\documentclass[reqno, 12pt]{amsart}
\usepackage{amsmath}
\usepackage{amsthm}
\usepackage{amsfonts}
\usepackage{amssymb}
\usepackage{mathrsfs}
\usepackage{epsfig}
\usepackage{slashed}
\usepackage{mathtools}
\usepackage{enumerate}
\usepackage{fullpage}
\usepackage{xcolor}
\usepackage{comment}
\usepackage[hidelinks]{hyperref}
\usepackage{bm}
\usepackage[
top    = 2.5cm,
bottom = 2.5cm,
left   = 1.5cm,
right  = 1.5cm]{geometry}
\usepackage{soul}

\newcommand{\mb}{\mathbf}
\newcommand{\mc}{\mathcal}

\renewcommand{\Re}{\mathrm{Re}\,}
\renewcommand{\Im}{\mathrm{Im}\,}
\newcommand{\rg}{\mathrm{rg}\,}

\newcommand{\N}{\mathbb{N}}
\newcommand{\R}{\mathbb{R}}
\newcommand{\C}{\mathbb{C}}

\newcommand{\B}{\mathbb{B}}

\newcommand{\supp}{\mathrm{supp}\,}

\DeclarePairedDelimiter\ceil{\lceil}{\rceil}

\DeclareMathOperator\arctanh{arctanh}
\renewcommand{\O}{\mathcal{O}}

\newcommand{\Nf}{\textup{\textbf{N}}}
\newcommand{\Lf}{\textup{\textbf{L}}}

\newcommand{\Sf}{\textup{\textbf{S}}}

\newcommand{\If}{\textup{\textbf{I}}}

\newcommand{\Rf}{\textup{\textbf{R}}}

\newcommand{\ff}{\textup{\textbf{f}}}

\newcommand\restr[2]{{  \left.\kern-\nulldelimiterspace #1 \vphantom{\big|}  \right|_{#2} }}

\newtheorem{lemma}{Lemma}[section]
\newtheorem{theorem}[lemma]{Theorem}

\newtheorem{proposition}[lemma]{Proposition}
\theoremstyle{remark}
\newtheorem{remark}[lemma]{Remark}

\theoremstyle{definition}
\newtheorem{definition}[lemma]{Definition}
\newtheorem*{definition*}{Definition}

\numberwithin{equation}{section}

\title[]{Stability of global self-similar solutions to the cubic wave equation and the wave maps equation}

\author{Akansha Sanwal}
\address{Universit\"at Innsbruck, Institut f\"ur Mathematik, Technikerstra{\ss}e 13, 6020 Innsbruck,
	Austria}
\email{akansha.sanwal@uibk.ac.at}

\author{Birgit Sch\"orkhuber}
\address{Universit\"at Innsbruck, Institut f\"ur Mathematik, Technikerstra{\ss}e 13, 6020 Innsbruck,
Austria}
\email{birgit.schoerkhuber@uibk.ac.at}

\author{David Wallauch}
\address{EPFL SB MATH PDE, Batiment MA, Station 8, CH-1015 Lausanne}
\email{david.wallauch@epfl.ch}

\begin{document}

\begin{abstract}
We study the long-time stability of global self-similar solutions to two energy supercritical nonlinear wave equations, namely, the cubic nonlinear wave equation in $6$ dimensions and the corotational wave maps equation in $4$  dimensions. We prove the stability of self-similar solutions under perturbations that are small in the critical Sobolev spaces. The proof is based on Strichartz estimates for wave equations with potentials in similarity variables.

\end{abstract}

\maketitle

\section{Introduction}

\subsection{The focusing nonlinear wave equation}
We consider the cubic wave equation
\begin{equation}
	\label{eq:cNLW}
	\left\{ \begin{array}{cl}
		\partial_t^2 u-\Delta_xu=u^3, \quad  t \in \R,  x\in \R^d\\
		(u(t_0,\cdot), \partial_tu(t_0,\cdot))= (f, g)
	\end{array} \right.
\end{equation}
for some $t_0 \in \R$, $u: I \times \R^d \to \R$, $d\geqslant 5$ and $I \subset \R$ an interval containing $t_0$. Eq.~\eqref{eq:cNLW} is invariant under the following scaling:
\begin{equation*}
	u\to u_{\lambda}:=\frac{1}{\lambda}u\Big(\frac{t}{\lambda},\frac{x}{\lambda}\Big), \quad \lambda >0,
\end{equation*}
with the initial data scaled accordingly. With $\dot{H}^s(\R^d)$ denoting the homogeneous Sobolev space, we have
\begin{equation}
	\label{eq:cNLWScaling}
	\|u_{\lambda}(t,\cdot)\|_{\dot{H}^s(\R^d)} = \lambda^{\frac{d}{2}-1-s}\|u(t/\lambda, \cdot)\|_{\dot{H}^s(\R^d)},
	\end{equation}
	implying that the scaling leaves the $\dot{H}^s(\R^d)$ norm invariant for $s=s_c=\frac{d}{2}-1$. Consequently, from the conserved energy:
	\begin{equation*}
		E[u,\partial_t u](t):=\frac{1}{2}\int_{\R^d} |\nabla u(t,x)|^2 dx +\frac{1}{2}\int_{\R^d} (\partial_tu(t,x))^2 dx -\frac{1}{4}\int_{\R^d} |u(t,x)|^{4}dx,
	\end{equation*}
	we deduce that \eqref{eq:cNLW} is energy supercritical for $d\geqslant 5$. \\
    
For the cubic wave equation, radially symmetric self-similar solutions, i.e, solutions that remain invariant under the scaling \eqref{eq:cNLWScaling}, take the form
\begin{equation*}
	u(t,x)=\frac{1}{t}U\Big(\frac{|x|}{t}\Big), 
\end{equation*}
for some function $U$ which is referred to as the profile of the solution. The trivial solution is given by $U = c_{d}$ for a suitable constant $c_d > 0$. However, in supercritical dimensions, the existence of genuinely non-trivial profiles can be expected, see e.g.  \cite{BizonMaisonWasserman, DaiDuyckaerts, Kycia}. A fully explicit example  was recently found  by Glogi\'c  and the second author \cite{GlogicSchoerkhuber} for $d\geqslant 5$,

\begin{equation}
	\label{eq:ExplicitU}
	\quad U(\rho)=\frac{2\sqrt{2(d-1)(d-4)}}{d-4+3\rho^2}.
\end{equation}

Obviously, the corresponding self-similar solution blows up in a backward evolution at time $t=0$.  However, as any self-similar solution, it still serves as a perfectly smooth forward in time solution on any time interval $[T,\infty)$ for any $T>0$. In dimensions $5 \leqslant d \leqslant 12$, numerical work by Kycia \cite{Kycia} suggests the existence of an infinite, countable family of smooth self-similar profiles. A proof of this fact for $d=5$ is contained in the work of Dai and Duyckaerts  \cite{DaiDuyckaerts}. For higher space dimensions, at least finitely many profiles can be expected, as the explicit example above shows.   In this article, we aim to understand the long-time ($t\to \infty$) stability of self-similar solutions.

\begin{definition}\label{Def:Admiss_Selfsim}
A function  $u_1^*$ is called an \emph{admissible self-similar solution} to the cubic nonlinear wave equation, if 
it solves
\begin{align}\label{NLW3}
	\partial_t^2 u-\Delta_xu=u^3,\qquad (t,x)\in(0,\infty)\times \R^d
\end{align} 
and can be written as $u_1^*(t,x)=t^{-1} U_*(\frac{|x|}{t})$ for some $U_*\in C^\infty ([0,\infty))$ that satisfies
\begin{align*}
\lim_{\rho\to\infty} U_*(\rho)=0.
\end{align*}
\end{definition}

We note that the profile in \eqref{eq:ExplicitU}  has decay of the order $\frac{1}{\rho^{2}}$ at infinity. This is in contrast to the solutions of \cite{DaiDuyckaerts}, which decay like $\frac{1}{\rho}$. In fact, by classical results of Kavian and Weissler \cite[Theorem 3.1]{KavianWeissler}, we have the following characterisation, which completely determines the possible behaviour of non-constant  smooth self-similar profiles at infinity.

\begin{lemma}{\cite{KavianWeissler}}\label{Le:DecaySelfSim}
Let $u(t,x)=t^{-1}U\big(\frac{|x|}{t}\big)$ with $U\in C^{2}([0,\infty))$ solve Eq.~\eqref{NLW3}
for $t>0$.
Assume further that $\displaystyle \lim_{\rho \to \infty} U(\rho )=0$.
Then there exists a constant $L\neq 0$ such that either 
\begin{align*}
\lim_{\rho \to \infty} \rho  U(\rho )=L, \quad 
\lim_{\rho \to \infty} \rho ^2 U'(\rho )=-L
\end{align*}
or 
\begin{align*}
\lim_{\rho \to \infty} \rho^2 U(\rho)=L, \quad 
\lim_{\rho \to \infty} \rho ^3 U'(\rho )=-2L.
\end{align*}
\end{lemma}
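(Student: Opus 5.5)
The plan is to reduce the statement to the asymptotic analysis of a second-order ODE with a regular singular point at $\rho=\infty$.

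\emph{Step 1: the profile equation.} Insert $u(t,x)=t^{-1}U(\rho)$, $\rho=|x|/t$, into Eq.~\eqref{NLW3}. A direct computation gives $\partial_t^2u=t^{-3}\bigl(\rho^2U''+4\rho U'+2U\bigr)$ and $\Delta_x u=t^{-3}\bigl(U''+\tfrac{d-1}{\rho}U'\bigr)$. Hence $U$ satisfies
\begin{equation*}
(\rho^2-1)U''+\Bigl(4\rho-\tfrac{d-1}{\rho}\Bigr)U'+2U+U^3=0 ,\qquad \rho>0 .
\end{equation*}
We only need $\rho\geqslant 2$, which is away from the light-cone singularity at $\rho=1$. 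There we divide by $\rho^2-1$. The leading part $\rho^2U''+4\rho U'+2U$ is an Euler operator with indicial polynomial $k(k-1)+4k+2=(k+1)(k+2)$, so its solutions are $\rho^{-1}$ and $\rho^{-2}$. This explains the two alternatives.

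\emph{Step 2: logarithmic variables and variation of constants.} Set $s=\ln\rho$ and $W(s)=U(e^s)$. The equation becomes
\begin{equation*}
W''+3W'+2W=F(s),
\end{equation*}
with
\begin{equation*}
F=-\frac{W^3}{1-e^{-2s}}+\O(e^{-2s})\bigl(|W|+|W'|+|W''|\bigr).
\end{equation*}
The second term collects the lower-order corrections coming from $(\rho^2-1)$ and $(d-1)/\rho$. Since $U\to0$, the cubic term is $o(1)\cdot|W|$. Write $W=a(s)e^{-s}+b(s)e^{-2s}$, with $W'=-ae^{-s}-2be^{-2s}$, and derive first-order equations for $a$ and $b$ by variation of constants:
\begin{equation*}
a'=e^{s}F,\qquad b'=-e^{2s}F.
\end{equation*}

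\emph{Step 3: a priori bounds.} A Gronwall/bootstrap argument on $[s_0,\infty)$, with $s_0$ large so that $|W|\leqslant\varepsilon$, should show that $a$ and $b$ grow at most like $e^{\delta s}$ for every $\delta>0$.

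From this, $e^{s}F$ is integrable at infinity, so $a(s)\to L_1$. If $L_1\neq0$, then $\rho U\to L_1$ and $\rho^2U'=-(a+2be^{-s})\to -L_1$, which is the first alternative.

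If $L_1=0$, write $a(s)=-\int_s^\infty e^{\sigma}F\,d\sigma$ and feed this back into the bounds. With the improved bound $|W|\lesssim e^{-(2-\delta)s}$, the cubic term decays like $e^{-3(2-\delta)s}$. Then $e^{2s}F$ becomes integrable, so $b\to L_2$ while $ae^{s}\to0$. This yields $\rho^2U\to L_2$ and $\rho^3U'\to-2L_2$, the second alternative.

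\emph{Step 4: excluding $L=0$.} This is the step I expect to be the main obstacle, together with closing the bootstrap in Step 3. Suppose $L_1=L_2=0$. Then $a$ and $b$ are given by integrals from $+\infty$, so $(a,b)$ solves a Volterra system on $[s_0,\infty)$ whose kernel is small for $s_0$ large. A contraction, or Gronwall applied from infinity, then forces $a=b\equiv0$, i.e.\ $U\equiv0$ for $\rho$ large. Uniqueness for the ODE on $(1,\infty)$ gives $U\equiv0$ there. In that case $L\neq 0$ would require excluding the trivial profile, which I take to be implicitly assumed; with that assumption understood, $L\neq0$ in both cases.

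The delicate point is that the coefficient functions in $F$ involve $W''$. I would remove this by solving for $U''$ first, using $\rho^2-1\neq0$, so the perturbation depends only on $W$ and $W'$. With that done, the bootstrap is a standard Levinson-type argument.
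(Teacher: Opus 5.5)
The paper does not prove this lemma; it imports it from \cite[Theorem~3.1]{KavianWeissler}. Your argument is therefore a self-contained replacement rather than a variant of an in-paper proof.

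The closest in-house argument is the wave-maps analogue, Lemma~\ref{lem: wmadmissible}. There one passes to $V=\rho U$, writes the profile equation in divergence form with integrating factor $\rho^3(\rho^2-1)^{-1/2}$, and integrates once. That closes immediately only because the linear terms cancel and $\sin V$ is bounded. It also yields merely finite, possibly zero, limits of $\rho U$ and $\rho^2U'$, with no dichotomy and no $L\neq0$; that is essentially all the stability analysis uses.

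For the cubic equation the same substitution gives, when $d=6$, $\bigl(\rho^3(\rho^2-1)^{-1/2}V'\bigr)'=\rho\,(V^3-3V)(\rho^2-1)^{-3/2}$. However, $V=\rho U$ is not a priori bounded, so that route would also need a growth bound like your Step~3 before it closes. Your approach is the natural way to obtain the full two-branch statement, and it is correct: logarithmic variable, variation of constants on the Euler modes $e^{-s},e^{-2s}$, a Gronwall bound that makes the perturbation integrable, then a Levinson/Volterra argument.

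A few points to tighten.

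\emph{Sign.} The profile equation is $(\rho^2-1)U''+(4\rho-\frac{d-1}{\rho})U'+2U-U^3=0$. The sign of the cubic term does not matter for the asymptotics.

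\emph{Step~3.} Gronwall for the linear system in $(W,W')$, whose coefficient perturbation is $O(\varepsilon^2+e^{-2s})$, gives $|W|+|W'|\lesssim_\delta e^{(-1+\delta)s}$. That means $|a|\lesssim e^{\delta s}$ but only $|b|\lesssim e^{(1+\delta)s}$, not $e^{\delta s}$. You need a second pass: then $|e^{2s}F|\lesssim e^{(-1+3\delta)s}$ is integrable, so $b$ converges in every case. This is what gives $be^{-s}\to0$ in the first alternative.

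\emph{Step~4.} The Volterra kernel is small in $L^1(s_0,\infty)$ only after inserting $|W|^2\lesssim e^{-(2-2\delta)s}$. With merely $|W|\le\varepsilon$, the term $\varepsilon^2\int_s^\infty|b|\,d\sigma$ is not controlled by $\varepsilon^2\sup|b|$. With that decay the kernel is $O(e^{-(1-2\delta)\sigma})$, and Gronwall from infinity closes because $a,b\to0$.

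\emph{The case $L=0$.} You are right that the statement as written fails for $U\equiv0$. What your argument shows is exactly that $L=0$ forces $U\equiv0$ on $(1,\infty)$. Reaching $[0,1]$ requires crossing the singular point $\rho=1$, where the Frobenius exponents are $0$ and $\frac{d-3}{2}$. For $d=6$ the branch $(1-\rho)^{3/2}$ is not $C^2$, so this works. Once $\frac{d-3}{2}>2$, the $C^2$ hypothesis no longer rules that branch out. The clean nondegeneracy assumption is therefore $U\not\equiv0$ on $(1,\infty)$.
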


Obviously, the profile in \eqref{eq:ExplicitU} falls into the second category. However, we will see that for the stability analysis,  it suffices to assume $U(\rho) \sim \rho^{-1}$.
\subsubsection{Main result}
We formulate the main result for Eq.~\eqref{eq:cNLW}. We believe that our analysis holds for all supercritical dimensions $d\geqslant 5$. However, we choose to work at $d=6$ where the critical exponent is of integer order, i.e.,  $s_c = 2$, so that the technical details do not obscure the analysis.

\begin{theorem}
	\label{thm:MainTheoremNLW}
	Let $d=6$ and $u_1^*$ be an admissible self-similar solution to Eq.~\eqref{eq:cNLW}.
	 There exist constants $\delta, \delta'>0$ such that the following holds. Let $(\varphi_0,\varphi_1)$ be radial Schwartz functions that satisfy
     
	\begin{equation}\label{Eq:DataSmall}
		\| (\varphi_0, \varphi_1) \|_{\dot{H}^2(\R^6) \times \dot{H}^1(\R^6)} \leqslant \delta.
	\end{equation}
    
	There exists a unique solution $u: \R_{\geqslant 1} \times \R^6 \to \R$ to  Eq.~\eqref{eq:cNLW} satisfying the initial condition
    
        \begin{align}\label{NLW3:pert_data}
u(1,\cdot) = u_1^*(1,\cdot) + \varphi_0, \quad  \partial_t u(1,\cdot) = \partial_t u_1^*(1,\cdot) + \varphi_1.
\end{align}

    Moreover,
	
    \begin{equation}
		\| (u, \partial_t u) - (u_1^*, \partial_tu_1^*)\|_{L^\infty_{t}((1,\infty)) (\dot{H}^2(\R^6)\times \dot{H}^1(\R^6))} \leqslant \delta'.
	\end{equation}
    
\end{theorem}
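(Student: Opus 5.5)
We use similarity variables adapted to forward-in-time self-similarity: $\tau=\log t$, $\rho=|x|/t$. Write $u=u_1^*+v$; then $v$ solves
\[
\partial_t^2 v-\Delta v-3(u_1^*)^2v=3u_1^*v^2+v^3 .
\]
The potential is $V(t,x)=3(u_1^*)^2=3t^{-2}U_*(|x|/t)^2$. It is scaling-critical, of the form $t^{-2}W(|x|/t)$. Lemma~\ref{Le:DecaySelfSim} gives $W(\rho)\lesssim\langle\rho\rangle^{-2}$, together with the corresponding derivative bounds.

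Set $\psi(\tau,\rho)=t\,v(t,t\rho)$. The linearized operator becomes an autonomous, time-independent operator $\mathbf L=\mathbf L_0+\mathbf L'$ acting on radial functions on all of $\R^6$. Here $\mathbf L_0$ is the free wave operator in similarity coordinates, and $\mathbf L'$ is the potential $W$, which is compact relative to $\mathbf L_0$ because of its decay. The critical norm $\dot H^2\times\dot H^1$ is invariant under this change of variables. For the free forward flow we would establish, via the explicit radial representation in $d=6$, Strichartz estimates of the form
\[
\|v\|_{L^p_t((1,\infty))L^q_x}\lesssim \|(f,g)\|_{\dot H^2\times\dot H^1}+\|F\|_{L^1_t\dot H^1}
\]
at the $\dot H^2$ scaling. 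An admissible choice is $p=2$, $q=12$, or something similar with $L^4_tL^{12}_x$-type control suited to the cubic term.

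The key step is to transfer these estimates to the potential problem. The plan is to treat $V$ perturbatively over long times using spectral information about $\mathbf L$. First, show that $\mathbf L$ has no growing modes in the relevant space. For the forward (expanding) evolution, scaling symmetries generate only decaying modes, since $t\partial_t u_1^*+x\cdot\nabla u_1^*+u_1^*$ corresponds to the eigenvalue $-1$ in $\tau$ after the weight. Then derive Strichartz estimates for $\partial_t^2-\Delta-V$ on $[1,\infty)$ via a Duhamel/local-smoothing argument. Split $V=V\chi_{|x|\le Rt}+V\chi_{|x|>Rt}$. The exterior part has small $L^1_tL^\infty$-type norm in the scaling-critical sense when $R$ is large, because $W\lesssim\rho^{-2}$ and $t^{-2}$ is integrated against weights $|x|^{-2}$. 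This part can be handled by a Hardy-type estimate combined with endpoint Strichartz. The interior part is a compact perturbation in similarity variables. For it we use semigroup theory, namely decay of $e^{\tau\mathbf L}$ on $H^2\times H^1$ of the light cone region $\rho\le R$ (Lumer–Phillips with dissipative estimates plus the absence of unstable spectrum), and translate exponential decay in $\tau$ back into integrability in $t$.

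We expect this linear step to be the main obstacle. The potential is not small, is not time-decaying in the scale-invariant sense, and lives up to and beyond the light cone. In particular, one must rule out zero-energy resonances and unstable eigenvalues of $\mathbf L$. For general admissible $U_*$ with merely $\rho^{-1}$ decay this is not explicit. The approach we would try is to exploit the smallness of $V$ outside compact $\rho$-sets and Hardy inequality $\||x|^{-1}f\|_{L^2}\lesssim\|\nabla f\|_{L^2}$, and to use a smallness requirement only where needed, or alternatively a positivity/energy argument for the linearized flow in the forward direction.

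With linear Strichartz bounds in hand, the nonlinear argument is standard. Define $X$ by the norm $\|(v,\partial_tv)\|_{L^\infty_t(\dot H^2\times\dot H^1)}+\|v\|_{S}$, where $S$ is the chosen Strichartz norm. We bound the nonlinearity $3u_1^*v^2+v^3$ in the dual norm $L^1_t\dot H^1_x$:
\[
\|\nabla(v^3)\|_{L^1L^2}\lesssim\|v\|_{L^2L^\infty}^2\|v\|_{L^\infty\dot H^2},
\]
while $\nabla(u_1^*v^2)$ uses $|u_1^*|\lesssim|x|^{-1}$ and Hardy. A contraction on a small ball in $X$ for $\delta$ small yields a unique global solution with $\|v\|_X\lesssim\delta=:\delta'/C$. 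Uniqueness in the stated class follows from standard local well-posedness for Schwartz data, which persists by the a priori $X$-bound.
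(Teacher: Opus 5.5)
There is a genuine gap, and it sits in the linear step you flag yourself. You give no mechanism that actually produces uniform $\mathcal H^2$ bounds and Strichartz estimates for the linearized flow, and both halves of your splitting fail for concrete reasons.

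\emph{Interior.} There is no decay to transfer.
\begin{itemize}
\item The free similarity flow is an $\mathcal H^2$-isometry, since the free wave flow conserves $\dot H^2\times\dot H^1$ and the rescaling is critical.
\item For data supported in $\{|x|\le 1\}$ the solution stays supported in $\{\rho\le 1\}$. So even its $H^2\times H^1(\{\rho\le R\})$ norm does not decay: the energy piles up at the light cone $\rho=1$, where all characteristics accumulate.
\item Accordingly there is no spectral gap. Lumer--Phillips or Gearhart--Pr\"uss together with the absence of unstable eigenvalues give only $\|\mathbf S(\tau)\|\lesssim_\varepsilon e^{\varepsilon\tau}$, which is exactly where the paper says abstract theory stops.
\item Moreover, $\{\rho\le R\}$ is not a domain of dependence for the forward flow: the backward domain of dependence of $\{|x|\le Rt\}$ at $t=1$ is $\{|x|\le (R+1)t-1\}$. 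So there is no semigroup on that region at all; only exterior regions $\{\rho\ge R\}$ with $R\ge1$ are invariant.
\end{itemize}

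\emph{Exterior.} In similarity variables $V\chi_{|x|>Rt}$ is $\tau$-independent.
\begin{itemize}
\item Hence no critical bound $\|V\chi_{|x|>Rt}\,v\|_{L^1_t\dot H^1}\lesssim\|v\|_{L^q_tX}$ with $q>1$ can hold. Test with $\psi(\tau,\rho)=\phi(\rho)\mathbf 1_{[0,T]}(\tau)$, $\phi$ supported in $\{\rho>R\}$: the left side grows like $T$, the right side like $T^{1/q}$.
\item Passing to endpoint Strichartz would instead require smallness of $V\chi_{|x|>Rt}$ in a scale-invariant norm such as $\sup_x|x|^2|V\chi_{|x|>Rt}|$. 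This holds when $U_*$ decays like $\rho^{-2}$.
\item It fails for profiles with $U_*\sim L\rho^{-1}$, which Lemma~\ref{Le:DecaySelfSim} allows and the theorem covers. There $V\chi_{|x|>Rt}\approx 3L^2|x|^{-2}$, whose critical size does not shrink as $R\to\infty$.
\end{itemize}

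The paper closes this gap by explicit computation rather than perturbation.
\begin{itemize}
\item It writes $\mathbf S(\tau)$ via Laplace inversion of the resolvent.
\item It constructs the resolvent from fundamental systems at the three singular points $\rho=0,1,\infty$, glued at $\rho=1$.
\item It subtracts the free resolvent to gain a factor $\langle\omega\rangle^{-1}$.
\item It bounds the resulting oscillatory integrals by kernels like $\langle\tau+\log|\rho-1|-\log|s\pm1|\rangle^{-2}$, which encode the transport toward the light cone.
\item Possible zeros of the matching coefficient $c_{0,1}$ on $i\R$ force the contours $\Re\lambda=\pm\delta$, exponentially weighted estimates, and complex interpolation. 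The $\dot H^1\times L^2$ semigroup justifies the left contour.
\end{itemize}

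Two further corrections.
\begin{itemize}
\item The spectral input you consider hard is in fact easy. Independently of $V$, solutions of the eigenvalue ODE behave like $\rho^{\lambda-1}$ or $\rho^{\lambda-2}$ as $\rho\to\infty$, which is incompatible with $\mathcal H^2$ when $\Re\lambda\ge0$. So no admissible profile has eigenvalues with $\Re\lambda\ge 0$.
\item Your cubic estimate is not scale-consistent: $L^2_tL^\infty_x$ is not controlled by $\dot H^2\times\dot H^1$ data, and $L^4_tL^{12}_x$ is not admissible. Use $\|\nabla(v^3)\|_{L^1_tL^2_x}\lesssim\|v\|_{L^2_t\dot W^{1,4}}\|v\|_{L^4_tL^8}^2$ and Hardy's inequality for $u_1^*v^2$, as the paper does.
\end{itemize}
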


Some remarks are in order. 

\begin{remark}

\begin{enumerate}
    \item Lemma \ref{Le:DecaySelfSim} shows that self-similar solutions have infinite critical Sobolev norm: depending on the case, it is either $u_1^*(t,\cdot)$ or $\partial_t u_1^*(t,\cdot)$ that logarithmically fails $\dot H^{s_c}(\R^d)$ or  $\dot H^{s_c-1}(\R^d)$, respectively, for $t > 0$. As a result, the solution $u$ in Theorem \ref{thm:MainTheoremNLW} is smooth yet has infinite $\dot{H}^2 \times \dot{H}^1$-norm. The perturbation however remains small in the critical topology. 
    \item The assumption of smooth initial data can be relaxed to $(\varphi_0,\varphi_1) \in \dot{H}^2 \times \dot{H}^1$ with a suitable adaptation on the notion of solution (similarly in Theorem \ref{thm:MainTheoremWM} below).
    \item We note that in the blowup analysis (corresponding to $t \to 0^{+})$, self-similar solutions have finite co-dimension stability in general and the exact number of co-dimensions depends on the  particular profile. Moreover, in this context, asymptotic stability has been established, i.e., convergence to the blowup profile for suitable initial data. This is not the case in the forward-in-time analysis. In particular, Theorem~\ref{thm:MainTheoremNLW} holds for any admissible self-similar solution, and the result can be interpreted as \textit{orbital stability}.
\end{enumerate}
\end{remark}

\subsubsection{Related results}\label{Sec:Results_Wave} The local well-posedness of the initial value problem 
\begin{equation}
	\label{eq:NLW}
	\partial_t^2 u -\Delta _x u = |u|^{p-1} u 
\end{equation}
for supercritical $p > \frac{d+2}{d-2}$, $d \geqslant 3$, in the critical Sobolev space is classical $\dot H^{s_c}(\R^d) \times  H^{s_c-1}(\R^d)$, see \cite{LinBla95}. Moreover, in this setting, global existence for small initial data can be obtained. As mentioned above, self-similar solutions are not contained in the critical Sobolev space. However, as shown by Planchon \cite{Pla2000}, this can be remedied by working in Besov spaces. In particular, \cite{Pla2000} proves the existence of global solutions for data with sufficiently small Besov norm, which includes small homogeneous data.  In contrast, for \textit{large} initial data, the understanding of the evolution is still rather limited in the supercritical case. As discussed above, smooth non-trivial self-similar profiles provide explicit examples for both finite-time blowup and non-dispersive global solutions (in one time direction). Their existence has been investigated for various combinations of $p$ and $d$ in  \cite{BizonMaisonWasserman,Kycia} and more recently in \cite{DaiDuyckaerts}. In general, countably infinite families of smooth profiles are expected to exist for $\frac{d+2}{d-2} < p < p_{JL}$, where $p_{JL}:= 1+ \frac{4}{d-4-2\sqrt{d-1}}$ for $d \geqslant 11$ denotes the Joseph-Lundgren exponent. Further examples of large global solutions  have been constructed by Krieger and Schlag \cite{KriegerSchlag} for $d =3$, $p=7$ in the radial case. Their solutions have infinite critical Sobolev norm precisely due to their (self-similar) decay at infinity. Moreover, the solutions are  stable with respect to a certain class of perturbations.\\ 

Concerning finite-time blowup, open (or finite co-dimension) sets of initial data have been constructed such that the corresponding solutions display specific self-similar blowup behaviour, see e.g. \cite{DonningerSchoerkhuber1, DonSch16, GlogicSchoerkhuber, CsoGloSch24, Ost24, Wallauch2024} and the references therein. In a different direction, Collot  \cite{Col18} constructed blowup bubbles concentrating a soliton in $d \geqslant  11$ for odd $p > p_{JL}$. 
 \\

We now proceed to the second problem studied in this paper. 

\subsection{Co-rotational wave maps}
We consider the wave maps equation $\mc U: I \times \R^d \to \mathbb{S}^d \subset \R^{d+1}$ which reads
\begin{equation}
	\label{eq:WaveMaps}
	\partial^{\mu} \partial_{\mu} \mc U + (\partial^{\mu} \mc U \cdot \partial_{\mu} \mc U) \mc U =0.
\end{equation} 
We are interested in the co-rotational setting, namely, maps $\mc U$ which respect a combined rotation of the domain and the target. This co-rotational ansatz is given by:

\begin{equation*}
	\mc U(t,x)=\begin{bmatrix}
		\sin(|x|u(t,x))\frac{x}{|x|}\\
		\cos(|x|u(t,x))
	\end{bmatrix}
\end{equation*}

for a smooth radial function $u=u(t,r): I \times [0,\infty) \to \R$. 
For such maps, Eq.~\eqref{eq:WaveMaps} reduces to a $(d+2)$-dimensional radial equation for $u$:

\begin{equation}
	\label{eq:WaveMapsCR}
	\left (\partial_t^2 -\partial_r^2 -\frac{d+1}{r}\partial_r \right )u(t,r) + \frac{(d-1)\sin(2ru(t,r)) - 2(d-1)ru(t,r)}{2r^3} =0.
\end{equation}

It is well-known that \eqref{eq:WaveMapsCR} admits self-similar solutions in the supercritical case $d \geqslant 3$, see \cite{Sha88,ShaTah94}, with an explicit example \cite{TurokSpergel, BizBir15} given by
\begin{equation}
	\label{eq:WaveMapsSSS}
	u(t,r) = \frac{1}{t} \phi\Big(\frac{r}{t}\Big), \quad \phi(\rho) = \frac{2}{\rho} \arctan\Big(\frac{\rho}{\sqrt{d-2}}\Big),
\end{equation}
which is referred as the ground state solution. 
Moreover, countable families of non-explicit profiles are known to exist in $3 \leqslant d \leqslant 6$, see \cite{Biz00, BieBizMal17}.\\

Similar to Definition \ref{Def:Admiss_Selfsim}, we define admissible self-similar solutions to the wave maps equation. Furthermore, given Lemma \ref{lem: wmadmissible} we do not need to make any decay assumptions. 

\begin{definition}
A function $u^*$ is called an \emph{admissible self-similar solution} to the wave maps equation, if it solves Eq.~\eqref{eq:WaveMapsCR} on $(0,\infty)^2$ and can be written as $u^*(t,r)=t^{-1} U_*(\frac{r}{t})$ for some $U_*\in C^\infty ([0,\infty))$.
\end{definition}

In the formulation of the main result, we restrict ourselves to $d=4$ and consider Eq.~\eqref{eq:WaveMapsCR} as a radial wave equation on $\R^6$. 

\begin{theorem}
\label{thm:MainTheoremWM}
	Let $d=4$ and $u_1^*$ be an admissible self-similar solution to \eqref{eq:WaveMapsCR}. There exist constants $\delta, \delta'>0$ such that the following holds.  Let $(\varphi_0,\varphi_1)$ be radial Schwartz functions that satisfy
     
	\begin{equation}\label{Eq:DataSmall}
		\| (\varphi_0, \varphi_1) \|_{\dot{H}^2(\R^6) \times \dot{H}^1(\R^6)} \leqslant \delta.
	\end{equation}
    
    Then, for initial data of the form
    
    \begin{align}\label{NLW3:pert_data}
u(1,\cdot) = u_1^*(1,\cdot) + \varphi_0, \quad  \partial_t u(1,\cdot) = \partial_t u_1^*(1,\cdot) + \varphi_1
\end{align}

	there exists a unique smooth solution $u: \R_{\geqslant 1} \times [0,\infty) \to \R$ to  Eq.~\eqref{eq:WaveMapsCR}. Moreover,
	\begin{equation}
		\| (u, \partial_t u) - (u_1^*, \partial_tu_1^*)\|_{L^\infty_{t}((1,\infty)) (\dot{H}^2(\R^6)\times \dot{H}^1(\R^6))} \leqslant \delta'.
	\end{equation}
\end{theorem}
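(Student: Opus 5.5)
We prove Theorem~\ref{thm:MainTheoremWM} along the lines of Theorem~\ref{thm:MainTheoremNLW}, treating \eqref{eq:WaveMapsCR} as a radial semilinear wave equation on $\R^6$. We write $u = u_1^* + v$ and expand the nonlinearity $N(r,u) := \frac{3\sin(2ru) - 6ru}{2r^3}$ around $u_1^*$:
\begin{equation*}
N(r,u_1^*+v) - N(r,u_1^*) = V(t,r)\, v + \mathcal{N}(t,r,v),
\end{equation*}
where
\begin{equation*}
V(t,r) = \frac{3(\cos(2ru_1^*)-1)}{r^2} = t^{-2}\,\mathcal V\Big(\frac{r}{t}\Big),\qquad \mathcal V(\rho) = \frac{3(\cos(2\rho U_*(\rho))-1)}{\rho^2}.
\end{equation*}
The remainder $\mathcal N$ is quadratic and cubic in $v$ with bounded smooth coefficients, e.g. a term $\frac{\sin(2ru_1^*)}{r}(\cdot)\,v^2$ plus an $O(v^3)$ term. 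Two facts are needed first. (i) An analogue of Lemma~\ref{Le:DecaySelfSim}, namely Lemma~\ref{lem: wmadmissible}, showing that $\rho U_*(\rho)$ has a limit (a multiple of $\pi$, up to the trivial branch) as $\rho\to\infty$ with derivative decay. This gives $\mathcal V(\rho) = O(\rho^{-2})$ together with symbol-type bounds $|\rho^k\partial_\rho^k \mathcal V(\rho)|\lesssim \langle\rho\rangle^{-2}$, which is exactly the decay rate $U_*\sim\rho^{-1}$ exploited for the cubic equation. (ii) The quadratic coefficient $\sin(2ru_1^*)/r$ behaves like $t^{-1}W(r/t)$ with $W$ bounded and decaying, so it scales like $u_1^*$.

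Next we pass to similarity variables $\tau=\log t$, $\rho = r/t$. Here the linear operator $\partial_t^2-\Delta + V$ becomes an autonomous operator $\mathbf L = \mathbf L_0 + \mathbf L'$ with the potential $\mathcal V$. We then invoke the Strichartz estimates for wave equations with self-similar potentials in similarity variables established for the cubic case. These estimates are stated for potentials of the form $t^{-2}\mathcal V(r/t)$ with $\mathcal V$ smooth and $O(\rho^{-2})$. They give, for the linearized flow on $[1,\infty)$, a bound of the form
\begin{equation*}
\|v\|_{L^\infty_t \dot H^2\times \dot H^1} + \|v\|_{S} \lesssim \|(v(1),\partial_tv(1))\|_{\dot H^2\times\dot H^1} + \|F\|_{N},
\end{equation*}
with Strichartz norms $S$ and $N$, for example $S = L^2_t L^{\infty}$-type or $L^p_tL^q$ with critical scaling. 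The key point to check is that this linear theory has no growing modes. In the forward direction this is plausible because the self-similar variables make the spectral parameter sign favorable: $\tau\to\infty$ corresponds to $t\to\infty$ with dispersion. This positivity or absence of unstable spectrum is what was used for $u^3$, and we will verify that the proof there only used the structure of $\mathcal V$ (smoothness, decay) and not its sign.

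Finally we close a contraction argument in the space $X$ defined by the norm $\|v\|_X = \|v\|_{L^\infty \dot H^2\times\dot H^1} + \|v\|_S$, via Duhamel's formula. We need the nonlinear estimates $\|\mathcal N(v)\|_N \lesssim \|v\|_X^2 + \|v\|_X^3$ and the corresponding difference estimates. For the cubic part $\frac{\text{bounded}}{r^3}(\ldots)$, expanded as $\frac{\sin(2ru)-\ldots}{r^3}$, we Taylor expand in $rv$. The terms have the form $g(ru_1^*, rv)\, v^3$ with $g$ smooth and bounded, and are handled as $v^3$ using Hölder and Sobolev embedding on $\R^6$ in the critical Strichartz pair. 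For the quadratic term $t^{-1}W(r/t)v^2$ we use the Hardy-type inequality $|t^{-1}W(r/t)|\lesssim r^{-1}$ together with $\|r^{-1}v\|$-type bounds from $\dot H^2$ via Hardy's inequality, so that this term scales like $v^3$ in critical norms. Smoothness of the solution follows from persistence of regularity for Schwartz data, and uniqueness comes from the contraction. We expect the quadratic term, with its weakly decaying coefficient, to be the main obstacle; we would first try controlling it by $\|r^{-1}\|$ weights plus Strichartz norms, reducing to the trilinear estimate already proven for the cubic equation.
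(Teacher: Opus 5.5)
Your proposal is correct and follows the paper's route. Lemma~\ref{lem: wmadmissible} gives a potential $\mathcal V=O(\langle\rho\rangle^{-2})$ with symbol-type derivative bounds, so the linear estimates of Theorem~\ref{thm:StrichartzFullSemigroup} apply verbatim, since they are proved for a general such potential, including the absence of eigenvalues with $\Re\lambda\geqslant 0$; the contraction then closes after a Taylor expansion whose quadratic coefficient $|\sin(2\rho U_*)/\rho|\lesssim\langle\rho\rangle^{-1}$ is treated exactly like the term $3u_1^*\phi_1^2$ already present in the cubic case (Hardy's inequality plus the $L^2_\tau\dot W^{1,4}$ norm), so it is not a new obstacle. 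Two minor points: the lemma only gives that $\lim_{\rho\to\infty}\rho U_*(\rho)$ exists and is finite, so your ``multiple of $\pi$'' claim is unsupported (though unused), and $L^2_tL^\infty$ is not a Strichartz norm at the $\dot H^2(\R^6)$ level, so you should work with the pairs $\frac1q+\frac6r=1$ together with $L^2\dot W^{1,4}$, as the paper does.
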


Again, we expect a similar result in other supercritical space dimensions.

\subsubsection{Related results}\label{Sec:Results_Wave} 
We give a short summary of the results which are important in the context of the paper. In particular, we restrict ourselves to the supercritical co-rotational setting. There, local well-posedness in the critical space has first been established in \cite{ShaTah94}. For more general results and a short history we refer the reader, e.g. to \cite{CanHer18} and the references therein.\\

In the spirit of \cite{Pla2000}, Germain \cite{Ger08} studied the supercritical co-rotational equation problem in Besov spaces. Beyond the small data theory, he also considers large homogeneous data, from which self-similar solutions are constructed-though uniqueness is lost.  For $d = 3$, Chiodaroli and Krieger \cite{ChiodaroliKrieger} prove the existence of global, forward-in-time, smooth solutions for large (even in the Besov sense) initial data following the approach of \cite{KriegerSchlag}. \\

Concerning blowup, the nonlinear asymptotic stability of the solution \eqref{eq:WaveMapsSSS} has been established in a series of works in various functional analytic frameworks, see \cite{DonSch12, Don11, CosDonXia16, CosDonGlo16, ChaDonGlo17, DonningerWallauch, GlogicWaveMaps, DonWal25}. In $d \geqslant 7$, type II blowup bubbles have been constructed by Ghoul, Ibrahim and Nguyen \cite{GhoIbrNgu18}.\\ 

Finally, we mention recent work by Bonk and Donninger \cite{BonDon26}, who consider for $d=3$ the problem of forward-in-time stability of the self-similar ground state \eqref{eq:WaveMapsSSS}. Although similar in spirit, their approach is very different from ours and based on a hyperboloidal foliation of the future light cone.  Taking this point of view, the linearised evolution  in self-similar coordinates  decays exponentially in Sobolev spaces of sufficiently high regularity. As a consequence, they prove that the ground state \eqref{eq:WaveMapsSSS} is asymptotically stable.

\subsection{Notation and function spaces}
For $u,v\in C_c^{\infty}(\R^d) \times C_c^{\infty}(\R^d)$ and $0<s<\frac{d}{2}$, we define
\begin{equation}
	\langle u,v\rangle_{\dot{H}^s} = \langle |\cdot|^s \hat{u}, |\cdot|^s\hat{v}\rangle_{L^2},
\end{equation}
where $\hat{u}$ denotes the (spatial) Fourier transform of $u$:
\begin{equation*}
	\hat{u}(\xi):=\frac{1}{(2\pi)^{\frac{d}{2}}} \int_{\R^d} e^{-ix\cdot \xi} u(x) dx.
\end{equation*}
Bold face letters denote tuples or operators and semigroups acting on tuples. For $\mb{u}=(u_1,u_2), \mb{v}=(v_1,v_2)$, we define
\begin{equation*}
	\langle \mb{u}, \mb{v} \rangle_{\mathcal{H}^s} = \langle u_1, v_1 \rangle_{\dot{H}^s} + \langle u_2,v_2\rangle_{\dot{H}^{s-1}}.
\end{equation*}
We define the Wronskian of two functions by
\begin{equation*}
	W(f_1,f_2)(x) = f_1(x)f_2'(x) - f_1'(x)f_2(x).
\end{equation*}
We shall drop arguments as needed to simplify the notation. By  $\B_1(\R^d)$, we denote the open ball of radius $1$ on $\R^d$ and by $\B_1^c(\R^d)$, its complement on $\R^d$. Whenever it is clear from the context, we shall drop $\R^d$ from the argument for simplicity. The same applies to the Lebesgue norms $L^p(\R^d)$, and so on. We denote by $C_{c,rad}^{\infty}$ radial representatives of smooth compactly supported functions.

\subsection{Plan of the paper} We briefly outline the steps involved in proving our main results. In the first step, we transform the original equation(s) via forward self-similar coordinates. Then, we linearise around any of the admissible self-similar solutions, and obtain a nonlinear wave equation with potential terms. We fix the dimension to $d=6$ and treat the obtained equation as a Cauchy problem:
\begin{equation*}
    \partial_\tau \bm{\phi}= \mb{L}\bm{\phi} + \mb{N}(\bm{\phi}).
\end{equation*}
A first study of the linear operator $\mb{L}$ in Section \ref{section:LinearisedEvolution} shows that it generates a semigroup $\mb{S}$ which satisfies the estimate
\begin{equation*}
    \| \Sf(\tau)\mb{f}\|_{\mathcal{H}^2} \lesssim_\varepsilon e^{\varepsilon\tau}\| \mb{f}\|_{\mathcal{H}^2}
\end{equation*}
on the scaling critical Sobolev space $\mathcal{H}^2:=\dot{H}^2_{rad}(\R^6) \times \dot{H}^1_{rad}(\R^6)$ of radially symmetric functions. To obtain sharp estimates on the semigroup, the idea is to use the inverse Laplace transform to write the semigroup as an integral of the resolvent as follows:
\begin{equation*}
    \Sf(\tau)\mathbf{f} = \frac{1}{2\pi i}\lim_{N \to \infty} \int_{\varepsilon-iN}^{\varepsilon+iN} e^{\lambda\tau}\Rf_{\Lf}(\lambda)\mathbf{f}d\lambda,
\end{equation*}
for $\varepsilon >0$. The resolvent equation, namely, $\mb{R}_{\mb{L}}(\lambda)\mb{f} = (\lambda - \mb{L})^{-1}\mb{f}=\mb{u}$ reduces to a second-order ordinary differential equation, which has been analysed in Section \ref{section:ODEAnalysis}. A large part of this section follows \cite[Section 4]{DonningerWallauch} and \cite[Section 4]{Wallauch2024}. At this point, it is also worth mentioning how the blow-up stability and forward-in-time stability analyses differ. When dealing with the forward-in-time stability of solutions, we are required to tackle three singular points, namely $\rho=0,1,\infty$ of the equation, in contrast to working on the backward light cone, hence considering the points $\rho=0,1$ only. We note that the forward-in-time stability cannot be restricted to any forward light cone, owing to the domain of dependence/region of influence property of solutions to wave equations, in contrast to blow-up analysis on the backward light cone. For more details, we refer the reader to Section \ref{section:InteriorEstimates}.\\

The construction of the resolvent is first accomplished locally, in the two regimes $(0,1)$ and $(1,\infty)$ and then, by gluing the obtained expressions together at the light-cone boundary, which in similarity variables corresponds to $\rho=1$, we obtain the resolvent on the full space in Section \ref{section:ODEAnalysis}.
As the potential term acts as a compact perturbation, we peel off the free semigroup $\Sf_0$, i.e.~the semigroup generated by the unperturbed wave operator $\Lf_0$, to arrive at
\begin{equation}\label{Eq:outline int}
    [\mb{S}(\tau)\mb{f}]_1(\rho) = [\mb{S}_0(\tau)\mb{f}]_1 (\rho) + \frac{1}{2\pi i} \lim_{N\to \infty} \int_{\varepsilon -iN}^{\varepsilon+iN} e^{\lambda \tau} [\mb{R}_{\mb{L}}(\lambda)
    \mb{f} - \mb{R}_{\mb{L}_0}(\lambda)\mb{f}]_1 d\lambda.
\end{equation}
The reason for this is that the difference
$\mb{R}_{\mb{L}}-\mb{R}_{\mb{L}_0}$
exhibits improved decay properties with respect to the spectral parameter. In Sections \ref{section:InteriorEstimates} and \ref{section:StrichartzEstimatesExterior}, we prove Strichartz estimates for the semigroup on the interior and the exterior of the light cone, as well as the improved energy bound
\begin{equation*}
    \| \Sf(\tau)\mb{f}\|_{\mathcal{H}^2} \lesssim\| \mb{f}\|_{\mathcal{H}^2}.
\end{equation*}
 With the required Strichartz estimates at hand, we run a fixed point argument on the integral formulation of the equation(s), consequently proving the stability of the self-similar solutions in Section \ref{sec:NonlinearityControl}.

\subsubsection{The gluing problem} An important part of the analysis is to mitigate the following problem which shows up when we glue the resolvent together at $\rho=1$: This is possible as there is a degree of freedom which stems from the fact that for $\Re \lambda>0$ there exists a resonant solution, i.e. a function $u_0(\cdot;\lambda)\in H^2_{loc}(\R^6)\times H^1_{loc}(\R^6)$ that formally solves $(\lambda-\Lf)u_0(\cdot;\lambda)=0.$ Near $\rho=1$, $u_0$ takes the form
\begin{equation*}
    u_0(\rho;\lambda) = c_{0,1}(\lambda) u_1(\rho;\lambda) + c_{0,2}(\lambda) u_2(\rho;\lambda),
\end{equation*}
with $u_2(1;\lambda)=0$. Thus, if the coefficient $c_{0,1}(\lambda)$ vanishes at a certain spectral point $\lambda\in \C,$ then it is not clear how we can match the local resolvents at $\rho=1$. This is what we refer to as the matching problem. As we cannot exclude roots of $c_{0,1}$ on the imaginary axis, which is the contour that we want to push the integral in \eqref{Eq:outline int}, we have to use an interpolation argument: Using the analyticity of the coefficient $c_{0,1}(\lambda)$, we can choose a $\delta>0$ such that $c_{0,1}(\pm \delta +i\omega)\neq 0$, and define the resolvent along these lines. However, since the spectrum of the operator $\mb{L}$ is a subset of $\{z \in \C: \Re(z) \leqslant 0\}$ in the critical space $\dot{H}^2(\R^6) \times \dot{H}^1(\R^6)$, the problem to define the resolvent on the left of the imaginary axis, i.e. on the line $\Re(\lambda) = -\delta$ poses another issue. This is circumvented by defining the analogue of the semigroup $\mb{S}$ on $\dot{H}^1(\R^6) \times L^2(\R^6)$ and showing that the two semigroups agree for smooth functions that are compactly supported. Since the exterior region $\rho>1$ is unaffected by the matching problem, we only have to do this in the interior region $0<\rho<1$, whereas in the exterior region we can push the contour safely onto the imaginary axis. To derive the desired estimates in the interior regime, we decompose the semigroup as
\begin{equation*}
    \mb{Sf} = \mb{S} \kappa_2\mb{f} + \mb{S}(1-\kappa_2) \mb{f},
\end{equation*}
where $\kappa_2$ is a smooth cutoff whose support spans the domain $0<\rho\leqslant 1$ and prove weighted Strichartz estimates for the two pieces. The desired Strichartz estimates then follow by interpolation between the resulting weighted estimates. This is the content of Section \ref{section:InteriorEstimates}.


\section{Reformulation of the problem in forward self-similar coordinates}
\subsection{Cubic wave equation}
We introduce forward self-similar coordinates:
\begin{equation}
	\tau:=\tau(t) = \log(t), \quad \xi :=\xi(t,x)=\frac{x}{t},
\end{equation}
which map $\R_{\geqslant 1} \times \R^d$ to $\R_{\geqslant 0} \times \R^d$. 
The derivatives with respect to the new variables $\tau,\xi$ become:
\begin{equation}
	\partial_t = e^{-\tau}(\partial_{\tau}-\Lambda), \quad \partial_{x_i} = e^{-\tau}\partial_{\xi_i},
\end{equation}
Furthermore, we define
\begin{equation*}
	v_1(\tau,\xi):=tu(t,x) = e^{\tau}u(e^{\tau},e^{\tau}\xi), \quad v_2(\tau,\xi)=: t^2 \partial_t u(t,x) = (\partial_{\tau} -\Lambda-1)v_1(\tau,\xi),
\end{equation*}
where
\begin{equation*}
	\Lambda f(x) = x\cdot \nabla f(x)=	\sum_{i=1}^d x_i \partial_{x_i}f(x).
\end{equation*}

Consequently, \eqref{eq:cNLW} transforms into
\begin{equation}
	\label{eq:cNLWSimilarity}
	\left\{ \begin{array}{cl}
		\partial_{\tau}v_1 &= v_1 +v_2 +\Lambda v_1\\
		\partial_{\tau}v_2 &= 2v_2 + \Delta_{\xi}v_1 +\Lambda v_2 + v_1^3.
	\end{array} \right.
\end{equation}
For $\mb{v} = (v_1,v_2)$, we write the above as:
\begin{equation}
	\label{eq:AbstractEquation}
	\partial_{\tau}\mathbf{v}(\tau) = \tilde{\Lf}_0 \mathbf{v}(\tau) + \Nf(\mathbf{v}(\tau)),
\end{equation}
where
\begin{equation}
	\label{eq:L_0}
	\tilde{\Lf}_0\begin{bmatrix}
		v_1\\
		v_2
	\end{bmatrix} = \begin{bmatrix}
		\Lambda+1 &&1\\
		\Delta && \Lambda+2
	\end{bmatrix}
	\begin{bmatrix}
		v_1\\
		v_2
	\end{bmatrix},
\end{equation}
is the linear wave equation in forward self-similar coordinates, and the nonlinear term is defined as
\begin{equation*}
	\Nf(\mb{f}):=\begin{bmatrix}
		0\\
		f_1^3
	\end{bmatrix},
\end{equation*}
for $\mb{f}=(f_1,f_2)$. The initial data become
\begin{equation*}
	\mb{v}(0,\cdot):=\mathbf{v}_0(\cdot) =
	\begin{bmatrix}
		u(1,\cdot)\\
		\partial_t u(1,\cdot)
	\end{bmatrix}.
\end{equation*}

Let $\mb{u} :=(u_1^*, u_2^*)^t$ be such that $u_1^*$ is an admissible self-similar solution to \eqref{eq:cNLW}. In the new variables, after introducing the radial variable $\rho=|\xi|$, the self-similar solution becomes
\begin{equation*}
	\mb{u}^* = \begin{bmatrix}
		U(\rho)\\
		-U(\rho)-\rho U'(\rho)
	\end{bmatrix}.
\end{equation*}
Note that $\mb{u^*}$ is a static solution to \eqref{eq:cNLWSimilarity}. To analyse the stability of $\mathbf{u}^*$, for $\bm{\phi} = (\phi_1,\phi_2)$, we make the ansatz
\begin{equation*}
	\mb{v}(\tau) = \bm{\phi}(\tau)+\mb{u}^*.
\end{equation*}
Using the expansion
\begin{equation*}
	(u_1^*+\phi)^3 = (u_1^*)^3 + \phi^3 + 3u_1^*\phi^2 + 3(u_1^*)^2\phi,
\end{equation*}
we obtain the following equation for $\bm{\phi}=(\phi_1,\phi_2)$
\begin{equation}
	\label{eq:EquationForPerturbation}
	\partial_{\tau}\bm{\phi}(\tau) = \tilde{\Lf}\bm{\phi}(\tau) + \mb{N}(\bm{\phi}(\tau))
\end{equation}
where
\begin{equation}
	\label{eq:Linearised}
	\tilde{\Lf}\bm{\phi} = \mathbf{\tilde{\Lf}_0}\bm{\phi} + \Lf'\bm{\phi},\quad  \Lf'\bm{\phi}=\begin{bmatrix}
		0\\
		3(u_1^*)^2\phi_1
	\end{bmatrix},\quad \mb{N}(\bm{\phi}) = \begin{bmatrix}
		0\\
		\phi_1^3+ 3u_1^*\phi_1^2
	\end{bmatrix},
\end{equation}
The equation \eqref{eq:EquationForPerturbation} is the central subject of our analysis, and we would like to analyse the above evolution on the space
\begin{equation}
	\label{eq:CriticalSpace}
	\mathcal{H}^{\frac{d}{2}-1}:= \{ \mb{f} \in \dot{H}^{\frac{d}{2}-1}(\R^d) \times \dot{H}^{\frac{d}{2}-2}(\R^d): \mb{f} \text{ radial}\}
\end{equation}
which is the scaling critical space for the cubic nonlinear wave equation. In later parts, when we fix $d=6$, we shall denote the corresponding space $\mathcal{H}^2$  by only $\mathcal{H}$.

\subsection{The wave maps equation}
We make the same transformation and scaling as for the wave equation, namely:
\begin{equation*}
	\tau=\tau(t):=\log(t), \quad \rho=\rho(r,t):=\frac{r}{t}.
\end{equation*}
Define $w(\tau,\rho):=tu(t,r)$ to obtain that \eqref{eq:WaveMapsCR} transforms to:
\begin{equation*}
	(\rho^2-1) \partial_{\rho}^2w+\partial_{\tau}^2 w -2\rho \partial_{\tau}\partial_{\rho} w+\Big(4\rho-\frac{d+1}{\rho}\Big) \partial_{\rho}w -3\partial_{\tau}w +2w     + \frac{d-1}{2\rho^3}\big(\sin(2\rho w)-2\rho w\big) =0.
\end{equation*}
Now, we set, as before $h_1(\tau,\rho):=w(\tau,\rho)$ and $h_2(\tau,\rho):=(\partial_{\tau}-\rho\partial_{\rho}-1)h_1(\tau,\rho)$. Then, the resulting system of equations for $\mb{h}:=(h_1,h_2)$ is given by:
\begin{equation}
    \label{eq:WaveMapsCP}
	\partial_{\tau}\mb{h}=\tilde{\Lf}_0(\mb{h})(\tau) + \mb{\tilde{N}}(\mb{h}(\tau)),
\end{equation}
where $\tilde{\Lf}_0$ is the same operator as defined in  \eqref{eq:L_0} and the nonlinear term is given by
\begin{equation}
	\mb{\tilde{N}}(\mb{h})(\rho):=	\begin{bmatrix}
		0\\
		-\frac{d-1}{2\rho^3}\big(\sin(2\rho h_1)-2\rho h_1\big)
	\end{bmatrix}.
\end{equation}
Let $\mathbf{u}_{\mathrm{WM}}^* = (u_{\mathrm{WM}_1}^*, u_{\mathrm{WM}_2}^*)$ where $u_{\mathrm{WM}_1}^*$ is an admissible self-similar solution to \eqref{eq:WaveMapsCR}. We perturb $\mathbf{u}_{\mathrm{WM}}^* $ by $\bm{\phi}:=(\phi_1,\phi_2)$: 
\begin{equation}
\mb{h}(\tau)=\mathbf{u}_{\mathrm{WM}}^*+\bm{\phi}(\tau)
\end{equation}
and are led to the following equation for $\bm{\phi}$:
\begin{equation}
    \partial_{\tau} \bm{\phi}(\tau) = (\mb{\tilde{L}_0} + \mb{L'}_{\mathrm{WM}})\bm{\phi}(\tau) + \mb{N}_{\mathrm{WM}}(\mb{\phi}(\tau)),
\end{equation}
where $\mb{\tilde{L}_0}$ is as defined in \eqref{eq:L_0}, and
\begin{equation}
    \mb{L'}_{\mathrm{WM}}\bm{\phi} = \begin{bmatrix}
        0\\
        -\frac{d-1}{\rho^2}\big[\cos(2\rho u_{\mathrm{WM}_1}^*)-1\big]\phi_1
    \end{bmatrix}, 
    \end{equation}
    and
    \begin{equation}
        \mb{N}_{\mathrm{WM}}(\bm{\phi})=\begin{bmatrix}
        0\\
        -\frac{(d-1)}{2\rho^3} \big[\sin\big(2\rho (u_{\mathrm{WM}_1}^* + \phi_1)\big) - \sin(2\rho u_{\mathrm{WM}_1}^*) \big] +\frac{d-1}{\rho^2}\cos(2\rho u_{\mathrm{WM}_1}^*)\phi_1
    \end{bmatrix}.
\end{equation}

The above transformation of the equation enables us to handle the linear parts, namely the operators $\tilde{\Lf}_0$ and $\Lf'$ simultaneously for the cubic wave equation and the wave maps equations. The nonlinear terms will be separately handled in Section \ref{sec:NonlinearityControl}.

\begin{remark}
With the above transformation, we observe that the potential, namely $ -\frac{d-1}{\rho^2}\big[\cos(2\rho u_{\mathrm{WM}_1}^*)-1\big]$ is smooth for any admissible self-similar solution and decays like $\langle \rho\rangle^{-1}$. Moreover, the explicit self-similar solution from \eqref{eq:ExplicitU} for the cubic wave equation becomes
\begin{equation*}
	\mathbf{u}_{\mathrm{NLW}}^*=\begin{bmatrix}
		u_{\mathrm{NLW}_1}^*\\
		u_{\mathrm{NLW}_2}^*
	\end{bmatrix}=\begin{bmatrix}
		\frac{2\sqrt{2(d-1)(d-4)}}{d-4+3\rho^2}\\
        -2\sqrt{2(d-1)(d-4)}\frac{(d-4-3\rho^2}{(d-4+3\rho^2)^2}
	\end{bmatrix},
\end{equation*}
while that from \eqref{eq:WaveMapsSSS} becomes
\begin{equation*}
	\mathbf{u}_{\mathrm{WM}}^*=\begin{bmatrix}
		u_{\mathrm{WM}_1}^*\\
		u_{\mathrm{WM}_2}^*
	\end{bmatrix}=\begin{bmatrix}
		\frac{2}{\rho}\arctan\Big(\frac{\rho}{\sqrt{2}}\Big)\\
		-\frac{2\sqrt{2}}{2+\rho^2}
	\end{bmatrix}.
\end{equation*}
\end{remark}


\begin{definition}
    We call $u:[1,\infty) \times \R^d \to \C$ a solution to \eqref{eq:cNLW}  if the corresponding $\mb{v}:[0,\infty) \to \mathcal{H}^{\frac{d}{2}-1}$ is in $C([0,\infty);\mathcal{H}^{\frac{d}{2}-1})$ and satisfies:
    \begin{equation}
        \mb{v}(\tau) = \mb{S}(\tau) \mb{v}_0 + \int_0^\tau \mb{S}(\tau-\sigma) \mb{N}(\mb{v}(\sigma))d\sigma.
    \end{equation}
\end{definition}
A similar definition holds for \eqref{eq:WaveMapsCR}.


\subsection{The free wave evolution in forward similarity coordinates}
We equip $\tilde{\Lf}_0$ with a domain:
\begin{equation*}
	\label{eq:GeneralDomain}
	\mathcal{D}(\tilde{\Lf}_0):= \{ \mb{u} \in C_c^{\ceil{\frac{d}{2}-1}} (\R^d) \times C_c^{\ceil{\frac{d}{2}-2}} (\R^d): \mb{u} \text{ radial}\},
\end{equation*}
where $\ceil{x}$ denotes the greatest integer function of $x$. In particular, for $d=6$, 
\begin{equation}
	\mathcal{D}(\tilde{\Lf}_0) = \{\mb{u} \in C_{c}^2(\R^6) \times C_{c}^1(\R^6): \mb{u} \text{ radial}\}.
\end{equation}


\begin{lemma}
	\label{lemma:Dissipativity}
	Let $d\geqslant 5$ and $\mb{u}=(u_1,u_2) \in \mathcal{D}(\tilde{\Lf}_0)$. Then 
	\begin{equation}
		\label{eq:Dissipativity}
		\Re\langle \tilde{\Lf}_0 \mb{u},\mb{u}\rangle_{\mathcal{H}^s} \leqslant \big(s-\frac{d}{2}+1\big)\langle \mb{u},\mb{u}\rangle_{\mathcal{H}^s}.
	\end{equation}
	\begin{proof}
		Using the definition of the inner product on $\mathcal{H}^{\frac{d}{2}-1}$ and the operator $\tilde{\Lf}_0$, we obtain
		\begin{equation}
			\label{eq:dissipative}
			\begin{split}
				\Re(\langle \tilde{\Lf}_0 \mb{u}, \mb{u}\rangle_{\mathcal{H}^s}) &= \Re(\langle \Lambda u_1 + u_1 +u_2, u_1 \rangle_{\dot{H}^s} + \langle \Delta u_1 + \Lambda u_2 + 2u_2, u_2\rangle_{\dot{H}^{s-1}})\\
				&=\Re(\langle \Lambda u_1,u_1 \rangle_{\dot{H}^{s}} + \langle u_1,u_1\rangle_{\dot{H}^s} + \langle u_2,u_1\rangle_{\dot{H}^{s}} + \langle \Delta u_1 ,u_2\rangle_{\dot{H}^{s-1}} \\
				&\quad+\langle \Lambda u_2 ,u_2\rangle_{\dot{H}^{s-1}} + 2\langle u_2,u_2\rangle_{\dot{H}^{s-1}})\\
				&=\big(s-\frac{d}{2}+1\big) \big(\langle u_1,u_1\rangle_{\dot{H}^s} + \langle u_2, u_2\rangle_{\dot{H}^{s-1}}\big)\\
				&=\big(s-\frac{d}{2}+1\big) \langle \mb{u}, \mb{u}\rangle_{\mathcal{H}^s},
			\end{split}
		\end{equation}
		where we used that
		\begin{equation*}
			\Re\langle \Lambda f,f \rangle_{\dot{H}^s} = \Big(s-\frac{d}{2}\Big)\langle f,f\rangle_{\dot{H}^s}.
		\end{equation*}
	\end{proof}
    \end{lemma}
    We also conclude that the densely defined operator $\tilde{\Lf}_0$ is closable. We denote its closure by $(\mb{L}_0, \mathcal{D}(\mb{L}_0))$. Then, $(\mb{L}_0, \mathcal{D}(\mb{L}_0))$ is also dissipative and satisfies
\begin{equation}
	\label{eq:LDissipativityOfClosedOperator}
	\Re\langle \mb{L}_0 \mb{u},\mb{u}\rangle_{\mathcal{H}^{\frac{d}{2}-1}} \leqslant 0.
\end{equation}

For the linear wave wave equation on $\R^d$ we observe that for initial data  prescribed at time $t=1$, namely $(f(x), g(x))=(u(1,x), \partial_t u(1,x))$ the solution is given by
\begin{equation}
	u(t,x) = [\cos(|\nabla|(t-1))f](x) + \Big[\frac{\sin(|\nabla|(t-1))}{|\nabla|}g\Big](x).
\end{equation}

With this, we define
	\begin{equation*}
		\begin{split}
			(C(\tau)f)(\rho)&:=e^{\tau}[\cos(|\nabla|(e^{\tau}-1))f](e^{\tau}\rho),\\
			(S(\tau)f)(\rho)&:=e^{\tau} \Big[\frac{\sin(|\nabla|(e^{\tau}-1))}{|\nabla|}f\Big](e^{\tau} \rho).
		\end{split}
	\end{equation*}
	
	We have the following result:
	\begin{lemma}
		\label{lemma:DecayForPropagators}
		Let $f \in \mathcal{S}(\R^d)$, $s\geqslant 0$. Then, the following bounds hold for all $\tau \geqslant 0$:
		\begin{equation}
			\begin{split}
				\|C(\tau)f\|_{\dot{H}^s(\R^d)} &\lesssim e^{\tau(s-\frac{d}{2}+1)}\|f\|_{\dot{H}^s(\R^d)},\\
				\|S(\tau)f\|_{\dot{H}^s(\R^d)} &\lesssim e^{\tau(s-\frac{d}{2}+1)}\|f\|_{\dot{H}^{s-1}(\R^d)}.
			\end{split}
		\end{equation}
		\begin{proof}
			The proof follows immediately by taking the Fourier transform and using the definition of the $\dot{H}^s$ norms.
		\end{proof}
	\end{lemma}
	
	With the above results, we can conclude the following:

\begin{proposition}
\label{prop:ExplicitSolution}
For $\mb{f}=(f_1,f_2) \in \mc S(\R^d) \times \mc S(\R^d)$, $\tau \geqslant 0$ and $\xi \in \R^d$ let 
\begin{equation*}
	\mb{S}_0(\tau)(f_1,f_2)(\xi) := 
	\begin{bmatrix}
		e^{\tau} [\cos(|\nabla|(e^{\tau}-1))f_1](e^{\tau}\xi) + e^{\tau} \Big[\frac{\sin(|\nabla| (e^{\tau}-1))}{|\nabla|} f_2\Big](e^{\tau}\xi)\\
		-e^{2\tau}[\sin(|\nabla|(e^{\tau}-1))|\nabla|f_1](e^{\tau}\xi) + e^{2\tau} [\cos(|\nabla|(e^{\tau}-1))f_2](e^{\tau}\xi)
	\end{bmatrix}.
		\end{equation*}
		Then $(\mb{S}_0(\tau))_{\tau \geqslant 0}$ extends to a strongly continuous one parameter semigroup on $\mathcal{H}^{\frac{d}{2}-1}$ with
        \begin{equation*}
		\|\mb{S}_0(\tau)\mb{f}\|_{\mathcal{H}^{\frac{d}{2}-1}} \lesssim \|\mb{f}\|_{\mathcal{H}^{\frac{d}{2}-1}}.
	\end{equation*}
    \begin{proof}
By using the given explicit expression, the group property as well as strong continuity can be checked on $\mc S(\R^d) \times \mc S(\R^d)$. Furthermore, one can also verify that
\begin{equation*}
    \big(\partial_\tau -\Lambda -1)[\mb{S}_0(\tau)\mb{f}]_1 = [\mb{S}_0(\tau)\mb{f}]_2,
\end{equation*}
and the decay estimate follows from Lemma \ref{lemma:DecayForPropagators}.
\end{proof}
\end{proposition}
In Appendix \ref{appen:LPTheorem}, for $d=6$, we also provide an alternative argument using the Lumer--Phillips theorem.



\subsection{Strichartz estimates for the free semigroup}
\begin{proposition}
	Let $q \in [2,\infty]$ and $r\in [6,12]$ be such that
	\begin{equation}
    \label{eq:AdmissibleExponents}
		\frac{1}{q} + \frac{6}{r} = 1.
	\end{equation}
	Then, the estimates
	\begin{equation}
		\label{eq:FreeStrichartzEstimates}
		\begin{split}
			\| [\mb{S}_0(\tau)\mb{f}]_1 \|_{L_{\tau}^q(\R_+)L_{\rho}^r(\R^6)} &\lesssim \|\mb{f}\|_{\mathcal{H}^2},
            \\
			\Big \| \int_0^{\tau} [\Sf_0(\tau-\sigma)\mb{h}(\sigma)]_1d\sigma \Big\|_{L_{\tau}^q(\R_+)L_{\rho}^r(\R^6)} &\lesssim \|\mb{h}\|_{L_{\tau}^1(\R_+)\mathcal{H}^2},
            \\
			\| [\mb{S}_0(\tau)\mb{f}]_1 \|_{L_{\tau}^2(\R_+)\dot{W}_{\rho}^{1,4}(\R^6)} &\lesssim \|\mb{f}\|_{\mathcal{H}^2},\\
			\Big \| \int_0^{\tau} [\Sf_0(\tau-\sigma)\mb{h}(\sigma)]_1d\sigma \Big\|_{L_{\tau}^2(\R_+)\dot{W}_{\rho}^{1,4}(\R^6)} &\lesssim \|\mb{h}\|_{L_{\tau}^1(\R_+)\mathcal{H}^2}
            \\
			\| [\mb{S}_0(\tau)\mb{f}]_1 \|_{L_{\tau}^\infty(\R_+)\dot{H}_{\rho}^2(\R^6)} &\lesssim \|\mb{f}\|_{\mathcal{H}^2},
            \\
			\Big \| \int_0^{\tau} [\Sf_0(\tau-\sigma)\mb{h}(\sigma)]_1d\sigma \Big\|_{L_{\tau}^\infty(\R_+)\dot{H}_{\rho}^2(\R^6)} &\lesssim \|\mb{h}\|_{L_{\tau}^1(\R_+)\mathcal{H}^2},
		\end{split}
	\end{equation}
    hold for all $\mb{f} \in \mathcal{H}^2$ and $ \mb{h} \in L_{\tau}^1(\R_+)\mathcal{H}^2$.
	\begin{proof}
		The proof follows by reversing the coordinate transform previously introduced and standard Strichartz estimates for wave equations.
	\end{proof}
\end{proposition}

\section{The linearised evolution}
\label{section:LinearisedEvolution}
\subsection{The perturbed problem }
In this section, we deal with the operator $\Lf'$.
\begin{equation*}
\mathcal{D}(\Lf'):=\mathcal{D}(\tilde{\Lf}_0)
\end{equation*} In the first step, via the bounded perturbation theorem, we show that the closure of $\tilde{\Lf}$ generates a semigroup on $\mathcal{H}^{\frac{d}{2}-1}$. The potential $V_b$ corresponding to the admissible self-similar solutions $u^*$ for \eqref{eq:NLW} and \eqref{eq:WaveMapsCR} which behave like
\begin{equation*}
    |u^*(\rho)|\sim \frac{1}{(1+\rho^2)^{\frac{1}{2}}}
\end{equation*}
is denoted by $V_b$, and we note that 
\begin{equation*}
    V_b(\rho)\sim \frac{1}{\langle \rho \rangle^2}.
\end{equation*}

Recall that the potential  $V_{\mathrm{NLW}}$, corresponding to \eqref{eq:ExplicitU}, is given by
\begin{equation*}
	V_{\mathrm{NLW}}(\rho) := \frac{24(d-1)(d-4)}{(d-4+3\rho^2)^2}.
\end{equation*}
while for $\eqref{eq:WaveMapsSSS}$ we have
\begin{equation*}
	V_{\mathrm{WM}}(\rho):=\frac{48}{(\rho^2+2)^2}.
\end{equation*}

In the following, we shall deal with the potential $V_b$ which, consequently, also implies the results for the better-behaved potential $V_i, i \in \{\mathrm{NLW}, \mathrm{WM}\}$.
\begin{lemma}

	\label{lemma:Boundedness}
	Let $d\geqslant 5$ be even. Then the operator $\Lf'$ is bounded on $\mathcal{H}^{\frac{d}{2}-1}$.
    \end{lemma}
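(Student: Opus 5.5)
We need to show that $\phi_1 \mapsto V\phi_1$, with $V = 3(u_1^*)^2$ (respectively the wave maps potential), maps $\dot H^{\frac d2-1}_{rad}(\R^d)$ boundedly into $\dot H^{\frac d2-2}(\R^d)$. Since $\Lf'\bm\phi = (0, V\phi_1)$, this gives
\[
\|\Lf'\bm\phi\|_{\mathcal{H}^{\frac d2-1}} = \|V\phi_1\|_{\dot H^{\frac d2-2}} \lesssim \|\phi_1\|_{\dot H^{\frac d2-1}} \leqslant \|\bm\phi\|_{\mathcal{H}^{\frac d2-1}}
\]
on the dense domain $\mathcal{D}(\Lf')$, and $\Lf'$ then extends uniquely to a bounded operator. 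Set $k := \frac d2-2 \geqslant 1$, an integer since $d$ is even. Then $\|g\|_{\dot H^k} \simeq \|\nabla^k g\|_{L^2}$, with $\nabla^k$ meaning $\Delta^{k/2}$ or $\nabla\Delta^{(k-1)/2}$; this avoids fractional calculus.

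\textbf{Step 1 (symbol bounds on $V$).} Write $V(\xi) = V_b(|\xi|)$ with $V_b$ smooth on $[0,\infty)$. We need $V_b$ to be an even smooth function of $\rho$: the profile comes from a smooth radial solution, and for wave maps $\cos(2\rho U)$ is even in $\rho$ once $U$ is. Then $V$ is smooth on $\R^d$. We also need the symbol-type bounds
\[
|\partial^\alpha V(\xi)| \lesssim \langle \xi\rangle^{-2-|\alpha|}, \qquad |\alpha| \leqslant k.
\]
For $\alpha = 0$ these follow from $V_b \sim \langle\rho\rangle^{-2}$. For the derivatives we use Lemma \ref{Le:DecaySelfSim}: it gives $U' = O(\rho^{-2})$. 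For higher derivatives we differentiate the profile ODE (a second-order ODE regular at $\rho=\infty$ in the variable $1/\rho$, apart from the light-cone singularity at $\rho = 1$, which is irrelevant for large $\rho$) and argue inductively that $U^{(j)} = O(\rho^{-1-j})$. For the explicit potentials $V_{\mathrm{NLW}}$ and $V_{\mathrm{WM}}$ these bounds are immediate. I expect this step, namely justifying the derivative decay for a general admissible profile, in particular for wave maps, where no decay is assumed and one must invoke the referenced Lemma \ref{lem: wmadmissible}, to be the main technical obstacle. Everything else is routine.

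\textbf{Step 2 (Leibniz and Hardy).} By the Leibniz rule,
\[
\nabla^k(V\phi) = \sum_{j=0}^k c_j\, \nabla^{k-j}V \cdot \nabla^j \phi .
\]
By Step 1, each coefficient satisfies $|\nabla^{k-j}V(\xi)| \lesssim |\xi|^{-2-(k-j)}$. Hence
\[
\|\nabla^{k-j}V\,\nabla^j\phi\|_{L^2} \lesssim \big\| |\xi|^{-(k+2-j)}\nabla^j\phi \big\|_{L^2} .
\]
We apply Hardy's inequality $\||x|^{-m}g\|_{L^2(\R^d)} \lesssim \|\nabla^m g\|_{L^2}$, valid for integer $0 \leqslant m < d/2$, with $m = k+2-j$ and $g = \nabla^j\phi$. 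Here $m \leqslant k+2 = \frac d2$, so we need care in the endpoint term $j = 0$, where $m = d/2$ and Hardy fails.

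\textbf{Step 3 (the endpoint $j=0$).} For $j = 0$ we do not use the full decay. Instead we use the weaker bound $|\nabla^k V| \lesssim |\xi|^{-(k+1)}$, which holds since $\langle\xi\rangle^{-(k+2)} \leqslant |\xi|^{-(k+1)}$, so the $\langle\xi\rangle$ in Step 1 costs nothing. Hardy with $m = k+1 = \frac d2 - 1 < \frac d2$ then gives
\[
\|\nabla^k V\,\phi\|_{L^2} \lesssim \||\xi|^{-(k+1)}\phi\|_{L^2} \lesssim \|\phi\|_{\dot H^{\frac d2-1}} .
\]
The same trick works uniformly: using $|\nabla^{k-j}V| \lesssim |\xi|^{-(k-j+1)}$ for every $j$, each term is bounded via Hardy with $m = k+1-j < d/2$, applied to $g = \nabla^j\phi$, by $\|\nabla^{k+1}\phi\|_{L^2} = \|\phi\|_{\dot H^{\frac d2-1}}$. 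Summing over $j$ gives the bound on $C_{c,rad}^\infty$, and density concludes the proof. Radiality is not even needed here.
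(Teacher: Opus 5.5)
Your argument is correct and is essentially the paper's proof. The paper simply asserts $|V_b^{(k)}(\rho)|\lesssim\langle\rho\rangle^{-k-2}$ and appeals to the integer-order norm equivalence plus Hardy's inequality, which is exactly your Leibniz--Hardy computation; your Step 1 justification of these bounds via the profile ODE goes beyond what the paper writes. One correction: in Step 2, using the full decay $|\xi|^{-(k+2-j)}$ would put you in $\dot H^{d/2}$ for every $j$, not only fail at $j=0$, so the actual proof is the uniform choice $m=k+1-j$ from Step 3.
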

	\begin{proof}
    Note that 
\begin{equation}
	\label{eq:DerivativeDecayPotential}
	|V_b^{(k)}(\rho)| \lesssim_k  \frac{1}{\langle \rho \rangle^{k+2}}
\end{equation}
for all $k\in \mathbb{N}_0$.
Hence, the result follows immediately from the equivalence of norms: 
		\begin{equation*}
			\|f\|_{\dot{H}^s(\R^d)} \simeq \sum_{|\alpha|=s} \|\partial^{\alpha}f\|_{L^2(\R^d)},\quad s \in \N,
		\end{equation*}
and Hardy's inequality for $s<\frac{d}{2}$.
	\end{proof}

In the following, by showing that the operator $\Lf'$ is in fact compact, we conclude that the essential spectrum of $\mb{L}_0+\Lf'$ is same as that of $\mb{L}_0$.

\begin{lemma}
	Let $d\geqslant 5$ be even. Then, the operator $\Lf':\mathcal{D}(\Lf')\subseteq \mathcal{H}^{\frac{d}{2}-1} \to \mathcal{H}^{\frac{d}{2}-1}$ is compact.
	\begin{proof}
		To show the compactness of the mapping defined by $\Lf'$, we show that the image $(0,Vu_1)^T$ is totally bounded in $\mathcal{H}^{\frac{d}{2}-1}$ for $\mb{u}$ bounded in $\mathcal{H}^{\frac{d}{2}-1}$. More precisely, we shall show that the set
		\begin{equation*}
			K_{\alpha} = \{ \partial^{\alpha} (V_bu_1): u_1 \in C_{c,rad}^2, \|u_1\|_{\dot{H}^{\frac{d}{2}-1}} \leqslant C \}
		\end{equation*}
	is totally bounded in $L^2(\R^d)$ for $|\alpha| =\frac{d}{2}-2$. To this end, we employ the Rellich--Kondrachov theorem. First, we first show that $K_{\alpha}$ is bounded in $H^1(\R^d)$. It suffices to control the terms: $\| \partial^{\alpha}(Vu_1)\|_{L^2(\R^d)}$ and $\| \partial^{\alpha+1}(Vu_1)\|_{L^2(\R^d)}$. The former term can be controlled exactly as in Lemma \ref{lemma:Boundedness}. For the latter, we observe using \eqref{eq:DerivativeDecayPotential}
	\begin{equation*}
		\| u_1\partial^{\alpha+1}V_b\|_{L^2(\R^d)} \lesssim \| | \cdot |^{-(\frac{d}{2}-1)}u_1\|_{L^2(\R^d)} \Big \| \frac{|\cdot|^{\frac{d}{2}-1}} {\langle \cdot \rangle^{\frac{d}{2}+1}} \Big \|_{L^{\infty}(\R^d)} \lesssim \|u_1\|_{\dot{H}^{\frac{d}{2}-1}(\R^d)}.
	\end{equation*}
Furthermore, 
\begin{equation*}
	\| V_b \partial^{\alpha+1}u_1\|_{L^2(\R^d)} \lesssim  \| V_b\|_{L^{\infty}(\R^d)} \|\partial^{\alpha+1}u_1\|_{L^2(\R^d)} \lesssim\| u_1\|_{\dot{H}^{\frac{d}{2}-1}(\R^d)}.
\end{equation*}
Alternatively, we control for $|\beta| + |\gamma| = |\alpha|+1=\frac{d}{2}-1$, the following term:
\begin{equation*}
	\|\partial^{\beta}V_b \partial^{\gamma}u_1 \|_{L^2(\R^d)} \lesssim \| |\cdot|^{-(\frac{d}{2}-1)+|\gamma|} \partial^{\gamma}u_1 \|_{L^2(\R^d)} \Big \| \frac{|\cdot|^{\frac{d}{2}-1-|\gamma|}} {\langle \cdot \rangle^{|\beta|+4}} \Big\|_{L^{\infty}(\R^d)} \lesssim \|u_1\|_{\dot{H}^{\frac{d}{2}-1}(\R^d)}.
\end{equation*}
 Exploiting the decay of $V$ and its derivatives, we use H\"older's inequality and Sobolev embedding to infer tightness as follows:  for $|\beta| + |\gamma|=|\alpha|=\frac{d}{2}-2$, we have
\begin{equation*}
	\begin{split}
	\| \partial^{\alpha}(Vu_1)\|_{L^2(B_R^c)} = \| \partial^{\beta} V \partial^{\gamma}u_1 \|_{L^2(B_R^c)} &\lesssim \| \langle \cdot \rangle^{-|\beta|-2} \partial^{\gamma}u_1\|_{L^2(B_R^c)}\\
	&\lesssim \| \langle \cdot \rangle^{-|\beta|-2} \|_{L^\frac{2d}{d-2-2|\gamma|}(B_R^c)} \| \partial^{\gamma}u_1 \|_{L^{\frac{d}{|\gamma|+1}}(B_R^c)}\\
	&\lesssim R^{-\varepsilon}\|u_1\|_{\dot{H}^{\frac{d}{2}-1}(\R^d)},
	\end{split}
\end{equation*}
for $\varepsilon >0$. The above can be imitated to bound $\| \partial^{\alpha+1}(Vu_1)\|_{L^2(B_R^c)}$. Indeed, we have for $|\beta|+|\gamma|=|\alpha|+1 = \frac{d}{2}-1$:
\begin{equation*}
	\begin{split}
	\| \partial^{\alpha+1}(V_bu_1)\|_{L^2(B_R^c)} = \|\partial^{\beta}V \partial^{\gamma}u_1\|_{L^2(B_R^c)} &\lesssim \| \langle \cdot \rangle^{-|\beta|-2}\partial^{\gamma}u_1\|_{L^2(B_R^c)} \\
	&\lesssim \| \langle \cdot \rangle^{-|\beta|-2}\|_{L^{\frac{2d}{d-2-2|\gamma|}}(B_R^c)} \| \partial^{\gamma}u_1\|_{L^{\frac{d}{|\gamma|+1}}(B_R^c)}\\
	&\lesssim R^{-\varepsilon'}\|u_1\|_{\dot{H}^{\frac{d}{2}-1}(\R^d)}.
	\end{split}
\end{equation*}
	\end{proof}
\end{lemma}

As a consequence of the bounded perturbation, we conclude that the operator $\tilde{\Lf}$ is closable with its closure being
\begin{equation*}
	\mb{L}  :=\mb{L}_0 + \mb{L}' , \quad \mathcal{D}(\mb{L}) = \mathcal{D}(\mb{L}_0).
\end{equation*}
This is summarised in the following:

\begin{proposition}
Let $d\geqslant 5$. Then, the operator $\mb{L}: \mathcal{D}(\mb{L})\to \mathcal{H}^{\frac{d}{2}-1}$ generates a strongly continuous semigroup $(\mb{S}(\tau))_{\tau \geqslant 0}$ on $\mathcal{H}^{\frac{d}{2}-1}$.
\end{proposition}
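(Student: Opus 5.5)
The plan is to combine the generation result for the free operator with the bounded perturbation theorem. By Proposition \ref{prop:ExplicitSolution}, $(\mb{S}_0(\tau))_{\tau\geqslant 0}$ is a strongly continuous semigroup on $\mathcal{H}^{\frac{d}{2}-1}$ with $\|\mb{S}_0(\tau)\|\lesssim 1$. Its generator coincides with $(\mb{L}_0,\mathcal{D}(\mb{L}_0))$, the closure of $\tilde{\Lf}_0$. By Lemma \ref{lemma:Boundedness}, $\Lf'$ extends to a bounded operator on all of $\mathcal{H}^{\frac{d}{2}-1}$; we keep the notation $\Lf'$ for this extension. The bounded perturbation theorem then says that $\mb{L}_0+\Lf'$ with domain $\mathcal{D}(\mb{L}_0)$ generates a strongly continuous semigroup $\mb{S}(\tau)$. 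This semigroup obeys the crude bound $\|\mb{S}(\tau)\|\leqslant M e^{M\|\Lf'\|\tau}$, where $M$ is the uniform bound for $\mb{S}_0$.

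The one point that needs care is identifying the generator of $\mb{S}_0$ with $\mb{L}_0$. I would do this by checking two things.

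First, the generator extends $\tilde{\Lf}_0$. Take $\mb{u}\in\mathcal{D}(\tilde{\Lf}_0)$, or first a radial Schwartz pair. The explicit formula in Proposition \ref{prop:ExplicitSolution} can be differentiated at $\tau=0$. This gives $\partial_\tau[\mb{S}_0(\tau)\mb{u}]_1|_{\tau=0}=(\Lambda+1)u_1+u_2$, consistent with the identity $(\partial_\tau-\Lambda-1)[\mb{S}_0\mb{u}]_1=[\mb{S}_0\mb{u}]_2$. Similarly, $\partial_\tau[\mb{S}_0(\tau)\mb{u}]_2|_{\tau=0}=\Delta u_1+(\Lambda+2)u_2$. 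The difference quotients converge in $\mathcal{H}^{\frac{d}{2}-1}$, by dominated convergence on the Fourier side. This works for $C^2_c\times C^1_c$ data as well, since $\mathcal{D}(\tilde{\Lf}_0)\subset \mathcal{H}^{\frac d2-1}$ and the required norms are finite.

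Second, $\mathcal{D}(\tilde{\Lf}_0)$ is a core for the generator. It is dense and it is invariant under $\mb{S}_0(\tau)$ up to the Schwartz class: finite speed of propagation combined with the rescaling keeps compact support, and smoothness is preserved. A core theorem (Engel--Nagel, Prop.\ II.1.7), applied to a dense, $\mb{S}_0$-invariant subspace of the domain, then shows that the closure of $\tilde{\Lf}_0$ equals the generator. If one prefers to avoid the core argument, the Lumer--Phillips route works instead. Lemma \ref{lemma:Dissipativity} gives dissipativity of $\tilde{\Lf}_0$ at $s=\frac d2-1$. One then shows that the range of $\lambda-\tilde{\Lf}_0$ is dense for some $\lambda>0$ by solving the resolvent ODE; this is the appendix argument mentioned after Proposition \ref{prop:ExplicitSolution}.

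Finally, $\tilde{\Lf}=\tilde{\Lf}_0+\Lf'$ on $\mathcal{D}(\tilde{\Lf}_0)$ and $\Lf'$ is bounded. Hence the closure of $\tilde{\Lf}$ is $\mb{L}_0+\Lf'=\mb{L}$ with $\mathcal{D}(\mb{L})=\mathcal{D}(\mb{L}_0)$, which is the stated generator. The main obstacle is only the domain-identification step. Everything else is a direct invocation of the bounded perturbation theorem together with Lemma \ref{lemma:Boundedness}.
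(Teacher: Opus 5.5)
Your main line is the paper's own argument. The paper obtains the proposition in one step from the bounded perturbation theorem: $\mb{L}_0$ generates $\mb{S}_0$ (Proposition \ref{prop:ExplicitSolution}, or the Lumer--Phillips argument of Appendix \ref{appen:LPTheorem}), and $\Lf'$ is bounded by Lemma \ref{lemma:Boundedness}, so $\mb{L}=\mb{L}_0+\Lf'$ on $\mathcal{D}(\mb{L}_0)$ is a generator. What you add is an explicit identification of the generator of the semigroup given by the formula in Proposition \ref{prop:ExplicitSolution} with the closure $\mb{L}_0$, which the paper leaves implicit. The core-theorem route is a good way to do this, and unlike the appendix, which solves the resolvent ODE only for $d=6$, it works for every $d\geqslant 5$.

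However, one claim in that step is not correct: that the differentiation at $\tau=0$ ``works for $C^2_c\times C^1_c$ data as well, since the required norms are finite''. For such $\mb{u}$, the vector $\tilde{\Lf}_0\mb{u}$ need not lie in $\mathcal{H}^{\frac d2-1}$.
\begin{itemize}
\item Counterexample: take $d=6$ and $\mb{u}=(0,u_2)$ with $u_2(x)=\chi(|x|)(|x|-1)_+^{3/2}$, where $\chi$ is a compactly supported smooth cutoff equal to $1$ near $1$. Then $u_2$ is radial and lies in $C^1_c$. But $u_2\notin\dot H^2(\R^6)$, because its second radial derivative behaves like $(|x|-1)_+^{-1/2}$, which is not square integrable near $|x|=1$.
\item Consequence: the first component of $h^{-1}(\mb{S}_0(h)\mb{u}-\mb{u})$ converges to $u_2$ in $\dot H^1(\R^6)$, hence in distributions. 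So it cannot converge in $\dot H^2(\R^6)$, and $\mb{u}$ is not in the domain of the generator.
\item Invariance also fails: $C^2_c\times C^1_c$ is not invariant under $\mb{S}_0(\tau)$. The wave propagator does not preserve $C^k$ regularity, since a singularity on a sphere is amplified when it focuses at the origin. So the core theorem cannot be applied to this space either.
\end{itemize}

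The problem originates in the paper's definition of $\mathcal{D}(\tilde{\Lf}_0)$, and the fix is simple. Define $\tilde{\Lf}_0$ on $C^\infty_{c,rad}(\R^d)\times C^\infty_{c,rad}(\R^d)$, as the paper itself does for $\Lf_{0,1}$ in Section \ref{section:InteriorEstimates}. This space has the three properties the core theorem needs:
\begin{itemize}
\item it is dense in $\mathcal{H}^{\frac d2-1}$, because $\frac d2-1<\frac d2$;
\item it lies in the domain of the generator, and there the generator equals $\tilde{\Lf}_0$ by your Fourier-side computation;
\item it is invariant under $\mb{S}_0(\tau)$, by finite speed of propagation and the rescaling.
\end{itemize}
Engel--Nagel, Prop.\ II.1.7, then identifies the closure with the generator, and the rest of your proof goes through unchanged.
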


\subsection{Spectral analysis}
In this section, we show that the unstable point spectrum of the generator $\mb{L}$ is empty. We remark that the eigenvalue from translation symmetry of the problem is $\lambda=-1$ and hence, does not cause any instability.

\begin{proposition}
    \label{lemma:PointSpectrumOfL}
	Let $d\geqslant 6$ be even. Then, for the operator 
	\begin{equation*}
		\mb{L} :\mathcal{D}(\mb{L}) \subseteq \mathcal{H}^{\frac{d}{2}-1} \to \mathcal{H}^{\frac{d}{2}-1},
	\end{equation*}
we have
\begin{equation}
	\label{eq:PointSpectrumOfL}
	\sigma(\mb{L}) \cap H^{+} = \emptyset
\end{equation}
where $H^+:= \{ \lambda \in \C: \Re\lambda >  0\}$. Moreover, $\mb{L}$ does not have eigenvalues with $\Re \lambda = 0$. 

\begin{proof}
We have $\mb{L}=\mb{L}_0+\mb{L}'$ where $\mb{L}_0$ is a dissipative operator and $\mb{L}'$ a compact operator. From the spectral theory of compact operators, we have
\begin{equation*}
	\sigma_{\mathrm{ess}}(\mb{L}) = \sigma_{\mathrm{ess}}(\mb{L}_0+\mb{L}') = \sigma_{\mathrm{ess}}(\mb{L}_0)\subseteq \{z\in \C: \Re z\leqslant 0\}.
\end{equation*}
For the point spectrum, we argue via contradiction. Assume that there exists an eigenvalue $\lambda \in \sigma(\mb{L})$  with $\mb{L} \mb{u} = \lambda \mb{u}$ and $\Re \lambda \geqslant  0$. Using the definition of the operator $\mb{L}$, we have
	\begin{equation*}
		\begin{split}
			u_1''(\rho) + \frac{d-1}{\rho}u_1'(\rho) + \rho u_2'(\rho) +2u_2(\rho) +V_b(\rho)u_1(\rho) &= \lambda u_2(\rho),\\
			u_2(\rho) &= (\lambda-1)u_1(\rho) -\rho u_1'(\rho).
		\end{split}
	\end{equation*}
 The above translates to the following ODE for the radial representative of $u_1$, which we still denote by $u_1$:
	\begin{equation}
		\label{eq:FullODE}
		(1-\rho^2)u_1''(\rho) + \Big( \frac{d-1}{\rho} + 2(\lambda-2)\rho \Big)u_1'(\rho) -\big( (\lambda-1)(\lambda-2)-V_b(\rho))u_1(\rho) = 0.
	\end{equation}
Since we are concerned with the global behaviour, we analyse the eigenvalue equation at $\infty$. Using the transformation $s=\frac{1}{\rho}$, the eigenvalue equation becomes
\begin{equation}
	\label{eq:EValueEquationAtInfty}
	s^2(s^2-1)u_1''(s) + \big(2s(s^2-1)-(d-1)s^3-2(\lambda-2)s\big)u_1'(s) - \Big((\lambda-1)(\lambda-2)-V_b\big(\frac{1}{s}\big)\Big)u_1(s) = 0.
\end{equation}
Computing the Frobenius indices and translating back show that a general solution of Eq.~\eqref{eq:FullODE} is given by 
\begin{equation*}
	u_1 = \alpha_1 v_1 + \alpha_2 v_2,
\end{equation*}
where
\begin{equation*}
	v_1(\rho) =	\rho^{\lambda-2}h_1\Big(\frac{1}{\rho}\Big), \quad v_2(\rho) = C\rho^{\lambda-2}h_1\Big(\frac{1}{\rho}\Big)\ln\Big(\frac{1}{\rho}\Big) + \rho^{\lambda-1}h_2\Big(\frac{1}{\rho}\Big),
\end{equation*}
for a constant $C$ and analytic functions $h_1$ and $h_2$ in a neighbourhood of infinity. We discuss several possibilities:\\
\textbf{(i)} $\alpha_1= 0, \alpha_2 \neq 0$: In the subcase $C=0$, the solution reads
\begin{equation*}
	u_1(\rho) = \alpha_2 \rho^{\lambda-1}h_2\Big(\frac{1}{\rho}\Big)
\end{equation*}
and using the embedding $H^{\frac{d}{2}-1}(\R^d) \hookrightarrow L^d(\R^d)$, we conclude that for $u_1$ to belong to $L^d(\R^d)$, one must have $\Re \lambda <0$.\\
In the subcase $C\neq 0$, the same conclusion as above still holds owing to the term with the factor $\rho^{\lambda-2}$.\\
\noindent \textbf{(ii)} $\alpha_2=0, \alpha_1\neq 0$: Here, we have to investigate 
\begin{equation*}
	u_1(\rho) = \rho^{\lambda-2}h_1\Big(\frac{1}{\rho}\Big)
\end{equation*}
and its second component $u_2$ which is given by 
\begin{equation}
\label{Eq:eig secondcomp}
	u_2(\rho) = (\lambda-1)u_1(\rho) - \rho u_1'(\rho) = \rho^{\lambda-2}h_1\Big(\frac{1}{\rho}\Big) + \rho^{\lambda-3}h_1'\Big(\frac{1}{\rho}\Big).
\end{equation}
which decays of order $\lambda-2$ at the least, and we conclude the proof.
\end{proof}
\end{proposition}

Next, we prove the following:
\begin{lemma}
	Let $d=6$. For every $\varepsilon>0$, there exist constants $C_{\varepsilon}, K_{\varepsilon}>0$ such that
	\begin{equation}
		\label{eq:BoundedResolvent}
		\| \Rf_\mb{L}(\lambda)\|_{\mathcal{H}^2} \leqslant C_{\varepsilon},
	\end{equation}
for all $\lambda \in \C$ which satisfy $|\lambda| \geqslant K_{\varepsilon}$ and $\Re \lambda \geqslant \varepsilon$.
\begin{proof}
	Since the free semigroup $\mb{S}_0(\tau)$ satisfies
	\begin{equation*}
		\| \mb{S}_0(\tau)\|_{\mathcal{H}^2} \leqslant 1,
	\end{equation*}
we have
\begin{equation}
	\|\Rf_{\mb{L}_0}(\lambda)\|_{\mathcal{H}^2} \leqslant \frac{1}{\Re \lambda}, \quad \text{for all } \Re \lambda >0.
\end{equation}
We have
\begin{equation*}
	\lambda -\mb{L} = \lambda - \mb{L}_0-\mb{L}' = (\If-\mb{L}'\Rf_{\mb{L}_0}(\lambda)) (\lambda -\mb{L}_0)
\end{equation*}
which implies that $(\lambda -\mb{L})^{-1}$ exists if and only if $(\If - \mb{L}'\Rf_{\mb{L}_0}(\lambda))^{-1}$ exists. Let $\Rf_{\mb{L}_0}(\lambda)\mb{f} = \mb{u}$, then
\begin{equation}
	\label{eq:equations}
	\begin{split}
	u_2(\rho) &= (\lambda-1)u_1(\rho) - \rho u_1'(\rho) - f_1(\rho),\\
	\end{split}
\end{equation}
Now, using the above expression, the decay of the potential $V_b$ and Hardy's inequality, we obtain
\begin{equation*}
	\begin{split}
		\| \mb{L}'\Rf_{\mb{L}_0}(\lambda)\mb{f}\|_{\mathcal{H}^2}  = \| \mb{L}'\mb{u}\|_{\mathcal{H}^2}  = \|V_bu_1\|_{\dot{H}^1} &\lesssim \frac{1}{|\lambda-1|} \Big( \|V_bu_2\|_{\dot{H}^1} + \| |\cdot|V_bu_1'\|_{\dot{H}^1} + \|V_bf_1\|_{\dot{H}^1} \Big)\\
		&\lesssim \frac{1}{|\lambda-1|} \Big( \|\mb{u}\|_{\mathcal{H}^2} + \|\mb{f}\|_{\mathcal{H}^2} \Big)\\
		&=\frac{1}{|\lambda-1|} \Big( \|\Rf_{\mb{L}_0}(\lambda)\mb{f}\|_{\mathcal{H}^2} + \|\mb{f}\|_{\mathcal{H}^2} \Big)
	\end{split}
\end{equation*}
and we conclude 
\begin{equation*}
	\|\mb{L}'\Rf_{\mb{L}_0}(\lambda)\|_{\mathcal{H}^2} \lesssim \frac{1}{|\lambda-1|}.
\end{equation*}
Thus, for $|\lambda|$ large enough, we have
\begin{equation}
	\| \mb{L}' \Rf_{\mb{L}_0}(\lambda) \|_{\mathcal{H}^2} <1.
\end{equation}
With this we conclude that the Neumann series for $(\If - \mb{L}'R_{\mb{L}_0}(\lambda))^{-1}$ converges and
\begin{equation*}
	\| \Rf_\mb{L}(\lambda)\|_{\mathcal{H}^2} = \| \Rf_{\mb{L}_0}(\lambda) (\If-\mb{L}'\Rf_{\mb{L}_0}(\lambda))^{-1} \|_{\mathcal{H}^2} \leqslant \| \Rf_{\mb{L}_0}(\lambda)\|_{\mathcal{H}^2} \sum_{k=0}^{\infty}\| \mb{L}'\Rf_{\mb{L}_0}(\lambda)\|_{\mathcal{H}^2}^k \leqslant C_{\varepsilon}.
\end{equation*}
This finishes the proof.
\end{proof}
\end{lemma}

The following readily follows from Gearhart--Pr\"uss theorem:
\begin{lemma}
	For every $\varepsilon >0$, there exists $C_{\varepsilon} >0$ such that
	\begin{equation}
		\| \Sf(\tau)\mb{f}\|_{\mathcal{H}^2} \leqslant C_{\varepsilon}e^{\varepsilon\tau}\| \mb{f}\|_{\mathcal{H}^2}
	\end{equation}
for all $\mb{f} \in \mathcal{H}^2$ and all $\tau \geqslant 0$. 
\end{lemma}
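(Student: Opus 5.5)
We will apply the Gearhart--Prüss theorem on the Hilbert space $\mathcal{H}^2$ to the rescaled semigroup $e^{-\varepsilon\tau}\Sf(\tau)$, whose generator is $\Lf-\varepsilon$. The Gearhart--Prüss theorem states that a $C_0$-semigroup on a Hilbert space is uniformly bounded provided the closed right half-plane $\{\Re\lambda>0\}$ lies in the resolvent set of the generator and the resolvent is uniformly bounded there. Applied to $\Lf-\varepsilon$, it therefore suffices to show that $\{\lambda:\Re\lambda>\varepsilon/2\}\subseteq\rho(\Lf)$ and that
\begin{equation*}
\sup_{\Re\lambda\geqslant \varepsilon/2}\|\Rf_{\Lf}(\lambda)\|_{\mathcal{H}^2}<\infty .
\end{equation*}
This then gives $\|e^{-\varepsilon\tau}\Sf(\tau)\|\leqslant C_\varepsilon$, which is the claim (after renaming $\varepsilon$).

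The first step is to establish that the half-plane lies in the resolvent set. By Proposition~\ref{lemma:PointSpectrumOfL}, $\sigma(\Lf)\cap H^+=\emptyset$. The argument there is that $\Lf'$ is compact, so the part of $\sigma(\Lf)$ in $H^+$ can only consist of eigenvalues, and these are excluded. Hence $\Rf_{\Lf}(\lambda)$ exists for every $\Re\lambda>0$ and is analytic there.

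The second step is to split the region $\{\Re\lambda\geqslant\varepsilon/2\}$ into two parts.
\begin{itemize}
\item On the part where $|\lambda|\geqslant K_{\varepsilon/2}$, the preceding lemma (estimate \eqref{eq:BoundedResolvent}) gives the bound $C_{\varepsilon/2}$ directly.
\item On the remaining part, $\{\varepsilon/2\leqslant\Re\lambda,\ |\lambda|\leqslant K_{\varepsilon/2}\}$, the set is compact and contained in $\rho(\Lf)$. The map $\lambda\mapsto\|\Rf_\Lf(\lambda)\|$ is continuous there, so it is bounded on this set.
\end{itemize}
Combining the two parts gives a uniform resolvent bound on $\{\Re\lambda\geqslant\varepsilon/2\}$.

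The third step is to invoke the Gearhart--Prüss theorem for the generator $\Lf-\varepsilon/2$. The right half-plane $\{\Re\mu>0\}$ for this generator corresponds to $\{\Re\lambda>\varepsilon/2\}$, where we have just shown the resolvent is uniformly bounded. This yields $\|\Sf(\tau)\|\leqslant C e^{\varepsilon\tau/2}\leqslant Ce^{\varepsilon\tau}$.

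The main obstacle is the first step, namely that the spectrum of $\Lf$ in $H^+$ is purely discrete point spectrum, so that excluding eigenvalues really excludes all spectrum there. This rests on the compactness of $\Lf'$ together with $\sigma(\Lf_0)\subseteq\{\Re z\leqslant0\}$, both of which were established earlier. The identity $\lambda-\Lf=(\If-\Lf'\Rf_{\Lf_0}(\lambda))(\lambda-\Lf_0)$ makes this precise. Since $\Lf'\Rf_{\Lf_0}(\lambda)$ is compact, Fredholm's alternative shows that $\If-\Lf'\Rf_{\Lf_0}(\lambda)$ is invertible unless $1$ is an eigenvalue of $\Lf'\Rf_{\Lf_0}(\lambda)$. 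Such an eigenvalue would produce an eigenvector of $\Lf$ with eigenvalue $\lambda$, which is ruled out. Everything else in the argument is routine.
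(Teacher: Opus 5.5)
Your argument is correct and is essentially the paper's: the paper only asserts that the bound follows from the Gearhart--Pr\"uss theorem, and your split of $\{\Re\lambda\geqslant\varepsilon/2\}$ into a large-$|\lambda|$ region (handled by the preceding resolvent lemma) and a compact remainder (handled by continuity of the resolvent together with Proposition~\ref{lemma:PointSpectrumOfL}) is exactly the verification behind that assertion. Your use of the factorization $\lambda-\Lf=(\If-\Lf'\Rf_{\Lf_0}(\lambda))(\lambda-\Lf_0)$ with the Fredholm alternative, to see that only eigenvalues must be excluded in $\{\Re\lambda>0\}$, is a slightly cleaner justification than the paper's essential-spectrum remark, but it rests on the same cited proposition.
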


However, the above estimate on the semigroup is not sufficient for our purposes, as there is no decay.  To this end, we will recast $\Sf$  via its inverse Laplace transform and bound the resulting oscillatory integrals in Strichartz spaces. To be precise, according to \cite[Corollary 5.15]{EngelNagel} for $\mathbf{f} \in \mathcal{D}(\mb{L})$, we have the following:
\begin{equation*}
	\Sf(\tau)\mathbf{f} = \frac{1}{2\pi i}\lim_{N \to \infty} \int_{\varepsilon-iN}^{\varepsilon+iN} e^{\lambda\tau}\Rf_{\Lf}(\lambda)\mathbf{f}d\lambda, \quad \text{ for } \varepsilon >0,
\end{equation*}
where $\Rf_{\Lf}(\lambda)=(\lambda-\Lf)^{-1}$.

\section{ODE Analysis and resolvent construction}
\label{section:ODEAnalysis}
\subsection{Fundamental systems}
We fix $d=6$. To make sense of the inversion formula, we first need to construct the resolvent $(\lambda-\Lf)^{-1}$. From previous considerations, we already know that this boils down to appropriately solving the ODE
\begin{equation}
	\label{eq:OriginalODE}
	(1-\rho^2)u_1''(\rho) + \Big(\frac{5}{\rho}+2(\lambda-2)\rho\Big)u_1'(\rho) - ((\lambda-1)(\lambda-2)-V(\rho))u_1(\rho)=F_{\lambda}(\rho),
\end{equation}
where $F_{\lambda}(\rho) :=(2-\lambda)f_1(\rho) + \rho f_1'(\rho)-f_2(\rho)$. For $0<\rho\leqslant 1$, this has already been accomplished in the work \cite{DonningerWallauch} with $\lambda$ replaced by $-\lambda$. However, the construction remains unaffected by the change of sign. Therefore, we recall the corresponding results from said work. For this, we let $V$ be any smooth radial potential with sufficient decay, which in our case means that $V$ decays like $\rho^{-2}$.
We also make the following definitions.
\begin{definition}
We define the strip $S$ as 
$$S:=\{\lambda\in\mathbb C : \Re\lambda\in(-\tfrac14,\tfrac14)\}.$$
In addition, we define an affine function $a:S\to \mathbb{C}$ as 
$$a(\lambda)=i\left(\frac{3}{2}+\lambda\right).$$
\end{definition}

Ultimately, our goal is to construct the resolvent for $\rho\in (0,1)$ and for $\rho\in (1,\infty)$ separately and then glue them together using a degree of freedom that will be present on the interval (0,1) for almost all values of $\lambda$.

\subsection{The region $0<\rho<1$}
Most of the necessary ODE analysis on this region has already been carried out in \cite{Wallauch2024}. We start by transforming variables according to 

\begin{equation*}
	v(\rho) = \rho^{\frac{5}{2}} (1-\rho^2)^{-\frac{1}{4}-\frac{\lambda}{2}}u(\rho),
\end{equation*}
which kills the first order derivative and turns \eqref{eq:OriginalODE} into
\begin{equation}
	\label{eq:NoFirstDerivativeODEnearo0}
	v''(\rho) + \frac{-15+(10-12\lambda+4\lambda^2)\rho^2}{4\rho^2(1-\rho^2)^2} v(\rho) + \frac{V(\rho)}{1-\rho^2}v(\rho)=0.
\end{equation}
To state our results in a concise manner, we denote by  $J_2$ and $Y_2$, the ``good'' and the ``bad'' Bessel functions of order $2$ (i.e. $\displaystyle\lim_{\rho\to 0} J_2(\rho)=0$ while $Y_2$ is unbounded near $0$),
and by $\varphi$ the diffeomorphism $\varphi:(0,1)\to (0,\infty),\,\varphi(\rho):=\frac{1}{2}(\log(1+\rho)-\log(1-\rho))$. In addition to that, for $r\in (0,\infty) $ and $\rho_0\in (0,1)$ fixed, we define $\rho_\lambda$ as the smooth version of the function
$\min\{\frac{r}{|a(\lambda)|},\rho_0\}$ and, similarly, $\widehat{\rho}_\lambda$ as a smooth version of the function 
$\min\{\frac{2r}{|a(\lambda)|},\tfrac{1+\rho_0}{2}\}$.
We begin with the following result, which is a direct consequence of Lemmas 3.2 and 3.3 in \cite{Wallauch2024}.

\begin{lemma}
There exist $r>1$ and $\rho_0\in [0,1)$ such that for $\lambda \in S$ the following holds. 
For $\rho\in (0,\widehat{\rho}_\lambda)$
equation \eqref{eq:NoFirstDerivativeODEnearo0}
has a fundamental system of solutions given by
\begin{align*}
\psi_1(\rho;\lambda)&=b_1(\rho;\lambda)[1+\rho^2 e_3(\rho;\lambda)] 
\\
&=\sqrt{(1-\rho^2)\varphi(\rho)}J_{2}( a(\lambda)\varphi(\rho))[1+\rho^2 e_3(\rho;\lambda)]
\\
\psi_2(\rho;\lambda)&= b_2(\rho;\lambda)[1+\rho^2 e_3(\rho;\lambda)]+ \psi_1(\rho;\lambda)e_4(\rho;\lambda).
\\
&=\sqrt{(1-\rho^2)\varphi(\rho)}Y_2( a(\lambda)\varphi(\rho))[1+\rho^2 e_3(\rho;\lambda)]+\O(\rho^{\frac{1}{2}}\langle\omega\rangle^{-2}).
\end{align*}
where $e_3$ satisfies
\begin{align*}
e_3(\rho;\lambda)=\widehat{e}_3(\rho;\lambda)+\O(\rho^{\frac{7}{2}}\langle\omega\rangle^{\frac{7}{2}})
\end{align*}
with
\begin{align}\label{esti:e12}
\partial_\rho^m\partial_\omega^n \widehat{e}_3(\rho;\lambda)\lesssim_{m,n} \langle\omega\rangle^{m-n}.
\end{align}
Similarly, for $\rho \in (\rho_\lambda,1)$, Eq.~\eqref{eq:NoFirstDerivativeODEnearo0} has a fundamental system of solutions given by
\begin{align*}
\psi_3(\rho;\lambda)&:=\frac{\sqrt{1-\rho^2}}{\sqrt{3+2\lambda}}\left(\frac{1+\rho}{1-\rho}\right)^{\frac{3}{4}+\frac{\lambda}{2}}[1+e_1(\rho;\lambda)][1+r_1(\rho;\lambda)]
\\
\psi_4(\rho;\lambda)&:=\frac{\sqrt{1-\rho^2}}{\sqrt{3+2\lambda}}\left(\frac{1-\rho}{1+\rho}\right)^{\frac{3}{4}+\frac{\lambda}{2}}[1+e_2(\rho;\lambda)][1+r_2(\rho;\lambda)]
\end{align*}
with 
$$
r_j(\rho;\lambda)=(1-\rho)\O(\rho^{0}\langle\omega\rangle^{-1}), \text{ and } e_j(\rho;\lambda)=(1-\rho)\O(\rho^{-1}\langle\omega\rangle^{-1})
$$
for $j=1,2$.
Moreover, in the case $V=0$, we have $r_1=r_2=0$.
\end{lemma}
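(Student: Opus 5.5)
The plan is to reduce the statement to \cite[Lemmas 3.2 and 3.3]{Wallauch2024} by checking that \eqref{eq:NoFirstDerivativeODEnearo0} matches the equation treated there, up to the sign change $\lambda\mapsto-\lambda$ mentioned above. Throughout, $\omega=\Im\lambda$.

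\textbf{Step 1: the model equation.} First verify that, for $V=0$, the functions $b_1,b_2$ solve
\begin{equation*}
v''+\frac{-15+(10-12\lambda+4\lambda^2)\rho^2}{4\rho^2(1-\rho^2)^2}v=\frac{q(\rho)}{(1-\rho^2)^2}\,v ,
\end{equation*}
with $q$ smooth and $q(\rho)=\O(\rho^0)$. This is the Liouville--Green/Bessel substitution $v=\sqrt{(1-\rho^2)\varphi}\,J_2(a(\lambda)\varphi)$, using $\varphi'=(1-\rho^2)^{-1}$. The index $2$ comes from $-15/4=\frac14-4$. The frequency satisfies $a(\lambda)^2=-(\tfrac32+\lambda)^2$, which reproduces $(10-12\lambda+4\lambda^2)/4$ up to the constant absorbed in $q$. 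This is the same computation as in \cite{Wallauch2024} with $\lambda$ replaced by $-\lambda$. It is consistent with the sign symmetry of $a$, since $J_2,Y_2$ are insensitive to $a\mapsto -a$ up to normalisation.

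\textbf{Step 2: near $\rho=0$.} Treat $\frac{V(\rho)}{1-\rho^2}+\frac{q}{(1-\rho^2)^2}$ as a perturbation on $(0,\widehat\rho_\lambda)$. Here $|a(\lambda)|\rho\lesssim r$, so the Bessel functions are in their small-argument regime, or controlled by $\rho_0$ when $|\omega|$ is small. Write $\psi_1=b_1(1+\rho^2e_3)$ and solve a Volterra equation for $e_3$ using the Green function $b_1(\rho)b_2(s)-b_1(s)b_2(\rho)$ divided by the Wronskian, which is constant. The kernel estimates, with symbol-type bounds $\partial_\rho^m\partial_\omega^n$ obtained from the Bessel asymptotics, give $\widehat e_3$ satisfying \eqref{esti:e12} plus the $\O(\rho^{7/2}\langle\omega\rangle^{7/2})$ remainder. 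Since $V$ is smooth and bounded near $0$, and only decays like $\rho^{-2}$ at infinity, which is irrelevant here, the $V$ term adds an $\O(\rho^2)$ correction of the same class. $\psi_2$ is then produced by reduction of order, $\psi_2=\psi_1\int\psi_1^{-2}$, which yields the $e_4$ term and the $\O(\rho^{1/2}\langle\omega\rangle^{-2})$ error.

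\textbf{Step 3: near $\rho=1$.} On $(\rho_\lambda,1)$ apply WKB. The functions $\sqrt{1-\rho^2}\,e^{\pm(\frac32+\lambda)\varphi}$, normalised by $(3+2\lambda)^{-1/2}$, solve the $V=0$ equation up to an error $\O(\rho^{-2}\langle\omega\rangle^{-1})$. A Volterra iteration from $\rho=1$ gives the factors $1+e_j$ with $e_j=(1-\rho)\O(\rho^{-1}\langle\omega\rangle^{-1})$. These factors are exactly the $V=0$ solutions, which accounts for $r_j=0$ when $V=0$. The potential term $V/(1-\rho^2)$ is then handled by a second Volterra iteration around these exact free solutions. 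Its kernel gains $\langle\omega\rangle^{-1}$ from the Wronskian $\sim 1$, after normalisation, and $(1-\rho)$ from integrability at $1$, giving $r_j$.

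\textbf{Main obstacle.} The main difficulty is uniformity in $\lambda\in S$ near $\omega=0$. There $|a(\lambda)|\ge 5/4$ stays bounded away from zero, which is what permits the choice of fixed $r>1$ and $\rho_0$. Preserving the symbol bounds under $\omega$-differentiation requires care, but this is precisely what \cite{Wallauch2024} establishes, so the proof amounts to verifying that its hypotheses on the potential (smoothness on $[0,1]$) hold for $V$.
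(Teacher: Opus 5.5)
Your proposal is correct and follows the paper, whose proof is only the remark that the statement collects special cases of Lemmas 3.1--3.3 in \cite{Wallauch2024}; your Steps 1--3 outline that reference rather than give an independent argument. Two details of the outline are off: your Step 1 claim that $a(\lambda)^2=-(\tfrac32+\lambda)^2$ reproduces $(10-12\lambda+4\lambda^2)/4$ up to a constant is false as written, and it holds only for the correct coefficient $10-12\lambda-4\lambda^2$ (which is what \eqref{eq:OriginalODE} actually gives, cf.\ \eqref{eq:NoFirstDerivativeODE2}), the remainder then being $q=\tfrac{15}{4}(1+\varphi^{-2}-\rho^{-2})$; and a Volterra iteration started at $\rho=1$ yields only $\psi_4$, since for $\psi_3$ the kernel contains $\bigl(\tfrac{1+s}{1-s}\cdot\tfrac{1-\rho}{1+\rho}\bigr)^{\frac32+\lambda}$, which is not integrable at $s=1$, so $\psi_3$ has to be produced differently, e.g.\ from $\psi_4$ by variation of constants as in the proof of Lemma \ref{lem:solsnear1}.
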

\begin{proof}
    This is just a collection of special cases of Lemmas 3.1-3.3 in \cite{Wallauch2024}.
\end{proof}

In the special case $V=0$, we endow all functions appearing in the previous lemma with an additional subscript $\mathrm{f}$, e.g.~ 
$\psi_{\mathrm{f}_j}(\rho;\lambda),r_{\mathrm{f}_j}(\rho;\lambda) $.
Next, we want to glue these solutions together. To that end, we compute the following Wronskian:
\begin{align*}
    W(\psi_3(\cdot;\lambda),\psi_4(\cdot;\lambda))=-1
\end{align*}
\begin{lemma}
For $\rho\in (\rho_\lambda,\widehat{\rho}_\lambda)$ one has that
\begin{align*}
    \psi_1(\rho;\lambda)&=c_{0,1}(\lambda)\psi_3(\rho;\lambda)+c_{0,2}(\lambda)\psi_4(\rho;\lambda)
    \\
    \psi_2(\rho;\lambda)&=c_{\widetilde 0,1}(\lambda)\psi_3(\rho;\lambda)+c_{\widetilde 0,2}(\lambda)\psi_4(\rho;\lambda)
\end{align*}
where \begin{align*}
    c_{i,j}(\lambda)=\O(\langle\omega\rangle^0).
\end{align*}
and 
\begin{align*}
       c_{i,j}(\lambda)-    c_{\mathrm{f}_{i,j}}(\lambda) =\O(\langle\omega\rangle^{-1}).   
\end{align*}
for $i=0,\widetilde 0, j=1,2$, where $ c_{\mathrm{f}_{i,j}}$ are the corresponding connection coefficients for $V=0$
Additionally, there exist $z_+,z_-\in \mathbb{C}^*=\mathbb{C}\setminus\{0\}$ such that 
\begin{align}\label{Eq:limit for c_{0,1}}
    c_{0,1}(\varepsilon\pm i\omega)= z_\pm +O(\omega^{-1}) \text{ as } |\omega|\to \infty.
\end{align}
\end{lemma}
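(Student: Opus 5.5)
The connection coefficients are computed with Wronskians. Since $W(\psi_3,\psi_4)=-1$ and all four functions solve Eq.~\eqref{eq:NoFirstDerivativeODEnearo0} on the overlap $(\rho_\lambda,\widehat\rho_\lambda)$, we have
\begin{align*}
c_{0,1}(\lambda)=-W(\psi_1,\psi_4)(\rho), \qquad c_{0,2}(\lambda)=W(\psi_1,\psi_3)(\rho),
\end{align*}
and analogously $c_{\widetilde 0,j}$ with $\psi_2$ in place of $\psi_1$. Each Wronskian is independent of $\rho$, so I would evaluate it at a convenient point of the overlap. The natural choice is $\rho=\rho_\lambda$ when $|\omega|$ is large, so that $\rho\simeq r/|a(\lambda)|$ and $a(\lambda)\varphi(\rho)\simeq r\cdot\mathrm{const}$. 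For bounded $\omega$, the bounds $c_{i,j}=\O(\langle\omega\rangle^0)$ are automatic: the $\psi_j$ depend analytically on $\lambda\in S$, hence the Wronskians are continuous in $\lambda$, and $\rho_0$ is fixed.

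For large $|\omega|$ I would insert the expansions of the previous lemma. At $\rho\sim r/|\omega|$ the error factors $\rho^2e_3$ are $\O(\langle\omega\rangle^{-2})$, and the $\rho$-derivative of $\rho^2 e_3$ is $\O(\langle\omega\rangle^{-1})$ by \eqref{esti:e12}. The factors $e_j,r_j$ in $\psi_{3},\psi_4$ are $\O(\rho^{-1}\langle\omega\rangle^{-1})=\O(1)$ at this scale, so I must take $r$ large to make them small. This is presumably why $r>1$ is chosen that way. Their derivatives also need control; I expect they follow from the symbol-type bounds in \cite{Wallauch2024}.

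The main terms $b_1,b_2$ and $\psi_{\mathrm f_3},\psi_{\mathrm f_4}$ are the free ones. The leading Wronskian is therefore $c_{\mathrm f_{i,j}}(\lambda)$, and what remains is $\O(\langle\omega\rangle^{-1})$, giving $c_{i,j}-c_{\mathrm f_{i,j}}=\O(\langle\omega\rangle^{-1})$. For the bound $c_{i,j}=\O(1)$, note that $J_2(a\varphi),Y_2(a\varphi)$ are evaluated at arguments of size $r$. There the Hankel asymptotics give $|J_2|,|Y_2|\lesssim (|a|\varphi)^{-1/2}$, and the prefactor $\sqrt{\varphi}$ combines with this to give $\O(|\omega|^{-1/2})$. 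Meanwhile $\psi_{3,4}=\O(|\omega|^{-1/2})$ via the factor $1/\sqrt{3+2\lambda}$. Each derivative costs $|\omega|$, so every Wronskian is $\O(1)$. For $\Re\lambda$ near $0$, the factors $\big(\frac{1+\rho}{1-\rho}\big)^{\pm(3/4+\lambda/2)}=e^{\pm 2\varphi(\rho)(3/4+\lambda/2)}$ are bounded at $\rho=\rho_\lambda$ because $|\omega|\varphi(\rho_\lambda)\simeq r$.

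The hard step is showing that the limits $z_\pm$ exist and are nonzero. By the $\O(\langle\omega\rangle^{-1})$ comparison it suffices to treat $c_{\mathrm f_{0,1}}$, and for $V=0$ this can be computed explicitly. With $V=0$, $r_{\mathrm f_j}=0$, and the free solutions are explicit: the resolvent ODE for the free 6-dimensional wave operator in similarity variables is hypergeometric. So $c_{\mathrm f_{0,1}}(\lambda)$ is given by a quotient of Gamma functions coming from the connection formula at $\rho=1$.

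In practice I would rather exploit that $\psi_3,\psi_4$ behave like $e^{\pm ia\varphi}$ up to the normalisation, while $\sqrt{\varphi}J_2(a\varphi)\sim\sqrt{2/(\pi a)}\cos(a\varphi-5\pi/4)$. Matching exponentials gives
\begin{align*}
c_{\mathrm f_{0,1}}(\lambda)=\sqrt{\tfrac{3+2\lambda}{2\pi a(\lambda)}}\,e^{\mp i5\pi/4}(1+\O(\langle\omega\rangle^{-1})).
\end{align*}
Here $(3+2\lambda)/a(\lambda)=-2i$ is constant, so the prefactor is a fixed nonzero constant. The sign $\pm$ is determined by which of $e^{\pm ia\varphi}$ corresponds to $\psi_3$, which depends on the sign of $\omega$. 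This yields distinct nonzero $z_\pm$. The delicate points are the branch of the square root in $\psi_{3}$ and the Hankel error terms, which are uniform only when $|a\varphi|\gtrsim r$ is large. That is again ensured by the choice of $r$.
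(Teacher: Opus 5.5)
Your Wronskian formulas for the coefficients, the evaluation at $\rho=\rho_\lambda$, and the size count behind $c_{i,j}=\O(\langle\omega\rangle^0)$ and $c_{i,j}-c_{\mathrm f_{i,j}}=\O(\langle\omega\rangle^{-1})$ are the paper's argument. The gap is in the limit $z_\pm$.

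On the whole overlap $(\rho_\lambda,\widehat\rho_\lambda)$ one has $|a(\lambda)\varphi(\rho)|\simeq r$. So you are using the Hankel expansion of $J_2$ at arguments of fixed size, with relative error $\O(r^{-1})$. Likewise $e_2(\rho_\lambda;\lambda)=(1-\rho_\lambda)\O(\rho_\lambda^{-1}\langle\omega\rangle^{-1})=\O(r^{-1})$, and this is its true size: it reproduces the first Hankel correction $\frac{15i}{8z}$ at $|z|\simeq r$. Since $r$ is fixed once and for all, enlarging it makes these errors small, but it does not make them $\O(\langle\omega\rangle^{-1})$. What your matching actually proves is
\[
c_{0,1}(\varepsilon\pm i\omega)=z_\pm+\O(r^{-1})+\O(\langle\omega\rangle^{-1}),
\]
where the $\O(r^{-1})$ term is uniform in $\omega$ but is not shown to converge as $|\omega|\to\infty$. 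This gives $|c_{0,1}(\varepsilon\pm i\omega)|\gtrsim 1$ for large $|\omega|$, which is what the proof of Lemma \ref{lem:zeros} uses. It does not give the existence of the limits or the rate claimed in the statement.

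The paper handles this step differently. It keeps the Bessel functions at the finite argument $a(\lambda)\rho_\lambda\to\mp r$ and writes the leading Wronskian through $J_1(\mp r)$ and $J_2(\mp r)$. It then obtains nonvanishing because $J_1,J_2$ are real on $\R$ and have no common zero, so no large-argument asymptotics are needed. Note, however, that it also drops the factor $1+e_2$ at $\rho_\lambda$ as if it were $1+\O(\langle\omega\rangle^{-1})$, so the same point has to be settled in either route.

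Settling it requires information beyond the scale $\rho\sim r/|\omega|$. In the Liouville variables $x=\varphi(\rho)$, $w(x)=v(\rho)/\sqrt{1-\rho^2}$, a direct computation from \eqref{eq:OriginalODE1} (through $v$) gives
\[
w''+\Big(a^2-\frac{15}{4\sinh^2x}+\frac{V(\tanh x)}{\cosh^2x}\Big)w=0 .
\]
Here the $\lambda^2$-term in \eqref{eq:NoFirstDerivativeODEnearo0} should carry a minus sign, as it does in \eqref{eq:NoFirstDerivativeODE2}. Then $c_{0,1}=-(3+2\lambda)^{-1/2}W(w_{\mathrm{reg}},f_+)$, where $w_{\mathrm{reg}}\sim\sqrt{x}J_2(ax)$ as $x\to 0$ and $f_+\sim e^{iax}$ as $x\to\infty$.

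The potential differs from $\frac{15}{4x^2}$ by a bounded term decaying like $x^{-2}$. Volterra iteration around $\sqrt{x}J_2(ax)$ and $\sqrt{x}H^{(1)}_2(ax)$ therefore gives $W(w_{\mathrm{reg}},f_+)$ as the exact Bessel value times $1+\O(|a|^{-1})$. That Bessel value, obtained from $W(J_2,H^{(1)}_2)(z)=\frac{2i}{\pi z}$, is, up to the branch bookkeeping you flagged, exactly the constant you found by matching exponentials.

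Two alternatives also work:
\begin{itemize}
\item Extend the Bessel representation of $\psi_1$ to $\rho$ of order one. There $|a\varphi|\simeq|\omega|$, so both the Hankel remainder and $e_2$ are $\O(\langle\omega\rangle^{-1})$.
\item Carry out the hypergeometric connection computation for $V=0$ that you mention, and apply Stirling's formula.
\end{itemize}
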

\begin{proof}
Note that $$c_{0,1}(\lambda)= -  W(\psi_1(\cdot;\lambda),\psi_4(\cdot;\lambda))(\rho_\lambda)=-W\left(b_1(.;\lambda),\frac{\sqrt{1-(\cdot)^2}}{\sqrt{3+2\lambda}}\left(\frac{1-(\cdot)}{1+(\cdot)}\right)^{\frac{3}{4}+\frac{\lambda}{2}}\right)(\rho_\lambda)+\O(\langle\omega\rangle^{-1}).$$
Thus, as 
$$
W\left(b_1(.;\lambda,)\frac{\sqrt{1-(\cdot)^2}}{\sqrt{3+2\lambda}}\left(\frac{1-(\cdot)}{1+(\cdot)}\right)^{\frac{3}{4}+\frac{\lambda}{2}}\right)(\rho_\lambda)=\O(\langle\omega\rangle^0).
$$
The claimed identities on $c_{0,1}$ follow. Likewise, one obtains the remaining identities for the connection coefficients and only \ref{Eq:limit for c_{0,1}} remains to be shown. To see this, we set $\widetilde b_1(\rho;\lambda)=\sqrt{1-\rho^2}\rho^{\frac{1}{2}}J_2(\rho  a(\lambda))$. Then $b_1(\rho_\lambda;\lambda)$ agrees with $\widetilde b_1(\rho_\lambda;\lambda)$ up to leading order as $|\omega|\to \infty$.
Using this, one readily infers that
\begin{equation*}
  W  (\psi_1(\cdot;\lambda),\psi_4(\cdot;\lambda))(\rho_\lambda)= W(\widetilde b_1(\cdot;\lambda),\psi_4(\cdot;\lambda))(\rho_\lambda)+O(\omega^{-1}).
\end{equation*}
Furthermore, 
\begin{align*}
W(\widetilde b_1(\cdot;\lambda),\psi_3(\cdot;\lambda))(\rho_\lambda)&=-\frac{\left(\frac{1-\rho_\lambda}{1+\rho_\lambda}\right)^{\frac{-ia(\lambda)}{2}}}{2\sqrt{-i\rho_\lambda a(\lambda)}}\big(
 2a(\lambda)\rho_\lambda(1-\rho_\lambda^2) J_1(a(\lambda)\rho_\lambda )
 \\
 &\quad +(2i\rho_\lambda a(\lambda)+3\rho_\lambda^2+3\rho_\lambda-3)J_2(a(\lambda)\rho_\lambda)\big).
\end{align*}
Assume that $ \omega>0$ and recall that all roots denote principal branches. Then, for $\lambda\in S$ with $|\lambda|$ sufficiently large we conclude that
\begin{align*}
\frac{1-\rho_\lambda^2}{2\sqrt{-i\rho_\lambda a(\lambda)}}\left(\frac{1-\rho_\lambda}{1+\rho_\lambda}\right)^{\frac{-ia(\lambda)}{2}}= \frac{1}{2\sqrt{r}}e^{-\frac{i\pi}{4}}+O(\omega^{-1}),
\end{align*}
\begin{align*}
     2a(\lambda)\rho_\lambda(1-\rho_\lambda^2) J_1(a(\lambda)\rho_\lambda) =-2rJ_1(-r)+O(\omega^{-1}),
\end{align*}
and
\begin{align*}
(-2i\rho_\lambda a(\lambda)+3\rho_\lambda^2+3\rho_\lambda-3)J_2(a(\lambda)\rho_\lambda)=(2i-3)J_2(-r)+O(\omega^{-1}).
\end{align*}
Hence, as the Bessel functions $J_1$ and $J_2$  map $\R$ to $\R$ the claim follows for $\Im \lambda>0$. Likewise, one argues for $\Im \lambda<0$ and the claim follows. 
\end{proof}
To continue, we define a smooth cutoff function 
$\chi: [0,1]\times S\to
[0,1]$, $\chi_\lambda(\rho):=\chi(\rho;\lambda)$, that satisfies
$\chi_\lambda(\rho)=1$ for $\rho \in [0,\rho_\lambda]$, $
\chi_\lambda(\rho)=0$ for $\rho \in [\widehat{\rho}_\lambda,1]$, and
$|\partial_\rho^k\partial_\omega^\ell \chi_\lambda(\rho)|\leqslant
C_{k,\ell}\langle\omega\rangle^{k-\ell}$ for $k,\ell\in\mathbb N_0$. This cutoff allows us to glue our local solutions together. 
\begin{lemma}\label{lem:solutions interior regime}
 For $\lambda\in S$ and $\rho\in (0,1)$ the equation
\begin{equation}
	\label{eq:OriginalODE1}
	(1-\rho^2)u_1''(\rho) + \Big(\frac{5}{\rho}+2(\lambda-2)\rho\Big)u_1'(\rho) - ((\lambda-1)(\lambda-2)-V(\rho))u_1(\rho)=0,
\end{equation}
has the linearly independent solutions $u_0$, $\widetilde u_0$ which satisfy the following properties. 
On the support of $\chi_\lambda$, one has that
\begin{align*}
u_0(\rho;\lambda)&= \rho^{-\frac{5}{2}}(1-\rho^2)^{\frac{1}{4}+\frac{\lambda}{2}}\psi_1(\rho;\lambda)=\sqrt{(1-\rho^2)\varphi(\rho)}J_{2}( a(\lambda)\varphi(\rho))[1+\rho^2 e_3(\rho;\lambda)]
\\
&=(1-\rho^2)^{\frac{1}{4}+\frac{\lambda}{2}}\O(\rho^0\langle\omega\rangle^2)[1+\O(\rho^{2}\langle\omega\rangle^0)]
\\
\widetilde u_0(\rho;\lambda)&= \rho^{-\frac{5}{2}}(1-\rho^2)^{\frac{1}{4}+\frac{\lambda}{2}}\psi_2(\rho;\lambda)\\
&=(1-\rho^2)^{\frac{1}{4}+\frac{\lambda}{2}}\left[\O(\rho^{-4}\langle\omega\rangle^{-2})+\O(\rho^{-2}\langle\omega\rangle^{-2})\right].
\end{align*}
Moreover, on the support of $(1-\chi_\lambda)$ one has that
\begin{align*}
    u_0(\rho;\lambda)&=c_{0,1}(\lambda)u_1(\rho;\lambda)+c_{0,2}(\lambda)u_2(\rho;\lambda)
    \\
    \widetilde u_0(\rho;\lambda)&=c_{\widetilde 0,1}(\lambda)u_1(\rho;\lambda)+c_{\widetilde 0,2}(\lambda)u_2(\rho;\lambda)
\end{align*}
with
\begin{equation*}
	\begin{split}
u_1(\rho;\lambda) &= \rho^{-\frac{5}{2}}(1-\rho^2)^{\frac{1}{4}+\frac{\lambda}{2}}\psi_3(\rho;\lambda)=
 \frac{\rho^{-\frac{5}{2}}(1+\rho)^{\frac{3}{2}+\lambda}}{\sqrt{3+2\lambda}}[1+e_1(\rho;\lambda)][1+r_1(\rho;\lambda)]\\
u_2(\rho;\lambda)&=\rho^{-\frac{5}{2}}(1-\rho^2)^{\frac{1}{4}+\frac{\lambda}{2}}\psi_4(\rho;\lambda)=
\frac{ \rho^{-\frac{5}{2}}(1-\rho)^{\frac32+\lambda}}{\sqrt{3+2\lambda}}[1+e_2(\rho;\lambda)][1+r_2(\rho;\lambda)]
\end{split}
\end{equation*}
with 
$$
r_j(\rho;\lambda)=(1-\rho)\O(\rho^{0}\langle\omega\rangle^{-1}), \text{ and } e_j(\rho;\lambda)=(1-\rho)\O(\rho^{-1}\langle\omega\rangle^{-1})
$$
for $j=1,2$.
Moreover, in case $V=0$, one has that $r_1=r_2=0$. 

\end{lemma}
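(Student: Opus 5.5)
The statement is essentially a repackaging of the two preceding lemmas, so the plan is to pull everything back from the Liouville-normal-form variable $v$ to $u_1$. I would proceed in three steps.

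\textbf{Step 1: conjugation.} Recall that $v(\rho)=\rho^{5/2}(1-\rho^2)^{-\frac14-\frac\lambda2}u(\rho)$ maps solutions of \eqref{eq:OriginalODE1} bijectively onto solutions of \eqref{eq:NoFirstDerivativeODEnearo0} on $(0,1)$. I would verify this by direct substitution: the weight is chosen precisely so that the first-order term cancels, and the potential term $V/(1-\rho^2)$ is untouched. Consequently, the functions $\rho^{-5/2}(1-\rho^2)^{\frac14+\frac\lambda2}\psi_j$ solve \eqref{eq:OriginalODE1} on their respective domains of definition.

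\textbf{Step 2: global definition of $u_0,\widetilde u_0$.} I would define
\[
u_0:=\rho^{-5/2}(1-\rho^2)^{\frac14+\frac\lambda2}\psi_1 \quad\text{on } (0,\widehat\rho_\lambda),
\]
and on $(\rho_\lambda,1)$ set
\[
u_0:=c_{0,1}u_1+c_{0,2}u_2, \qquad u_j:=\rho^{-5/2}(1-\rho^2)^{\frac14+\frac\lambda2}\psi_{j+2}.
\]
On the overlap $(\rho_\lambda,\widehat\rho_\lambda)$ the connection lemma gives $\psi_1=c_{0,1}\psi_3+c_{0,2}\psi_4$. The two definitions therefore agree there, and since both pieces solve the same linear ODE, $u_0$ is a well-defined solution on all of $(0,1)$. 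The same argument applies to $\widetilde u_0$, using $c_{\widetilde0,j}$. Because $\chi_\lambda=1$ on $[0,\rho_\lambda]$ and $\chi_\lambda=0$ on $[\widehat\rho_\lambda,1]$, the support of $\chi_\lambda$ lies in $[0,\widehat\rho_\lambda]$ and that of $1-\chi_\lambda$ in $[\rho_\lambda,1]$. Hence the stated representations hold on the respective supports; at the endpoints they follow by continuity, or by taking $\widehat\rho_\lambda$ slightly larger.

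\textbf{Step 3: asymptotics and linear independence.} The explicit forms of $u_1,u_2$ follow by algebra:
\[
(1-\rho^2)^{\frac14+\frac\lambda2}\sqrt{1-\rho^2}\Big(\tfrac{1+\rho}{1-\rho}\Big)^{\frac34+\frac\lambda2}=(1+\rho)^{\frac32+\lambda},
\]
and similarly for $u_2$, which produces the factor $(1-\rho)^{\frac32+\lambda}$. The bounds on $e_j$ and $r_j$, together with $r_j=0$ when $V=0$, are inherited verbatim. For $u_0$ near zero, I would use $\varphi(\rho)=\rho+\O(\rho^3)$ and the small-argument behaviour $J_2(z)\sim z^2/8$. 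Together with the $\rho^{1/2}$ in $\sqrt{\varphi}$ and the prefactor $\rho^{-5/2}$, this yields the bound $\O(\rho^0\langle\omega\rangle^2)$ in the regime $\rho\lesssim|a|^{-1}$. Similarly, $Y_2(z)\sim -\frac{4}{\pi}z^{-2}$ gives the $\O(\rho^{-4}\langle\omega\rangle^{-2})$ leading term for $\widetilde u_0$, with the correction $\O(\rho^{1/2}\langle\omega\rangle^{-2})$ producing the $\rho^{-2}$ term. Linear independence follows because $\psi_1,\psi_2$ have the nonvanishing Wronskian inherited from $W(J_2,Y_2)$ (indeed $u_0$ is bounded while $\widetilde u_0$ blows up like $\rho^{-4}$ at $0$).

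The main obstacle is bookkeeping the symbol-type bounds uniformly in $\lambda\in S$ in the transition regime $\rho\sim|a(\lambda)|^{-1}$. There the Bessel argument $a\varphi$ is of order one, so one must use that $J_2$ and $Y_2$ are bounded for arguments in a compact subset of the region $\Re z>0$ near the positive imaginary axis (recall $a(\lambda)\approx i\omega$ up to a bounded shift), rather than small-argument expansions. I would handle this by splitting at $\rho=r/|a|$ and using large-argument asymptotics beyond it, where the estimates are absorbed into the $(1-\chi_\lambda)$ regime anyway.
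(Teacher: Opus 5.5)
Your proposal is correct and matches how the paper obtains this lemma. The paper gives no separate proof: the lemma is the immediate consequence of undoing the substitution $v=\rho^{5/2}(1-\rho^2)^{-\frac14-\frac\lambda2}u$ in the preceding fundamental-system lemma and gluing across $(\rho_\lambda,\widehat\rho_\lambda)$ with the connection coefficients $c_{0,j},c_{\widetilde 0,j}$.

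One correction to your last paragraph: $a(\lambda)=-\omega+i(\tfrac32+\Re\lambda)$ lies near the real axis, not near $i\omega$. More importantly, the transition regime is not an obstacle. Since $\widehat\rho_\lambda\lesssim|a(\lambda)|^{-1}$ and $|a(\lambda)|>\tfrac54$ on $S$, one has $|a(\lambda)\varphi(\rho)|\lesssim 1$ on all of $\mathrm{supp}\,\chi_\lambda$. There the convergent series $J_2(z)=z^2h(z)$ ($h$ entire) and the analogous series for $z^2Y_2(z)$ (principal branch, harmless since $\Im a(\lambda)>0$) already give the uniform symbol bounds. No splitting and no large-argument asymptotics are needed.
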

\begin{lemma}\label{lem:zeros}
    The function $c_{0,1}:S\to \mathbb{C}$ has finitely many zeros.
\end{lemma}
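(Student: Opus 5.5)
The plan is to combine analyticity of $c_{0,1}$ on $S$ with the asymptotic information \eqref{Eq:limit for c_{0,1}}. This confines the zeros to a compact subset of $S$, where analyticity leaves only finitely many.

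First, analyticity. The solutions $\psi_1$ and $\psi_4$ depend holomorphically on $\lambda\in S$. This follows from their construction in \cite{Wallauch2024}: they are built by Volterra iterations whose kernels depend analytically on $\lambda$, with $\psi_1$ given by the Frobenius solution at $\rho=0$ normalised through $J_2(a(\lambda)\varphi(\rho))$ and $\psi_4$ by the Frobenius solution at $\rho=1$. Hence
\[
c_{0,1}(\lambda)=-W(\psi_1(\cdot;\lambda),\psi_4(\cdot;\lambda)),
\]
which is independent of $\rho$, is holomorphic on $S$. I would evaluate this Wronskian at a fixed point $\rho$ rather than at $\rho_\lambda$, to avoid any dependence of the evaluation point on $\lambda$. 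One also has to check that the branches $\sqrt{3+2\lambda}$ and $\big(\tfrac{1-\rho}{1+\rho}\big)^{3/4+\lambda/2}$ are analytic on $S$, which holds because $\Re(3+2\lambda)>0$ there.

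Second, $c_{0,1}\not\equiv 0$, and in fact it is bounded away from zero for large $|\Im\lambda|$. The statement \eqref{Eq:limit for c_{0,1}} gives $c_{0,1}(\varepsilon\pm i\omega)\to z_\pm\neq0$ as $|\omega|\to\infty$. The proof of that lemma produces these limits with error $O(\omega^{-1})$ uniformly in $\Re\lambda\in(-\tfrac14,\tfrac14)$: the leading terms $\frac{1}{2\sqrt r}e^{-i\pi/4}$, $-2rJ_1(-r)$ and $(2i-3)J_2(-r)$ do not depend on $\Re\lambda$. Consequently there is $\omega_0>0$ such that $|c_{0,1}(\lambda)|\geqslant \tfrac12\min(|z_+|,|z_-|)>0$ for all $\lambda\in S$ with $|\Im\lambda|\geqslant\omega_0$. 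All zeros therefore lie in $\{|\Re\lambda|<\tfrac14,\ |\Im\lambda|<\omega_0\}$. Nonvanishing at large $|\Im \lambda|$ also shows $c_{0,1}\not\equiv0$, so by the identity theorem its zeros are isolated in $S$.

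Third, compactness near the boundary of the strip, which I expect to be the main obstacle. The region above is relatively compact in $\C$ but not in $S$, so zeros could in principle accumulate at $\Re\lambda=\pm\tfrac14$. To handle this, I would use that the constructions of \cite{Wallauch2024} are valid on a slightly larger strip, say $\Re\lambda\in(-\tfrac13,\tfrac13)$. The interior estimates only require $\Re\lambda$ bounded, and the singular exponent $\tfrac32+\lambda$ stays away from the resonant values. Then $c_{0,1}$ extends holomorphically to a neighbourhood of $\overline{S}\cap\{|\Im\lambda|\leqslant\omega_0\}$, which is compact. A nontrivial holomorphic function has only finitely many zeros on a compact set, and this gives the claim. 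If extending the strip proves delicate, the fallback is to shrink $S$ slightly: in the sequel only lines $\Re\lambda=\pm\delta$ with small $\delta$ are needed, so finiteness on $\{|\Re\lambda|\leqslant\tfrac18\}$ suffices.
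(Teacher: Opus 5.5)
Your argument is correct and is the paper's argument: $c_{0,1}=-W(\psi_1,\psi_4)$ is holomorphic on $S$ because $\psi_1,\psi_4$ come from Volterra iterations with $\lambda$-analytic kernels, and the nonvanishing limits $z_\pm$ as $|\Im\lambda|\to\infty$ then leave only finitely many zeros. The paper compresses this into two lines. Your extra steps supply details it leaves implicit: evaluating the Wronskian at a fixed $\rho$, checking that the asymptotics are uniform in $\Re\lambda$, and ruling out accumulation of zeros at $\Re\lambda=\pm\frac14$ by continuing $c_{0,1}$ slightly past the open strip. That continuation is harmless, since nothing in the construction of the recessive solutions at $\rho=0$ and $\rho=1$ degenerates there, so your fallback of shrinking $S$ is not needed.
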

\begin{proof}
We first note that the solutions $\psi_1$ and $\psi_3$ are constructed by means of Volterra iteration with integral kernels that depend analytically on $\lambda$, hence they also depend analytically on $\lambda$ (see, for instance the proof of Lemma 3.13 in \cite{GloHilWal25}  for such arguments). Thus, $c_{0,1}(\lambda)$ connects two analytic solutions and is therefore also analytic. 
Consequently,  the asymptotics \eqref{Eq:limit for c_{0,1}} imply the claim. 
\end{proof}
Moving on, we denote by $\widehat S$ the set
$$
\widehat{S}:=S\setminus \{\lambda\in S  : c_{0,1}(\lambda)=0\}.
$$
Then, on $\widehat{S}$ we can divide by $c_{0,1}$ to conclude that the representation
\begin{align*}
 \left[c_{\widetilde 0,2}(\lambda)-c_{\widetilde 0,1}(\lambda)\frac{c_{0,2}(\lambda)}{c_{0,1}(\lambda)} \right] u_2(\rho;\lambda)=\widetilde u_0(\rho;\lambda)-\frac{c_{\widetilde 0,1}(\lambda)}{c_{0,1}(\lambda)}u_0(\rho;\lambda).
\end{align*}
This, in turn, implies that
$ \left[c_{\widetilde 0,2}(\lambda)-c_{\widetilde 0,1}(\lambda)\frac{c_{0,2}(\lambda)}{c_{0,1}(\lambda)} \right] $ is non-vanishing on $\widehat{S}$, as $u_0$ and $\widetilde u_0$ are linearly independent. Thus, we set
$\widehat{c}(\lambda)= \left[c_{\widetilde 0,2}(\lambda)-c_{\widetilde 0,1}(\lambda)\frac{c_{0,2}(\lambda)}{c_{0,1}(\lambda)} \right]^{-1}$ to conclude that
\begin{align*}
    u_2(\rho;\lambda)= \widehat{c}(\lambda)\left[\widetilde u_0(\rho;\lambda)-\frac{c_{\widetilde 0,1}(\lambda)}{c_{0,1}(\lambda)}u_0(\rho;\lambda)\right].
\end{align*}
With this at hand, we make the following definition. 

\begin{definition}
    Let $\lambda\in \widehat{S}$ and $f\in C^\infty_{rad}(\overline{\B_1})$. Then, for $c\in \mathbb{C}$, we define the function 
    \begin{align*}
            \mathcal{R}_c(f)(\rho;\lambda):(0,1)\times S \to \mathbb{C} 
    \end{align*}
as
\begin{align*}
      \mathcal{R}_c(f)(\rho;\lambda):&=u_2(\rho;\lambda)\int_0^\rho \frac{u_0(s;\lambda)f(s) }{(1-s^2)W(u_0(\cdot;\lambda),u_2(\cdot;\lambda))} ds+
      u_0(\rho;\lambda)\int_\rho^1 \frac{u_2(s;\lambda)f(s) }{(1-s^2)W(u_0(\cdot;\lambda),u_2(\cdot;\lambda))} ds 
      \\
      &\quad+cu_0(\rho;\lambda)
\end{align*}
where \begin{align*}
    {W(u_0(\cdot;\lambda),u_2(\cdot;\lambda))}(\rho)=\frac{2\widehat{c}(\lambda)}{\pi}{W(u_0(\cdot;\lambda),\widetilde u_0(\cdot;\lambda))}(\rho)= \frac{2\widehat{c}(\lambda)}{\pi} \frac{(1-\rho^2)^{\frac{1}{2}+\lambda}}{\rho^{5}}.
\end{align*}
\end{definition}

One directly concludes the following.
\begin{lemma}
\label{lemma:ResolventInsideCone}
 Let $\lambda\in \widehat{S}$ and $f\in C^\infty_{rad}(\overline{\B_1})$ and $c\in \mathbb{C}. $ Then, the function $\mathcal{R}_c (f) \in H^1(\B_1)$. Furthermore, for $\Re \lambda>0$ we have $\mathcal{R}_c (f) \in H^2(\B_1)$. In addition to that, it solves the equation
\begin{equation}
	\label{eq:OriginalODEagain}
	(1-\rho^2)\mathcal{R}_c (f)''(\rho) + \Big(\frac{5}{\rho}+2(\lambda-2)\rho\Big)\mathcal{R}_c (f)'(\rho) - ((\lambda-1)(\lambda-2)-V(\rho))\mathcal{R}_c (f)(\rho)=f(\rho)
\end{equation}
and is the unique one-parameter family of solutions to that equation in $ H^1\times L^2(\B_1)$.
\end{lemma}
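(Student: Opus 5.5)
The plan is to verify the statement directly from the variation-of-constants structure of $\mathcal{R}_c(f)$, using the local asymptotics of $u_0,\widetilde u_0,u_1,u_2$ collected in Lemma \ref{lem:solutions interior regime}.

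\textbf{Step 1: the equation.} Write the ODE \eqref{eq:OriginalODEagain} in normalized form $u''+p u'+qu=f/(1-\rho^2)$. Since $u_0$ and $u_2$ both solve the homogeneous equation, differentiating $\mathcal{R}_c(f)$ twice gives the equation: the boundary terms from the variable integration limits cancel in the first derivative, and the second derivative produces $f(\rho)/(1-\rho^2)$ by the Wronskian normalization. Note that $W(u_0,u_2)$ is explicit, namely $\frac{2\widehat c(\lambda)}{\pi}(1-\rho^2)^{1/2+\lambda}\rho^{-5}$, and is nonvanishing on $\widehat S$. The term $c\,u_0$ solves the homogeneous equation, so it does not affect this.

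\textbf{Step 2: regularity.} Near $\rho=0$, we have $u_0=\O(\rho^0)$, which is the regular (Bessel $J_2$) branch in the variables of $u$, while $u_2\sim \widetilde u_0+\mathrm{const}\cdot u_0$ with $\widetilde u_0=\O(\rho^{-4})$. The integrand weight is $1/W\sim\rho^5$. Hence the first term is $\O(\rho^{-4})\int_0^\rho \O(s^5)\,ds=\O(\rho^2)$, and the second term is bounded. Thus $\mathcal{R}_c(f)$ is smooth at the origin as a radial function on $\B_1\subset\R^6$ (the parity structure of the Frobenius expansions makes this rigorous).

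Near $\rho=1$, the behaviour is $u_1\sim 1$ and $u_2\sim (1-\rho)^{3/2+\lambda}$, while $1/((1-s^2)W)\sim(1-s)^{-3/2-\lambda}$. The second integral is then $\int_\rho^1 \O(1)\,ds$, which is finite. The first integral behaves like $\int_0^\rho (1-s)^{-3/2-\lambda}ds\sim(1-\rho)^{-1/2-\lambda}$; multiplied by $u_2$, this gives $\O(1-\rho)$, which is smooth enough.

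The only potentially nonsmooth pieces near $\rho=1$ are the component of $u_0$ along $u_2$, which is proportional to $c_{0,2}$, and the factor $(1-\rho)^{3/2+\lambda}$. This factor lies in $H^1$ near $1$ when $\Re\lambda>-\tfrac12$, hence on all of $S$, because its derivative $(1-\rho)^{1/2+\lambda}$ is in $L^2$. Its second derivative $(1-\rho)^{-1/2+\lambda}$ is in $L^2$ exactly when $\Re\lambda>0$. This yields the dichotomy between $H^1$ on $S$ and $H^2$ for $\Re\lambda>0$.

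\textbf{Step 3: uniqueness.} Any two $H^1$ solutions differ by a homogeneous solution $\alpha u_0+\beta\widetilde u_0$. The function $\widetilde u_0\sim\rho^{-4}$ is not in $H^1(\B_1)$ in dimension $6$, since $|\nabla\rho^{-4}|^2\rho^5\sim\rho^{-5}$ is not integrable. Therefore $\beta=0$, and the solution set is exactly the one-parameter family $\{\mathcal{R}_c(f):c\in\C\}$.

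The main obstacle I expect is the bookkeeping at $\rho=1$. There one must check that the combination of the $u_1$- and $u_2$-components coming from $u_0$ (through $c_{0,1},c_{0,2}$), together with the error factors $e_j,r_j$ and their $\rho$-derivatives, really has the claimed Sobolev regularity. I would handle this by differentiating the representations in Lemma \ref{lem:solutions interior regime}, using the symbol-type bounds on $e_j,r_j$, on the support of $1-\chi_\lambda$.
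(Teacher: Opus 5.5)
Your proposal is correct and follows the paper's route. The equation holds by variation of constants. The regularity is read off from the local forms of $u_0,\widetilde u_0,u_1,u_2$ near $\rho=0$ and $\rho=1$, and the $H^1$ versus $H^2$ dichotomy comes from the factor $(1-\rho)^{3/2+\lambda}$.

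Your uniqueness step ($\widetilde u_0\sim\rho^{-4}\notin H^1(\B_1)$, while $u_0\in H^1(\B_1)$) also supplies a detail that the paper's terse ``one readily computes'' proof leaves implicit.
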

\begin{proof}
    By construction $\mathcal{R}_c(f)$ solves \eqref{eq:OriginalODEagain}. Thus, we only need to verify the claimed regularity properties. Recall that
    \begin{align*}
        u_0(\rho;\lambda)=\chi_\lambda(\rho)(1-\rho^2)^{\frac{1}{4}+\frac{\lambda}{2}}\O(\rho^0\langle\omega\rangle^2)&+c_{0,1}(\lambda)(1-\chi_\lambda(\rho))\frac{\rho^{-\frac{5}{2}}(1+\rho)^{\frac{3}{2}+\lambda}}{\sqrt{3+2\lambda}}[1+e_1(\rho;\lambda)][1+r_1(\rho;\lambda)]
        \\
        &\quad+c_{0,2}(\lambda)(1-\chi_\lambda(\rho))\frac{\rho^{-\frac{5}{2}}(1-\rho)^{\frac{3}{2}+\lambda}}{\sqrt{3+2\lambda}}[1+e_2(\rho;\lambda)][1+r_2(\rho;\lambda)]
    \end{align*}
    from which we readily infer that
    $$u_0(\cdot;\lambda)\in H^1(\B_1), \text{ for all }\lambda\in S$$ as well as 
     $$u_0(\cdot;\lambda)\in H^2(\B_1), \text{ for all }\lambda\in S  \text{ with } \Re \lambda>0.$$ To the deal with integral terms, we let $\lambda\in S$ be fixed and plug in the explicit forms of the solutions that we derived in Lemma \ref{lem:solutions interior regime} to obtain that
     \begin{align*}
        &\quad  u_2(\rho;\lambda)\int_0^\rho \frac{u_0(s;\lambda)f(s) }{(1-s^2)W(u_0(\cdot;\lambda),u_2(\cdot;\lambda))} ds+ u_0(\rho;\lambda)\int_\rho^1 \frac{u_2(s;\lambda)f(s) }{(1-s^2)W(u_0(\cdot;\lambda),u_2(\cdot;\lambda))} ds
         \\
         &=\chi_\lambda(\rho)\O(\rho^{-4})\int_0^\rho \O(s^5) f(s) ds+ \chi_\lambda(\rho)\O(\rho^{0})\int_\rho^1 \chi_\lambda(s)\O(s) f(s) ds
         \\
         &\quad+  \chi_\lambda(\rho)\O(\rho^{0})\int_\rho^1 (1-\chi_\lambda(s))\frac{s^{\frac{5}{2}}g_2(s;\lambda)}{(1+s)^{\frac{3}{2}+\lambda}} f(s) ds
         \\
         &\quad + (1-\chi_\lambda(\rho))\frac{\rho^{-\frac{5}{2}}(1-\rho)^{\frac{3}{2}+\lambda}}{\sqrt{3+2\lambda}}g_2(\rho;\lambda)\int_0^\rho \chi_\lambda(s)\O(s^5) f(s) ds
         \\
         &\quad+ c_{0,1}(\lambda) (1-\chi_\lambda(\rho))\frac{\rho^{-\frac{5}{2}}(1-\rho)^{\frac{3}{2}+\lambda}}{3+2\lambda}g_2(\rho;\lambda)\int_0^\rho (1-\chi_\lambda(s))\frac{s^{\frac{5}{2}}g_1(s;\lambda)}{(1-s)^{\frac{3}{2}+\lambda}} f(s) ds
         \\
         &\quad+ c_{0,1}(\lambda) (1-\chi_\lambda(\rho))\frac{\rho^{-\frac{5}{2}}(1+\rho)^{\frac{3}{2}+\lambda}}{3+2\lambda}g_1(\rho;\lambda)\int_\rho^1 (1-\chi_\lambda(s))\frac{s^{\frac{5}{2}}g_2(s;\lambda)}{(1+s)^{\frac{3}{2}+\lambda}} f(s) ds
         \\
         &\quad+ c_{0,2}(\lambda) (1-\chi_\lambda(\rho))\frac{\rho^{-\frac{5}{2}}(1-\rho)^{\frac{3}{2}+\lambda}}{3+2\lambda}g_2(\rho;\lambda)\int_0^1(1-\chi_\lambda(s))\frac{s^{\frac{5}{2}}g_2(s;\lambda)}{(1+s)^{\frac{3}{2}+\lambda}} f(s) ds
     \end{align*}
     where $g_j:= [1+e_j(\rho;\lambda)][1+r_j(\rho;\lambda)]\in C^\infty(\supp (1-\chi_\lambda(\cdot))$
     which is an element of $H^2(\B_1)$.
Using this decomposition, one readily computes that $\mathcal{R}_c(f)$ satisfies the claimed regularity. 
\end{proof}

\subsection{On $[1,\infty)$} 
With this, we move to the exterior regime, $\rho\geqslant 1$. In this case, we use the transformation
\begin{equation}\label{Eq:transform}
	v(\rho) = \rho^{\frac{5}{2}} (\rho^2-1)^{-\frac{1}{4}-\frac{\lambda}{2}}u_1(\rho),
\end{equation}
which kills the first order derivative and turns \eqref{eq:OriginalODE} into
\begin{equation}
	\label{eq:NoFirstDerivativeODE}
	v''(\rho) + \frac{-15+(10-12\lambda+4\lambda^2)\rho^2}{4\rho^2(\rho^2-1)^2} v(\rho) + \frac{V(\rho)}{\rho^2-1}v(\rho)=0.
\end{equation}
Now, to construct solutions to this equation on the interval $(1,\infty)$, we rewrite it as
	\begin{equation}\label{eq:NoFirstDerivativeODE2}
	v''(\rho) + \frac{-5-12\lambda-4\lambda^2}{4(\rho^2-1)^2} v(\rho) =-\left[\frac{V(\rho)}{\rho^2-1}+\frac{15}{4\rho^2(\rho^2-1)}\right]v(\rho)
\end{equation}
and note, that the equation
\begin{align*}
h''(\rho) + \frac{-5-12\lambda-4\lambda^2}{4(1-\rho^2)^2} h(\rho) =0
\end{align*}
has a fundamental system of solutions given by
\begin{align*}
h_1(\rho;\lambda)&=\frac{\sqrt{\rho^2-1}}{\sqrt{3+2\lambda}} \left(\frac{1+\rho}{\rho-1}\right)^{\frac{\lambda}{2}+\frac{3}{4}}
\\
h_2(\rho;\lambda)&=\frac{\sqrt{\rho^2-1}}{\sqrt{3+2\lambda}} \left(\frac{\rho-1}{1+\rho}\right)^{\frac{\lambda}{2}+\frac{3}{4}}
\end{align*}
where the square root of $3+2\lambda$ denotes the principal branch. 
\begin{lemma}\label{lem:solsnear1}
There exists an $r> 1$, such that Eq.~\eqref{eq:NoFirstDerivativeODE} has a fundamental system of the form 
\begin{align*}
v_1(\rho;\lambda)&=\frac{\sqrt{\rho^2-1}}{\sqrt{3+2\lambda}} \left(\frac{1+\rho}{\rho-1}\right)^{\frac{\lambda}{2}+\frac{3}{4}}g_1(\rho;\lambda)
\\
v_2(\rho;\lambda)&=\frac{\sqrt{\rho^2-1}}{\sqrt{3+2\lambda}} \left(\frac{\rho-1}{\rho+1}\right)^{\frac{\lambda}{2}+\frac{3}{4}}g_2(\rho;\lambda)
\end{align*}
for all $1\leqslant \rho<r$,
where $$g_j(\rho;\lambda)=[1+(\rho-1)\O_o(\langle \omega\rangle^{-1})+(\rho-1)\widehat g_j(\rho;\lambda)] \text{ with }|\partial_\rho^n\partial_\omega^\ell \widehat g_j(\rho;\lambda)|\lesssim_{n,\ell}\langle\omega\rangle^{-2-\ell}.$$
\end{lemma}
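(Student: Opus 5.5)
We construct $v_j = h_j g_j$ by a Volterra iteration around the free solutions $h_1,h_2$, treating the right-hand side of \eqref{eq:NoFirstDerivativeODE2} as a perturbation. Write $P(\rho):=\frac{V(\rho)}{\rho^2-1}+\frac{15}{4\rho^2(\rho^2-1)}$. This has only a simple pole at $\rho=1$, since $V$ is smooth and bounded. A direct computation gives $W(h_1,h_2)=-1$, independent of $\lambda$. By variation of constants, solutions of $v''+Q_\lambda v=-Pv$, with $Q_\lambda$ the free coefficient, satisfy
\begin{equation*}
v(\rho)=h_j(\rho)+\int_1^\rho \big[h_1(\rho)h_2(s)-h_2(\rho)h_1(s)\big]P(s)v(s)\,ds .
\end{equation*}
Dividing by $h_j(\rho)$ and setting $g_j=v_j/h_j$ yields
\begin{equation*}
g_j(\rho)=1+\int_1^\rho K_j(\rho,s;\lambda)\,P(s)\,g_j(s)\,ds,\qquad K_j(\rho,s;\lambda)=\frac{[h_1(\rho)h_2(s)-h_2(\rho)h_1(s)]h_j(s)}{h_j(\rho)}.
\end{equation*}

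The first step is to bound the kernel. Put $\Phi(\rho)=\frac12\log\frac{\rho+1}{\rho-1}$, so that $(\frac{1+\rho}{\rho-1})^{\lambda/2+3/4}=e^{(\lambda+3/2)\Phi}$. Then
\begin{equation*}
K_j(\rho,s)=\frac{\sqrt{(\rho^2-1)(s^2-1)}}{3+2\lambda}\big(1-e^{\pm(2\lambda+3)(\Phi(s)-\Phi(\rho))}\big),
\end{equation*}
with the sign chosen so that the exponential has modulus at most $1$ for $1<s<\rho$ and $\Re\lambda>-\frac14$. For $j=2$ this holds because $\Phi$ is decreasing, so $\Phi(s)>\Phi(\rho)$ and we take the negative sign. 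For $j=1$ the roles of $h_1$ and $h_2$ swap, and the exponential is again bounded because $\Re(3+2\lambda)>0$ on $S$. Hence
\begin{equation*}
|K_j(\rho,s)P(s)|\lesssim \langle\omega\rangle^{-1}\sqrt{(\rho^2-1)(s^2-1)}\,(s-1)^{-1}\lesssim \langle\omega\rangle^{-1}\frac{\sqrt{\rho-1}}{\sqrt{s-1}},
\end{equation*}
which is integrable in $s$ on $(1,\rho)$ with integral $\lesssim \langle\omega\rangle^{-1}(\rho-1)$. A standard Volterra lemma then gives a unique solution $g_j=1+(\rho-1)\O(\langle\omega\rangle^{-1})$ on $[1,r)$ for any fixed $r>1$. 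The resulting $v_j$ form a fundamental system because their Wronskian tends to $W(h_1,h_2)=-1$ as $\rho\to1$. The choice of $r$ is only for convenience: we keep $\rho$ in a compact neighbourhood of $1$, where $P$ is smooth apart from the simple pole.

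The main obstacle is splitting $g_j-1$ into a piece that is merely $(\rho-1)\O_o(\langle\omega\rangle^{-1})$ and a symbol-type remainder $(\rho-1)\widehat g_j$ with $\partial_\rho^n\partial_\omega^\ell\widehat g_j=\O(\langle\omega\rangle^{-2-\ell})$. To handle this, I will isolate the first Born term:
\begin{equation*}
g_j^{(1)}(\rho)=\int_1^\rho K_j P\,ds=\frac{1}{3+2\lambda}\int_1^\rho\sqrt{(\rho^2-1)(s^2-1)}\,P(s)\,ds-\frac{1}{3+2\lambda}\int_1^\rho(\cdots)\,e^{\pm(2\lambda+3)(\Phi(s)-\Phi(\rho))}\,ds.
\end{equation*}
The non-oscillatory part is $\frac{\rho-1}{3+2\lambda}$ times a smooth function of $\rho$. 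This is exactly the $\O_o(\langle\omega\rangle^{-1})$ term, meaning the leading term whose $\omega$-derivatives gain a full power of $\langle\omega\rangle^{-1}$ each.

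For the oscillatory part I will change variables to $\Phi$ and integrate by parts once in $s$. Since $\partial_s e^{(2\lambda+3)\Phi(s)}=(2\lambda+3)\Phi'(s)e^{(2\lambda+3)\Phi(s)}$ and $1/\Phi'(s)=-(s^2-1)$ vanishes at $s=1$, this produces another factor of $\langle\omega\rangle^{-1}$ with no boundary term at $s=1$. The result is $\O(\langle\omega\rangle^{-2}(\rho-1))$. Applying $\partial_\omega$ hits the exponential and brings down $(\Phi(s)-\Phi(\rho))$, a logarithmic factor; this should be absorbed by further integrations by parts. The $\partial_\rho$ derivatives are handled by differentiating the Volterra equation after rescaling $s=1+(\rho-1)t$, which turns it into an equation on the fixed interval $t\in(0,1)$. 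The higher Born terms are already $\O(\langle\omega\rangle^{-2})$ and inherit the same symbol bounds by induction on $n$ and $\ell$, as in Lemmas 3.1–3.3 of \cite{Wallauch2024}; I would follow that argument closely. The delicate point I expect is the uniform control of $\partial_\omega^\ell$ near $s=\rho$ and near $\rho=1$, where the phase $\Phi$ is singular.
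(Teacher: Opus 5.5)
Your treatment of $v_2$ is sound, and once the kernel is computed correctly it is essentially the paper's argument. Dividing by $h_j(\rho)$ leaves the factor $(s^2-1)$, not $\sqrt{(\rho^2-1)(s^2-1)}$. Hence $K_2P=\frac{\widetilde V(s)}{3+2\lambda}\big[\big(\frac{\rho+1}{\rho-1}\cdot\frac{s-1}{s+1}\big)^{\lambda+\frac32}-1\big]$ with $\widetilde V(s)=V(s)+\frac{15}{4s^2}$, which is exactly the paper's $K$. The splitting of the first Born term, the integration by parts via $\partial_s\big(\frac{s-1}{s+1}\big)^{\lambda+\frac32}=(3+2\lambda)\big(\frac{s-1}{s+1}\big)^{\lambda+\frac32}(s^2-1)^{-1}$, and the rescaling $s=1+t(\rho-1)$ all match. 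The construction of $v_1$, however, fails. For $j=1$ and $1<s<\rho$ the same computation gives
\[
K_1(\rho,s)P(s)=\frac{\widetilde V(s)}{3+2\lambda}\Big[1-\Big(\frac{s+1}{s-1}\cdot\frac{\rho-1}{\rho+1}\Big)^{\lambda+\frac32}\Big].
\]
The sign in the exponent is dictated by $j$; you cannot choose it. Since $\Phi(s)>\Phi(\rho)$ and $\Re(3+2\lambda)>0$, the exponential has modulus $\geq 1$ and grows like $(s-1)^{-\Re\lambda-\frac32}$ as $s\to1^+$. Because $\Re\lambda+\frac32>\frac54$ on $S$, the integral $\int_1^\rho K_1P\,ds$ already diverges for $g_1\equiv 1$ whenever $\widetilde V(1)\neq0$ (e.g.\ for $V\geq 0$). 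So there is no Volterra equation to solve. The obstruction is structural: $h_1\sim(\rho-1)^{-\frac14-\frac\lambda2}$ is the dominant solution at the regular singular point $\rho=1$, and only the recessive one can be produced by an iteration anchored there.

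The paper instead obtains $v_1$ from $v_2$ by variation of constants, following the proof of Lemma 4.1 in \cite{DonningerWallauch}. For instance, take $r$ close enough to $1$ that $g_2\neq0$ uniformly in $\omega$, and consider $v_2(\rho)\int_\rho^{r}v_2(s)^{-2}\,ds$. Since $h_2^{-2}=-\partial_s\big(\frac{s+1}{s-1}\big)^{\lambda+\frac32}$, one integration by parts produces $h_1g_2^{-1}$, a multiple of $v_2$, and a remainder that yields to the same integration-by-parts and rescaling arguments you use for $g_2$. Alternatively, a Volterra equation with $\int_\rho^{\rho_0}$, $\rho_0>1$, has a bounded kernel. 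In either version a second step is missing: the choice of representative. $v_1$ is only determined modulo $v_2$, and adding $\beta v_2$ changes $\widehat g_1$ by $\beta(\rho-1)^{\lambda+\frac12}$ times a smooth function. Its $\rho$-derivative is unbounded at $\rho=1$ because $\Re\lambda+\frac12<1$. The stated bounds on $\partial_\rho^n\widehat g_1$ therefore force $v_1$ to correspond to the exponent-$0$ Frobenius solution of \eqref{eq:OriginalODEhom} at $\rho=1$, normalised by $g_1(1)=1$. This is the same Frobenius argument the paper uses for the smoothness of $g_2$, and you would need to subtract the appropriate multiple of $v_2$ explicitly.

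Two smaller points. First, the subscript in $\O_o$ denotes oddness in $\omega$, not a symbol-type gain. The leading part of $\frac{1}{3+2\lambda}$ is $\frac{-2i\omega}{|3+2\lambda|^2}$, multiplied by the $\lambda$-independent profile $\frac{1}{\rho-1}\int_1^\rho\widetilde V$; this oddness is what is exploited later via \cite[Lemma 4.2]{Donninger2010}. The even part $\frac{3+2\Re\lambda}{|3+2\lambda|^2}$ belongs in $\widehat g_j$. Second, each $\partial_\rho$ falling on the rescaled phase $\big(\frac{(1+\rho)t}{2+(\rho-1)t}\big)^{\lambda+\frac32}$ costs a factor $\langle\omega\rangle$. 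So differentiating the rescaled Volterra equation does not by itself give loss-free $\rho$-derivative bounds. As in the paper, you first integrate by parts $n$ times to bank $(3+2\lambda)^{-n}$, with $n$ depending on the number of derivatives.
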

\begin{proof}
We prove this Lemma by means of Volterra iterations. For this, we set
$$
\widetilde V(\rho)=\frac{15}{4\rho^2}+V(\rho)
$$ and note that the Wronskian of $h_1$ and $h_2$ is given by
$$
W(h_1(.;\lambda),h_2(.;\lambda))(\rho)=-1.
$$
Thus, we look for solutions of the integral equation
\begin{align}\label{eq:int1}
f(\rho;\lambda)=h_2(\rho;\lambda)-h_2(\rho;\lambda)\int_{\rho_0}^\rho \frac{h_1(s;\lambda)f(s;\lambda)\widetilde V(s)}{s^2-1}ds+h_1(\rho;\lambda)\int_{\rho_0}^\rho \frac{h_2(s;\lambda)f(s;\lambda)\widetilde V(s)}{s^2-1}ds
\end{align}
with $\rho_0\geqslant 1$.
Given that $h_2$ is non-vanishing, we can divide \eqref{eq:int1} by it and set $g=\frac{f}{h_2}$  to arrive at the equation
\begin{align}\label{eq:intk}
g(\rho;\lambda)=1+\int_{\rho_0}^\rho K(\rho,s;\lambda) g(s;\lambda) ds
\end{align}
where
\begin{align*}
K(\rho,s;\lambda)=\frac{\left[\left(\frac{1+\rho}{\rho-1}\frac{s-1}{s+1}\right)^{\lambda+\frac{3}{2}} -1\right]\widetilde{V}(s)}{3+2\lambda}
\end{align*}
Note that
$$
\int_{1}^\rho \sup_{\rho \in (s,r)} |K(\rho,s;\lambda)|ds\lesssim \frac{1}{|3+2\lambda|}.
$$

Hence, we can set $\rho_0=1$ and infer that there exists a unique solution $g_2$ to \eqref{eq:intk}. 
By performing minor modifications of the proof of Lemma 4.1 in \cite{DonningerWallauch}, one obtains that
$$
g_2(\rho;\lambda)=1+(\rho-1)\O(\langle\omega\rangle^{-1})+\O((\rho-1)^2\langle\omega\rangle^{-1}).
$$
To show that $g_2$ is smooth up to $\rho=1$, we undo the transformation \eqref{Eq:transform} to conclude that $u_2(\rho;\lambda):=\rho^{-\frac{5}{2}} (\rho^2-1)^{\frac{1}{4}+\frac{\lambda}{2}}v_2$ is an element of $C^1[(1,r)),$ which is locally of the form $u_2(\rho;\lambda)=(\rho-1)^{\frac{3}{2}+\lambda}[1+g_2(\rho;\lambda)]$ and solves \eqref{eq:OriginalODE} on $(1,r)$. Hence, as the Frobenius indices at $\rho=1$ are given by $\{0,\frac{3}{2}+\lambda\}$ it must be a multiple of the vanishing Frobenius solution and the claim follows.  To show that the leading order behavior in $\omega$ is indeed given by $\O_o(\langle\omega\rangle^{-1})$, we expand 
\begin{align*}
\int_{\rho_0}^\rho K(\rho,s;\lambda)ds=-\frac{1}{3+2\lambda}\int_1^\rho\widetilde{V}(s)ds +\int_1^\rho \frac{\left(\frac{1+\rho}{\rho-1}\frac{s-1}{s+1}\right)^{\lambda+\frac{3}{2}}\widetilde{V}(s)}{3+2\lambda} ds.
\end{align*}
One readily computes that 
\begin{align*}
-\frac{1}{3+2\lambda}\int_1^\rho\widetilde{V}(s) ds=(\rho-1)e(\rho)\left[\frac{2\omega}{|3+2\lambda|^2}- \frac{-3\varepsilon}{|3+2\lambda|^2}\right]= (\rho-1)e(\rho)[\O_o(\langle\omega\rangle^{-1})+\O(\langle\omega\rangle^{-2})]
\end{align*}
where $e=\O(\rho^{-1})$.
Furthermore, one has that
\begin{align*}
\partial_s \left(\frac{s-1}{s+1}\right)^{\lambda+\frac{3}{2}}=(3+2\lambda)\left(\frac{s-1}{s+1}\right)^{\lambda+\frac{3}{2}}(s^2-1)^{-1}
\end{align*}
which we use repeatedly to derive that
\begin{align*}
\int_1^\rho \frac{\left(\frac{1+\rho}{\rho-1}\frac{s-1}{s+1}\right)^{\lambda+\frac{3}{2}}\widetilde{V}(s)}{3+2\lambda} ds&=\int_1^\rho \widetilde{V}(s)(s^2-1)\frac{\partial_s\left(\frac{1+\rho}{\rho-1}\frac{s-1}{s+1}\right)^{\lambda+\frac{3}{2}}}{(3+2\lambda)^2} ds
\\
&= \frac{\widetilde V(\rho)(\rho^2-1)}{(3+2\lambda)^2}-\int_1^\rho \partial_s[\widetilde{V}(s)(s^2-1)]\frac{\left(\frac{1+\rho}{\rho-1}\frac{s-1}{s+1}\right)^{\lambda+\frac{3}{2}}}{(3+2\lambda)^2} ds
\\
&=(1-\rho^2)\sum_{j=2}^n r(\rho)(3+2\lambda)^{-j}+\int_1^\rho \widehat{V}(s)\frac{\left(\frac{1+\rho}{\rho-1}\frac{s-1}{s+1}\right)^{\lambda+\frac{3}{2}}}{(3+2\lambda)^n} ds
\end{align*}
for a smooth function $\widehat{V}(s)$.
Furthermore,  by scaling
\begin{align*}
    &\quad  +\int_1^\rho \widehat{V}(s)\frac{\left(\frac{1+\rho}{\rho-1}\frac{s-1}{s+1}\right)^{\lambda+\frac{3}{2}}}{(3+2\lambda)^n} ds
    \\
    &= \int_0^{\rho-1}\widehat{V}(s+1)\frac{\left(\frac{1+\rho}{\rho-1}\frac{s}{s+2}\right)^{\lambda+\frac{3}{2}}}{(3+2\lambda)^n} ds
    \\
    &=(\rho-1)\int_0^{1} \widehat{V}(1+t(\rho-1))\frac{\left(\frac{(1+\rho)t}{t(\rho-1)+2}\right)^{\lambda+\frac{3}{2}}}{(3+2\lambda)^n}dt.
\end{align*}
Consequently, since $n$ was arbitrary, we see that for $\ell,k\in \mathbb{N}$, with $\ell\geq 1$ and $k\geq 0$ the estimates
\begin{align*}
   \left| \partial_\rho^\ell \partial_\omega^k \int_1^\rho K(\rho,s;\lambda)ds\right|&\lesssim_{n,k} |\partial_\omega^k (3+2\lambda)^{-1} \partial_\rho^{\ell-1}\widetilde V(\rho)|+   
   \langle \omega \rangle^{-2-k}.
\end{align*}
Hence, one readily obtains the desired form of
$
g_2(\rho;\lambda).
$
To construct the second solution, we use the variation of constants formula as in the proof of Lemma 4.1 in \cite{DonningerWallauch}.
\end{proof}
In a similar fashion, one constructs the solutions near $\infty$.

\begin{lemma}
Let $r$ be as in Lemma \ref{lem:solsnear1}. Then, there exists a fundamental system to Eq.~\eqref{eq:NoFirstDerivativeODE} of the form 
\begin{align*}
v_3(\rho;\lambda)&=\frac{\sqrt{\rho^2-1}}{\sqrt{3+2\lambda}} \left(\frac{1+\rho}{\rho-1}\right)^{\frac{\lambda}{2}+\frac{3}{4}}g_3(\rho;\lambda)
\\
v_4(\rho;\lambda)&=\frac{\sqrt{\rho^2-1}}{\sqrt{3+2\lambda}} \left(\frac{\rho-1}{\rho+1}\right)^{\frac{\lambda}{2}+\frac{3}{4}}g_4(\rho;\lambda)
\end{align*}
for all $\frac{1+r}{2}\leqslant \rho<\infty$,
where $g_j(\rho;\lambda)=1+[\O(\rho^{-1})\O_o(\langle\omega\rangle^{-1})+\O(\rho^{-1}\langle\omega\rangle^{-2})]$.
\end{lemma}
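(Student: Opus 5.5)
We construct $v_3$ and $v_4$ by Volterra iteration anchored at $\rho=\infty$, in the same way as Lemma \ref{lem:solsnear1} but integrating from $\infty$ instead of from $1$. We again write Eq.~\eqref{eq:NoFirstDerivativeODE} in the form \eqref{eq:NoFirstDerivativeODE2}, with right-hand side $-\widetilde V(\rho)(\rho^2-1)^{-1}v$ and $\widetilde V(\rho)=\frac{15}{4\rho^2}+V(\rho)$. We use the free fundamental system $h_1,h_2$, whose Wronskian is $-1$. The key difference from the region near $1$ is the size of the potential: for $\rho\geqslant \frac{1+r}{2}$ we have $\widetilde V(s)(s^2-1)^{-1}=\O(s^{-4})$, since $V$ decays like $\rho^{-2}$. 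This is integrable at infinity, so we may take the base point $\rho_0=\infty$.

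\textbf{Step 1: the integral equation for $g_4$.} For $v_4=h_2 g_4$ we write
\begin{equation*}
g_4(\rho;\lambda)=1-\int_\rho^\infty K(\rho,s;\lambda)g_4(s;\lambda)\,ds,
\end{equation*}
with the same kernel $K$ as in Lemma \ref{lem:solsnear1}. For $s\geqslant\rho$ and $\Re\lambda\in(-\frac14,\frac14)$ the ratio $\frac{1+\rho}{\rho-1}\frac{s-1}{s+1}$ lies in $(0,1]$, and $\Re(\lambda+\frac32)>0$. Hence the bracket in $K$ is bounded, and
\begin{equation*}
|K(\rho,s;\lambda)|\lesssim \frac{\widetilde V(s)}{|3+2\lambda|}\lesssim \langle\omega\rangle^{-1}s^{-2}.
\end{equation*}
Since $\int_\rho^\infty\sup_{\rho'\in(\frac{1+r}{2},s)}|K(\rho',s)|\,ds\lesssim \rho^{-1}\langle\omega\rangle^{-1}$, the standard Volterra lemma gives a unique bounded solution with $g_4=1+\O(\rho^{-1}\langle\omega\rangle^{-1})$.

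For $v_3=h_1g_3$ the kernel instead involves $\left(\frac{1+\rho}{\rho-1}\frac{s-1}{s+1}\right)^{-(\lambda+\frac32)}$. Over $s\in(\rho,\infty)$ this factor is bounded by $\left(\frac{\rho+1}{\rho-1}\right)^{\Re\lambda+\frac32}$, which is bounded uniformly for $\rho\geqslant \frac{1+r}{2}>1$. We can therefore run the same iteration from $\infty$ directly; no variation of constants is needed, because $h_1$ and $h_2$ are both bounded multiples of $\sqrt{\rho^2-1}$ at infinity. This yields $g_3=1+\O(\rho^{-1}\langle\omega\rangle^{-1})$, and linear independence follows from $W(v_3,v_4)\to W(h_1,h_2)=-1$ as $\rho\to\infty$.

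\textbf{Step 2: extracting the leading term.} To refine the error to $\O(\rho^{-1})\O_o(\langle\omega\rangle^{-1})+\O(\rho^{-1}\langle\omega\rangle^{-2})$, we expand the first Picard iterate as in Lemma \ref{lem:solsnear1}. The non-oscillatory part is
\begin{equation*}
-\frac{1}{3+2\lambda}\int_\rho^\infty \widetilde V(s)\,ds=\O(\rho^{-1})\frac{1}{3+2\lambda},
\end{equation*}
and $\frac{1}{3+2\lambda}$ is exactly a symbol of the form $\O_o(\langle\omega\rangle^{-1})+\O(\langle\omega\rangle^{-2})$. In the oscillatory part we integrate by parts using
\begin{equation*}
\partial_s\left(\frac{s-1}{s+1}\right)^{\lambda+\frac32}=(3+2\lambda)\left(\frac{s-1}{s+1}\right)^{\lambda+\frac32}(s^2-1)^{-1}.
\end{equation*}
Each step gains a factor $(3+2\lambda)^{-1}$, so this part is $\O(\rho^{-1}\langle\omega\rangle^{-2})$. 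The boundary term at $s=\rho$ is $\widetilde V(\rho)(\rho^2-1)(3+2\lambda)^{-2}=\O(\rho^{0}\langle\omega\rangle^{-2})$, and we must check that it is in fact $\O(\rho^{-1})$. The boundary term at infinity vanishes, since $\widetilde V(s)(s^2-1)\to$ const while the oscillating factor tends to $1$, so that term cancels against the constant part. The higher Volterra iterates are $\O(\rho^{-2}\langle\omega\rangle^{-2})$ and cause no problem. Derivative bounds in $\rho$ and $\omega$ follow by differentiating under the integral as in \cite{DonningerWallauch}.

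\textbf{Main obstacle.} The delicate point is the $\rho$-decay of the boundary term $\frac{\widetilde V(\rho)(\rho^2-1)}{(3+2\lambda)^2}$, which is only $\O(1)$ if $V\sim\rho^{-2}$. To get $\rho^{-1}$ decay we rewrite the kernel in the variable $y=\log\frac{s+1}{s-1}\sim 2/s$, so that $K=\widetilde V(s)(s^2-1)\,(e^{-(\lambda+\frac32)(y(s)-y(\rho))}-1)/(3+2\lambda)\cdot y'(s)$. The integral then runs over $y\in(0,y(\rho))$, an interval of length $\O(\rho^{-1})$. Every term, boundary contributions included, inherits a factor $y(\rho)\lesssim\rho^{-1}$, provided we subtract the constant limit of $\widetilde V(s)(s^2-1)$ before integrating by parts. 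We expect this bookkeeping to be the only real work.
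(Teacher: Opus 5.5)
Your Step 1 is the paper's proof: Volterra iteration from $\rho=\infty$ for both solutions, and independence from the limit of the Wronskian. It needs two small repairs.

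First, for $s\geqslant\rho$ the ratio $R:=\frac{1+\rho}{\rho-1}\frac{s-1}{s+1}$ lies in $[1,\frac{\rho+1}{\rho-1})$, not in $(0,1]$. It is the factor $R^{-(\lambda+\frac32)}$ in the $h_1$-kernel that is bounded by $1$. This is harmless, because $\rho\geqslant\frac{1+r}{2}>1$ keeps both factors bounded.

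Second, since $h_1h_2=\frac{\rho^2-1}{3+2\lambda}$, the limit $W(v_3,v_4)\to W(h_1,h_2)$ needs $\partial_\rho g_j=o(\rho^{-2})$. This follows from $K(\rho,\rho;\lambda)=0$ and $\partial_\rho K=-R^{\lambda+\frac32}\widetilde V(s)(\rho^2-1)^{-1}$, which give $\partial_\rho g_j=\O(\rho^{-3})$, as in the paper.

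\textbf{The gap: Step 2.} The boundary term at $s=\infty$ does not vanish. As $s\to\infty$, $R^{\lambda+\frac32}\to e^{\kappa Y}$, where $\kappa:=\lambda+\frac32$ and $Y:=\log\frac{\rho+1}{\rho-1}\simeq\frac{2}{\rho}$. The two boundary terms therefore combine to $\pm\bigl(Le^{\kappa Y}-\widetilde V(\rho)(\rho^2-1)\bigr)(3+2\lambda)^{-2}$, with $L:=\lim_{s\to\infty}s^2\widetilde V(s)=\frac{15}{4}+\lim_{s\to\infty} s^2V(s)>0$.

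Their main part $L(e^{\kappa Y}-1)(3+2\lambda)^{-2}$ is not $\O(\rho^{-1}\langle\omega\rangle^{-2})$. Indeed $|e^{\kappa Y}-1|$ is of size $\min\{|\kappa|Y,1\}$; for instance it is $\approx 2$ at $|\omega|\approx\frac{\pi\rho}{2}$. You get either the factor $Y$ or the second power of $\langle\omega\rangle^{-1}$, never both. Likewise, for $|\omega|\ll\rho$ your ``oscillatory part'' is, to leading order, minus the non-oscillatory part. It is of order $\rho^{-1}\langle\omega\rangle^{-1}$ and cancels that part rather than being smaller than it.

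Your $y$-substitution does not repair this; it makes it exact. Take the part of $\widetilde V(s)(s^2-1)$ equal to its limit $L$. With your kernel, its contribution to the first iterate of $g_4$ is exactly $\frac{L}{4\kappa^2}\bigl(1+\kappa Y-e^{\kappa Y}\bigr)$.
\begin{itemize}
\item For $1\ll|\omega|\ll\rho$ this equals $-\frac{L}{8}Y^2(1+o(1))=-\frac{L}{2\rho^2}(1+o(1))$, independent of $\omega$.
\item The higher iterates are $\O(\rho^{-2}\langle\omega\rangle^{-2})$, and the contribution of $\widetilde V(s)(s^2-1)-L$ is $o(\rho^{-2})$ in this regime, so neither can cancel it.
\item Averaging over $\pm\omega$ removes any odd leading term but leaves $-\frac{L}{2\rho^2}(1+o(1))$. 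This is not $\O(\rho^{-1}\langle\omega\rangle^{-2})$ once $|\omega|\gg\rho^{1/2}$.
\end{itemize}
So no bookkeeping yields the refined form with a remainder that is $\O(\rho^{-1}\langle\omega\rangle^{-2})$ uniformly on $[\frac{1+r}{2},\infty)$.

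What Step 1 does prove, and what the Volterra construction in the paper delivers, is $g_j=1+\O(\rho^{-1}\langle\omega\rangle^{-1})$ uniformly. This is the form recorded in Lemma \ref{lemma:FundamentalSystem1Infty}. The finer splitting into $(3+2\lambda)^{-1}$ times a function of $\rho$, plus $\O(\langle\omega\rangle^{-2})$, holds only locally uniformly in $\rho$. For $\rho$ in a compact set, $Y$ is bounded below and one integration by parts in $y$ suffices. That is enough for the connection coefficients in Lemma \ref{lem:concoef}, which are evaluated at a fixed $\rho\in(\frac{1+r}{2},r)$.
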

\begin{proof}
By performing two Volterra iterations, one for each solution, one constructs the two claimed solutions. Furthermore, to show that the constructed solutions are linearly independent, we note that for $\lambda\in S$ fixed we have, by construction,
\begin{align*}
    g_3(\rho;\lambda)&=1+\int_{\rho_0}^\rho \frac{\left[\left(\frac{1+\rho}{\rho-1}\frac{s-1}{s+1}\right)^{\lambda+\frac{3}{2}} -1\right]\widetilde{V}(s)}{3+2\lambda} g(s;\lambda) ds
    \\
    &=1+\int_{\rho}^\infty \frac{\left[\left(\frac{1+\rho}{\rho-1}\frac{s-1}{s+1}\right)^{\lambda+\frac{3}{2}} -1\right]\widetilde{V}(s)}{3+2\lambda} ds+\O(\rho^{-3}).
\end{align*}
and \begin{align*}
    g_4(\rho;\lambda)  =1+\int_{\rho}^\infty \frac{\left[\left(\frac{\rho-1}{\rho+1}\frac{s+1}{s-1}\right)^{\lambda+\frac{3}{2}} -1\right]\widetilde{V}(s)}{3+2\lambda} ds+\O(\rho^{-3}).
\end{align*}
In particular, 
$$
|\partial_\rho g_j(\rho;\lambda)|\lesssim_\lambda \rho^{-3}, \qquad \text{ for } j=3,4.
$$
Therefore,
\begin{align*}
    W(v_3(\cdot;\lambda),v_4(\cdot;\lambda))(\rho)= -1+\frac{\rho^2-1}{3+2\lambda}\O(\rho^{-3}).
\end{align*}
Thus, as the Wronskian of $v_3$ and $v_4$ is constant, we can let $\rho\to \infty$, to conclude that 
\begin{equation*}
W(v_3(\cdot;\lambda),v_4(\cdot;\lambda))(\rho)= -1.
\end{equation*}
\end{proof}

We also readily compute the connection coefficients of these solutions.
\begin{lemma}\label{lem:concoef}
There exist functions $a_{i,j}(\lambda)$  with $i=1,2$, $j=3,4$ such that
\begin{align*}
v_1(\rho;\lambda)= a_{1,3}(\lambda) v_3(\rho;\lambda)+a_{1,4} v_4(\rho;\lambda)
\\
v_2(\rho;\lambda)= a_{2,3}(\lambda) v_3(\rho;\lambda)+a_{2,4} v_4(\rho;\lambda)
\end{align*}
with
\begin{align*} 
	a_{1,3}(\lambda)&=1+ \O_o(\langle\omega\rangle^{-1})+\O(\langle\omega\rangle^{-2})\quad &&a_{1,4}(\lambda)=(\tfrac{1+r}{1-r})^{k-1-\lambda}(\O\langle\omega\rangle^{-2})
	\\
	a_{2,3}(\lambda)&= \O(\langle\omega\rangle^{-2}) \quad &&a_{2,4}(\lambda)=1+ \O_o(\langle\omega\rangle^{-1})+\O(\langle\omega\rangle^{-2}).
\end{align*}
for all $\rho \in (\frac{1+r}{2}, r)$.
\end{lemma}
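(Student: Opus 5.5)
The connection coefficients will be computed as Wronskians on the overlap interval $(\frac{1+r}{2},r)$. Since $W(v_3,v_4)=-1$ is constant, for $\rho$ in the overlap we have
\begin{align*}
a_{1,3}(\lambda)=-W(v_1(\cdot;\lambda),v_4(\cdot;\lambda))(\rho),\quad a_{1,4}(\lambda)=W(v_1(\cdot;\lambda),v_3(\cdot;\lambda))(\rho),
\end{align*}
and analogously for $a_{2,3}$ and $a_{2,4}$. The point $\rho$ is free in the overlap, and I will fix a convenient one, say $\rho_*=\frac{3+r}{4}$. I then plug in the representations from Lemma \ref{lem:solsnear1} and the subsequent lemma. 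The common prefactor $\frac{\rho^2-1}{3+2\lambda}$ appears in all four products. The Wronskian of the leading parts $h_1,h_2$ equals $-1$, while $W(h_1,h_1)=W(h_2,h_2)=0$.

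For the diagonal coefficients I use the product rule: if $v_1=h_1g_1$ and $v_4=h_2g_4$, then
\begin{align*}
W(v_1,v_4)=g_1g_4W(h_1,h_2)+h_1h_2\,(g_1g_4'-g_1'g_4).
\end{align*}
Here $h_1h_2=\frac{\rho^2-1}{3+2\lambda}=\O(\langle\omega\rangle^{-1})$ on the overlap. Moreover $g_j'=\O(\langle\omega\rangle^{-1})$, because the $\rho$-derivative hits only the $\O_o(\langle\omega\rangle^{-1})$ and $\O(\langle\omega\rangle^{-2})$ corrections. So the second term is $\O(\langle\omega\rangle^{-2})$. The first term is $-g_1g_4=-1+\O_o(\langle\omega\rangle^{-1})+\O(\langle\omega\rangle^{-2})$, using that the $\O_o$ pieces multiply into $\O(\langle\omega\rangle^{-2})$. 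This gives the claims for $a_{1,3}$, and identically for $a_{2,4}$.

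For the off-diagonal coefficients I compute, for example,
\begin{align*}
W(v_2,v_3)=h_2h_1(g_2g_3'-g_2'g_3)+g_2g_3W(h_2,h_1).
\end{align*}
Naively this is $\O(1)$, so the leading terms must cancel. To get the cancellation I would not use the expansions directly. Instead I would exploit that $g_2$ and $g_3$ solve Volterra equations with the same kernel $K$, differing only in the base point ($1$ versus $\infty$). Then $h_2 g_2$ and $h_1 g_3$ differ in their $h_1$-content only through the integral of $h_2 f \widetilde V/(s^2-1)$ over the gap between base points. The resulting coefficient is $\int \frac{h_2^2\widetilde V}{s^2-1}ds$ along the relevant path. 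After the integration-by-parts trick from Lemma \ref{lem:solsnear1}, based on $\partial_s(\frac{s-1}{s+1})^{\lambda+3/2}=(3+2\lambda)(\cdot)(s^2-1)^{-1}$, this coefficient is $\O(\langle\omega\rangle^{-2})$. The same applies to $a_{1,4}$, up to the exponential factor $(\frac{1+r}{1-r})^{\ldots-\lambda}$, which comes from evaluating $h_1/h_2$ at the base point. I expect to use $|{(\tfrac{s-1}{s+1})^{i\omega}}|=1$ throughout to make the oscillatory weights harmless.

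The main obstacle is the off-diagonal step: extracting the two powers of $\langle\omega\rangle$ requires the oscillatory cancellation rather than crude bounds on the $g_j$. I would carry this out by writing $a_{2,3}=\mp\int_{\rho_*}^{\infty}\frac{h_2(s)v_2(s)\widetilde V(s)}{s^2-1}ds$ (variation of constants) and integrating by parts once. Each integration by parts gains $(3+2\lambda)^{-1}$. The boundary terms vanish at $s=\infty$ by the decay of $\widetilde V$, and at the finite end they are controlled by the $(\rho-1)$ factors.
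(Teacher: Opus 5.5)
Your diagonal computation is correct and is exactly what the paper does. You evaluate $-W(v_1,v_4)$ at a point of the overlap and use $W(h_1,h_2)=-1$, $h_1h_2=\frac{\rho^2-1}{3+2\lambda}$ and $g_j'=\O(\langle\omega\rangle^{-1})$. Your sign $a_{1,3}=-W(v_1,v_4)$ is the right one. One small slip: $\rho_*=\frac{3+r}{4}$ is smaller than $\frac{1+r}{2}$, so it is not in the overlap; take $\rho_*=\frac{1+3r}{4}$ instead.

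The off-diagonal step, however, rests on a misidentification. Expanding $v_2=a_{2,3}v_3+a_{2,4}v_4$ gives $W(v_2,v_3)=a_{2,4}W(v_4,v_3)=a_{2,4}$. So the Wronskian you display is a diagonal coefficient, and it really equals $g_2g_3+\O(\langle\omega\rangle^{-2})=1+\dots$. There is no cancellation in it, and any attempt to prove it small must fail.

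The missing observation is that the off-diagonal coefficients $a_{2,3}=-W(v_2,v_4)$ and $a_{1,4}=W(v_1,v_3)$ pair solutions with the same free leading factor. Then $W(h_j,h_j)=0$ kills the $\O(1)$ term, and the product rule alone gives
\[
W(v_2,v_4)=h_2^2\,(g_2g_4'-g_2'g_4),\qquad W(v_1,v_3)=h_1^2\,(g_1g_3'-g_1'g_3).
\]
On the overlap, $h_2^2=\frac{\rho^2-1}{3+2\lambda}\bigl(\frac{\rho-1}{\rho+1}\bigr)^{\lambda+\frac32}=\O(\langle\omega\rangle^{-1})$. Each bracket is $\O(\langle\omega\rangle^{-1})$ by the derivative bounds on $g_j$ you already used. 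Hence $a_{2,3}=\O(\langle\omega\rangle^{-2})$ immediately. Likewise $a_{1,4}$ is $\O(\langle\omega\rangle^{-2})$ times $\bigl(\frac{\rho_*+1}{\rho_*-1}\bigr)^{\lambda+\frac32}$, coming from $h_1(\rho_*)^2$; this is the source of the $\lambda$-dependent power in the statement. This is all the paper means by ``in a similar fashion'': the oscillatory gain is already built into the bounds on $g_j'$, so no new Volterra argument is needed.

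Your Volterra fallback is also not right as stated:
\begin{enumerate}
\item $g_2$ and $g_3$ are normalized against different free solutions ($h_2$ versus $h_1$), so they do not share a kernel. The natural pairs are $(g_2,g_4)$ and $(g_1,g_3)$.
\item The formula $a_{2,3}=\mp\int_{\rho_*}^\infty\frac{h_2v_2\widetilde V}{s^2-1}\,ds$ drops the contribution of $[1,\rho_*]$.
\item The same formula needs $v_2$ on $(\rho_*,\infty)$, where $v_2$ is only known through $a_{2,3}v_3+a_{2,4}v_4$, which is circular.
\item After integrating by parts, the boundary term at $s=\infty$ does not vanish, since $(s^2-1)\widetilde V(s)$ tends to a nonzero constant ($\widetilde V$ decays only like $s^{-2}$). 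It is merely $\O(\langle\omega\rangle^{-2})$.
\end{enumerate}
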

\begin{proof}
Given that 
$$
W(v_3,v_4)=-1
$$
one readily computes that
\begin{align*}
a_{1,3}=W(v_1(.;\lambda)v_4(.;\lambda))(\rho)=W(v_1(.;\lambda)v_4(.;\lambda))(r)=1+ \O_o(\langle\omega\rangle^{-1})+\O(\langle\omega\rangle^{-2})
\end{align*}
In a similar fashion, the remaining coefficients are computed.
\end{proof}
Hence, by reversing the transformation \eqref{Eq:transform}, that eliminated the first order terms in equation \eqref{eq:OriginalODE}
 and picking a smooth cut-off, which we call $\kappa$, that is supported on $[1,r]$ and equals $1$ on $[1,\frac{1}{2}(1+r)]$, one arrives at the following.
 \begin{lemma}
 	\label{lemma:FundamentalSystem1Infty}
For all $\rho \geqslant 1$ and all $\lambda\in S$, the equation \begin{equation}
	\label{eq:OriginalODEhom}
	(1-\rho^2)u_1''(\rho) + \Big(\frac{5}{\rho}+2(\lambda-2)\rho\Big)u_1'(\rho) - ((\lambda-1)(\lambda-2)-V(\rho))u_1(\rho)=0
\end{equation}
has a fundamental system of solutions given by
\begin{equation}
	\label{eq:FundamentalSystemOn1Infty}
	\begin{split}
		\tilde{u}_{1}(\rho;\lambda)&=
		\kappa(\rho)\rho^{-\frac{5}{2}}\frac{(1+\rho)^{\frac32+\lambda}}{\sqrt{3+2\lambda}}g_1(\rho;\lambda)\nonumber
		\\
		&\quad +(1-\kappa(\rho))\rho^{-\frac{5}{2}}\left[a_{1,3}(\lambda)\frac{(1+\rho)^{\frac32+\lambda}}{\sqrt{3+2\lambda}}g_3(\rho;\lambda)
		+
		a_{1,4}(\lambda)\frac{(\rho-1)^{\frac32+\lambda}}{\sqrt{3+2\lambda}}g_4(\rho;\lambda)\right]
		\\
		\tilde u_2(\rho;\lambda)&= \kappa(\rho)\rho^{-\frac{5}{2}}\frac{(\rho-1)^{\frac32+\lambda}}{\sqrt{3+2\lambda}}g_2(\rho;\lambda)
		\\
		&\quad
		+(1-\kappa(\rho))\rho^{-\frac{5}{2}}\left[a_{2,3}(\lambda)\frac{(1+\rho)^{\frac32+\lambda}}{\sqrt{3+2\lambda}}g_3(\rho;\lambda)
		+ a_{2,4}(\lambda)\frac{(\rho-1)^{\frac32+\lambda}}{\sqrt{3+2\lambda}}g_4(\rho;\lambda)\right]
	\end{split}
\end{equation}
for
\begin{align*} 
	a_{1,3}(\lambda)&=1+ \O_o(\langle\omega\rangle^{-1})+\O(\langle\omega\rangle^{-2})\quad &&a_{1,4}(\lambda)=(\tfrac{1+r}{1-r})^{k-1-\lambda}(\O\langle\omega\rangle^{-2})
	\\
	a_{2,3}(\lambda)&= \O(\langle\omega\rangle^{-2}) \quad &&a_{2,4}(\lambda)=1+ \O_o(\langle\omega\rangle^{-1})+\O(\langle\omega\rangle^{-2})
\end{align*}
and where the $g_j$ are jointly smooth functions that satisfy
\begin{align*}
g_j(\rho;\lambda)=[1+(\rho-1)\widehat g_j(\rho;\lambda)] \text{ where }|\partial_\rho^n\partial_\omega^\ell \widehat g_j(\rho;\lambda)|\lesssim_{n,\ell}\langle\omega\rangle^{-1-\ell}
\end{align*}
for $j=1,2$, $n,\ell\in \mathbb{N}$, and all $1<\rho<r$ as well as $\lambda \in S$
 and
\begin{equation*}
g_j(\rho;\lambda)=1+\O(\rho^{-1} \langle\omega\rangle^{-1})
\end{equation*}
for $j=3,4$. 
Furthermore, their Wronskian is given by
\begin{equation*}
W(\rho;\lambda):=W(\tilde{u}_1(.;\lambda),\tilde{u}_2(.;\lambda))(\rho)=\rho^{-5} (\rho^2-1)^{\frac{1}{2}+\lambda}.
\end{equation*}
\end{lemma}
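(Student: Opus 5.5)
The plan is to build $\tilde u_1,\tilde u_2$ directly from the fundamental systems $\{v_1,v_2\}$ near $\rho=1$ (Lemma \ref{lem:solsnear1}) and $\{v_3,v_4\}$ near infinity, glued through the connection coefficients of Lemma \ref{lem:concoef}. The inverse of the transformation \eqref{Eq:transform} is $u=\rho^{-\frac52}(\rho^2-1)^{\frac14+\frac\lambda2}v$. Applying it to $v_1,v_3$ gives
\[
\rho^{-\frac52}(\rho^2-1)^{\frac14+\frac\lambda2}\frac{\sqrt{\rho^2-1}}{\sqrt{3+2\lambda}}\Big(\frac{1+\rho}{\rho-1}\Big)^{\frac\lambda2+\frac34}g_j=\rho^{-\frac52}\frac{(1+\rho)^{\frac32+\lambda}}{\sqrt{3+2\lambda}}g_j ,
\]
and similarly $v_2,v_4$ give $\rho^{-\frac52}(\rho-1)^{\frac32+\lambda}(3+2\lambda)^{-1/2}g_j$. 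Call these $U_j$, $j=1,\dots,4$. Each $U_j$ solves \eqref{eq:OriginalODEhom} on the interval where it is defined: $(1,r)$ for $j=1,2$, and $(\frac{1+r}{2},\infty)$ for $j=3,4$.

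Define $\tilde u_1$ to be $U_1$ on $[1,r)$ and $a_{1,3}U_3+a_{1,4}U_4$ on $(\frac{1+r}{2},\infty)$. On the overlap $(\frac{1+r}2,r)$ these agree, because Lemma \ref{lem:concoef} states $v_1=a_{1,3}v_3+a_{1,4}v_4$ there and the transformation is linear. Hence
\[
\tilde u_1=\kappa U_1+(1-\kappa)(a_{1,3}U_3+a_{1,4}U_4)
\]
holds identically for any cutoff $\kappa$ that is supported in $[1,r]$ and equals $1$ on $[1,\frac{1+r}{2}]$. This is the formula in the statement, and it defines a genuine solution on all of $[1,\infty)$. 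The point to stress is that the cutoff does not create an error term: it only selects which of two equal representations is used. The same argument applies to $\tilde u_2$ with $a_{2,3},a_{2,4}$. The asymptotics of the $a_{i,j}$ are copied directly from Lemma \ref{lem:concoef}.

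The bounds on the $g_j$ come from the earlier lemmas. For $j=1,2$ on $(1,r)$, Lemma \ref{lem:solsnear1} gives $g_j=1+(\rho-1)[\O_o(\langle\omega\rangle^{-1})+\widehat g_j]$. The term $\O_o(\langle\omega\rangle^{-1})$ is, from the proof, $(3+2\lambda)^{-1}$ times a smooth function of $\rho$, and the remainder has symbol bounds $\langle\omega\rangle^{-2-\ell}$. Together these satisfy the weaker symbol bound $\langle\omega\rangle^{-1-\ell}$ required here, with the $\rho$-derivatives uniform on $(1,r)$. Joint smoothness up to $\rho=1$ follows from the Frobenius argument already given in the proof of Lemma \ref{lem:solsnear1}. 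For $j=3,4$ the bound $1+\O(\rho^{-1}\langle\omega\rangle^{-1})$ is the statement of the lemma on the solutions near infinity.

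Finally, the Wronskian. First, $W(v_1,v_2)=-1$: near $\rho=1$ the leading parts $h_1,h_2$ have Wronskian $-1$, the corrections vanish as $\rho\to1$, and the Wronskian is constant. For $u=m v$ with $m(\rho)=\rho^{-\frac52}(\rho^2-1)^{\frac14+\frac\lambda2}$, we have $W(mv_1,mv_2)=m^2W(v_1,v_2)$, and
\[
m^2=\rho^{-5}(\rho^2-1)^{\frac12+\lambda}.
\]
So $W(\tilde u_1,\tilde u_2)=\pm\rho^{-5}(\rho^2-1)^{\frac12+\lambda}$. The ordering and sign convention have to be tracked carefully to recover the sign as stated; the interior computation, where $W(\psi_3,\psi_4)=-1$ combined with the same weight gave a positive-looking Wronskian, serves as a consistency check.

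I expect the main obstacle to be bookkeeping rather than analysis. One issue is making sure the $\omega$-derivative symbol bounds survive on the overlap region, where the coefficients $a_{1,4}$ carry the factor $(\frac{1+r}{1-r})^{\cdots-\lambda}$; this factor has modulus bounded on $S$, since $\Re\lambda$ is bounded there. The other is fixing the Wronskian sign consistently with the convention used for the interior Wronskian $W(u_0,u_2)$.
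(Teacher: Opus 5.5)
Your construction is what the paper does; its justification is only the sentence before the lemma, ``reverse \eqref{Eq:transform} and glue with $\kappa$''. Your observation that the cutoff merely selects between two identical representations on $(\frac{1+r}{2},r)$, so no error term arises, is the right reason $\tilde u_1,\tilde u_2$ are exact solutions. The bounds on $g_j$ and $a_{i,j}$ are indeed inherited from Lemmas \ref{lem:solsnear1} and \ref{lem:concoef}.

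The gap is the sign of the Wronskian, which you leave as $\pm$. The value $W(h_1,h_2)=-1$ you start from (also printed in the paper just before Lemma \ref{lem:solsnear1}) is a sign slip. With $W(f_1,f_2)=f_1f_2'-f_1'f_2$, write $h_{1,2}=(3+2\lambda)^{-\frac12}\sqrt{\rho^2-1}\,q^{\mp\alpha}$ with $q=\frac{\rho-1}{\rho+1}$ and $\alpha=\frac{\lambda}{2}+\frac34$. Since $q'/q=\frac{2}{\rho^2-1}$, the terms involving the derivative of $\sqrt{\rho^2-1}$ cancel, and
\[
W(h_1,h_2)=\frac{\rho^2-1}{3+2\lambda}\cdot 2\alpha\,\frac{q'}{q}=\frac{4\alpha}{3+2\lambda}=1 .
\]
Next, $W(v_1,v_2)=g_1g_2\,W(h_1,h_2)+h_1h_2\,W(g_1,g_2)$, with $h_1h_2=\frac{\rho^2-1}{3+2\lambda}\to0$ and $g_j(1)=1$. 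So the constant Wronskian of \eqref{eq:NoFirstDerivativeODE} is $1$. With your weight $m=\rho^{-\frac52}(\rho^2-1)^{\frac14+\frac\lambda2}$ this gives $W(\tilde u_1,\tilde u_2)=m^2=\rho^{-5}(\rho^2-1)^{\frac12+\lambda}$ on all of $(1,\infty)$, exactly as stated and with no ambiguity. Taken literally, your value $-1$ would give the opposite sign.

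The interior cannot serve as a consistency check. There $q=\frac{1-\rho}{1+\rho}$ is decreasing, so $W(\psi_3,\psi_4)=-1$ is correct and $W(u_1,u_2)=-\rho^{-5}(1-\rho^2)^{\frac12+\lambda}$ is genuinely negative; the sign flip between the two regions is real.

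A smaller point concerns your bounded-modulus remark for the prefactor of $a_{1,4}$. It is correct for the factor that actually arises: $a_{1,4}=W(v_3,v_1)(\rho_*)=h_1(\rho_*)^2\,W(g_3,g_1)(\rho_*)$ for a fixed $\rho_*\in(\frac{1+r}{2},r)$, which contains $\big(\frac{\rho_*+1}{\rho_*-1}\big)^{\lambda+\frac32}$ with a positive base. For the printed base $\frac{1+r}{1-r}<0$ the remark would fail, since a complex power of a negative number grows like $e^{\pi|\omega|}$ in one direction of $\omega$.
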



\begin{remark}
    For later purposes, we define
\begin{equation}
    \zeta(\lambda)=a_{1,3}(\lambda)a_{2,4}(\lambda)-a_{1,4}(\lambda)a_{2,3}(\lambda),
\end{equation}
and there exists a constant $a(V)$ depending only on the potential $V$, such that
\begin{align*}
	\zeta(\lambda)=1+\frac{a(V)}{3+2\lambda}+\left[1+(\tfrac{1+r}{1-r})^{1-\lambda}\right](\O\langle\omega\rangle^{-2}).
\end{align*}
\end{remark}

\subsubsection{The free counterparts} We also consider equation \eqref{eq:OriginalODE} with $V=0$:
\begin{equation}
	\label{eq:ODEWithV=0}
	(1-\rho^2)u_1''(\rho) + \Big(\frac{5}{\rho}+2(\lambda-2)\rho\Big)u_1'(\rho) - ((\lambda-1)(\lambda-2))u_1(\rho)=F_{\lambda}(\rho).
\end{equation}

We have the following result for \eqref{eq:ODEWithV=0}.
\begin{lemma}
	For all $\rho \geqslant 1$ and all $\lambda \in S$, the homogeneous version of equation \eqref{eq:ODEWithV=0} has a fundamental system of solutions given by
\begin{equation}
	\begin{split}
		\tilde{u}_{\mathrm{f}_1}(\rho;\lambda)&=
		\kappa(\rho)\rho^{-\frac{5}{2}}\frac{(1+\rho)^{\frac32+\lambda}}{\sqrt{3+2\lambda}}g_{\mathrm{f}_1}(\rho;\lambda)\nonumber
		\\
		&\quad +(1-\kappa(\rho))\rho^{-\frac{5}{2}}\left[a_{\mathrm{f}_{1,3}}(\lambda)\frac{(1+\rho)^{\frac32+\lambda}}{\sqrt{3+2\lambda}}g_{\mathrm{f}_3}(\rho;\lambda)
		+
		a_{\mathrm{f}_{1,4}}(\lambda)\frac{(\rho-1)^{\frac32+\lambda}}{\sqrt{3+2\lambda}}g_{\mathrm{f}_4}(\rho;\lambda)\right]
		\\
		\tilde {u}_{\mathrm{f}_2}(\rho;\lambda)&= \kappa(\rho)\rho^{-\frac{5}{2}}\frac{(\rho-1)^{\frac32+\lambda}}{\sqrt{3+2\lambda}}g_{\mathrm{f}_2}(\rho;\lambda)
		\\
		&\quad
		+(1-\kappa(\rho))\rho^{-\frac{5}{2}}\left[a_{\mathrm{f}_{2,3}}(\lambda)\frac{(1+\rho)^{\frac32+\lambda}}{\sqrt{3+2\lambda}}g_{\mathrm{f}_3}(\rho;\lambda)
		+ a_{\mathrm{f}_{2,4}}(\lambda)\frac{(\rho-1)^{\frac32+\lambda}}{\sqrt{3+2\lambda}}g_{\mathrm{f}_4}(\rho;\lambda)\right]
	\end{split}
\end{equation}
where
$$a_{\mathrm{f}_{1,j}}=(\langle\omega\rangle^0),\quad a_{\mathrm{f}_{i,j}}=(\langle\omega\rangle^0)$$
and where the $g_{\mathrm{f}_j}$ are jointly smooth functions that satisfy
\begin{align*}
	g_{\mathrm{f}_j}(\rho;\lambda)=[1+(\rho-1)\widehat g_{\mathrm{f}_j}(\rho;\lambda)] \text{ where }|\partial_\rho^n\partial_\omega^\ell \widehat g_{\mathrm{f}_j}(\rho;\lambda)|\lesssim_{n,\ell}\langle\omega\rangle^{-1-\ell}
\end{align*}
for $j=1,2$, $n,\ell\in \mathbb{N}$, and all $1<\rho<r$ as well as $\lambda \in S$
and
\begin{align*}
	g_{\mathrm{f}_j}(\rho;\lambda)=1+\O(\rho^{-1} \langle\omega\rangle^{-1})
\end{align*}
for $j=3,4$. 
\end{lemma}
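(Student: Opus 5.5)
This is the special case $V=0$ of Lemma~\ref{lemma:FundamentalSystem1Infty}, so I would rerun that construction with $\widetilde V$ replaced by $\widetilde V_{\mathrm f}(\rho):=\frac{15}{4\rho^2}$, tracking only the weaker bounds claimed here. After the transformation \eqref{Eq:transform}, the homogeneous version of \eqref{eq:ODEWithV=0} becomes \eqref{eq:NoFirstDerivativeODE2} with $V=0$. Its right-hand side still contains the term $\frac{15}{4\rho^2(\rho^2-1)}v$. Unlike the interior case, where $r_{\mathrm f_j}=0$, one therefore cannot expect explicit solutions, and the Volterra scheme is needed again.

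\textbf{Step 1 (near $\rho=1$).} Working with $h_1,h_2$ and $W(h_1,h_2)=-1$, I would set up the integral equation \eqref{eq:intk} with kernel
\[
K_{\mathrm f}(\rho,s;\lambda)=\frac{\big[\big(\frac{1+\rho}{\rho-1}\frac{s-1}{s+1}\big)^{\lambda+\frac32}-1\big]\widetilde V_{\mathrm f}(s)}{3+2\lambda}.
\]
Since $\widetilde V_{\mathrm f}$ is smooth and bounded on $[1,r]$, the bound $\int_1^\rho\sup|K_{\mathrm f}|\,ds\lesssim|3+2\lambda|^{-1}$ holds as before. This yields $g_{\mathrm f_2}$, and variation of constants yields $g_{\mathrm f_1}$. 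Smoothness up to $\rho=1$ follows from the same Frobenius argument, using the indices $\{0,\tfrac32+\lambda\}$. For the symbol bounds $|\partial_\rho^n\partial_\omega^\ell\widehat g_{\mathrm f_j}|\lesssim\langle\omega\rangle^{-1-\ell}$ I would repeat the integration by parts via $\partial_s\big(\frac{s-1}{s+1}\big)^{\lambda+\frac32}=(3+2\lambda)\big(\frac{s-1}{s+1}\big)^{\lambda+\frac32}(s^2-1)^{-1}$ together with the rescaling $s=1+t(\rho-1)$. We only need $\langle\omega\rangle^{-1}$ rather than the finer $\O_o$-splitting, so it suffices to keep the first term $-\frac{1}{3+2\lambda}\int_1^\rho\widetilde V_{\mathrm f}$ and bound it crudely.

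\textbf{Step 2 (near $\infty$).} Since $\widetilde V_{\mathrm f}(s)/(s^2-1)=\O(s^{-4})$, the kernels are integrable at $\infty$. Running the Volterra iteration from $\infty$ gives $g_{\mathrm f_3},g_{\mathrm f_4}=1+\O(\rho^{-1}\langle\omega\rangle^{-1})$ on $[\frac{1+r}{2},\infty)$. The same Wronskian-at-infinity argument gives $W(v_{\mathrm f_3},v_{\mathrm f_4})=-1$.

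\textbf{Step 3 (gluing).} I would define $a_{\mathrm f_{i,j}}$ as Wronskians, exactly as in Lemma~\ref{lem:concoef}. For example,
\[
a_{\mathrm f_{1,3}}=W(v_{\mathrm f_1},v_{\mathrm f_4})(r'),
\]
evaluated at a fixed $r'\in(\frac{1+r}{2},r)$. Inserting the expansions from Steps 1 and 2 shows these coefficients are $\O(\langle\omega\rangle^0)$. Undoing \eqref{Eq:transform} and gluing with the cutoff $\kappa$ then gives $\tilde u_{\mathrm f_1},\tilde u_{\mathrm f_2}$ in the stated form; they solve the equation because $\kappa$ only interpolates between two representations of the same solution.

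The main obstacle is the coefficient $a_{\mathrm f_{1,4}}$. It multiplies the growing factor $\big(\frac{1+r}{r-1}\big)^{\dots-\lambda}$, which is uniformly bounded only because $\Re\lambda$ is confined to the strip $S$. I would check this boundedness explicitly for $\lambda\in S$ and absorb the factor into the $\O(\langle\omega\rangle^0)$ constant.
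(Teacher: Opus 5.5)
Your proposal is correct and follows the paper's route. The paper gives no separate proof: the statement is literally Lemma~\ref{lemma:FundamentalSystem1Infty} with $V=0$, assembled from Lemma~\ref{lem:solsnear1}, the Volterra construction at infinity and Lemma~\ref{lem:concoef} with $\widetilde V$ replaced by $\frac{15}{4\rho^2}$, so citing that lemma would already suffice. The factor you call the main obstacle is harmless, because for the fixed base $\frac{1+r}{r-1}>0$ one has $\big|\big(\frac{1+r}{r-1}\big)^{z}\big|=\big(\frac{1+r}{r-1}\big)^{\Re z}$ and $\Re\lambda$ is bounded on $S$; note also that $W(v_{\mathrm f_3},v_{\mathrm f_4})=-1$ gives $a_{\mathrm f_{1,3}}=-W(v_{\mathrm f_1},v_{\mathrm f_4})$, a sign that both you and the paper drop.
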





\subsection{Constructing and gluing up the resolvents}  In this section, we construct the resolvent on $(0,1)$ and $(1,\infty)$ separately and then glue them up to  construct the full solution to \eqref{eq:OriginalODE}. 




We prove the following result:
\begin{lemma}
		\label{lemma:ResolventFull}
		Let $f \in C^2_c(\R^6)$ and $0 < \Re(\lambda)\leqslant \frac{1}{300}$. Then, the function $\mathcal{R}(f)$ given by
			\begin{equation*}
			\mathcal{R}(f)(\rho;\lambda):=
			\begin{cases}
				&u_2(\rho;\lambda) \displaystyle\int_0^{\rho} \frac{u_0(s;\lambda)f(s)}{(1-s^2)W(u_0,u_2)(s;\lambda)}ds + u_0(\rho;\lambda) \displaystyle \int_{\rho}^1 \frac{u_2(s;\lambda) f(s)}{(1-s^2) W(u_0,u_2)(s;\lambda)}ds\\
                &\quad +c(f;\lambda) u_0(\rho;\lambda), \hspace{4.5cm} \rho\in[0,1], \\
				&-\tilde{u}_1(\rho;\lambda)\displaystyle \int_{\rho}^{\infty} \frac{\tilde{u}_2(s;\lambda)f(s)}{W(\tilde{u}_1, \tilde{u}_2)(s;\lambda)(1-s^2)}ds\\
                &\quad + \tilde{u}_2(\rho;\lambda) \displaystyle\int_{\rho}^{\infty} \frac{\tilde{u}_1(s;\lambda)f(s)}{W(\tilde{u}_1, \tilde{u}_2)(s;\lambda)(1-s^2)} ds, \quad~ \rho \in [1,\infty).
			\end{cases}
		\end{equation*}


with 
\begin{equation*}
 c(f;\lambda):=\mu(f;\lambda)=\frac{1}{c_{0,1}(\lambda)} \int_1^{\infty} \frac{\tilde{u}_2(s;\lambda)f(s)}{(1-s^2)W(u_0,u_2)(s;\lambda)}ds.
\end{equation*}
		is the unique radial function $\mathfrak{u} \in \dot{H}^{2}(\R^6)$ solving the equation: 
		\begin{equation}
			\label{eq:FUll}
			(1-\rho^2)\mathfrak{u}''(\rho) + \Big( \frac{d-1}{\rho} + 2(\lambda-2)\rho \Big)\mathfrak{u}'(\rho) -\big( (\lambda-1)(\lambda-2)-V(\rho))\mathfrak{u}(\rho) = f(\rho).
		\end{equation}
        \begin{proof}
\textbf{On $\rho \in (0,1)$}: Using the fundamental system $\{u_0,u_2\}$ on $(0,1)$, and variation of constants, a solution is given by
\begin{align*}
      u_2(\rho;\lambda)\int_0^\rho \frac{u_0(s;\lambda)f(s) }{(1-s^2)W(u_0(\cdot;\lambda),u_2(\cdot;\lambda))} ds+
      u_0(\rho;\lambda)\int_\rho^1 \frac{u_2(s;\lambda)f(s) }{(1-s^2)W(u_0(\cdot;\lambda),u_2(\cdot;\lambda))} ds.
\end{align*}
Since $u_0 \in H^2(\B_1)$, we make use of the degree of freedom to write
\begin{equation*}
\begin{split}
      \mathcal{R}_{int}(f)(\rho;\lambda):&=u_2(\rho;\lambda)\int_0^\rho \frac{u_0(s;\lambda)f(s) }{(1-s^2)W(u_0(\cdot;\lambda),u_2(\cdot;\lambda))} ds+
      u_0(\rho;\lambda)\int_\rho^1 \frac{u_2(s;\lambda)f(s) }{(1-s^2)W(u_0(\cdot;\lambda),u_2(\cdot;\lambda))} ds\\
      &\quad +c u_0(\rho;\lambda).
      \end{split}
\end{equation*}
The claimed regularity has already been discussed in Lemma \ref{lemma:ResolventInsideCone}.\\
\textbf{On $(1,\infty)$}: Using the fundamental system \eqref{eq:FundamentalSystemOn1Infty}, a solution to \eqref{eq:OriginalODE} on $[1,\infty)$ 
\begin{equation*}
	\begin{split}
	u(\rho;\lambda) &= -\tilde{u}_1(\rho;\lambda)\int_{\rho}^{\infty} \frac{\tilde{u}_2(s;\lambda)F_{\lambda}(s)}{W(s;\lambda)(1-s^2)}ds + \tilde{u}_2(\rho;\lambda) \int_{\rho}^{\infty} \frac{\tilde{u}_1(s;\lambda)F_{\lambda}(s)}{W(s;\lambda)(1-s^2)} ds.
	\end{split}
\end{equation*}
To show that the above is well-defined, we perform some further manipulations. To that end, we compute
\begin{equation*}
	\begin{split}
\int_{\rho}^{\infty} \frac{\tilde{u}_2(s;\lambda)F_{\lambda}(s)}{W(s;\lambda)(1-s^2)}ds&=-\frac{1}{\sqrt{3+2\lambda}}\Big[\int_{\rho}^{\infty} \frac{s^{\frac{5}{2}}F_{\lambda}(s)} {(s^2-1)^{\frac{3}{2}+\lambda}}
 \Big( \kappa(s)(s-1)^{\frac{3}{2}+\lambda} g_2(s;\lambda) \\
 &\quad + (1-\kappa(s))a_{2,3}(\lambda)(s+1)^{\frac{3}{2}+\lambda} g_3(s;\lambda)\\
 &\quad + (1-\kappa(s))a_{2,4}(\lambda)(s-1)^{\frac{3}{2}+\lambda} g_4(s;\lambda)\Big) ds\Big]\\
 &=:\mathrm{I}+\mathrm{II}+\mathrm{III}.
\end{split}
\end{equation*}
Then,
\begin{equation*}
	\mathrm{I}= -\frac{1}{\sqrt{3+2\lambda}}\int_{\rho}^{\infty} \frac{\kappa(s)s^{\frac{5}{2}}g_2(s;\lambda)F_{\lambda}(s)}{(s+1)^{\frac{3}{2}+\lambda}} ds
\end{equation*}
is finite owing to the integrand being well-defined and the compact support of $F_{\lambda}$. We define
\begin{equation*}
	\alpha(\lambda):=\frac{1}{\sqrt{3+2\lambda}}\int_{1}^{\infty} \frac{\kappa(s)s^{\frac{5}{2}}g_2(s;\lambda)F_{\lambda}(s)}{(s+1)^{\frac{3}{2}+\lambda}} ds.
\end{equation*}
Next,
\begin{equation*}
	\mathrm{II} = -\frac{1}{\sqrt{3+2\lambda}}\int_{\rho}^{\infty} \frac{a_{2,3}(\lambda)(1-\kappa(s))s^{\frac{5}{2}} g_3(s;\lambda)F_{\lambda}(s) }{(s-1)^{\frac{3}{2}+\lambda}}ds=:-\int_{\rho}^{\infty} \frac{h_1(s;\lambda)}{(s-1)^{\frac{3}{2}+\lambda}}ds,
\end{equation*}
where 
\begin{equation*}
	h_1(s;\lambda):= \frac{a_{2,3}(\lambda) (1-\kappa(s)) s^{\frac{5}{2}} g_3(s;\lambda)F_{\lambda}(s)}{\sqrt{3+2\lambda}},
\end{equation*}
which is also well-defined, since $(1-\kappa(\cdot))$ is supported away from $1$: using an integration by parts, we have
\begin{equation*}
	\mathrm{II} = -\frac{2}{2\lambda+1} \Big[ \frac{h_1(\rho;\lambda)}{(\rho-1)^{\frac{1}{2}+\lambda}} + \int_{\rho}^{\infty} \frac{h'_1(s;\lambda)}{(s-1)^{\frac{1}{2}+\lambda}} ds \Big].
\end{equation*}
Here, we observe that as $\rho \to 1^+$, after multiplication with $-\tilde{u}_1$, the following terms need to be dealt with:
\begin{equation}
\begin{split}
\mathrm{A}&:=\frac{\kappa(\rho)\rho^{-\frac{5}{2}} (1+\rho)^{\frac{3}{2}}g_1(\rho;\lambda)}{(\sqrt{3+2\lambda}} \int_{\rho}^{\infty}\frac{h_1(s;\lambda)}{(s-1)^{\frac{3}{2}+\lambda}}ds,\\
\mathrm{B}&:=\frac{(1-\kappa(\rho))\rho^{-\frac{5}{2}}(1+\rho)^{\frac{3}{2}+\lambda}a_{1,3}(\lambda)g_3(\rho;\lambda)}{\sqrt{3+2\lambda}} \int_{\rho}^{\infty} \frac{s^{\frac{5}{2}}(1-\kappa(s))a_{2,3}(\lambda)g_3(\rho;\lambda)F_{\lambda}(s)}{\sqrt{3+2\lambda}}ds.
\end{split}
\end{equation}
It is easy to see that
\begin{equation*}
	\mathrm{III} = -\frac{1}{\sqrt{3+2\lambda}}\int_{\rho}^{\infty} \frac{a_{2,4}(\lambda) s^{\frac{5}{2}}(1-\kappa(s))  g_4(s;\lambda) F_{\lambda}(s)}{(1+s)^{\frac{3}{2} +\lambda}}ds
\end{equation*}
exists for all $\rho \in [1,\infty)$. 
To handle the second term defining the solution $u$, we compute
\begin{equation*}
	\begin{split}
		\int_{\rho}^{\infty} \frac{\tilde{u}_1(s;\lambda)F_{\lambda}(s)}{W(s;\lambda)(1-s^2)}ds &= -\frac{1}{\sqrt{3+2\lambda}}\int_{\rho}^{\infty} \frac{s^{\frac{5}{2}}F_{\lambda}(s)}{(s^2-1)^{\frac{3}{2}+\lambda}} \Big[ \kappa(s)(1+s)^{\frac{5}{2}+\lambda} g_1(s;\lambda)\\
		&\quad  + (1-\kappa(s))a_{1,3}(\lambda) (1+s)^{\frac{3}{2}+\lambda} g_3(s;\lambda)\\
		&\quad + (1-\kappa(s))a_{1,4}(\lambda) (s-1)^{\frac{3}{2}+\lambda} g_4(s;\lambda)\Big ]ds\\
        &=: \mathrm{IV} + \mathrm{V} +\mathrm{VI}.
	\end{split}
\end{equation*}
Using integration by parts and the fact that $F_{\lambda}$ is compactly supported, we have
\begin{equation*}
	\begin{split}
	\mathrm{IV} = -\frac{1}{\sqrt{3+2\lambda}}\int_{\rho}^{\infty} \frac{s^{\frac{5}{2}} F_{\lambda}(s) \kappa(s)g_1(s;\lambda)}{(s-1)^{\frac{3}{2}+\lambda}}ds&=:-\int_{\rho}^{\infty} \frac{h_2(s;\lambda)}{(s-1)^{\frac{3}{2}+\lambda}} ds \\
	&= -\frac{2}{2\lambda+1} \Big[ \frac{h_2(\rho;\lambda)}{(\rho-1)^{\frac{1}{2}+\lambda}} + \int_{\rho}^{\infty} \frac{h'_2(s;\lambda)}{(s-1)^{\frac{1}{2}+\lambda}} ds \Big],
	\end{split}
\end{equation*}
where
\begin{equation*}
	h_2(s;\lambda) = \frac{\kappa(s)s^{\frac{5}{2}}g_1(s;\lambda)F_{\lambda}(s)}{\sqrt{3+2\lambda}}.
\end{equation*}
In this case again, after multiplying the above with $\tilde{u}_2(\rho;\lambda)$, we are led to consider the following term:
\begin{equation}
\begin{split}
    \mathrm{C}&:=-\frac{(1-\kappa(\rho))\rho^{-\frac{5}{2}}a_{2,3}(\lambda)(1+\rho)^{\frac{3}{2}+\lambda}g_3(\rho;\lambda)}{\sqrt{3+2\lambda}} \int_{\rho}^{\infty} \frac{h_2(s;\lambda)}{(s-1)^{\frac{3}{2}+\lambda}}ds,
    \end{split}
\end{equation}
In a similar spirit, we obtain
\begin{equation*}
	\begin{split}
	\mathrm{V} = -\int_{\rho}^{\infty} \frac{s^{\frac{5}{2}} F_{\lambda}(s) (1-\kappa(s))a_{1,3}(\lambda)g_3(s;\lambda)} {(s-1)^{\frac{3}{2}+\lambda}} ds &=: -\int_{\rho}^{\infty} \frac{f_3(s;\lambda)}{(s-1)^{\frac{3}{2}+\lambda}}ds\\ &=-\frac{2}{2\lambda+1} \Big[ \frac{f_3(\rho;\lambda)}{(s-1)^{\frac{1}{2}+\lambda}} + \int_{\rho}^{\infty} \frac{f'_3(s;\lambda)}{(s-1)^{\frac{1}{2}+\lambda}} ds \Big],
	\end{split}
\end{equation*}
in relation to which we obtain the following term:
\begin{equation}
    \mathrm{D}:=\frac{(1-\kappa(\rho))\rho^{-\frac{5}{2}}a_{2,3}(\lambda)(1+\rho)^{\frac{3}{2}+\lambda}g_3(\rho;\lambda)}{\sqrt{3+2\lambda}} \int_{\rho}^{\infty} \frac{(1-\kappa(s))s^{\frac{5}{2}}a_{1,3}(\lambda)g_3(s;\lambda)F_{\lambda}(s)}{(s-1)^{\frac{3}{2}+\lambda}}ds.
\end{equation}
Integrating by parts the terms $\mathrm{A}$ and $\mathrm{C}$ straight away shows that cancellation in the leading order (boundary) terms while the remaining terms can be dealt with another integration by parts. The same holds for the terms $\mathrm{B}$ and $\mathrm{D}$. Finally, we have that
\begin{equation*}
	\mathrm{VI} = -\int_{\rho}^{\infty} \frac{s^{\frac{5}{2}} F_{\lambda}(s) (1-\kappa(s)) a_{1,4}(\lambda) g_4(s;\lambda)}{(s+1)^{\frac{3}{2}+\lambda}} ds 
\end{equation*}
is well-defined for all $\rho \in [1,\infty)$. We observe that as $\rho\to 1^+$, we have,
\begin{align*}
    \lim_{\rho \to 1^+}\left( \tilde{u}_1(\rho;\lambda)\int_{\rho}^{\infty} \frac{\tilde{u}_2(s;\lambda)F_{\lambda}(s)}{(s^2-1)^{\frac{3}{2}+\lambda}}ds\right)
\end{align*}

More precisely, we observe that the following terms contribute at $\rho=1$:
\begin{equation*}
\begin{split}
&\frac{\kappa(\rho)\rho^{-\frac{5}{2}} (\rho+1)^{\frac{3}{2}+\lambda} g_1(\rho;\lambda)}{\sqrt{3+2\lambda} }\Big[\int_{\rho}^{\infty}\frac{\kappa(s)s^{\frac{5}{2}} g_2(s;\lambda)F_{\lambda}(\rho)}{(s+1)^{\frac{3}{2}+\lambda}} ds + \int_{\rho}^{\infty} \frac{(1-\kappa(s))s^{\frac{5}{2}} a_{2,3}(\lambda) g_3(s;\lambda)F_{\lambda}(s)}{(s-1)^{\frac{3}{2}+\lambda}}ds\\
&\quad +\int_{\rho}^{\infty} \frac{(1-\kappa(s))s^{\frac{5}{2}} a_{2,4}(\lambda)  g_4(s;\lambda)F_{\lambda}(s)}{(s-1)^{\frac{3}{2}+\lambda}}ds  \Big]
\end{split}
\end{equation*}
In fact, an integration by parts in the second term above leads to a cancellation, and we obtain that as $\rho\to 1^+$, we have
\begin{equation*}
    \begin{split}
       u(\rho;\lambda) = &\frac{\kappa(\rho)\rho^{-\frac{5}{2}} (\rho+1)^{\frac{3}{2}+\lambda} g_1(\rho;\lambda)}{3+2\lambda} \Big[\int_{\rho}^{\infty}\frac{\kappa(s)s^{\frac{5}{2}} g_2(s;\lambda)F_{\lambda}(s)}{(s+1)^{\frac{3}{2}+\lambda}} ds + \frac{2}{1+2\lambda}\int_{\rho}^{\infty} \frac{\sqrt{3+2\lambda}~h_1'(s)}{(s-1)^{\frac{1}{2}+\lambda}}ds\\
&\quad +\int_{\rho}^{\infty} \frac{(1-\kappa(s))s^{\frac{5}{2}} a_{2,4}(\lambda)  g_4(s;\lambda)F_{\lambda}(s)}{(s+1)^{\frac{3}{2}+\lambda}}ds +c_3(f;\lambda) \Big]
    \end{split}
\end{equation*}

Next, we glue the solutions at the point $\rho=1$. From the solution constructed on $(0,1)$, we have 
\begin{equation*}
\begin{split}
     u(\rho;\lambda)    &=u_2(\rho;\lambda)U_0(\rho;\lambda)f(\rho)-u_2(\rho;\lambda)\int_0^\rho U_0(s;\lambda) f'(s) ds
      \\
      &\quad -u_0(\rho;\lambda)U_2(\rho;\lambda)f(\rho)-u_0(\rho;\lambda)\int_\rho^1 U_2(s;\lambda) f'(s) ds
      \\
      &\quad+u_0(\rho;\lambda)[c+U_2(1)f(1)]
      \end{split}
\end{equation*}
where we denote
\begin{equation*}
    U_j(\rho;\lambda):=\int_0^{\rho} \frac{u_j(s;\lambda)}{(1-s^2)W(u_0,u_2)(s;\lambda)}ds, \quad \text{ for } j=0,1,2.
\end{equation*}
We note that at the matching point, namely at $\rho=1$, the above evaluates to $c u_0(1;\lambda)$ for some constant $c$. From the right, the non-trivial contribution at $\rho=1$ is:
\begin{equation*}
    \tilde{u}_1(1;\lambda) \int_1^{\infty} \frac{\tilde{u}_2(s;\lambda)f(s)}{(1-s^2)W(\tilde{u}_1, \tilde{u}_2)(s;\lambda)}ds.
\end{equation*}
To ensure that the solution is continuous at $\rho=1$, we choose
\begin{equation*}
    \label{eq:RawC}
    c:=\mu(f;\lambda)=\frac{1}{c_{0,1}(\lambda)} \int_1^{\infty} \frac{\tilde{u}_2(s;\lambda)f(s)}{(1-s^2)W(\tilde{u}_1, \tilde{u}_2)(s;\lambda)}ds.
\end{equation*}
\textbf{Regularity of the resolvent}: From the construction of the resolvent via variation of parameters and using the explicit expression for the fundamental systems, namely $u_0, \tilde{u}_0, \tilde{u}_1$ and $\tilde{u}_2$, we can deduce that $\mathcal{R}(f)(\rho;\lambda) \in \dot{H}^2(\R^6)$.
This completes the proof.
\end{proof}
\end{lemma}

For later uses, we write down the elaborate form of the constant $c(f;\lambda)$ in the following: 

\begin{lemma}
    \label{lemma:FullDefinitionOfC}
    The matching constant $c$ is given by
    \begin{equation}
        \begin{split}
            c=\mu(f;\lambda)&= \frac{1}{c_{0,1}(\lambda)\sqrt{3+2\lambda}} \Big[\int_1^{\infty} \frac{\kappa(s)s^{\frac{5}{2}}g_2(s;\lambda)f(s)}{(s+1)^{\frac{3}{2}+\lambda}}ds + a_{2,3}(\lambda) \int_1^{\infty} \frac{(1-\kappa(s)) s^{\frac{5}{2}} g_3(s;\lambda) f(s)}{(s-1)^{\frac{3}{2}+\lambda}}ds\\
            &\quad + a_{2,4}(\lambda) \int_1^{\infty} \frac{(1-\kappa(s)) s^{\frac{5}{2}} g_4(s;\lambda) f(s)}{(s+1)^{\frac{3}{2}+\lambda}}ds\Big].
        \end{split}
    \end{equation}
    \begin{proof}
        The form follows by spelling out the Wronskian $W(\tilde{u}_1, \tilde{u}_2)$ and the function $\tilde{u}_2$.
    \end{proof}
\end{lemma}

	\subsection{Resolvent in the case $V \equiv 0$}
	Using the fundamental system corresponding to the case $V\equiv 0$, we follow the same procedure to obtain that the free resolvent $\mathcal{R}_{\mathrm{f}}(F_{\lambda})$ on $[1,\infty)$  is given by
	\begin{equation*}
		\begin{split}
			\mathcal{R}_{\mathrm{f}}(F_{\lambda})(\rho;\lambda) &= -\tilde{u}_{\mathrm{f}_1}(\rho;\lambda) \int_{\rho}^{\infty} \frac{\tilde{u}_{\mathrm{f}_2}(s;\lambda)F_{\lambda}(s)}{W(s;\lambda)(1-s^2)} ds + \tilde{u}_{\mathrm{f}_2}(\rho;\lambda) \int_{\rho}^{\infty} \frac{\tilde{u}_{\mathrm{f}_1}(s;\lambda)F_{\lambda}(s)}{W(s;\lambda)(1-s^2)} ds.
		\end{split}
	\end{equation*}
	 Thus, noting that $W(u_0,u_2)(\rho;\lambda) = W(u_{\mathrm{f}_0}, u_{\mathrm{f}_2})(\rho;\lambda)$, we define the free resolvent $\mathcal{R}_{\mathrm{f}}$ as follows:

    \begin{equation*}
			\mathcal{R}_{\mathrm{f}}(f)(\rho;\lambda):=
			\begin{cases}
				&u_{\mathrm{f}_2}(\rho;\lambda) \displaystyle\int_0^{\rho} \frac{u_{\mathrm{f}_0}(s;\lambda)f(s)}{(1-s^2)W(u_0,u_2)(s;\lambda)}ds + u_{\mathrm{f}_0}(\rho;\lambda) \displaystyle \int_{\rho}^1 \frac{u_{\mathrm{f}_2}(s;\lambda) f(s)}{(1-s^2) W(u_0,u_2)(s;\lambda)}ds\\
                &\quad +c_{\mathrm{f}}(f;\lambda) u_{\mathrm{f}_0}(\rho;\lambda), \hspace{4.5cm} \rho\in[0,1], \\
				&-\tilde{u}_{\mathrm{f}_1}(\rho;\lambda)\displaystyle \int_{\rho}^{\infty} \frac{\tilde{u}_{\mathrm{f}_2}(s;\lambda)f(s)}{W(\tilde{u}_1, \tilde{u}_2)(s;\lambda)(1-s^2)}ds\\
                &\quad + \tilde{u}_{\mathrm{f}_2}(\rho;\lambda) \displaystyle\int_{\rho}^{\infty} \frac{\tilde{u}_{\mathrm{f}_1}(s;\lambda)f(s)}{W(\tilde{u}_1, \tilde{u}_2)(s;\lambda)(1-s^2)} ds, \quad~ \rho \in [1,\infty).
			\end{cases}
		\end{equation*}
        

	\subsection{Integral representation of the semigroup} Using Laplace inversion, for $\mathbf{f}:=(f_1,f_2) \in \mathcal{D}(\mb{L})$ and $\varepsilon >0$, we have the following:
	\begin{equation}
		\label{eq:IntegralRepresentation}
		\begin{split}
			[\mb{S}(\tau)\mb{f}]_1(\rho) &= [\mb{S}_0(\tau)\mb{f}]_1(\rho) + [\tilde{\mb{S}}(\tau)\mb{f}](\rho)\\
			&=: [\mb{S}_0(\tau)\mb{f}]_1(\rho) + \frac{1}{2\pi i} \lim_{N \to \infty} \int_{\varepsilon -iN}^{\varepsilon+iN} e^{\lambda \tau} [\mathcal{R}(F_{\lambda})(\rho;\lambda) - \mathcal{R}_{\mathrm{f}}(F_{\lambda})(\rho;\lambda)]d \lambda
		\end{split}
	\end{equation}
where $F_{\lambda}(\rho;\lambda):=(2-\lambda)f_1(\rho) + \rho f_1'(\rho)-f_2(\rho)$. \\


\section{Strichartz estimates in the interior region}
\label{section:InteriorEstimates}
The goal of this section is to derive estimates of the form
\begin{align*}
    \|\Sf(\tau)\ff\|_{L^p(\R_+)L^q(\B_1)}\lesssim \|\ff\|_{\dot{H}^2(\R^6)\times \dot H^1(\R^6)},
\end{align*}
and 
\begin{align*}
    \|\Sf(\tau)\ff\|_{L^\infty(\R_+)\dot{H}^2(\B_1)}\lesssim \|\ff\|_{\dot{H}^2(\R^6)\times \dot H^1(\R^6)}.
\end{align*}
for $(q,r)$ satisfying \eqref{eq:AdmissibleExponents}. Recall that on the support of $1-\chi_{\lambda}$, we have
\begin{equation*}
u_0(\rho;\lambda) = c_{0,1}(\lambda) u_1(\rho;\lambda) + c_{0,2}(\lambda)u_2(\rho;\lambda)
\end{equation*}
Since there might be a point on the imaginary axis that is a root of the connection coefficient $c_{0,1}(\lambda)$, we cannot simply integrate our resolvent along the imaginary axis to use \eqref{eq:IntegralRepresentation}. To circumvent this problem, we pick a $\delta>0$ that is close to 0 such that $c_{0,1}(\pm \delta+i\omega)\neq 0$ for all $\omega\in \R$, which is possible thanks to Lemma \ref{lem:zeros}. This allows us to recast the first component of $\Sf$ as
\begin{align*}
[\Sf(\tau)\ff]_1(\rho)= [(\Sf(\tau)-\Sf_0(\tau))\ff]_1(\rho)  + \lim_{N\to \infty} \frac{1}{2\pi }\int_{-N}^N e^{\pm \delta \tau}e^{i \omega\tau}[\mathcal{R}(\lambda)-\mathcal{R}_{\mathrm{f}}(\lambda)]F_\lambda(\rho) d\omega
\end{align*}
for all $\ff\in C^\infty_{c,rad}(\R^6)\times C^\infty_{c,rad}(\R^6)$. At this point, the inversion formula does not make sense for the line $\Re \lambda=-\delta$, as it lies in the essential spectrum of $\Lf$. We will show below that one can nevertheless make sense of this formula. 
Aside from this, the fact that we cannot set $\delta=0$ introduces the following technical difficulties:
\begin{itemize}
    \item Due to the presence of the exponential weight $e^{\delta \tau}$, we cannot prove an $L^q L^r$ estimate, and instead we need to prove weighted $L^qL^r$ estimates, with a weight that cancels the exponential term.
        \item Due to different behaviour near the singular points $1$ and $\infty$\footnote{Note that even though we are working in the interior regime in this section, the behaviour in the exterior regime influences the interior regime through the matching term $c(F_\lambda) u_0$}, we need to introduce another cut-off $\kappa_2$ with $\kappa_2(\rho)=1$ for $\rho\in [0,2r]$ and $\kappa_2(\rho)=0 $ for $\rho\geqslant 3r$ and split the semigroup into 
    $$
    \Sf\ff= \Sf \kappa_2 \ff+\Sf (1-\kappa_2) \ff.
    $$
    \item As the line $\Re \lambda=-\delta$ is not the boundary of the essential spectrum in $\dot H^2\times \dot H^1$, but to the left of said boundary, we will not be able to derive an energy estimate in $\dot H^2\times \dot H^1$. Instead, we will need to prove different $L^p$-type energy type estimates in $\dot{W}^{p}\times \dot{W}^{1,p}$ with $p$ depending on whether we consider $\Sf \kappa_2 \ff$ or $\Sf (1-\kappa_2) \ff$. The goal is to prove four sets of estimates (corresponding to $\Re\lambda=\pm \delta$ and $\kappa_2, (1-\kappa_2)$) that in the end interpolate into $\dot H^2\times \dot H^1$.
    \end{itemize}
    Consider the operator $(\widetilde \Lf_0,C^\infty_{c,rad}(\R^6)\times C^\infty_{c,rad}(\R^6)).$ Then, we have the following.
    \begin{lemma}
        The operator  $(\widetilde \Lf_0,C^\infty_{c,rad}(\R^6)\times C^\infty_{c,rad}(\R^6))$ is closable in $\dot{H}^1(\R^6)\times L^2(\R^6)$. Furthermore, its closure denoted $\Lf_{0,1}$ generates a $C_0$-semigroup $\Sf_{0,1}$ that satisfies the estimate
        \begin{align*}
            \|\Sf_{0,1}(\tau)\ff\|_{\dot{H}^1(\R^6)\times L^2(\R^6)}&\lesssim e^{-\tau}\|\ff\|_{\dot{H}^1(\R^6)\times L^2(\R^6)}
        \end{align*}
        for all $\tau\geqslant 0$ and all $\ff\in \dot{H}^1(\R^6)\times L^2(\R^6)$.
    \end{lemma}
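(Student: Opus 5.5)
We mirror the proof of Proposition \ref{prop:ExplicitSolution} and the dissipativity computation of Lemma \ref{lemma:Dissipativity}, now at regularity $s=1$ in $d=6$. Equivalently, Lemma \ref{lemma:DecayForPropagators} with $s=1$, $d=6$ gives the exponent $s-\tfrac d2+1=-1$, which is exactly the claimed rate.

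\textbf{Dissipativity.} For $\mb u\in C^\infty_{c,rad}(\R^6)\times C^\infty_{c,rad}(\R^6)$ we redo the computation \eqref{eq:dissipative} with the inner product $\langle\cdot,\cdot\rangle_{\dot H^1\times L^2}$. The cross terms $\langle u_2,u_1\rangle_{\dot H^1}+\langle \Delta u_1,u_2\rangle_{L^2}$ are purely imaginary after integration by parts. Using $\Re\langle\Lambda f,f\rangle_{\dot H^s}=(s-3)\|f\|_{\dot H^s}^2$ we obtain
\[
\Re\langle\widetilde\Lf_0\mb u,\mb u\rangle_{\dot H^1\times L^2}=(1-2+1)\|u_1\|_{\dot H^1}^2+(2-3)\|u_2\|_{L^2}^2 .
\]
This gives the bound $\leqslant 0$ but not the rate $-1$. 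Hence $\widetilde\Lf_0$ is dissipative, and since it is densely defined it is closable with dissipative closure $\Lf_{0,1}$. Density holds because $C^\infty_{c,rad}$ is dense in the radial parts of $\dot H^1(\R^6)$ and $L^2(\R^6)$.

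\textbf{Generation and explicit form.} Rather than verifying the range condition of Lumer--Phillips directly, we use the explicit formula for $\mb S_0(\tau)$ from Proposition \ref{prop:ExplicitSolution}. On Schwartz data this formula defines a family $\Sf_{0,1}(\tau)$, and the semigroup property and strong continuity are checked exactly as in that proof. The $\dot H^1\times L^2$ bound comes from Lemma \ref{lemma:DecayForPropagators} with $s=1$ for the first component. For the second component, $e^{2\tau}[\cos(|\nabla|(e^\tau-1))f_2](e^\tau\cdot)$ has $L^2$ norm $e^{2\tau}e^{-3\tau}\|f_2\|_{L^2}=e^{-\tau}\|f_2\|_{L^2}$. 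Similarly, the $\sin(|\nabla|(e^\tau-1))|\nabla| f_1$ term is bounded by $e^{-\tau}\|f_1\|_{\dot H^1}$. Hence $\|\Sf_{0,1}(\tau)\ff\|\lesssim e^{-\tau}\|\ff\|$, and the family extends by density to a $C_0$-semigroup on the radial subspace of $\dot H^1\times L^2$.

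\textbf{Identifying the generator.} This is the main point requiring care. On $C^\infty_{c,rad}\times C^\infty_{c,rad}$, differentiating the explicit formula at $\tau=0$ yields $\widetilde\Lf_0\ff$, as in the identity $(\partial_\tau-\Lambda-1)[\mb S_0\ff]_1=[\mb S_0\ff]_2$. Thus the generator $\mb A$ of $\Sf_{0,1}$ extends $\widetilde\Lf_0$. Next, $C^\infty_{c,rad}\times C^\infty_{c,rad}$ is invariant under $\Sf_{0,1}(\tau)$: finite speed of propagation preserves compact support, and smoothness is preserved as well. By the core theorem (Engel--Nagel, Prop.~II.1.7), this space is therefore a core for $\mb A$, and so $\mb A=\overline{\widetilde\Lf_0}=\Lf_{0,1}$. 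The decay estimate then holds for all $\ff\in\dot H^1\times L^2$ by density, which completes the proof.
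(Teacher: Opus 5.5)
Your proposal is correct and follows the route the paper implicitly takes for this lemma, namely the explicit formula of Proposition~\ref{prop:ExplicitSolution} combined with Lemma~\ref{lemma:DecayForPropagators} at $s=1$, $d=6$. Your core-theorem identification of the generator with $\overline{\widetilde\Lf_0}$ is a careful step that the paper leaves implicit.

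There is one slip. In your dissipativity computation the coefficient of $\|u_1\|_{\dot H^1}^2$ is $1-3+1=-1$, not $1-2+1=0$. Hence $\Re\langle\widetilde\Lf_0\mb u,\mb u\rangle_{\dot H^1\times L^2}=-\|\mb u\|_{\dot H^1\times L^2}^2$, which is Lemma~\ref{lemma:Dissipativity} with $s=1$. This identity by itself already gives $\|\Sf_{0,1}(\tau)\|\leqslant e^{-\tau}$, so your remark that dissipativity does not yield the rate is wrong. It does not affect the rest of the argument.
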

    To move on, we set $\Lf_1:=\Lf_{0,1}+\Lf_V'$ and denote the semigroup generated by this operator by $\Sf_1$. In the same  fashion as the result on $\Lf$ and $\Sf$, one proves the following.
\begin{lemma}
The spectrum of the operator $\Lf_1$ satisfies
\begin{align*}
    \sigma(\Lf_1)\subset \{z\in \C: \Re z\leqslant -1\}. 
\end{align*}
In addition to that, for every $\varepsilon >0$, there exists $C_{\varepsilon} >0$ such that
	\begin{equation}
		\| \Sf_1(\tau)\mb{f}\|_{\dot{H}^1(\R^6)\times L^2(\R^6)} \leqslant C_{\varepsilon}e^{(-1+\varepsilon)\tau}\| \mb{f}\|_{\dot{H}^1(\R^6)\times L^2(\R^6)}
	\end{equation}
for all $\mb{f} \in \dot{H}^1(\R^6)\times L^2(\R^6)$ and all $\tau \geqslant 0$. Lastly, for $\lambda\in \widehat{S}$,  and $f\in C^\infty_{c,rad}(\R^6)$, the function $\mathcal{R}(f)$ is the unique solution in $\dot H^1(\R^6)\times L^2(\R^6)$ to the equation 
		\begin{equation}
			\label{eq:FUll}
			(1-\rho^2)u_1''(\rho) + \Big( \frac{d-1}{\rho} + 2(\lambda-2)\rho \Big)u_1'(\rho) -\big( (\lambda-1)(\lambda-2)-V(\rho))u_1(\rho) = f(\rho).
		\end{equation}
\end{lemma}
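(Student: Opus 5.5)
I plan to follow exactly the route that was taken for $\Lf$ on $\mathcal{H}^2$, now on $\dot H^1(\R^6)\times L^2(\R^6)$, where the preceding lemma gives $\|\Sf_{0,1}(\tau)\|\lesssim e^{-\tau}$. By the Hille--Yosida theorem this bound yields $\{\Re\lambda>-1\}\subset\rho(\Lf_{0,1})$ with $\|\Rf_{\Lf_{0,1}}(\lambda)\|\lesssim (\Re\lambda+1)^{-1}$.

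\textbf{Perturbation and spectrum.} First, $\Lf'_V\bm f=(0,Vf_1)$ is compact from $\dot H^1\times L^2$ to itself. Indeed, $|V(\rho)|\lesssim\langle\rho\rangle^{-2}$, so Hardy's inequality gives $\|Vf_1\|_{L^2}\lesssim\||\cdot|^{-1}f_1\|_{L^2}\lesssim\|f_1\|_{\dot H^1}$. Rellich--Kondrachov on balls, combined with the tail bound $\|Vf_1\|_{L^2(B_R^c)}\lesssim R^{-1}\|f_1\|_{\dot H^1}$, gives total boundedness, as in the compactness lemma above. The bounded perturbation theorem then shows that $\Lf_1$ generates $\Sf_1$, and $\sigma_{\mathrm{ess}}(\Lf_1)=\sigma_{\mathrm{ess}}(\Lf_{0,1})\subset\{\Re z\leqslant-1\}$. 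Consequently, any spectrum of $\Lf_1$ in $\{\Re z>-1\}$ consists of isolated eigenvalues of finite multiplicity.

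\textbf{Exclusion of eigenvalues.} Suppose $\Lf_1\bm u=\lambda\bm u$ with $\Re\lambda>-1$. Then $u_1$ solves \eqref{eq:FullODE} with $d=6$. Radial $\dot H^1$ regularity excludes the $\rho^{-4}$ Frobenius branch at $0$. At $\infty$, the embedding $\dot H^1(\R^6)\hookrightarrow L^3$ excludes the $\rho^{\lambda-1}$ branch and forces $u_1\sim\rho^{\lambda-2}$, as in the proof of Proposition~\ref{lemma:PointSpectrumOfL}. The delicate point is $\rho=1$, where the Frobenius exponents are $\{0,\tfrac32+\lambda\}$. Since $u_2\in L^2$ and $u_1\in\dot H^1$, the solution cannot carry a nonzero $(1-\rho)^{\frac32+\lambda}$ component when $\Re\lambda\leqslant-\tfrac12$, so in that range $u_1$ is smooth across the light cone. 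For $\Re\lambda>-\tfrac12$ that component does belong to the space, and smoothness is not forced.

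To rule out such eigenfunctions, I will use scaling. Rescaling back to physical variables, $u(t,x)=t^{\lambda-1}u_1(|x|/t)$ is a solution of the linearised wave equation. If $u_1$ lies in $\dot H^1\times L^2$ and decays like $\rho^{\lambda-2}$, then $u$ has finite energy, and the energy identity together with the dissipativity computation of Lemma~\ref{lemma:Dissipativity} at $s=1$ contradicts $\Re\lambda>-1$. Equivalently, if $\lambda$ were an eigenvalue of $\Lf_1$ with $\Re\lambda>0$, then $u_1$ would also lie in $\mathcal{H}^2$, because the $\rho^{\lambda-2}$ decay is compatible with $\dot H^2$ and the regularity at $\rho=1$ follows from the Frobenius exponent $\tfrac32+\lambda$. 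This would contradict Proposition~\ref{lemma:PointSpectrumOfL}. The strip $-1<\Re\lambda\leqslant 0$ is the step I expect to be the main obstacle. I would attempt it by exploiting uniqueness: an eigenfunction in that strip is a resonant solution which, via the explicit forms in Lemma~\ref{lem:solutions interior regime} and Lemma~\ref{lemma:FundamentalSystem1Infty}, must be proportional to $u_0$ on $(0,1)$ and to $\tilde u_1$ on $(1,\infty)$. Matching these at $\rho=1$ with enough regularity forces the coefficient to vanish, because the $\dot H^1$-admissible solution on the exterior cannot glue to $u_0$ without a jump in derivative. I expect the core difficulty to be showing that this gluing fails for every $\lambda$ in the strip.

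\textbf{Growth bound and resolvent.} Once the spectrum is located, the growth bound $C_\varepsilon e^{(-1+\varepsilon)\tau}$ follows from the Gearhart--Pr\"uss theorem. For this I need uniform resolvent bounds for large $|\lambda|$ on $\Re\lambda\geqslant-1+\varepsilon$, obtained by the Neumann-series argument $\|\Lf'_V\Rf_{\Lf_{0,1}}(\lambda)\|\lesssim|\lambda-1|^{-1}$ with Hardy's inequality, as in the lemma giving \eqref{eq:BoundedResolvent}.

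Finally, for $\lambda\in\widehat S$ I take $\mathcal R(f)$ from Lemma~\ref{lemma:ResolventFull}, which is built from $u_0,u_2,\tilde u_1,\tilde u_2$ and makes sense whenever $c_{0,1}(\lambda)\neq0$. The bounds on these functions show $\mathcal R(f)\in\dot H^1$ near $0$ and $1$ (Lemma~\ref{lemma:ResolventInsideCone}) and near $\infty$, and $\mathcal R(f)$ solves the equation by construction. For uniqueness, the difference of two $\dot H^1$ solutions solves the homogeneous equation. It is therefore $\alpha u_0$ on the interior, using the $\dot H^1$ condition at $0$, and a multiple of the decaying exterior solution on $(1,\infty)$, using the condition at $\infty$. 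Continuity at $\rho=1$, together with the fact that $c_{0,1}(\lambda)\neq0$ gives $u_0(1;\lambda)\neq0$, forces both multiples to vanish.
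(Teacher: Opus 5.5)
The step you flag yourself, excluding eigenvalues of $\Lf_1$ with $-1<\Re\lambda\leqslant 0$, is a genuine gap, and neither mechanism you propose closes it.

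The energy argument fails because Lemma~\ref{lemma:Dissipativity} controls only $\tilde{\Lf}_0$. For an eigenvector of $\Lf_1$ it yields $(\Re\lambda+1)\|\mathbf u\|^2=\Re\langle Vu_1,u_2\rangle_{L^2}$, and the right-hand side has no sign.

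The gluing argument rests on a false premise. The surface $\rho=1$ is characteristic, so a jump $J$ of $u_1'$ across it is harmless for a distributional solution in $\dot H^1\times L^2$: the surface terms $J\delta$ from $\Delta u_1$ and $-J\delta$ from $\Lambda u_2$ cancel. Only continuity of $u_1$ is imposed.

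What you missed is the information at infinity. On the $\rho^{\lambda-2}$ branch, $u_2=(\lambda-1)u_1-\rho u_1'=\rho^{\lambda-2}(1+O(\rho^{-1}))$, which is not in $L^2(\R^6)$ whenever $\Re\lambda\geqslant-1$. Combined with your $L^3$ argument for the $\rho^{\lambda-1}$ branch, every eigenfunction vanishes identically on $\rho>1$; in particular it is not a multiple of $\tilde u_1$. This also repairs your uniqueness argument, where continuity at $\rho=1$ is a single equation for two unknown multiples. The exterior multiple is forced to be zero, so continuity gives $\alpha c_{0,1}(\lambda)u_1(1;\lambda)=0$, hence $\alpha=0$ for $\lambda\in\widehat S$.

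With the exterior gone, the spectral claim becomes exactly a statement about $c_{0,1}$. Suppose $c_{0,1}(\lambda_*)=0$ for some $\lambda_*\in S$. Take $u_0(\cdot;\lambda_*)\mathbf 1_{\B_1}$ with second component $((\lambda_*-1)u_0-\rho u_0')\mathbf 1_{\B_1}$. This function:
\begin{itemize}
\item behaves like $(1-\rho)_+^{3/2+\lambda_*}$ at the light cone;
\item lies in $\dot H^1\times L^2$, since this branch is admissible for all $\Re\lambda_*>-1$, not only for $\Re\lambda_*>-\frac12$ as you state;
\item solves the eigenvalue equation distributionally.
\end{itemize}
The maximal domain coincides with $\mathcal D(\Lf_{0,1})$: at $\lambda=1$ the free solution regular at $\rho=0$ is constant, so it does not vanish at $\rho=1$. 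Hence $\lambda_*$ is an eigenvalue of $\Lf_1$ with $\Re\lambda_*>-\frac14$.

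Consequently $\sigma(\Lf_1)\cap S=\{\lambda\in S: c_{0,1}(\lambda)=0\}$. The first claim therefore needs a mode-stability input: that $c_{0,1}$, and its analogue on the rest of $\{\Re\lambda>-1\}$, has no zeros. Your gluing idea cannot supply this.

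The paper does not supply it either. Its one-line appeal to the argument for $\Lf$ inherits the blind spot of the proof of Proposition~\ref{lemma:PointSpectrumOfL}, on which your reduction for $\Re\lambda>0$ also relies. That proof only inspects $\rho=\infty$ and never considers eigenfunctions supported in $\overline{\B_1}$. Elsewhere the paper only shows that $c_{0,1}$ has finitely many zeros in $S$, and it explicitly allows zeros on $i\R$. That is precisely why $\widehat S$, rather than $S$, appears in the uniqueness statement.
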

Consequently, we see that the expression $\mathcal{R}(F_\lambda)(-\delta +i \omega)$ is well-defined as the first component of the resolvent of $\Lf_1$. Furthermore, for $\lambda>0$ we are also in the resolvent set of $\Lf$, and the two resolvents agree there when applied to functions in $C^\infty_{c,rad}(\R^6)$. Thus, also the Hille-Yosida approximations agree for smooth compactly supported functions (we can for instance compare them in $H^1_{loc}\times L^2_{loc}$) and we see that for $\ff\in C^\infty_{c,rad}(\R^6)\times C^\infty_{c,rad}(\R^6) $ we have that 
$$
\Sf(\tau)\ff=\Sf_1(\tau)\ff.
$$

  To establish estimates on $\Sf$, we recall the definition of the restriction of $\mathcal{R}$ to the interior $\mathcal{R}_{int}$:
  \begin{align*}
      \mathcal{R}_{int}(f)(\rho;\lambda):&=u_2(\rho;\lambda)\int_0^\rho \frac{u_0(s;\lambda)f(s) }{(1-s^2)W(u_0(\cdot;\lambda),u_2(\cdot;\lambda))} ds+
      u_0(\rho;\lambda)\int_\rho^1 \frac{u_2(s;\lambda)f(s) }{(1-s^2)W(u_0(\cdot;\lambda),u_2(\cdot;\lambda))} ds 
      \\
      &\quad+\mu(f) u_0(\rho;\lambda)
\end{align*}
where 
\begin{align*}
  \mu(f)=  c_{0,1}(\lambda)^{-1}\int_1^\infty \frac{\tilde{u}_2(s;\lambda)f(s)}{W(s;\lambda)(1-s^2)}ds
\end{align*}
and
\begin{align*}
    {W(u_0(\cdot;\lambda),u_2(\cdot;\lambda))}(\rho)=\frac{2\widehat{c}(\lambda)}{\pi}{W(u_0(\cdot;\lambda),\widetilde u_0(\cdot;\lambda))}(\rho)= \frac{2\widehat{c}(\lambda)}{\pi} \frac{(1-\rho^2)^{\frac{1}{2}+\lambda}}{\rho^{5}}.
\end{align*}
Next, we manipulate this expression as follows. We perform an integration by parts in each of the two integrals to arrive at 
\begin{align*}
      \mathcal{R}_{int}(f)(\rho;\lambda):    &=u_2(\rho;\lambda)U_0(\rho;\lambda)f(\rho)-u_2(\rho;\lambda)\int_0^\rho U_0(s;\lambda) f'(s) ds
      \\
      &\quad -u_0(\rho;\lambda)U_2(\rho;\lambda)f(\rho)-u_0(\rho;\lambda)\int_\rho^1 U_2(s;\lambda) f'(s) ds
      \\
      &\quad+u_0(\rho;\lambda)[\mu(f)+U_2(1)f(1)]
\end{align*}
where $$U_j(\rho;\lambda):=\int_0^\rho \frac{u_j(s;\lambda)}{(1-s^2)W(u_0(\cdot;\lambda),u_2(\cdot;\lambda))}ds= \int_0^\rho \frac{u_j(s;\lambda)s^{5}}{(1-s^2)^{\frac{3}{2}+\lambda}\widehat{c}(\lambda)} ds.$$

In the next subsection, we will infer that the summands in the integral of the difference of the resolvents are of $\langle \omega \rangle^{-2}$, which allows for a liberal interchange of the limit and the integral. However, in a later section, when we prove Strichartz estimates for the derivatives of the semigroup, this will not be straightforward: taking a derivative improves the decay in $\rho$ but worsens it in $\lambda$. Nevertheless, we can argue by using integration by parts that one requires $k$ integration by parts to interchange $k$ derivatives with the integral via the dominated convergence theorem. For example, see \cite[Lemma 5.1]{Wallauch2024}.

\subsection{Strichartz estimates on $\Sf \kappa_2$ }
Our method of deriving estimates on $\Sf-\Sf_0$, hence $\Sf$ will be to split it into smaller pieces and derive estimates on each of the terms. We start this procedure with the following terms:
\begin{align*}
    D_1(f)(\rho;\lambda):=[u_2(\rho;\lambda)U_0(\rho;\lambda)-u_0(\rho;\lambda)U_2(\rho;\lambda)-u_{\mathrm{f}_2}(\rho;\lambda)U_{\mathrm{f}_0}(\rho;\lambda)+u_{\mathrm{f}_0}(\rho;\lambda)U_{\mathrm{f}_2}(\rho;\lambda)]f(\rho).
\end{align*}
\begin{lemma}\label{lem:decompD1}
    We can decompose $D_1(f)$ as
    \begin{align*}
        D_1(f)(\rho;\lambda)=f(\rho)\sum_{j=1}^5 G_j(\rho;\lambda)
    \end{align*}
    where
    \begin{align*}
        G_1(\rho;\lambda)&=\chi_\lambda(\rho)(1-\rho^2)^{\frac{\lambda}{2}}\int_0^\rho (1-s^2)^{-\frac{\lambda}{2}}\O(\rho^0s\langle\omega\rangle^{-1})ds,
        \\
        G_2(\rho;\lambda)&=(1-\chi_\lambda(\rho))\rho^{-\frac{5}{2}}(1-\rho)^{\frac{3}{2}+\lambda}\int_0^\rho \chi_\lambda(s)(1-s^2)^{-\frac{\lambda}{2}}\O(\rho^0s\langle\omega\rangle^{-\frac{7}{2}})ds
        \\
        G_3(\rho;\lambda)&=(1-\chi_\lambda(\rho))\rho^{-\frac{5}{2}}(1+\rho)^{\frac{3}{2}+\lambda}\int_0^\rho \chi_\lambda(s)(1-s^2)^{-\frac{\lambda}{2}}\O(\rho^0s\langle\omega\rangle^{-\frac{7}{2}})ds
        \\
        G_4(\rho;\lambda)&= (1-\chi_\lambda(\rho))\rho^{-\frac{5}{2}}(1-\rho)^{\frac{3}{2}+\lambda}\int_0^\rho (1-\chi_\lambda(s))s^{\frac{5}{2}}(1-s)^{-\frac{3}{2}+\lambda}\O(\rho^0s^0\langle\omega\rangle^{-2})ds
        \\
        G_5(\rho;\lambda)&=(1-\chi_\lambda(\rho))\rho^{-\frac{5}{2}}(1+\rho)^{\frac{3}{2}+\lambda}\int_0^\rho (1-\chi_\lambda(s))s^{\frac{5}{2}}(1+s)^{-\frac{3}{2}+\lambda}\O(\rho^0s^0\langle\omega\rangle^{-2})ds.
    \end{align*}
\end{lemma}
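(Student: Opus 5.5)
We expand $D_1(f)$ directly from the explicit representations of Lemma \ref{lem:solutions interior regime}, splitting everything with the partition of unity $\chi_\lambda+(1-\chi_\lambda)=1$ in both the outer variable $\rho$ and the integration variable $s$. Recall that
\[
U_j(\rho;\lambda)=\int_0^\rho \frac{u_j(s;\lambda)s^{5}}{(1-s^2)^{\frac32+\lambda}\widehat c(\lambda)}\,ds ,
\]
and that the Wronskian factor is the same in the free and perturbed cases, with $\widehat c$ replaced by $\widehat c_{\mathrm f}$. The structural identity we use is
\[
u_2U_0-u_0U_2=\int_0^\rho \frac{[u_2(\rho)u_0(s)-u_0(\rho)u_2(s)]\,s^5}{(1-s^2)^{\frac32+\lambda}\widehat c(\lambda)}\,ds .
\]
Thus $D_1/f$ is the difference between the perturbed and free Green kernels on the diagonal region $s<\rho$. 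Since the kernel $u_2(\rho)u_0(s)-u_0(\rho)u_2(s)$ is invariant under changes of fundamental system up to the Wronskian normalization, we may rewrite it in whichever fundamental system is convenient on each region.

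\textbf{Case $\rho$ in $\supp\chi_\lambda$.} Here $s<\rho$ also lies where $\chi_\lambda(s)=1$. We write the kernel in terms of $\psi_1,\psi_2$:
\[
\frac{\pi}{2}\,\rho^{-5/2}s^{-5/2}(1-\rho^2)^{\frac14+\frac\lambda2}(1-s^2)^{\frac14+\frac\lambda2}\bigl[\psi_2(\rho)\psi_1(s)-\psi_1(\rho)\psi_2(s)\bigr].
\]
After dividing by $(1-s^2)^{\frac32+\lambda}$ and absorbing harmless powers of $(1-s^2)$ (bounded since $\rho<\rho_0<1$), the prefactor becomes $(1-\rho^2)^{\lambda/2}(1-s^2)^{-\lambda/2}$. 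The bracket splits into the free Bessel part $b_2(\rho)b_1(s)-b_1(\rho)b_2(s)$ plus corrections of size $\rho^2e_3$ and $\psi_1e_4$. The free Bessel part cancels exactly against the $V=0$ term, because $b_j$ do not depend on $V$. The remaining contributions carry an extra factor $\rho^2\widehat e_3-\rho^2\widehat e_{\mathrm f,3}$. We then use the small-argument and large-argument bounds for $J_2,Y_2$ in the regime $\rho\lesssim \langle\omega\rangle^{-1}$ versus $\rho\gtrsim\langle\omega\rangle^{-1}$, together with $s^5\cdot s^{-5/2}\rho^{-5/2}$. This should produce the size $\O(\rho^0 s\langle\omega\rangle^{-1})$, which gives $G_1$.

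\textbf{Case $\rho$ in $\supp(1-\chi_\lambda)$.} Here we write $u_2(\rho)$ and $u_0(\rho)=c_{0,1}u_1+c_{0,2}u_2$ via $u_1,u_2$. The $u_2(\rho)$ pieces produce the prefactor $\rho^{-5/2}(1-\rho)^{\frac32+\lambda}$ (these become $G_2,G_4$), and the $u_1(\rho)$ pieces produce $\rho^{-5/2}(1+\rho)^{\frac32+\lambda}$ (these become $G_3,G_5$). For the $s$-integral we split again with $\chi_\lambda(s)$.
\begin{itemize}
\item Where $\chi_\lambda(s)\neq0$, we have $s\lesssim\langle\omega\rangle^{-1}$, and $u_0(s)$, $u_2(s)$ are given by the Bessel representation with $u_0=\O(\langle\omega\rangle^{2}\cdot)$. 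The difference from the free case gains the factor $s^2(e_3-e_{\mathrm f,3})$. Combining this with the Wronskian factor $\widehat c^{-1}$ and the length of the $s$-interval $\lesssim\langle\omega\rangle^{-1}$ should give the claimed $\O(s\langle\omega\rangle^{-7/2})$. This is the delicate bookkeeping step.
\item Where $\chi_\lambda(s)\neq1$, we have $u_j(s)=s^{-5/2}(1\pm s)^{\frac32+\lambda}g_j$. The integrand becomes $s^{5/2}(1\mp s)^{-\frac32+\lambda}\times$ a product of $g$'s and connection coefficients. The free part again cancels, leaving differences $g_j-g_{\mathrm f,j}=(1-s)\O(\langle\omega\rangle^{-1})$ and $c_{i,j}-c_{\mathrm f_{i,j}}=\O(\langle\omega\rangle^{-1})$.
\end{itemize}

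\textbf{Main obstacle.} In the last region, each difference only gains one power $\langle\omega\rangle^{-1}$, whereas $G_4$ and $G_5$ require $\langle\omega\rangle^{-2}$. To upgrade, we note that the leading $\langle\omega\rangle^{-1}$ correction terms carry the factor $(3+2\lambda)^{-1}$ coming from the Volterra kernel; these are the $\O_o$-type terms, and we combine them with the $1/\sqrt{3+2\lambda}$ normalizations of $u_1,u_2$. The product of the two normalizations gives $(3+2\lambda)^{-1}$. Together with the explicit factor $\widehat c^{-1}$, this should yield the extra decay. If that bookkeeping fails to close, the fallback is to integrate by parts once in $s$ using $\partial_s(1\mp s)^{\lambda}$, which trades a factor $\lambda^{-1}$ for a derivative falling on a smooth symbol.
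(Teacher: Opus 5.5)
Your proposal is correct and is essentially the paper's argument: you split with $\chi_\lambda$ in both $\rho$ and $s$, insert the Bessel representation on $\supp\chi_\lambda$ and the $\psi_3,\psi_4$ representation on $\supp(1-\chi_\lambda)$, subtract the free counterparts, and count powers of $\langle\omega\rangle$, trading powers of $\rho,s$ for $\langle\omega\rangle^{-1}$ on $\supp\chi_\lambda$; your use of the invariance of the Green kernel $\frac{u_2(\rho)u_0(s)-u_0(\rho)u_2(s)}{W(u_0,u_2)(s)}$ under change of fundamental system is just a tidier form of the paper's grouping of $u_2U_0$ with $u_0U_2$ (for $\rho,s\in\supp(1-\chi_\lambda)$ it even removes $\widehat c$ and all connection coefficients). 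The only thing to clean up is your ``main obstacle'': the second $\langle\omega\rangle^{-1}$ in $G_4,G_5$ comes simply from the product of the two $1/\sqrt{3+2\lambda}$ normalizations times the single $\langle\omega\rangle^{-1}$ gained from the free/perturbed difference, so no $\O_o$ structure or integration by parts is needed and $\widehat c^{-1}=\O(1)$ contributes nothing; also, in the mixed region the gain comes as well from $r_j(\rho)$ and the differences of connection coefficients, not only from $s^2(e_3-e_{\mathrm f,3})$.
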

\begin{proof}
    Note that on the support of $\chi_\lambda$ we have the leading order behavior
    \begin{align*}
u_2(\rho;\lambda)&=(1-\rho^2)^{\frac{\lambda}{2}}\O(\rho^0\langle\omega\rangle^2)+(1-\rho^2)^{\frac{\lambda}{2}}\O(\rho^{-4}\langle\omega\rangle^{-2})=(1-\rho^2)^{\frac{\lambda}{2}}\O(\rho^{-4}\langle\omega\rangle^{-2})
        \\
      u_0(\rho;\lambda)&=(1-\rho^2)^{\frac{\lambda}{2}}\O(\rho^0\langle\omega\rangle^2),
    \end{align*}
    where the first inequality follows from the fact that $\chi_\lambda(\rho)\O(\rho^0\langle\omega\rangle^2)=\chi_\lambda(\rho)\O(\rho^{-4}\langle\omega\rangle^{-2})$.
    Furthermore, as $u_j$ and $u_{\mathrm{f}_j}$ agree at leading order, one readily computes that on the support of $\chi_\lambda$ we have the identities
        \begin{align*}
u_2(\rho;\lambda)-u_{\mathrm{f}_2}(\rho;\lambda)&=\O(\rho^{-4}\langle\omega\rangle^{-3})
        \\
u_0(\rho;\lambda)-u_{\mathrm{f}_0}(\rho;\lambda)&=\O(\rho^0\langle\omega\rangle^1).
    \end{align*}
    Similarly, on the support of $(1-\chi_\lambda(\rho))$ we know that
        \begin{align*}
 u_2(\rho;\lambda)&=\rho^{-\frac{5}{2}}\frac{(1-\rho)^{\frac{3}{2}+\lambda}}{\sqrt{3+2\lambda}}[1+e_2(\rho;\lambda)][1+r_2(\rho;\lambda)]
 \\
u_2(\rho;\lambda)-u_{\mathrm{f}_2}(\rho;\lambda)&=\rho^{-\frac{5}{2}}\frac{(1-\rho)^{\frac{3}{2}+\lambda}}{\sqrt{3+2\lambda}}[1+e_2(\rho;\lambda)]r_2(\rho;\lambda).
        \\
         u_0(\rho;\lambda)&=c_{0,2}(\lambda)\rho^{-\frac{5}{2}}\frac{(1-\rho)^{\frac{3}{2}+\lambda}}{\sqrt{3+2\lambda}}[1+e_2(\rho;\lambda)][1+r_2(\rho;\lambda)]
         \\
         &\quad +c_{0,1}(\lambda)\rho^{-\frac{5}{2}}\frac{(1+\rho)^{\frac{3}{2}+\lambda}}{\sqrt{3+2\lambda}}[1+e_1(\rho;\lambda)][1+r_1(\rho;\lambda)]
         \\
u_0(\rho;\lambda)-u_{\mathrm{f}_0}(\rho;\lambda)&=\rho^{-\frac{5}{2}}\frac{(1-\rho)^{\frac{3}{2}+\lambda}}{\sqrt{3+2\lambda}}[1+e_2(\rho;\lambda)]\left[c_{0,2}(\lambda)r_2(\rho;\lambda)+[c_{0,2}(\lambda)-c_{\mathrm{f}_{0,2}}(\lambda)][1+r_2(\rho;\lambda)]\right]
\\
&\quad + \rho^{-\frac{5}{2}}\frac{(1+\rho)^{\frac{3}{2}+\lambda}}{\sqrt{3+2\lambda}}[1+e_1(\rho;\lambda)]\left[c_{0,1}(\lambda)r_1(\rho;\lambda)+[c_{0,1}(\lambda)-c_{\mathrm{f}_{0,1}}(\lambda)][1+r_1(\rho;\lambda)]\right].
    \end{align*}
    By using these identities and the fact that 
    $$
    (1-\chi_\lambda(\rho))e_j(\rho;\lambda)= (1-\chi_\lambda(\rho))\O(\rho^0\langle\omega\rangle^0),
    $$
    one readily computes that
    \begin{align*}
   &\quad     u_2(\rho;\lambda)U_0(\rho;\lambda)-u_{\mathrm{f}_2}(\rho;\lambda)U_{\mathrm{f}_0}(\rho;\lambda)
   \\
   &=\chi_\lambda(\rho)(1-\rho^2)^{\frac{\lambda}{2}}\int_0^\rho (1-s^2)^{-\frac{\lambda}{2}}\O(\rho^0s^1\langle\omega\rangle^{-1})
        \\
  &\quad + (1-\chi_\lambda(\rho))\rho^{-\frac{5}{2}}(1-\rho)^{\frac{3}{2}+\lambda}\int_0^\rho \chi_\lambda(s)(1-s^2)^{-\frac{\lambda}{2}}\O(\rho^0 s^5\langle\omega\rangle^{\frac{1}{2}})
  \\
  &\quad +   (1-\chi_\lambda(\rho))\rho^{-\frac{5}{2}}(1-\rho)^{\frac{3}{2}+\lambda}\int_0^\rho (1-\chi_\lambda(s))s^{\frac{5}{2}}(1-s)^{-\frac{3}{2}+\lambda}\O(\rho^0s^0\langle\omega\rangle^{-2})
   \\
  &\quad + (1-\chi_\lambda(\rho))\rho^{-\frac{5}{2}}(1-\rho)^{\frac{3}{2}+\lambda}\int_0^\rho (1-\chi_\lambda(s))s^{\frac{5}{2}}(1+s)^{-\frac{3}{2}+\lambda}\O(\rho^0s^0\langle\omega\rangle^{-2}).
    \end{align*}
   By decomposing $ u_0(\rho;\lambda)U_2(\rho;\lambda)-u_{\mathrm{f}_0}(\rho;\lambda)U_{\mathrm{f}_2}(\rho;\lambda)$
   in the same way and grouping terms together, one arrives at the desired result. 
\end{proof}
Motivated by this decomposition, we define operators $T_{j,\pm}$ and $\dot{T}_{j,\pm}$ as
\begin{align*}
    T_{j,\pm}(\tau)f(\rho):=\lim_{N\to \infty}\int_{-N}^N e^{i \omega \tau } G_j(\rho;\pm \delta+i\omega) f(\rho)d\omega
\end{align*}
and, motivated by the summand $\lambda f_1$ in the definition of $F_\lambda$, as
\begin{align*}
    \dot T_{j,\pm}(\tau)f(\rho):=\lim_{N\to \infty}\int_{-N}^N e^{i \omega \tau }\omega  G_j(\rho;\pm \delta+i\omega) f(\rho)d\omega
\end{align*}
for $\tau \in \R_+, f\in C^\infty_{c,rad}(\R^6)$ and $j=1,\dots,5$.
\begin{lemma}
The estimates 
\begin{align*}
    \|T_{j,\pm}(\cdot)f\|_{L^q(\R_+)L^r(\B_1)}\lesssim \|f\|_{\dot{H}^1(\R^6)}
    \\
     \|\dot T_{j,\pm}(\cdot)f\|_{L^q(\R_+)L^r(\B_1)}\lesssim \|f\|_{\dot{H}^2(\R^6)}
\end{align*}
hold for $j=1,\dots,5$ all $q\in [2,\infty],r\in[6,12]$ satisfy \eqref{eq:AdmissibleExponents} and all $f\in C^\infty_{c,rad}(\R^6)$.

\end{lemma}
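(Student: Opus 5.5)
The plan is to prove pointwise-in-$\tau$ kernel bounds for each $T_{j,\pm}$ via integration by parts in $\omega$, and then obtain the mixed-norm estimates from Hardy/Sobolev and the $\rho$-integrability of the kernels. Note that $T_{j,\pm}(\tau)f(\rho)=f(\rho)\,K_j(\tau,\rho)$ with
\[
K_j(\tau,\rho):=\lim_{N\to\infty}\int_{-N}^N e^{i\omega\tau}G_j(\rho;\pm\delta+i\omega)\,d\omega .
\]
So these are multiplication operators in $\rho$, and everything reduces to bounds on the scalar oscillatory integrals $K_j$ (and on $\dot K_j$, with the extra factor $\omega$).

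\textbf{Step 1: kernel bounds.} For each $j$ I aim to prove
\[
|K_j(\tau,\rho)|\lesssim \rho^{-\alpha_j}\langle \tau\rangle^{-2}
\]
with a suitable power $\alpha_j$. The main tool is the symbol behaviour from Lemma \ref{lem:decompD1}.

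For $G_1$, the integrand is $\O(\langle\omega\rangle^{-1})$ and is supported where $\rho\lesssim\langle\omega\rangle^{-1}$ (the support of $\chi_\lambda$). Integrating $\int_0^\rho s\,ds$ gives $\rho^2\langle\omega\rangle^{-1}$, and $\rho^2\lesssim \langle\omega\rangle^{-2}$ there. Hence $G_1=\O(\langle\omega\rangle^{-3})$ on the region $\rho\lesssim\langle\omega\rangle^{-1}$. This is integrable in $\omega$, so $|K_1|\lesssim \min\{1,\rho^{-2}\}$. Two integrations by parts in $\omega$, using $\partial_\omega^k\O(\langle\omega\rangle^{m})=\O(\langle\omega\rangle^{m-k})$, give the additional factor $\langle\tau\rangle^{-2}$.

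For $G_2,G_3$ the symbol carries $\langle\omega\rangle^{-7/2}$, which is even better.

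For $G_4,G_5$ the symbol is $\O(\langle\omega\rangle^{-2})$, with phases $(1\pm\rho)^{\lambda}$ and $(1\pm s)^{\lambda}$. The phase $e^{i\omega(\tau+\log(1-\rho)-\log(1-s))}$ is not uniformly nonstationary, so here I would first do the $\omega$-integral for fixed $s$. This gives a kernel of the form $\langle \tau+\log\frac{1-\rho}{1-s}\rangle^{-2}$, which I then integrate against $(1-s)^{-3/2+\delta}\cdots$ near $s=1$. The weights $e^{\pm\delta\tau}$ coming from $\Re\lambda=\pm\delta$ are absorbed by the factor $(1-\rho)^{\pm\delta}$ produced by the shift. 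This is why the estimate is stated for the pieces separately (and later interpolated).

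\textbf{Step 2: Strichartz norms.} Having $|T_{j,\pm}(\tau)f(\rho)|\lesssim |f(\rho)|\,k_j(\rho)\langle\tau\rangle^{-2}$ with $k_j$ bounded by $\rho^{-1}$ near $0$, I use Hardy's inequality $\||\cdot|^{-1}f\|_{L^2(\R^6)}\lesssim\|f\|_{\dot H^1}$ and Sobolev $\dot H^1\hookrightarrow L^3(\R^6)$. For $r\in[6,12]$, Hölder on $\B_1$ then gives
\[
\|k_j f\|_{L^r(\B_1)}\lesssim\|f\|_{\dot H^1},
\]
provided $k_j\in L^{p}(\B_1)$ with $\tfrac1r=\tfrac13+\tfrac1p$, which needs only mild local integrability of $k_j$. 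Since $\langle\tau\rangle^{-2}\in L^q(\R_+)$ for every $q\ge 1$, the $L^q_\tau$ norm is harmless.

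The $\dot T_{j,\pm}$ case is the same argument with one power of $\omega$ lost. I would compensate through the norm: $\|f\|_{\dot H^2}$ controls $|\cdot|^{-2}f$ in $L^2$ and $f$ in $L^6$. In the $G_1$ region the factor $\omega$ is traded for $\rho^{-1}$ via $\rho\lesssim\langle\omega\rangle^{-1}$. In the $G_4,G_5$ region I would instead perform one integration by parts in $s$, moving the derivative onto $f$ (this is where $\dot H^2$ enters), as in the treatment of \cite[Lemma 5.1]{Wallauch2024}.

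\textbf{Main obstacle.} I expect the principal difficulty to be $G_4,G_5$ near the light cone $\rho\to1$. There the symbol decays only like $\langle\omega\rangle^{-2}$, the $s$-integral has the singular weight $(1-s)^{-3/2+\Re\lambda}$, and the phase is of logarithmic type. Controlling this uniformly in $\rho$ while keeping the $e^{\pm\delta\tau}$ weight under control is the step I would check most carefully, trying first the change of variables $y=-\log(1-s)$ to turn the oscillatory integral into a convolution in $\tau$.
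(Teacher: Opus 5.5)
Your overall route matches what the paper cites from \cite[Lemma 5.14]{DonningerWallauch}: view $T_{j,\pm}$ and $\dot T_{j,\pm}$ as multipliers $f(\rho)K_j(\tau,\rho)$, bound $K_j$ by oscillatory-integral arguments, then apply an inequality in $\rho$. However, Step~2 as written cannot work.

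\textbf{The gap in Step~2.} Since $\dot H^1(\R^6)\hookrightarrow L^3(\R^6)$ and $r\ge 6>3$, the H\"older relation $\frac1r=\frac13+\frac1p$ forces $\frac1p<0$. No multiplier that is merely bounded near the origin can map $\dot H^1(\R^6)$ into $L^r(\B_1)$, let alone one of size $\rho^{-1}$. Smoothed truncations of $|x|^{-3/2}$ have bounded $\dot H^1$ norm but unbounded $L^6(\B_1)$ norm. What is actually needed is that $K_j(\tau,\cdot)$ \emph{vanishes} at $\rho=0$ at the scaling rate. If $|K_j(\tau,\rho)|\lesssim\rho^{2-6/r}h(\tau)$ with $h\in L^q(\R_+)$, then the radial bound $|x|^2|f(x)|\lesssim\|f\|_{\dot H^1(\R^6)}$ closes the estimate:
\[
\int_{\B_1}|x|^{2r-6}|f(x)|^r\,dx\le\bigl\||\cdot|^2f\bigr\|_{L^\infty(\R^6)}^{r-3}\,\|f\|_{L^3(\R^6)}^3\lesssim\|f\|_{\dot H^1(\R^6)}^r .
\]
Thus you need a factor $\rho^{3/2}$ near the origin, which covers all $r\in[6,12]$. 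For $\dot T_{j,\pm}$, the analogous interpolation of $\|f\|_{L^6}$ with $\||\cdot|f\|_{L^\infty}\lesssim\|f\|_{\dot H^2}$ shows that $\rho^{1-6/r}$, i.e.\ $\rho^{1/2}$, suffices.

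\textbf{Where the vanishing comes from.} It is available, but you have to extract it by trading powers of $\rho$ against powers of $\langle\omega\rangle$ on the respective supports. This is the same device the paper uses for $G_{16}$.
\begin{itemize}
\item For $G_1$: since $\int_0^\rho s\,ds\sim\rho^2$, one has $G_1=\chi_\lambda(\rho)\O(\rho^2\langle\omega\rangle^{-1})$. On $\supp\chi_\lambda$ one has $\rho\lesssim\langle\omega\rangle^{-1}$, so this becomes $\chi_\lambda(\rho)\O(\rho^{3/2}\langle\omega\rangle^{-3/2})$, still an integrable symbol.
\item For $G_2,\dots,G_5$: on $\supp(1-\chi_\lambda)$ one has $\rho\gtrsim\langle\omega\rangle^{-1}$. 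Evaluating the $s$-integrals near $\rho=0$ again gives $\O(\rho^{3/2}\langle\omega\rangle^{-3/2})$.
\end{itemize}
Your claims $|K_1|\lesssim\min\{1,\rho^{-2}\}$ and ``$k_j\lesssim\rho^{-1}$ near $0$'' therefore go in the wrong direction. Away from the origin, radial $\dot H^1$ functions are bounded. There your near-cone plan works and gives a bound $\lesssim\langle\tau\rangle^{-2}$: do the $\omega$-integral first for fixed $s$, then substitute logarithmically to get a convolution.

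\textbf{Two smaller misreadings.} First, $T_{j,\pm}$ contains no factor $e^{\pm\delta\tau}$, since the lemma is unweighted. The shift $\Re\lambda=\pm\delta$ only perturbs the powers of $1\mp\rho$ and $1\mp s$, so there is nothing to absorb. Second, for $j\le5$ there is no $f$ under the $s$-integral. An integration by parts in $s$ for $\dot T_{4,\pm},\dot T_{5,\pm}$ moves the derivative onto the smooth symbol $(1-\chi_\lambda(s))s^{5/2}\O(\cdot)$, not onto $f$. The $\dot H^2$ norm enters only through the weaker vanishing requirement at $\rho=0$, which compensates for the lost factor $\omega$.
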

\begin{proof}
    By arguing as in the proof of Lemma 5.14 in \cite{DonningerWallauch}\footnote{The fact that we consider $\lambda=\pm \delta+i \omega$ instead of $\lambda= i\omega$ as in the cited lemma does not require any modification of the arguments.} one concludes the estimates
    \begin{align*}
    \|T_{j,\pm}(\cdot)f\|_{L^p(\R_+)L^q(\B_1)}\lesssim \|f\|_{H^1(\B_1)}
    \\
     \|\dot T_{j,\pm}(\cdot)f\|_{L^p(\R_+)L^q(\B_1)}\lesssim \|f\|_{H^2(\B_1)}
\end{align*}
which immediately imply the desired bounds. 
\end{proof}


Next, we look at 
\begin{align*}
    D_2(f)(\rho;\lambda):&=-u_2(\rho;\lambda)\int_0^\rho U_0(s;\lambda) f'(s) ds
-u_0(\rho;\lambda)\int_\rho^1 U_2(s;\lambda) f'(s) ds
\\
&\quad+u_{\mathrm{f}_2}(\rho;\lambda)\int_0^\rho U_{\mathrm{f}_0}(s;\lambda) f'(s) ds+u_{\mathrm{f}_0}(\rho;\lambda)\int_\rho^1 U_{\mathrm{f}_2}(s;\lambda) f'(s) ds.
\end{align*}

\begin{lemma}
    We can decompose $D_2(f)$ as
    \begin{align*}
D_2(f)(\rho;\lambda):=\sum_{j=6}^{15} G_j(f)(\rho;\lambda)
    \end{align*}
    where
    \begin{align*}
        G_6(f)(\rho;\lambda)&:=\chi_\lambda(\rho) (1-\rho^2)^{\frac{\lambda}{2}}\int_0^\rho \int_0^s (1-t^2)^{-\frac{\lambda}{2}}\O(\rho^0t^5\langle\omega\rangle^3) dt f'(s) ds
        \\
         G_7(f)(\rho;\lambda)&:=(1-\chi_\lambda(\rho)) \rho^{-\frac{5}{2}}(1-\rho)^{\frac{3}{2}+\lambda}\int_0^\rho \int_0^s \chi_\lambda(t) (1-t^2)^{-\frac{\lambda}{2}}\O(\rho^0t^{\frac{5}{2}}\langle\omega\rangle^{-2}) dt f'(s) ds
         \\
         G_8(f)(\rho;\lambda)&:=(1-\chi_\lambda(\rho)) \rho^{-\frac{5}{2}}(1-\rho)^{\frac{3}{2}+\lambda}\int_0^\rho \int_0^s (1-\chi_\lambda(t))\frac{\O(\rho^0t^{\frac{5}{2}}\langle\omega\rangle^{-2})}{(1-t)^{\frac{3}{2}+\lambda }}dt f'(s) ds
         \\
          G_{9}(f)(\rho;\lambda)&:=(1-\chi_\lambda(\rho)) \rho^{-\frac{5}{2}}(1-\rho)^{\frac{3}{2}+\lambda}\int_0^\rho \int_0^s (1-\chi_\lambda(t))\frac{\O(\rho^0t^{\frac{5}{2}}\langle\omega\rangle^{-2})}{(1+t)^{\frac{3}{2}+\lambda}}dt f'(s) ds
          \\
          G_{10}(f)(\rho;\lambda)&:=\chi_\lambda(\rho) (1-\rho^2)^{\frac{\lambda}{2}}\int_\rho^1 \int_0^s \chi_\lambda(t)  (1-t^2)^{-\frac{\lambda}{2}}\O(\rho^0t\langle\omega\rangle^{-1}) dt f'(s) ds
            \\
          G_{11}(f)(\rho;\lambda)&:=\chi_\lambda(\rho) (1-\rho^2)^{\frac{\lambda}{2}}\int_\rho^1 \int_0^s (1-\chi_\lambda(t))\frac{\O(\rho^0t\langle\omega\rangle^{-1}) }{ (1+t)^{\frac{3}{2}+\lambda}}dt f'(s) ds
              \\
          G_{12}(f)(\rho;\lambda)&:=(1-\chi_\lambda(\rho)) \rho^{-\frac{5}{2}}(1-\rho)^{\frac{3}{2}+\lambda}\int_\rho^1 \int_0^s \chi_\lambda(t)(1-t^2)^{-\frac{\lambda}2}\O(\rho^0 t \langle\omega\rangle^{-\frac{7}{2}})dt f'(s) ds
             \\
          G_{13}(f)(\rho;\lambda)&:=(1-\chi_\lambda(\rho)) \rho^{-\frac{5}{2}}(1+\rho)^{\frac{3}{2}+\lambda}\int_\rho^1 \int_0^s \chi_\lambda(t)(1-t^2)^{-\frac{\lambda}2}\O(\rho^0 t \langle\omega\rangle^{-\frac{7}{2}})dt f'(s) ds
                        \\
          G_{14}(f)(\rho;\lambda)&:=(1-\chi_\lambda(\rho)) \rho^{-\frac{5}{2}}(1-\rho)^{\frac{3}{2}+\lambda}\int_\rho^1 \int_0^s (1-\chi_\lambda(t))\frac{\O(\rho^0t^{\frac{5}{2}}\langle\omega\rangle^{-2}) }{ (1+t)^{\frac{3}{2}+\lambda}}dt f'(s) ds
             \\
          G_{15}(f)(\rho;\lambda)&:=(1-\chi_\lambda(\rho)) \rho^{-\frac{5}{2}}(1+\rho)^{\frac{3}{2}+\lambda}\int_\rho^1 \int_0^s (1-\chi_\lambda(t))\frac{\O(\rho^0t^{\frac{5}{2}}\langle\omega\rangle^{-2}) }{ (1+t)^{\frac{3}{2}+\lambda}}dt f'(s) ds.
    \end{align*}
\end{lemma}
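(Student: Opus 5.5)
The decomposition follows the same pattern as Lemma \ref{lem:decompD1}. We insert the partition of unity $1=\chi_\lambda+(1-\chi_\lambda)$ twice: once in the outer variable $\rho$ and once in the inner integration variable $t$. In each resulting region we substitute the representations of $u_0,u_2$ and $u_{\mathrm f_0},u_{\mathrm f_2}$ from Lemma \ref{lem:solutions interior regime}. For the inner functions we use
\[
U_j(s;\lambda)=\int_0^s \frac{u_j(t;\lambda)t^5}{(1-t^2)^{\frac32+\lambda}\widehat c(\lambda)}\,dt .
\]
The starting point is the telescoping identity
\[
u_2U_0-u_{\mathrm f_2}U_{\mathrm f_0}=(u_2-u_{\mathrm f_2})U_0+u_{\mathrm f_2}(U_0-U_{\mathrm f_0}).
\]
An analogous identity holds for the pair $u_0,U_2$. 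Each difference then carries either the factor $r_j=\O(\langle\omega\rangle^{-1})$, the coefficient difference $c_{i,j}-c_{\mathrm f_{i,j}}=\O(\langle\omega\rangle^{-1})$, or the interior difference bounds
\[
u_2-u_{\mathrm f_2}=\O(\rho^{-4}\langle\omega\rangle^{-3}),\qquad u_0-u_{\mathrm f_0}=\O(\rho^0\langle\omega\rangle)
\]
on $\supp\chi_\lambda$. These bounds were already established in the proof of Lemma \ref{lem:decompD1}.

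The first group of terms, $G_6$ through $G_9$, comes from the integral over $(0,\rho)$ multiplied by $u_2$. If $\rho\in\supp\chi_\lambda$, then $t\le s\le\rho$ also lies in $\supp\chi_\lambda$, so only the interior representation enters. We multiply $u_2\sim(1-\rho^2)^{\lambda/2}\O(\rho^{-4})$ against the integrand
\[
u_0(t)\,t^5(1-t^2)^{-\frac32-\lambda}\sim(1-t^2)^{-\frac{\lambda}{2}}\O(t^5).
\]
Since $t\le\rho$, the factor $\rho^{-4}$ can be absorbed by $t^5$, leaving $\O(t)$; we leave it in the conservative form $t^5$. Accounting for the $\omega$-powers coming from $\widehat c(\lambda)^{-1}$ and the difference bounds gives $G_6$. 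If $\rho\in\supp(1-\chi_\lambda)$, then $u_2$ equals $\rho^{-5/2}(1-\rho)^{3/2+\lambda}$ times a bounded factor. We split the $t$-integral by $\chi_\lambda(t)$. The $\chi_\lambda(t)$ piece gives $G_7$, using $t\lesssim\langle\omega\rangle^{-1}$ to trade powers of $t$ for powers of $\omega$. On the $1-\chi_\lambda(t)$ piece, $u_0$ is a combination of $(1-t)^{3/2+\lambda}$ and $(1+t)^{3/2+\lambda}$ terms. Dividing by $(1-t^2)^{3/2+\lambda}$ produces the kernels $(1+t)^{-3/2-\lambda}$ and $(1-t)^{-3/2-\lambda}$, which give $G_9$ and $G_8$ respectively.

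The second group, $G_{10}$ through $G_{15}$, comes from $u_0\int_\rho^1U_2f'$. Here $s$ ranges over $(\rho,1)$ while $t\in(0,s)$, so both cutoff regimes occur in $t$ even when $\rho\in\supp\chi_\lambda$; this gives $G_{10}$ and $G_{11}$. In $G_{11}$, $u_2(t)$ with $t$ in the exterior regime contributes $(1-t)^{3/2+\lambda}$, which cancels against the Wronskian and leaves $(1+t)^{-3/2-\lambda}$. For $\rho\in\supp(1-\chi_\lambda)$, $u_0$ splits into its $c_{0,2}(1-\rho)^{3/2+\lambda}$ and $c_{0,1}(1+\rho)^{3/2+\lambda}$ parts. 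Combined with the two $t$-regimes, this yields $G_{12}$ through $G_{15}$. The power $\langle\omega\rangle^{-7/2}$ in $G_{12}$ and $G_{13}$ arises as in $G_2$ and $G_3$: on $\supp\chi_\lambda(t)$ we use the Bessel bound $J_2=\O(\langle\omega\rangle^{-1/2})$ at the matching scale, together with the factors $(3+2\lambda)^{-1/2}$ and $r_j$.

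The main obstacle is keeping exact track of the $\omega$-exponents. Two places are delicate. The first is the $\chi_\lambda$-region of $G_6$, where the free terms individually grow like $\langle\omega\rangle^{2}$ and only the differences give the stated bound. The second is checking that the $(1-t)^{-3/2-\lambda}$ singularity in $G_8$ is paired with $(1-\rho)^{3/2+\lambda}$ and $t\le\rho$; this pairing is what makes the expression integrable. I would handle both by reusing the bookkeeping from Lemma \ref{lem:decompD1} verbatim, because the inner integrals $U_0$ and $U_2$ are exactly the kernels treated there. The only new feature is the outer $f'(s)\,ds$ integration, and it does not interact with the estimates in $\omega$.
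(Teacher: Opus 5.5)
Your route is the one the paper has in mind; its proof is only the pointer to Lemma~\ref{lem:decompD1}. You cut off with $\chi_\lambda$ in $\rho$ and in $t$, telescope, and insert the local representations and the difference bounds. The step that fails is the one exponent count you actually carry out, for $G_6$.

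Your own ingredients on $\supp\chi_\lambda$ are:
\begin{itemize}
\item $u_2-u_{\mathrm{f}_2}=\O(\rho^{-4}\langle\omega\rangle^{-3})$ and $u_{\mathrm{f}_2}=\O(\rho^{-4}\langle\omega\rangle^{-2})$;
\item $u_0(t)t^5(1-t^2)^{-\frac32-\lambda}=(1-t^2)^{-\frac{\lambda}{2}}\O(t^5\langle\omega\rangle^{2})$;
\item $u_0-u_{\mathrm{f}_0}=\O(\langle\omega\rangle)$ and $\widehat c(\lambda)^{-1}=\O(1)$.
\end{itemize}
With these, both telescoped pieces are $\O(\rho^{-4}t^5\langle\omega\rangle^{-1})$. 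By $t\le s\le\rho$ this is $\O(\rho^0 t\langle\omega\rangle^{-1})$, which is exactly the structure of $G_1$.

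Writing $\rho^0t^5$ in place of $\rho^{-4}t^5$ is not conservative; it discards a large factor. Since $\rho\lesssim\langle\omega\rangle^{-1}$ on $\supp\chi_\lambda$, we have $\rho^{-4}\langle\omega\rangle^{-1}\gtrsim\langle\omega\rangle^{3}$. So the class $\O(\rho^0t^5\langle\omega\rangle^3)$ is strictly smaller than what these bounds give, and no bookkeeping reaches it.

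That class is what you get by treating $u_2$ as $\O(\rho^0\langle\omega\rangle^2)$ on $\supp\chi_\lambda$. That amounts to reading the inclusion $\chi_\lambda\O(\rho^0\langle\omega\rangle^2)\subset\chi_\lambda\O(\rho^{-4}\langle\omega\rangle^{-2})$ from the proof of Lemma~\ref{lem:decompD1} backwards. Doing so ignores the genuine $\rho^{-4}$ singularity of the $Y_2$-part of $u_2$ (Frobenius index $-4$).

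What your argument actually proves is the lemma with $G_6$ carrying $\O(\rho^{-4}t^5\langle\omega\rangle^{-1})$, and you should state it that way. This is harmless downstream. After two $\rho$-derivatives one gets $\O(\rho^{-6}t^5\langle\omega\rangle^{-1})$, and $\rho^{-6}t^5\langle\omega\rangle^{-1}\lesssim\rho^{-\frac52}t^{\frac32-\frac1{10}}\langle\omega\rangle^{-1-\frac1{10}}$ on $\supp\chi_\lambda$. That is exactly the bound used for $T''_{7,\pm}$.

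For the same reason, reusing the bookkeeping of Lemma~\ref{lem:decompD1} ``verbatim'' does not settle the exponents; each one has to be checked.
\begin{itemize}
\item \emph{$G_{12},G_{13}$.} The $\langle\omega\rangle^{-7/2}$ does not come from $J_2=\O(\langle\omega\rangle^{-1/2})$. On $\supp\chi_\lambda(t)$ one has $|a(\lambda)|\,t\lesssim r$, and your tally $-\frac12-\frac12-1$ gives only $-2$. The correct count is $u_2(t)t^5=\O(t\langle\omega\rangle^{-2})$ on $\supp\chi_\lambda(t)$, times $(3+2\lambda)^{-1/2}$ from the exterior form of $u_0(\rho)$, times $\O(\langle\omega\rangle^{-1})$ from the difference.
\item \emph{$G_{11}$.} The same ingredients are $u_0(\rho)=\O(\langle\omega\rangle^2)$, the $U_2$-integrand $(1+t)^{-\frac32-\lambda}\O(t^{5/2}\langle\omega\rangle^{-1/2})$, and $\langle\omega\rangle^{-1}$ from the difference. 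Together they give $\O(\rho^0t^{5/2}\langle\omega\rangle^{1/2})$. On $\supp(1-\chi_\lambda(t))$ this is strictly larger than $\O(\rho^0t\langle\omega\rangle^{-1})$. Reaching the stated form requires further integrations by parts in $t$ against the oscillatory factor. Each gains $(\frac12+\lambda)^{-1}$ but produces boundary terms at $t=s$ that are not among the listed $G_j$.
\end{itemize}
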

\begin{proof}
    This follows from the same considerations as Lemma \ref{lem:decompD1}.
\end{proof}
As above, we define associated operators as 
$T_{j,\pm}$ and $\dot{T}_{j,\pm}$ as
\begin{align*}
    T_{j,\pm}(\tau)f(\rho):=\lim_{N\to \infty}\int_{-N}^N e^{i \omega \tau } G_j(f)(\rho;\pm \delta+i\omega) d\omega
\end{align*}
and, motivated by the summand $\lambda f_1$ in the definition of $F_\lambda$, as
\begin{align*}
    \dot T_{j,\pm}(\tau) f(\rho):=\lim_{N\to \infty}\int_{-N}^N e^{i \omega \tau }\omega  G_j(f)(\rho;\pm \delta+i\omega) d\omega
\end{align*}
for $f\in C^\infty_{c,rad}(\R^6).$
\begin{lemma}
The estimates 
\begin{align*}
    \|T_{j,\pm}(\cdot)f\|_{L^q(\R_+)L^r(\B_1)}\lesssim \|f\|_{\dot{H}^1(\R^6)}
    \\
     \|\dot T_{j,\pm}(\cdot)f\|_{L^q(\R_+)L^r(\B_1)}\lesssim \|f\|_{\dot{H}^2(\R^6)}
\end{align*}
hold for $j=6,\dots,15,$ all $q\in [2,\infty], ~r\in[6,12]$ satisfy \eqref{eq:AdmissibleExponents} and all $f\in C^\infty_{c,rad}(\R^6)$.
\end{lemma}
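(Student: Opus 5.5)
The plan is to reduce the statement to the same local estimates on $\B_1$ that were used for $j=1,\dots,5$. Concretely, I will prove
\begin{align*}
\|T_{j,\pm}(\cdot)f\|_{L^q(\R_+)L^r(\B_1)}\lesssim \|f\|_{H^1(\B_1)},\qquad \|\dot T_{j,\pm}(\cdot)f\|_{L^q(\R_+)L^r(\B_1)}\lesssim \|f\|_{H^2(\B_1)},
\end{align*}
and then pass to homogeneous norms on $\R^6$. The passage works because the $H^k(\B_1)$ norm of a radial function is controlled by its $\dot H^k(\R^6)$ norm: one uses Hardy's inequality and the embeddings $\dot H^1\hookrightarrow L^3$ and $\dot H^2\hookrightarrow L^3$ restricted to the bounded set $\B_1$.

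For the local bounds I will follow the argument of Lemma 5.14 in \cite{DonningerWallauch}, treating each $G_j(f)$, $j=6,\dots,15$, according to its structure. Each one is an outer factor ($\chi_\lambda(\rho)(1-\rho^2)^{\lambda/2}$, or $(1-\chi_\lambda(\rho))\rho^{-5/2}(1\mp\rho)^{3/2+\lambda}$) times an iterated integral of $f'$ against a symbol with explicit $\O(\cdot)$ bounds in $t$ and $\langle\omega\rangle$. The steps are as follows.

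\textbf{Step 1.} I will interchange the order of integration, so that $G_j(f)(\rho;\lambda)=\int_0^1 K_j(\rho,s;\lambda)f'(s)\,ds$. The kernel $K_j$ is then an integral in $t$ of symbol-type functions.

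\textbf{Step 2.} I will perform the $\omega$-integration first. The oscillatory factors $(1-\rho^2)^{\lambda/2}(1-t^2)^{-\lambda/2}$ and $(1\mp\rho)^{\lambda}(1\mp t)^{-\lambda}$ turn $e^{i\omega\tau}$ into $e^{i\omega(\tau+\frac12\log\frac{1-\rho^2}{1-t^2})}$, respectively $e^{i\omega(\tau+\log\frac{1\mp\rho}{1\mp t})}$, with the real weight $e^{\pm\delta(\cdots)}$ collected into the symbol. Repeated integration by parts in $\omega$, using the symbol bounds $\partial_\omega^n\O(\langle\omega\rangle^{\alpha})=\O(\langle\omega\rangle^{\alpha-n})$, yields pointwise bounds of the form $\langle\tau+\text{phase}\rangle^{-2}$ whenever the net $\omega$-power is $\le -1$. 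If the net power is larger, I will first integrate by parts in $s$ or $t$, moving derivatives onto $f$ to gain $\langle\omega\rangle^{-1}$ each time.

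\textbf{Step 3.} The resulting operators are of the form $f'\mapsto \int \O(\langle\tau-\psi(\rho,s)\rangle^{-2})|f'(s)|\,ds$. I will estimate them in $L^q_\tau L^r_\rho(\B_1)$ using Minkowski, Young in $\tau$, and Hardy/Hölder in $s$. For $\dot T$, the extra factor $\omega$ costs one additional integration by parts, which is paid for by using $f''$, whence the $H^2$ norm.

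The main obstacle is $G_6$, which carries the bad factor $\langle\omega\rangle^3$, together with the boundary behaviour near $\rho=1$, where $(1-t)^{-3/2-\lambda}$ appears in $G_8$. For $G_6$ I will exploit the support of $\chi_\lambda$, namely $t\le\rho\lesssim\langle\omega\rangle^{-1}$. Integrating $t^5$ over this region gives the gain $\rho^6\lesssim\langle\omega\rangle^{-6}\rho^{0}$, and I will trade the powers of $\rho$ against $\langle\omega\rangle$; this is the scaling trick from the cited lemma. For $G_8$ I will integrate by parts in $t$, using $\partial_t(1-t)^{-1/2-\lambda}$. This converts the singularity into a factor $(1-\rho)^{-1/2-\lambda}$, which combines with the outer $(1-\rho)^{3/2+\lambda}$ into an integrable, purely oscillatory phase $(1-\rho)^{1}$. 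That phase is then handled as in Step 2.
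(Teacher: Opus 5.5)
Your plan is essentially the paper's proof. The paper simply invokes the argument of Lemma 5.17 in \cite{DonningerWallauch}, not Lemma 5.14, which it reserves for the multiplication-type terms $j=1,\dots,5$; your Steps 1--3 are in fact that integral-operator version. As for $j=1,\dots,5$, it passes from local bounds on $\B_1$ to the homogeneous norms via Hardy's inequality. Incidentally, $\dot H^2(\R^6)$ embeds into $L^6$, not $L^3$, though Hardy alone suffices here.

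Two details need tightening:
\begin{enumerate}
\item The uniform kernel bound $\langle\tau+\text{phase}\rangle^{-2}$ requires net $\omega$-order strictly below $-1$; order exactly $-1$ can produce a logarithmic singularity. This matters for $\dot T_{j,\pm}$ with $j=8,9,14,15$. There $\omega\,\O(\langle\omega\rangle^{-2})$ sits exactly at order $-1$, and no powers of $\rho$ or $t$ can be traded because both cutoffs are $1-\chi_\lambda$. So you need the extra integration by parts in $t$, or a mildly singular kernel as in \cite[Lemma 5.4]{DonningerWallauch}.
\item In $G_6$ you must keep the weight on $s$ rather than bounding the $t$-integral by $\rho^6$. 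The reason is that $\int_0^\rho s^{c}|f'(s)|\,ds$ is controlled by $\|s^{5/2}f'\|_{L^2}$ only for $c>2$. The remaining $\omega$-decay is then obtained by borrowing a small $\rho^{-a}$ with $a<6/r$, as the paper does for $T''_{7,\pm}$.
\end{enumerate}
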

\begin{proof}
Once, again this again follows from arguments exhibited in \cite{DonningerWallauch}. This time, one readily employs the exact arguments employed in the proof of Lemma 5.17 in said work.
\end{proof}
With this, we come to $D_3(f)(\rho;\lambda)$ which we define as
\begin{align*}
 D_3(f)(\rho;\lambda):=  u_0(\rho;\lambda)[ \mu(f)(\lambda)+U_2(1;\lambda)f(1)]-u_{\mathrm{f}_0}(\rho;\lambda)[\mu_{\mathrm{f}}(f)(\lambda)+U_{\mathrm{f}_2}(1;\lambda)f(1)].
   \end{align*}

We will also need to manipulate these terms a bit in order to prove estimates. To that end, we compute that
\begin{align*}
    \mu(f)(\lambda)&=c_{0,1}(\lambda)^{-1}\int_1^\infty \kappa(s)\frac{s^{\frac{5}{2}}g_2(s;\lambda)}{\sqrt{3+2\lambda}(1+s)^{\frac{3}{2}+\lambda}}f(s) ds
    \\
    &\quad +c_{0,1}(\lambda)^{-1}\int_1^\infty \frac{(1-\kappa(s))}{\sqrt{3+2\lambda}}\left[\frac{s^{\frac{5}{2}}a_{2,3}(\lambda)g_3(s;\lambda)}{(s-1)^{\frac{3}{2}+\lambda}}+\frac{a_{2,4}(\lambda)s^{\frac{5}{2}}g_4(s;\lambda)}{(1+s)^{\frac{3}{2}+\lambda}}\right]f(s) ds
    \\
    &= c_{0,1}(\lambda)^{-1}\int_1^\infty \frac{2\partial_s[\kappa(s)s^{\frac{5}{2}}g_2(s;\lambda)f(s)]}{(1+2\lambda)\sqrt{3+2\lambda}(1+s)^{\frac{1}{2}+\lambda}} ds+\frac{c_{0,1}(\lambda)^{-1} f(1)}{(1+2\lambda)\sqrt{3+2\lambda}2^{\frac{1}{2}+\lambda}}
    \\
    &\quad +\frac{2c_{0,1}(\lambda)^{-1}}{\sqrt{3+2\lambda}(1+2\lambda)}\int_1^\infty \frac{a_{2,3}(\lambda)\partial_s[ (1-\kappa(s)) s^{\frac{5}{2}}g_3(s;\lambda)f(s)]}{(s-1)^{\frac{1}{2}+\lambda}}
    \\
    &\quad +\frac{2c_{0,1}(\lambda)^{-1}}{\sqrt{3+2\lambda}(1+2\lambda)}\int_1^\infty\frac{a_{2,4}(\lambda)\partial_s[ (1-\kappa(s)) s^{\frac{5}{2}}g_4(s;\lambda)f(s)]}{(1+s)^{\frac{1}{2}+\lambda}} ds.
\end{align*}
Furthermore, we compute that
\begin{align*}
    W(u_0(\cdot;\lambda)),u_2(\cdot;\lambda))(\rho)= W(u_1(\cdot;\lambda)),u_2(\cdot;\lambda))(\rho)=c_{0,1}(\lambda)\frac{(1-\rho^2)^{\frac{1}{2}+\lambda}}{\rho^{5}},
\end{align*}
which implies that
 \begin{align*}
    U_2(1;\lambda)&=\int_0^1 \frac{\chi_\lambda(s)\O(s\langle\omega\rangle^{-1})}{(1-s^2)^{\frac{\lambda}{2}}}ds+ c_{0,1}(\lambda)^{-1}\int_0^1\frac{(1-\chi_\lambda(s))s^{\frac{5}{2}}[1+e_2(s;\lambda)][1+r_2(s;\lambda)]}{\sqrt{3+2\lambda}(1+s)^{\frac{3}{2}+\lambda}}ds
    \\
    &= \int_0^1 \frac{\chi_\lambda(s)\O(s\langle\omega\rangle^{-1})}{(1-s^2)^{\frac{\lambda}{2}}}ds+ c_{0,1}(\lambda)^{-1}\int_0^1\frac{\partial_s[(1-\chi_\lambda(s))s^{\frac{5}{2}}[1+e_2(s;\lambda)][1+r_2(s;\lambda)]]}{(1+2\lambda)\sqrt{3+2\lambda}(1+s)^{\frac{1}{2}+\lambda}}ds
    \\
    &\quad -\frac{2c_{0,1}(\lambda)^{-1}}{(1+2\lambda)\sqrt{3+2\lambda}2^{\frac{1}{2}+\lambda}}.
\end{align*}
Therefore, we conclude that
\begin{align*}
    &\quad \mu(f)(\lambda)+U_2(1;\lambda)f(1)
    \\
    &=c_{0,1}(\lambda)^{-1}\int_1^\infty \frac{2\partial_s[\kappa(s)s^{\frac{5}{2}}g_2(s;\lambda)f(s)]}{(1+2\lambda)\sqrt{3+2\lambda}(1+s)^{\frac{1}{2}+\lambda}} ds
    \\
     &\quad +\frac{2c_{0,1}(\lambda)^{-1}}{\sqrt{3+2\lambda}(1+2\lambda)}\int_1^\infty \frac{a_{2,3}(\lambda)\partial_s[ (1-\kappa(s)) s^{\frac{5}{2}}g_3(s;\lambda)f(s)]}{(s-1)^{\frac{1}{2}+\lambda}}
    \\
    &\quad +\frac{2c_{0,1}(\lambda)^{-1}}{\sqrt{3+2\lambda}(1+2\lambda)}\int_1^\infty\frac{a_{2,4}(\lambda)\partial_s[ (1-\kappa(s)) s^{\frac{5}{2}}g_4(s;\lambda)f(s)]}{(1+s)^{\frac{1}{2}+\lambda}} ds
    \\
    &\quad +f(1)\left[\int_0^1 \frac{\chi_\lambda(s)\O(s\langle\omega\rangle^{-1})}{(1-s^2)^{\frac{\lambda}{2}}}ds+ c_{0,1}(\lambda)^{-1}\int_0^1\frac{\partial_s[(1-\chi_\lambda(s))s^{\frac{5}{2}}[1+e_2(s;\lambda)][1+r_2(s;\lambda)]]}{(1+2\lambda)\sqrt{3+2\lambda}(1+s)^{\frac{1}{2}+\lambda}}ds\right].
\end{align*}

\begin{lemma}
       We can decompose $ D_3(f)$ as 
       \begin{align*}
           D_3(f)(\rho;\lambda)=\sum_{j=16}^{27} G_j(f)(\rho;\lambda)
       \end{align*}
       with
       \begin{align*}
           G_{16}(f)(\rho;\lambda)&=\chi_\lambda(\rho)(1-\rho^2)^{\frac{\lambda}{2}}\O(\rho^0\langle\omega\rangle^{-\frac{1}{2}})\int_1^\infty \frac{\partial_s[\kappa(s)\O(s^{\frac{5}{2}}\langle\omega\rangle^0)f(s)]}{(1+s)^{\frac{1}{2}+\lambda}}ds 
           \\
              G_{17}(f)(\rho;\lambda)&=(1-\chi_\lambda(\rho))(1-\rho)^{\frac{3}{2}+\lambda}\O(\rho^{-\frac{5}{2}}\langle\omega\rangle^{-3})\int_1^\infty \frac{\partial_s[\kappa(s)\O(s^{\frac{5}{2}}\langle\omega\rangle^0)f(s)]}{(1+s)^{\frac{1}{2}+\lambda}}ds 
              \\
              G_{18}(f)(\rho;\lambda)&=(1-\chi_\lambda(\rho))(1+\rho)^{\frac{3}{2}+\lambda}\O(\rho^{-\frac{5}{2}}\langle\omega\rangle^{-3})\int_1^\infty \frac{\partial_s[\kappa(s)\O(s^{\frac{5}{2}}\langle\omega\rangle^0)f(s)]}{(1+s)^{\frac{1}{2}+\lambda}}ds 
              \\
              G_{19}(f)(\rho;\lambda)&=\chi_\lambda(\rho)(1-\rho^2)^{\frac{\lambda}{2}}\O(\rho^0\langle\omega\rangle^{-\frac{1}{2}})
              \\
              &\quad \times \int_1^\infty \left[\frac{\partial_s[ (1-\kappa(s)) \O(s^{\frac{5}{2}}\langle\omega\rangle^0)f(s)]}{(s-1)^{\frac{1}{2}+\lambda}}-\frac{\partial_s[ (1-\kappa(s)) \O(s^{\frac{5}{2}}\langle\omega\rangle^0)f(s)]}{(1+s)^{\frac{1}{2}+\lambda}}\right] ds
              \\
              G_{20}(f)(\rho;\lambda)&=(1-\chi_\lambda(\rho))(1-\rho)^{\frac{3}{2}+\lambda}\O(\rho^{-\frac{5}{2}}\langle\omega\rangle^{-3})\\
              &\quad \times \int_1^\infty \left[\frac{\partial_s[ (1-\kappa(s)) \O(s^{\frac{5}{2}}\langle\omega\rangle^0)f(s)]}{(s-1)^{\frac{1}{2}+\lambda}}-\frac{\partial_s[ (1-\kappa(s)) \O(s^{\frac{5}{2}}\langle\omega\rangle^0)f(s)]}{(1+s)^{\frac{1}{2}+\lambda}}\right] ds
              \\
              G_{21}(f)(\rho;\lambda)&=(1-\chi_\lambda(\rho))(1+\rho)^{\frac{3}{2}+\lambda}\O(\rho^{-\frac{5}{2}}\langle\omega\rangle^{-3})\\
              &\quad \times \int_1^\infty \left[\frac{\partial_s[ (1-\kappa(s)) \O(s^{\frac{5}{2}}\langle\omega\rangle^0)f(s)]}{(s-1)^{\frac{1}{2}+\lambda}}-\frac{\partial_s[ (1-\kappa(s)) \O(s^{\frac{5}{2}}\langle\omega\rangle^0)f(s)]}{(1+s)^{\frac{1}{2}+\lambda}}\right] ds
              \\
               G_{22}(f)(\rho;\lambda)&=\chi_\lambda(\rho)(1-\rho^2)^{\frac{\lambda}{2}}\O(\rho^0\langle\omega\rangle)f(1)\int_0^1 \frac{\chi_\lambda(s)\O(s\langle\omega\rangle^{-1})}{(1-s^2)^{\frac{\lambda}{2}}}ds
           \\
              G_{23}(f)(\rho;\lambda)&=(1-\chi_\lambda(\rho))(1-\rho)^{\frac{3}{2}+\lambda}\O(\rho^{-\frac{5}{2}}\langle\omega\rangle^{-\frac{7}{2}})f(1)\int_0^1 \frac{\chi_\lambda(s)\O(s\langle\omega\rangle^{-1})}{(1-s^2)^{\frac{\lambda}{2}}}ds
              \\
              G_{24}(f)(\rho;\lambda)&=(1-\chi_\lambda(\rho))(1+\rho)^{\frac{3}{2}+\lambda}\O(\rho^{-\frac{5}{2}}\langle\omega\rangle^{-\frac{7}{2}})f(1)\int_0^1 \frac{\chi_\lambda(s)\O(s\langle\omega\rangle^{-1})}{(1-s^2)^{\frac{\lambda}{2}}}ds
               \\
               G_{25}(f)(\rho;\lambda)&=\chi_\lambda(\rho)(1-\rho^2)^{\frac{\lambda}{2}}\O(\rho^0\langle\omega\rangle^{-\frac12})f(1)\int_0^1\frac{\partial_s[(1-\chi_\lambda(s))\O(s^{\frac{5}{2}}\langle\omega\rangle^0)]}{(1+s)^{\frac{1}{2}+\lambda}}ds
           \\
              G_{26}(f)(\rho;\lambda)&=(1-\chi_\lambda(\rho))(1-\rho)^{\frac{3}{2}+\lambda}\O(\rho^{-\frac{5}{2}}\langle\omega\rangle^{-3})f(1)\int_0^1\frac{\partial_s[(1-\chi_\lambda(s))\O(s^{\frac{5}{2}}\langle\omega\rangle^0)]}{(1+s)^{\frac{1}{2}+\lambda}}ds
              \\
              G_{27}(f)(\rho;\lambda)&=(1-\chi_\lambda(\rho))(1+\rho)^{\frac{3}{2}+\lambda}\O(\rho^{-\frac{5}{2}}\langle\omega\rangle^{-3})f(1)\int_0^1\frac{\partial_s[(1-\chi_\lambda(s))\O(s^{\frac{5}{2}}\langle\omega\rangle^0)]}{(1+s)^{\frac{1}{2}+\lambda}}ds.
       \end{align*}
   \end{lemma}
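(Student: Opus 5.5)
The proof is a bookkeeping exercise, organised like the proof of Lemma \ref{lem:decompD1}. We insert the representation of $\mu(f)(\lambda)+U_2(1;\lambda)f(1)$ derived just before the statement into the definition of $D_3(f)$, together with its free analogue for $\mu_{\mathrm f}(f)+U_{\mathrm f_2}(1;\lambda)f(1)$. We then split $u_0$ and $u_{\mathrm f_0}$ according to $\chi_\lambda$ and $1-\chi_\lambda$. On the support of $\chi_\lambda$ we use $u_0=(1-\rho^2)^{\frac{\lambda}{2}}\O(\rho^0\langle\omega\rangle^2)$, or more precisely the $J_2$ representation. On the support of $1-\chi_\lambda$ we use $u_0=c_{0,1}u_1+c_{0,2}u_2$ with the explicit forms from Lemma \ref{lem:solutions interior regime}. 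This produces exactly three $\rho$-profiles: $\chi_\lambda(\rho)(1-\rho^2)^{\frac{\lambda}{2}}$, $(1-\chi_\lambda(\rho))\rho^{-\frac52}(1-\rho)^{\frac32+\lambda}$ and $(1-\chi_\lambda(\rho))\rho^{-\frac52}(1+\rho)^{\frac32+\lambda}$. These account for the triples $(G_{16},G_{17},G_{18})$, $(G_{19},G_{20},G_{21})$, and so on.

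The bracket $\mu(f)+U_2(1)f(1)$ splits into four pieces: the $\kappa$-integral, the combined $(1-\kappa)$-integral with kernels $(s-1)^{-\frac12-\lambda}$ and $(1+s)^{-\frac12-\lambda}$, the $\chi_\lambda$-part of $U_2(1)$, and the $(1-\chi_\lambda)$-part of $U_2(1)$. Multiplying each piece by each of the three profiles gives the twelve terms $G_{16},\dots,G_{27}$.

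It remains to verify the $\langle\omega\rangle$-powers, and this is the only step requiring care. The prefactor of each integral is $c_{0,1}(\lambda)^{-1}(1+2\lambda)^{-1}(3+2\lambda)^{-\frac12}=\O(\langle\omega\rangle^{-\frac32})$. Here we use Lemma \ref{lem:zeros} and the choice of $\delta$: on the lines $\Re\lambda=\pm\delta$ the coefficient $c_{0,1}$ is nonvanishing, and by \eqref{Eq:limit for c_{0,1}} it tends to $z_\pm\neq0$. Hence $c_{0,1}^{-1}=\O(\langle\omega\rangle^0)$ uniformly on these lines. The coefficients $a_{2,3}$ and $a_{2,4}$ are $\O(1)$ by Lemma \ref{lem:concoef}, and the $g_j$ carry symbol bounds of order $\langle\omega\rangle^0$; these are absorbed into $\O(s^{\frac52}\langle\omega\rangle^0)$.

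On the exterior parts, $u_0$ contributes $c_{0,j}(3+2\lambda)^{-\frac12}=\O(\langle\omega\rangle^{-\frac12})$. The products of these orders give the stated powers in the outer-region terms. On the $\chi_\lambda$-region one cannot simply multiply the crude bound $\O(\langle\omega\rangle^2)$. Instead, one uses that on $\supp\chi_\lambda$ the quantity $\rho\langle\omega\rangle$ is bounded, so $J_2(a\varphi(\rho))$ and $\sqrt{\varphi(\rho)}$ together contribute $\O(\rho^0\langle\omega\rangle^{1})$ after accounting for the factor $\rho^{-\frac52}$ normalisation. This yields $\O(\langle\omega\rangle^{-\frac12})$ in $G_{16}$, $G_{19}$ and $G_{25}$, and $\O(\langle\omega\rangle)$ in $G_{22}$, where the $\chi_\lambda$-part of $U_2(1)$ carries no $c_{0,1}^{-1}$ factor. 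The statement only claims a decomposition with these $\O$-labels, so I expect consistent tracking of these powers to be the main obstacle.

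The free terms $u_{\mathrm f_0}[\cdots]$ are handled identically with $V=0$; they have the same structure and are simply absorbed into the same $\O$-symbols. No cancellation between the perturbed and free terms is needed for this decomposition. That is why the orders here are weaker than those for $D_1$, and why the later estimates must exploit the integrals over $s$, for example by further integrations by parts, rather than the difference structure.
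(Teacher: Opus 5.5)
Your skeleton matches what the paper intends: insert the integrated-by-parts form of $\mu(f)+U_2(1;\lambda)f(1)$, split $u_0$ with $\chi_\lambda$, and take four pieces times three $\rho$-profiles. The $\langle\omega\rangle$-bookkeeping, however, does not close, and the cause is your claim that no cancellation between perturbed and free terms is needed. Set $X:=\mu(f)(\lambda)+U_2(1;\lambda)f(1)$ and let $X_{\mathrm{f}}$ be its free analogue. Then each of $u_0X$ and $u_{\mathrm{f}_0}X_{\mathrm{f}}$ on its own is one power of $\langle\omega\rangle$ worse than the claimed $G_j$.

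Off $\supp\chi_\lambda$, your own orders multiply to $\langle\omega\rangle^{-\frac{3}{2}}\cdot\langle\omega\rangle^{-\frac{1}{2}}=\langle\omega\rangle^{-2}$. The statement asserts $\langle\omega\rangle^{-3}$ in $G_{17},G_{18},G_{20},G_{21},G_{26},G_{27}$.

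On $\supp\chi_\lambda$, the bound $u_0=\O(\rho^0\langle\omega\rangle)$ is false. From $\varphi(\rho)=\rho+\O(\rho^3)$ and $J_2(z)=\frac{z^2}{8}+\O(z^4)$ one gets $u_0(0;\lambda)=\frac{a(\lambda)^2}{8}$, which is of size $\langle\omega\rangle^2$; this is the $\O(\rho^0\langle\omega\rangle^2)$ recorded in Lemma~\ref{lem:solutions interior regime}. The constraint $\rho\langle\omega\rangle\lesssim1$ only converts a factor $\langle\omega\rangle$ into $\rho^{-1}$, never into $\rho^0$. So $u_0X=\O(\rho^0\langle\omega\rangle^{\frac{1}{2}})$ there, not $\O(\rho^0\langle\omega\rangle^{-\frac{1}{2}})$ as stated in $G_{16},G_{19},G_{25}$.

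This is not cosmetic. The bound on $T_{16,\pm,\kappa_2}$ trades $\chi_\lambda\O(\rho^0\langle\omega\rangle^{-\frac{1}{2}})$ for $\chi_\lambda\O(\rho^{-\frac{1}{2}-\frac{1}{24}}\langle\omega\rangle^{-1-\frac{1}{24}})$. Starting from $\langle\omega\rangle^{\frac{1}{2}}$, the required weight would be $\rho^{-\frac{3}{2}-\frac{1}{24}}$, which is not even in $L^6(\B_1)$.

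The missing idea is the one in the proof of Lemma~\ref{lem:decompD1}. Write $D_3(f)=(u_0-u_{\mathrm{f}_0})X+u_{\mathrm{f}_0}(X-X_{\mathrm{f}})$ as in \eqref{eq:ab-cd}, and use that perturbed and free quantities agree to leading order:
on $\supp\chi_\lambda$ one has $u_0-u_{\mathrm{f}_0}=\O(\rho^0\langle\omega\rangle)$.
On $\supp(1-\chi_\lambda)$ the difference always carries either $r_j=(1-\rho)\O(\langle\omega\rangle^{-1})$ or $c_{0,j}-c_{\mathrm{f}_{0,j}}=\O(\langle\omega\rangle^{-1})$.
In the pieces of $X-X_{\mathrm{f}}$ that carry the prefactor $c_{0,1}^{-1}(1+2\lambda)^{-1}(3+2\lambda)^{-\frac{1}{2}}=\O(\langle\omega\rangle^{-\frac{3}{2}})$, every term contains one of $c_{0,1}^{-1}-c_{\mathrm{f}_{0,1}}^{-1}$, $g_j-g_{\mathrm{f}_j}$, $a_{2,j}-a_{\mathrm{f}_{2,j}}$ or $r_2$. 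Each of these is $\O(\langle\omega\rangle^{-1})$ in the symbol sense, so those pieces are $\O(\langle\omega\rangle^{-\frac{5}{2}})$.

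With this, $\langle\omega\rangle\cdot\langle\omega\rangle^{-\frac{3}{2}}$ and $\langle\omega\rangle^{2}\cdot\langle\omega\rangle^{-\frac{5}{2}}$ give exactly $\langle\omega\rangle^{-\frac{1}{2}}$ on $\supp\chi_\lambda$. Likewise $\langle\omega\rangle^{-\frac{3}{2}}\cdot\langle\omega\rangle^{-\frac{3}{2}}$ and $\langle\omega\rangle^{-\frac{1}{2}}\cdot\langle\omega\rangle^{-\frac{5}{2}}$ give $\langle\omega\rangle^{-3}$ off it. This also requires $c_{\mathrm{f}_{0,1}}\neq0$ on $\Re\lambda=\pm\delta$, which is arranged exactly as for $c_{0,1}$. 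Your closing remark that the decomposition and later estimates can bypass the difference structure is therefore backwards: the difference structure is precisely what produces the orders in the statement.
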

   
   As there are some $G_i$, namely $G_{16},\dots  G_{21}$ that are influenced by the exterior region $\rho>1$, we need to argue differently for them. In particular, as previously mentioned, we need to include the cutoff $\kappa_2$. 
   To that end, we define operators $T_{j,\pm, \kappa_2},\dot T_{j,\pm, \kappa_2}$
   as
   \begin{align*}
     T_{j,\pm, \kappa_2}(\tau)f(\rho)&:=\lim_{N\to \infty} \int_{-N}^N e^{i\omega \tau}G_j(\kappa_2 f)(\rho;\pm \delta +i\omega) d\omega 
     \\
   \dot T_{j,\pm, \kappa_2}(\tau)f(\rho)&:=\lim_{N\to \infty} \int_{-N}^N \omega e^{i\omega \tau}G_j(\kappa_2 f)(\rho;\pm \delta +i\omega) d\omega 
   \end{align*}
   for $j=16,\dots,27$, $\tau \in \R_+$ and $f\in C^\infty_{c,rad}(\R^6).$
    \begin{lemma}\label{lem: strichartz D_3 k_2}
       The estimates
       \begin{align*}
       \| T_{j,\pm, \kappa_2}(\cdot)f\|_{L^p(\R_+)L^q(\B_1)}&\lesssim \|f\|_{\dot{H}^1(\R^6)}
       \\
        \| T_{j,\pm, \kappa_2}(\cdot)f\|_{L^p(\R_+)L^q(\B_1)}&\lesssim \|f\|_{\dot{H}^2(\R^6)}
        \\
         \|\dot T_{j,\pm, \kappa_2}(\cdot)f\|_{L^p(\R_+)L^q(\B_1)}&\lesssim \|f\|_{\dot{H}^2(\R^6)}
       \end{align*}
       hold for $j=16,\dots,27,$ all $q\in [2,\infty],~r\in[6,12]$ satisfy \eqref{eq:AdmissibleExponents} and all $f\in C^\infty_{c,rad}(\R^6)$.
       
    \end{lemma}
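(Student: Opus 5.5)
Each $G_j(\kappa_2 f)$, $j=16,\dots,27$, has a product structure: a $\rho$-dependent kernel times a scalar functional of $f$. The kernel is one of $\chi_\lambda(\rho)(1-\rho^2)^{\lambda/2}\O(\rho^0\langle\omega\rangle^{\alpha})$ or $(1-\chi_\lambda(\rho))(1\mp\rho)^{\frac32+\lambda}\O(\rho^{-\frac52}\langle\omega\rangle^{\alpha})$. The functional is either $f(1)$ times an $\omega$-dependent coefficient, or one of the integrals over $[1,\infty)$. Because of the cutoff $\kappa_2$, these integrals run only over the compact set $[1,3r]$. The plan is to treat all twelve pieces with the machinery of \cite{DonningerWallauch} already used for $T_{j,\pm}$ with $j\leqslant 15$. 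The only new ingredient is control of the exterior functionals, uniformly in $\omega$ and with enough oscillation to close the $\omega$-integral.

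\textbf{Step 1: the terms $j=22,\dots,27$.} These involve only $f(1)$. For $f\in C^\infty_{c,rad}(\R^6)$, the trace and Sobolev bounds give $|f(1)|\lesssim \|f\|_{H^1(\B_2\setminus\B_{1/2})}$, and radial Hardy/Sobolev then gives $\|f\|_{H^1(\B_2\setminus \B_{1/2})}\lesssim \|f\|_{\dot H^1(\R^6)}$; the same holds with $\dot H^2$. The remaining $s$-integrals over $[0,1]$ are explicit functions of $\lambda$. They are bounded by $\langle\omega\rangle^{0}$, with an oscillatory factor $(1+s)^{-\frac12-\lambda}$ or $(1-s^2)^{-\lambda/2}$ that one further integration by parts in $s$ turns into an extra $\langle\omega\rangle^{-1}$. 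The $\rho$-kernels are exactly those handled in Lemma~5.14 of \cite{DonningerWallauch}. We therefore reduce to oscillatory integrals in $\omega$ of the form $\int e^{i\omega\tau}(1\pm\rho)^{i\omega}\O(\langle\omega\rangle^{-1-\epsilon})\,d\omega$ or their $\chi_\lambda$-localized analogues. These yield pointwise decay in $\tau$ (via $\tau\mapsto\tau+\log(1\pm\rho)$ and the standard estimates), which gives the $L^q_\tau L^r_\rho(\B_1)$ bounds. The factor $e^{\pm\delta\tau}$ is absent because the integrals are in $\omega$ only, as in the definition of $T_{j,\pm}$. For $\dot T$, the extra $\omega$ is absorbed by using $\dot H^2$: one more integration by parts in the $s$-integral trades a derivative on the coefficient for a $\langle\omega\rangle^{-1}$.

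\textbf{Step 2: the terms $j=16,\dots,21$.} The functional is
\[
\Phi(\lambda)=\int_1^{3r}\frac{\partial_s[\kappa(s)\O(s^{5/2})\kappa_2 f(s)]}{(s\pm1)^{\frac12+\lambda}}\,ds .
\]
Its structure is a Fourier transform in the variable $y=\log(s\pm1)$ of a function bounded by $|f|+|f'|$. By Plancherel in $\omega$, $\|\Phi(\pm\delta+i\cdot)\|_{L^2_\omega}\lesssim\|(s\pm1)^{\mp\delta}(|f'|+|f|)\|_{L^2([1,3r],ds/(s\pm1))}$. For the $(s-1)^{-\frac12-\lambda}$ term this weight is singular at $s=1$. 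I would handle the singularity by splitting off $f(1)$ (already treated in Step~1) and using Hardy's inequality near $s=1$ for $f-f(1)$, choosing $\delta$ small so that the weight $(s-1)^{-1\mp2\delta}$ is still controllable with one derivative. The result is $\|\Phi\|_{L^2_\omega}\lesssim\|f\|_{H^2(\B_{3r})}\lesssim\|f\|_{\dot H^2(\R^6)}$, and for the $\dot H^1$ version, $\|\langle\omega\rangle^{-1}\Phi\|_{L^2_\omega}\lesssim \|f\|_{\dot H^1}$. With the $L^2_\omega$ bound in hand, $T_{j,\pm,\kappa_2}f$ becomes a composition: an operator with kernel $K(\tau,\rho)=\int e^{i\omega\tau}[\rho\text{-kernel}](\rho;\lambda)\,\widehat{\,\cdot\,}(\omega)\,d\omega$ applied to an $L^2$ function. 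The $\rho$-kernels carry $\langle\omega\rangle^{-1/2}$ or better, and the oscillation $(1\pm\rho)^{i\omega}$ acts as a translation in $\tau$. Minkowski's inequality, Plancherel in $\tau$ and the $\langle\omega\rangle^{-1/2}$ gain then give $L^2_\tau L^r$, and Sobolev in $\rho$ on $\B_1$ gives $L^\infty_\tau$. Interpolation covers the admissible range. The $\dot T$ bound is obtained as in Step~1 by integrating by parts once more in $s$.

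\textbf{Main obstacle.} The hardest part is the $(s-1)^{-\frac12-\lambda}$ functional in $G_{19}$--$G_{21}$ on the line $\Re\lambda=+\delta$. There the weight is non-integrable in $L^2(ds/(s-1))$ unless cancellation against the $f(1)$-contribution is used, and this is exactly where $\delta>0$ must be taken small and the subtraction of $f(1)$ must be done consistently with the terms $G_{22}$--$G_{27}$. I would try first to regroup $G_{19}+G_{25}$ (and the analogous pairs) before estimating, so that the boundary terms at $s=1$ cancel as they did in the derivation of $\mu(f)+U_2(1)f(1)$.
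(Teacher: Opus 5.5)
Your Plancherel-based route is a legitimate alternative to the paper's. However, the step you single out as the main obstacle rests on a misreading, and your proposed treatment of it distorts the rest of Step~2.

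\textbf{The singularity at $s=1$ does not exist.} The factor $(s-1)^{-\frac12-\lambda}$ occurs only in $G_{19}$--$G_{21}$, where it multiplies $\partial_s[(1-\kappa(s))\cdots]$. Your formula for $\Phi$ pairs $\kappa(s)$ with $(s-1)^{-\frac12-\lambda}$, a combination that never appears. Since $\kappa\equiv1$ on $[1,\frac{1+r}{2}]$ and $\kappa_2\equiv0$ on $[3r,\infty)$, each exterior functional is one of two kinds: an integral over $\supp\kappa$ against the harmless weight $(1+s)^{-\frac12-\lambda}$, or an integral over $[\frac{1+r}{2},3r]$. In both cases the relevant $\log(s\pm1)$ stays bounded. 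There is nothing to subtract at $s=1$, and no regrouping with $G_{25}$ is needed. The boundary cancellation you have in mind was already carried out when $\mu(f)+U_2(1;\lambda)f(1)$ was rewritten, before $G_{16},\dots,G_{27}$ were listed.

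\textbf{The Plancherel weight is also wrong.} With $s\pm1=e^y$ one gets $\|\Phi(\pm\delta+i\cdot)\|_{L^2_\omega}\simeq\|(s\pm1)^{\mp\delta}h\|_{L^2(ds)}$, without your extra factor $(s\pm1)^{-1}$. So even without the cutoff the weight near $s=1$ would be the integrable $(s-1)^{-2\delta}$. The correct outcome is $\|\Phi\|_{L^2_\omega}\lesssim\|f\|_{H^1([1,3r])}\lesssim\|f\|_{\dot H^1(\R^6)}$, with no derivative lost. This matters. Your lossy version $\|\langle\omega\rangle^{-1}\Phi\|_{L^2_\omega}\lesssim\|f\|_{\dot H^1}$ forces the $\rho$-kernel to absorb a full power of $\langle\omega\rangle$. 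The kernels $\chi_\lambda(\rho)\O(\langle\omega\rangle^{-1/2})$ of $G_{16}$ and $G_{19}$ cannot do this under a sup-in-$\omega$ bound, since one is left with $\rho^{-1/2}\notin L^{12}(\B_1)$.

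\textbf{Two further points need repair.}
\begin{enumerate}
\item $\Phi$ is not literally a Fourier transform in $y$. The amplitudes $\O(s^{5/2}\langle\omega\rangle^0)$ contain $g_2,g_3,g_4$, which depend on both $s$ and $\omega$. You must split off the $\omega$-independent leading part $g_j=1+\O(\langle\omega\rangle^{-1})$ and treat the remainder by absolute integrability in $\omega$.
\item ``Sobolev in $\rho$'' does not give $L^\infty_\tau$. In your framework the $L^\infty_\tau L^6$ endpoint comes from Cauchy--Schwarz in $\omega$, using $\|K(\cdot;\omega)\|_{L^6(\B_1)}\lesssim\langle\omega\rangle^{-3/2}$. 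For the $\chi_\lambda$-kernels this holds because $\supp\chi_\lambda$ lies in a ball of radius $\lesssim\langle\omega\rangle^{-1}$. For the $(1-\chi_\lambda)$-kernels it follows from the size $\rho^{-5/2}\langle\omega\rangle^{-3}$ on $\supp(1-\chi_\lambda)$.
\end{enumerate}

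\textbf{Comparison with the paper.} With these corrections your argument closes, and for $j=16,\dots,21$ it is somewhat more economical than the paper's. The paper never treats the functional as an $L^2_\omega$ object. It trades powers of $\rho$ for $\omega$-decay on $\supp\chi_\lambda$ to make the $\omega$-integrals absolutely convergent. It then applies Lemmas~5.1, 5.3 and 5.4 of \cite{DonningerWallauch} to get pointwise kernels $\langle\tau+\frac12\log(1-\rho^2)-\log(s\pm1)\rangle^{-2}$, which reduce to $\langle\tau\rangle^{-2}$ on the compact $s$-support. For the $L^2_\tau L^{12}$ endpoint it uses the $|x|^{-1/24}$-weighted kernel, Lemma~5.7 and Young's inequality after $s=e^y-1$, which is Plancherel in disguise. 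Your Minkowski-plus-Plancherel step replaces that machinery, at the cost of handling the $\omega$-dependent amplitudes explicitly.
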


   \begin{proof}
       We start with the operators $T_{16,\pm, \kappa_2}$. Since we can trade decay powers of $\rho$ for decay in $\omega$ on the support of the cutoff $\chi_\lambda$, we conclude that 
       \begin{align*}
           T_{16,\pm, \kappa_2}(\tau)f(\rho)&= \int_{-\infty}^\infty e^{i\omega \tau}G_{16}(\kappa_2 f)(\rho;\pm \delta +i\omega) d\omega 
       \end{align*}
       for all $\rho\in (0,\infty).$ Let $\widetilde f=\kappa_2 f$. Then, from the identity $\chi_\lambda(\rho)\O(\rho^0\langle\omega\rangle^{-\frac{1}{2}})=\chi_\lambda(\rho)\O(\rho^{-\frac{1}{2}-\frac{1}{24}}\langle\omega\rangle^{-1-\frac{1}{24}})$ as well as the fact that the support of $\chi_\lambda$ is contained in $ [0,\rho_0) $ for all $\lambda \in S$ and some $\rho_0<1$, 
       an application of Lemma 5.1 in \cite{DonningerWallauch} yields the estimate
       \begin{align*}
           |T_{16,\pm, \kappa_2}(\tau)f(\rho)|&\lesssim 1_{[0,\rho_0)}(\rho)\rho^{-\frac{1}{2}-\frac{1}{24}}\int_1^\infty\frac{|\partial_s[\kappa(s)s^{\frac{5}{2}}\widetilde f(s)]|}{(1+s)^{\frac{1}{2}\pm \delta}}\langle\tau+\frac{1}2\log(1-\rho^2)-\log(s+1)\rangle^{-2} ds
           \\
           &\lesssim 1_{[0,\rho_0)}(\rho)\rho^{-\frac{1}{2}-\frac{1}{24}}\int_1^\infty\frac{|\partial_s[\kappa(s)s^{\frac{5}{2}}\widetilde f(s)]|}{(1+s)^{\frac{1}{2}\pm \delta}}\langle\tau \rangle^{-2} ds
       \end{align*}
       where the inequality
       $$
       \langle\tau+\frac{1}2\log(1-\rho^2)-\log(s+1)\rangle^{-2} \lesssim \langle\tau\rangle^{-2} 
       $$
       follows as $\frac{1}2\log(1-\rho^2)-\log(s+1)$ is uniformly bounded on the support of $ 1_{[0,\rho_0)}(\rho)\kappa(s)$.
       Thus, we conclude that 
\begin{align*}
   \|T_{16,\pm, \kappa_2}(\cdot)f\|_{L^\infty(\R_+)L^6(\B_1)}\lesssim \int_1^\infty\frac{|\partial_s[\kappa(s)s^{\frac{5}{2}}\widetilde f(s)]|}{(1+s)^{\frac{1}{2}\pm \delta}} ds
   \end{align*}
   which readily implies the two bounds: 
   \begin{align*}
    \| T_{16,\pm, \kappa_2}(\cdot)f\|_{L^\infty(\R_+)L^6(\B_1)}\lesssim\|f\|_{\dot{H}^1(\R^6)}
    \\
    \| T_{16,\pm, \kappa_2}(\cdot)f\|_{L^\infty(\R_+)L^6(\B_1)}\lesssim\|f\|_{\dot{H}^2(\R^6)}
   \end{align*}
   due to the compact support of $\kappa$. 
   For the other endpoint estimate, we use Lemma 5.4 in \cite{DonningerWallauch} to obtain 
   \begin{align*}
           |T_{16,\pm, \kappa_2}(\tau)f(\rho)|&\lesssim 1_{[0,\rho_0)}(\rho)\rho^{-\frac{1}{2}+\frac{1}{24}}\int_1^\infty\frac{|\partial_s[\kappa(s)s^{\frac{5}{2}}\widetilde f(s)]|}{(1+s)^{\frac{1}{2}\pm \delta}}g(\tau+\frac{1}2\log(1-\rho^2)-\log(s+1))ds
   \end{align*}
      with $g(x):= \langle x\rangle^{-2} |x|^{-\frac{1}{24}}$. Thus, from Minkowski's inequality, we conclude that
      \begin{align*}
            \|T_{16,\pm, \kappa_2}(\tau)f\|_{L^{12}(\B_1)}&\lesssim \int_1^\infty\frac{|\partial_s[\kappa(s)s^{\frac{5}{2}}\widetilde f(s)]|}{(1+s)^{\frac{1}{2}\pm \delta}}\|\rho^{-\frac{1}{2}+\frac{1}{24}}g(\tau+\frac{1}2\log(1-\rho^2)-\log(s+1))\|_{L^{12}_\rho(\B^6_{\rho_0})}.
      \end{align*}
      Now, we have that
      \begin{align*}
        &\quad \|\rho^{-\frac{1}{2}+\frac{1}{24}}g(\tau+\frac{1}2\log(1-\rho^2)-\log(s+1))\|_{L^{12}_\rho(\B^6_{\rho_0})}
        \\&\lesssim \langle \tau-\log(s+1)\rangle^{-2}  \|\rho^{-\frac{1}{2}+\frac{1}{24}}|\tau+\frac{1}2\log(1-\rho^2)-\log(s+1)|^{-\frac{1}{24}}\|_{L^{12}_\rho(\B^6_{\rho_0})}.
      \end{align*}
      Furthermore, we compute that
      \begin{align*}
          \|\rho^{-\frac{1}{2}+\frac{1}{24}}|\tau+\frac{1}2\log(1-\rho^2)-\log(s+1)|^{-\frac{1}{24}}\|_{L^{12}_\rho(\B^6_{\rho_0})}^{12}&= \int_0^{\rho_0} \rho^{-\frac{1}{2}}|\tau+\frac{1}2\log(1-\rho^2)-\log(s+1)|^{-\frac{1}{2}} d\rho
          \\
         & \lesssim |\tau-\log(s+1)|^{-\frac{1}{2}} 
      \end{align*}
      where the inequality is thanks to Lemma 5.7 in \cite{DonningerWallauch}.
   Hence, 
\begin{align*}
    \|T_{16,\pm, \kappa_2}(\tau)f\|_{L^{12}(\B_1)}&\lesssim \int_1^\infty\frac{|\partial_s[\kappa(s)s^{\frac{5}{2}}\widetilde f(s)]|}{(1+s)^{\frac{1}{2}\pm \delta}}g(\tau-\log(s+1)) ds.
\end{align*}
Next, we set $\widehat{f}(s)=|\partial_s[\kappa(s)s^{\frac{5}{2}}\widetilde f(s)]|$, change variables to $s=e^y-1$, and use the fact that $\supp(\kappa)\subset (1,r')$ for some $r'\in (1,\infty)$ to obtain 
   \begin{align*}
        \|T_{16,\pm, \kappa_2}(\cdot)f\|_{L^2(\R_+)L^{12}(\B_1)}&\lesssim \left\|\int_{\log 2}^\infty 
       \widehat{f}(e^y-1)e^{(\frac{1}{2}\mp \delta)y}g(\tau-y) dy \right\|_{L^2_\tau(\R_+)}
       \\
       &= \left\|\left(1_{(\log 2,\infty )}
       \widehat{f}(e^{(\cdot)}-1)e^{(\frac{1}{2}\mp \delta)(\cdot) }\right) *g \right\|_{L^2_\tau(\R_+)}
       \\
       &\lesssim \left\|1_{(\log 2,\infty )}
       \widehat{f}(e^{(\cdot)}-1)e^{(\frac{1}{2}\mp \delta)(\cdot) } \right\|_{L^2(\R)}\|g\|_{L^1}
       \\
       &\lesssim 
       \left\|
       \widehat{f}(s) (1+s)^{\mp \delta} \right\|_{L^2_s((1,\infty))}
       \\
       &\lesssim \|s^{\frac{5}{2}}f(s)\|_{H^1((1,r'))}\lesssim \|f\|_{\dot{H}^1(\R^6)}
   \end{align*}
   To prove the desired estimates on $\dot T_{16,\pm, \kappa_2}$, we integrate by parts once more to obtain
   \begin{align*}
 G_{16}(f)(\rho;\lambda)&=\chi_\lambda(\rho)(1-\rho^2)^{\frac{\lambda}{2}}\O(\rho^0\langle\omega\rangle^{-\frac{1}{2}})  \int_1^\infty \frac{\partial_s[\kappa(s)\O(s^{\frac{5}{2}}\langle\omega\rangle^0)f(s)]}{(1+s)^{\frac{1}{2}+\lambda}}ds
 \\
 &= \chi_\lambda(\rho)(1-\rho^2)^{\frac{\lambda}{2}}\O(\rho^0\langle\omega\rangle^{-\frac{1}{2}})\int_1^\infty \frac{\partial_s^2[\kappa(s)\O(s^{\frac{5}{2}}\langle\omega\rangle^{-1})f(s)]}{(1+s)^{-\frac{1}{2}+\lambda}}ds
  \\
  &\quad+\chi_\lambda(\rho)(1-\rho^2)^{\frac{\lambda}{2}}\O(\rho^0\langle\omega\rangle^{-\frac{1}{2}})\left[\frac{\partial_s[\kappa(s)\O(s^{\frac{5}{2}}\langle\omega\rangle^{-1})f(s)]}{(1+s)^{-\frac{1}{2}+\lambda}}\right]\Bigg|_{s=1}^{s=\infty}
  \\
&=:  \dot G_{16,1}(f)(\rho;\lambda)+  \dot G_{16,2}(f)(\rho;\lambda)
   \end{align*}
   This resulting extra decay in $\lambda$ allows us to treat the term 
   \begin{align*}
\lim_{N\to \infty} \int_{-N}^N e^{i\omega \tau}\omega \dot G_{16,1}(\kappa_2 f)(\rho;\pm \delta +i\omega) d\omega
   \end{align*}
   in the same fashion as $T_{16,\pm, \kappa_2}$. Furthermore, these arguments also yield the estimate
\begin{align*}
\left\|\lim_{N\to \infty} \int_{-N}^N e^{i\omega \tau}\omega \dot G_{16,2}(\kappa_2 f)(\rho;\pm \delta +i\omega) d\omega\right\|_{L^q(\R_+)L^r(\B_1)}\lesssim |f(1)|+|f'(1)|\lesssim \|f\|_{\dot{H}^2(\R^6)}
\end{align*}
   where the last inequality follows as
   \begin{align*}
   |f(1)|\lesssim \left|\int_0^1 \partial_\rho \left[\rho^{\frac{7}{2}} f(\rho)\right] d\rho\right|\lesssim \|f\|_{H^1(\B_1)}&\lesssim \|f\|_{\dot{H}^2(\R^6)}.
   \end{align*}
   Thus, the estimate for general tuples $(p,q)$ follows from elementary interpolation and we move on to $T_{17,\pm, \kappa_2}(\tau)f(\rho)$.
Applying Lemma 5.3 of \cite{DonningerWallauch} with $n=2$ yields  
\begin{align*}
|T_{17,\pm, \kappa_2}(\tau)f(\rho)|&\leqslant \rho^{-\frac{1}{2}} (1-\rho) \int_1^\infty\frac{|\partial_s[\kappa(s)s^{\frac{5}{2}}\widetilde f(s)]|}{(1+s)^{\frac{1}{2}\pm \delta}}\langle\tau+\log(1-\rho)-\log(s+1)\rangle^{-2} ds.
\end{align*}
Hence, one readily concludes that
   \begin{align*}
    \| T_{17,\pm, \kappa_2}f\|_{L^\infty(\R_+)L^6(\B_1)}\lesssim\|f\|_{\dot{H}^1(\R^6)}
    \\
    \| T_{17,\pm, \kappa_2}f\|_{L^\infty(\R_+)L^6(\B_1)}\lesssim\|f\|_{\dot{H}^2(\R^6)}.
   \end{align*}
   For the other endpoint, we apply Lemma  5.4 of \cite{DonningerWallauch} to infer that 
   \begin{align*}
|T_{17,\pm, \kappa_2}(\tau)f(\rho)|&\leqslant\rho^{-\frac{1}{2}+\frac{1}{24}} (1-\rho) \int_1^\infty\frac{|\partial_s[\kappa(s)s^{\frac{5}{2}}\widetilde f(s)]|}{(1+s)^{\frac{1}{2}\pm \delta}}g(\tau+\log(1-\rho)-\log(s+1)) ds
\end{align*}
with $g(x)=\langle x\rangle^{-2}| x|^{-\frac{1}{24}}$ as above. Furthermore, as $s$ is compactly supported, we have the elementary estimate 
\begin{align*}
    (1-\rho)\langle\tau+\log(1-\rho)-\log(s+1)\rangle^{-2}\lesssim \langle\tau\rangle^{-2}
\end{align*}
which allows us to obtain the bound 
\begin{align*}
      \|T_{17,\pm, \kappa_2}(\cdot)f\|_{L^2(\R_+)L^{12}(\B_1)}&\lesssim \|f\|_{H^1(\B_1)}
\end{align*}
in the same fashion as we did for $T_{16,\pm, \kappa_2}$. In order to bound $ \dot T_{17,\pm, \kappa_2}$, one performs an integration by parts as before and argues in the same fashion. One readily modifies these arguments to also estimate the operators 
$T_{18,\pm, \kappa_2}, \dots, T_{21,\pm, \kappa_2}$, as well as $\dot T_{18,\pm, \kappa_2}, \dots, \dot T_{21,\pm, \kappa_2}$ and we turn to $T_{22,\pm, \kappa_2}$. As $\chi_\lambda$ localizes both $\rho$ and $s$ away from $1$, exchanging powers of $\rho$ and $s$ for decay in $\omega$ yields
\begin{align*}
        \|T_{22,\pm, \kappa_2}(\cdot)f\|_{L^q(\R_+)L^{r}(\B_1)}
         +\|\dot T_{22,\pm, \kappa_2}(\cdot)f\|_{L^q(\R_+)L^{r}(\B_1)}&\lesssim|f(1)|
\end{align*}
which immediately implies the claimed estimates. Similarly, an application of Lemma 5.3 in \cite{DonningerWallauch} shows that
\begin{align*}
    |T_{23,\pm, \kappa_2}(\tau)f(\rho)|+ |\dot T_{23,\pm, \kappa_2}(\tau)f(\rho)|&\lesssim (1-\rho)\langle\tau+\log(1-\rho)\rangle^{-2}|f(1)|\lesssim \langle\tau\rangle^{-2}|f(1)|.
\end{align*}

and the desired estimates once more follow. By applying the same strategy one also bounds the operators  $T_{24,\pm, \kappa_2},\dots, T_{27,\pm, \kappa_2}$ and by performing an additional integration by parts and utilizing previously exhibited arguments also the operators $\dot T_{24,\pm, \kappa_2},\dots,\dot T_{27,\pm, \kappa_2}$ and we conclude this proof.
      \end{proof}
      Next, we make the convention that $W^{0,r}=L^r$ and define weighted Strichartz norms as
      \begin{align*}
          \|f\|_{L^p_\tau(\R_+, e^{q \mu \tau }) W^{s,r}(\B_1)}^p:=\int_0^\infty \|f(\tau,\cdot)\|_{W^{s,r}(\B_1)}^p e^{q \mu \tau } d\tau 
      \end{align*}
      for $q,r\in [2,\infty)$, $\mu \in \R$ and $s\in \mathbb{N}_0$, and we let $L^q_\tau(\R_+, e^{q \mu \tau}) W^{s,r}(\B_1)$ be the completion of $C^\infty_c(\R_+\times \overline{\B_1})$ with respect to that norm. Similarly, we let $C(\R_+, e^{\mu \tau}) W^{s,r}(\B_1) $ be the completion of $C^\infty_c(\R_+\times \overline{\B_1})$ with respect to the norm
      \begin{align*}
          \|f\|_{L^\infty_\tau(\R_+, e^{\mu \tau })W^{s,r}(\B_1)}:=\sup_{\tau\in \R_+}\left(\|f(\tau,\cdot)\|_{ W^{s,r}(\B_1)} e^{\mu \tau } \right).
      \end{align*} 
      \begin{lemma}
          The estimates 
          \begin{align}
              \|[(\Sf-\Sf_0)(\kappa_2\ff)]_1\|_{L^q_\tau(\R_+, e^{\pm  \delta \tau})L^r(\B_1)}&\lesssim \|\ff\|_{\mathcal{H}^2}
          \end{align}
          hold for all $\ff\in C^\infty_{c,rad}(\R^6)\times  C^\infty_{c,rad}(\R^6)$ and all $q\in [2,\infty],~r\in[6,12]$ satisfy \eqref{eq:AdmissibleExponents}.
                \end{lemma}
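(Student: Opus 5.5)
We plan to assemble the estimate from the term-by-term bounds already established. For $\ff\in C^\infty_{c,rad}(\R^6)\times C^\infty_{c,rad}(\R^6)$ we have $\Sf(\tau)\kappa_2\ff=\Sf_1(\tau)\kappa_2\ff$. The contour may be shifted to $\Re\lambda=\pm\delta$, since $c_{0,1}(\pm\delta+i\omega)\neq 0$ by Lemma \ref{lem:zeros}. This gives, on $\B_1$,
\[
[(\Sf-\Sf_0)(\tau)\kappa_2\ff]_1(\rho)=\frac{e^{\pm\delta\tau}}{2\pi}\lim_{N\to\infty}\int_{-N}^N e^{i\omega\tau}\big[\mathcal{R}_{int}-\mathcal{R}_{\mathrm{f},int}\big](F_\lambda)(\rho;\pm\delta+i\omega)\,d\omega ,
\]
with $F_\lambda=(2-\lambda)\kappa_2f_1+\rho(\kappa_2f_1)'-\kappa_2f_2$. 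The weight $e^{\mp\delta\tau}$ in the norm $L^q_\tau(\R_+,e^{\pm\delta\tau})$ is designed exactly to cancel the prefactor $e^{\pm\delta\tau}$; the sign is chosen so that each weighted norm pairs with the corresponding contour. It therefore suffices to bound the $\omega$-integral in $L^q(\R_+)L^r(\B_1)$.

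Next we split $\mathcal{R}_{int}-\mathcal{R}_{\mathrm{f},int}=D_1+D_2+D_3$, as in the integration-by-parts representation preceding Lemma \ref{lem:decompD1}. We then expand $D_1=f\sum_{j\le5}G_j$, $D_2=\sum_{j=6}^{15}G_j(f)$ and $D_3=\sum_{j=16}^{27}G_j(f)$. We plug in $f=F_\lambda$ and separate $F_\lambda$ by its dependence on $\lambda$. The parts $(2-\pm\delta)\kappa_2f_1+\rho(\kappa_2f_1)'-\kappa_2f_2$ are $\lambda$-independent, and they produce the operators $T_{j,\pm}$ and $T_{j,\pm,\kappa_2}$. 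These act on $g\in\{\kappa_2f_1,\ \rho(\kappa_2f_1)',\ \kappa_2f_2\}$ and are bounded by $\|g\|_{\dot H^1(\R^6)}$; for $j\ge16$, on $\kappa_2f_1$, by $\|g\|_{\dot H^2}$. The term $-i\omega\kappa_2f_1$ produces $\dot T_{j,\pm}$ and $\dot T_{j,\pm,\kappa_2}$ acting on $\kappa_2f_1$, controlled by $\|\kappa_2f_1\|_{\dot H^2}$. All of these are supplied by the three preceding lemmas, including Lemma \ref{lem: strichartz D_3 k_2}.

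It remains to control the data norms. We need $\|\kappa_2f_2\|_{\dot H^1}\lesssim\|f_2\|_{\dot H^1}$, $\|\kappa_2 f_1\|_{\dot H^2}\lesssim\|f_1\|_{\dot H^2}$ and $\|\rho(\kappa_2f_1)'\|_{\dot H^1}\lesssim\|f_1\|_{\dot H^2}$. These follow from the Leibniz rule and Hardy's inequality in $\R^6$: derivatives landing on $\kappa_2$ are compactly supported away from the origin and bounded by $\langle\rho\rangle^{-k}$, and $|\rho\nabla f_1|$ on the support of $\kappa_2$ is controlled by $\|f_1\|_{\dot H^2}$ after Hardy. A density argument extends the bound from $C^\infty_c$ data if needed. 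Summing finitely many contributions then yields the claim.

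The main obstacle is justifying the exchange of the limit $N\to\infty$ with the splitting, and the fact that the $G_j$ have only formal $\O(\langle\omega\rangle^{-k})$ symbols. For the undifferentiated component, the $\langle\omega\rangle^{-2}$ decay of the difference of resolvents noted before the subsection allows dominated convergence, after we trade powers of $\rho$ for decay in $\omega$ on $\supp\chi_\lambda$. For the $\omega$-weighted pieces, the extra integration by parts already built into the definitions of $\dot T_{j,\pm}$ restores summability. A second point needs care: on $\Re\lambda=-\delta$ the representation must be read as the resolvent of $\Lf_1$ on $\dot H^1\times L^2$, which is where we use the identity $\Sf\ff=\Sf_1\ff$ for compactly supported smooth data.
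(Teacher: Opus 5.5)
Your proposal is correct and follows the paper's proof. The paper likewise bounds the weighted norm by the sum over $j=1,\dots,27$ of the norms of $T_{j,\mp,\kappa_2}$ and $\dot T_{j,\mp,\kappa_2}$, and then invokes the preceding kernel lemmas; your explicit treatment of the input $\rho(\kappa_2 f_1)'$ and of the data norms via Hardy is, if anything, more careful than the paper's shorthand. One small sign slip: the norm $L^q_\tau(\R_+,e^{\pm\delta\tau})$ carries the weight $e^{\pm\delta\tau}$, so it pairs with the contour $\Re\lambda=\mp\delta$ (the paper's $T_{j,\mp,\kappa_2}$), not with $\Re\lambda=\pm\delta$ as your first paragraph suggests.
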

          \begin{proof}
              By construction, we have that
              \begin{align*}
                   \|[(\Sf-\Sf_0)(\kappa_2\ff)]_1\|_{L^q(\R_+, e^{\pm   \delta \tau})L^r(\B_1)}&\lesssim \sum_{j=1}^{27}\|T_{j,\mp,\kappa_2} f_1\|_{L^q(\R_+)L^r(\B_1)}+\|\dot T_{j,\mp,\kappa_2} f_1\|_{L^q(\R_+)L^r(\B_1)}
                   \\
                   &\quad +\|T_{j,\mp,\kappa_2} f_2\|_{L^q(\R_+)L^r(\B_1)}.
              \end{align*}
              Thus, the estimates follow from the estimates on the operators $T_{j,\mp,\kappa_2}$ and $\dot T_{j,\mp,\kappa_2}$.
          \end{proof}

Next, we will state an interpolation result that relies on complex interpolation. For an extensive treatment on the subject, we refer to the book \cite{BerLof12}. 
    
\begin{proposition}\label{prop:interpolation}
Then the following complex interpolation identities hold:
\begin{enumerate}
\item
For $1\leqslant  p_0,p_1<\infty$, $1\leqslant q_0,q_1< \infty$, and $s_0,s_1 \in \mathbb{N}_0$ with $0\leqslant s_0,s_1<3$, we have that, up to isomorphisms, 
\begin{align*}
\left(L^{p_0}(\R_+,e^{\delta p_0\tau}d\tau)  W^{s_0,q_0}(\B_1),L^{p_1}(\R_+,e^{-\delta p_1\tau}d\tau) W^{s_1,q_1}(\B_1) \right)_{[\frac12]}=L^{p}(\R_+) W^{s,q}(\B_1)
\end{align*}
where $s, p,$ and $q$ are such that 
$$s= \frac{1}{2}(s_0+ s_1) \in \mathbb{N}_0,\qquad \frac{1}{p}=\frac{1}{2p_0} +\frac{1}{2 p_1},\qquad \frac{1}{q}=\frac{1}{2q_0}+\frac{1}{2q_1}.
$$
\item
For $1\leqslant q_0,q_1< \infty$ and $s_0,s_1 \in \mathbb{N}_0$ with $0\leqslant s_0,s_1<\frac{d}{2}$ we have that
\begin{align*}
\left(C_\tau(\R_+, e^{-\delta\tau}) W^{s_0,q_0}(\B_1) ,C_\tau(\R_+, e^{\delta\tau})W^{s_1,q_1}(\B_1) \right)_{[\frac12]}=C(\R_+) W^{s,q}(\B_1)
\end{align*}
where $s$ and $q$ are such that 
$$
s= \frac{1}{2}(s_0+ s_1) \in \mathbb{N}_0 ,\qquad \frac{1}{q}=\frac{1}{2q_0}+\frac{1}{2q_1}.
$$
\end{enumerate}
\end{proposition}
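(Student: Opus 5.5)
The plan is to reduce both identities to two classical results in complex interpolation (see \cite{BerLof12}): the Stein--Weiss theorem for weighted vector-valued $L^p$ spaces, and the interpolation of Sobolev spaces on a bounded Lipschitz domain. The key structural point is that the time weight and the spatial space factor. That is,
\[
L^{p_0}(\R_+,e^{\delta p_0\tau}d\tau)W^{s_0,q_0}(\B_1)=L^{p_0}(\R_+,w_0;X_0),\qquad X_0=W^{s_0,q_0}(\B_1),\ w_0=e^{\delta p_0\tau},
\]
and similarly for the second space. We work in the radial setting but on the full ball, so $X_j$ are ordinary Sobolev spaces on $\B_1$, and $(X_0,X_1)$ is a compatible couple since both embed continuously into $L^1(\B_1)$.

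\textbf{Step 1: the $L^p$ identity (1).} We apply the vector-valued Stein--Weiss theorem \cite[Thm.~5.5.3]{BerLof12}. It gives
\[
\big(L^{p_0}(w_0;X_0),L^{p_1}(w_1;X_1)\big)_{[\theta]}=L^{p}(w;(X_0,X_1)_{[\theta]}),\qquad w^{1/p}=w_0^{(1-\theta)/p_0}w_1^{\theta/p_1}.
\]
With $\theta=\tfrac12$, $w_0^{1/p_0}=e^{\delta\tau}$ and $w_1^{1/p_1}=e^{-\delta\tau}$, the weights cancel exactly and we get $w\equiv1$. This cancellation is the reason the weights were chosen symmetrically as $e^{\pm\delta\tau}$.

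\textbf{Step 2: the spatial identity.} It remains to show
\[
(W^{s_0,q_0}(\B_1),W^{s_1,q_1}(\B_1))_{[1/2]}=W^{s,q}(\B_1)
\]
for integer $s$. I would use Stein's universal extension operator $E$, which acts boundedly $W^{k,r}(\B_1)\to W^{k,r}(\R^6)$ simultaneously for all $k$ and $r$, together with the restriction map. This retraction/coretraction pair transfers the identity to $\R^6$. There, $W^{k,r}=H^{k,r}$ (Bessel potential spaces) for $1<r<\infty$, and $[H^{s_0,q_0},H^{s_1,q_1}]_{1/2}=H^{s,q}$ by \cite[Thm.~6.4.5]{BerLof12}. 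Since $s\in\N_0$, we convert back to $W^{s,q}$.

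\textbf{Main obstacle.} I expect the main difficulty to be the endpoint $q_j=1$, where $W^{k,1}\neq H^{k,1}$. I would first try to exclude this case, since only $q_j\geq2$ is needed in the applications. If that is not possible, I would replace the spatial identity by a continuous embedding of the interpolation space into $W^{s,q}(\B_1)$, which is all that the later argument requires.

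\textbf{Step 3: the continuous-in-time version (2).} Here the supremum norms are weighted by $e^{\mp\delta\tau}$. I would realize $C_\tau(\R_+,e^{\mu\tau})X$ as the closed subspace of weighted $L^\infty$ consisting of the closure of $C_c^\infty$. The plan is to run the three-lines argument directly, using the analytic family $F(z)=e^{\delta(2z-1)\tau}G(z)$. On $\Re z=0$ and $\Re z=1$ this produces exactly the weights $e^{\mp\delta\tau}$, and it yields both inclusions with the weights cancelling as in Step 1. The spatial part then follows from Step 2. Density of $C_c^\infty$ makes the identity valid for the completed spaces and handles the usual $L^\infty$ subtleties of complex interpolation, because the lower complex method applied to closures of $C_c$ functions is well behaved.
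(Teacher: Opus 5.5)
Your proposal is correct. It is the standard argument behind the paper's one-line proof, which simply defers to Prop.~A.1 of \cite{Wallauch2024}: the weighted vector-valued Stein--Weiss theorem, in which the weights $e^{\pm\delta\tau}$ cancel exactly at $\theta=\tfrac12$, combined with interpolation of $W^{s,q}(\B_1)$ via a universal extension operator and Bessel potential spaces. Your caution at $q_j=1$ is justified: this route needs $1<q_j<\infty$, which is the only range used later, so the statement as written, which allows $q_j=1$, is not covered by it.

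In (2), the multiplier $e^{\delta(2z-1)\tau}$ only reduces matters to the unweighted identity $[C_0(\R_+;X_0),C_0(\R_+;X_1)]_{1/2}=C_0(\R_+;[X_0,X_1]_{1/2})$, which Step 2 alone does not give. The inclusion of the right-hand side into the left-hand side does not come from the three-lines lemma. It comes from a partition-of-unity construction $F(z)=\sum_i\phi_i(\tau)G_i(z)$ with $G_i(\tfrac12)=f(\tau_i)$ and $\|G_i\|_{\mathcal F}\approx\|f(\tau_i)\|_{[X_0,X_1]_{1/2}}$, followed by a density argument.
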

\begin{proof}
This follows in the same fashion as Proposition A.1 in \cite{Wallauch2024}.
\end{proof}
Applying this result yields the following set of estimates on $\Sf$.
      \begin{lemma}\label{lem: Strichartz k_2}
          The estimates
          \begin{align}
              \|[\Sf(\kappa_2\ff)]_1\|_{L^q(\R_+)L^r(\B_1)}&\lesssim \|\ff\|_{\mathcal{H}^2}
          \end{align}
          hold for all $\ff\in C^\infty_{c,rad}(\R^6)\times C^\infty_{c,rad}(\R^6)$ and all $q\in [2,\infty],~r\in[6,12]$ satisfying \eqref{eq:AdmissibleExponents}.
                \end{lemma}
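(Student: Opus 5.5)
The estimate splits into the free part and the difference. By the triangle inequality,
\[
\|[\Sf(\kappa_2\ff)]_1\|_{L^q(\R_+)L^r(\B_1)}\leqslant \|[\Sf_0(\kappa_2\ff)]_1\|_{L^q(\R_+)L^r(\R^6)}+\|[(\Sf-\Sf_0)(\kappa_2\ff)]_1\|_{L^q(\R_+)L^r(\B_1)}.
\]
For the first term we use the free Strichartz estimate \eqref{eq:FreeStrichartzEstimates}. This gives the bound $\|\kappa_2\ff\|_{\mathcal{H}^2}$, and since $\kappa_2$ is smooth and compactly supported, Hardy's inequality yields $\|\kappa_2\ff\|_{\mathcal{H}^2}\lesssim\|\ff\|_{\mathcal{H}^2}$. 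It therefore remains to control the difference term $\Sf-\Sf_0$.

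For the difference we interpolate between the two weighted bounds from the preceding lemma. Write $\mathbf{T}\ff:=[(\Sf-\Sf_0)(\kappa_2\ff)]_1$. This is a linear operator defined on $C^\infty_{c,rad}\times C^\infty_{c,rad}$. The preceding lemma shows that $\mathbf{T}$ is bounded from $\mathcal{H}^2$ into both $L^q(\R_+,e^{\delta\tau})L^r(\B_1)$ and $L^q(\R_+,e^{-\delta\tau})L^r(\B_1)$, with the same admissible pair $(q,r)$ in each.

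We apply Proposition~\ref{prop:interpolation} with $p_0=p_1=q$, $q_0=q_1=r$ and $s_0=s_1=0$. The weights $e^{\pm\delta q\tau}$ average to $1$, so the interpolation space at $\theta=\tfrac12$ is the unweighted $L^q(\R_+)L^r(\B_1)$. Taking the source space as $\mathcal{H}^2$ on both sides, so that it interpolates to itself, the interpolation property gives
\[
\|\mathbf{T}\ff\|_{L^q(\R_+)L^r(\B_1)}\lesssim\|\ff\|_{\mathcal{H}^2}.
\]
The weight exponents in the paper's convention need a little bookkeeping: the weight $e^{q\mu\tau}$ corresponds to $\mu=\pm\delta$, and these are symmetric, so the midpoint is $\mu=0$.

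The endpoint $q=\infty$ (so $r=6$) is not covered by part (1) of the proposition, which requires finite exponents. Here I would use part (2) with the $C_\tau$ spaces, noting that for smooth compactly supported data the orbit is continuous in $\tau$. Alternatively, one can argue directly: at each fixed $\tau$, the weighted bounds give $e^{\pm\delta\tau}|\mathbf{T}\ff(\tau)|$ bounded, and Stein-type interpolation of the analytic family $e^{z\delta\tau}$ yields the unweighted bound.

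The main subtlety, and what I expect to be the real obstacle, is that the two weighted estimates come from different representations: contour integrals along $\Re\lambda=\delta$ and along $\Re\lambda=-\delta$. Interpolation is only legitimate if both represent the same operator $\mathbf{T}$. For the $+\delta$ line this follows from the Laplace inversion formula for $\Sf$. For the $-\delta$ line it uses the identity $\Sf\ff=\Sf_1\ff$ for smooth compactly supported data, together with the fact that $\mathcal{R}(F_\lambda)$ is the resolvent of $\Lf_1$ there. Since $\kappa_2\ff$ is again smooth and compactly supported, both formulas apply to it, and the interpolation goes through.

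Finally, density of $C^\infty_{c,rad}$ is not needed, because the statement is made only for such $\ff$.
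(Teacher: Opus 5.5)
Your proposal is correct and follows the same route as the paper. You apply the free Strichartz estimates \eqref{eq:FreeStrichartzEstimates} to $\Sf_0(\kappa_2\ff)$, using Hardy to absorb the cutoff, and you apply Proposition~\ref{prop:interpolation} to the two weighted bounds on $[(\Sf-\Sf_0)(\kappa_2\ff)]_1$ from the preceding lemma, with the $\Re\lambda=-\delta$ representation justified through $\Sf\ff=\Sf_1\ff$ exactly as you describe.

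You can simplify one step. Since $e^{\delta\tau}\geqslant 1$ on $\R_+$, the $e^{+\delta\tau}$-weighted bound alone already dominates the unweighted $L^q(\R_+)L^r(\B_1)$ norm for every admissible $q$, including $q=\infty$. So for this lemma you need neither the complex interpolation nor your separate endpoint discussion; interpolation only becomes essential for the $(1-\kappa_2)$ piece, where the two weighted bounds have different $L^p$-based data spaces.
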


      \subsection{Strichartz estimates on $\Sf(1-\kappa_2)$}
      To prove estimates on $\Sf(1-\kappa_2)$, we recall that we are not actually interested in estimating $\mathcal{R}_{int}(f)$ but actually $\mathcal{R}_{int}(F_\lambda)$ with 
      $$
      F_\lambda(\rho):=(2-\lambda)f_1(\rho) + \rho f_1'(\rho)-f_2(\rho)=-\Big(\frac{1}{2} +\lambda \Big) f_1(\rho) +\rho f_1'(\rho)+\frac{5}{2}f_1(\rho)-f_2(\rho).
      $$
      Additionally, we note that $G_{j}( (1-\kappa_2)f)$ is only nonzero for $j=19,20,21$ and that $(1-\kappa)(1-\kappa_2)=(1-\kappa_2)$ on $(1,\infty)$. 
Motivated by this, we first look at the leading order terms in the integrals which are given by 
\begin{align*}
   \int_{1}^{\infty} \frac{(1-\kappa_2)s^{\frac{5}{2}} ((2-\lambda)f_1(s) +sf_1'(s))}{(s\pm 1)^{\frac{3}{2}+\lambda}} ds&= \int_{1}^{\infty} \frac{(1-\kappa_2)s^{\frac{5}{2}} [-(\frac{1}{2}+\lambda)f_1(s) +\frac{5}{2}f_1(s)+s f_1'(s))]}{(s\pm 1)^{\frac{3}{2}+\lambda}} ds
\end{align*}
Now, an integration by parts yields
\begin{align*}
    -\int_{1}^{\infty} \frac{(1-\kappa_2(s))s^{\frac{5}{2}} (\frac{1}{2}+\lambda)}{(s\pm 1)^{\frac{3}{2}+\lambda}}f_1(s) ds
    &=-\int_{1}^{\infty} \frac{(s\pm1)\partial_s[(1-\kappa_2(s))s^{\frac{5}{2}}f_1(s)]}{(s\pm 1)^{\frac{3}{2}+\lambda}} ds
\end{align*}
which implies that
\begin{align*}
    		& \int_{1}^{\infty} \frac{(1-\kappa_2)s^{\frac{5}{2}} ((2-\lambda)f_1(s) +sf_1'(s))}{(s\pm 1)^{\frac{3}{2}+\lambda}} ds\\
		&\quad= -\int_{1}^{\infty} \frac{(s\pm1)\kappa_2'(s)s^{\frac{5}{2}}f_1(s)}{(s\pm 1)^{\frac{3}{2}+\lambda}} ds-\frac{5}{2} \int_{1}^{\infty} \frac{(1-\kappa_2(s))s^{\frac{3}{2}}f_1(s)}{(s\pm1)^{\frac{3}{2}+\lambda}} ds -\int_{1}^{\infty} \frac{(1-\kappa_2(s))s^{\frac{5}{2}}f_1'(s)}{(s\pm 1)^{\frac{3}{2}+\lambda}}ds
        \\
        &=-\frac{1}{1+2\lambda}\int_1^\infty\frac{\partial_s \left[2(s\pm1)\kappa_2'(s)s^{\frac{5}{2}}f_1(s)+5(1-\kappa_2(s))s^{\frac{3}{2}}f_1(s)+2(1-\kappa_2(s))s^{\frac{5}{2}}f_1'(s)\right]}{(s\pm 1)^{\frac{1}{2}+\lambda}}ds.
\end{align*}	
\begin{lemma}
    We can decompose $\mathcal{R}_{int}((1-\kappa_2)F_\lambda)-\mathcal{R}_{\mathrm{f}_{int}}((1-\kappa_2)F_\lambda)$

as 
$\mathcal{R}_{int}((1-\kappa_2)F_\lambda)-\mathcal{R}_{\mathrm{f}_{int}}((1-\kappa_2)F_\lambda)=\sum_{j=19}^{21}\widetilde G_j(F_+)(\rho;\lambda)+\widetilde G_j(F_-)(\rho;\lambda)$
where 
$$F_\pm(s)= \left[2(s\pm1)\kappa_2'(s)s^{\frac{5}{2}}f_1(s)+5(1-\kappa_2(s))s^{\frac{3}{2}}f_1(s)+2(1-\kappa_2(s))s^{\frac{5}{2}}(f_1'(s)-f_2(s))\right]$$
and 
\begin{align*}
   \widetilde G_{19}(F)(\rho;\lambda)&=\chi_\lambda(\rho)(1-\rho^2)^{\frac{\lambda}{2}}\O(\rho^0\langle\omega\rangle^{-\frac{1}{2}})
              \\
              &\quad \times \int_1^\infty \left[\frac{\partial_s[ \O(s^0\langle\omega\rangle^0)F(s)]}{(s-1)^{\frac{1}{2}+\lambda}}+\frac{\partial_s[\O(s^0\langle\omega\rangle^0)F(s)]}{(1+s)^{\frac{1}{2}+\lambda}}\right] ds
              \\
           \widetilde   G_{20}(F)(\rho;\lambda)&=(1-\chi_\lambda(\rho))(1-\rho)^{\frac{3}{2}+\lambda}\O(\rho^{-\frac{5}{2}}\langle\omega\rangle^{-3})
              \\
              &\quad \times\int_1^\infty \left[\frac{\partial_s[ \O(s^0\langle\omega\rangle^0)F(s)]}{(s-1)^{\frac{1}{2}+\lambda}}+\frac{\partial_s[\O(s^0\langle\omega\rangle^0)F(s)]}{(1+s)^{\frac{1}{2}+\lambda}}\right] ds
              \\
              \widetilde G_{21}(F)(\rho;\lambda)&=(1-\chi_\lambda(\rho))(1+\rho)^{\frac{3}{2}+\lambda}\O(\rho^{-\frac{5}{2}}\langle\omega\rangle^{-3})
              \\
              &\quad \times  \int_1^\infty \left[\frac{\partial_s[ \O(s^0\langle\omega\rangle^0)F(s)]}{(s-1)^{\frac{1}{2}+\lambda}}+\frac{\partial_s[\O(s^0\langle\omega\rangle^0)F(s)]}{(1+s)^{\frac{1}{2}+\lambda}}\right] ds.
\end{align*}
\end{lemma}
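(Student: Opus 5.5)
We first observe that on $(0,1)$ the function $(1-\kappa_2)F_\lambda$ vanishes identically, because $\kappa_2=1$ on $[0,2r]$. Hence in $\mathcal{R}_{int}((1-\kappa_2)F_\lambda)$ the two variation-of-constants integrals over $(0,\rho)$ and $(\rho,1)$ vanish, and so does the term $U_2(1;\lambda)f(1)$ coming from the integration by parts. Only the matching term remains:
\[
\mathcal{R}_{int}((1-\kappa_2)F_\lambda)(\rho;\lambda)=\mu((1-\kappa_2)F_\lambda)(\lambda)\,u_0(\rho;\lambda),
\]
and the same holds for the free counterpart with $\mu_{\mathrm f}$ and $u_{\mathrm f_0}$. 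By Lemma~\ref{lemma:FullDefinitionOfC}, and since $\kappa(1-\kappa_2)=0$, only the integrals against $(1-\kappa)$ survive:
\[
\mu((1-\kappa_2)F_\lambda)=\frac{1}{c_{0,1}(\lambda)\sqrt{3+2\lambda}}\int_1^\infty (1-\kappa_2(s))s^{\frac52}\Big[\frac{a_{2,3}(\lambda)g_3(s;\lambda)}{(s-1)^{\frac32+\lambda}}+\frac{a_{2,4}(\lambda)g_4(s;\lambda)}{(s+1)^{\frac32+\lambda}}\Big]F_\lambda(s)\,ds .
\]

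Next we remove the factor $\lambda$ in $F_\lambda$. We write $F_\lambda=-(\tfrac12+\lambda)f_1+\tfrac52 f_1+\rho f_1'-f_2$ and apply the integration by parts carried out just before the statement, separately with $s-1$ and with $s+1$. In doing so, the factors $g_3$ and $g_4$ are carried inside the derivative; they are $1+\O(s^{-1}\langle\omega\rangle^{-1})$ and this behaviour is preserved under $\partial_s$. After a second integration by parts producing $(s\pm1)^{-\frac12-\lambda}$, each integral takes the form
\[
\frac{1}{1+2\lambda}\int_1^\infty \frac{\partial_s[\O(s^0\langle\omega\rangle^0)F_\pm(s)]}{(s\pm1)^{\frac12+\lambda}}\,ds .
\]
The boundary terms at $s=1$ vanish because of the support of $1-\kappa_2$, and those at infinity vanish because $f_1,f_2$ are compactly supported.

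We now count powers of $\langle\omega\rangle$. The prefactor has $c_{0,1}(\lambda)^{-1}=\O(1)$ on $\Re\lambda=\pm\delta$, by the choice of $\delta$ and the asymptotics \eqref{Eq:limit for c_{0,1}}. In addition, $(3+2\lambda)^{-1/2}(1+2\lambda)^{-1}=\O(\langle\omega\rangle^{-3/2})$, $a_{2,3}=\O(\langle\omega\rangle^{-2})$ and $a_{2,4}=\O(1)$. Multiplying by $u_0$ and splitting with $\chi_\lambda$ gives the three pieces:
\begin{itemize}
\item on $\supp\chi_\lambda$ we have $u_0=(1-\rho^2)^{\frac14+\frac\lambda2}\O(\rho^0\langle\omega\rangle^{2})$, but the $\rho^{-5/2}$ normalisation of $u_0$ absorbs part of this; we expect the paper's bookkeeping, which gives $\O(\langle\omega\rangle^{-1/2})$ as in $G_{16}$, to carry over, which yields $\widetilde G_{19}$;
\item on $\supp(1-\chi_\lambda)$ we use $u_0=c_{0,1}u_1+c_{0,2}u_2$, and the $(1\mp\rho)^{\frac32+\lambda}$ profiles give $\widetilde G_{20}$ and $\widetilde G_{21}$.
\end{itemize}
The free terms have the same structure, so the difference is bounded by the sum of these forms; no cancellation is needed, since each term separately has the stated size. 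The pieces involving $s+1$ give the $F_+$ contributions and those involving $s-1$ give the $F_-$ contributions.

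The main obstacle is the $\omega$-bookkeeping. One has to confirm the precise powers, $\langle\omega\rangle^{-1/2}$ inside the cone core and $\langle\omega\rangle^{-3}$ near $\rho=1$, by matching them against the analogous $G_{19}$–$G_{21}$ in the preceding decomposition of $D_3$. One also has to check that the difference between the potential and free resolvents does not worsen these powers; the estimates $c_{0,j}-c_{\mathrm f_{0,j}}=\O(\langle\omega\rangle^{-1})$ and the bounds on $r_j$ control this. We would mimic that earlier computation line by line.
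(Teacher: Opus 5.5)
Your structural reduction matches the paper's. Because $\kappa_2\equiv 1$ on $[0,2r]$, only the matching term $\mu((1-\kappa_2)F_\lambda)\,u_0$ and its free analogue survive, which is why only $G_{19}$--$G_{21}$ remain. The identity $\kappa(1-\kappa_2)=0$ removes the $\kappa$-integral. The double integration by parts then removes the factor $\lambda$ exactly, producing $(1+2\lambda)^{-1}$ and $F_\pm$; carrying $g_3,g_4$ inside is harmless, since $(s\pm1)\partial_s g_j=\O(\langle\omega\rangle^{-1})$.

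The gap is the $\langle\omega\rangle$-count, which is the real content of the lemma. By your own numbers, $\mu((1-\kappa_2)F_\lambda)=\O(\langle\omega\rangle^{-3/2})$. Also $u_0=\O(\rho^0\langle\omega\rangle^{2})$ on $\supp\chi_\lambda$, and $u_0=\O(\rho^{-5/2}\langle\omega\rangle^{-1/2})(1\pm\rho)^{\frac32+\lambda}$ on $\supp(1-\chi_\lambda)$. Hence $\mu u_0$ and $\mu_{\mathrm f}u_{\mathrm f_0}$ are individually of size $\langle\omega\rangle^{1/2}$ and $\langle\omega\rangle^{-2}$. Each is one power short of the claimed $\langle\omega\rangle^{-1/2}$ in $\widetilde G_{19}$ and $\langle\omega\rangle^{-3}$ in $\widetilde G_{20},\widetilde G_{21}$.

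So your assertion that no cancellation is needed is false. The appeal to the $\rho^{-5/2}$ normalisation does not rescue it: trading $\rho$ for $\langle\omega\rangle$ on $\supp\chi_\lambda$ only gives forms like $\O(\rho^{-5/2}\langle\omega\rangle^{-2})$ or $\O(\rho^{-1}\langle\omega\rangle^{-1/2})$, never $\O(\rho^{0}\langle\omega\rangle^{-1/2})$. A symbol growing like $\langle\omega\rangle^{1/2}$ is not even integrable in $\omega$, so the kernel bounds used in Lemma~\ref{lem: strichartz1-k2 lemma} would be lost.

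The missing step is to exploit the difference, as in the proof of Lemma~\ref{lem:decompD1}. Write $\mu u_0-\mu_{\mathrm f}u_{\mathrm f_0}=(\mu-\mu_{\mathrm f})u_0+\mu_{\mathrm f}(u_0-u_{\mathrm f_0})$, and use that each difference gains one power of $\langle\omega\rangle$:
\begin{itemize}
\item On $\supp\chi_\lambda$ one has $u_0-u_{\mathrm f_0}=\O(\rho^0\langle\omega\rangle)$.
\item On $\supp(1-\chi_\lambda)$ the difference $u_0-u_{\mathrm f_0}$ always carries a factor $r_j=\O(\langle\omega\rangle^{-1})$ or $c_{0,j}-c_{\mathrm f_{0,j}}=\O(\langle\omega\rangle^{-1})$.
\item $\mu-\mu_{\mathrm f}=\O(\langle\omega\rangle^{-5/2})$, since $c_{0,1}^{-1}-c_{\mathrm f_{0,1}}^{-1}$, $a_{2,4}-a_{\mathrm f_{2,4}}$ and $g_4-g_{\mathrm f_4}$ are $\O(\langle\omega\rangle^{-1})$, while the $a_{2,3}$-terms are already $\O(\langle\omega\rangle^{-7/2})$.
\end{itemize}
This yields $\langle\omega\rangle\cdot\langle\omega\rangle^{-3/2}$ and $\langle\omega\rangle^{2}\cdot\langle\omega\rangle^{-5/2}$ on $\supp\chi_\lambda$. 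On $\supp(1-\chi_\lambda)$ it yields $\langle\omega\rangle^{-3/2}\cdot\langle\omega\rangle^{-3/2}$ and $\langle\omega\rangle^{-1/2}\cdot\langle\omega\rangle^{-5/2}$. These are precisely the stated powers. The comparison with the free resolvent is therefore not a side check that the powers are ``not worsened''; it is where the required decay comes from.
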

We also record the following Lemma.
\begin{lemma}\label{lem: norm estimate delta}
    Let $\delta\geqslant 0$ be sufficiently small. Then, the function 
    $$F(s)= \left[2s\kappa_2'(s)s^{\frac{5}{2}}f_1(s)+5(1-\kappa_2(s))s^{\frac{3}{2}}f_1(s)+2(1-\kappa_2(s))s^{\frac{5}{2}}(f_1'(s)-f_2(s))\right]$$
    satisfies the estimates
    \begin{align*}
        \||.|^{\frac{1}{2}\mp \delta-\frac{3\mp\delta}{6}}F'\|_{L^{\frac{6}{3\mp \delta}}((0,\infty))}&\lesssim \|f_1\|_{\dot{W}^{2,\frac{6}{3\mp \delta}}(\R^6)}+\|f_2\|_{\dot{W}^{1,\frac{6}{3\mp \delta}}(\R^6)}
    \end{align*}
    and
       \begin{align*}
        \||.|^{-\frac{1}{2}\mp \delta-\frac{3\mp\delta}{6}}F'\|_{L^{\frac{6}{3\mp \delta}}((0,\infty))}&\lesssim \|f_1\|_{\dot{W}^{2,\frac{6}{3\mp \delta}}(\R^6)}+\|f_2\|_{\dot{W}^{1,\frac{6}{3\mp \delta}}(\R^6)}
    \end{align*}
    for all $f\in C_{c,rad}^\infty(\R^6).$
\end{lemma}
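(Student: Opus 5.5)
The plan is to compute $F'$ explicitly and reduce both inequalities to weighted one-dimensional Hardy inequalities for radial functions on $\R^6$. Throughout, write $p:=\frac{6}{3\mp\delta}$, so that $\frac{3\mp\delta}{6}=\frac1p$ and $p\in(2-\epsilon,2+\epsilon)$ for $\delta$ small.

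\textbf{Support and reduction to the first estimate.} Since $\kappa_2=1$ on $[0,2r]$, the factor $1-\kappa_2$ and $\kappa_2'$ (together with $\kappa_2''$) are supported in $[2r,\infty)$, with $2r>1$. Hence $F'$ is supported in $[2r,\infty)$. The weight in the second estimate is $s^{-p}$ times the weight in the first, and $s^{-p}\le 1$ there. So it suffices to prove the first estimate.

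\textbf{Expanding $F'$.} Differentiating gives a finite sum of terms of the form $\eta(s)\,s^{a}\,\partial_s^k f_1$ with $k\le 2$, and $\eta(s)\,s^{a}\,\partial_s^k f_2$ with $k\le 1$. Here $\eta$ is one of $1-\kappa_2,\kappa_2',\kappa_2''$ (bounded), and $a\le \frac52$. The homogeneity is always $a=\frac52-(2-k)$ for $f_1$ and $a=\frac52-(1-k)$ for $f_2$; the terms carrying an explicit factor $s$ next to $\kappa_2'$ only occur on the compact set $[2r,3r]$, where powers of $s$ are harmless.

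\textbf{Weight bookkeeping.} The $p$-th power of the weight is $s^{(\frac12\mp\delta)p-1}$, and $(3\mp\delta)p=6$. For the top-order term $s^{5/2}f_1''$ this gives
\begin{equation*}
\int |s^{5/2}f_1''|^p\, s^{(\frac12\mp\delta)p-1}\,ds=\int |f_1''|^p s^{(3\mp\delta)p-1}\,ds=\int|f_1''|^p s^5\,ds\simeq \|\partial_\rho^2 f_1\|_{L^p(\R^6)}^p .
\end{equation*}
This is controlled by $\|f_1\|_{\dot W^{2,p}(\R^6)}$ via the usual radial identities. In the same way, $s^{5/2}f_2'$ yields $\|f_2\|_{\dot W^{1,p}}$. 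Each lower-order term $s^{5/2-j}\partial^{k}f$ with $j$ derivatives missing produces $\int|\partial^k f|^p s^{5-jp}\,ds$.

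\textbf{Hardy inequalities.} These lower-order terms are bounded by applying the radial $L^p$-Hardy inequality
\begin{equation*}
\int_0^\infty |g|^p s^{5-p}\,ds\lesssim\int_0^\infty|g'|^p s^5\,ds,
\end{equation*}
once or twice. This inequality is valid for compactly supported smooth $g$ because $p<6$. Applying it again with weight $s^{5-2p}$ requires $2p<6$, which holds for $p$ near $2$. Inserting these bounds term by term gives the claim.

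\textbf{Main obstacle.} The only delicate point is checking that the exponents stay strictly inside the admissible Hardy range uniformly for small $\delta$. I expect no serious difficulty there.
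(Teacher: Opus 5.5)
Your proposal is correct and is essentially the paper's argument. You bound $|F'|$ by $\sum_j s^{\frac52-j}|f_1^{(2-j)}|+\sum_j s^{\frac52-j}|f_2^{(1-j)}|$ (the cutoff terms live on $[2r,3r]$), observe that the weight turns each term into $\int|f^{(k)}|^p s^{5-jp}\,ds$, and close with Hardy's inequality. The only difference is where Hardy is applied: you use the one-dimensional weighted Hardy inequality on the radial profiles and compare only the top-order radial derivative with Cartesian ones, while the paper first passes to $\R^6$ via the pointwise bound of radial by Cartesian derivatives and then uses Hardy on $\R^6$; this amounts to the same thing under the same constraint $2p<6$.

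Your shortcut for the second estimate ($F'$ vanishes on $[0,2r)$ with $2r>1$, so the extra factor $s^{-p}$ is at most $1$) is valid for the statement as written. Note, though, that where the lemma is invoked, in the proof of Lemma~\ref{lem: strichartz1-k2 lemma}, the second bound is actually used for $F$ rather than $F'$. Your bookkeeping covers that case too: the terms $s^{\frac52}f_1'$, $s^{\frac32}f_1$, $s^{\frac52}f_2$ lead to the exponents $5-p$, $5-2p$, $5-p$.
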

\begin{proof}
We only prove the more involved estimate 
\begin{align*}
        \||.|^{\frac{1}{2}\mp \delta-\frac{3\mp\delta}{6}}F'\|_{L^{\frac{6}{3\mp \delta}}((0,\infty))}&\lesssim ,\|f_1\|_{\dot{W}^{2,\frac{6}{3\mp \delta}}(\R^6)}+\|f_2\|_{\dot{W}^{1,\frac{6}{3\mp \delta}}(\R^6)},
    \end{align*}
    as the second one follows from the same arguments. 
    We start by noting that
    \begin{align*}
        |F'(s)|\lesssim \sum_{j=0}^2 s^{\frac{5}{2}-j}|f_1^{(2-j)}(s)|+\sum_{j=0}^1 s^{\frac{5}{2}-j}|f_2^{(1-j)}(s)|
    \end{align*}
    Furthermore,
    we compute that
 $$   (\frac{5}{2}-j)\frac{6}{3\mp \delta}+(\frac{1}{2}\mp \delta)\frac{6}{3\mp \delta}-1=5-j\frac{6}{3\mp \delta}. $$
 Hence, 
    \begin{align*}
\||.|^{\frac{1}{2}\mp \delta-\frac{3\mp\delta}{6}}F'\|_{L^{\frac{6}{3\mp \delta}}((0,\infty))}&\lesssim \sum_{j=0}^2 \||.|^{-j}f_1^{(2-j)}\|_{L^{\frac{6}{3\mp \delta}}(\R^6)}+\sum_{j=0}^1 \||.|^{-j}f_2^{(1-j)}\|_{L^{\frac{6}{3\mp \delta}}(\R^6)}.
    \end{align*}
    To avoid confusion, we let $\varphi_{f_i}$, be such that $f_i$ is its radial representative, i.e., 
    $\varphi_{f_i}(x)=f_i(|x|)$. Then,  thanks to Theorem 2.2 in \cite{DjaEdeOli11} we know that
    \begin{align*}
        |f_i^{(j)}(\rho)|\lesssim \sum_{|\alpha|=j}| \partial_\alpha \varphi_{f_i}(\xi)|
    \end{align*}
    where $|\xi|=\rho$ and the sum is over all multi-indices $\alpha$ of length $j$.
    Consequently,  by applying Hardy's inequality, we obtain that
    \begin{align*}
\||.|^{\frac{1}{2}\mp \delta-\frac{3\mp\delta}{6}}F'\|_{L^{\frac{6}{3\mp \delta}}((0,\infty)}&\lesssim \sum_{j=0}^2 \sum_{|\alpha|=j}\||.|^{-j}\varphi_{f_1}\|_{\dot{W}^{2-j,\frac{6}{3\mp \delta}}(\R^6)}+ \|\varphi_{f_2}\|_{\dot W^{1,\frac{6}{3\mp \delta}}(\R^6)}
 \\
 &\lesssim \|\varphi_{f_1}\|_{\dot{W}^{2,\frac{6}{3\mp \delta}}(\R^6)}+ \|\varphi_{f_2}\|_{\dot W^{1,\frac{6}{3\mp \delta}}(\R^6)}.
    \end{align*}
\end{proof}
For $j=19,20,21$ we define integral operators $\widetilde{T}_{j,\pm,1-\kappa_2}\ff$ as
\begin{align*}
   \widetilde{T}_{j,\pm,1-\kappa_2}(\tau)\ff(\rho):=\lim_{N\to \infty}\int_{-N}^Ne^{i\omega \tau }\left[G_j(F_+)(\rho;\pm \delta +i\omega )+G_j(F_-)(\rho;\pm \delta +i\omega )\right] d\omega.
\end{align*}
      \begin{lemma}\label{lem: strichartz1-k2 lemma}
          The estimates

      \begin{align*}
  \|   \widetilde{T}_{j,\pm,1-\kappa_2}(\cdot)\ff\|_{L^{\frac{6}{3\mp \delta}}(\R_+)L^{12}(\B_1)}&\lesssim \|\ff\|_{\dot{W}^{2,\frac{6}{3\mp \delta}}(\R^6)\times\dot{W}^{1,\frac{6}{3\mp \delta}}(\R^6)}
  \\
    \|   \widetilde{T}_{j,\pm,1-\kappa_2}(\cdot)\ff\|_{L^{\infty}(\R_+)L^{6}(\B_1)}&\lesssim \|\ff\|_{\dot{W}^{2,\frac{6}{3\mp \delta}}(\R^6)\times\dot{W}^{1,\frac{6}{3\mp \delta}}(\R^6)}
      \end{align*}
      hold for all $\ff\in C^\infty_{c,rad}(\R^6)\times C^\infty_{c,rad}(\R^6)$ and $j=19,20,21$.
            \end{lemma}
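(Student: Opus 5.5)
We follow the template of Lemma \ref{lem: strichartz D_3 k_2}, but replace $\dot H^1$-type control of the $s$-integrals by the weighted $L^p$ control of Lemma \ref{lem: norm estimate delta}. Each $\widetilde G_j(F_\pm)$, $j=19,20,21$, factors as an explicit $\rho$-dependent prefactor times an $s$-integral
\[
\int_1^\infty \frac{\partial_s[\O(s^0\langle\omega\rangle^0)F(s)]}{(s\mp' 1)^{\frac12+\lambda}}\,ds .
\]
Write $\lambda=\pm\delta+i\omega$. The factor $(s\mp'1)^{-\frac12\mp\delta}e^{-i\omega\log(s\mp'1)}$ is an oscillatory phase in $\omega$, and it combines with $e^{i\omega\tau}$ and with the phase $(1\mp\rho)^{\lambda}$ (or $(1-\rho^2)^{\lambda/2}$) coming from the prefactor. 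After the $\omega$-integration we therefore expect kernels of the form $\langle \tau + \log(1\mp\rho)-\log(s\mp'1)\rangle^{-2}$. These are obtained exactly as for $T_{16,\pm,\kappa_2}$ and $T_{17,\pm,\kappa_2}$ via Lemmas 5.1, 5.3 and 5.4 of \cite{DonningerWallauch}. For $\widetilde G_{19}$ we additionally trade $\rho$-powers on $\supp\chi_\lambda$ for $\omega$-decay. The difference from the $\kappa_2$ case is that $s$ now ranges over $(2r,\infty)$ rather than a compact set, so the factor $\log(s\mp'1)$ is unbounded and cannot be absorbed into $\langle\tau\rangle^{-2}$.

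\emph{Pointwise bounds.} The plan is to first derive
\[
|\widetilde T_{j,\pm,1-\kappa_2}(\tau)\ff(\rho)|\lesssim w_j(\rho)\int_{2r}^\infty |\widetilde F'(s)|\, s^{-\frac12\mp\delta}\, h\big(\tau+\theta_j(\rho)-\log s\big)\,ds ,
\]
where $\widetilde F$ absorbs the $\O(s^0)$ symbol and $h=\langle\cdot\rangle^{-2}$ or $h=g=\langle\cdot\rangle^{-2}|\cdot|^{-1/24}$. Here $w_j$ is $\rho^{-\frac12\mp\frac1{24}}$ on $[0,\rho_0)$ for $j=19$, and $\rho^{-\frac12}(1-\rho)$ for $j=20,21$. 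The phase $\theta_j$ is bounded or equal to $\log(1-\rho)$, and it is controlled as in the $T_{17}$ argument via $(1-\rho)\langle\tau+\log(1-\rho)-y\rangle^{-2}\lesssim\langle\tau-y\rangle^{-2}$.

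\emph{$L^\infty L^6$ endpoint.} Take the $L^6(\B_1)$ norm in $\rho$, which is finite for these weights, and bound $\sup_\tau$ of the $s$-integral by H\"older. Write $s^{-\frac12\mp\delta}=s^{\frac12\mp\delta-\frac{3\mp\delta}{6}}\cdot s^{-1+\frac{3\mp\delta}{6}}$. The first factor pairs with $\widetilde F'$ in $L^{p}$, $p=\frac{6}{3\mp\delta}$, which is precisely the quantity estimated in Lemma \ref{lem: norm estimate delta}. The second factor lies in $L^{p'}(ds)$ on $(2r,\infty)$, since $(1-\frac{3\mp\delta}{6})p'=\frac{3\pm\delta}{6}\cdot\frac{6}{3\pm\delta}\cdot\ldots>1$ after the $h$ factor is included. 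If this is borderline, we will instead use the second (stronger-weighted) estimate of Lemma \ref{lem: norm estimate delta} for that piece.

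\emph{$L^{p}L^{12}$ endpoint.} Use Minkowski in $\rho$ together with Lemma 5.7 of \cite{DonningerWallauch} to reduce the $L^{12}_\rho$ norm of $w_j g(\cdot)$ to $g(\tau-\log s)$. Then change variables $s=e^y$, which turns the $s$-integral into a convolution in $y$:
\[
\Big\|\big(1_{y>\log 2r}\,|\widetilde F'(e^y)|e^{(\frac12\mp\delta)y}\big)*g\Big\|_{L^{p}_\tau}\lesssim \big\||\widetilde F'(e^y)|e^{(\frac12\mp\delta)y}\big\|_{L^{p}_y}\|g\|_{L^1}
\]
by Young's inequality. Undoing the substitution, with $dy=ds/s$, gives $\||\cdot|^{\frac12\mp\delta-\frac1p}\widetilde F'\|_{L^p}$, and $\frac1p=\frac{3\mp\delta}{6}$ is exactly the exponent appearing in Lemma \ref{lem: norm estimate delta}. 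This matching is the reason the time exponent is $\frac{6}{3\mp\delta}$ rather than $2$.

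\emph{Main obstacle.} The hard step is obtaining the pointwise kernel bound uniformly as $s\to\infty$ and near $s=1^+$ in the $(s-1)^{-\frac12-\lambda}$ piece. There one needs the $\omega$-symbol behaviour of the $\O(s^0\langle\omega\rangle^0)$ factors, which come from $g_3,g_4,a_{2,j}$ and $c_{0,1}^{-1}$ along $\Re\lambda=\pm\delta$. The choice of $\delta$ in Lemma \ref{lem:zeros} makes $c_{0,1}^{-1}$ a symbol there, so we will verify the symbol estimates $\partial_\omega^k=\O(\langle\omega\rangle^{-k})$ and then apply Lemma 5.3 of \cite{DonningerWallauch} with $n=2$. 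The interchange of limit and integral is justified by the $\langle\omega\rangle^{-3}$ or $\langle\omega\rangle^{-1/2}$ prefactors combined with one further integration by parts in $s$, as in \cite[Lemma 5.1]{Wallauch2024}.
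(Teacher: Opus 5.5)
Your plan is essentially the paper's proof: pointwise kernel bounds from the Donninger--Wallauch lemmas, reduction of the $\rho$-norm to an $s$-integral against $g(\tau-\log(s\mp1))$, a logarithmic change of variables, Young's inequality, and Lemma \ref{lem: norm estimate delta}. The only organisational difference is that the paper runs Young once for all $p\in[\frac{6}{3\mp\delta},\infty]$, using $\|g\|_{L^r}$ with $1\leqslant r\leqslant\frac{6}{3\pm\delta}$, instead of treating the two endpoints separately.

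One correction is needed in your $L^\infty L^6$ H\"older step. There the power is exactly $(1-\frac{3\mp\delta}{6})p'=1$, not $>1$, so $s^{-1/p'}\notin L^{p'}(2r,\infty)$. Your fallback to the second estimate of Lemma \ref{lem: norm estimate delta} would not fix this: that estimate controls the term $s^{-1}F$ produced when $\partial_s$ hits the $\O(s^0)$ symbol. What actually works is keeping the kernel inside the $L^{p'}$ factor, which gives $\|s^{-1/p'}g(\tau-\log s)\|_{L^{p'}(ds)}\leqslant\|g\|_{L^{p'}}<\infty$ since $p'<24$. This is precisely the paper's Young inequality with $r=\frac{6}{3\pm\delta}$.
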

      \begin{proof}
         By arguing as above, we conclude that for $j=19,20,21$ and $q\in [6,12]$
\begin{align*}
   \| \widetilde T_{j,\pm,1-\kappa_2}(\tau)\ff\|_{L^{q}(\B_1)}&\lesssim \int_1^\infty\left| \frac{\partial_s[\O(s^{0})F(s)]}{(s-1)^{\frac{1}{2}\pm \delta}} \right|g(\tau -\log(s-1))ds
   \\
   &\quad+\int_1^\infty\left| \frac{\partial_s[\O(s^{0})F(s)]}{(s+1)^{\frac{1}{2}\pm \delta}} \right|g(\tau -\log(s+1))ds
   \\
   &=:I_1+I_2
\end{align*}
where $F(s)= 2s\kappa_2'(s)s^{\frac{5}{2}}f_1(s)+5(1-\kappa_2(s))s^{\frac{3}{2}}f_1(s)+2(1-\kappa_2(s))s^{\frac{5}{2}}(f_1'(s)-f_2(s))$ and $g(x)=\langle x\rangle^{-2}|x|^{-\frac{1}{24}}.$
To estimate $I_1$, we change variables according to $s=1+e^y$, to obtain that
\begin{align*}
    I_1\leqslant\int_{-\infty}^\infty \left[|(1+e^{y})^{-1} F(1+e^{y})|+ |F'(1+e^y)|\right]e^{y(\frac{1}{2}\mp \delta)}g( \tau -y) dy
\end{align*}
Now, for $p\in [\frac{6}{3\mp \delta},\infty]$, an application of Young's inequality yields 
\begin{align*}
    \|I_1\|_{L^p((0,\infty))}&\lesssim \left[\||(1+e^{y})^{-1}F(1+e^y)e^{y(\frac{1}{2}\mp \delta)}\|_{L^{\frac{6}{3\mp \delta}}(\R)}+\|F'(1+e^y)e^{y(\frac{1}{2}\mp \delta)}\|_{L^{\frac{6}{3\mp \delta}}(\R)}\right]\|g\|_{L^{\frac{1}{1+\frac{1}{p}-\frac{3\mp\delta}{6}}}}.
    \end{align*}
    Thus, given that $p\in [\frac{6}{3\mp \delta},\infty]$, which, in turn, implies that $$1 \leqslant \frac{1}{1+\frac{1}{p}-\frac{3\mp\delta}{6}}\leqslant \frac{6}{3\pm \delta},$$
    we have that 
    \begin{align*}
        \|g\|_{L^{\frac{1}{1+\frac{1}{p}-\frac{3\mp\delta}{6}}}}\lesssim 1
    \end{align*}
    and therefore
    \begin{align*}
      \|I_1\|_{L^p((0,\infty))}&\lesssim\left( \int_\mathbb{R} |(1+e^{y})^{-1}F(1+e^y)|^{\frac{6}{3\mp \delta}} e^{y(\frac{1}{2}\mp \delta)\frac{6}{3\mp \delta}} dy \right)^{\frac{3\mp \delta}{6}}+\left( \int_\mathbb{R} |F'(1+e^y)|^{\frac{6}{3\mp \delta}} e^{y(\frac{1}{2}\mp \delta)\frac{6}{3\mp \delta}} dy \right)^{\frac{3\mp \delta}{6}}
        \\
        &=\left( \int_1^\infty |s^{-1}F(s)|^{\frac{6}{3\mp \delta}} (s-1)^{(\frac{1}{2}\mp \delta)\frac{6}{3\mp \delta}-1}ds \right)^{\frac{3\mp \delta}{6}}+\left( \int_1^\infty |F'(s)|^{\frac{6}{3\mp \delta}} (s-1)^{(\frac{1}{2}\mp \delta)\frac{6}{3\mp \delta}-1}ds \right)^{\frac{3\mp \delta}{6}}
        \\
        &\lesssim||.|^{-\frac{1}{2}\mp \delta-\frac{3\mp\delta}{6}}F\|_{L^{\frac{6}{3\mp \delta}}((0,\infty))}+ ||.|^{\frac{1}{2}\mp \delta-\frac{3\mp\delta}{6}}F'\|_{L^{\frac{6}{3\mp \delta}}((0,\infty))}
        \\
        &\lesssim \|f_1\|_{\dot{W}^{2,\frac{6}{3\mp \delta}}(\R^6)}+\|f_2\|_{\dot{W}^{1,\frac{6}{3\mp \delta}}(\R^6)}
\end{align*}
thanks to Lemma \ref{lem: norm estimate delta}. The term $I_2$ is estimated analogously, which concludes the proof.
      \end{proof}

Observe now that 
$$
\frac{1}{2}\left[\frac{3+\delta}{6}+\frac{3-\delta}{6}\right]=\frac{1}{2}.
$$
Interpolating between the associated Sobolev spaces yields an $L^2$-based estimate, namely the following.
      \begin{lemma}\label{lem: Strichartz 1-k_2}
          The estimates
          \begin{align}
              \|[\Sf((1-\kappa_2)\ff)]_1\|_{L^q(\R_+)L^r(\B_1)}&\lesssim \|\ff\|_{\mathcal{H}^2}
          \end{align}
          hold for all $\ff\in C^\infty_{c,rad}(\R^6)\times C^\infty_{c,rad}(\R^6)$ and all $q\in [2,\infty],~r\in[6,12]$ satisfy \eqref{eq:AdmissibleExponents}.
                \end{lemma}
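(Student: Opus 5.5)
Write $[\Sf((1-\kappa_2)\ff)]_1 = [\Sf_0((1-\kappa_2)\ff)]_1 + [(\Sf-\Sf_0)((1-\kappa_2)\ff)]_1$ and treat the two pieces separately. The free part is handled directly by the free Strichartz estimates \eqref{eq:FreeStrichartzEstimates}, since multiplication by the smooth radial cutoff $1-\kappa_2$ is bounded on $\mathcal{H}^2$. This is the same Hardy-inequality argument as in Lemma \ref{lemma:Boundedness}: $\kappa_2'$ is compactly supported away from the origin. Thus the real work is the difference term restricted to $\B_1$.

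For the difference, start from the integral representation \eqref{eq:IntegralRepresentation} on the lines $\Re\lambda=\pm\delta$. The justification that $\Sf\ff=\Sf_1\ff$ for $\ff\in C^\infty_{c,rad}\times C^\infty_{c,rad}$ allows the contour to be moved to $\Re\lambda=-\delta$. On $\B_1$, the only contributions from $(1-\kappa_2)F_\lambda$ come through the matching term $\mu((1-\kappa_2)F_\lambda)u_0$. By the preceding decomposition lemma, these are exactly the terms $\widetilde G_{19},\widetilde G_{20},\widetilde G_{21}$ applied to $F_\pm$. Hence
\[
e^{\mp\delta\tau}[(\Sf-\Sf_0)((1-\kappa_2)\ff)]_1=\sum_{j=19}^{21}\widetilde T_{j,\pm,1-\kappa_2}(\tau)\ff .
\]
Lemma \ref{lem: strichartz1-k2 lemma} then gives, for each sign, weighted endpoint bounds. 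These are in $L^{\frac{6}{3\mp\delta}}(\R_+,e^{\mp\delta\frac{6}{3\mp\delta}\tau})L^{12}(\B_1)$ and $L^\infty(\R_+,e^{\mp\delta\tau})L^6(\B_1)$, both by $\|\ff\|_{\dot W^{2,\frac{6}{3\mp\delta}}\times\dot W^{1,\frac{6}{3\mp\delta}}}$. By Minkowski/Young in $\tau$ as in that proof, one also gets the intermediate exponents $p\in[\frac{6}{3\mp\delta},\infty]$.

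Next, interpolate the two signs with Proposition \ref{prop:interpolation}. The exponential weights $e^{\pm\delta\tau}$ cancel at $\theta=\tfrac12$. On the data side, $(\dot W^{2,\frac{6}{3+\delta}},\dot W^{2,\frac{6}{3-\delta}})_{[\frac12]}=\dot H^2$ (radially), since $\frac12(\frac{3+\delta}{6}+\frac{3-\delta}{6})=\frac12$, and similarly for $\dot W^1\to\dot H^1$. This complex interpolation of homogeneous Sobolev spaces restricted to radial functions is standard, via Stein--Weiss or retraction onto the radial subspace. The output is the unweighted endpoint $L^\infty_\tau L^6(\B_1)$ and a near-endpoint $L^2_\tau L^{12}(\B_1)$ bound. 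For the time exponent, pair $p_0=\frac{6}{3+\delta}$ with a $p_1$ chosen so that $\frac{1}{2p_0}+\frac1{2p_1}=\frac12$, i.e. $p_1=\frac{6}{3-\delta}$, which is exactly the lemma's exponent on the other side. The spatial exponent stays $12$ on both sides.

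Finally, interpolate between $L^\infty L^6$ and $L^2L^{12}$ with Hölder in $\tau$ and $\rho$ to obtain all pairs satisfying \eqref{eq:AdmissibleExponents}. Adding the free part completes the proof for smooth compactly supported data.

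The main obstacle is the interpolation step. One has to check that the weighted mixed-norm spaces and the radial homogeneous $\dot W^{s,p}$ data spaces interpolate as claimed, and that the four-term bookkeeping produces exactly the exponents $(2,12)$ and $(\infty,6)$. If complex interpolation of the data spaces turns out to be awkward, I would instead use the bilinear/analytic-family form of Stein interpolation. This would be applied directly to the family $z\mapsto e^{(2z-1)\delta\tau}$ times the operator, with data $|\ff|^{p(z)}$-type analytic families, exactly as in Proposition A.1 of \cite{Wallauch2024}.
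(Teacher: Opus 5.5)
Your proposal is correct and follows the paper's own argument. You use the two contour representations on $\Re\lambda=\pm\delta$ and the weighted bounds of Lemma \ref{lem: strichartz1-k2 lemma} with data in $\dot W^{2,\frac{6}{3\mp\delta}}\times\dot W^{1,\frac{6}{3\mp\delta}}$. You then complex-interpolate at $\theta=\frac12$ via Proposition \ref{prop:interpolation}, where the exponential weights cancel and $\frac12\bigl(\frac{3+\delta}{6}+\frac{3-\delta}{6}\bigr)=\frac12$ yields both $L^2_\tau$ and the $L^2$-based data space. Beyond the paper's one-line remark, you also make explicit the free part and the final H\"older interpolation between the $(\infty,6)$ and $(2,12)$ endpoints.
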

                By combining Lemmas \ref{lem: Strichartz k_2} and \ref{lem: Strichartz 1-k_2} one then arrives at Strichartz estimates for the interior region.
\begin{lemma}
\label{lem: Strichartz interior}
          The estimates
          \begin{align}
              \|[\Sf\ff]_1\|_{L^q(\R_+)L^r(\B_1)}&\lesssim \|\ff\|_{\mathcal{H}^2}
          \end{align}
          hold for all $\ff\in C^\infty_{c,rad}(\R^6)\times C^\infty_{c,rad}(\R^6)$ and all $q\in [2,\infty],~r\in[6,12]$ satisfy \eqref{eq:AdmissibleExponents}.
                \end{lemma}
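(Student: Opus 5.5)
The estimate follows by splitting the data with the cutoff $\kappa_2$ and using linearity of the semigroup. For $\ff\in C^\infty_{c,rad}(\R^6)\times C^\infty_{c,rad}(\R^6)$, both $\kappa_2\ff$ and $(1-\kappa_2)\ff$ are again smooth, compactly supported and radial, since $\kappa_2$ is a smooth radial cutoff. Hence
\begin{align*}
[\Sf(\tau)\ff]_1=[\Sf(\tau)(\kappa_2\ff)]_1+[\Sf(\tau)((1-\kappa_2)\ff)]_1 ,
\end{align*}
and the triangle inequality in $L^q(\R_+)L^r(\B_1)$ reduces the claim to bounding the two pieces separately.

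For the first piece, Lemma \ref{lem: Strichartz k_2} gives directly
\begin{align*}
\|[\Sf(\kappa_2\ff)]_1\|_{L^q(\R_+)L^r(\B_1)}\lesssim \|\ff\|_{\mathcal{H}^2}
\end{align*}
for every admissible pair. For the second piece, Lemma \ref{lem: Strichartz 1-k_2} gives
\begin{align*}
\|[\Sf((1-\kappa_2)\ff)]_1\|_{L^q(\R_+)L^r(\B_1)}\lesssim \|\ff\|_{\mathcal{H}^2}.
\end{align*}
Both bounds are already stated with the full norm $\|\ff\|_{\mathcal{H}^2}$ on the right, so no estimate of the form $\|\kappa_2\ff\|_{\mathcal{H}^2}\lesssim\|\ff\|_{\mathcal{H}^2}$ is needed. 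That bound would otherwise be a routine multiplier estimate via Hardy's inequality. Adding the two bounds proves the lemma.

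All of the difficulty sits in the two preceding lemmas, so the one point to check is that each is genuinely stated for the full semigroup $\Sf$, not only for the difference $\Sf-\Sf_0$.
\begin{itemize}
\item For $\Sf(\kappa_2\ff)$: the free part is controlled on all of $\R^6$, hence on $\B_1$, by the free Strichartz estimates \eqref{eq:FreeStrichartzEstimates}. The difference $\Sf-\Sf_0$ is controlled by the weighted estimates with weights $e^{\pm\delta\tau}$, interpolated via Proposition \ref{prop:interpolation}.
\item For $\Sf((1-\kappa_2)\ff)$: the free part is again handled by \eqref{eq:FreeStrichartzEstimates}. The difference consists only of the terms $\widetilde G_{19},\widetilde G_{20},\widetilde G_{21}$. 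These are bounded via Lemma \ref{lem: strichartz1-k2 lemma} in $\dot W^{2,\frac{6}{3\mp\delta}}\times\dot W^{1,\frac{6}{3\mp\delta}}$ and then interpolated to $\mathcal{H}^2$, since $\tfrac12\big(\tfrac{3+\delta}{6}+\tfrac{3-\delta}{6}\big)=\tfrac12$.
\end{itemize}
Granting those two lemmas as stated, the present lemma follows immediately from the splitting above.
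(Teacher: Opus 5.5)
Your proof is correct and matches the paper's argument: the paper obtains this lemma by splitting $\ff=\kappa_2\ff+(1-\kappa_2)\ff$ and combining Lemmas \ref{lem: Strichartz k_2} and \ref{lem: Strichartz 1-k_2}, both of which are stated for the full semigroup $\Sf$ with $\|\ff\|_{\mathcal{H}^2}$ on the right-hand side. Your remarks on how those two lemmas are proved also agree with the paper: free Strichartz plus interpolated $e^{\pm\delta\tau}$-weighted bounds for the $\kappa_2$ piece, and only $\widetilde G_{19},\widetilde G_{20},\widetilde G_{21}$ surviving for the $(1-\kappa_2)$ piece.
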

\subsection{Derivative estimates on $\Sf \kappa_2$}
The goal of this part is to arrive at the estimates
\begin{align}\label{eq: first higher norm}
    \|\Sf \kappa_2\ff\|_{L^\infty(\R_+) H^2(\B_1)\times  H^1(\B_1)}\lesssim \|\ff\|_{\mathcal{H}^2}.
\end{align}
and 
\begin{align}\label{eq: first Strichrtz dif }
    \|[\Sf \kappa_2\ff]_1\|_{L^2(\R_+) W^{1,4}(\B_1)\times  H^1(\B_1)}\lesssim \|\ff\|_{\mathcal{H}^2}.
\end{align}
Since \eqref{eq: first Strichrtz dif } is technically quite less involved than \eqref{eq: first higher norm}, and follows from considerations already exhibited in this proof or \cite{DonningerWallauch}, we only exhibit how to establish \eqref{eq: first higher norm}. For this, we recall that $\mathcal{R}_{int}$ is given by
\begin{align*}
      \mathcal{R}_{int}(f)(\rho;\lambda):&=u_2(\rho;\lambda)U_0(\rho;\lambda)f(\rho)-u_2(\rho;\lambda)\int_0^\rho U_0(s;\lambda) f'(s) ds
      \\
      &\quad -u_0(\rho;\lambda)U_2(\rho;\lambda)f(\rho)-u_0(\rho;\lambda)\int_\rho^1 U_2(s;\lambda) f'(s) ds
      \\
      &\quad+u_0(\rho;\lambda)[\mu(f)+U_2(1)f(1)]
\end{align*}
where  $$U_j(\rho;\lambda):=\int_0^\rho \frac{u_j(s;\lambda)}{(1-s^2)W(u_0(\cdot;\lambda),u_2(\cdot;\lambda))}ds= \int_0^\rho \frac{u_j(s;\lambda)s^{5}}{(1-s^2)^{\frac{3}{2}+\lambda}\widehat{c}(\lambda)} ds.$$
 A direct computation shows that 
\begin{align*}
    \partial_\rho \mathcal{R}_{int}(f)(\rho;\lambda)
     &=u_2'(\rho;\lambda)U_0(\rho;\lambda)f(\rho)-u_2'(\rho;\lambda)\int_0^\rho U_0(s;\lambda) f'(s) ds
      \\
      &\quad -u_0'(\rho;\lambda)U_2(\rho;\lambda)f(\rho)-u_0'(\rho;\lambda)\int_\rho^1 U_2(s;\lambda) f'(s) ds
      \\
      &\quad+u_0'(\rho;\lambda)[\mu(f)+U_2(1)f(1)],
\end{align*}
from which we conclude that \begin{align*}
    \partial_\rho^2 \mathcal{R}_{int}(f)(\rho;\lambda)
     &=\partial_\rho[u_2'(\rho;\lambda)U_0(\rho;\lambda)-u_0'(\rho;\lambda)U_2(\rho;\lambda)]f(\rho)-u_2''(\rho;\lambda)\int_0^\rho U_0(s;\lambda) f'(s) ds
      \\
      &\quad -u_0''(\rho;\lambda)\int_\rho^1 U_2(s;\lambda) f'(s) ds+u_0''(\rho;\lambda)[\mu(f)+U_2(1)f(1)].
\end{align*}

We manipulate this expression by decomposing it into the pieces supported near $\rho=0$ and near $\rho=1$:
\begin{align*}
 &\quad    u_2'(\rho;\lambda)U_0(\rho;\lambda)-u_0'(\rho;\lambda)U_2(\rho;\lambda)
 \\
 &=\chi_\lambda(\rho) \partial_\rho\left[ \rho^{-\frac{5}{2}}(1-\rho^2)^{\frac{1}{4}+\frac{\lambda}{2}}[c_{2,0}(\lambda)\psi_1(\rho;\lambda) +c_{2,\widetilde 0}(\lambda)\psi_2(\rho;\lambda)]\right]U_0(\rho;\lambda)
 \\
 &\quad -\chi_\lambda(\rho) \partial_\rho\left[ \rho^{-\frac{5}{2}}(1-\rho^2)^{\frac{1}{4}+\frac{\lambda}{2}}(\lambda)\psi_1(\rho;\lambda) \right]U_2(\rho;\lambda)
 \\
 &\quad +\frac{(1-\chi_\lambda(\rho))}{\sqrt{3+2\lambda}} \partial_\rho\left[ \rho^{-\frac{5}{2}}(1-\rho)^{\frac{3}{2}+\lambda}[1+e_2(\rho;\lambda)][1+r_2(\rho;\lambda)]\right]\int_0^\rho\chi_\lambda(s) \frac{s^5\psi_1(s;\lambda)}{\widehat{c}(\lambda)(1-s)^{\frac{3}{4}+\frac{\lambda}{2}}} ds 
 \\
 &\quad  -\frac{(1-\chi_\lambda(\rho))}{\sqrt{3+2\lambda}}c_{0,1}(\lambda) \partial_\rho\left[ \rho^{-\frac{5}{2}}(1+\rho)^{\frac{3}{2}+\lambda}[1+e_2(\rho;\lambda)][1+r_2(\rho;\lambda)]\right]
 \\
 &\quad \times \int_0^\rho\chi_\lambda(s) \frac{s^5[c_{2,0}(\lambda)\psi_1(s;\lambda) +c_{2,\widetilde 0}(\lambda)\psi_2(s;\lambda)]}{\widehat{c}(\lambda)(1-s)^{\frac{3}{4}+\frac{\lambda}{2}}} ds
 \\
&\quad -\frac{(1-\chi_\lambda(\rho))}{\sqrt{3+2\lambda}} c_{0,2}(\lambda)\partial_\rho\left[ \rho^{-\frac{5}{2}}(1-\rho)^{\frac{3}{2}+\lambda}[1+e_2(\rho;\lambda)][1+r_2(\rho;\lambda)]\right]
\\
&\quad \times \int_0^\rho\chi_\lambda(s) \frac{s^5[c_{2,0}(\lambda)\psi_1(s;\lambda) +c_{2,\widetilde 0}(\lambda)\psi_2(s;\lambda)]}{\widehat{c}(\lambda)(1-s)^{\frac{3}{4}+\frac{\lambda}{2}}} ds
\\
&\quad +\frac{(1-\chi_\lambda(\rho))}{3+2\lambda}c_{0,1}(\lambda) \partial_\rho\left[ \rho^{-\frac{5}{2}}(1-\rho)^{\frac{3}{2}+\lambda}[1+e_2(\rho;\lambda)][1+r_2(\rho;\lambda)]\right]
\\
&\quad \times \int_0^\rho(1-\chi_\lambda(s)) \frac{s^{\frac{5}{2}}[1+e_2(s;\lambda)][1+r_2(s;\lambda)]}{\widehat{c}(\lambda)(1-s)^{\frac{3}{2}+\lambda}} ds
\\
&\quad -\frac{(1-\chi_\lambda(\rho))}{3+2\lambda}c_{0,1}(\lambda) \partial_\rho\left[ \rho^{-\frac{5}{2}}(1+\rho)^{\frac{3}{2}+\lambda}[1+e_1(\rho;\lambda)][1+r_1(\rho;\lambda)]\right]
\\
&\quad \times \int_0^\rho(1-\chi_\lambda(s)) \frac{s^{\frac{5}{2}}[1+e_1(s;\lambda)][1+r_1(s;\lambda)]}{\widehat{c}(\lambda)(1+s)^{\frac{3}{2}+\lambda}} ds.
\end{align*}

We are now concerned with the last two terms, for the simple reason that hitting these terms with another derivative produces the singular factor $(1-\rho)^{-1}$.  Thus, we perform an integration by parts to compute that
\begin{align*}
 \int_0^\rho(1-\chi_\lambda(s)) \frac{s^{\frac{5}{2}}[1+e_j(s;\lambda)][1+r_j(s;\lambda)]}{(1\pm s)^{\frac{3}{2}+\lambda}} ds
&=\mp (1-\chi_\lambda(\rho)) \frac{\rho^{\frac{5}{2}}[1+e_j(\rho;\lambda)][1+r_j(\rho;\lambda)]}{(\frac{1}{2}+\lambda)(1\pm \rho)^{\frac{3}{2}+\lambda}} 
\\
&\quad\pm \int_0^\rho \frac{\partial_s[(1-\chi_\lambda(s))s^{\frac{5}{2}}[1+e_j(s;\lambda)][1+r_j(s;\lambda)]]}{(\frac{1}{2}+\lambda)(1\pm s)^{\frac{1}{2}+\lambda}} ds.
\end{align*}
Thus, one observes that the expression
\begin{align*}
    &\quad (1-\chi_\lambda(\rho))c_{0,1}(\lambda) \partial_\rho\left[ \rho^{-\frac{5}{2}}(1-\rho)^{\frac{3}{2}+\lambda}[1+e_2(\rho;\lambda)][1+r_2(\rho;\lambda)]\right]
\\
&\quad \times \int_0^\rho(1-\chi_\lambda(s)) \frac{s^{\frac{5}{2}}[1+e_2(s;\lambda)][1+r_2(s;\lambda)]}{\widehat{c}(\lambda)(1-s)^{\frac{3}{2}+\lambda}} ds
\\
&\quad -(1-\chi_\lambda(\rho))c_{0,1}(\lambda) \partial_\rho\left[ \rho^{-\frac{5}{2}}(1+\rho)^{\frac{3}{2}+\lambda}[1+e_1(\rho;\lambda)][1+r_1(\rho;\lambda)]\right]
\\
&\quad \times \int_0^\rho(1-\chi_\lambda(s)) \frac{s^{\frac{5}{2}}[1+e_1(s;\lambda)][1+r_1(s;\lambda)]}{\widehat{c}(\lambda)(1+s)^{\frac{3}{2}+\lambda}} ds
\end{align*}
equals 
\begin{align*}
&\frac{(1-\chi_\lambda(\rho))^2}{\widehat{c}(\lambda)(\frac{1}{2}+\lambda)}(1-\rho)\partial_\rho [\rho^{-\frac{5}{2}}[1+e_2(\rho;\lambda)][1+r_2(\rho;\lambda)]]\rho^{\frac{5}{2}}[1+e_1(\rho;\lambda)][1+r_1(\rho;\lambda)]
\\
&\quad -\frac{(1-\chi_\lambda(\rho))^2}{\widehat{c}(\lambda)(\frac{1}{2}+\lambda)}(1+\rho)\partial_\rho [\rho^{-\frac{5}{2}}[1+e_1(\rho;\lambda)][1+r_1(\rho;\lambda)]]\rho^{\frac{5}{2}}[1+e_2(\rho;\lambda)][1+r_2(\rho;\lambda)]
\\
 &\quad +(1-\chi_\lambda(\rho))c_{0,1}(\lambda) \partial_\rho\left[ \rho^{-\frac{5}{2}}(1-\rho)^{\frac{3}{2}+\lambda}[1+e_2(\rho;\lambda)][1+r_2(\rho;\lambda)]\right]
\\
&\quad \times\int_0^\rho \frac{\partial_s[(1-\chi_\lambda(s))s^{\frac{5}{2}}[1+e_j(s;\lambda)][1+r_j(s;\lambda)]]}{\widetilde{c}(\lambda)(\frac{1}{2}+\lambda)(1-s)^{\frac{1}{2}+\lambda}} ds
\\
&\quad +(1-\chi_\lambda(\rho))c_{0,1}(\lambda) \partial_\rho\left[ \rho^{-\frac{5}{2}}(1+\rho)^{\frac{3}{2}+\lambda}[1+e_1(\rho;\lambda)][1+r_1(\rho;\lambda)]\right]
\\
&\quad \times \int_0^\rho\frac{\partial_s[(1-\chi_\lambda(s))s^{\frac{5}{2}}[1+e_j(s;\lambda)][1+r_j(s;\lambda)]]}{\widehat{c}(\lambda)(\frac{1}{2}+\lambda)(1+ s)^{\frac{1}{2}+\lambda}} ds
\end{align*}
which, after one more integration by parts in both integrals, equals,
\begin{align*}
&=\frac{(1-\chi_\lambda(\rho))^2}{\widehat{c}(\lambda)(\frac{1}{2}+\lambda)}(1-\rho)\partial_\rho [\rho^{-\frac{5}{2}}[1+e_2(\rho;\lambda)][1+r_2(\rho;\lambda)]]\rho^{\frac{5}{2}}[1+e_1(\rho;\lambda)][1+r_1(\rho;\lambda)]
\\
&\quad -\frac{(1-\chi_\lambda(\rho))^2}{\widehat{c}(\lambda)(\frac{1}{2}+\lambda)}(1+\rho)\partial_\rho [\rho^{-\frac{5}{2}}[1+e_1(\rho;\lambda)][1+r_1(\rho;\lambda)]]\rho^{\frac{5}{2}}[1+e_2(\rho;\lambda)][1+r_2(\rho;\lambda)]
\\
&\quad-\frac{(1-\chi_\lambda(\rho))}{\widehat{c}(\lambda)(-\frac{1}{2}+\lambda)(\frac{1}{2}+\lambda)}(1-\rho)^{\frac{1}{2}-\lambda}\partial_\rho [\rho^{-\frac{5}{2}}(1-\rho)^{\frac{3}{2}+\lambda}[1+e_2(\rho;\lambda)][1+r_2(\rho;\lambda)]]
\\
&\quad \times \partial_\rho[(1-\chi_\lambda(\rho))\rho^{\frac{5}{2}}[1+e_1(\rho;\lambda)][1+r_1(\rho;\lambda)]]
\\
&\quad -\frac{(1-\chi_\lambda(\rho))}{\widehat{c}(\lambda)(-\frac{1}{2}+\lambda)(\frac{1}{2}+\lambda)}(1+\rho)^{\frac{1}{2}-\lambda}\partial_\rho [\rho^{-\frac{5}{2}}(1+\rho)^{\frac{3}{2}+\lambda}[1+e_1(\rho;\lambda)][1+r_1(\rho;\lambda)]]
\\
&\quad \times \partial_\rho[(1-\chi_\lambda(\rho))\rho^{\frac{5}{2}}[1+e_2(\rho;\lambda)][1+r_2(\rho;\lambda)]]
\end{align*}

\begin{align*}
&\quad +(1-\chi_\lambda(\rho))c_{0,1}(\lambda) \partial_\rho\left[ \rho^{-\frac{5}{2}}(1-\rho)^{\frac{3}{2}+\lambda}[1+e_2(\rho;\lambda)][1+r_2(\rho;\lambda)]\right]
\\
&\quad \times\int_0^\rho \frac{\partial_s^2[(1-\chi_\lambda(s))s^{\frac{5}{2}}[1+e_j(s;\lambda)][1+r_j(s;\lambda)]]}{\widetilde{c}(\lambda)(-\frac{1}{2}+\lambda)(\frac{1}{2}+\lambda)(1- s)^{-\frac{1}{2}+\lambda}} ds
\\
&\quad -(1-\chi_\lambda(\rho))c_{0,1}(\lambda) \partial_\rho\left[ \rho^{-\frac{5}{2}}(1+\rho)^{\frac{3}{2}+\lambda}[1+e_1(\rho;\lambda)][1+r_1(\rho;\lambda)]\right]
\\
&\quad \times \int_0^\rho\frac{\partial_s^2[(1-\chi_\lambda(s))s^{\frac{5}{2}}[1+e_j(s;\lambda)][1+r_j(s;\lambda)]]}{\widehat{c}(\lambda)(-\frac{1}{2}+\lambda)(\frac{1}{2}+\lambda)(1+ s)^{-\frac{1}{2}+\lambda}} ds
\end{align*}
Using this, we arrive at the following:
\begin{lemma}
    We can decompose 
    \begin{align*}
  D''_1f(\rho;\lambda):=      \left[\partial_\rho[u_2'(\rho;\lambda)U_0(\rho;\lambda)-u_0'(\rho;\lambda)U_2(\rho;\lambda)]-\partial_\rho[u_{\mathrm{f}_2}'(\rho;\lambda)U_{\mathrm{f}_0}(\rho;\lambda)-u_{\mathrm{f}_0}'(\rho;\lambda)U_{\mathrm{f}_2}(\rho;\lambda)]\right]f(\rho)
    \end{align*}
    as
    \begin{align*}
         D''_1f(\rho;\lambda)=f(\rho)\sum_{j=1}^{6}G_j''(\rho;\lambda)
    \end{align*}
    where
\begin{align*}
  G_1''(\rho;\lambda)&:=  \chi_\lambda(\rho)(1-\rho^2)^{\frac{\lambda}{2}}\left[(1-\rho^2)^{-\frac{\lambda}{2}}\O(\rho^0\langle\omega\rangle^{-1})+\int_0^\rho(1-s^2)^{-\frac{\lambda}{2}}\O(\rho^{-2}s\langle\omega\rangle^{-1}) \right]\\
  G_2''(\rho;\lambda)&:=(1-\chi_\lambda(\rho))(1-\rho)^{-\frac{1}{2}+\lambda}\left[\int_0^\rho\chi_\lambda(s)\O(\rho^{-\frac{5}{2}}s\langle\omega\rangle^{-\frac{3}{2}})(1-s^2)^{-\frac{\lambda}{2}}ds+\chi_\lambda(\rho)\O(\rho^{-\frac{3}{2}}\langle\omega\rangle^{-\frac{5}{2}})(1-\rho^2)^{-\frac{\lambda}{2}} \right]
  \\
  G_3''(\rho;\lambda)&:=(1-\chi_\lambda(\rho))(1+\rho)^{-\frac{1}{2}+\lambda}\left[\int_0^\rho\chi_\lambda(s)\O(\rho^{-\frac{5}{2}}s\langle\omega\rangle^{-\frac{3}{2}})(1-s^2)^{-\frac{\lambda}{2}}ds+\chi_\lambda(\rho)\O(\rho^{-\frac{3}{2}}\langle\omega\rangle^{-\frac{5}{2}})(1-\rho^2)^{-\frac{\lambda}{2}}\right]
  \\
  G_4''(\rho;\lambda)&:=(1-\chi_\lambda(\rho))^2\O(\rho^{-1}\langle\omega\rangle^{-3})+(1-\chi_\lambda(\rho))\chi_\lambda'(\rho)\O(\rho^{-1}\langle\omega\rangle^{-3})+\chi_\lambda'(\rho)^2\O(\rho^{-1}\langle\omega\rangle^{-3})
  \\
  G_5''(\rho;\lambda)&:=(1-\chi_\lambda(\rho))(1-\rho)^{-\frac{1}{2}+\lambda}\int_0^\rho\frac{(1-\chi_\lambda(s))\O(\rho^{-\frac{5}{2}}s^{\frac{1}{2}}\langle\omega\rangle^{-2})-\chi_\lambda'(s)\O(\rho^{-\frac{5}{2}}s^{\frac{1}{2}}\langle\omega\rangle^{-2})}{(1-s)^{\frac{1}{2}+\lambda}} ds
   \\
  G_6''(\rho;\lambda)&:=(1-\chi_\lambda(\rho))(1+\rho)^{-\frac{1}{2}+\lambda}\int_0^\rho\frac{(1-\chi_\lambda(s))\O(\rho^{-\frac{5}{2}}s^{\frac{1}{2}}\langle\omega\rangle^{-2})-\chi_\lambda'(s)\O(\rho^{-\frac{5}{2}}s^{\frac{1}{2}}\langle\omega\rangle^{-2})}{(1+s)^{\frac{1}{2}+\lambda}}ds.
\end{align*}
\end{lemma}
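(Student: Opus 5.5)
The decomposition will follow the same bookkeeping as Lemma \ref{lem:decompD1}, now applied to the second derivative. I will split each of the two expressions $\partial_\rho[u_2'U_0-u_0'U_2]$ and its free counterpart according to the cutoff $\chi_\lambda$, using the representation derived just above.

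\textbf{Region near the origin.} On $\operatorname{supp}\chi_\lambda$ I use $u_0=\rho^{-5/2}(1-\rho^2)^{\frac14+\frac\lambda2}\psi_1$ and $u_2=\widehat c(\lambda)[\widetilde u_0-\tfrac{c_{\widetilde0,1}}{c_{0,1}}u_0]$, together with the Bessel-type asymptotics of $\psi_1,\psi_2$. Differentiating once more, there are two kinds of terms.
\begin{itemize}
\item When the derivative falls on $U_0$ or $U_2$, it produces the boundary terms $u_2'u_0 s^5/(\dots)-u_0'u_2 s^5/(\dots)$. By the Wronskian identity these combine into $(1-\rho^2)^{-\lambda/2}$ times a bounded function. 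Subtracting the free Wronskian, which equals $W(u_0,u_2)$, removes the leading part. The remainder comes from $e_3-\widehat e_3$ and the $\O(\langle\omega\rangle^{-1})$ difference of the connection coefficients, which gives the $\O(\rho^0\langle\omega\rangle^{-1})$ term in $G_1''$.
\item When the derivative falls on $u_j'$, it costs a factor $\rho^{-1}$. On $\operatorname{supp}\chi_\lambda$ one power of $\rho$ may be traded for $\langle\omega\rangle^{-1}$ and vice versa. This yields the integral part of $G_1''$, with $\O(\rho^{-2}s\langle\omega\rangle^{-1})$, exactly as $G_1$ arose in Lemma \ref{lem:decompD1}.
\end{itemize}

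\textbf{Near the light cone.} On $\operatorname{supp}(1-\chi_\lambda)$ I will use the expansion displayed immediately before the lemma.
\begin{itemize}
\item The first four summands involve $\int_0^\rho\chi_\lambda(s)\dots$. After one more derivative, the $\partial_\rho$ either hits the prefactor $\rho^{-5/2}(1\mp\rho)^{\frac32+\lambda}g_j$, costing a factor $(1\mp\rho)^{-1}$, or hits the upper limit. The first case gives $(1\mp\rho)^{-\frac12+\lambda}$ times the integral. The second case gives the pointwise term $\chi_\lambda(\rho)\O(\rho^{-3/2}\langle\omega\rangle^{-5/2})(1-\rho^2)^{-\lambda/2}$. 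The $\omega$-powers follow from $c_{0,j}=\O(1)$, $1/\sqrt{3+2\lambda}$, and the size of $\psi_1,\psi_2$ on the transition region $\rho\sim\langle\omega\rangle^{-1}$. Together these give $G_2''$ and $G_3''$.
\item The last two summands are the dangerous ones. For these I use the already-computed double integration by parts. The boundary products $(1-\chi_\lambda)^2(1\mp\rho)\partial_\rho[\dots]\rho^{5/2}[\dots]$ are smooth in $\rho$. Their difference with the free versions involves only $r_j$ and $e_j-e_{\mathrm f_j}$, which are $\O(\langle\omega\rangle^{-1})$. Combined with the prefactor $(\frac12+\lambda)^{-1}$ and the $(\lambda\pm\frac12)^{-1}$ from the second integration, a further derivative gives $(1-\chi_\lambda)^2\O(\rho^{-1}\langle\omega\rangle^{-3})$. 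The terms where the derivative falls on the cutoff give the $\chi_\lambda'$ contributions, so all of these combine into $G_4''$.
\item The remaining integral terms contain $\partial_s^2[(1-\chi_\lambda)s^{5/2}g_j]$ against $(1\mp s)^{-\frac12+\lambda}$. Differentiating the outer factor once more, and integrating by parts once in $s$ to restore the weight $(1\mp s)^{-\frac12-\lambda}$, produces $G_5''$ and $G_6''$. The $\chi_\lambda'$ pieces appear from $\partial_s$ of the cutoff.
\end{itemize}

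\textbf{Main obstacle.} The key step is verifying that the singular factor $(1-\rho)^{-1}$ produced by the second derivative of $(1-\rho)^{\frac32+\lambda}$ cancels in the boundary terms. This is why the two integrations by parts are needed. I must also check that the free and perturbed leading terms coincide, so that the claimed decay $\langle\omega\rangle^{-3}$, respectively $\langle\omega\rangle^{-2}$, survives.

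For this I will use that $r_{\mathrm f_j}=0$ and that $c_{0,j}-c_{\mathrm f_{0,j}}=\O(\langle\omega\rangle^{-1})$, as established earlier. Every surviving term then carries at least one factor $r_j$ or one such coefficient difference. All remaining bounds are symbol-type estimates in $\rho$ and $\omega$, and follow routinely from the $\O$-calculus of the cited lemmas.
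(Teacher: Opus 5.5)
Your plan follows the paper's route. You split with $\chi_\lambda$, trade powers of $\rho$ for $\langle\omega\rangle^{-1}$ on $\supp\chi_\lambda$, and treat the two terms in which both $\rho$ and $s$ lie in $\supp(1-\chi_\lambda)$ by integrating by parts in $s$, gaining $\langle\omega\rangle^{-1}$ from $r_j$ when the free counterpart is subtracted.

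The step that fails as written is your last one for $G_5''$, $G_6''$. The first integration by parts already removes the $(1-\rho)^{-1}$ singularity: the product $u_2'(\rho)(1-\rho)^{-\frac12-\lambda}$ coming from its boundary term is smooth up to $\rho=1$, and the $(\frac32+\lambda)g_1g_2$ parts of the two boundary terms cancel. The second integration by parts is there only to produce the factor $(\lambda-\frac12)^{-1}$. The two $\rho$-derivatives of $(1\mp\rho)^{\frac32+\lambda}$ cancel the prefactor $(3+2\lambda)^{-1}(\frac12+\lambda)^{-1}$. After that, $(\lambda-\frac12)^{-1}$ times the $\O(\langle\omega\rangle^{-1})$ from $r_j$ is exactly the $\langle\omega\rangle^{-2}$ claimed in $G_5''$, $G_6''$. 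You propose to integrate by parts once more, to pass from the weight $(1\mp s)^{\frac12-\lambda}$ back to $(1\mp s)^{-\frac12-\lambda}$. This reinstates the factor $\frac12-\lambda$, so you are back at the once-integrated form. There the integral term is only $\O(\langle\omega\rangle^{-1})$ after subtracting the free part, which is weaker than the lemma asserts. No integration by parts is needed at this point. Write $(1\mp s)^{\frac12-\lambda}=(1\mp s)\,(1\mp s)^{-\frac12-\lambda}$, and absorb the factor $1\mp s$, together with $\partial_s^2[(1-\chi_\lambda(s))s^{\frac52}g_j(s;\lambda)]$, into the $\O(\rho^{-\frac52}s^{\frac12}\langle\omega\rangle^{-2})$ symbols.

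A smaller inaccuracy concerns the terms in which $\partial_\rho$ falls on $U_0$, $U_2$. They sum to $\frac{u_0u_2'-u_0'u_2}{(1-\rho^2)W(u_0,u_2)}=\frac{1}{1-\rho^2}$, identically for the perturbed and the free system and on all of $(0,1)$. So they cancel exactly in $D_1''$, and there is no remainder from $e_3-\widehat e_3$ or from $c_{0,j}-c_{\mathrm{f}_{0,j}}$ to account for there. All of $G_1''$ comes from $u_2''U_0-u_0''U_2$ minus its free counterpart, by the same bookkeeping that produced $G_1$.
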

As above, we define corresponding integral operators as
\begin{align*}
    T_{j,\pm}''(\tau)f(\rho)&:=\lim_{N\to \infty}\int_{-N}^N e^{i \omega \tau } G_j''(\rho;\pm \delta +i \omega)f(\rho) d\omega
    \\
  \dot T_{j,\pm}''(\tau)f(\rho)&:=\lim_{N\to \infty}\int_{-N}^N e^{i \omega \tau } \omega G_j''(\rho;\pm \delta +i\omega)f(\rho) d\omega
\end{align*}
for $j=1,\dots, 6$.
\begin{lemma}
    The estimates
\begin{align*}
        \|T_{j,\pm}''(\cdot)f\|_{L^\infty(\R_+) L^{\frac{2}{1\mp 2\delta}}(\B_1)}&\lesssim \|f\|_{\dot W^{1,\frac{2}{1\mp 2\delta}}(\R^6)}
        \\
        \|T_{j,\pm}''(\cdot)f\|_{L^\infty(\R_+) L^{\frac{2}{1\mp 2\delta}}(\B_1)}&\lesssim \|f\|_{\dot W^{2,\frac{2}{1\mp 2\delta}}(\R^6)}
        \\
        \|\dot T_{j,\pm}''(\cdot)f\|_{L^\infty(\R_+) L^{\frac{2}{1\mp 2\delta}}(\B_1)}&\lesssim \|f\|_{\dot W^{2,\frac{2}{1\mp 2\delta}}(\R^6)}
\end{align*}
hold for $j=1,\dots, 6$ and all $f\in C^\infty_{c,rad}(\R^6).$
\end{lemma}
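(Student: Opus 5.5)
We treat each of the six kernels separately, in the same way as the $T_{j,\pm,\kappa_2}$ estimates of Lemma \ref{lem: strichartz D_3 k_2}. The kernels are multiplied by the pointwise factor $f(\rho)$, so no integration in $s$ against $f$ appears. The goal is therefore a pointwise-in-$\rho$ bound of the form
\begin{align*}
\sup_{\tau\geqslant 0}|T''_{j,\pm}(\tau)f(\rho)|\lesssim w_j(\rho)|f(\rho)|,
\end{align*}
with an explicit weight $w_j$, followed by a weighted Hardy inequality in $L^{p_\pm}(\B_1)$, $p_\pm:=\frac{2}{1\mp 2\delta}$. To get the pointwise bound we first use the oscillatory-integral lemmas of \cite{DonningerWallauch} (Lemmas 5.1 and 5.3) with $\lambda=\pm\delta+i\omega$. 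They turn $\int e^{i\omega\tau}(1\pm\rho)^{\lambda}\O(\langle\omega\rangle^{-1-\epsilon})\,d\omega$ into $(1\pm\rho)^{\pm\delta}\langle\tau+\log(1\pm\rho)\rangle^{-2}$ times the symbol bound. The inner $s$-integrals in $G_2'',G_3'',G_5'',G_6''$ are handled by Fubini, moving the $\omega$-integral inside, exactly as for $T_{16}$ and $T_{17}$.

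The kernels fall into two groups. For $G_1''$ and $G_4''$, which live on $\supp\chi_\lambda$ or carry the good $\langle\omega\rangle^{-3}$ factor, we trade powers of $\rho$ for powers of $\langle\omega\rangle$ using $\chi_\lambda(\rho)\O(\rho^0\langle\omega\rangle^{-1})=\chi_\lambda(\rho)\O(\rho^{-1-\epsilon}\langle\omega\rangle^{-1-\epsilon})$. This makes the $\omega$-integral absolutely convergent and gives $|T''_{j}f(\rho)|\lesssim\rho^{-2}|f(\rho)|$ near $0$. Hardy's inequality in $\R^6$ at exponent $p_\pm<3$ then controls $\|\rho^{-2}f\|_{L^{p_\pm}}$ by $\|f\|_{\dot W^{2,p_\pm}}$, and $\|\rho^{-1}f\|_{L^{p_\pm}}$ by $\|f\|_{\dot W^{1,p_\pm}}$. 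For the $\dot W^{1}$ bound we only need $\rho^{-1}$; the kernel gives $\rho^{-2}$ paired with a power $\langle\omega\rangle^{-1}$ that can be spent, and if that fails we fall back on $\rho^{-1}$ together with the compact support of $\chi_\lambda$. For $G_2'',G_3'',G_5'',G_6''$, which are supported near the light cone, the weight is $(1-\rho)^{-\frac12\pm\delta}$ after the Fourier transform in $\omega$; the $(1+\rho)$ pieces are harmless.

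The main obstacle is this singular weight $(1-\rho)^{-\frac12\pm\delta}$ at $\rho=1$. We must show
\begin{align*}
\|(1-\rho)^{-\frac12\pm\delta}f\|_{L^{p_\pm}(\B_1\setminus\B_{1/2})}\lesssim\|f\|_{\dot W^{1,p_\pm}(\R^6)},
\end{align*}
and this is exactly why the exponent $p_\pm=\frac{2}{1\mp2\delta}$ was chosen. Since $(1-\rho)^{(-\frac12\pm\delta)p_\pm}=(1-\rho)^{-1}$ is only borderline non-integrable, we cannot use a Hardy inequality at the boundary of $\B_1$, because $f$ does not vanish there. Instead we bound the weighted norm by $\sup_{\rho\in(1/2,1)}|f(\rho)|$ times $\|(1-\rho)^{-\frac12\pm\delta}\langle\tau+\log(1-\rho)\rangle^{-2}\|_{L^{p_\pm}}$, keeping the $\tau$-decay factor from Lemma 5.3. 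This is uniformly bounded in $\tau$: substituting $y=-\log(1-\rho)$ turns it into $\int\langle\tau-y\rangle^{-2p_\pm}\,dy<\infty$. Finally $\sup_{(1/2,1)}|f|$ is controlled by $\|f\|_{\dot W^{1,p_\pm}(\R^6)}$, because $p_\pm>1$ equals the one-dimensional radial Sobolev threshold away from the origin, and by $\|f\|_{\dot W^{2,p_\pm}}$ after a further Sobolev embedding on the annulus.

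The $\dot T''$ estimates follow the pattern of $\dot T_{16}$. The extra factor $\omega$ is absorbed by integrating by parts once more in the inner $s$-integrals, or by using the surplus power of $\langle\omega\rangle$ in $G_4''$. The boundary terms produced this way involve $f$, $f'$ and $f'(1)$, which are bounded by $\|f\|_{\dot W^{2,p_\pm}}$ by the same trace argument as above.
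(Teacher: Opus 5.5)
For the undotted operators your route is the paper's: pointwise bounds from Lemmas 5.1 and 5.3 of \cite{DonningerWallauch}, Hardy near the origin, and near the light cone keeping the factor $\langle\tau+\log(1-\rho)\rangle^{-2}$. With $x=-\log(1-\rho)$ one has $(1-\rho)^{-1}d\rho=dx$, so everything reduces to a supremum of $f$ controlled by a one-dimensional Sobolev bound. This is exactly the paper's computation for $j=2$. For $j=5$ one additionally needs $\int_0^\infty e^{-(\frac12\mp\delta)z}\langle a+z\rangle^{-2}dz\lesssim\langle a\rangle^{-2}$ to collapse the inner $s$-integral to a single factor $\langle\tau+\log(1-\rho)\rangle^{-2}$. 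Also, the inequality you display as what you ``must show'' is false as written: it diverges logarithmically at $\rho=1$ unless $f(1)=0$. Only the version with $\langle\tau+\log(1-\rho)\rangle^{-2}$ inside the norm holds, and that is what you actually use.

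The step that fails as described is the one for $\dot T''_{j,\pm}$. In $D_1''$ the function $f$ enters only as the multiplier $f(\rho)$, so no boundary terms with $f'(1)$ arise. Moreover, for $G_2''$ and $G_3''$, integrating by parts in the inner integral gains nothing. That integral lives on $\supp\chi_\lambda$, where $s\lesssim\langle\omega\rangle^{-1}$, so $\partial_s(1-s^2)^{-\lambda/2}=\O(1)$ there. The summand $\chi_\lambda(\rho)\O(\rho^{-\frac32}\langle\omega\rangle^{-\frac52})(1-\rho^2)^{-\frac\lambda2}$ has no $s$-integral at all.

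What works, and what the paper does (compare its weights $\rho^{-\frac14}$ for $T_1''$ and $\rho^{-\frac54}$ for $\dot T_1''$), is to absorb $\omega$ through the localisation $s\lesssim\langle\omega\rangle^{-1}\lesssim\rho$. The extra $\omega$ then costs exactly one power of $\rho$ at the origin, e.g.\ $|\dot T_{2,\pm}''(\tau)f(\rho)|\lesssim\rho^{-1-\epsilon}(1-\rho)^{-\frac12\pm\delta}|f(\rho)|\langle\tau+\log(1-\rho)\rangle^{-2}$. That power is paid by the second derivative through Hardy, $\|\rho^{-1-\epsilon}f\|_{L^{p_\pm}(\B_1)}\lesssim\||\cdot|^{-2}f\|_{L^{p_\pm}}\lesssim\|f\|_{\dot W^{2,p_\pm}}$ with $p_\pm=\frac{2}{1\mp2\delta}<3$, while the light-cone analysis is unchanged.

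The same bookkeeping fixes your $G_1''$ paragraph. For $T_1''$ trade only $\rho^{-\epsilon}$, since a weight $\rho^{-1-\epsilon}$ is not controlled by $\|f\|_{\dot W^{1,p_\pm}}$ (it fails under concentration scaling). Trade $\rho^{-1-\epsilon}$ only for $\dot T_1''$.
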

\begin{proof}
    An application of Lemma 5.1 in \cite{DonningerWallauch} and some small computations yield the estimates 
    \begin{align*}
        |T_{1,\pm}''(\cdot)f(\rho)|&\lesssim |f(\rho)|\rho^{-\frac{1}{4}}
        \\
        |\dot T_{1,\pm}''(\cdot)f(\rho)|&\lesssim |f(\rho)|\rho^{-\frac{5}{4}}
    \end{align*}
    which implies that
 \begin{align*}
      \|T_{1,\pm}''(\cdot)f\|_{L^\infty (\R_+)L^{\frac{2}{1\mp 2\delta}}(\B_1)}&\lesssim \||.|^{-1} f\|_{\dot L^{\frac{2}{1\mp 2\delta}}(\B_1)}
      \\
       \|\dot T_{1,\pm}''(\cdot)f\|_{L^\infty(\R_+)L^{\frac{2}{1\mp 2\delta}}(\B_1)}&\lesssim \||.|^{-2} f\|_{\dot L^{\frac{2}{1\mp 2\delta}}(\B_1)}
 \end{align*}
 and the desired estimates for $j=1$ follow. For $j=2$ one rewrites $$(1-\chi_\lambda(\rho))\O(\rho^{-\frac{5}{2}}s\langle\omega\rangle^{-\frac{3}{2}})=(1-\chi_\lambda(\rho))\O(\rho^{-2-\delta}s\langle\omega\rangle^{-1-\delta})$$
 to obtain
 \begin{align*}
      |T_{2,\pm}''(\tau)f(\rho)|&\lesssim (1-\rho)^{-\frac{1}{2}\pm \delta}\rho^{-\delta}|f(\rho)|\langle\tau+\log(1-\rho)\rangle^{-2},
 \end{align*}
 hence, changing variables according to $\rho=1-e^{-x}$ and applying Young's inequality shows that
 \begin{align*}
     \|T_{2,\pm}''(\tau)f\|_{L^\infty L^{\frac{2}{1\mp 2\delta}}(\B_1)}&\lesssim \left\|\int_0^1\rho^{5-\frac{2\delta}{1\mp2 \delta}} (1-\rho)^{-1}|f(\rho)|^{\frac{2}{1\mp 2\delta}}\langle\tau+\log(1-\rho)\rangle^{-2}d\rho\right\|_{L^\infty_\tau(\R_+)}^{\frac{1\mp 2\delta}{2}}
     \\
     &= \left\|\int_0^\infty (1-e^{-x})^{5-\frac{2\delta}{1\mp2 \delta}} |f(1-e^{-x})|^{\frac{2}{1\mp 2\delta}}\langle\tau-x\rangle^{-2}dx\right\|_{L^\infty_\tau(\R_+)}^{\frac{1\mp 2\delta}{2}}
     \\
 & \lesssim   \| (1-e^{-x})^{5-\frac{2\delta}{1\mp2 \delta}} |f(1-e^{-x})|^{\frac{2}{1\mp 2\delta}}\|_{L^\infty_y(0,\infty)}^{\frac{1\mp 2\delta}{2}}
 \\
 &=\| \rho^{5-\frac{2\delta}{1\mp2 \delta}}  |f(\rho)|^{\frac{2}{1\mp 2\delta}}\|_{L^\infty_\rho(0,1)}^{\frac{1\mp 2\delta}{2}}
 \\
 &=\| \rho^{5\frac{1\mp 2\delta}{2}-\delta} f(\rho)\|_{L^\infty_\rho(0,1)}
 \\
 &\lesssim \|\rho^{5\frac{1\mp 2\delta}{2}-\delta} f(\rho)\|_{W^{1,1}_\rho(0,1)}
 \\
 &\leqslant  \|\rho^{5\frac{1\mp 2\delta}{2}-1-\delta} f(\rho)\|_{L^1_\rho(0,1)}+\|\rho^{5\frac{1\mp 2\delta}{2}-1-\delta} f'(\rho)\|_{L^1_\rho(0,1)}
 \\
 &\lesssim \left(\|\rho^{5\frac{1\mp 2\delta}{2}+\frac{1}{10}-1} f(\rho)\|_{L^{\frac{2}{1\mp 2\delta}}_\rho(0,1)}+\|\rho^{5\frac{1\mp 2\delta}{2}+\frac{1}{10}} f'(\rho)\|_{L^{\frac{2}{1\mp 2\delta}}_\rho(0,1)}\right)\|\rho^{-\frac{1}{4}}\|_{L^{\frac{2}{1\pm 2\delta}}_\rho(0,1)}
 \\
 &\lesssim \||.|^{-1} f\|_{L^{\frac{2}{1\mp 2\delta}}(\B_1)}+ \|f\|_{W^{1,\frac{2}{1\mp 2\delta}}(\B_1)} \lesssim  \|f\|_{W^{1,\frac{2}{1\mp 2\delta}}(\B_1)}
 \end{align*}
 and the bounds on $T_{2,\pm}''$ follow.

 The same arguments can be applied to conclude that \begin{align*}
       \|\dot T_{2,\pm}''(\cdot)f\|_{L^\infty(\R_+) L^{\frac{2}{1\mp 2\delta}}(\B_1)}&\lesssim\|f\|_{W^{2,\frac{2}{1\mp 2\delta}}(\B_1)}\lesssim \|f\|_{\dot W^{2,\frac{2}{1\mp 2\delta}}(\R^6)}.
 \end{align*}
 Similarly, one obtains the claimed estimates for $j=3,4$ and we move to the operator
 $T_{5,\pm}''$. Thanks to Lemma 5.3 in \cite{DonningerWallauch} one has 
 \begin{align*}
     |T_{5,\pm,}''(\tau)f(\rho)|&\lesssim  |f(\rho)|\rho^{-1}(1-\rho)^{-\frac{1}{2}\pm \delta}\int_0^\rho  \frac{1}{(1-s)^{\frac{1}{2}\pm\delta }}\langle\tau+\log(1-\rho)-\log(1-s)\rangle^{-2} ds
     \\
     &= 1_{(0,\frac{1}{2})}(\rho) |f(\rho)|\rho^{-1}(1-\rho)^{-\frac{1}{2}\pm \delta}\int_0^\rho  \frac{1}{(1-s)^{\frac{1}{2}\pm\delta }}\langle\tau+\log(1-\rho)-\log(1-s)\rangle^{-2} ds
     \\
     &\quad+ 1_{(\frac{1}{2},1)}(\rho)|f(\rho)|\rho^{-1}(1-\rho)^{-\frac{1}{2}\pm \delta}\int_0^\rho  \frac{1}{(1-s)^{\frac{1}{2}\pm\delta }}\langle\tau+\log(1-\rho)-\log(1-s)\rangle^{-2} ds
     \\
     &\lesssim  1_{(0,\frac{1}{2})}(\rho) |f(\rho)|\rho^{-1}\int_0^\rho 1 ds
     \\
     &\quad+ 1_{(\frac{1}{2},1)}(\rho)|f(\rho)|(1-\rho)^{-\frac{1}{2}\pm \delta}\int_0^\rho  \frac{1}{(1-s)^{\frac{1}{2}\pm\delta }}\langle\tau+\log(1-\rho)-\log(1-s)\rangle^{-2} ds.
 \end{align*}
 Now clearly
 \begin{align*}
     \left\|1_{(0,\frac{1}{2})}(\rho) |f(\rho)|\rho^{-1}\int_0^\rho 1 ds\right\|_{L^{\frac{2}{1\mp 2\delta}}_\rho(\B_1)}&\lesssim \|f\|_{W^{1,\frac{2}{1\mp 2\delta}}(\B_1)}.
 \end{align*}
 Hence, we only need to focus on 
 \begin{align*}
     I:=1_{(\frac{1}{2},1)}(\rho)|f(\rho)|(1-\rho)^{-\frac{1}{2}\pm \delta}\int_0^\rho  \frac{1}{(1-s)^{\frac{1}{2}\pm\delta }}\langle\tau+\log(1-\rho)-\log(1-s)\rangle^{-2} ds.
 \end{align*}
 By changing variables according to $\rho=1-e^{-x}$ and $s=1-e^{y}$ we obtain that

    \begin{align*}
        \|I\|_{L^{\frac{2}{1\mp 2\delta}}(\B_1)}^{\frac{2}{1\mp 2\delta}}&\lesssim \int_0^\infty |f(1-e^{x})|^{\frac{2}{1\mp 2\delta}}(1-e^{x})^5\left(\int_{-\infty}^0 e^{y(\frac{1}{2}\mp \delta)} \langle\tau-x-y\rangle^{-2}
        dy \right)^{\frac{2}{1\mp 2\delta}}dx
        \\
        &\lesssim \int_0^\infty |f(1-e^{x})|^{\frac{2}{1\mp 2\delta}}(1-e^{x})^5\langle\tau-x\rangle^{-2}
         dx.
    \end{align*}
    Thus, by arguing as in the case $j=2$ one obtains 
    \begin{align*}
         \|I\|_{L^\infty (\R_+)L^{\frac{2}{1\mp 2\delta}}(\B_1)}^{\frac{2}{1\mp 2\delta}}&\lesssim \|f\|_{W^{1,\frac{2}{1\mp 2\delta}}(\B_1)}. 
    \end{align*}
    As one derives the remaining bounds in the same fashion, we conclude this proof.
 \end{proof}
 We move on to 
  \begin{align*}
  D''_2(f)(\rho;\lambda)&:= - u_2''(\rho;\lambda)\int_0^\rho U_0(s;\lambda) f'(s) ds -u_0''(\rho;\lambda)\int_\rho^1 U_2(s;\lambda) f'(s) ds
  \\
&\quad + u_{\mathrm{f}_2}''(\rho;\lambda)\int_0^\rho U_{\mathrm{f}_0}(s;\lambda) f'(s) ds +u_{\mathrm{f}_0}''(\rho;\lambda)\int_\rho^1 U_{\mathrm{f}_2}(s;\lambda) f'(s) ds.
 \end{align*}

    \begin{lemma}
        We can decompose $D_2''(f)$
        as
        \begin{align*}
        D_2''(f)(\rho;\lambda):=\sum_{j=7}^{16} G_j''(f)(\rho;\lambda) 
        \end{align*}

          where
    \begin{align*}
        G_7''(f)(\rho;\lambda)&:=\chi_\lambda(\rho) (1-\rho^2)^{\frac{\lambda}{2}}\int_0^\rho \int_0^s (1-t^2)^{-\frac{\lambda}{2}}\O(\rho^{-2}t^5\langle\omega\rangle^3) dt f'(s) ds
        \\
         G_8''(f)(\rho;\lambda)&:=(1-\chi_\lambda(\rho)) \rho^{-\frac{5}{2}}(1-\rho)^{-\frac{1}{2}+\lambda}\int_0^\rho \int_0^s \chi_\lambda(t) (1-t^2)^{-\frac{\lambda}{2}}\O(\rho^0t^{\frac{5}{2}}\langle\omega\rangle^{0}) dt f'(s) ds
         \\
         G_9''(f)(\rho;\lambda)&:=(1-\chi_\lambda(\rho)) \rho^{-\frac{5}{2}}(1-\rho)^{-\frac{1}{2}+\lambda}\int_0^\rho \int_0^s (1-\chi_\lambda(t))\frac{\O(\rho^0t^{\frac{5}{2}}\langle\omega\rangle^{0})}{(1-t)^{\frac{3}{2}+\lambda }}dt f'(s) ds
         \\
          G_{10}''(f)(\rho;\lambda)&:=(1-\chi_\lambda(\rho)) \rho^{-\frac{5}{2}}(1-\rho)^{-\frac{1}{2}+\lambda}\int_0^\rho \int_0^s (1-\chi_\lambda(t))\frac{\O(\rho^0t^{\frac{5}{2}}\langle\omega\rangle^{0})}{(1+t)^{\frac{3}{2}+\lambda}}dt f'(s) ds
          \\
          G_{11}''(f)(\rho;\lambda)&:=\chi_\lambda(\rho) (1-\rho^2)^{\frac{\lambda}{2}}\int_\rho^1 \int_0^s \chi_\lambda(t)  (1-t^2)^{-\frac{\lambda}{2}}\O(\rho^{-2}t\langle\omega\rangle^{-1}) dt f'(s) ds
            \\
          G_{12}''(f)(\rho;\lambda)&:=\chi_\lambda(\rho) (1-\rho^2)^{\frac{\lambda}{2}}\int_\rho^1 \int_0^s (1-\chi_\lambda(t))\frac{\O(\rho^{-2}t\langle\omega\rangle^{-1}) }{ (1+t)^{\frac{3}{2}+\lambda}}dt f'(s) ds
              \\
          G_{13}''(f)(\rho;\lambda)&:=(1-\chi_\lambda(\rho)) \rho^{-\frac{5}{2}}(1-\rho)^{-\frac{1}{2}+\lambda}\int_\rho^1 \int_0^s \chi_\lambda(t)(1-t^2)^{-\frac{\lambda}2}\O(\rho^0 t \langle\omega\rangle^{-\frac{3}{2}})dt f'(s) ds
             \\
          G_{14}''(f)(\rho;\lambda)&:=(1-\chi_\lambda(\rho)) \rho^{-\frac{5}{2}}(1+\rho)^{-\frac{1}{2}+\lambda}\int_\rho^1 \int_0^s \chi_\lambda(t)(1-t^2)^{-\frac{\lambda}2}\O(\rho^0 t \langle\omega\rangle^{-\frac{3}{2}})dt f'(s) ds
                        \\
          G_{15}''(f)(\rho;\lambda)&:=(1-\chi_\lambda(\rho)) \rho^{-\frac{5}{2}}(1-\rho)^{-\frac{1}{2}+\lambda}\int_\rho^1 \int_0^s (1-\chi_\lambda(t))\frac{\O(\rho^0t^{\frac{5}{2}}\langle\omega\rangle^{0}) }{ (1+t)^{\frac{3}{2}+\lambda}}dt f'(s) ds
             \\
          G_{16}''(f)(\rho;\lambda)&:=(1-\chi_\lambda(\rho)) \rho^{-\frac{5}{2}}(1+\rho)^{-\frac{1}{2}+\lambda}\int_\rho^1 \int_0^s (1-\chi_\lambda(t))\frac{\O(\rho^0t^{\frac{5}{2}}\langle\omega\rangle^{0}) }{ (1+t)^{\frac{3}{2}+\lambda}}dt f'(s) ds.
    \end{align*}
\end{lemma}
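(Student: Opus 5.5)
This decomposition is obtained from the explicit local representations of the fundamental system, exactly as in Lemma \ref{lem:decompD1} and its first-derivative analogue, now applied to the second derivatives $u_0''$, $u_2''$ and their free counterparts.

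\emph{Step 1: region $\chi_\lambda$.} On $\supp\chi_\lambda$ we use Lemma \ref{lem:solutions interior regime}. We write $u_0=(1-\rho^2)^{\frac14+\frac{\lambda}{2}}\O(\rho^0\langle\omega\rangle^2)$ and $u_2=(1-\rho^2)^{\frac{\lambda}{2}}\O(\rho^{-4}\langle\omega\rangle^{-2})$. The symbol calculus behind these $\O$-terms (the estimates \eqref{esti:e12} on $\widehat e_3$, together with the Bessel asymptotics) gives the following rule: each $\rho$-derivative costs one power of $\rho$, because on $\supp\chi_\lambda$ we have $\rho\langle\omega\rangle\lesssim 1$. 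Hence $u_j''$ is $u_j$ times $\O(\rho^{-2})$, up to harmless factors of $(1-\rho^2)$.

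Next we subtract the free terms, using $u_j-u_{\mathrm{f}_j}$ from the proof of Lemma \ref{lem:decompD1}. The same $\O(\rho^{-2})$ rule applies to the difference, so we insert it into $U_0$, $U_2$ exactly as there. This produces $G_7''$ (from the $\int_0^\rho$ term) and $G_{11}''$, $G_{12}''$ (from the $\int_\rho^1$ term). The latter two are split according to whether the inner variable $t$ lies in $\supp\chi_\lambda$ or in $\supp(1-\chi_\lambda)$. In the second case we use the representation of $u_2$ near $1$; the $t$-integrand then carries $(1+t)^{-\frac32-\lambda}$, because $u_2/W$ contains $(1-t)^{\frac32+\lambda}/(1-t^2)^{\frac32+\lambda}$.

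\emph{Step 2: region $1-\chi_\lambda$.} Here we write $u_0=c_{0,1}u_1+c_{0,2}u_2$, with $u_1,u_2$ given by the $(1\pm\rho)^{\frac32+\lambda}$ formulas. Two $\rho$-derivatives of $(1\mp\rho)^{\frac32+\lambda}$ produce $(1\mp\rho)^{-\frac12+\lambda}$ times $\O(\langle\omega\rangle^2)$. Derivatives falling on $e_j,r_j$ cost at most $\langle\omega\rangle$ together with a power of $(1-\rho)^{-1}$, which this factor absorbs. We then substitute into $U_0$ and $U_2$, splitting the $t$-integrals by $\chi_\lambda(t)$ versus $1-\chi_\lambda(t)$:
\begin{itemize}
\item the term with $u_2''\int_0^\rho U_0 f'$ gives $G_8''$, $G_9''$, $G_{10}''$;
\item the term with $u_0''\int_\rho^1U_2f'$ gives $G_{13}''$ through $G_{16}''$.
\end{itemize}
The $\langle\omega\rangle$-powers are obtained from those in $G_7,\dots,G_{15}$ of the undifferentiated decomposition by multiplying by $\langle\omega\rangle^2$. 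For example, $\langle\omega\rangle^{-2}$ becomes $\langle\omega\rangle^0$, and $\langle\omega\rangle^{-7/2}$ becomes $\langle\omega\rangle^{-3/2}$. The potential-free parts cancel against the $u_{\mathrm{f}}$ terms at leading order, since $c_{i,j}-c_{\mathrm{f}_{i,j}}=\O(\langle\omega\rangle^{-1})$ and $r_j=\O(\langle\omega\rangle^{-1})$.

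\emph{Main obstacle.} The delicate point is the term $u_2''\int_0^\rho(1-\chi_\lambda(t))\,t^{\frac52}(1-t)^{-\frac32-\lambda}\dots$. Taken at face value it is too singular at $\rho=1$: the $\int_0^s$ integral grows like $(1-s)^{-\frac12-\lambda}$. I would keep it in the unintegrated form stated in $G_9''$, since the statement only asks for a representation with the given symbol bounds and does not require integrability. The $\O$-bookkeeping must then be verified carefully, in particular that the derivatives landing on $\chi_\lambda$ (of size $\langle\omega\rangle$, living where $\rho\sim\langle\omega\rangle^{-1}$) are absorbed into $G_8''$ and $G_{13}''$, $G_{14}''$ through the rule $\rho^{-1}\sim\langle\omega\rangle$. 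Finally, grouping all terms yields the claimed sum.
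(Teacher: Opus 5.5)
Your proposal follows the paper's route: the paper gives no separate proof and obtains this decomposition by repeating the bookkeeping of Lemma \ref{lem:decompD1} (and of the undifferentiated $D_2$ decomposition) with $u_j''$, $u_{\mathrm{f}_j}''$ in place of $u_j$, $u_{\mathrm{f}_j}$, gaining a factor $\rho^{-2}$ on $\supp\chi_\lambda$ and $\langle\omega\rangle^{2}(1\mp\rho)^{-2}$ on $\supp(1-\chi_\lambda)$, as you describe. Two of your worries are unnecessary: no $\chi_\lambda'(\rho)$ terms arise, because you differentiate $u_j$ itself using the local representation valid at $\rho$ and only then multiply by $\chi_\lambda+(1-\chi_\lambda)$; and the inner integral in $G_9''$, although of size $(1-s)^{-\frac12-\lambda}$, is integrable in $s$, so the only singularity at $\rho=1$ sits in the prefactor $(1-\rho)^{-\frac12+\lambda}$.
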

We will need a second decomposition of $D_2$. In particular, we need to manipulate the parts of 
$$
 u_2''(\rho;\lambda)\int_0^\rho U_0(s;\lambda) f'(s) ds -u_0''(\rho;\lambda)\int_\rho^1 U_2(s;\lambda) f'(s) ds$$ that are supported near $\rho=1$ and where both derivative hit an oscillatory factor that causes loss of decay in $\omega.$ In other words, we study
\begin{align*}
&\quad    (1-\chi_\lambda(\rho))(\frac{1}{2}+\lambda)\rho^{-\frac{5}{2}}(1-\rho)^{-\frac{1}{2}+\lambda}g_2(\rho;\lambda)
   c_{0,1}(\lambda) \int_0^\rho \int_0^s (1-\chi_\lambda(t))\frac{t^{\frac{5}{2}}g_1(t;\lambda)}{(1-t)^{\frac{3}{2}+\lambda }}dt f'(s) ds
   \\
   &\quad +(1-\chi_\lambda(\rho))(\frac{1}{2}+\lambda)\rho^{-\frac{5}{2}}(1-\rho)^{-\frac{1}{2}+\lambda}g_2(\rho;\lambda)
   c_{0,2}(\lambda) \int_0^\rho \int_0^s (1-\chi_\lambda(t))\frac{t^{\frac{5}{2}}g_2(t;\lambda)}{(1+t)^{\frac{3}{2}+\lambda }}dt f'(s) ds
   \\
   &\quad    +(1-\chi_\lambda(\rho))(\frac{1}{2}+\lambda)\rho^{-\frac{5}{2}}(1+\rho)^{-\frac{1}{2}+\lambda}g_1(\rho;\lambda)
   c_{0,1}(\lambda) \int_\rho^1 \int_0^s (1-\chi_\lambda(t))\frac{t^{\frac{5}{2}}g_2(t;\lambda)}{(1+t)^{\frac{3}{2}+\lambda }}dt f'(s) ds
   \\
   &\quad +(1-\chi_\lambda(\rho))(\frac{1}{2}+\lambda)\rho^{-\frac{5}{2}}(1-\rho)^{-\frac{1}{2}+\lambda}g_2(\rho;\lambda)
   c_{0,2}(\lambda) \int_\rho^1 \int_0^s (1-\chi_\lambda(t))\frac{t^{\frac{5}{2}}g_2(t;\lambda)}{(1+t)^{\frac{3}{2}+\lambda }}dt f'(s) ds.
\end{align*}
After integrating by parts twice in each term, one obtains
\begin{align*}
   &\quad   (1-\chi_\lambda(\rho))(\frac{1}{2}+\lambda)\rho^{-\frac{5}{2}}(1-\rho)^{-\frac{1}{2}+\lambda}g_2(\rho;\lambda)
   c_{0,1}(\lambda) \int_0^\rho \int_0^s (1-\chi_\lambda(t))\frac{t^{\frac{5}{2}}g_1(t;\lambda)}{(1-t)^{\frac{3}{2}+\lambda }}dt f'(s) ds
   \\
   &\quad    +(1-\chi_\lambda(\rho))(\frac{1}{2}+\lambda)\rho^{-\frac{5}{2}}(1+\rho)^{-\frac{1}{2}+\lambda}g_1(\rho;\lambda)
   c_{0,1}(\lambda) \int_\rho^1 \int_0^s (1-\chi_\lambda(t))\frac{t^{\frac{5}{2}}g_2(t;\lambda)}{(1+t)^{\frac{3}{2}+\lambda }}dt f'(s) ds
   \\
   &=\frac{(1-\chi_\lambda(\rho))}{-\frac{1}{2}+\lambda}\rho^{-\frac{5}{2}}(1-\rho)^{-\frac{1}{2}+\lambda}g_2(\rho;\lambda)
   c_{0,1}(\lambda) \int_0^\rho\frac{\partial_s[ (1-\chi_\lambda(s))s^{\frac{5}{2}}g_1(s;\lambda)f'(s) ]}{(1-s)^{-\frac{1}{2}+\lambda }} ds
   \\
   &\quad +
   \frac{(1-\chi_\lambda(\rho))}{-\frac{1}{2}+\lambda}\rho^{-\frac{5}{2}}(1+\rho)^{-\frac{1}{2}+\lambda}g_1(\rho;\lambda)
   c_{0,1}(\lambda) \int_\rho^1\frac{\partial_s[ (1-\chi_\lambda(s))s^{\frac{5}{2}}g_2(s;\lambda)f'(s) ]}{(1+s)^{-\frac{1}{2}+\lambda }} ds
   \\
   &\quad+
   \frac{(1-\chi_\lambda(\rho))}{-\frac{1}{2}+\lambda}\rho^{-\frac{5}{2}}(1-\rho)^{-\frac{1}{2}+\lambda}g_2(\rho;\lambda)
   c_{0,1}(\lambda) \int_0^\rho\frac{\partial_s[ (1-\chi_\lambda(s))s^{\frac{5}{2}}g_1(s;\lambda) ]}{(1-s)^{-\frac{1}{2}+\lambda }} f'(s) ds
   \\
   &\quad +
   \frac{(1-\chi_\lambda(\rho))}{-\frac{1}{2}+\lambda}\rho^{-\frac{5}{2}}(1+\rho)^{-\frac{1}{2}+\lambda}g_1(\rho;\lambda)
   c_{0,1}(\lambda) \int_\rho^1\frac{\partial_s[ (1-\chi_\lambda(s))s^{\frac{5}{2}}g_2(s;\lambda) ]}{(1+s)^{-\frac{1}{2}+\lambda }}f'(s) ds
   \\
  &\quad+ \frac{(1-\chi_\lambda(\rho))}{-\frac{1}{2}+\lambda}\rho^{-\frac{5}{2}}(1-\rho)^{-\frac{1}{2}+\lambda}g_2(\rho;\lambda)
   c_{0,1}(\lambda) \int_0^\rho \int_0^s \frac{\partial_t^2[(1-\chi_\lambda(t))t^{\frac{5}{2}}g_1(t;\lambda)]}{(1-t)^{\frac{3}{2}+\lambda }}dt f'(s) ds
   \\
   &\quad    +\frac{(1-\chi_\lambda(\rho))}{-\frac{1}{2}+\lambda}\rho^{-\frac{5}{2}}(1+\rho)^{-\frac{1}{2}+\lambda}g_1(\rho;\lambda)
   c_{0,1}(\lambda) \int_\rho^1 \int_0^s \frac{\partial_t^2[(1-\chi_\lambda(t))t^{\frac{5}{2}}g_2(t;\lambda)]}{(1+t)^{-\frac{1}{2}+\lambda }}dt f'(s) ds
\end{align*}
as well as
\begin{align*}
     &\quad +(1-\chi_\lambda(\rho))(\frac{1}{2}+\lambda)\rho^{-\frac{5}{2}}(1-\rho)^{-\frac{1}{2}+\lambda}g_2(\rho;\lambda)
   c_{0,2}(\lambda) \int_0^\rho \int_0^s (1-\chi_\lambda(t))\frac{t^{\frac{5}{2}}g_2(t;\lambda)}{(1+t)^{\frac{3}{2}+\lambda }}dt f'(s) ds
   \\
   &\quad +(1-\chi_\lambda(\rho))(\frac{1}{2}+\lambda)\rho^{-\frac{5}{2}}(1-\rho)^{-\frac{1}{2}+\lambda}g_2(\rho;\lambda)
   c_{0,2}(\lambda) \int_\rho^1 \int_0^s (1-\chi_\lambda(t))\frac{t^{\frac{5}{2}}g_2(t;\lambda)}{(1+t)^{\frac{3}{2}+\lambda }}dt f'(s) ds
   \\
   &=\frac{(1-\chi_\lambda(\rho))}{-\frac{1}{2}+\lambda}\rho^{-\frac{5}{2}}(1-\rho)^{-\frac{1}{2}+\lambda}g_2(\rho;\lambda)
   c_{0,2}(\lambda) \int_0^1 \int_0^s \frac{\partial_t^2[(1-\chi_\lambda(t))t^{\frac{5}{2}}g_2(t;\lambda)]}{(1+t)^{-\frac{1}{2}+\lambda }}dt f'(s) ds
   \\
 &\quad +  \frac{(1-\chi_\lambda(\rho))}{-\frac{1}{2}+\lambda}\rho^{-\frac{5}{2}}(1-\rho)^{-\frac{1}{2}+\lambda}g_2(\rho;\lambda)
   c_{0,2}(\lambda) \int_0^1  \frac{\partial_s[(1-\chi_\lambda(s))s^{\frac{5}{2}}g_2(s;\lambda)]}{(1+s)^{-\frac{1}{2}+\lambda }} f'(s) ds
   \\
  &\quad  +  \frac{(1-\chi_\lambda(\rho))}{-\frac{1}{2}+\lambda}\rho^{-\frac{5}{2}}(1-\rho)^{-\frac{1}{2}+\lambda}g_2(\rho;\lambda)
   c_{0,2}(\lambda) \int_0^1  \frac{\partial_s[(1-\chi_\lambda(s))s^{\frac{5}{2}}g_2(s;\lambda)f'(s) ]}{(1+s)^{-\frac{1}{2}+\lambda }}ds.
\end{align*}
By performing the same integration by parts for the remaining terms that are supported near $\rho=1$, we obtain the following second decomposition of $D_2$.
    \begin{lemma}
        We can decompose $D_2''(f)$ as
        \begin{align*}
        D_2''(f)(\rho;\lambda)=\sum_{j=7}^{15} \dot G_j''(f)(\rho;\lambda) 
        \end{align*}

          where
    \begin{align*}
       \dot G_7''(f)(\rho;\lambda)&:=G_7''(f)(\rho;\lambda)
        \\
         \dot G_8''(f)(\rho;\lambda)&:= G_8''(f)(\rho;\lambda)
         \\
         \dot G_9''(f)(\rho;\lambda)&:=(1-\chi_\lambda(\rho)) \rho^{-\frac{5}{2}}(1-\rho)^{-\frac{1}{2}+\lambda}\int_0^\rho \int_0^s \frac{\partial_t^2[(1-\chi_\lambda(t))\O(\rho^0t^{\frac{5}{2}}\langle\omega\rangle^{-2})]}{(1-t)^{-\frac{1}{2}+\lambda }}dt f'(s) ds
         \\
         &\quad + (1-\chi_\lambda(\rho)) \rho^{-\frac{5}{2}}(1-\rho)^{-\frac{1}{2}+\lambda}\int_0^\rho \frac{\partial_s[(1-\chi_\lambda(s))\O(\rho^0s^{\frac{5}{2}}\langle\omega\rangle^{-2})] f'(s)}{(1-s)^{-\frac{1}{2}+\lambda }} ds
         \\
          &\quad + (1-\chi_\lambda(\rho)) \rho^{-\frac{5}{2}}(1-\rho)^{-\frac{1}{2}+\lambda}\int_0^\rho \frac{\partial_s[(1-\chi_\lambda(s))\O(\rho^0s^{\frac{5}{2}}\langle\omega\rangle^{-2})f'(s)] }{(1-s)^{-\frac{1}{2}+\lambda }} ds
         \\
          \dot G_{10}''(f)(\rho;\lambda)&:=(1-\chi_\lambda(\rho)) \rho^{-\frac{5}{2}}(1-\rho)^{-\frac{1}{2}+\lambda}g_2(\rho;\lambda)
     c_{0,2}(\lambda) \int_0^1 \int_0^s \frac{\partial_t^2[(1-\chi_\lambda(t))\O(\rho^0t^{\frac{5}{2}}\langle\omega\rangle^{-2})]}{(1+t)^{-\frac{1}{2}+\lambda }}dt f'(s) ds
     \\
     &\quad +  (1-\chi_\lambda(\rho)) \rho^{-\frac{5}{2}}(1-\rho)^{-\frac{1}{2}+\lambda}g_2(\rho;\lambda)
   c_{0,2}(\lambda) \int_0^1  \frac{\partial_s[(1-\chi_\lambda(s))\O(\rho^0s^{\frac{5}{2}}\langle\omega\rangle^{-2})] f'(s)}{(1+s)^{-\frac{1}{2}+\lambda }} ds
   \\
  &\quad  +(1-\chi_\lambda(\rho)) \rho^{-\frac{5}{2}}(1-\rho)^{-\frac{1}{2}+\lambda}g_2(\rho;\lambda)
   c_{0,2}(\lambda) \int_0^1  \frac{\partial_s[(1-\chi_\lambda(s))\O(\rho^0s^{\frac{5}{2}}\langle\omega\rangle^{-2})f'(s)] }{(1+s)^{-\frac{1}{2}+\lambda }}ds.
          \\
          \dot G_{11}''(f)(\rho;\lambda)&:=G_{11}''(f)(\rho;\lambda)
            \\
         \dot G_{12}''(f)(\rho;\lambda)&:= G_{12}''(f)(\rho;\lambda)
              \\
          \dot G_{13}''(f)(\rho;\lambda)&:= G_{13}''(f)(\rho;\lambda)
             \\
         \dot G_{14}''(f)(\rho;\lambda)&:= G_{14}''(f)(\rho;\lambda)
                        \\
          \dot G_{15}''(f)(\rho;\lambda)&:=(1-\chi_\lambda(\rho)) \rho^{-\frac{5}{2}}(1+\rho)^{-\frac{1}{2}+\lambda}\int_\rho^1 \int_0^s \frac{\partial_t^2[(1-\chi_\lambda(t))\O(\rho^0t^{\frac{5}{2}}\langle\omega\rangle^{-2})]}{(1+t)^{-\frac{1}{2}+\lambda }}dt f'(s) ds
         \\
         &\quad + (1-\chi_\lambda(\rho)) \rho^{-\frac{5}{2}}(1+\rho)^{-\frac{1}{2}+\lambda}\int_\rho^1 \frac{\partial_s[(1-\chi_\lambda(s))\O(\rho^0s^{\frac{5}{2}}\langle\omega\rangle^{-2})] f'(s)}{(1+s)^{-\frac{1}{2}+\lambda }} ds
         \\
          &\quad + (1-\chi_\lambda(\rho)) \rho^{-\frac{5}{2}}(1+\rho)^{-\frac{1}{2}+\lambda}\int_\rho^1 \frac{\partial_s[(1-\chi_\lambda(s))\O(\rho^0s^{\frac{5}{2}}\langle\omega\rangle^{-2})f'(s)] }{(1+s)^{-\frac{1}{2}+\lambda }} ds
    \end{align*}
\end{lemma}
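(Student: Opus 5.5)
The decomposition is a bookkeeping statement: $D_2''(f)$ is split into the pieces $G_7'',\dots,G_{16}''$ of the previous lemma, and each piece with an oscillatory factor near $\rho=1$ is rewritten by integration by parts. I would start from the first decomposition $D_2''(f)=\sum_{j=7}^{16}G_j''(f)$ from the previous lemma. The pieces supported near $\rho=0$ ($G_7'',G_8'',G_{11}'',G_{12}''$) and the pieces involving $\chi_\lambda(t)$ in the inner integral ($G_{13}'',G_{14}''$) already carry enough decay in $\omega$. For these I simply set $\dot G_j''=G_j''$. So the work concerns only the terms coming from $(1-\chi_\lambda(\rho))$ in which both $\rho$-derivatives of $u_2$ resp.\ $u_0$ fell on the oscillatory factor $(1\mp\rho)^{\frac32+\lambda}$. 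This produces the prefactor $(\frac12+\lambda)(\frac32+\lambda)(1\mp\rho)^{-\frac12+\lambda}$, of which one power $\frac32+\lambda$ is absorbed by $(3+2\lambda)^{-1/2}$ and $\widehat c(\lambda)^{-1}$. These are exactly the four displayed expressions preceding the statement, i.e.\ the leading parts of $G_9'',G_{10}'',G_{15}'',G_{16}''$. The remaining parts of those $G_j''$, where at least one derivative hits $\rho^{-\frac52}g_j$, already have an extra $\langle\omega\rangle^{-1}$ and can be absorbed into the $\O(\cdot)$ symbols.

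For the four leading terms I would integrate by parts in the inner $t$-integral using
\begin{equation*}
\frac{1}{(1\mp t)^{\frac32+\lambda}}=\pm\frac{1}{\frac12+\lambda}\,\partial_t (1\mp t)^{-\frac12+\lambda}\cdot(-1)^{0},
\end{equation*}
twice. The first integration by parts cancels the factor $\frac12+\lambda$ in front. The second produces $(-\frac12+\lambda)^{-1}$, and the boundary terms become $\partial_s$ of $(1-\chi_\lambda)s^{\frac52}g_j$ times $f'(s)$. Between the two steps one swaps the order of the $s$- and $t$-integration (Fubini), or equivalently integrates by parts in $s$, so that the derivative lands either on the coefficient or on the product with $f'$. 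This yields exactly the three types of terms displayed before the statement: the double integral with $\partial_t^2[\cdots]$, the term with $\partial_s[\cdots]f'(s)$, and the term with $\partial_s[\cdots f'(s)]$.

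The boundary terms at $t=0$ vanish because of the factor $(1-\chi_\lambda)$, which vanishes near $0$. The boundary terms at $t=s$ (respectively $s=\rho$, or $s=1$ for the $c_{0,2}$ contribution with $\int_0^1$) must be checked to either cancel pairwise or be regrouped. Pairing the $c_{0,1}$ terms with $(1-\rho)$ and $(1+\rho)$ is designed precisely so that the $s=\rho$ boundary contributions cancel; this mirrors the cancellation already observed in the $D_1''$ computation, via the Wronskian identity relating $u_1,u_2$. The $c_{0,2}$ terms combine the $\int_0^\rho$ and $\int_\rho^1$ pieces into a single $\int_0^1$, giving $\dot G_{10}''$.

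Finally I would collect symbols. From Lemma~\ref{lem:solutions interior regime} I know $c_{0,j}=\O(\langle\omega\rangle^0)$ and $g_j=\O(\rho^0\langle\omega\rangle^0)$ on $\supp(1-\chi_\lambda)$, with $\omega$-derivatives gaining decay, and that $\partial_t$ applied to $(1-\chi_\lambda)$ costs at most $\langle\omega\rangle^{1}$ but is supported where $t\simeq\langle\omega\rangle^{-1}$, where it is traded against powers of $t$. Together with the prefactor $(\lambda^2-\frac14)^{-1}=\O(\langle\omega\rangle^{-2})$, this justifies writing the coefficients as $\O(\rho^0t^{\frac52}\langle\omega\rangle^{-2})$. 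The main obstacle I expect is this last bookkeeping. The claim is that the two boundary contributions at $s=\rho$ cancel exactly rather than only to leading order, and that differentiating $\chi_\lambda$ does not destroy the $\langle\omega\rangle^{-2}$ gain. I would verify this first by explicitly writing the Wronskian $W(u_1,u_2)=c\,\rho^{-5}(1-\rho^2)^{\frac12+\lambda}$ in the boundary terms.
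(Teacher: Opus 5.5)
Your route is the paper's. You keep $G_7'',G_8'',G_{11}''$--$G_{14}''$ unchanged; note, though, that $G_8''$ is not a near-$\rho=0$ piece, since it carries $1-\chi_\lambda(\rho)$ and $\chi_\lambda(t)$. You integrate the four leading near-$\rho=1$ pieces by parts twice in $t$ and once in $s$, cancel the $s=\rho$ boundary terms within the $c_{0,1}$ pair, and merge the two $c_{0,2}$ pieces into $\int_0^1$. This gives $\dot G_{10}''$, while $G_{16}''$ becomes $\dot G_{15}''$.

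The identities to use are $(1\mp t)^{-\frac32-\lambda}=\pm(\tfrac12+\lambda)^{-1}\partial_t(1\mp t)^{-\frac12-\lambda}$ and $(1\mp t)^{-\frac12-\lambda}=\pm(\lambda-\tfrac12)^{-1}\partial_t(1\mp t)^{\frac12-\lambda}$. The exponent $-\frac12+\lambda$ in your displayed identity is wrong.

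What you call the main obstacle is elementary for the leading pieces. In the normalization of the four expressions displayed before the lemma, the two $s=\rho$ boundary terms are exactly $\pm(\lambda-\tfrac12)^{-1}c_{0,1}(\lambda)(1-\chi_\lambda(\rho))^2g_1(\rho;\lambda)g_2(\rho;\lambda)f'(\rho)$. So they cancel with no Wronskian argument.

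The step that fails as written is the $\omega$-count. The first integration by parts uses up the factor $\frac12+\lambda$ in front, so the net factor is $(\frac12+\lambda)/(\lambda^2-\frac14)=(\lambda-\frac12)^{-1}=\O(\langle\omega\rangle^{-1})$. With $c_{0,j},g_j=\O(\langle\omega\rangle^0)$ you therefore reach only $\langle\omega\rangle^{-1}$, not the $\langle\omega\rangle^{-2}$ claimed in $\dot G_9'',\dot G_{10}'',\dot G_{15}''$.

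The missing power comes from the subtraction of the free counterpart built into $D_2''$. Both $g_j-g_{\mathrm{f}_j}=(1+e_j)r_j$ and $c_{0,j}-c_{\mathrm{f}_{0,j}}$ are $\O(\langle\omega\rangle^{-1})$, which is exactly why $G_9'',G_{10}'',G_{15}'',G_{16}''$ carry $\langle\omega\rangle^{0}$ rather than $\langle\omega\rangle^{1}$. So run your integrations by parts on those symbols, or split the difference via $a_1b_1-a_2b_2=a_1(b_1-b_2)+(a_1-a_2)b_2$ and integrate termwise. The $s=\rho$ cancellation holds separately for the perturbed and for the free pair, so it survives the subtraction.

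Your caution at $s=1$ is warranted. The $s$-integration by parts in the $\int_\rho^1$ and $\int_0^1$ pieces leaves nonzero boundary terms at $s=1$ (note $\chi_\lambda(1)=0$ and $g_2(1;\lambda)=1$), and nothing cancels them. They add up to the leading part of $u_0''(\rho)$, minus its free analogue, times a $\lambda$-dependent constant times $f'(1)$. They are harmless: estimate them like $G_{24}''$--$G_{28}''$, using $|f'(1)|\lesssim\|f\|_{W^{2,p}(\B_1)}$. But they are not of the displayed form, so they must be listed separately or moved into $D_3''$; the decomposition as stated omits them.
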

To move on, we set 
\begin{align*}
    T_{j,\pm}''(\tau)f(\rho)&:=\lim_{N\to \infty}\int_{-N}^N e^{i \omega \tau } G_j''(f)(\rho; \pm \delta + i \omega)d\omega
    \\
  \dot T_{n,\pm}''(\tau)f(\rho)&:=\lim_{N\to \infty}\int_{-N}^N e^{i \omega \tau } \omega \dot G_n''(f)(\rho;\pm \delta +i \omega) d\omega
\end{align*}
for $j=7,\dots, 16$, $n=7,\dots, 15$, $\tau \in \R_+$, and $f\in C^\infty_{c,rad}$.
\begin{lemma}
    The estimates
\begin{align*}
        \|T_{j,\pm}''(\cdot)f\|_{L^\infty(\R_+) L^{\frac{2}{1\mp 2\delta}}(\B_1)}&\lesssim \|f\|_{\dot W^{1,\frac{2}{1\mp 2\delta}}(\R^6)}
        \\
        \|T_{j,\pm}''(\cdot)f\|_{L^\infty(\R_+) L^{\frac{2}{1\mp 2\delta}}(\B_1)}&\lesssim \|f\|_{\dot W^{2,\frac{2}{1\mp 2\delta}}(\R^6)}
        \\
        \|\dot T_{n,\pm}''(\cdot)f\|_{L^\infty(\R_+) L^{\frac{2}{1\mp 2\delta}}(\B_1)}&\lesssim \|f\|_{\dot W^{2,\frac{2}{1\mp 2\delta}}(\R^6)}
\end{align*}
hold for $j=7,\dots, 16$, $n=7,\dots, 15$ and all $f\in C^\infty_{c,rad}(\R^6).$
\end{lemma}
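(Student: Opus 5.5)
We argue term by term, exactly as in the proof of the preceding lemma for $T''_{1,\pm},\dots,T''_{6,\pm}$. The pointwise bounds on the oscillatory integrals come from Lemmas 5.1, 5.3 and 5.4 of \cite{DonningerWallauch}. The $\rho$-integrals near $\rho=1$ are converted into convolutions by the substitutions $\rho=1-e^{-x}$, $s=1-e^{-y}$, and then Young's inequality is applied. Throughout, the exponent $\frac{2}{1\mp 2\delta}$ is chosen to match the factor $(1-\rho)^{-\frac12\pm\delta}$ that comes from $(1-\rho)^{-\frac12+\lambda}$ on the line $\Re\lambda=\pm\delta$. Indeed, $|(1-\rho)^{-\frac12\pm\delta}|^{\frac{2}{1\mp2\delta}}=(1-\rho)^{-1}$, which is precisely the measure $dx$ after the change of variables.

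\textbf{Terms localized by $\chi_\lambda$ ($G''_7,G''_{11},G''_{12}$).} These are supported in $[0,\rho_0)$ with $\rho_0<1$, so all powers $(1-\rho^2)^{\pm\lambda/2}$ are harmless. For $G''_7$ the factor $t^5\langle\omega\rangle^3$ with $t\le s\le\rho\lesssim\langle\omega\rangle^{-1}$ gives $\int_0^\rho\int_0^s t^5\langle\omega\rangle^3\,dt\,|f'(s)|\,ds\lesssim \rho^{4}\langle\omega\rangle^{-2}\cdot\rho^{-2}\int_0^\rho|f'|$. Trading further powers of $\rho$ for $\langle\omega\rangle^{-1-\epsilon}$ on $\supp\chi_\lambda$, Lemma 5.1 of \cite{DonningerWallauch} yields a pointwise bound $\lesssim \rho^{-a}\int_0^\rho |f'|$ uniformly in $\tau$, with some $a<2$. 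Hardy's inequality then gives the $L^\infty L^{\frac{2}{1\mp2\delta}}$ bound by $\|f\|_{\dot W^{1,\cdot}}$. For $G''_{11},G''_{12}$, the $\rho^{-2}$ weight is paired with $\int_\rho^1 |f'|$, and we bound it by $\||\cdot|^{-1}f'\|$-type quantities via the weighted Hardy inequality as in the $j=1$ case. The second estimate, with $\dot W^2$ on the right, follows because the domain is $\B_1$ and $\|f\|_{W^{1,p}(\B_1)}\lesssim \|f\|_{\dot W^{2,p}(\R^6)}$ by Sobolev and Hardy. The $\dot T''$ versions cost one power of $\omega$, which is absorbed by one extra power of $\rho^{-1}$ on $\supp\chi_\lambda$, leading to $\||\cdot|^{-2}f\|$-type norms controlled by $\dot W^2$.

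\textbf{Terms near $\rho=1$ for $T''_j$ ($G''_8,G''_9,G''_{10},G''_{13}$–$G''_{16}$).} Each has the form $(1-\rho)^{-\frac12+\lambda}$ (or the harmless $(1+\rho)^{-\frac12+\lambda}$) times double integrals whose inner kernel is at worst $(1-t)^{-\frac32-\lambda}$. For the $T''_j$ estimates, we first integrate the inner $t$-integral by parts once, as done above. This replaces $(1-t)^{-\frac32-\lambda}$ by $(1-t)^{-\frac12-\lambda}/(\frac12+\lambda)$ plus a boundary term that combines with the outer factor to $(1-s)^{-\frac12-\lambda}$. Lemma 5.3 of \cite{DonningerWallauch} then yields bounds of the form
\begin{align*}
|f'\text{-convolution}|\lesssim (1-\rho)^{-\frac12\pm\delta}\int_0^\rho\frac{|f'(s)|}{(1-s)^{\frac12\pm\delta}}\langle\tau+\log(1-\rho)-\log(1-s)\rangle^{-2}\,ds .
\end{align*}
This is the same structure as the term $I$ in the proof for $T''_{5,\pm}$, with $f'$ in place of $f$. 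Repeating that computation gives the bound $\|f'\|_{L^{\frac{2}{1\mp2\delta}}(\B_1)}$ plus lower-order terms. The terms with $\int_\rho^1$ are handled identically, using the reversed convolution.

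\textbf{The $\dot T''_n$ estimates, which are the main obstacle.} The extra factor $\omega$ destroys the $\langle\omega\rangle^{-2}$ summability in the terms where both $\rho$-derivatives hit the oscillatory factor. This is exactly why the second decomposition $\dot G''_n$ was prepared. In $\dot G''_9,\dot G''_{10},\dot G''_{15}$, two integrations by parts moved those derivatives onto $(1-\chi_\lambda)t^{5/2}g_j$ and onto $f'$. This produces $\O(\langle\omega\rangle^{-2})$ symbols, so multiplying by $\omega$ still leaves $\langle\omega\rangle^{-1}$. Combined with the non-oscillatory-in-$\omega$ structure $(1-s)^{\frac12-\lambda}$, this suffices for Lemma 5.3 of \cite{DonningerWallauch}. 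The price is that $f''$ appears, and we control it by $\|f\|_{\dot W^{2,\frac{2}{1\mp2\delta}}}$ using the same $\rho=1-e^{-x}$ convolution argument. The boundary terms at $\rho=1$ and $s=1$ are controlled by $|f'(1)|+|f(1)|\lesssim\|f\|_{W^{2,p}(\B_1)}$ via the trace argument used for $G_{16}$. The unchanged terms $\dot G''_n=G''_n$ for $n=7,8,11$–$14$ already carry enough decay: $\langle\omega\rangle^{3}t^5$ on $\supp\chi_\lambda$ and $\langle\omega\rangle^{-3/2}$ for $G''_{13},G''_{14}$. Their $\omega$ factor is absorbed as in the first step, by trading for powers of $\rho$, with $\dot W^2$ on the right-hand side. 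I expect the delicate point to be checking that no residual $(1-\rho)^{-1}$ singularity survives in $\dot G''_9$. If one does, I would perform one further integration by parts in the $\lambda$-dependent power, as in the treatment of $\dot G_{16,1}$.
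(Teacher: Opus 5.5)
Your framework is the paper's: pointwise kernel bounds from the lemmas of \cite{DonningerWallauch}, trading powers of $\rho,t$ for $\omega$-decay on $\supp\chi_\lambda$, integration by parts in the inner integral near $\rho=1$, the substitutions $\rho=1-e^{-x}$, $s=1-e^{y}$, and Young's inequality, with $p=\frac{2}{1\mp2\delta}$ exactly absorbing $(1-\rho)^{-\frac12\pm\delta}$. The gap is at the one genuinely delicate step.

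After your single integration by parts in the inner integral of $G''_9$, the leading symbol is only $\O(\langle\omega\rangle^{-1})$. The same holds after multiplying $\dot G''_9,\dot G''_{10},\dot G''_{15}$ by $\omega$. Such a symbol is not integrable in $\omega$, and it does not produce the kernel $\langle\tau+\log(1-\rho)-\log(1-s)\rangle^{-2}$. Already $\int_{\R}e^{i\omega x}\langle\omega\rangle^{-1}\,d\omega=2K_0(|x|)$ blows up logarithmically at $x=0$. So the pointwise bounds you attribute to Lemma 5.3 of \cite{DonningerWallauch} fail for exactly these terms. In the paper that lemma is only invoked for symbols with at least $\langle\omega\rangle^{-2}$ decay.

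The paper instead integrates by parts twice, so that $I_2,I_3$ are $\O(\langle\omega\rangle^{-2})$ and only the boundary term $I_1$ is borderline. For $I_1$ it uses $\rho,s\gtrsim\langle\omega\rangle^{-1}$ on $\supp(1-\chi_\lambda)$ to rewrite the symbol as $\O(\rho^{-\frac52+\frac1{10}}s^{\frac52+\frac1{10}}\langle\omega\rangle^{-\frac{8}{10}})$. It then applies a variant of the weak-decay Lemma 5.2 of \cite{DonningerWallauch}, which gives the singular but integrable kernel $g(x)=\langle x\rangle^{-2}|x|^{-\frac15}$. Since $g\in L^1(\R)$, the two applications of Young's inequality in the log variables still close. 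The $\dot T''_n$ bounds for $n=9,10,15$ need the same device.

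So the delicate point is this borderline $\omega$-decay, not a residual $(1-\rho)^{-1}$ singularity, which the choice of $p$ already absorbs. Note also that this term does not have the structure of $I$ in the $T''_{5,\pm}$ proof. There $f$ sits outside the $s$-integral, whereas here $f'$ sits inside the convolution.

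A smaller slip concerns $G''_7$. You discard the $s$-weight and arrive at the pointwise bound $\rho^{-a}\int_0^\rho|f'|$, which cannot be controlled by $\|f\|_{\dot W^{1,p}(\R^6)}$ for $p$ near $2$. Take $f$ approximating $|x|^{-\beta}$ near $0$ with $0<\beta<\frac6p-1$. The $\dot W^{1,p}$ norms stay bounded while $\int_0^\rho|f'|$ diverges, so no Hardy-type inequality can rescue this step.

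You must keep the weight produced by $\int_0^s t^5\,dt$. The paper writes $\chi_\lambda(\rho)\O(\rho^{-2}t^5\langle\omega\rangle^3)=\chi_\lambda(\rho)\O(\rho^{-\frac52}t^{\frac32-\frac1{10}}\langle\omega\rangle^{-1-\frac1{10}})$. This gives $|T''_{7,\pm}(\tau)f(\rho)|\lesssim\rho^{-\frac52}\int_0^1 s^{\frac52-\frac1{10}}|f'(s)|\,ds$. The proof then concludes by H\"older, using that $\rho^{-\frac52}\in L^{\frac{2}{1\mp2\delta}}(\B_1)$ for small $\delta$.
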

\begin{proof}
Since $t\leqslant \rho$ we can rewrite $\chi_\lambda(\rho)\O(\rho^{-2}t^5\langle\omega\rangle^{3})=\chi_\lambda(\rho)\O(\rho^{-\frac{5}{2}}t^{\frac{3}{2}-\frac{1}{10}}\langle\omega\rangle^{-1-\frac{1}{10}})$
and again apply Lemma 5.1 of \cite{DonningerWallauch} to obtain 
    \begin{align*}
        |T_{7,\pm}''(\tau)f(\rho)|&\lesssim \rho^{-\frac{5}{2}}\int_0^1 s^{\frac{5}{2}-\frac{1}{10}} |f'(s)| ds \lesssim \rho^{-\frac{5}{2}}\|s^{\frac{5}{2}+\frac{1}{10}} f'(s)\|_{L_s^{\frac{2}{1\mp2\delta}}(0,1)}\|s^{-\frac{2}{10}}\|_{\frac{2}{1\pm 2\delta}}\lesssim \rho^{-\frac{5}{2}}\|f\|_{L^{\frac{2}{1\mp2\delta}}(\B_1)}
    \end{align*}
    provided $\delta$ is chosen to be sufficiently small. Hence
    \begin{align*}
         \|T_{7,\pm}''(\cdot)f\|_{L^\infty(\R_+) L^{\frac{2}{1\mp 2\delta}}(\B_1)}&\lesssim \|f\|_{ W^{1,\frac{2}{1\mp 2\delta}}(\B_1)}
    \end{align*}
    and, by trading an additional power of $t$ for decay in $\omega$, one also obtains 
    \begin{align*}
         \|\dot T_{7,\pm}''(\cdot)f\|_{L^\infty(\R_+) L^{\frac{2}{1\mp 2\delta}}(\B_1)}&\lesssim \|f\|_{ W^{2,\frac{2}{1\mp 2\delta}}(\B_1)}.
    \end{align*}
    A similar computation yields the estimate
    \begin{align*}
         |T_{8,\pm}''(\tau)f(\rho)|&\lesssim \|f\|_{W^{1,\frac{2}{1\mp 2\delta}}(\B_1)}\rho^{-\frac{5}{2}}(1-\rho)^{-\frac{1}2\pm \delta} \langle\tau+\log(1-\rho) \rangle^{-2} 
    \end{align*}
    and one concludes 
    \begin{align*}
          \|T_{8,\pm}''(\cdot)f\|_{L^\infty(\R_+) L^{\frac{2}{1\mp 2\delta}}(\B_1)}&\lesssim \|f\|_{ W^{1,\frac{2}{1\mp 2\delta}}(\B_1)}
    \end{align*}
    by employing a minor variation of the arguments used to estimate $T''_{5,\pm}$. Likewise, one bounds $\dot T_{8,\pm}''$ and we turn to $T_{9,\pm}''$. To estimate this operator, we integrate by parts twice to calculate that
    \begin{align*}
        \int_0^\rho \int_0^s (1-\chi_\lambda(t))\frac{\O(\rho^0t^{\frac{5}{2}}\langle\omega\rangle^{0})}{(1-t)^{\frac{3}{2}+\lambda }}dt f'(s) ds&=\int_0^\rho  (1-\chi_\lambda(s))\frac{\O(\rho^0 s^{\frac{5}{2}}\langle\omega\rangle^{-1})}{(1-s)^{\frac{1}{2}+\lambda }} f'(s) ds
        \\
        &\quad +\int_0^\rho \int_0^s \frac{\partial_t[(1-\chi_\lambda(t))\O(\rho^0t^{\frac{5}{2}}\langle\omega\rangle^{-1})]}{(1-t)^{\frac{1}{2}+\lambda }}dt f'(s) ds
        \\
        &=\int_0^\rho  (1-\chi_\lambda(s))\frac{\O(\rho^0 s^{\frac{5}{2}}\langle\omega\rangle^{-1})}{(1-s)^{\frac{1}{2}+\lambda }} f'(s) ds
        \\
        &\quad +\int_0^\rho \frac{\partial_s[(1-\chi_\lambda(s))\O(\rho^0s^{\frac{5}{2}}\langle\omega\rangle^{-2})]}{(1-s)^{-\frac{1}{2}+\lambda }} f'(s) ds
        \\
        &\quad+\int_0^\rho \int_0^s \frac{\partial_t^2[(1-\chi_\lambda(t))\O(\rho^0t^{\frac{5}{2}}\langle\omega\rangle^{-2})]}{(1-t)^{-\frac{1}{2}+\lambda }}dt f'(s) ds
        \\
        &=:I_1+I_2+I_3.
    \end{align*}
    Note that $I_2$ and $I_3$ decay quadratically in $\omega$, hence, we will focus on estimating the operator corresponding to $I_1$. In other words, we look at 
    \begin{align*}
      &\quad T_{9,1,\pm}''(\tau)f(\rho) 
      \\
      &=\lim_{N\to \infty} \int_{-N}^N e^{i\omega\tau}(1-\chi_{\pm \delta+i \omega}(\rho)) \rho^{-\frac{5}{2}}(1-\rho)^{-\frac{1}{2}\pm \delta +i\omega}\int_0^\rho  (1-\chi_{\pm \delta +i\omega}(s))\frac{\O(\rho^0 s^{\frac{5}{2}}\langle\omega\rangle^{-1})}{(1-s)^{\frac{1}{2}\pm \delta +i\omega  }} f'(s) ds d\omega
       \\
       &=\lim_{N\to \infty}\int_0^\rho f'(s) \int_{-N}^N e^{i\omega\tau}(1-\chi_{\pm \delta+i \omega}(\rho)) \rho^{-\frac{5}{2}}(1-\rho)^{-\frac{1}{2}\pm \delta +i\omega}  (1-\chi_{\pm \delta +i\omega}(s))\frac{\O(\rho^0 s^{\frac{5}{2}}\langle\omega\rangle^{-1})}{(1-s)^{\frac{1}{2}\pm \delta +i\omega  }} d\omega ds.
    \end{align*}
    Now, we can use the identity
    $$
    (1-\chi_\lambda)(s)\O(\rho^{-\frac{5}{2}}s^{\frac{5}{2}}\langle\omega\rangle^{-1})= (1-\chi_\lambda)(\rho) (1-\chi_\lambda)(s)\O(\rho^{-\frac{5}{2}+\frac{1}{10}}s^{\frac{5}{2}+\frac{1}{10}}\langle\omega\rangle^{-\frac{8}{10}})
    $$and a straightforward variation of Lemma 5.2 in \cite{DonningerWallauch} to obtain 
    \begin{align*}
      |  T_{9,1,\pm}''(\tau)f(\rho)|&\lesssim \rho^{-\frac{5}{2}+\frac{1}{10}}(1-\rho)^{-\frac{1}{2}\pm \delta} \int_0^\rho \frac{|f'(s)|s^{\frac{5}{2}}}{(1-s)^{\frac{1}{2}+\lambda}} g(\tau+\log(1-\rho)-\log(1-s)) ds
    \end{align*}
    where $g(x)=\langle x\rangle^{-2} |x|^{-\frac{1}{5}}$.
    So, 
\begin{align*}
  &\quad\|  T_{9,1,\pm}''(\cdot)f\|_{L^{\frac{2}{1\mp 2\delta}}(\B_1)}
  \\
  &\lesssim\left( \int_0^1 (1-\rho)^{-1}\left(\int_{0}^1 s^{\frac{5}{2}+\frac{1}{10}} |f'(s)|\frac{1}{(1-s)^{\frac{1}{2}\pm\delta}}g(\tau+\log(1-\rho)-\log(1-s)) ds \right)^{{\frac{2}{1\mp 2\delta}}} d\rho\right)^{\frac{1\mp 2\delta}{2}}\\
    &\leqslant   \left( \int_0^\infty \left(\int_{-\infty}^0 (1-e^{y})^{\frac{5}{2}+\frac{1}{10}} |f'(1-e^{y})|e^{y(\frac{1}{2}\mp\delta)}g(\tau-x-y) dy \right)^{\frac{2}{1\mp 2\delta}} dx\right)^{\frac{1\mp 2\delta}{2}}.
\end{align*}
By employing Young's inequality twice, we then obtain 
\begin{align*}
 \|  T_{9,1,\pm}''(\cdot)f\|_{L^\infty(\R_+)L^{\frac{2}{1\mp 2\delta}}(\B_1)}& \lesssim  \left( \int_\R \left(\int_{-\infty}^0 (1-e^{y})^{\frac{5}{2}+\frac{1}{10}} |f'(1-e^{y})|e^{y(\frac{1}{2}-\delta)}g(\tau-y) dy \right)^{\frac 2{1\mp 2\delta}} d\tau\right)^{\frac{1\mp 2\delta}2}
 \\
 &\lesssim \|(1-e^{y})^{\frac{5}{2}+\frac{1}{10}} |f'(1-e^{y})|e^{y(\frac{1}{2}-\delta)}\|_{L^{\frac{2}{1\mp 2 \delta}}((-\infty,0))}\|g\|_{L^1(\R)}
 \\
 &\lesssim  \|f\|_{ W^{2,\frac{2}{1\mp 2\delta}}(\B_1)}.
\end{align*}
To bound the remaining operators, one readily adapts the strategies used for the cases $j=7,8,9$ and conclude this proof.
\end{proof}
With this we come to \begin{align*}
 D_3''(f)(\rho;\lambda):=  u_0''(\rho;\lambda)[ \mu(f)(\lambda)+U_2(1;\lambda)f(1)]-u_{\mathrm{f}_0}''(\rho;\lambda)[\mu_{\mathrm{f}}(f)(\lambda)+U_{\mathrm{f}_2}(1;\lambda)f(1)].
   \end{align*}
\begin{lemma}
       We can decompose $ D_3''(f)$ as 
       \begin{align*}
           D_3''(f)(\rho;\lambda)=\sum_{j=17}^{28} G_j''(f)(\rho;\lambda)
       \end{align*}
       with
       \begin{align*}
           G_{17}''(f)(\rho;\lambda)&=\chi_\lambda(\rho)(1-\rho^2)^{\frac{\lambda}{2}}\O(\rho^{-2}\langle\omega\rangle^{-\frac{1}{2}})\int_1^\infty \frac{\partial_s[\kappa(s)\O(s^{\frac{5}{2}}\langle\omega\rangle^0)f(s)]}{(1+s)^{\frac{1}{2}+\lambda}}ds 
           \\
              G_{18}''(f)(\rho;\lambda)&=(1-\chi_\lambda(\rho))(1-\rho)^{-\frac{1}{2}+\lambda}\O(\rho^{-\frac{5}{2}}\langle\omega\rangle^{-1})\int_1^\infty \frac{\partial_s[\kappa(s)\O(s^{\frac{5}{2}}\langle\omega\rangle^0)f(s)]}{(1+s)^{\frac{1}{2}+\lambda}}ds 
              \\
              G_{19}''(f)(\rho;\lambda)&=(1-\chi_\lambda(\rho))(1+\rho)^{-\frac{1}{2}+\lambda}\O(\rho^{-\frac{5}{2}}\langle\omega\rangle^{-1})\int_1^\infty \frac{\partial_s[\kappa(s)\O(s^{\frac{5}{2}}\langle\omega\rangle^0)f(s)]}{(1+s)^{\frac{1}{2}+\lambda}}ds 
              \\
              G_{20}''(f)(\rho;\lambda)&=\chi_\lambda(\rho)(1-\rho^2)^{\frac{\lambda}{2}}\O(\rho^{-2}\langle\omega\rangle^{-\frac{1}{2}})
              \\
              &\quad \times \int_1^\infty \left[\frac{\partial_s[ (1-\kappa(s)) \O(s^{\frac{5}{2}}\langle\omega\rangle^0)f(s)]}{(s-1)^{\frac{1}{2}+\lambda}}-\frac{\partial_s[ (1-\kappa(s)) \O(s^{\frac{5}{2}}\langle\omega\rangle^0)f(s)]}{(1+s)^{\frac{1}{2}+\lambda}}\right] ds
              \\
              G_{21}''(f)(\rho;\lambda)&=(1-\chi_\lambda(\rho))(1-\rho)^{-\frac{1}{2}+\lambda}\O(\rho^{-\frac{5}{2}}\langle\omega\rangle^{-1})\\
              &\quad \times \int_1^\infty \left[\frac{\partial_s[ (1-\kappa(s)) \O(s^{\frac{5}{2}}\langle\omega\rangle^0)f(s)]}{(s-1)^{\frac{1}{2}+\lambda}}-\frac{\partial_s[ (1-\kappa(s)) \O(s^{\frac{5}{2}}\langle\omega\rangle^0)f(s)]}{(1+s)^{\frac{1}{2}+\lambda}}\right] ds
              \\
              G_{22}''(f)(\rho;\lambda)&=(1-\chi_\lambda(\rho))(1+\rho)^{-\frac{1}{2}+\lambda}\O(\rho^{-\frac{5}{2}}\langle\omega\rangle^{-1})\\
              &\quad \times \int_1^\infty \left[\frac{\partial_s[ (1-\kappa(s)) \O(s^{\frac{5}{2}}\langle\omega\rangle^0)f(s)]}{(s-1)^{\frac{1}{2}+\lambda}}-\frac{\partial_s[ (1-\kappa(s)) \O(s^{\frac{5}{2}}\langle\omega\rangle^0)f(s)]}{(1+s)^{\frac{1}{2}+\lambda}}\right] ds
              \\
               G_{23}''(f)(\rho;\lambda)&=\chi_\lambda(\rho)(1-\rho^2)^{\frac{\lambda}{2}}\O(\rho^{-2}\langle\omega\rangle)f(1)\int_0^1 \frac{\chi_\lambda(s)\O(s\langle\omega\rangle^{-1})}{(1-s^2)^{\frac{\lambda}{2}}}ds
           \\
              G_{24}''(f)(\rho;\lambda)&=(1-\chi_\lambda(\rho))(1-\rho)^{-\frac{1}{2}+\lambda}\O(\rho^{-\frac{5}{2}}\langle\omega\rangle^{-\frac{3}{2}})f(1)\int_0^1 \frac{\chi_\lambda(s)\O(s\langle\omega\rangle^{-1})}{(1-s^2)^{\frac{\lambda}{2}}}ds
              \\
              G_{25}''(f)(\rho;\lambda)&=(1-\chi_\lambda(\rho))(1+\rho)^{-\frac{1}{2}+\lambda}\O(\rho^{-\frac{5}{2}}\langle\omega\rangle^{-\frac{3}{2}})f(1)\int_0^1 \frac{\chi_\lambda(s)\O(s\langle\omega\rangle^{-1})}{(1-s^2)^{\frac{\lambda}{2}}}ds
               \\
               G_{26}''(f)(\rho;\lambda)&=\chi_\lambda(\rho)(1-\rho^2)^{\frac{\lambda}{2}}\O(\rho^{-2}\langle\omega\rangle^{-\frac12})f(1)\int_0^1\frac{\partial_s[(1-\chi_\lambda(s))\O(s^{\frac{5}{2}}\langle\omega\rangle^0)]}{(1+s)^{\frac{1}{2}+\lambda}}ds
           \\
              G_{27}''(f)(\rho;\lambda)&=(1-\chi_\lambda(\rho))(1-\rho)^{-\frac{1}{2}+\lambda}\O(\rho^{-\frac{5}{2}}\langle\omega\rangle^{-1})f(1)\int_0^1\frac{\partial_s[(1-\chi_\lambda(s))\O(s^{\frac{5}{2}}\langle\omega\rangle^0)]}{(1+s)^{\frac{1}{2}+\lambda}}ds
              \\
              G_{28}''(f)(\rho;\lambda)&=(1-\chi_\lambda(\rho))(1+\rho)^{-\frac{1}{2}+\lambda}\O(\rho^{-\frac{5}{2}}\langle\omega\rangle^{-1})f(1)\int_0^1\frac{\partial_s[(1-\chi_\lambda(s))\O(s^{\frac{5}{2}}\langle\omega\rangle^0)]}{(1+s)^{\frac{1}{2}+\lambda}}ds.
       \end{align*}
   \end{lemma}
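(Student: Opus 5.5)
The decomposition of $D_3''(f)$ comes from differentiating twice, in $\rho$, the representation of $u_0$ used for $D_3$, and leaving the $\rho$-independent bracket $\mu(f)(\lambda)+U_2(1;\lambda)f(1)$ untouched. That bracket was already rewritten, before the decomposition of $D_3$, as a sum of four pieces: the $\kappa$-integral, the $(1-\kappa)$-integral with kernels $(s\mp1)^{-\frac12-\lambda}$, the $f(1)\chi_\lambda$-integral, and the $f(1)(1-\chi_\lambda)$-integral. So the plan is to take that identity verbatim and only analyse $u_0''-u_{\mathrm{f}_0}''$ and $u_0''$ on the supports of $\chi_\lambda$ and $1-\chi_\lambda$.

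\textbf{On the support of $\chi_\lambda$.} Here $u_0=\rho^{-\frac52}(1-\rho^2)^{\frac14+\frac\lambda2}\psi_1$, with $\psi_1=b_1[1+\rho^2e_3]$ given by the Bessel function $J_2(a(\lambda)\varphi(\rho))$.
\begin{itemize}
\item The symbol bounds on $\widehat e_3$ together with $J_2(z)=O(z^2)$ near $0$ give $\partial_\rho^2$ of the prefactor-free part as $\O(\rho^{-2})$ times the undifferentiated bound. Each $\rho$-derivative costs either $\rho^{-1}$ or $\langle\omega\rangle$, and on $\rho\lesssim\langle\omega\rangle^{-1}$ these are interchangeable.
\item In $D_3$, the interior factor (after including the $c_{0,1}^{-1}$, $\widehat c$ and $\sqrt{3+2\lambda}$ normalisations) was $\chi_\lambda(1-\rho^2)^{\frac\lambda2}\O(\rho^0\langle\omega\rangle^{-\frac12})$. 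Two derivatives therefore give $\O(\rho^{-2}\langle\omega\rangle^{-\frac12})$.
\item This produces $G''_{17},G''_{20},G''_{26}$ from the pieces $G_{16},G_{19},G_{25}$, and $G''_{23}$ with the factor $\O(\rho^{-2}\langle\omega\rangle)$ from $G_{22}$.
\item The derivatives falling on $\chi_\lambda$ are $\O(\langle\omega\rangle^{k})$ and are supported where $\rho\simeq\langle\omega\rangle^{-1}$. They are absorbed in the same symbols.
\end{itemize}

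\textbf{On the support of $1-\chi_\lambda$.} Here $u_0=c_{0,1}u_1+c_{0,2}u_2$, with $u_{1,2}=\rho^{-\frac52}(1\pm\rho)^{\frac32+\lambda}(3+2\lambda)^{-\frac12}g_{1,2}$.
\begin{itemize}
\item The worst terms arise when both derivatives hit $(1\pm\rho)^{\frac32+\lambda}$. This gives $(\frac32+\lambda)(\frac12+\lambda)(1\pm\rho)^{-\frac12+\lambda}$, a gain of $\langle\omega\rangle^{2}$ compared with the $D_3$ factors.
\item Hence $\O(\rho^{-\frac52}\langle\omega\rangle^{-3})$ becomes $\O(\rho^{-\frac52}\langle\omega\rangle^{-1})$ for $G''_{18},G''_{19},G''_{21},G''_{22},G''_{27},G''_{28}$. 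Similarly $\langle\omega\rangle^{-\frac72}$ becomes $\langle\omega\rangle^{-\frac32}$ for $G''_{24},G''_{25}$.
\item Terms where derivatives hit $\rho^{-\frac52}$, $g_j$ or $e_j$ cost at most $\langle\omega\rangle(1-\rho)^{-1}$. This is at least as good, and they are rewritten with the same exponent $(1\pm\rho)^{-\frac12+\lambda}$ by factoring $(1\pm\rho)^{2}\O(\rho^{0})$, which is bounded.
\item For the $r_j$ contributions one must check that $\partial_\rho^n\partial_\omega^\ell$ bounds persist. This follows from $r_j=(1-\rho)\O(\rho^0\langle\omega\rangle^{-1})$ and the symbol behaviour of $e_j$.
\end{itemize}

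\textbf{Differences with the free counterparts.} These are handled exactly as in the $D_3$ lemma, using $c_{0,j}-c_{\mathrm{f}_{0,j}}=\O(\langle\omega\rangle^{-1})$ and $g_j-g_{\mathrm{f}_j}$ small. They do not change the form of the symbols.

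\textbf{Main obstacle.} The delicate step is the bookkeeping of the $\O(\cdot)$ symbols under two $\rho$-derivatives near $\rho=1$, where the cutoff $\chi_\lambda$ and the terms $e_j$ interact. I would first verify the orders using the explicit leading terms, namely $J_2$ and the pure powers $(1\pm\rho)^{\frac32+\lambda}$, and then argue that the corrections only improve the bounds.
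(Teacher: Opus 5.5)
Your proposal is correct and is the argument the paper leaves implicit. You keep the $\rho$-independent bracket $\mu(f)+U_2(1;\lambda)f(1)$ and its free analogue exactly as rewritten for $D_3$, and differentiate $u_0$ and $u_{\mathrm f_0}$ twice on the supports of $\chi_\lambda$ and $1-\chi_\lambda$. This costs $\rho^{-2}$ in the Bessel region, and near the light cone it gains $\langle\omega\rangle^{2}$ at the price of $(1\pm\rho)^{-2}$, which is exactly the passage from $G_{16},\dots,G_{27}$ to $G''_{17},\dots,G''_{28}$.

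One simplification: since $u_0$ is a single function, you can split $u_0''=\chi_\lambda u_0''+(1-\chi_\lambda)u_0''$ directly. Then no derivatives of $\chi_\lambda$ appear, and there is nothing to absorb.
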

   We define associated operators $T_{j,\pm,\kappa_2}$ as
   \begin{align*}
       T_{j,\pm,\kappa_2}''(\tau)f(\rho)&:=\lim_{N \to \infty} \int_{-N}^N e^{i\omega \tau } G_{j}''(\kappa_2 f)(\rho;\pm \delta+i\omega)d \omega\\
         \dot T_{j,\pm,\kappa_2}''(\tau)f(\rho)&:=\lim_{N \to \infty} \int_{-N}^N e^{i\omega \tau } \omega G_{j}''(\kappa_2f)(\rho;\pm \delta+i\omega)d \omega
   \end{align*}
for $j=17,\dots 28$, $\tau \in \R_+$, and $f\in C^\infty_{c,rad}(\R^6).$
 \begin{lemma}
    The estimates
\begin{align*}
        \|T_{j,\pm,\kappa_2}''(\cdot)f\|_{L^\infty(\R_+) L^{\frac{2}{1\mp 2\delta}}(\B_1)}&\lesssim \|f\|_{\dot W^{1,\frac{2}{1\mp 2\delta}}(\R^6)}
        \\
        \|T_{j,\pm,\kappa_2}''(\cdot)f\|_{L^\infty(\R_+) L^{\frac{2}{1\mp 2\delta}}(\B_1)}&\lesssim \|f\|_{\dot W^{2,\frac{2}{1\mp 2\delta}}(\R^6)}
        \\
        \|\dot T_{j,\pm,\kappa_2}''(\cdot)f\|_{L^\infty(\R_+) L^{\frac{2}{1\mp 2\delta}}(\B_1)}&\lesssim \|f\|_{\dot W^{2,\frac{2}{1\mp 2\delta}}(\R^6)}
\end{align*}
hold for $j=17,\dots, 28$, and all $f\in C^\infty_{c,rad}(\R^6).$
\end{lemma}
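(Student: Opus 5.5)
The plan is to treat the twelve operators in three groups, reusing the pointwise kernel bounds from the proof of Lemma \ref{lem: strichartz D_3 k_2} and the $L^\infty_\tau L^{\frac{2}{1\mp2\delta}}$ arguments already used for $T''_{1,\pm},\dots,T''_{5,\pm}$. In every $G_j''$ the $\rho$-dependent prefactor is separated from a scalar functional of $f$. That functional is either an integral over $\supp\kappa\subset(1,r')$, or an integral over $(1,\infty)$ of $\partial_s[(1-\kappa)\O(s^{5/2})\widetilde f]$ against $(s\mp1)^{-\frac12-\lambda}$, or a multiple of $f(1)$. Since $\widetilde f=\kappa_2 f$ is supported in $[0,3r]$, all these functionals only see $f$ on a compact set. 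So the main task is to control the $\rho$-prefactors uniformly in $\tau$ in $L^{\frac{2}{1\mp2\delta}}(\B_1)$, and then bound the functionals by $\|f\|_{\dot W^{1,\frac{2}{1\mp2\delta}}}$ or $\|f\|_{\dot W^{2,\frac{2}{1\mp2\delta}}}$.

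For the terms supported on $\supp\chi_\lambda$ ($j=17,20,23,26$), I will trade powers of $\rho$ for decay in $\omega$, writing for example $\chi_\lambda\O(\rho^{-2}\langle\omega\rangle^{-1/2})=\chi_\lambda\O(\rho^{-5/2-\epsilon}\langle\omega\rangle^{-1-\epsilon})$ with small $\epsilon$. Lemma 5.1 of \cite{DonningerWallauch} then gives a pointwise bound $\rho^{-5/2-\epsilon}1_{[0,\rho_0)}(\rho)\,|\text{functional}|\,\langle\tau\rangle^{-2}$. Here I use that $\log(1\pm s)$ and $\log(1-\rho^2)$ are bounded on the relevant supports. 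The weight $\rho^{-5/2-\epsilon}$ lies in $L^{\frac{2}{1\mp2\delta}}(\B_1^6)$ as long as $\epsilon,\delta$ are small, since $(5/2+\epsilon)\frac{2}{1\mp2\delta}<6$. For $j=20$ the factor $(s-1)^{-\frac12\mp\delta}$ is integrable near $s=1$. In all cases the functional is bounded by Hölder on a compact set by $\|f\|_{W^{1,p}(\B_{3r})}$, $p=\frac{2}{1\mp2\delta}$, and then by $\|f\|_{\dot W^{1,p}(\R^6)}$ or $\|f\|_{\dot W^{2,p}(\R^6)}$ via Hardy's inequality and compact support, as in Lemma \ref{lem: norm estimate delta}. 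The values $f(1)$ are controlled as $|f(1)|\lesssim\|f\|_{W^{1,p}(\B_1)}$, exactly as in the proof of Lemma \ref{lem: strichartz D_3 k_2}.

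For the terms supported near $\rho=1$ ($j=18,19,21,22,24,25,27,28$), the prefactor is $(1\mp\rho)^{-\frac12+\lambda}\O(\rho^{-5/2}\langle\omega\rangle^{-1})$ times a functional with at least $\langle\omega\rangle^{0}$ decay. I will pull out an $\langle\omega\rangle^{-\delta}$ (or $\langle\omega\rangle^{-1/2}$ where available) and apply Lemma 5.3 of \cite{DonningerWallauch}. This yields a bound of the form $(1-\rho)^{-\frac12\pm\delta}\langle\tau+\log(1-\rho)-\log(s\pm1)\rangle^{-2}$ inside the $s$-integral. The $(1+\rho)$-terms are harmless. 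For the $(1-\rho)$-terms I change variables $\rho=1-e^{-x}$ and argue as in the bound for $T''_{2,\pm}$: the $L^p_\rho$ norm of $(1-\rho)^{-\frac12\pm\delta}\langle\tau+\log(1-\rho)\rangle^{-2}$ is $\lesssim1$ uniformly in $\tau$, because $(1-\rho)^{-1}\,d\rho=dx$ and $\langle\cdot\rangle^{-2}$ is integrable. The factor $(1-\rho)^{(-\frac12\pm\delta)p}=(1-\rho)^{-1}$ is exactly what makes this change of variables work. This gives the $T''$ bounds with the $W^{1,p}$ norm, and hence also with $\dot W^{2,p}$ by Hardy and support.

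For $\dot T''_{j,\pm,\kappa_2}$ the extra factor $\omega$ must be compensated. I will integrate by parts once more in the $s$-functional, as was done for $\dot G_{16}$: $\partial_s$ acting on $(1\pm s)^{\frac12-\lambda}$ gains $\langle\omega\rangle^{-1}$ at the cost of $\partial_s^2[\kappa\,s^{5/2}\widetilde f]$ and boundary terms $f(1),f'(1)$. These are controlled by $\|f\|_{W^{2,p}(\B_{3r})}\lesssim\|f\|_{\dot W^{2,p}(\R^6)}$. For the $f(1)$-terms with $\chi_\lambda$ in $s$ ($j=23,24,25$), trading $s$ powers gives the needed $\omega$ decay directly. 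The main obstacle is the $(s-1)^{-\frac12-\lambda}$ singular factor in $j=20,21,22$ after this extra integration by parts. Because $(1-\kappa)$ vanishes near $s=1$, I would integrate by parts only where $(1-\kappa)\neq0$, which produces $(s-1)^{\frac12-\lambda}$ and no boundary term at $s=1$. On $\supp\kappa_2$ the resulting integral is over a compact $s$-range, and the Lemma 5.3 convolution bound still closes after absorbing $\log(s-1)$ into the $\langle\cdot\rangle^{-2}$ kernel.
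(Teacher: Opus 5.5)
You organize the proof the same way the paper does: on $\supp\chi_\lambda$ you trade powers of $\rho$ for $\omega$-decay and apply Lemma 5.1 of \cite{DonningerWallauch}; near the cone you change variables $\rho=1-e^{-x}$, using $(1-\rho)^{(-\frac12\pm\delta)p}=(1-\rho)^{-1}$; and for $\dot T''_{j,\pm,\kappa_2}$ you integrate by parts once more in $s$. The step that fails is your kernel bound for the terms supported near $\rho=1$.

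For $j=18,19,21,22$ the symbol is $\O(\rho^{-\frac52}\langle\omega\rangle^{-1})$ times an $s$-functional of size $\O(\langle\omega\rangle^{0})$. In the first estimate you control $f$ only in $\dot W^{1,p}$, so you cannot integrate by parts again. In the $\dot T''$ estimate for $j=18,19,21,22,27,28$, the extra factor $\omega$ consumes exactly the $\langle\omega\rangle^{-1}$ gained by the additional integration by parts. In all these cases the total symbol has order exactly $-1$.

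There is no $\langle\omega\rangle^{-\delta}$ to ``pull out''. On $\supp(1-\chi_\lambda)$ one has $\rho\gtrsim\langle\omega\rangle^{-1}$, so trading between $\rho$ and $\omega$ goes the wrong way. Consequently Lemma 5.3 cannot give a bound by $\langle\tau+\log(1-\rho)-\log(s\pm1)\rangle^{-2}$. The Fourier transform of an order $-1$ symbol generally has a logarithmic singularity at the origin; for example $\int_\R e^{i\omega x}\langle\omega\rangle^{-1}\,d\omega=2K_0(|x|)\sim-2\log|x|$ as $x\to 0$, which is not bounded by $\langle x\rangle^{-2}$.

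The repair is what the paper does. Use the oscillatory-integral bound for symbols of order $-1+\epsilon$, which yields the singular kernel $g(x)=\langle x\rangle^{-2}|x|^{-\epsilon}$ (the paper takes $\epsilon=\frac1{24}$). Then check that your $L^\infty_\tau L^p_\rho$ argument tolerates this integrable singularity:
\begin{itemize}
\item On $\rho\in(\frac12,1)$ your change of variables still closes, because $g^{p}\in L^1(\R)$ when $\epsilon p<1$.
\item On $\rho\in(0,\frac12)$ the factor $(1-\rho)^{-1}$ no longer helps, and the weight $\rho^{5-\frac{5p}{2}}$ is mildly singular for $p=\frac{2}{1-2\delta}$. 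You need separately that $\int_0^{1/2}\rho^{5-\frac{5p}{2}}\,|\tau+\log(1-\rho)-c|^{-\epsilon p}\,d\rho\lesssim 1$ uniformly in $\tau,c\in\R$. This follows from H\"older's inequality, since $\rho\mapsto\log(1-\rho)$ is a diffeomorphism there and both singularities are integrable for small $\epsilon,\delta$.
\end{itemize}
With this replacement your argument goes through; as written, the central pointwise estimate for the near-cone terms is false.

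Conversely, the $(s-1)^{-\frac12-\lambda}$ factor you single out as the main obstacle for $j=20,21,22$ is harmless. Since $(1-\kappa)\kappa_2$ is supported in $[\frac{1+r}{2},3r]$, that factor is smooth there and the integration by parts produces no boundary terms. The real difficulty is the borderline $\omega$-decay.
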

\begin{proof}
    An application of Lemma 5.1 of \cite{DonningerWallauch} yields
    \begin{align*}
        |T_{17,\pm,\kappa_2}''(\tau)f|&\lesssim \rho^{-\frac{5}{2}-\delta}\int_1^\infty |\partial_s [\kappa(s) f(s)]| ds
        \\
        &\lesssim \rho^{-\frac{5}{2}-\delta}\|f\|_{W^{1,\frac{2}{1\mp 2\delta}}((1,r'))}
    \end{align*}
    for some $r' \in (1,\infty).$
    So, one readily obtains the desired bounds on $T_{17,\pm,\kappa_2}''$. To control $\dot T_{17,\pm,\kappa_2}''$, one performs one more integration by parts and then argues in the same fashion to obtain
    \begin{align*}
        |T_{17,\pm,\kappa_2}''(\cdot)f|&\lesssim \rho^{-\frac{5}{2}-\delta}\left(|f(1)|+|f'(1)|+\|f\|_{W^{2,\frac{2}{1\mp 2\delta}}((1,r'))}\right)
    \end{align*}
    which implies the claimed estimates on $\dot T_{17,\pm,\kappa_2}''$. To bound  $T_{18,\pm,\kappa_2}''$, we infer the estimate
    \begin{align*}
        |T_{18,\pm,\kappa_2}''(\tau)f(\rho)|&\lesssim \rho^{-\frac{5}{2}}(1-\rho)^{-\frac{1}{2}\pm \delta}\int_1^\infty |\partial_s [\kappa(s) f(s)]|g(\tau +\log(1-\rho)-\log(s-1))ds,
    \end{align*}
    with $g(x)=\langle x\rangle^{-2}|x|^{-\frac{1}{24}}$.
    Thus, by decomposing the $\rho$ integral into two integrals and changing variables in the well established fashion and setting $ \widetilde f(s)=|\partial_s [\kappa(s) f(s)]$, we obtain that
    \begin{align*}
         \|T_{18,\pm,\kappa_2}''(\tau)f\|_{L^{\frac{2}{1\mp 2\delta}}(\B_1)}^{\frac{2}{1\mp 2\delta}}&\lesssim \int_1^\infty |\partial_s [\kappa(s) f(s)]|\int_0^\frac{1}{2}\rho^{-\frac{1}{2}} g(\tau +\log(1-\rho)-\log(s-1))^{\frac{2}{1\mp 2\delta}}d\rho ds 
         \\
         &\quad +\int_{-\infty}^\infty |\widetilde f(1+e^y)|e^y\int_{-\log \frac{1}{2}}^\infty g(\tau -x-y)^{\frac{2}{1\mp 2\delta}}dx dy
         \\
         &\lesssim \int_1^\infty |\partial_s [\kappa(s) f(s)]|\int_0^\frac{1}{2}\rho^{-\frac{1}{2}} |\tau +\log(1-\rho)-\log(s-1)|^{-\frac{1}{4}}d\rho ds 
         \\
         &\quad +\int_{-\infty}^\infty |\widetilde f(1+e^y)|e^y\int_{\R} g(x)^{\frac{2}{1\mp 2\delta}}dx dy
         \\
         &\lesssim \int_1^\infty |\partial_s [\kappa(s) f(s)]|ds \lesssim \|f\|_{W^{1,\frac{2}{1\mp 2\delta}}((1,r'))}
    \end{align*}
    and we obtain the desired estimates. Once more, one obtains the estimates for $ \dot T_{18,\pm,\kappa_2}''f$ by integrating by parts another time and a straightforward modification of the arguments exhibited in this proof. One also bounds $T_{19,\pm,\kappa_2}'',\dots T_{22,\pm,\kappa_2}''$ and $\dot T_{19,\pm,\kappa_2}'',\dots \dot T_{22,\pm,\kappa_2}''$  with the same strategy. Lastly, one readily modifies the arguments employed to bound the operators $T_{22,\pm,\kappa_2},\dots T_{27,\pm,\kappa_2}$ and $ \dot T_{22,\pm,\kappa_2},\dots \dot T_{27,\pm,\kappa_2}$ in the proof of Lemma \ref{lem: strichartz D_3 k_2} to estimate the remaining operators and we conclude this proof.
\end{proof}
Using all of these kernel estimates and the fact that for $ s\in \mathbb{N}_0$
\begin{equation}\label{eq:interpolation id}
    [\dot W^{s,\frac{2}{1-2\delta}}(\R^6), \dot W^{s,\frac{2}{1+2\delta}}(\R^6)]_{\frac{1}{2}}\simeq \dot H^s(\R^6)
\end{equation}

an application of Prop. \ref{prop:interpolation} yields estimates on the semigroup $\Sf$.
\begin{lemma}\label{lem: norm esti 1 }
    The estimate
    \begin{align*}
        \|\Sf(\tau)\kappa_2\ff\|_{L^\infty_\tau (\R_+)H^2(\B_1)\times H^1(\B_1)}\lesssim \|\ff\|_{\mathcal{H}^2}
        \end{align*}
        holds for all $\ff \in C^\infty_{c,rad}(\R^6)\times  C^\infty_{c,rad}(\R^6).$
\end{lemma}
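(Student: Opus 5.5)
The plan is to assemble the lemma from the kernel estimates just proved, by complex interpolation between the two weighted lines $\Re\lambda=\pm\delta$. As in the Strichartz case, split $\Sf\kappa_2\ff=\Sf_0\kappa_2\ff+(\Sf-\Sf_0)\kappa_2\ff$. The free part is harmless: $\|\Sf_0(\tau)\kappa_2\ff\|_{\mathcal{H}^2}\lesssim\|\kappa_2\ff\|_{\mathcal{H}^2}\lesssim\|\ff\|_{\mathcal{H}^2}$ by Proposition~\ref{prop:ExplicitSolution}. Multiplication by $\kappa_2$ is bounded on $\dot H^2\times\dot H^1$ of radial functions, by Hardy's inequality. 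Restricting to $\B_1$, the homogeneous norms control the inhomogeneous ones, again by Hardy's inequality on the ball.

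For the difference, use the Laplace representation~\eqref{eq:IntegralRepresentation}, shifted to $\Re\lambda=\pm\delta$. This is legitimate since $\Sf=\Sf_1$ on $C^\infty_{c,rad}$ data, so the line $-\delta$ can also be used. One obtains
\[
e^{\mp\delta\tau}\partial_\rho^2[(\Sf-\Sf_0)(\tau)\kappa_2\ff]_1=\textstyle\sum_j \big(T''_{j,\pm}+\dot T''_{j,\pm}\big)(\tau)(\kappa_2 f_1)+\sum_j T''_{j,\pm}(\tau)(\kappa_2 f_2),
\]
using $\partial_\rho^2\mathcal R_{int}=D_1''+D_2''+D_3''$. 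Here the $\lambda f_1$ summand of $F_\lambda$ produces the dotted operators, and the $\rho f_1'$ and $f_2$ summands produce the undotted ones with one derivative fewer. For $D_3''$ the $\kappa_2$-versions are used. For $D_2''$ the second decomposition is used in the dotted terms.

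The preceding three lemmas then give
\[
\|\partial_\rho^2[(\Sf-\Sf_0)\kappa_2\ff]_1\|_{L^\infty(\R_+,e^{\mp\delta\tau})L^{\frac{2}{1\mp2\delta}}(\B_1)}\lesssim\|f_1\|_{\dot W^{2,\frac{2}{1\mp2\delta}}}+\|f_2\|_{\dot W^{1,\frac{2}{1\mp2\delta}}}.
\]
Lower-order derivatives ($\partial_\rho$ and the function itself) are obtained the same way from the first-derivative formula, or by Hardy/Poincaré on $\B_1$. The second component is $[\Sf\ff]_2=(\partial_\tau-\Lambda-1)[\Sf\ff]_1$, i.e.\ the resolvent second component $(\lambda-1)u_1-\rho u_1'-f_1$. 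Its $H^1(\B_1)$ norm is handled by the same kernels, since $\lambda\partial_\rho\mathcal R$ has the same structure as $\partial_\rho^2\mathcal R$ up to the dotted operators.

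Interpolation is the final step. Apply Proposition~\ref{prop:interpolation}(2) to the linear map $\ff\mapsto(\Sf-\Sf_0)\kappa_2\ff$, with exponents $q_0=\frac{2}{1-2\delta}$, $q_1=\frac{2}{1+2\delta}$, so that $q=2$ and $s=2$ (respectively $1$). On the data side use \eqref{eq:interpolation id}. The weights $e^{\mp\delta\tau}$ cancel at $\theta=\frac12$, giving the $L^\infty_\tau H^2(\B_1)\times H^1(\B_1)$ bound by $\|\ff\|_{\mathcal H^2}$.

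The main obstacle is justifying the interpolation. Proposition~\ref{prop:interpolation}(2) is stated for $C_\tau$-type spaces, so continuity in $\tau$ must be checked. The operator must also be defined consistently on a common dense class, namely $C^\infty_{c,rad}$, where both contour representations agree because $c_{0,1}\neq0$ on both lines. Interchanging $\partial_\rho^2$ with $\lim_N$ needs two integrations by parts in $\omega$, as in \cite[Lemma 5.1]{Wallauch2024}. I would treat these points by density and the analyticity already established.
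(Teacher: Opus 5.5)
Your proposal is correct and follows the paper's route. You derive weighted $W^{2,\frac{2}{1\mp2\delta}}(\B_1)$ bounds on the lines $\Re\lambda=\pm\delta$ from the kernel lemmas for $D_1'',D_2'',D_3''$, interpolate via Proposition~\ref{prop:interpolation} and \eqref{eq:interpolation id}, and treat the second component through $[\Sf\ff]_2=(\partial_\tau-\Lambda-1)[\Sf\ff]_1$, absorbing the factor $\lambda$ into operators of the same type. Your explicit splitting off of the free part $\Sf_0\kappa_2\ff$, bounded directly in $\mathcal H^2$ and restricted to $\B_1$ via Hardy, is a point the paper leaves implicit, since the interpolated weighted bounds really apply only to $(\Sf-\Sf_0)\kappa_2\ff$.
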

\begin{proof}
By employing the previously established kernel bounds, we conclude that
\begin{align*}
        \|e^{\pm  \delta\tau}[\Sf(\tau)\ff]_1\|_{L^\infty_\tau(\R_+) W^{2,\frac{2}{1\mp 2\delta}}(\B_1)}&\lesssim \|\ff\|_{\dot W^{2,\frac{2}{1\mp 2\delta}}\times \dot W^{1,\frac{2}{1\mp 2\delta}}(\R^6)}.
\end{align*}
 \footnote{Strictly speaking, the above considerations only bound the seminorm $L^\infty_\tau(\R_+)  \dot W^{2,\frac{2}{1\mp 2\delta} }$. However, the terms with fewer derivatives are all of lower order compared to these terms, and we require no further consideration to bound them as well.}
 Consequently, by employing Prop. \ref{prop:interpolation} and \eqref{eq:interpolation id} we conclude that
\begin{align*}
    \|[\Sf(\tau)\kappa_2\ff]_1\|_{L^\infty_\tau (\R_+)H^2(\B_1)}\lesssim \|\ff\|_{\mathcal{H}^2}.
\end{align*}
To prove the estimate
\begin{align*}
    \|[\Sf(\tau)\kappa_2\ff]_2\|_{L^\infty_\tau (\R_+)H^1(\B_1)}\lesssim \|\ff\|_{\mathcal{H}^2}.
\end{align*}
we note that according to \eqref{eq:cNLWSimilarity}
we have the identity
\begin{align*}
 [\Sf(\tau)\kappa_2\ff(\rho)]_2=   \partial_\tau [\Sf(\tau)\kappa_2\ff(\rho)]_1-[\Sf(\tau)\kappa_2\ff(\rho)]_1-\rho\partial_\rho [\Sf(\tau)\kappa_2\ff(\rho)]_1.
\end{align*}
Note that the the $\tau$-derivative produces a factor $\lambda$. Consequently, when estimating $[\Sf(\tau)\kappa_2\ff(\rho)]_2$ in $H^1(\B^d_1)$, one arrives at operators that are of the same form as the previously bounded operators $T_{j,\pm}''$ and $\dot T_{j,\pm}''$ and $T_{j,\pm,\kappa_2}''$ and $\dot T_{j,\pm,\kappa_2}''$ and one readily estimates them in the well established fashion. 
\end{proof}

\subsection{Derivative estimates on $\Sf(1- \kappa_2)$}
Here, we need to study the terms

\begin{align*}
 \widetilde D_3''(F_\lambda)(\rho;\lambda):&=\partial_\rho^2\left[ \mathcal{R}_{int}((1-\kappa_2)F_\lambda)-\mathcal{R}_{\mathrm{f}_{int}}((1-\kappa_2)F_\lambda)\right]
 \\
 &= u_0''(\rho;\lambda)[\mu((1-\kappa_2)F_\lambda)+U_2(1)F_\lambda(1)]-u_{\mathrm{f_0}}''(\rho;\lambda)[\mu_\mathrm{f}((1-\kappa_2)F_\lambda)+U_{\mathrm{f_2}}(1)F_\lambda(1)].
\end{align*}

\begin{lemma}
    We can decompose
$\widetilde D_3''(F_\lambda)$
as 
$$ \widetilde D_3''(F_\lambda)(\rho;\lambda)=\sum_{j=20}^{22}\left[\widetilde G_j''(F_+)(\rho;\lambda)+\widetilde G_j''(F_-)(\rho;\lambda)\right]$$
where 
$$F_\pm(s)= \left[2(s\pm1)\kappa_2'(s)s^{\frac{5}{2}}f_1(s)+5(1-\kappa_2(s))s^{\frac{3}{2}}f_1(s)+2(1-\kappa_2(s))s^{\frac{5}{2}}(f_1'(s)+f_2(s))\right]$$
and 
\begin{align*}
   \widetilde G_{20}''(F)(\rho;\lambda)&=\chi_\lambda(\rho)(1-\rho^2)^{\frac{\lambda}{2}}\O(\rho^{-2}\langle\omega\rangle^{-\frac{1}{2}})
              \\
              &\quad \times \int_1^\infty \left[\frac{\partial_s[ \O(s^0\langle\omega\rangle^0)F(s)]}{(s-1)^{\frac{1}{2}+\lambda}}+\frac{\partial_s[\O(s^0\langle\omega\rangle^0)F(s)]}{(1+s)^{\frac{1}{2}+\lambda}}\right] ds
              \\
           \widetilde   G_{21}''(F)(\rho;\lambda)&=(1-\chi_\lambda(\rho))(1-\rho)^{-\frac{1}{2}+\lambda}\O(\rho^{-\frac{5}{2}}\langle\omega\rangle^{-1})
              \\
              &\quad \times\int_1^\infty \left[\frac{\partial_s[ \O(s^0\langle\omega\rangle^0)F(s)]}{(s-1)^{\frac{1}{2}+\lambda}}+\frac{\partial_s[\O(s^0\langle\omega\rangle^0)F(s)]}{(1+s)^{\frac{1}{2}+\lambda}}\right] ds
              \\
              \widetilde G_{22}''(F)(\rho;\lambda)&=(1-\chi_\lambda(\rho))(1+\rho)^{-\frac{1}{2}+\lambda}\O(\rho^{-\frac{5}{2}}\langle\omega\rangle^{-1})
              \\
              &\quad \times  \int_1^\infty \left[\frac{\partial_s[ \O(s^0\langle\omega\rangle^0)F(s)]}{(s-1)^{\frac{1}{2}+\lambda}}+\frac{\partial_s[\O(s^0\langle\omega\rangle^0)F(s)]}{(1+s)^{\frac{1}{2}+\lambda}}\right] ds.
\end{align*}

\end{lemma}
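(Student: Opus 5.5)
We derive this decomposition by the same route that gave the decomposition of $\mathcal{R}_{int}((1-\kappa_2)F_\lambda)-\mathcal{R}_{\mathrm{f}_{int}}((1-\kappa_2)F_\lambda)$ into $\widetilde G_{19},\widetilde G_{20},\widetilde G_{21}$, with $u_0,u_{\mathrm{f}_0}$ replaced by their second derivatives.

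\textbf{Step 1: only the matching term survives.} Since $(1-\kappa_2)F_\lambda$ vanishes on $[0,2r]\supset[0,1]$, all interior integrals in $\mathcal{R}_{int}$ vanish. The same holds for the boundary terms involving $F_\lambda(\rho)$, and $F_\lambda(1)=0$. Hence only $u_0''\,\mu((1-\kappa_2)F_\lambda)-u_{\mathrm{f}_0}''\,\mu_{\mathrm{f}}((1-\kappa_2)F_\lambda)$ remains, which is the second line of the definition of $\widetilde D_3''$. In $\mu$, the $\kappa$-part vanishes because $\kappa(1-\kappa_2)=0$, and $(1-\kappa)(1-\kappa_2)=1-\kappa_2$. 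So only the $a_{2,3}$ and $a_{2,4}$ integrals against $(s\mp1)^{-\frac32-\lambda}$ remain, each multiplied by $c_{0,1}(\lambda)^{-1}(3+2\lambda)^{-1/2}$. The computation preceding the first-order analogue then applies verbatim: substituting $F_\lambda=-(\frac12+\lambda)f_1+\frac52 f_1+sf_1'-f_2$ and integrating by parts once, using $(\frac12+\lambda)(s\pm1)^{-\frac32-\lambda}=-\partial_s(s\pm1)^{-\frac12-\lambda}$, absorbs the factor $\lambda$. A second integration by parts gains $(1+2\lambda)^{-1}$. The result is an expression of the form $(1+2\lambda)^{-1}\int_1^\infty \partial_s[\O(s^0\langle\omega\rangle^0)F_\pm(s)](s\mp1)^{-\frac12-\lambda}ds$, where the $g_3,g_4$ factors and the $s^{5/2}$ weights are carried inside the $\O$-symbol. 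Boundary terms vanish because $1-\kappa_2$ vanishes near $s=1$ and $f$ has compact support.

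\textbf{Step 2: the $\rho$-dependent prefactor.} We split $u_0''$ with $\chi_\lambda$. On $\operatorname{supp}\chi_\lambda$, the form of $u_0$ from Lemma \ref{lem:solutions interior regime}, differentiated twice (each derivative costs $\rho^{-1}$ or $\langle\omega\rangle$, traded as before), yields $(1-\rho^2)^{\frac{\lambda}{2}}\O(\rho^{-2}\langle\omega\rangle^{2})$. Combined with $c_{0,1}^{-1}(3+2\lambda)^{-1/2}(1+2\lambda)^{-1}=\O(\langle\omega\rangle^{-3/2})$, this should give the $\langle\omega\rangle^{-1/2}$ in $\widetilde G_{20}''$ after the appropriate trading. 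On $\operatorname{supp}(1-\chi_\lambda)$ we write $u_0=c_{0,1}u_1+c_{0,2}u_2$. Differentiating $(1\mp\rho)^{\frac32+\lambda}$ twice produces $(1\mp\rho)^{-\frac12+\lambda}$ times $\O(\langle\omega\rangle^2)$. The factor $c_{0,1}$ cancels in the $u_1$ term, and $c_{0,2}c_{0,1}^{-1}=\O(1)$ for $\lambda=\pm\delta+i\omega$, since $c_{0,1}$ has no zeros on these lines and tends to $z_\pm\neq0$. Multiplied by $\O(\langle\omega\rangle^{-1/2})\cdot(1+2\lambda)^{-1}$ and with the $a_{2,j}$ bounded, this gives $\O(\rho^{-\frac52}\langle\omega\rangle^{-1})$ for the $\widetilde G_{21}''$ and $\widetilde G_{22}''$ prefactors. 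The free part $u_{\mathrm{f}_0}''\mu_{\mathrm{f}}$ has the identical structure and is absorbed into the same symbols. This is also where the sign of $f_2$ in $F_\pm$ should be double-checked against the first-order version.

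\textbf{Main obstacle.} The delicate point is the bookkeeping of the $\omega$-powers in Step 2. We must verify that the two derivatives on the oscillatory factor $(1\mp\rho)^{\frac32+\lambda}$ cost exactly $\langle\omega\rangle^{2}$ (and not $\langle\omega\rangle^{2}$ times a further loss through $e_j$ and $r_j$). For that we use the symbol bounds $e_j,r_j=(1-\rho)\O(\langle\omega\rangle^{-1})$, whose derivatives are of order $\O(\langle\omega\rangle^0)$. We must also confirm that $(1+2\lambda)^{-1}$, $(3+2\lambda)^{-1/2}$ and $c_{0,1}^{-1}$ are uniformly bounded on $\Re\lambda=\pm\delta$, which requires $\delta<\frac12$.
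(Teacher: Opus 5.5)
Your Step 1 is fine and follows the paper's route: only the matching term survives, and the two integrations by parts done before the first-order decomposition absorb the $\lambda$ in $F_\lambda$ and produce $(1+2\lambda)^{-1}$. The gap is in Step 2. There you treat $u_0''\mu$ and $u_{\mathrm{f}_0}''\mu_{\mathrm{f}}$ separately and let the free part be ``absorbed into the same symbols''. Each of these terms is one power of $\langle\omega\rangle$ too large, and your own count shows it.

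The prefactor of the integral in $\mu((1-\kappa_2)F_\lambda)$ is $c_{0,1}^{-1}(3+2\lambda)^{-1/2}(1+2\lambda)^{-1}=\O(\langle\omega\rangle^{-3/2})$.
\begin{itemize}
\item On $\operatorname{supp}(1-\chi_\lambda)$: $c_{0,1}u_1''$ contributes $\langle\omega\rangle^{2}$ from the two derivatives on $(1+\rho)^{\frac32+\lambda}$, and $\langle\omega\rangle^{-1/2}$ from its normalization $(3+2\lambda)^{-1/2}$. The single product is therefore $\O(\rho^{-5/2}\langle\omega\rangle^{0})$, not $\O(\rho^{-5/2}\langle\omega\rangle^{-1})$. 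For $V=0$ the $\lambda$-dependence of its leading part is $\frac{(\frac32+\lambda)(\frac12+\lambda)}{(3+2\lambda)(1+2\lambda)}=\frac14$, a nonzero constant, so no hidden decay is available.
\item On $\operatorname{supp}\chi_\lambda$: you get $\O(\rho^{-2}\langle\omega\rangle^{2})\cdot\O(\langle\omega\rangle^{-3/2})=\O(\rho^{-2}\langle\omega\rangle^{1/2})$, and trading cannot repair this. There $\langle\omega\rangle\lesssim\rho^{-1}$, so trading only turns it into $\O(\rho^{-3}\langle\omega\rangle^{-1/2})$. That is a worse singularity at $\rho=0$ (e.g.\ $|\cdot|^{-3}\notin L^2(\B_1)$ in $\R^6$), not the stated $\O(\rho^{-2}\langle\omega\rangle^{-1/2})$.
\end{itemize}

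The missing idea is that the extra factor $\langle\omega\rangle^{-1}$ comes from subtracting the free resolvent. Using \eqref{eq:ab-cd}, write
\[ u_0''\mu-u_{\mathrm{f}_0}''\mu_{\mathrm{f}}=u_0''(\mu-\mu_{\mathrm{f}})+(u_0''-u_{\mathrm{f}_0}'')\mu_{\mathrm{f}}, \]
and use that every difference gains $\langle\omega\rangle^{-1}$:
\begin{itemize}
\item $c_{0,j}-c_{\mathrm{f}_{0,j}}=\O(\langle\omega\rangle^{-1})$, hence also $c_{0,1}^{-1}-c_{\mathrm{f}_{0,1}}^{-1}$, since both are bounded away from $0$ on $\Re\lambda=\pm\delta$;
\item $r_j=(1-\rho)\O(\langle\omega\rangle^{-1})$ with $r_{\mathrm{f}_j}=0$;
\item $a_{2,j}-a_{\mathrm{f}_{2,j}}=\O(\langle\omega\rangle^{-1})$;
\item $g_j-g_{\mathrm{f}_j}=\O(\rho^{-1}\langle\omega\rangle^{-1})$ for $j=3,4$;
\item $u_0-u_{\mathrm{f}_0}=\O(\rho^0\langle\omega\rangle)$ on $\operatorname{supp}\chi_\lambda$, as in the proof of Lemma \ref{lem:decompD1}.
\end{itemize}
This yields exactly $\O(\rho^{-2}\langle\omega\rangle^{-1/2})$ and $\O(\rho^{-5/2}\langle\omega\rangle^{-1})$. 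It is the same mechanism behind the $\langle\omega\rangle^{-1/2}$ and $\langle\omega\rangle^{-3}$ in the first-order $\widetilde G_{19},\widetilde G_{20},\widetilde G_{21}$, on which the paper's ``proceeding as above'' implicitly relies. By comparison, the points you flagged as the main obstacle (derivatives of $e_j,r_j$, and boundedness of $(1+2\lambda)^{-1}$ and $c_{0,1}^{-1}$) are harmless.
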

Proceeding as above, we define another set of operators 
\begin{align*}
 \widetilde    T_{j,\pm,1-\kappa_2}''(\tau)\ff(\rho):=\lim_{N\to \infty}\int_{-N}^N e^{i \omega \tau } \left[\widetilde G_j''(F_-)(\rho;\pm \delta +i \omega)+\widetilde G_j''(F_+)(\rho;\pm \delta +i \omega)\right] d\omega
\end{align*}
for $j=20,21,22$. $\tau \in \R_+$, and $\ff \in C_{c,rad}^{\infty}(\R^6)\times C_{c,rad}^{\infty}(\R^6)$.
  \begin{lemma}
    The estimates 
    \begin{align*}
        \| \widetilde    T_{j,\pm,1-\kappa_2}''\ff\|_{L^\infty(\R_+)L^{\frac{2}{1\mp 2\delta}}(\B_1)}&\lesssim \|\ff\|_{\dot{W}^{2,\frac{6}{3\mp \delta}}(\R^6)\times \dot{W}^{1,\frac{6}{3\mp \delta}}(\R^6)}
    \end{align*}
    hold for $j=20,21,22$ and $\ff \in C_{c,rad}^{\infty}(\R^6)\times C_{c,rad}^{\infty}(\R^6)$.
\end{lemma}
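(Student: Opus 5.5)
Since the goal is an $L^\infty_\tau L^{\frac{2}{1\mp 2\delta}}(\B_1)$ bound, I would estimate the kernels pointwise in $\rho$ and then integrate in $\rho$, following the proof of Lemma \ref{lem: strichartz1-k2 lemma} and the estimates of $T''_{18,\pm,\kappa_2}$. The first step is to bound $\widetilde T''_{j,\pm,1-\kappa_2}(\tau)\ff(\rho)$ pointwise. I would insert $\lambda=\pm\delta+i\omega$ into $\widetilde G''_j(F_\pm)$, interchange the $\omega$-integral with the $s$-integral, and apply Lemmas 5.1--5.4 of \cite{DonningerWallauch}. The $\langle\omega\rangle^{-\frac12}$ gain in $\widetilde G''_{20}$ can be exchanged for $\rho$-powers on $\supp\chi_\lambda$, and $\widetilde G''_{21},\widetilde G''_{22}$ already carry $\langle\omega\rangle^{-1}$. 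This should give
\[
|\widetilde T''_{j,\pm,1-\kappa_2}(\tau)\ff(\rho)|\lesssim w_j(\rho)\sum_{\sigma=\pm1}\int_1^\infty \frac{|\partial_s[\O(s^0)F_\pm(s)]|}{(s+\sigma)^{\frac12\pm\delta}}\, g\big(\tau+\ell_j(\rho)-\log(s+\sigma)\big)\,ds,
\]
with $g(x)=\langle x\rangle^{-2}|x|^{-\frac1{24}}$. The weights are $w_{20}=1_{[0,\rho_0)}\rho^{-\frac52+\epsilon}$ with $\ell_{20}=\frac12\log(1-\rho^2)$, $w_{21}=\rho^{-\frac52}(1-\rho)^{-\frac12\pm\delta}$ with $\ell_{21}=\log(1-\rho)$, and $w_{22}=\rho^{-\frac52}$ with $\ell_{22}=\log(1+\rho)$. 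The factor $\rho^{-\frac52}$ is harmless: it only occurs with $(1-\chi_\lambda)$, which sits away from $0$, or it is weakened by trading powers of $\omega$.

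Next comes the $\rho$-integration, split into $(0,\frac12)$ and $(\frac12,1)$. On $(0,\frac12)$ the shift $\ell_j(\rho)$ is bounded, so I would use $\sup_x g\lesssim$ integrable singularity together with Lemma 5.7 of \cite{DonningerWallauch}, as was done for $T''_{18,\pm,\kappa_2}$. This reduces to $\int_1^\infty|\partial_s F|(s+\sigma)^{-\frac12\mp\delta}g(\tau-\log(s+\sigma))\,ds$. On $(\frac12,1)$, for $j=21$, I would substitute $\rho=1-e^{-x}$. The factor $(1-\rho)^{-(\frac12\mp\delta)\frac{2}{1\mp2\delta}}=(1-\rho)^{-1}$ is exactly the Jacobian of that substitution. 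Minkowski and Young in $x$ (with $\|g\|_{L^1}$ and $\|g\|_{L^{\frac{2}{1\mp2\delta}}}$ finite) then bound the $L^\infty_\tau L^{\frac{2}{1\mp2\delta}}_\rho$ norm by the same one-dimensional convolution expression, but with $\tau$ replaced by a free variable and the $L^\infty$ norm taken over it.

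The last step is the $s$-integral. I would substitute $s=\sigma'+e^y$ as in the proof of Lemma \ref{lem: strichartz1-k2 lemma} and apply Young's inequality, pairing the $L^{\frac{6}{3\mp\delta}}(\R)$ norm of $e^{y(\frac12\mp\delta)}|F'(\sigma'+e^y)|$, plus the analogous $F$ term, against $\|g\|_{L^{p'}}$ for the conjugate exponent, which is admissible for $p=\infty$. By Lemma \ref{lem: norm estimate delta} this is bounded by $\|f_1\|_{\dot W^{2,\frac{6}{3\mp\delta}}}+\|f_2\|_{\dot W^{1,\frac{6}{3\mp\delta}}}$, and the discrepancy in the sign of $f_2$ in $F_\pm$ is irrelevant. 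Note that the right side involves $\frac{6}{3\mp\delta}$-norms while the left side involves $L^{\frac{2}{1\mp2\delta}}$. This mismatch is harmless because the exponent is entirely on the data side: the $\rho$-integration only uses integrability of $g$ powers, not any relation between the exponents.

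The step I expect to be the main obstacle is the interchange of the $\omega$-limit with the $s$-integral, together with the resulting kernel bound, in the $\widetilde G''_{20}$ and $\widetilde G''_{21}$ terms near $s=1$. There the factor $(s-1)^{-\frac12-\lambda}$ offers only the weak $\langle\omega\rangle^{-\frac12}$ or $\langle\omega\rangle^{-1}$ decay, and the $(1-\rho)^{-\frac12\pm\delta}$ singularity must be absorbed exactly by the $\rho$-measure. If the direct Lemma 5.3/5.4 bound fails, I would first integrate by parts once more in $s$, as done for $\dot G_{16}$. That gains $\langle\omega\rangle^{-1}$ at the cost of boundary terms $|F(1)|+|F'(1)|$, which I would control via the trace argument $|f(1)|\lesssim\|f\|_{W^{1,p}}$, although here $F$ is supported on $s\ge 2r$, so these boundary terms in fact vanish.
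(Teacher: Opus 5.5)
Your route is the paper's route: pointwise kernel bounds from Lemmas 5.2/5.4 of \cite{DonningerWallauch}, a split of the $\rho$-integral at $\frac12$, Lemma 5.7 away from the cone, the substitutions $\rho=1-e^{-x}$ and $s=1+e^{y}$, Young's inequality, and Lemma \ref{lem: norm estimate delta}. The gap is in the near-cone part of $j=21$. It sits exactly where you assert that the mismatch between $p:=\frac{2}{1\mp 2\delta}$ and $q:=\frac{6}{3\mp\delta}$ is harmless.

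Away from the cone that assertion is true. There Lemma 5.7 collapses the $\rho$-integral into a single kernel in $\tau-\log(s\pm 1)$, and an $L^\infty$-output Young bound suffices. On $(\frac12,1)$, however, the weight $(1-\rho)^{-(\frac12\mp\delta)p}=(1-\rho)^{-1}$ is exactly critical. Set $h(y):=\big(|F'|+s^{-1}|F|\big)(1+e^{y})\,e^{y(\frac12\mp\delta)}$. Then you get
\[
\int_{\log 2}^{\infty}\Big(\int_{\R}h(y)\,g(\tau-x-y)\,dy\Big)^{p}dx=\int_{-\infty}^{\tau-\log 2}|(h*g)(z)|^{p}\,dz ,
\]
whose supremum over $\tau\geqslant 0$ is $\|h*g\|_{L^p(\R)}^{p}$. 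An $L^p$ norm over an unbounded interval is not controlled by $\sup_z|(h*g)(z)|$. So pairing $\|h\|_{L^q}$ with $\|g\|_{L^{q'}}$, as you propose, proves nothing here. What you need is $\|h*g\|_{L^p}\leqslant\|h\|_{L^q}\|g\|_{L^r}$ with $1+\frac1p=\frac1q+\frac1r$, and this requires $p\geqslant q$. Since $\frac1p-\frac1q=\mp\frac{5\delta}{6}$:
\begin{itemize}
\item For the upper sign the step works. Take $\frac1r=1-\frac{5\delta}{6}$; then $g\in L^r$ because its singularity at $0$ is mild.
\item For the lower sign it fails, because $\frac{2}{1+2\delta}<\frac{6}{3+\delta}$.
\end{itemize}

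For the lower sign this cannot be repaired within the same argument. The weights in Lemma \ref{lem: norm estimate delta} are such that $\|h\|_{L^q}$ is of the size of the data norm, and $h$ can carry any $L^q$ tail as $y\to+\infty$. For example, take $f_2=0$ and $f_1''(s)\simeq s^{-3-\delta}(\log s)^{-a}$ for large $s$, with $\frac1q<a\leqslant\frac1p$.
\begin{itemize}
\item This $f_1$ has finite $\dot W^{2,q}(\R^6)$ norm.
\item However, $h(y)\simeq y^{-a}\notin L^p$.
\item Since $h,g\geqslant 0$, it follows that $h*g\notin L^p$.
\end{itemize}
Truncating such data to $C^\infty_{c,rad}$ shows that the absolute-value kernel bound cannot give the estimate with a uniform constant.

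Lowering the target exponent below $\frac{2}{1+2\delta}$ turns the weight into $e^{-cx}\,dx$, and then your $L^\infty$ argument does work. But the interpolation with the $+\delta$ line then no longer lands on $L^2$. The $+\delta$ exponent cannot be raised to compensate, since that produces the weight $e^{cx}\,dx$.

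The paper's proof follows the same route. In its estimate of $I_2$ it writes the $x$-integral with exponent $\frac{6}{3\mp\delta}$ instead of $\frac{2}{1\mp 2\delta}$ and then uses $\|g\|_{L^1}$. So it passes over this point rather than resolving it, and comparing with it will not close your gap.
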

\begin{proof}
One estimates $\widetilde T_{20,\pm,1-\kappa_2}''$ by applying the same strategy used in the proof of Lemma \ref{lem: strichartz1-k2 lemma}. By setting 
$$F(s)= \left[2s\kappa_2'(s)s^{\frac{5}{2}}f_1(s)+5(1-\kappa_2(s))s^{\frac{3}{2}}f_1(s)+2(1-\kappa_2(s))s^{\frac{5}{2}}(f_1'(s)+f_2(s))\right]$$ and
applying Lemma 5.2 of \cite{DonningerWallauch} we conclude the estimate
\begin{align*}
   &\quad  \|\widetilde T_{21,\pm,1-\kappa_2}''(\tau)\ff\|_{L^{\frac{2}{1\mp 2\delta}}(\B_1)}^{\frac{2}{1\mp 2\delta}}
   \\
    &\lesssim  \int_1^\infty \frac{|F'(s)|+s^{-1}|F(s)|}{(s-1)^{\frac{1}{2}\pm \delta}}\langle\tau-\log(s-1)\rangle^{-2}\int_0^\frac{1}{2} \rho^{-\frac{1}{10}}|\tau+\log(1-\rho)-\log(s-1)|^{-\frac{1}{10}} d\rho ds
    \\
    &\quad+ \left\|\int_1^\infty \frac{|F'(s)|+s^{-1}|F(s)|}{(s-1)^{\frac{1}{2}\pm \delta}} (1-\rho)^{-1}g(\tau+\log(1-\rho)-\log(s-1)) ds\right\|_{L^\frac{2}{1\mp 2\delta}_\rho((\frac{1}{2},1))}
    \\
    &\quad+ \int_1^\infty \frac{|F'(s)|+s^{-1}|F(s)|}{(1+s)^{\frac{1}{2}\pm \delta}}\langle\tau-\log(1+s)\rangle^{-2}\int_0^\frac{1}{2} \rho^{-\frac{1}{10}}|\tau+\log(1-\rho)-\log(1+s)|^{-\frac{1}{10}} d\rho ds
    \\
    &\quad+ \left\|\int_1^\infty \frac{|F'(s)|+s^{-1}|F(s)|}{(s+1)^{\frac{1}{2}\pm \delta}} (1-\rho)^{-1}g(\tau+\log(1-\rho)-\log(s+1)) ds\right\|_{L^\frac{2}{1\mp 2\delta}_\rho((\frac{1}{2},1))}
    \\
    &=:I_1(\tau)+I_2(\tau)+I_3(\tau)+I_4(\tau)
\end{align*}
where $g(x)=\langle x\rangle^{-2}|x|^{-\frac{1}{10}}$. 
Given that \begin{align*}
    \int_0^\frac{1}{2} \rho^{-\frac{1}{10}}|\tau+\log(1-\rho)-\log( s\pm 1)|^{-\frac{1}{10}} d\rho\lesssim 1,
\end{align*}
one estimates $I_1$ and $I_3$ in the same way that we bounded $I_1$ in the proof of Lemma \ref{lem: strichartz1-k2 lemma}. To estimate $I_2$, we define the new integration variables $\rho=1-e^{-x}$ and $s=1+e^y$ to obtain that
\begin{align*}
    &\quad \|I_2\|_{L^\infty(\R_+)}\\
    &\lesssim \left\|\left(\int_{-\log 2}^\infty \left( \int_{-\infty}^\infty \left(|F'(1+ e^y)|+(1+ e^y)^{-1}|F(1+ e^y)|\right)e^{y(\frac{1}{2}\mp \delta)}g(\tau-x-y) dy \right)^{\frac{6}{3\mp \delta}}dx\right)^{\frac{3\mp \delta}{6}}
    \right\|_{L^\infty_\tau(\R_+)}
    \\
&= \left\|1_{(-\log 2,\infty)}*\left( \int_{-\infty}^\infty \left(|F'(1+ e^y)|+(1+e^y)^{-1}|F(1+e^y)|\right)e^{y(\frac{1}{2}\mp \delta)}g(\cdot-y) dy \right)^{\frac{6}{3\mp \delta}}
    \right\|_{L^\infty(\R_+)}^{\frac{3\mp \delta}{6}}.
\end{align*}
Thus, applying Young's inequality twice shows that 
\begin{align*}
        \|I_2\|_{L^\infty(\R_+)} &\lesssim \|1_{(-\log 2,\infty)}\|_{L^\infty(\R)}^{\frac{3\mp \delta}{6}}\left\|\int_{-\infty}^\infty \left(|F'(1+ e^y)|+(1+e^y)^{-1}|F(1+ e^y)|\right)e^{y(\frac{1}{2}\mp \delta)}g(\tau-y) dy 
    \right\|_{L^{\frac{6}{3\mp \delta}}_\tau(\R)}
    \\
    &=\left\|\left[\left(|F'(1+ e^{(\cdot)})|+(1+ e^{(\cdot)})^{-1}|F(1+ e^{(\cdot)})|\right)e^{(\cdot)(\frac{1}{2}\mp \delta)}\right]*g 
    \right\|_{L^{\frac{6}{3\mp \delta}}(\R)}
    \\
    &\lesssim \left\|\left(|F'(1+ e^{y})|+(1+ e^{y})^{-1}|F(1+ e^{y})|\right)e^{y(\frac{1}{2}\mp \delta)} \right\|_{L^{\frac{6}{3\mp \delta}}_y(\R)}\|g\|_{L^1(\R)} 
    \\
    &\lesssim \|\ff\|_{\dot{W}^{2,\frac{6}{3\mp \delta}}(\R^6) \times \dot{W}^{1,\frac{6}{3\mp \delta}}(\R^6)},
    \end{align*}
thanks to Lemma \ref{lem: norm estimate delta}. In the same way, one estimates $I_4$ as well as $\widetilde T_{22,\pm,1-\kappa_2}''$ which concludes this proof.
\end{proof}

Hence, by interpolation and by bounding $[\Sf(\tau)(1-\kappa_2)\ff]_2$ as outlined in the proof of Lemma \ref{lem: norm esti 1 }, we obtain the following. 
\begin{lemma}
    The estimate
    \begin{align}
    \|\Sf(\tau)(1-\kappa_2)\ff\|_{L^\infty_\tau H^2(\B_1)\times  H^1(\B_1)}
    &\lesssim \|\ff\|_{\mathcal{H}^2}
\end{align}
holds for all $\ff\in C^\infty_{c,rad}(\R^6)\times C^\infty_{c,rad}(\R^6)$.
\end{lemma}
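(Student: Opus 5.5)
We obtain the estimate for $[\Sf(\tau)(1-\kappa_2)\ff]_1$ in $L^\infty_\tau H^2(\B_1)$ in the same way as in Lemma \ref{lem: norm esti 1 }, using the weighted $L^p$ bounds just proved in place of the $\kappa_2$-bounds. The first observation is that for data supported in $\rho\geqslant 2r$ only the matching term contributes inside the cone. Indeed, $(1-\kappa_2)F_\lambda$ vanishes on $\B_1$, so the terms $D_1''$ and $D_2''$ vanish identically, and $[\Sf(\tau)(1-\kappa_2)\ff]_1$ on $\B_1$ coincides with $[\Sf_0(\tau)(1-\kappa_2)\ff]_1$ plus the inverse Laplace transform of $\widetilde D_3''(F_\lambda)$ along $\Re\lambda=\pm\delta$. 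By the identity $\Sf\ff=\Sf_1\ff$ for smooth compactly supported data, we may shift the contour to either line. The free part is bounded in $L^\infty_\tau\dot H^2(\R^6)$ by Proposition \ref{prop:ExplicitSolution}. Lower-order terms in the $H^2(\B_1)$ norm can be controlled by the $L^\infty L^r$ Strichartz bound of Lemma \ref{lem: Strichartz 1-k_2} together with the first-derivative terms, exactly as in the footnote to Lemma \ref{lem: norm esti 1 }.

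Next we combine the lemma on $\widetilde T''_{j,\pm,1-\kappa_2}$ for $j=20,21,22$ with the decomposition of $\widetilde D_3''$. This gives, for both signs,
\begin{align*}
\|e^{\pm\delta\tau}\partial_\rho^2[(\Sf-\Sf_0)(\tau)(1-\kappa_2)\ff]_1\|_{L^\infty_\tau(\R_+)L^{\frac{2}{1\mp2\delta}}(\B_1)}\lesssim \|\ff\|_{\dot W^{2,p_\mp}(\R^6)\times\dot W^{1,p_\mp}(\R^6)},
\end{align*}
with $p_\mp=\frac{6}{3\mp\delta}$. The factor $\lambda$ in $F_\lambda$ has already been absorbed into $F_\pm$ by the integration by parts preceding the definition of $F_\pm$, so no separate dotted operator is required. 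We then apply Proposition \ref{prop:interpolation}(2) with $\theta=\frac12$ on the target side and complex interpolation on the data side. Since $\frac12(\frac{3+\delta}{6}+\frac{3-\delta}{6})=\frac12$ and $\frac12(\frac{1-2\delta}{2}+\frac{1+2\delta}{2})=\frac12$, both sides interpolate to $L^2$-based spaces, namely $\dot H^2\times\dot H^1$ (as in \eqref{eq:interpolation id}) and $L^\infty_\tau H^2(\B_1)$. The main technical point, and the step I expect to be the main obstacle, is that the integrability exponents on the data side ($\frac{6}{3\mp\delta}$) differ from those on the target side ($\frac{2}{1\mp2\delta}$). The interpolation must therefore pair the $+$ estimate with the $-$ estimate consistently, so that the exponential weights $e^{\pm\delta\tau}$ cancel. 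This requires checking that the family of operators is analytic in the complex interpolation parameter. I would argue this as in \cite[Prop.~A.1]{Wallauch2024}: the $\pm\delta$ lines can be viewed as the boundary of a strip in which the resolvent representation is analytic away from the finitely many zeros of $c_{0,1}$ (Lemma \ref{lem:zeros}), which $\delta$ avoids.

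Finally, we treat the second component. We use $[\Sf(\tau)(1-\kappa_2)\ff]_2=(\partial_\tau-1-\rho\partial_\rho)[\Sf(\tau)(1-\kappa_2)\ff]_1$, obtained from \eqref{eq:cNLWSimilarity}. Its $H^1(\B_1)$ norm involves $\partial_\rho$ and $\rho\partial_\rho^2$ of the first component, together with $\partial_\tau\partial_\rho$. The $\tau$-derivative produces a factor $\lambda$ on the $\widetilde D_3'$-type kernels. This loses one power of $\langle\omega\rangle$, and we recover it by one more integration by parts in $s$, via $\partial_s(s\pm1)^{-\frac12-\lambda+1}$, in the $\mu$-integral, which moves a derivative onto $F_\pm$. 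The resulting operators have the same structure as $\widetilde T''_{j,\pm,1-\kappa_2}$, with one fewer $\rho$-derivative on $u_0$ and one more $s$-derivative on $F$. This is exactly the regularity provided by $\dot W^{2,p}\times\dot W^{1,p}$ through Lemma \ref{lem: norm estimate delta}. The same weighted bounds and interpolation then give the $H^1(\B_1)$ estimate. By density this extends to all $\ff\in C^\infty_{c,rad}\times C^\infty_{c,rad}$, which concludes the proof of the statement.
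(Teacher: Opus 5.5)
Your overall route is the paper's. For $(1-\kappa_2)\ff$ only the matching term $\mu((1-\kappa_2)F_\lambda)u_0$ survives in $\B_1$, and its second derivative is $\widetilde D_3''$. The bounds on $\widetilde T''_{j,\pm,1-\kappa_2}$ from the lines $\Re\lambda=\pm\delta$ are then interpolated via Proposition~\ref{prop:interpolation}, and the second component is handled through $[\Sf]_2=(\partial_\tau-1-\rho\partial_\rho)[\Sf]_1$.

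What you call the main obstacle is not one. You are interpolating a single linear operator $\ff\mapsto(\Sf-\Sf_0)(\cdot)(1-\kappa_2)\ff$. The $+\delta$ line bounds it from $\dot W^{2,\frac{6}{3-\delta}}\times\dot W^{1,\frac{6}{3-\delta}}$ into $L^\infty(\R_+,e^{-\delta\tau})W^{2,\frac{2}{1-2\delta}}(\B_1)$. The $-\delta$ line bounds it from $\dot W^{2,\frac{6}{3+\delta}}\times\dot W^{1,\frac{6}{3+\delta}}$ into $L^\infty(\R_+,e^{\delta\tau})W^{2,\frac{2}{1+2\delta}}(\B_1)$. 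Interpolating one operator between two couples needs neither matching exponents on the data and target sides nor an analytic family. The only input is that both line integrals represent the same operator, which is the identity $\Sf\ff=\Sf_1\ff$. Your strip argument would in any case be shaky: $\delta$ only keeps the zeros of $c_{0,1}$ off the two lines, not out of the strip between them, and that is exactly why $\Re\lambda=0$ is avoided.

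The step that actually fails is your treatment of $\partial_\tau\partial_\rho[\Sf(\tau)(1-\kappa_2)\ff]_1$. In the kernels $\widetilde G_j''(F_\pm)$ the $\mu$-integral is already $\int_1^\infty\partial_s[\O(s^0)F_\pm(s)](s\pm1)^{-\frac12-\lambda}\,ds$. The weighted $L^{\frac{6}{3\mp\delta}}$ bound on $F'$ is exactly what controls this one derivative, since $F_\pm$ contains $f_1'$ and $f_2$. Your extra integration by parts, via $(s\pm1)^{-\frac12-\lambda}=(\tfrac12-\lambda)^{-1}\partial_s(s\pm1)^{\frac12-\lambda}$, puts $\partial_s^2$ on $\O(s^0)F_\pm$. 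That brings in $f_1'''$ and $f_2''$, which $\dot W^{2,p}\times\dot W^{1,p}$ does not control (after interpolation you would need $\dot H^3\times\dot H^2$ data). So, contrary to your claim, this is not the regularity the data provide.

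The step is also unnecessary. On the support of $1-\chi_\lambda$ the oscillatory factors give, at leading order, $u_0'\approx\lambda^{\frac12}(1\pm\rho)^{\frac12+\lambda}$ and $u_0''\approx\lambda^{\frac32}(1\pm\rho)^{-\frac12+\lambda}$. Thus $\lambda u_0'$ has the same order in $\langle\omega\rangle$ as $u_0''$, and it carries the milder factor $(1-\rho)^{\frac12+\lambda}$ at the light cone. On the support of $\chi_\lambda$ the two are comparable after trading powers of $\langle\omega\rangle$ for powers of $\rho^{-1}$ as usual. Hence $\lambda\,\partial_\rho[\mu((1-\kappa_2)F_\lambda)u_0]$ minus its free counterpart decomposes exactly like $\widetilde D_3''$, into kernels of type $\widetilde G_j''(F_\pm)$, $j=20,21,22$. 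The bound on $\widetilde T''_{j,\pm,1-\kappa_2}$ then applies with no further integration by parts; this is what the paper means by bounding the second component as in the $\kappa_2$ case. With that replacement your argument goes through.
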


Consequently, by combining the results of this section, and by employing the same techniques to establish $L^2 W^{1,4}$-type Strichartz estimates, we finally arrive at the following:
\begin{proposition}
\label{prop:InteriorSTrichartz}
    The semigroup $\Sf$ satisfies the estimates
    \begin{align*}
        \|[\Sf(\tau)\ff]_1\|_{L^q_\tau(\R_+) L^r(\B_1)}&\lesssim  \|\ff\|_{\mathcal{H}^2}
    \end{align*}
    for all $q\in [2,\infty],~r\in[6,12]$ satisfy \eqref{eq:AdmissibleExponents}
    and all $\ff\in C_{c,rad}^{\infty}(\R^6)\times  C_{c,rad}^{\infty}(\R^6)$.
    Additionally, the estimates
    \begin{align*}
        \|\Sf(\tau)\ff\|_{L^\infty_\tau(\R_+) H^2(\B_1)\times H^1(\B_1)}&\lesssim  \|\ff\|_{\mathcal{H}^2}
    \end{align*}
    and 
       \begin{align*}
        \|[\Sf(\tau)\ff]_1\|_{L^2_\tau(\R_+) W^{1,4}(\B_1)}&\lesssim  \|\ff\|_{\mathcal{H}^2}
    \end{align*}
    holds for all $\ff\in C_{c,rad}^{\infty}(\R^6)\times  C_{c,rad}^{\infty}(\R^6).$
\end{proposition}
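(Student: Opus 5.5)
The plan is to assemble the estimates of this section rather than prove anything new. Linearity and the triangle inequality will carry most of it. For $\ff\in C^\infty_{c,rad}(\R^6)\times C^\infty_{c,rad}(\R^6)$ we split
\[
\Sf(\tau)\ff=\Sf(\tau)\kappa_2\ff+\Sf(\tau)(1-\kappa_2)\ff .
\]
Both $\kappa_2\ff$ and $(1-\kappa_2)\ff$ are again smooth, compactly supported and radial. Moreover
\[
\|\kappa_2\ff\|_{\mathcal{H}^2}+\|(1-\kappa_2)\ff\|_{\mathcal{H}^2}\lesssim\|\ff\|_{\mathcal{H}^2}.
\]
This holds because derivatives of $\kappa_2$ are bounded and supported in $\{2r\le\rho\le3r\}$, so the lower-order terms produced by the product rule are controlled by Hardy's inequality in $\R^6$.

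\textbf{Strichartz bound.} The $L^q_\tau L^r(\B_1)$ bound is exactly Lemma \ref{lem: Strichartz interior}, which combines Lemma \ref{lem: Strichartz k_2} and Lemma \ref{lem: Strichartz 1-k_2}. For completeness one recalls how those lemmas are proved. First, $\Sf=\Sf_0+(\Sf-\Sf_0)$, and the free part is handled by the free Strichartz estimates \eqref{eq:FreeStrichartzEstimates}, restricted to $\B_1$. Second, the difference is written through the contour representation on the lines $\Re\lambda=\pm\delta$; this is legitimate because $\Sf\ff=\Sf_1\ff$ for compactly supported smooth data. Third, the weighted estimates with weights $e^{\pm\delta\tau}$ are interpolated via Proposition \ref{prop:interpolation}(1).

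\textbf{Energy bound.} The $L^\infty_\tau(H^2\times H^1)(\B_1)$ bound follows by adding Lemma \ref{lem: norm esti 1 } and the subsequent lemma for $\Sf(1-\kappa_2)\ff$.

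\textbf{The $L^2_\tau W^{1,4}(\B_1)$ bound.} This is the only genuinely new part, and I would copy the structure of the $L^qL^r$ argument with one spatial derivative inserted.
\begin{itemize}
\item Differentiate $\mathcal{R}_{int}(F_\lambda)-\mathcal{R}_{\mathrm{f}_{int}}(F_\lambda)$ once in $\rho$, using the formula for $\partial_\rho\mathcal{R}_{int}$ derived above.
\item Decompose the result into kernels $G_j'$ analogous to $G_1,\dots,G_{27}$. Compared with those, each $G_j'$ gains one factor of $\langle\omega\rangle$ or one factor of $(1-\rho)^{-1}$.
\item On the $\chi_\lambda$-support, trade powers of $\rho$ for $\omega$-decay and apply Lemmas 5.1--5.4 of \cite{DonningerWallauch}.
\item On the $(1-\chi_\lambda)$-support, first integrate by parts once in the inner integrals, as was done for $D_1''$ and $D_2''$, to remove the $\omega$-loss. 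This yields pointwise kernel bounds of the type $\rho^{-a}(1-\rho)^{-\frac12\pm\delta}g(\tau+\log(1-\rho)-\log(1\pm s))$.
\item Close with the change of variables $\rho=1-e^{-x}$ and Young's inequality. This gives weighted $L^2_\tau W^{1,4}$ bounds for $\Sf\kappa_2\ff$ in terms of $\|\ff\|_{\mathcal{H}^2}$, and for $\Sf(1-\kappa_2)\ff$ in terms of $\dot W^{2,p_\pm}\times\dot W^{1,p_\pm}$ norms with $p_\pm$ chosen so that they interpolate to $\dot H^2\times\dot H^1$.
\item Conclude with Proposition \ref{prop:interpolation}(1) and \eqref{eq:interpolation id}.
\end{itemize}

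\textbf{Main obstacle.} I expect the hardest step to be the exterior-influenced terms for $(1-\kappa_2)\ff$, namely the analogues of $\widetilde G_{19}$--$\widetilde G_{21}$ carrying one derivative. There the factor $(1-\rho)^{-\frac12\pm\delta}$ must be absorbed in $L^4$ rather than $L^{2/(1\mp2\delta)}$. Doing so forces the choice of exponents $p_\pm$ and of the Young exponents to be compatible with Lemma \ref{lem: norm estimate delta}, with $\delta$ small enough that all $L^1$ norms of $g$ remain finite.
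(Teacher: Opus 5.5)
For the $L^q_\tau L^r(\B_1)$ bound and the $L^\infty_\tau(H^2\times H^1)(\B_1)$ bound you do exactly what the paper does: you add the $\kappa_2$ and $(1-\kappa_2)$ lemmas. For the $L^2_\tau W^{1,4}(\B_1)$ bound the paper itself only says that the same techniques apply, so your route is the intended one. Two points in your $W^{1,4}$ sketch are wrong, however, and the second makes a step fail as written.

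\textbf{The singular factor near the cone.} One $\rho$-derivative turns $(1-\rho)^{\frac32+\lambda}$ into a multiple $(\frac32+\lambda)$ of $(1-\rho)^{\frac12+\lambda}$. So on $\Re\lambda=\pm\delta$ the first-derivative kernels carry the bounded factor $(1-\rho)^{\frac12\pm\delta}$, not $(1-\rho)^{-\frac12\pm\delta}$. The factor $(1-\rho)^{-\frac12\pm\delta}$ is the two-derivative behaviour, and it is what forced the spaces $L^{\frac{2}{1\mp2\delta}}(\B_1)$ in the $H^2$ estimate. For exactly that exponent, $(1-\rho)^{(-\frac12\pm\delta)p}\,d\rho$ becomes $dx$ under $\rho=1-e^{-x}$, and this mechanism is not available in $L^4$. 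Consequently the ``main obstacle'' you describe does not occur: near the light cone the $W^{1,4}$ estimate is milder than the $H^2$ one.

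\textbf{The time exponent for $\Sf(1-\kappa_2)\ff$.} The real problem is your claim of weighted $L^2_\tau W^{1,4}$ bounds for $\Sf(1-\kappa_2)\ff$ with data in $\dot W^{2,p_\pm}\times\dot W^{1,p_\pm}$.
\begin{itemize}
\item In these exterior-influenced terms, $\tau$ enters only through $g(\tau-\log(s\pm1)+\text{shift}(\rho))$. For the $(s-1)^{-\frac12-\lambda}$ term, after $s=1+e^y$ the output on $\Re\lambda=\pm\delta$ is bounded by $h*g$ in $\tau$, with $h(y)=(|F'|+s^{-1}|F|)(1+e^y)\,e^{y(\frac12\mp\delta)}$.
\item Lemma \ref{lem: norm estimate delta} controls $h$ only in $L^{p_\pm}$ with $p_\pm=\frac{6}{3\mp\delta}$. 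This exponent is forced by scaling: it is exactly the one for which the weight $s^{p(3\mp\delta)-1}$ matches $s^5$ as $s\to\infty$.
\item Young's inequality gives $\|h*g\|_{L^q}\lesssim\|h\|_{L^{p_\pm}}$ only for $q\ge p_\pm$. This excludes $q=2$ on the line where $p_+=\frac{6}{3-\delta}>2$.
\item Since $\frac12\big(\frac{3-\delta}{6}+\frac{3+\delta}{6}\big)=\frac12$, interpolation lands on $L^2_\tau$ only if the time exponents are exactly $q_\pm=p_\pm$.
\end{itemize}
So, as in Lemma \ref{lem: strichartz1-k2 lemma}, you must prove $L^{p_\pm}_\tau(\R_+)W^{1,4}(\B_1)$ bounds for the $\pm\delta$ operators. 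Then let Proposition \ref{prop:interpolation}(1), which allows $p_0\neq p_1$, interpolate the time exponents and the data spaces simultaneously, giving $L^2_\tau W^{1,4}(\B_1)$ with $\dot H^2\times\dot H^1$ data. For $\Sf\kappa_2\ff$ your plan of weighted $L^2_\tau$ bounds with $\mathcal{H}^2$ data on both lines is fine, because there the $s$-integrals run over a compact set.
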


	\section{Strichartz estimates in the exterior of the light cone}
    \label{section:StrichartzEstimatesExterior}
    In this section, our aim is to estimate the integral given in \eqref{eq:IntegralRepresentation} on the imaginary axis, i.e. when $\varepsilon\to 0$. It is to be noted that in the case $\rho\in [1,\infty)$, it is acceptable to let $\varepsilon
    \to 0$ since the matching problem is limited to the fundamental system on $(0,1)$. This is a consequence of the fact that wave equations are well-posed on the exterior of the light cone. Thus, we can leverage the representation \eqref{eq:IntegralRepresentation} to obtain the following estimates:
    \begin{equation*}
    \begin{split}
    \|\Sf(\tau)\ff\|_{L^q(\R_+)L^r(\B^c_1)}&\lesssim \|\ff\|_{\dot{H}^2(\R^6)\times \dot H^1(\R^6)},\\
    \|\Sf(\tau)\ff\|_{L^2(\R_+)\dot{W}^{1,4}(\B^c_1)}&\lesssim \|\ff\|_{\dot{H}^2(\R^6)\times \dot H^1(\R^6)},\\
     \|\Sf(\tau)\ff\|_{L^\infty(\R_+)\dot{H}^2(\B^c_1)}&\lesssim \|\ff\|_{\dot{H}^2(\R^6)\times \dot H^1(\R^6)},
     \end{split}
\end{equation*}
for $(q,r)$ satisfying \eqref{eq:AdmissibleExponents}. We shall prove the first and the last estimate above, noting that the proof of the second estimate follows along the same lines as of the first one, and is easier than that of the last.

\subsection{Difference of the resolvents}
In the following, to estimate the difference 
 \begin{equation}
 \begin{split}
 \label{eq:DifferenceOfResolvents}
 \mathcal{R}(F_{\lambda})(\rho;\lambda) - \mathcal{R}_{\mathrm{f}}(F_{\lambda})(\rho;\lambda )&=   -\tilde{u}_1(\rho;\lambda) \int_{\rho}^{\infty} \frac{s^5\tilde{u}_2(s;\lambda)F_{\lambda}(s)}{(s^2-1)^{\frac{3}{2}+\lambda}}ds+\tilde{u}_2(\rho;\lambda) \int_{\rho}^{\infty} \frac{s^5\tilde{u}_1(s;\lambda)F_{\lambda}(s)}{(s^2-1)^{\frac{3}{2}+\lambda}}ds \\
	&\quad + \tilde{u}_{\mathrm{f}_1} (\rho;\lambda) \int_{\rho}^{\infty} \frac{\tilde{u}_{\mathrm{f}_2}(s;\lambda)F_{\lambda}(s)}{(s^2-1)^{\frac{3}{2}+\lambda}}ds-\tilde{u}_{\mathrm{f}_2} (\rho;\lambda) \int_{\rho}^{\infty} \frac{s^5\tilde{u}_{\mathrm{f}_1}(s;\lambda)F_{\lambda}(s)}{(s^2-1)^{\frac{3}{2}+\lambda}}ds.
    \end{split}
 \end{equation}
 
 We split the above into several terms so as to separate the singular point $1$ and $\infty$ and estimate the corresponding terms.  
Recall that $\zeta(\lambda)=a_{1,3}(\lambda)a_{2,4}(\lambda)-a_{1,4}(\lambda)a_{2,3}(\lambda)$. Then we decompose
 \begin{align*}
     \mathcal{R}(F_{\lambda})(\rho;\lambda) - \mathcal{R}_{\mathrm{f}}(F_{\lambda})(\rho;\lambda)=\big(A_-+ A_+ + B_-+B_+  +\sum_{i\in\{+,-\}} C_{ij}\big)F_{\lambda}(\rho;\lambda),
 \end{align*}
 where
 \begin{equation}
 \begin{split}
 \label{eq:A-+Terms}
  A_- F_{\lambda}(\rho;\lambda)&=\frac{\rho^{-\frac{5}{2}}(\rho-1)^{\frac{3}{2}+\lambda}}{3+2\lambda} \Big[g_{\mathrm{f}_2}(\rho;\lambda)\int_{\rho}^{\infty} \frac{\kappa(s)s^{\frac{5}{2}}g_{\mathrm{f}_1}(s;\lambda)F_{\lambda}(s)}{(s-1)^{\frac{3}{2}+\lambda}} ds\\
    &\quad -g_2(\rho;\lambda)\int_{\rho}^{\infty} \frac{\kappa(s)s^{\frac{5}{2}}g_1(s;\lambda)F_{\lambda}(s)}{(s-1)^{\frac{3}{2}+\lambda}}ds\Big],\\
   A_+ F_{\lambda}(\rho;\lambda)&=\frac{\rho^{-\frac{5}{2}}(\rho+1)^{\frac{3}{2}+\lambda}}{3+2\lambda} \Big[g_1(\rho;\lambda)\int_{\rho}^{\infty} \frac{\kappa(s)s^{\frac{5}{2}}g_2(s;\lambda)F_{\lambda}(s)}{(s+1)^{\frac{3}{2}+\lambda}}ds\\\
    &\quad -
    g_{\mathrm{f}_1}(\rho;\lambda)\int_{\rho}^{\infty} \frac{\kappa(s)s^{\frac{5}{2}}g_{\mathrm{f}_2}(s;\lambda)F_{\lambda}(s)}{(s+1)^{\frac{3}{2}+\lambda}} ds\Big],
     \end{split}
    \end{equation}
 
  \begin{equation}
 \begin{split}
 \label{eq:B-+Terms}
     B_-  F_{\lambda}(\rho;\lambda)&= \frac{(1-\kappa(\rho))\rho^{-\frac{5}{2}}(\rho-1)^{\frac{3}{2}+\lambda}}{3+2\lambda} \Big[\zeta_{\mathrm{f}}(\lambda)g_{\mathrm{f}_3}(\rho;\lambda) \int_{\rho}^{\infty} \frac{(1-\kappa(s))s^{\frac{5}{2}}g_{\mathrm{f}_4}(s;\lambda)F_{\lambda}(s)}{(s-1)^{\frac{3}{2}+\lambda}} ds\\
    &\quad -\zeta(\lambda)g_{3}(\rho;\lambda) \int_{\rho}^{\infty} \frac{(1-\kappa(s))s^{\frac{5}{2}}g_{4}(s;\lambda)F_{\lambda}(s)}{(s-1)^{\frac{3}{2}+\lambda}} ds\Big],\\
    B_+  F_{\lambda}(\rho;\lambda)&= \frac{(1-\kappa(\rho))\rho^{-\frac{5}{2}}(\rho+1)^{\frac{3}{2}+\lambda}}{3+2\lambda} \Big[\zeta(\lambda) g_{3}(\rho;\lambda) \int_{\rho}^{\infty} \frac{(1-\kappa(s))s^{\frac{5}{2}}g_{4}(s;\lambda)F_{\lambda}(s)}{(s+1)^{\frac{3}{2}+\lambda}} ds\\
    &\quad -\zeta_{\mathrm{f}}(\lambda)g_{\mathrm{f}_3}(\rho;\lambda) \int_{\rho}^{\infty} \frac{(1-\kappa(s))s^{\frac{5}{2}}g_{\mathrm{f}_4}(s;\lambda)F_{\lambda}(s)}{(s+1)^{\frac{3}{2}+\lambda}} ds\Big],
\end{split}
    \end{equation}

 \begin{equation}
     \begin{split}
     \label{eq:C+-Terms}
     C_{+-}F_{\lambda}(\rho;\lambda)&:=\frac{\kappa(\rho)\rho^{-\frac{5}{2}}(\rho+1)^{\frac{3}{2}+\lambda}}{3+2\lambda} \Big[a_{2,3}(\lambda)g_1(\rho;\lambda) \int_{\rho}^{\infty} \frac{(1-\kappa(s))s^{\frac{5}{2}}g_3(s;\lambda) F_{\lambda}(s)}{(s-1)^{\frac{3}{2}+\lambda}}ds\\
     &\quad - a_{\mathrm{f}_{2,3}}(\lambda) g_{\mathrm{f}_1}(\rho;\lambda) \int_{\rho}^{\infty} \frac{(1-\kappa(s))s^{\frac{5}{2}}g_{\mathrm{f}_3}(s;\lambda) F_{\lambda}(s)}{(s-1)^{\frac{3}{2}+\lambda}}ds\Big]\\
     &+\frac{(1-\kappa(\rho))\rho^{-\frac{5}{2}}(\rho+1)^{\frac{3}{2}+\lambda}}{3+2\lambda} \Big[ a_{\mathrm{f}_{2,3}}(\lambda) g_{\mathrm{f}_3}(\rho;\lambda) \int_{\rho}^{\infty} \frac{\kappa(s)s^{\frac{5}{2}}g_{\mathrm{f}_1}(s;\lambda)F_{\lambda}(s)}{(s-1)^{\frac{3}{2}+\lambda}}ds\\
     &\quad -a_{2,3}(\lambda)g_3(\rho;\lambda) \int_{\rho}^{\infty} \frac{\kappa(s)s^{\frac{5}{2}}g_1(s;\lambda)F_{\lambda}(s)}{(s-1)^{\frac{3}{2}+\lambda}}ds\Big].
     \end{split}
 \end{equation}

 \begin{equation}
     \begin{split}
     \label{eq:C-+Terms}
         C_{-+}F_{\lambda}(\rho;\lambda)&:=-\frac{\kappa(\rho)\rho^{-\frac{5}{2}}(\rho-1)^{\frac{3}{2}+\lambda}}{3+2\lambda} \Big[a_{1,4}(\lambda)g_2(\rho;\lambda) \int_{\rho}^{\infty} \frac{(1-\kappa(s))s^{\frac{5}{2}}g_4(s;\lambda) F_{\lambda}(s)}{(s+1)^{\frac{3}{2}+\lambda}}ds\\
     &\quad -a_{\mathrm{f}_{1,4}}(\lambda) g_{\mathrm{f}_2}(\rho;\lambda) \int_{\rho}^{\infty} \frac{(1-\kappa(s))s^{\frac{5}{2}}g_{\mathrm{f}_4}(s;\lambda) F_{\lambda}(s)}{(s+1)^{\frac{3}{2}+\lambda}}ds\Big]\\
     &+ \frac{(1-\kappa(\rho))\rho^{-\frac{5}{2}} (\rho-1)^{\frac{3}{2}+\lambda}} {3+2\lambda} \Big[a_{1,4}(\lambda)g_4(\rho;\lambda) \int_{\rho}^{\infty} \frac{\kappa(s)s^{\frac{5}{2}} g_2(s;\lambda)F_{\lambda}(s)}{(s+1)^{\frac{3}{2}+\lambda}}ds\\
     &\quad -a_{\mathrm{f}_{1,4}}(\lambda) g_{\mathrm{f}_4}(\rho;\lambda) \int_{\rho}^{\infty} \frac{\kappa(s)s^{\frac{5}{2}}g_{\mathrm{f}_2}(s;\lambda)F_{\lambda}(s)}{(s+1)^{\frac{3}{2}+\lambda}}ds\Big].
     \end{split}
 \end{equation}

 \begin{equation}
     \begin{split}
        \label{eq:C--Terms}
         C_{--}F_{\lambda}(\rho;\lambda)&:=-\frac{\kappa(\rho)\rho^{-\frac{5}{2}}(\rho-1)^{\frac{3}{2}+\lambda}}{3+2\lambda} \Big[a_{1,3}(\lambda)g_2(\rho;\lambda) \int_{\rho}^{\infty} \frac{(1-\kappa(s))s^{\frac{5}{2}}g_3(s;\lambda) F_{\lambda}(s)}{(s-1)^{\frac{3}{2}+\lambda}}ds\\
     &\quad - a_{\mathrm{f}_{1,3}}(\lambda)g_{\mathrm{f}_2}(\rho;\lambda) \int_{\rho}^{\infty} \frac{(1-\kappa(s))s^{\frac{5}{2}}g_{\mathrm{f}_3}(s;\lambda) F_{\lambda}(s)}{(s-1)^{\frac{3}{2}+\lambda}}ds\Big]\\
     &+\frac{(1-\kappa(\rho))\rho^{-\frac{5}{2}} (\rho-1)^{\frac{3}{2}+\lambda}} {3+2\lambda}\Big[a_{\mathrm{f}_{2,4}}(\lambda) g_{\mathrm{f}_4}(\rho;\lambda) \int_{\rho}^{\infty} \frac{\kappa(s)s^{\frac{5}{2}} g_{\mathrm{f}_1}(s;\lambda)F_{\lambda}(s)}{(s-1)^{\frac{3}{2}+\lambda}}ds\\
     &\quad - a_{2,4}(\lambda)g_4(\rho;\lambda) \int_{\rho}^{\infty} \frac{\kappa(s)s^{\frac{5}{2}}g_1(s;\lambda)F_{\lambda}(s)}{(s-1)^{\frac{3}{2}+\lambda}}ds\Big].
     \end{split}
 \end{equation}

 \begin{equation}
    \begin{split}
    \label{eq:C++Terms}
     C_{++}F_{\lambda}(\rho;\lambda)&:=\frac{\kappa(\rho)\rho^{-\frac{5}{2}}(\rho+1)^{\frac{3}{2}+\lambda}}{3+2\lambda} \Big[a_{2,4}(\lambda)g_1(\rho;\lambda) \int_{\rho}^{\infty} \frac{(1-\kappa(s))s^{\frac{5}{2}}g_4(s;\lambda) F_{\lambda}(s)}{(s+1)^{\frac{3}{2}+\lambda}}ds\\
     &\quad - a_{\mathrm{f}_{2,4}}(\lambda)g_{\mathrm{f}_1}(\rho;\lambda) \int_{\rho}^{\infty} \frac{(1-\kappa(s))s^{\frac{5}{2}}g_{\mathrm{f}_4}(s;\lambda) F_{\lambda}(s)}{(s+1)^{\frac{3}{2}+\lambda}}ds\Big]\\
     &+\frac{(1-\kappa(\rho)) \rho^{-\frac{5}{2}} (\rho+1)^{\frac{3}{2}+\lambda}}{3+2\lambda}\Big[a_{1,3}(\lambda)g_3(\rho;\lambda) \int_{\rho}^{\infty} \frac{\kappa(s)s^{\frac{5}{2}} g_2(s;\lambda)F_{\lambda}(s)}{(s+1)^{\frac{3}{2}+\lambda}}ds\\
     &\quad -a_{\mathrm{f}_{1,3}}(\lambda)g_{\mathrm{f}_3}(\rho;\lambda) \int_{\rho}^{\infty} \frac{\kappa(s)s^{\frac{5}{2}}g_{\mathrm{f}_2}(s;\lambda)F_{\lambda}(s)}{(s+1)^{\frac{3}{2}+\lambda}} ds\Big].
     \end{split}
 \end{equation}

We shall deal with the term $F_{\lambda}(s) = (2-\lambda)f_1(\rho) +\rho f_1'(\rho)-f_2(\rho)$ in two ways by means of the following operators:
\begin{equation}
	T_{G}(\tau)F_{\lambda}(\rho):= \frac{1}{2\pi i} \lim_{\varepsilon \to 0}\lim_{N \to \infty}  \int_{\varepsilon-iN}^{\varepsilon+iN} e^{\lambda \tau} G(f_2)(\rho;\lambda) d\lambda,
\end{equation}
and 
\begin{equation}
	\dot{T}_{G}(\tau)F_{\lambda}(\rho):= \frac{1}{2\pi i} \lim_{\varepsilon \to 0}  \lim_{N \to \infty} \int_{\varepsilon-iN}^{\varepsilon+iN} e^{\lambda \tau} G\big((2-\lambda)f_1(\cdot) + \cdot f_1'(\cdot)\big)(\rho;\lambda) d\lambda,
\end{equation}
 where $G$ is one of the terms in the expression \eqref{eq:DifferenceOfResolvents}.


\subsection{Strichartz estimates} For $G$ in \eqref{eq:DifferenceOfResolvents}, we aim to prove the following estimates:
\begin{equation*}
    \begin{split}
        \| T_{G}(\tau)f\|_{L^q_{\tau}(\R_+)L^r_{\rho}(\B_1^c)} &\lesssim \|f\|_{\dot{H}^1(\R^6)},\\
        \| \dot{T}_{G}(\tau)f\|_{L^q_{\tau}(\R_+)L^r_{\rho}(\B_1^c)} &\lesssim \|f\|_{\dot{H}^2(\R^6)}.
    \end{split}
\end{equation*}

\label{section:StrichartzEstimates}
\subsubsection{The operator $T$}
We shall repeatedly make use of the following naive identity:
\begin{equation}
	\label{eq:ab-cd}
	a_1b_1-a_2b_2 = a_1(b_1-b_2)+(a_1-a_2)b_2
\end{equation}

We deal with the leading order terms in \eqref{eq:DifferenceOfResolvents} in the following:\\

\textbf{$A$ terms}:  We start with a preliminary result for the terms supported near $\rho=1$.
\begin{lemma}\label{lem:A asymp}
We have that 
\begin{align*}
A_\pm F_\lambda(\rho;\lambda):=\mathcal{O}(\langle \omega \rangle^{-2}) \kappa(\rho) \rho^{-\frac{5}{2}} (\rho\pm1)^{\frac{3}{2}+\lambda} \int_{\rho}^{\infty} \frac{\kappa(s)s^{\frac{5}{2}}F_{\lambda}(s)}{(s\pm1)^{\frac{1}{2}+\lambda}}\gamma_{A_\pm}(\rho,s;\lambda) ds
\end{align*}
where the functions $\gamma_{A_-}$ and $\gamma_{A_+}$ are smooth functions on $[1,\infty)^2\times S$ that satisfy the estimates
\begin{align*}
|\partial_\rho^k\partial_s^\ell\partial_\omega^n \gamma_{A_\pm}(\rho,s;\lambda) |\lesssim_{k,\ell,n}\langle\omega\rangle^{-n}
\end{align*} 
for all $k,\ell,n \in \mathbb{N}_0$ and all $(\rho,s;\lambda)\in [1,\infty)^2\times S$.
\end{lemma}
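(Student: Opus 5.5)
The plan is to treat the two pieces of $A_-$ and $A_+$ in parallel, since they have the same structure. Each is a difference between the potential and free expressions, so we split it with \eqref{eq:ab-cd}. For $A_-$ (where $\kappa(\rho)$ is implicit, since $A_\pm$ only enters on $\supp\kappa$) we write
\begin{align*}
g_{\mathrm{f}_2}(\rho)\int_\rho^\infty \frac{\kappa g_{\mathrm{f}_1}F_\lambda s^{5/2}}{(s-1)^{\frac32+\lambda}}ds-g_2(\rho)\int_\rho^\infty \frac{\kappa g_1F_\lambda s^{5/2}}{(s-1)^{\frac32+\lambda}}ds
&=\int_\rho^\infty \frac{\kappa(s)s^{\frac52}F_\lambda(s)}{(s-1)^{\frac32+\lambda}}\,H(\rho,s;\lambda)\,ds,
\end{align*}
where $H(\rho,s;\lambda):=g_{\mathrm{f}_2}(\rho;\lambda)g_{\mathrm{f}_1}(s;\lambda)-g_2(\rho;\lambda)g_1(s;\lambda)$. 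By Lemma \ref{lem:solsnear1} and the free analogue, $g_j=1+(\rho-1)\O_o(\langle\omega\rangle^{-1})+(\rho-1)\widehat g_j$, with $|\partial^n_\rho\partial^\ell_\omega\widehat g_j|\lesssim\langle\omega\rangle^{-2-\ell}$. The leading $\O_o(\langle\omega\rangle^{-1})$ terms depend only on the potential through $-\frac{1}{3+2\lambda}\int_1^\rho\widetilde V$. For the potential and the free case (where $\widetilde V=\frac{15}{4s^2}$) these terms differ by $\frac{1}{3+2\lambda}\int_1^\rho V$, which is $(\rho-1)\O(\langle\omega\rangle^{-1})$ but smooth and symbolic. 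So $H$ is a sum of terms of the form $(\rho-1)h_1+(s-1)h_2$ with symbol-type bounds of order $\langle\omega\rangle^{-1}$. Because the leading-order $1$'s cancel exactly, $H$ vanishes at $\rho=s=1$.

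The key step is then to gain one more power of $\langle\omega\rangle^{-1}$ while lowering the exponent $\frac32+\lambda$ to $\frac12+\lambda$. The $(s-1)h_2(s)$ part does both at once: $(s-1)(s-1)^{-\frac32-\lambda}=(s-1)^{-\frac12-\lambda}$, and the prefactor $\frac1{3+2\lambda}$ supplies the second $\langle\omega\rangle^{-1}$. For the $(\rho-1)h_1(\rho)$ part we write $\rho-1=(s-1)-(s-\rho)$. The $(s-1)$ piece is treated as above. For the $(s-\rho)$ piece we integrate by parts using $\partial_s(s-1)^{-\frac12-\lambda}=-(\frac12+\lambda)(s-1)^{-\frac32-\lambda}$. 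The boundary term at $s=\rho$ vanishes because of the factor $(s-\rho)$, and the interior term produces $\frac{1}{\frac12+\lambda}$ times a smooth $s$-derivative of $\kappa s^{5/2}F_\lambda(s-\rho)$. The derivative falling on $F_\lambda$ must not appear in the final form, since the statement keeps $F_\lambda$ undifferentiated. If the derivative cannot be absorbed into $\gamma$, we instead expand $h_1(\rho)-h_1(s)$ by Taylor's theorem, which again yields a factor $(s-\rho)$ times a smooth function. The remaining $(s-1)h_1(s)$ piece is then handled as before.

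Collecting the terms gives $\frac{1}{3+2\lambda}\cdot\O(\langle\omega\rangle^{-1})=\O(\langle\omega\rangle^{-2})$ times
\[
\int_\rho^\infty \kappa(s)s^{\frac52}F_\lambda(s)(s-1)^{-\frac12-\lambda}\gamma_{A_-}(\rho,s;\lambda)\,ds,
\]
with $\gamma_{A_-}$ built from $h_1,h_2$ and the Taylor remainders. The bounds $|\partial_\rho^k\partial_s^\ell\partial_\omega^n\gamma|\lesssim\langle\omega\rangle^{-n}$ follow from the symbol estimates on $\widehat g_j$, including the $\omega$-derivative bounds inherited from the Volterra construction. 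The $A_+$ term is simpler: $(s+1)^{-\frac32-\lambda}=(s+1)^{-\frac12-\lambda}(s+1)^{-1}$, and $(s+1)^{-1}$ is smooth, so only the extra $\langle\omega\rangle^{-1}$ from the difference $H$ is needed.

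The main obstacle is verifying that the $\O_o(\langle\omega\rangle^{-1})$ parts of $g_j$ and $g_{\mathrm f_j}$ combine into a function vanishing to first order at $\rho=s=1$ with genuine symbol behaviour in $\omega$. This requires revisiting the Volterra expansion in Lemma \ref{lem:solsnear1}, in particular the explicit leading term $-\frac{1}{3+2\lambda}\int_1^\rho\widetilde V$. We would establish this first, before carrying out the integration by parts.
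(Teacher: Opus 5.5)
You follow the paper's route: the split \eqref{eq:ab-cd}, followed by $g_j=1+(\cdot-1)\widehat g_j$. Your treatment of the $(s-1)$-part and of $A_+$ is fine. The gap is the remaining piece of $A_-$,
\[
\frac{\kappa(\rho)\rho^{-\frac52}(\rho-1)^{\frac32+\lambda}}{3+2\lambda}\,(\rho-1)h_1(\rho;\lambda)\int_\rho^\infty\frac{\kappa(s)s^{\frac52}g_{\mathrm{f}_1}(s;\lambda)F_\lambda(s)}{(s-1)^{\frac32+\lambda}}\,ds,\qquad h_1:=\widehat g_{\mathrm{f}_2}-\widehat g_2 .
\]
Your integration by parts unavoidably puts $\partial_s$ on $F_\lambda$, so it does not give the stated form. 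The Taylor fallback changes nothing. Writing $(\rho-1)h_1(\rho)=(s-1)h_1(s)-(s-\rho)k(\rho,s)$ still leaves a term $(s-\rho)(s-1)^{-\frac32-\lambda}$, which is the original problem.

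In fact no rearrangement can succeed. The identity must hold for every $F_\lambda$, and $\kappa=1$ near $s=1$, so the integrands must agree for $s>\rho$. This forces $\O(\langle\omega\rangle^{-2})\gamma_{A_-}=H/\big((3+2\lambda)(s-1)\big)$, which contains $\frac{\rho-1}{s-1}\,h_1(\rho;\lambda)g_{\mathrm{f}_1}(s;\lambda)$. On $\{1\le\rho\le s\}$ this factor is discontinuous at $(1,1)$ unless $h_1(1;\lambda)=0$. By the expansion in the proof of Lemma \ref{lem:solsnear1}, $h_1(1;\lambda)=\frac{V(1)}{3+2\lambda}+\O(\langle\omega\rangle^{-2})$, which is nonzero for large $|\omega|$ whenever $V(1)\neq0$. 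So the $(\rho,s)$-smoothness of $\gamma_{A_-}$ claimed in the statement cannot be obtained for this term.

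The paper's proof consists only of the split and the expansions, so it tacitly places $\frac{\rho-1}{s-1}$ inside $\gamma_{A_-}$. What is actually true, and all that is used later, is weaker:
\begin{itemize}
\item On the integration region $0\le\frac{\rho-1}{s-1}\le1$, and this factor is independent of $\omega$.
\item Hence $\gamma_{A_-}$ is bounded there, with $|\partial_\omega^n\gamma_{A_-}|\lesssim\langle\omega\rangle^{-n}$ uniformly in $(\rho,s)$.
\item In Lemma \ref{lemma:TApm} the $\omega$-integral is handled by \cite[Lemma 5.1]{DonningerWallauch}, applied pointwise in $(\rho,s)$ with only such $\omega$-bounds. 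The factor $\gamma_{A_\pm}$ is in fact dropped there altogether.
\end{itemize}
So abandon the integration by parts and prove this weaker version. Alternatively, you could keep your term with $\partial_sF_\lambda$ as a separate term. It is estimable downstream, costing only $f_2\in\dot H^1$ or $f_1\in\dot H^2$, but it is not of the stated form.

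Finally, your \emph{main obstacle} is not one. Lemma \ref{lemma:FundamentalSystem1Infty} already writes $g_j-1$ and $g_{\mathrm{f}_j}-1$ as $(\cdot-1)$ times symbols of order $-1$. So $H$ is $\O(\langle\omega\rangle^{-1})$ and vanishes at $\rho=s=1$ without any cancellation of the $\O_o$ parts. The prefactor $\frac{1}{3+2\lambda}$ then supplies the second factor $\langle\omega\rangle^{-1}$.
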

\begin{proof}
By employing \eqref{eq:ab-cd} we obtain that
\begin{align*}
A_- F_\lambda(\rho;\lambda)&=\frac{\kappa(\rho)\rho^{-\frac{5}{2}}(\rho-1)^{\frac{3}{2}+\lambda}}{3+2\lambda} \Big[\big(g_{\mathrm{f}_2}(\rho;\lambda)-g_2(\rho;\lambda)\big) \int_{\rho}^{\infty} \frac{\kappa(s)s^{\frac{5}{2}}g_{\mathrm{f}_1}(s;\lambda)F_{\lambda}(s)}{(s-1)^{\frac{3}{2}+\lambda}} ds
    \\
    &\quad + g_2(\rho;\lambda)\int_{\rho}^{\infty} \frac{\kappa(s)s^{\frac{5}{2}}\big(g_{\mathrm{f}_1}(s;\lambda) -g_1(s;\lambda)\big)F_{\lambda}(s)}{(s-1)^{\frac{3}{2}+\lambda}}ds\Big]
\end{align*}
and the claim follows by plugging in the asymptotic expansions
\begin{align*}
	g_{j}(\rho;\lambda)=[1+(\rho-1)\widehat g_{j}(\rho;\lambda)] \text{ where }|\partial_\rho^n\partial_\omega^\ell \widehat g_{j}(\rho;\lambda)|\lesssim_{n,\ell}\langle\omega\rangle^{-1-\ell}
	\\
		g_{\mathrm{f}_j}(\rho;\lambda)=[1+(\rho-1)\widehat g_{\mathrm{f}_j}(\rho;\lambda)] \text{ where }|\partial_\rho^n\partial_\omega^\ell \widehat g_{\mathrm{f}_j}(\rho;\lambda)|\lesssim_{n,\ell}\langle\omega\rangle^{-1-\ell}
\end{align*}
for $j=1,2$. Likewise, one argues for $A_+$.
\end{proof}
In view of Lemma \ref{lem:A asymp},
we observe that it suffices to control the following for $A_\pm$, which, for the ease of notation, we again denote by $A_\pm$:
\begin{equation}
\label{eq:A_-}
A_\pm(F_{\lambda})(\rho;\lambda) :=\mathcal{O}(\langle \omega \rangle^{-2}) \kappa(\rho) \rho^{-\frac{5}{2}} (\rho\pm1)^{\frac{3}{2}+\lambda} \int_{\rho}^{\infty} \frac{\kappa(s)s^{\frac{5}{2}}F_{\lambda}(s)}{(s\pm1)^{\frac{1}{2}+\lambda}} ds.
\end{equation}


As previously mentioned, we define associated oscillatory integral operators as
\begin{equation*}
	T_{A_\pm}(\tau)f_2(\rho):=\frac{1}{2\pi i} \lim_{\varepsilon \to 0}\lim_{N\to \infty} \int_{\varepsilon-iN}^{\varepsilon+iN}e^{\tau(\varepsilon+i \omega)}\mathcal{O}(\langle \omega \rangle^{-2}) \kappa(\rho) \rho^{-\frac{5}{2}} (\rho\pm1)^{\frac{3}{2}+\varepsilon+i\omega} \int_{\rho}^{\infty} \frac{\kappa(s)s^{\frac{5}{2}}f_2(s)}{(s-1)^{\frac{1}{2}+\varepsilon+i\omega}} dsd\omega
\end{equation*}
and note that the above are absolutely convergent and thus well-defined. 


\begin{lemma}
\label{lemma:TApm}
The operators $T_{A_\pm}$ satisfy the estimates
\begin{align*}
\|T_{A_\pm}(\cdot)f_2\|_{L^q(\R_+)L^r(\B_1^c)}&\lesssim \|f_2\|_{\dot{H}^1(\R^6)}
\end{align*}
for all $q\in [2,\infty],~r\in[6,12]$ that satisfy \eqref{eq:AdmissibleExponents} and $f_2\in C^\infty_{c,rad}(\R^6)$.
\end{lemma}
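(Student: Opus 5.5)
Because the kernel carries the factor $\mathcal{O}(\langle\omega\rangle^{-2})$, the $\omega$-integral converges absolutely. We may therefore exchange the $\omega$- and $s$-integrals and pass to the limit $\varepsilon\to0$ directly. Fix the exponential $e^{i\omega(\tau+\log(\rho\pm1)-\log(s\pm1))}$ and treat the remaining $\omega$-dependent symbol $\mathcal{O}(\langle\omega\rangle^{-2})\gamma_{A_\pm}$ with Lemma 5.1 of \cite{DonningerWallauch}, exactly as for $T_{16,\pm,\kappa_2}$ in the interior. This should give the pointwise bound
\begin{align*}
|T_{A_\pm}(\tau)f_2(\rho)|\lesssim \kappa(\rho)\rho^{-\frac52}(\rho\pm1)^{\frac32}\int_\rho^\infty \frac{\kappa(s)s^{\frac52}|f_2(s)|}{(s\pm1)^{\frac12}}\,\langle \tau+\log(\rho\pm1)-\log(s\pm1)\rangle^{-2}\,ds .
\end{align*}
The key observation is that $\rho$ and $s$ are restricted to $\supp\kappa\subset[1,r]$ with $s\geqslant\rho$. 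For the $+$ sign the logarithms are bounded, so the weight is $\lesssim\langle\tau\rangle^{-2}$.

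For the $-$ sign this boundedness fails near $\rho=1$. However, $s\geqslant\rho$ gives $(\rho-1)^{\frac32}(s-1)^{-\frac12}\leqslant(\rho-1)$. We then use $(\rho-1)\langle\tau+\log(\rho-1)-\log(s-1)\rangle^{-2}\lesssim\langle\tau\rangle^{-2}$, which is the same elementary estimate used for $T_{17,\pm,\kappa_2}$. The case to check is $\log(s-1)-\log(\rho-1)$ large: then $\rho-1$ is small and beats the loss. If this fails to be uniform, I will instead change variables $\rho=1+e^{-x}$ and apply Young's inequality in $x$, as in Lemma \ref{lem: strichartz1-k2 lemma}.

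In either case we obtain
\begin{align*}
|T_{A_\pm}(\tau)f_2(\rho)|\lesssim 1_{[1,r]}(\rho)\langle\tau\rangle^{-2}\int_1^r|f_2(s)|\,ds .
\end{align*}
By Hardy's inequality, or the radial Sobolev embedding on the annulus $1\leqslant s\leqslant r$, the right-hand integral is $\lesssim\|f_2\|_{L^6(\B_r\setminus\B_1)}\lesssim\|f_2\|_{\dot H^1(\R^6)}$. Since $\langle\tau\rangle^{-2}\in L^q(\R_+)$ for every $q\in[2,\infty]$, and $1_{[1,r]}\in L^r(\B_1^c)$ because the support is compact, the estimate follows for every admissible pair at once, with no interpolation needed.

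I expect the main obstacle to be justifying the kernel bound from Lemma 5.1 of \cite{DonningerWallauch}. The symbol $\gamma_{A_\pm}$ also depends on $\rho$ and $s$, and its $\omega$-derivatives are only controlled by $\langle\omega\rangle^{-n}$, uniformly in $(\rho,s)$ by Lemma \ref{lem:A asymp}. This is exactly what the integration by parts in $\omega$ (twice) requires to produce $\langle\cdot\rangle^{-2}$ decay. The phase is $\tau+\log(\rho\pm1)-\log(s\pm1)$, which is real-linear in $\omega$. The remaining real part $e^{\varepsilon(\cdot)}$ contributes at most $\big((\rho\pm1)/(s\pm1)\big)^{\varepsilon}\leqslant1$ for $s\geqslant\rho$, so the $\varepsilon\to0$ limit is harmless.
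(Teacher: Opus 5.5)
Your proposal is correct, but it finishes differently from the paper. Both arguments start from the same pointwise kernel bound, obtained from Lemma 5.1 of \cite{DonningerWallauch}.

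\textbf{The paper's route.} It substitutes $\rho-1=e^{-x}$, $s-1=e^{y}$ and reads $\langle\tau-x-y\rangle^{-2}$ as a convolution kernel. Applying Young's inequality twice, it proves only the two endpoints $L^\infty_\tau L^6$ and $L^2_\tau L^{12}$, with the data measured by $(\int\kappa(s)s^5|f_2(s)|^2\,ds)^{1/2}$ plus Hardy. The intermediate pairs then follow by interpolation.

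\textbf{Your route.} You use that $\rho$ and $s$ both lie in the compact set $\supp\kappa$ with $s\geqslant\rho$. Your uniform bound does hold, so the fallback is unnecessary. Set $d=\log\frac{s-1}{\rho-1}$, so that $0\leqslant d\leqslant\log(r-1)-\log(\rho-1)$. Peetre's inequality gives $\langle\tau-d\rangle^{-2}\leqslant 2\langle\tau\rangle^{-2}\langle d\rangle^{2}$. Combined with $\langle d\rangle\lesssim\langle\log(\rho-1)\rangle$ and $(\rho-1)\langle\log(\rho-1)\rangle^{2}\lesssim1$, this yields
$|T_{A_\pm}(\tau)f_2(\rho)|\lesssim\mathbf 1_{[1,r]}(\rho)\langle\tau\rangle^{-2}\|f_2\|_{\dot H^1(\R^6)}$.
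That single bound gives every admissible pair at once, and even $L^1_\tau$, with no interpolation.

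One small point on your last step: Hardy alone does not give the $L^6$ bound on the annulus. That step needs the radial Strauss estimate $|f(\rho)|\lesssim\rho^{-2}\|f\|_{\dot H^1(\R^6)}$, not the Sobolev embedding, which only reaches $L^3$. It is simpler to write $\int_1^r|f_2|\,ds\lesssim\||\cdot|^{-1}f_2\|_{L^2(\R^6)}\lesssim\|f_2\|_{\dot H^1(\R^6)}$ via Cauchy--Schwarz and Hardy.

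\textbf{Comparison.} Your argument is shorter and stronger for this localized piece. However, it relies entirely on $\kappa$ being compactly supported in both variables. The paper's convolution argument does not need that. It carries over unchanged to the $B$ terms, where $1-\kappa$ is not compactly supported and the logarithmic shifts are genuinely unbounded, which is presumably why the paper uses it throughout.
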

\begin{proof}
We start with the bounds on $A_-$.
Using Fubini's theorem and \cite[Lemma 5.1]{DonningerWallauch}, we have
\begin{equation*}
	\begin{split}
		|T_{A_-}(\tau)f_2(\rho)| &\lesssim \Big| \kappa(\rho)\rho^{-\frac{5}{2}}(\rho-1)^{\frac{3}{2}} \int_{\rho}^{\infty} \frac{\kappa(s)s^{\frac{5}{2}}f_2(s)}{(s-1)^{\frac{1}{2}}} \langle \tau +\log\big(\frac{\rho-1}{s-1}\big)\rangle^{-2}ds\Big|.
	\end{split}
\end{equation*}
Thus,
\begin{equation*}
	\begin{split}
		\|T_{A_-}(\tau)f_2\|_{L^6(\B_1^c)}^6 &\lesssim \int_{1}^{\infty} \kappa(\rho)\rho^{-10}(\rho-1)^9 \Big (\int_{\rho}^{\infty} \frac{\kappa(s)s^{\frac{5}{2}}f_2(s)}{(s-1)^{\frac{1}{2}}} \langle \tau +\log\big(\frac{\rho-1}{s-1}\big)\rangle^{-2}ds\Big)^6 d\rho\\
		&\lesssim \int_{1}^{\infty} \kappa(\rho) \Big (\int_{\rho}^{\infty} \frac{\kappa(s)s^{\frac{5}{2}}f_2(s)}{(s-1)^{\frac{1}{2}}} \langle \tau +\log\big(\frac{\rho-1}{s-1}\big)\rangle^{-2}ds\Big)^6 d\rho.
	\end{split}
\end{equation*}
We change variables as: $\rho-1=e^{-x}$ and $s-1=e^y$ to obtain
\begin{equation*}
	\begin{split}
			\|T_{A_-}(\tau)f_2\|_{L^6(\B_1^c)}^6 &\lesssim \int_{\R}\kappa(e^{-x}+1)e^{-x}\Big(\int_{\R} \frac{\kappa(e^y+1)(e^y+1)^{\frac{5}{2}} f_2(e^y+1)}{e^{\frac{y}{2}}} \langle \tau-x-y\rangle^{-2} e^ydy\Big)^6 dx\\
			&\lesssim \int_{\R} \mathbf{1}_{[0,\infty)}(x)g(\tau-x)dx,
	\end{split}
\end{equation*}
where
\begin{equation*}
	g(\tau-x):= \Big(\int_{\R} \frac{\kappa(e^y+1)(e^y+1)^{\frac{5}{2}} f_2(e^y+1)}{e^{\frac{y}{2}}} \langle \tau-x-y\rangle^{-2} e^ydy\Big)^6.
\end{equation*}
Thus, 
\begin{equation*}
	\|T_{A_-}(\tau)f_2\|_{L^6(\B_1^c)} \lesssim |(\mathbf{1}_{[0,\infty)}\ast g)(\tau)|^{\frac{1}{6}},
\end{equation*}
and, consequently, applying Young's convolution inequality twice yields
\begin{equation*}
	\begin{split}
	\|T_{A_-}(\tau)f_2\|_{L^{\infty}_{\tau}(\R_+)L^6(\B_1^c)}& \lesssim \|\mathbf{1}_{[0,\infty)}\|_{L^{\infty}}^{\frac{1}{6}} \|g\|_{L_{\tau}^1(\R_+)}^{\frac{1}{6}}
	\\
	&\lesssim \Big( \int_1^{\infty} \big(\int_{\R} \kappa(e^y+1)(e^y+1)^{\frac{5}{2}} f_2(e^y+1)e^y \langle \tau-y\rangle^{-2}dy\big)^6 d\tau\Big)^{\frac{1}{6}}
	\\
	&=\| \kappa(e^{(\cdot)}+1)(e^{(\cdot)}+1)^{\frac{5}{2}}f_2(e^{(\cdot)}+1)e^{\frac{(\cdot)}{2}} \ast \langle \cdot\rangle^{-2}\|_{L^6(\R_+)}
	\\
	&\lesssim \|\kappa(e^{(\cdot)}+1)(e^{(\cdot)}+1)^{\frac{5}{2}}f_2(e^{(\cdot)}+1)e^{\frac{(\cdot)}{2}}\|_{L^2(\R_+)} \| \langle \cdot \rangle^{-2}\|_{L^{\frac{3}{2}}(\R_+)}
	\\
	&\lesssim \Big(\int_{\R}\kappa(e^y+1)(e^y+1)^{5}|f_2(e^y+1)|^2 e^ydy\Big)^{\frac{1}{2}}.
	\end{split}
\end{equation*}
Now, we undo the change of variables: $e^y+1=s$ to obtain that the above can be estimated by
\begin{equation*}
	\Big( \int_{\R} \kappa(s)s^5 |f_2(s)|^2 ds\Big)^{\frac{1}{2}} \lesssim \|f_2\|_{\dot{H}^1(\R^6)},
\end{equation*}
where we utilise the support property of $\kappa$ and Hardy's inequality in the last step. Next, by using the elementary inequality $\rho^{-2}(\rho\pm 1) \langle a+\log(\rho\pm 1)\rangle^{-24} \lesssim \langle a\rangle^{-24}$, for $\rho\in(1,\infty)$ and $a\in \R$ we obtain
\begin{equation*}
	\begin{split}
\|T_{A_-}(\tau)f_2\|_{L^{12}(\B_1^c)}^{12}& \lesssim \int_1^{\infty} \kappa(\rho)\rho^{-25}(\rho-1)^{18} \Big(\int_{\rho}^{\infty} \frac{\kappa(s)s^{\frac{5}{2}}f_2(s)}{(s-1)^{\frac{1}{2}}} \langle \tau +\log\big(\frac{\rho-1}{s-1}\big)\rangle^{-2}ds\Big)^{12} d\rho\\
&\lesssim \int_1^{\infty} \rho^{-2} d\rho \left(\int_{1}^{\infty} \frac{\kappa(s)s^{\frac{5}{2}}f_2(s)}{(s-1)^{\frac{1}{2}}} \langle \tau -\log(s-1)\rangle^{-2}ds\right)^{12} 
\\
&\lesssim \left(\int_{1}^{\infty} \frac{\kappa(s)s^{\frac{5}{2}}f_2(s)}{(s-1)^{\frac{1}{2}}} \langle \tau -\log(s-1)\rangle^{-2}ds\right)^{12} .
	\end{split}
\end{equation*}
This gives
\begin{equation*}
    \begin{split}
       \|T_{A_-}(\tau)f_2\|_{L^{12}(\B_1^c)} &\lesssim  \int_1^{\infty} \rho^{-2} d\rho \left(\int_{1}^{\infty} \frac{\kappa(s)s^{\frac{5}{2}}f_2(s)}{(s-1)^{\frac{1}{2}}} \langle \tau -\log(s-1)\rangle^{-2}ds\right)\\
       &=\int_{\R} \kappa(e^y+1) (e^y+1)^{\frac{5}{2}}f_2(e^y+1) e^{\frac{y}{2}} \langle \tau - y\rangle^{-2}dy\\
       &=:(p_1\ast p_2)(\tau),
    \end{split}
\end{equation*}
where 
\begin{equation*}
p_1(y) = \kappa(e^y+1) (e^y+1)^{\frac{5}{2}}f_2(e^y+1) e^{\frac{y}{2}}, \quad p_2(y):=\langle y\rangle^{-2}.
\end{equation*}
Consequently,
\begin{equation*}
\begin{split}
    \|T_{A_-}(\tau)f_2\|_{L^2(\R_+)L^{12}(\B_1^c)} &\lesssim \|p_1\|_{L^2} \|p_2\|_{L^1} \lesssim \|p_1\|_{L^2} \lesssim \|f_2\|_{\dot{H}^1(\R^6)}.
\end{split}
\end{equation*}
 \end{proof}

\textbf{$B$ terms}: Next, we deal with the terms supported near $\rho=\infty$. To that end, we start with the analogue of Lemma \ref{lem:A asymp}.
\begin{lemma}\label{lemB asymp}
We have that 
\begin{align*}
B_\pm F_\lambda(\rho;\lambda):=\mathcal{O}(\langle \omega \rangle^{-2})(1 -\kappa(\rho)) \rho^{-\frac{5}{2}} (\rho\pm1)^{\frac{3}{2}+\lambda} \int_{\rho}^{\infty} \frac{(1-\kappa(s))s^{\frac{5}{2}}F_{\lambda}(s)}{(s\pm1)^{\frac{1}{2}+\lambda}}\gamma_{B_\pm}(\rho,s;\lambda) ds
\end{align*}
where the functions $\gamma_{B_-}$ and $\gamma_{B_+}$ are smooth functions on $[1,\infty)^2\times S$ that satisfy the estimates
\begin{align*}
|\partial_\rho^k\partial_s^\ell\partial_\omega^n \gamma_{B_\pm}(\rho,s;\lambda) |\lesssim_{k,\ell,n}1 
\end{align*} 
for all $k,\ell,n \in \mathbb{N}_0$ and all $(\rho,s;\lambda)\in [1,\infty)^2\times S$.
\end{lemma}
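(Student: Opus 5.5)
The plan is to mirror the proof of Lemma \ref{lem:A asymp}, now on the support of $1-\kappa$, where the relevant solutions are $v_3,v_4$ near infinity. Write $B_-F_\lambda$ using \eqref{eq:ab-cd} twice, once for the coefficients and once for the profiles:
\begin{align*}
\zeta_{\mathrm f}g_{\mathrm f_3}(\rho)g_{\mathrm f_4}(s)-\zeta g_3(\rho)g_4(s)
&=(\zeta_{\mathrm f}-\zeta)g_{\mathrm f_3}(\rho)g_{\mathrm f_4}(s)+\zeta\big(g_{\mathrm f_3}(\rho)-g_3(\rho)\big)g_{\mathrm f_4}(s)\\
&\quad+\zeta g_3(\rho)\big(g_{\mathrm f_4}(s)-g_4(s)\big).
\end{align*}
The same splitting applies to $B_+$.

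For the coefficient difference, use the Remark after Lemma \ref{lemma:FundamentalSystem1Infty}. It gives $\zeta(\lambda)-\zeta_{\mathrm f}(\lambda)=\frac{a(V)-a(0)}{3+2\lambda}+\O(\langle\omega\rangle^{-2})$. Together with the prefactor $(3+2\lambda)^{-1}$ already present in $B_\pm$, the first term therefore carries $\O(\langle\omega\rangle^{-2})$ overall.

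For the profile differences, go back to the Volterra representation of $g_3,g_4$ from the lemma constructing the solutions near $\infty$. The difference $g_j-g_{\mathrm f_j}$ comes only from the potential part $V$ of $\widetilde V$, in the integral $\int_\rho^\infty K(\rho,s;\lambda)\dots ds$. Integrate by parts once in the oscillatory part of $K$, using $\partial_s\big(\tfrac{s-1}{s+1}\big)^{\lambda+\frac32}=(3+2\lambda)\big(\tfrac{s-1}{s+1}\big)^{\lambda+\frac32}(s^2-1)^{-1}$, exactly as in the proof of Lemma \ref{lem:solsnear1}. This produces a factor $(3+2\lambda)^{-1}$ and shows $g_j-g_{\mathrm f_j}=\O(\rho^{-1}\langle\omega\rangle^{-1})$ with symbol-type bounds under $\partial_\rho^k\partial_\omega^n$. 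Combined with the prefactor $(3+2\lambda)^{-1}$, every term again carries $\O(\langle\omega\rangle^{-2})$.

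Next comes the exponent shift. The integrand in $B_\pm$ has $(s\pm1)^{-\frac32-\lambda}$, while the target form has $(s\pm1)^{-\frac12-\lambda}$. Since $s\geqslant\frac{1+r}{2}>1$ on $\supp(1-\kappa)$, one simply writes $(s\pm1)^{-\frac32-\lambda}=(s\pm1)^{-\frac12-\lambda}(s\pm1)^{-1}$. The harmless factor $(s\pm1)^{-1}$, together with $g_{\mathrm f_4}(s)$ or the difference factor, is absorbed into $\gamma_{B_\pm}(\rho,s;\lambda)$. Likewise $g_3(\rho)$ or $g_{\mathrm f_3}(\rho)$ and the bounded $\O(1)$ parts go into $\gamma_{B_\pm}$. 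The factor $1-\kappa(s)$ and the $\rho$-prefactor are kept explicitly.

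The main obstacle is verifying the uniform bounds $|\partial_\rho^k\partial_s^\ell\partial_\omega^n\gamma_{B_\pm}|\lesssim 1$ on $[1,\infty)^2\times S$, uniformly as $\rho,s\to\infty$. This needs that the $g_j$ and $g_{\mathrm f_j}$, for $j=3,4$, satisfy symbol estimates in $\rho$ and $\omega$, not merely the stated $1+\O(\rho^{-1}\langle\omega\rangle^{-1})$. Differentiating in $\omega$ hits $\big(\tfrac{\rho\mp1}{\rho\pm1}\tfrac{s\pm1}{s\mp1}\big)^{\lambda}$ inside the Volterra kernel and yields logarithms of ratios of order $\O(\rho^{-1})$, since $s\geqslant\rho$ and both are large. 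I would first establish these derivative bounds by differentiating the Volterra equation and iterating, as in \cite[Lemma 4.1]{DonningerWallauch}. Here the requirement is only $\lesssim 1$, not decay in $\omega$, which is weaker than in Lemma \ref{lem:A asymp}, so this step should close. Finally, the $\O(\langle\omega\rangle^{-2})$ factor is pulled out in front, with any leftover $\omega$-dependence of the coefficients absorbed into $\gamma_{B_\pm}$.
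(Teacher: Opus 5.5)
Your proposal is correct and follows the paper's route. The paper presents this lemma as the direct analogue of Lemma \ref{lem:A asymp}: split via \eqref{eq:ab-cd}, use $\zeta-\zeta_{\mathrm f}=\O(\langle\omega\rangle^{-1})$ and $g_j-g_{\mathrm f_j}=\O(\rho^{-1}\langle\omega\rangle^{-1})$ together with the prefactor $(3+2\lambda)^{-1}$, and absorb the smooth bounded factor $(s\pm1)^{-1}$ into $\gamma_{B_\pm}$, which is harmless since $s\geqslant\frac{1+r}{2}$ on $\supp(1-\kappa)$. The extra integration by parts in the Volterra kernel is unnecessary: $K$ already carries $(3+2\lambda)^{-1}$, and the bound $g_j-g_{\mathrm f_j}=\O(\rho^{-1}\langle\omega\rangle^{-1})$, with symbol-type derivative bounds, is already recorded in the lemma constructing $v_3,v_4$.
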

Thus, we once more define corresponding integral operators as
\begin{equation*}
	T_{B_\pm}(\tau)f(\rho):=\frac{1}{2\pi i} \lim_{\varepsilon \to 0}\lim_{N\to \infty} \int_{\varepsilon-iN}^{\varepsilon+iN}e^{i \lambda \tau}\mathcal{O}(\langle \omega \rangle^{-2}) (1-\kappa(\rho)) \rho^{-\frac{5}{2}} (\rho\pm1)^{\frac{3}{2}+i\lambda} \int_{\rho}^{\infty} \frac{(1-\kappa(s))s^{\frac{5}{2}}f(s)}{(s-1)^{\frac{1}{2}+i\lambda}} dsd\lambda,
\end{equation*}
for $\lambda =\varepsilon +i\omega$.

\begin{lemma}
    \label{lemma:TBpm}
The operators $T_{B_\pm}$ satisfy the estimates
\begin{align*}
\|T_{B_\pm}(\cdot)f_2\|_{L^q(\R_+)L^r(\B^c_1)}&\lesssim \|f_2\|_{\dot{H}^1(\R^6)}
\end{align*}
for all $q\in [2,\infty],~r\in[6,12]$  that satisfy \eqref{eq:AdmissibleExponents} and  $f_2\in C^\infty_{c,rad}(\R^6)$.
\end{lemma}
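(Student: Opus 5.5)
I will mirror the proof of Lemma \ref{lemma:TApm}, replacing the behaviour near $\rho=1$ by the behaviour near $\rho=\infty$. First I apply Fubini and \cite[Lemma 5.1]{DonningerWallauch} to the $\omega$-integral. This is legitimate because of the factor $\mathcal{O}(\langle\omega\rangle^{-2})$ and the uniform symbol bounds on $\gamma_{B_\pm}$ from Lemma \ref{lemB asymp}. It gives the pointwise kernel bound
\[
|T_{B_\pm}(\tau)f_2(\rho)|\lesssim (1-\kappa(\rho))\rho^{-\frac52}(\rho\pm1)^{\frac32}\int_\rho^\infty \frac{(1-\kappa(s))s^{\frac52}|f_2(s)|}{(s\pm1)^{\frac12}}\Big\langle \tau+\log\Big(\frac{\rho\pm1}{s\pm1}\Big)\Big\rangle^{-2}ds .
\]
Since $1-\kappa$ vanishes near $1$, on the relevant region $\rho\pm1\simeq\rho$ and $s\pm1\simeq s$. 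The prefactor therefore behaves like $\rho^{-1}$ and the integrand like $s^{2}|f_2(s)|$. I then use the substitutions $\rho=e^{x}$, $s=e^{y}$ (with $x,y\gtrsim 0$), so the kernel becomes $\langle\tau+x-y\rangle^{-2}$ with $y\geqslant x$.

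For the endpoint $L^\infty_\tau L^6$, I compute $\|T_{B_\pm}(\tau)f_2\|_{L^6(\B_1^c)}^6\lesssim\int_0^\infty e^{-6x}e^{6x}\big(\int e^{2y}|f_2(e^y)|\langle\tau+x-y\rangle^{-2}dy\big)^6dx$. The weights cancel exactly: $\rho^5d\rho=e^{6x}dx$ against $\rho^{-6}$. Young's inequality in $x$ (convolving with $\langle\cdot\rangle^{-2}\in L^{6/5}$ or $L^{3/2}$) then bounds this by $\|e^{2y}f_2(e^y)e^{y/2}\cdot e^{y/2}\|$-type expressions. More precisely, I aim for $\|p\|_{L^2(\R)}$ with $p(y)=e^{3y}|f_2(e^y)|$, which after undoing the substitution is $\big(\int s^5|f_2(s)|^2ds\big)^{1/2}$. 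By Hardy's inequality on $\R^6$ this is $\lesssim\|f_2\|_{\dot H^1}$. This is exactly the step where the exponent $\frac52$ in $s^{5/2}$ and the decay of the prefactor must match; I expect it to be the only delicate bookkeeping.

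For the endpoint $L^2_\tau L^{12}$, I use Minkowski's inequality. The $\rho$-integral $\int \rho^{5-12}\langle\tau+\log\rho-y\rangle^{-24}d\rho$ is bounded uniformly in $y$ (indeed $\rho^{-7}\rho^5=\rho^{-2}$ is integrable). Exploiting the monotonicity $y\geqslant x$ in addition, I reduce to $\|T_{B_\pm}(\tau)f_2\|_{L^{12}}\lesssim\int p(y)\tilde g(\tau-y)dy$. Here $\tilde g\in L^1$ comes from a version of \cite[Lemma 5.4]{DonningerWallauch} with a mild singular factor, as in the treatment of $T_{16,\pm,\kappa_2}$. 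Young's inequality then gives $\lesssim\|p\|_{L^2}\lesssim\|f_2\|_{\dot H^1}$.

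The main obstacle is the $L^{12}$ estimate. The factor $\rho^{-1}$ at infinity is only barely enough: after taking the $12$-th power, the $\rho$-integral must converge without loss in $\tau$. If the crude uniform bound fails to produce an $L^1_\tau$ kernel, I would insert the extra gain $\rho^{-\epsilon}$ from the $\mathcal O(\rho^{-1})$ corrections in $g_3,g_4$, or trade $\langle\omega\rangle^{-\epsilon}$ for $|x|^{-\epsilon}$ as in \cite[Lemma 5.4]{DonningerWallauch}. Finally, the general admissible pairs $(q,r)$ follow by interpolating between the two endpoints.
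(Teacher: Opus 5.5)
There is a genuine gap, exactly at the step you flagged as the delicate bookkeeping. With the kernel you wrote (denominator $(s\pm1)^{\frac12}$, as in Lemma \ref{lemB asymp}), the integrand is $\simeq s^2|f_2(s)|$. Hence $p(y)=e^{3y}|f_2(e^y)|$ and $\|p\|_{L^2(\R)}^2=\int_1^\infty s^5|f_2(s)|^2\,ds$, which is (part of) $\|f_2\|_{L^2(\R^6)}^2$ rather than a Hardy quantity. Hardy's inequality only gives $\int_0^\infty s^3|f_2(s)|^2\,ds\lesssim\|f_2\|_{\dot H^1(\R^6)}^2$. In the proof for $T_{A_\pm}$ this distinction was invisible because $\kappa$ has compact support, so $s^5\simeq s^3$ there. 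The support of $1-\kappa$, however, is unbounded.

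The failure is not confined to the last inequality; your kernel bound itself is too weak. Take $f_\mu=f(\cdot/\mu)$ with $0\le f$ supported in $\{2\le|x|\le3\}$. At $\tau=0$ your majorant is $\simeq\rho^{-1}\mu^3\langle\log(\rho/\mu)\rangle^{-2}$ for $r\le\rho\le2\mu$, so its $L^6(\B_1^c)$ norm is $\simeq\mu^3$, while $\|f_\mu\|_{\dot H^1}\simeq\mu^2$. The fallbacks you mention ($\rho^{-\epsilon}$ from the corrections in $g_3,g_4$, or $|x|^{-\epsilon}$ traded from $\langle\omega\rangle^{-\epsilon}$) cannot recover a full power of $s$.

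The missing ingredient is the decay $(s\pm1)^{-\frac32-\lambda}$ that is actually present in \eqref{eq:B-+Terms}. Since $1-\kappa(s)$ vanishes near $s=1$, no integration by parts is needed there. The form with $\frac12+\lambda$ in Lemma \ref{lemB asymp} amounts to absorbing the bounded factor $(s\pm1)^{-1}$ into $\gamma_{B_\pm}$: harmless for the symbol bounds, but fatal for the weight at infinity. The paper's kernel bound for $T_{B_-}$ indeed keeps $(s-1)^{\frac32}$ in the denominator.

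With this factor the integrand is $\simeq s|f_2(s)|$, so $p(y)=e^{2y}|f_2(e^y)|$ and $\|p\|_{L^2}^2=\int s^3|f_2(s)|^2\,ds\lesssim\|f_2\|_{\dot H^1}^2$. The rest of your $L^\infty_\tau L^6$ argument then goes through: the weights $\rho^5\cdot\rho^{-6}\,d\rho=dx$ cancel, and Young's inequality with $\langle\cdot\rangle^{-2}\in L^{3/2}$ finishes it. This matches the paper, which uses the substitutions $\rho-1=e^{-x}$, $s-1=e^y$ instead of $\rho=e^x$, $s=e^y$; that difference is immaterial here.

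Finally, the $L^2_\tau L^{12}$ endpoint is not the obstacle you expected. Since $\rho^{-12}\rho^5=\rho^{-7}$, one has $\|\rho^{-1}\langle\tau+\log\rho-y\rangle^{-2}\mathbf{1}_{\rho\le e^y}\|_{L^{12}(\B_1^c)}\lesssim\langle\tau-y\rangle^{-2}$. Minkowski plus Young with $\langle\cdot\rangle^{-2}\in L^1$ therefore suffices, with no singular kernel from \cite[Lemma 5.4]{DonningerWallauch}.
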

\begin{proof}

Imitating the steps we did in order to bound $T_{A_-}$, we obtain
\begin{equation*}
	|T_{B_-}(\tau)f_2| \lesssim (1-\kappa(\rho)) \rho^{-\frac{5}{2}} (\rho-1)^{\frac{3}{2}} \int_{\rho}^{\infty} \frac{(1-\kappa(s))s^{\frac{5}{2}}f_2(s)}{(s-1)^{\frac{3}{2}}} \langle \tau +\log\Big(\frac{\rho-1}{s-1}\Big)\rangle^{-2}ds,
	\end{equation*}
	and the $L^6_{\rho}(\B_1^c)$ norm becomes
	\begin{equation*}
		\|T_{B_-}(\tau)f_2\|_{L_{\rho}^6(\B_1^c)}^6 \lesssim \int_1^{\infty} (1-\kappa(\rho)) \rho^{-10} (\rho-1)^9 \Big(\int_{\rho}^{\infty} \frac{(1-\kappa(s)) s^{\frac{5}{2}} f_2(s)}{(s-1)^{\frac{3}{2}}} \langle \tau +\log\Big(\frac{\rho-1}{s-1}\Big)\rangle^{-2}ds\Big)^6 d\rho,
 	\end{equation*}
 	and we change variables $\rho-1=e^{-x}, s-1=e^y$ to obtain that
 	\begin{equation*}
 		\begin{split}
 				\|T_{B_-}(\tau)f_2\|_{L_{\rho}^6(\B_1^c)}^6 \lesssim 	\int_{\R} \tilde{g}(\tau-x)dx,
 		\end{split}
 	\end{equation*}
 	where 
 	\begin{equation*}
 		\tilde{g}(\tau-x) :=\Big( \int_{\R} \frac{(1-\kappa(e^y+1))(e^y+1)^{\frac{5}{2}}f_2(e^y+1)}{e^{\frac{y}{2}}} \langle \tau-x-y\rangle^{-2}dy\Big)^6.
 	\end{equation*}
 	The computations remain as before with a slight change in $g$ (which becomes $\tilde{g}$ now) and $\kappa$ changing to $1-\kappa$. We conclude briefly:
 	\begin{equation*}
 		\begin{split}
 		\| T_{B_-}(\tau)f_2\|_{L_{\tau}^{\infty}(\R_+)L_{\rho}^6(\B_1^c)} \lesssim \| \mb{1} \ast \tilde{g}\|_{L^{\infty}(\R)}^{\frac{1}{6}} &\lesssim \|\tilde{g}\|_{L^1(\R)}^{\frac{1}{6}}\\& =\|\tilde{g}^{\frac{1}{6}}\|_{L^6(\R)}\\
 		 &\lesssim \Big\| \frac{(1-\kappa(e^{y}+1))(e^y+1)^{\frac{5}{2}}f_2(e^y+1)}{e^\frac{y}{2}} \Big\|_{L^2(\R)} \| \langle \cdot \rangle^{-2}\|_{L^{\frac{3}{2}}(\R)}\\ &\lesssim \|f\|_{\dot{H}^1(\R^6)},
 		\end{split}
 	\end{equation*}
  	where we use Hardy's inequality to obtain the last display. For the $(2,12)$ pair, the same strategy as for $A_-$ gives
    \begin{equation*}
    \begin{split}
     \| T_{B_-}(\tau)f_2\|_{L_{\rho}^{12}(\B_1^c)} \lesssim (\tilde{q}_1 \ast \tilde{q}_2)(\tau),
    \end{split}
    \end{equation*}
    where
    \begin{equation*}
        q_1(y) := (1-\kappa(e^y+1)) (e^y+1)^{\frac{5}{2}}f_2(e^y+1) e^{\frac{y}{2}}, \quad q_2(y):=p_2(y)=\langle y\rangle^{-2}.
    \end{equation*}
    Consequently,
    \begin{equation*}
        \begin{split}
            \| T_{B_-}(\tau)f_2\|_{L_{\tau}^{1}(\R_+)L_{\rho}^{12}(\B_1^c)} \lesssim \|\| \tilde{q}_1 \ast \tilde{q}_2\|_{L^1} \lesssim \|\tilde{q}_1\|_{L^2} \|\tilde{q}_2\|_{L^2} \lesssim \|f_2\|_{\dot{H}^1(\R^6)}.
        \end{split}
    \end{equation*}
    Minor modifications of this strategy yield the bounds on $B_+$, and we conclude this proof.
\end{proof}

\textbf{$C$ terms}: In this section, we deal with the mixed terms, namely $C_{\star \circ}, ~\star, \circ \in \{+,-\}$. The corresponding operators, in this case,  read
\begin{equation*}
    T_{C_{\star \circ}}(\tau)f(\rho):=\frac{1}{2\pi i} \lim_{\varepsilon \to 0} \lim_{N\to \infty} \int_{\varepsilon-iN}^{\varepsilon+iN} e^{\lambda \tau} C_{\star \circ}(\tau) f(\rho)d\lambda,
\end{equation*}
and we prove the following:
\begin{lemma}
    \label{lemma:TCTerms}
    The operators $T_{C_{\star \circ}}$ satisfy the estimate:
    \begin{equation*}
        \|T_{C_{\star \circ}}(\cdot) f_2\|_{L^q(\R_+)L^r(\B_1^c)} \lesssim \|f_2\|_{\dot{H}^1(\R^6)},
    \end{equation*}
    for all $f_2\in C_{c,rad}^{\infty}(\R^6)$ and all $q\in [2,\infty],~r\in[6,12]$ that satisfy \eqref{eq:AdmissibleExponents}.
    \begin{proof}
        We deal with $C_{+-}$ term; the rest being similar or potentially even better-behaved, can be dealt with using similar arguments. In the first place, after employing \eqref{eq:ab-cd}, we single out the leading order term from \eqref{eq:C+-Terms}:
        \begin{equation*}
            \frac{\big(a_{2,3}(\lambda) - a_{\mathrm{f}_{2,3}}(\lambda)\big)}{3+2\lambda} \kappa(\rho)\rho^{-\frac{5}{2}} (\rho+1)^{\frac{3}{2}+\lambda} \int_{\rho}^{\infty} \frac{(1-\kappa(s))s^{\frac{5}{2}}F_{\lambda}(s)}{(s-1)^{\frac{3}{2}+\lambda}}ds
        \end{equation*}
        Next, we use an integration by parts on the above to arrive at the following leading order terms for $C_{+-}$:
        \begin{equation}
        \label{eq:CPlusMinusLeading}
            \begin{split}
&\frac{2\big(a_{2,3}(\lambda) - a_{\mathrm{f}_{2,3}}(\lambda)\big)}{(1+2\lambda)(3+2\lambda)} \kappa(\rho)\rho^{-\frac{5}{2}} (\rho+1)^{\frac{3}{2}+\lambda}\Big[\frac{(1-\kappa(\rho))\rho^{\frac{5}{2}} F_{\lambda}(\rho)}{(\rho-1)^{\frac{1}{2}+\lambda}} + \int_{\rho}^{\infty} \frac{((1-\kappa(s))s^{\frac{5}{2}}F_{\lambda}(s))'}{(s-1)^{\frac{1}{2}+\lambda}}ds\Big]. 
                \end{split}
        \end{equation}
        The operator $T$ corresponding to the first term can be bounded by:
        \begin{equation*}
            \begin{split}
                &\kappa(\rho)(1-\kappa(\rho)) f_2(\rho)(\rho+1)^3(\rho-1)^{-1} \langle \tau +\log(\rho+1)-\log(\rho-1)\rangle^{-2} \\
                &\quad \lesssim  \kappa(\rho)(1-\kappa(\rho)) \rho^2 f_2(\rho)(\rho+1)^3 \langle \tau -\log(\rho-1)\rangle^{-2},
            \end{split}
        \end{equation*}
        and its $L^6(\B_1^c)$ norm is given by
        \begin{equation*}
        \begin{split}
            &\Big(\int_1^{\infty} \kappa(\rho)(1-\kappa(\rho))\rho^{12} |f_2(\rho)|^6 \langle \tau-\log(\rho-1)\rangle^{-12}d\rho\Big)^{\frac{1}{6}}\\
            &\lesssim  \Big(\int_{\R} \kappa(e^x+1)(1-\kappa(e^x+1)) (e^x+1)^{12} e^x |f_2(e^x+1)|^{6} \langle \tau-x\rangle^{-12} dx\Big)^{\frac{1}{6}}\\
            &=:m_1 \ast m_2 (\tau),
            \end{split}
        \end{equation*}
        where
        \begin{equation*}
            m_1(x) := \kappa(e^x+1)(1-\kappa(e^x+1)) (e^x+1)^{12} e^x |f_2(e^x+1)|^{6}, \quad m_2(x):=\langle x\rangle^{-2},
        \end{equation*}
        implying that the $L^{\infty}_{\tau}(\R_+)$ norm of the above can be bounded by
        \begin{equation*}
        \|m_1\|_{L^2}^{\frac{1}{6}} \|m_2\|_{L^2}^{\frac{1}{6}} \lesssim \|m_1\|_{L^2}^{\frac{1}{6}} \lesssim \|f_2\|_{\dot{H}^1(\R^6)}.
        \end{equation*}
        
        The second term in \eqref{eq:CPlusMinusLeading} leads to the following bound for the operator $T$:
        \begin{equation*}
        \begin{split}
            &\Big|\kappa(\rho)\rho^{-\frac{5}{2}}(\rho+1)^{\frac{3}{2}} \int_{\rho}^{\infty} \frac{(1-\kappa(s))s^{\frac{5}{2}} f_2'(s)}{(s-1)^{\frac{1}{2}}} \langle \tau+\log(\rho+1)-\log(s-1)\rangle^{-2}ds\Big|\\
            &\lesssim \Big|\kappa(\rho)\rho^{-\frac{5}{2}}(\rho+1)^{\frac{3}{2}} \int_{\rho}^{\infty} \frac{(1-\kappa(s))s^{\frac{5}{2}} f_2'(s)}{(s-1)^{\frac{1}{2}}} \langle \tau-\log(s-1)\rangle^{-2}ds\Big|
        \end{split}
        \end{equation*}
        Owing to the support of $\kappa$ and the change of variable $s-1=e^y$, the $L^{6}(\B_1^c)$ norm of the above is estimated by
        \begin{equation*}
            \int_1^{\infty} \frac{(1-\kappa(e^y+1))(e^y+1)^{\frac{5}{2}} f_2'(e^y+1)e^y}{e^\frac{y}{2}}\langle \tau-y\rangle^{-2}dy :=p_1\ast p_2(\tau),
        \end{equation*}
        where $p_1(y):=\frac{(1-\kappa(e^y+1))(e^y+1)^{\frac{5}{2}} f_2'(e^y+1)e^y}{e^{\frac{y}{2}}}$, and $p_2(y):=\langle y\rangle^{-2}$. Consequently, the $L^{\infty}(\R_+)L^6(\B_1^c)$ norm is bounded by
        \begin{equation*}
            \|p_1\ast p_2\|_{L^{\infty}(\R_+)} \lesssim \|p_1\|_{L^2} \|p_2\|_{L^2} \lesssim \|f\|_{\dot{H}^1(\R^6)}.
        \end{equation*}
        Furthermore, for the $(2,12)$ endpoint, we have that the $L^{12}(\B_1^c)$ norm is still given by $(p_1\ast p_2)(\tau)$, which further implies for the $L^2_{\tau}(\R_+)$ norm that:
        \begin{equation*}
              \|p_1 \ast p_2\|_{L^2_{\tau}(\R_+)} \lesssim \|p_1\|_{L^2} \|p_2\|_{L^1} \lesssim \|f_2\|_{\dot{H}^1(\R^6)}.  
        \end{equation*}
    \end{proof}
\end{lemma}

\subsubsection{The operator $\dot{T}$}
\label{subsection:TDot}
 Recall that
\begin{equation}
	\label{eq:TDot}
	\dot{T}_{G}(\tau)(f)(\rho) = \frac{1}{2\pi i} \lim_{\varepsilon \to 0} \lim_{N\to \infty} \int_{\varepsilon -iN}^{\varepsilon +iN} e^{\lambda \tau} G\big((2-\lambda)f_1(\cdot) +(\cdot) f'_1(\cdot)\big)(\rho;\lambda) d\lambda,
\end{equation}
where  $G$ is one of the summand in the difference of the resolvents \eqref{eq:DifferenceOfResolvents}. As in the previous section, we shall deal with the leading-order terms previously obtained. First of all, we observe that we can write 
\begin{equation*}
	f(s;\lambda):=(2-\lambda)f_1(s) + sf_1'(s) = -\Big(\frac{1}{2} +\lambda \Big) f_1(s) +sf_1'(s)+\frac{5}{2}f_1(s).
\end{equation*}
After performing an integration by parts and a simple manipulation, we obtain, for $\gamma =\gamma(\cdot) \in\{(1-\kappa(\cdot)), \kappa(\cdot)\}$ and $\star \in \{+,-\}$:
\begin{equation}
	\begin{split}
		\label{eq:IntegrationByPartsFinal}
		&\int_{\rho}^{\infty} \frac{\gamma(s)s^{\frac{5}{2}} ((2-\lambda)f_1(s) +sf_1'(s))}{(s\star1)^{\frac{3}{2}+\lambda}} ds\\
		&\quad= -\frac{\gamma(\rho)\rho^{\frac{5}{2}} f_1(\rho)}{(\rho\star1)^{\frac{1}{2}+\lambda}} -\frac{5}{2} \int_{\rho}^{\infty} \frac{\gamma(s)s^{\frac{3}{2}}f_1(s)}{(s\star1)^{\frac{3}{2}+\lambda}} ds -\int_{\rho}^{\infty} \frac{\gamma(s)s^{\frac{5}{2}}f_1'(s)}{(s\star1)^{\frac{3}{2}+\lambda}}ds.
	\end{split}
\end{equation}
\textbf{$A$ terms}: We prove the following lemma:
\begin{lemma}
    \label{lemma:TDotA}
    Let $q\in [2,\infty],r\in[6,12]$ satisfy \eqref{eq:AdmissibleExponents}, $i \in \{+,-\}$ and $f_1\in C_{c,rad}^{\infty}(\R^6)$.  Then, the following estimate holds
 	\begin{equation*}
 		\|\dot{T}_{A_{i}}(\cdot)f\|_{L^q(\R_+)L^r(\B_1^c)} \lesssim \|f_1\|_{\dot{H}^2(\R^6)}.
 	\end{equation*}
    \begin{proof}
        For brevity, we note that the proof follows from Lemma \ref{lemma:TDotB} (see below) by replacing $\kappa(\cdot)$ by $1-\kappa(\cdot)$.
    \end{proof}
\end{lemma}
\vspace{0.5cm}

\textbf{$B$ terms}: We prove the following:
\begin{lemma}
 	\label{lemma:TDotB}
 	Let $q\in [2,\infty],r\in[6,12]$ satisfy \eqref{eq:AdmissibleExponents} and $f_1\in C_{c,rad}^{\infty}(\R^6)$.  Then, the following estimate holds
 	\begin{equation*}
 		\|\dot{T}_B(\cdot)f\|_{L^q(\R_+)L^r(\B_1^c)} \lesssim \|f_1\|_{\dot{H}^2(\R^6)}.
 	\end{equation*}
 \begin{proof}
 Recall that in the leading order, we have
\begin{equation*}
	B_+(f)(\rho;\lambda) = \O(\langle \omega \rangle^{-2}) (1-\kappa(\rho)) \rho^{-\frac{5}{2}} (\rho+1)^{\frac{3}{2}+\lambda} \int_{\rho}^{\infty} \frac{(1-\kappa(s))s^{\frac{5}{2}} f(s;\lambda)}{(s+1)^{\frac{3}{2}+\lambda}}ds,
\end{equation*}
where $f(\rho;\lambda):= (2-\lambda)f_1(\rho) +\rho f_1'(\rho)$. Thus, from  \eqref{eq:IntegrationByPartsFinal}, we conclude:
\begin{equation}
	\label{eq:LeadingOrderTermAtInfinityTDot}
	\begin{split}
		B_+((2-\lambda)f_1(\cdot)+(\cdot)f_1'(\cdot))(\rho;\lambda) &= -\O(\langle \omega \rangle^{-2}) (1-\kappa(\rho)) \rho^{-\frac{5}{2}} (\rho+1)^{\frac{3}{2}+\lambda} \\
		&\quad \times \Big[ \frac{(1-\kappa(\rho))\rho^{\frac{5}{2}} f_1(\rho)}{(\rho+1)^{\frac{1}{2}+\lambda}} +\frac{5}{2}\int_{\rho}^{\infty} \frac{(1-\kappa(s))s^{\frac{3}{2}}f_1(s)}{(s+1)^{\frac{3}{2}+\lambda}} ds \\
        &\quad \quad+\int_{\rho}^{\infty} \frac{(1-\kappa(s))s^{\frac{5}{2}} f_1'(s)}{(s+1)^{\frac{3}{2}+\lambda}}ds \Big].
	\end{split}
\end{equation}
The first term above will have the following counterpart:
\begin{equation*}
	\O(\langle \omega \rangle^{-2}) (1-\kappa(\rho))^2 (\rho-1)f_1(\rho),
\end{equation*}
 which becomes evident from the first term constituting $B_-$ in \eqref{eq:DifferenceOfResolvents},  see equation \eqref{eq:IByPBMinus}.
Thus, we require to control the following term:
\begin{equation*}
	\O(\langle \omega \rangle^{-2})(1-\kappa(\rho))^2 f_1(\rho),
\end{equation*}
for which the operator $\dot{T}$ is given by
\begin{equation*}
	|\dot{T}_{B_+^1}(\tau)f(\rho)|:= \Big| \frac{1}{2\pi i}\lim_{\varepsilon \to 0}\lim_{N \to \infty} \int_{\varepsilon -iN}^{\varepsilon +iN} e^{\lambda \tau} \O(\langle \omega \rangle^{-2}) (1-\kappa(\rho))^2 f_1(\rho) d\lambda \Big| \lesssim (1-\kappa(\rho))^2 |f_1(\rho)| \langle \tau \rangle^{-2}.
\end{equation*}
The $L^{\infty}L^6$ norm of the above can be controlled by the $\dot{H}^2(\R^6)$ norm of $f_1$ via Sobolev embedding $\dot{H}^2(\R^6) \hookrightarrow L^6(\R^6)$.  For the other endpoint, namely $L_{\tau}^{2}(\R_+)L^{12}(\B_1^c)$, we first note that \cite[Proposition B.1]{GlogicWaveMaps} gives for $g \in C_{c,rad}^{\infty}(\R^6)$
\begin{equation}
	\label{eq:LInfinityHardy}
	\| |(\cdot)| g\|_{L^{\infty}(\R^6)} \lesssim \|g\|_{\dot{H}^2(\R^6)}
\end{equation}
which on interpolating with the Sobolev estimate
\begin{equation}
	\| g \|_{L^6(\R^6)} \lesssim \|g\|_{\dot{H}^2(\R^6)}
\end{equation}
implies
\begin{equation}
	\label{eq:L12InterpolationResult}
	\| |(\cdot)|^{\frac{1}{2}} g\|_{L^{12}(\R^6)} \lesssim \|g\|_{\dot{H}^2(\R^6)}.
\end{equation}
The above suffices for us to conclude that
\begin{equation*}
	\| \dot{T}_{B_-^1}(\tau)f(\rho)\|_{L_{\tau}^{2}(\R_+)L^{12}(\B_1^c)} \lesssim \| f_1\|_{\dot{H}^2(\R^6)}.
\end{equation*}
For the second term in \eqref{eq:LeadingOrderTermAtInfinityTDot} the resulting operator can be bounded by
\begin{equation}
	\label{eq:TDotI2}
	|\dot{T}_{B_+^2}(\tau)f(\rho)| \lesssim  (1-\kappa(\rho))\rho^{-\frac{5}{2}} (\rho+1)^{\frac{3}{2}} \int_{\rho}^{\infty} \frac{(1-\kappa(s))s^{\frac{3}{2}} f_1(s)}{(s+1)^{\frac{3}{2}}} \langle \tau +\log(\rho+1)-\log(s+1)\rangle^{-2}ds. 
\end{equation}

This gives, by a change variables: $\rho+1 = e^{-x}, s+1 = e^y$
\begin{equation*}
	\begin{split}
		&\quad \|\dot{T}_{B_+^2}(\tau)f(\cdot)\|_{L^6(\B_1^c)}\\
        &\lesssim \Big( \int_{1}^{\infty} (1-\kappa(\rho))\rho^{-1} \big( \int_{\rho}^{\infty} \frac{(1-\kappa(s))s^{\frac{3}{2}}f_1(s)}{(s-1)^{\frac{3}{2}} }\langle \tau + \log(\rho+1)-\log(s+1)\rangle^{-2}ds \big)^6 d\rho \Big)^{\frac{1}{6}},\\
		& \lesssim \Big( \int_{\R} \mb{1}_{(-\infty,0)} (x) \big( \int_{\R} \frac{(1-\kappa(e^y-1))(e^y-1)^{\frac{3}{2}} f_1(e^y-1)\langle \tau -x-y\rangle^{-2}}{e^{\frac{y}{2}}} dy \big)^6 dx \Big)^{\frac{1}{6}}.
	\end{split}
\end{equation*}
As a result,
\begin{equation*}
	\begin{split}
		\|\dot{T}_{B_+^2}(\tau)f(\cdot)\|_{L_{\tau}^{\infty}(\R_+)L^6(\B_1^c)} &\lesssim \Big \| \mb{1}_{(-\infty,0)} \ast \Big(\int_{\R} \frac{(1-\kappa(e^y-1))(e^y-1)^{\frac{3}{2}} f_1(e^y-1)\langle \tau -y\rangle^{-2}}{e^{\frac{y}{2}}} dy \Big)^6 \Big\|_{L^{\infty}_{\tau}}^{\frac{1}{6}}\\
		&\lesssim \|\mb{1}_{(-\infty,0)} \|_{L^{\infty}}^{\frac{1}{6}} \Big \| \int_{\R} \frac{(1-\kappa(e^y-1))(e^y-1)^{\frac{3}{2}} f_1(e^y-1)\langle \tau-y\rangle^{-2}}{e^{\frac{y}{2}}} dy \Big \|_{L^6}\\
		&\lesssim \Big\| \frac{(1-\kappa(e^{(\cdot)}-1))(e^{(\cdot)}-1)^{\frac{3}{2}} f_1(e^{(\cdot)}-1)}{e^{\frac{(\cdot)}{2}}} \ast \langle (\cdot) \rangle^{-2} \Big\|_{L^6} \\
		&\lesssim \|f_1\|_{\dot{H}^2(\R^6)}.
	\end{split}
\end{equation*}
The other endpoint is controlled as follows:
\begin{equation*}
	\begin{split}
		&\|\dot{T}_{B_+^2}(\tau)f\|_{L_{\tau}^{2}(\R_+)L^{12}(\B_1^c)}\\
		 &\lesssim \Big \| \Big( \int_1^{\infty} (1-\kappa(\rho)) \rho^{-7} \big( \int_{\rho}^{\infty} \frac{(1-\kappa(s))s^{\frac{3}{2}} f_1(s)\langle \tau + \log(\rho+1)-\log(s+1)\rangle^{-2}}{(s+1)^{\frac{3}{2}}}  ds \big)^{12} d\rho \Big)^{\frac{1}{12}} \Big\|_{L^2_{\tau}} \\
		&\lesssim  \Big \| \Big( \int_1^{\infty} (1-\kappa(\rho)) \rho^{-6} \big( \int_{\rho}^{\infty} \frac{(1-\kappa(s))s^{\frac{3}{2}} f_1(s)\langle \tau -\log(s+1)\rangle^{-2}}{(s+1)^{\frac{3}{2}}}  ds \big)^{12} d\rho \Big)^{\frac{1}{12}} \Big\|_{L_{\tau}^2}\\
		&\lesssim \Big\|\int_{\R} \frac{(1-\kappa(s))s^{\frac{3}{2}} f_1(s)}{(s+1)^{\frac{3}{2}}} \langle \tau-\log(s+1) \rangle^{-2} ds \Big\|_{L_{\tau}^2}.
	\end{split}
\end{equation*}
As before, we perform a change of variables: $s+1=e^y$:
\begin{equation*}
	\|\dot{T}_{B_+^2}(\tau)f(\cdot)\|_{L_{\tau}^{2}(\R_+)L^{12}(\B_1^c)} \lesssim  \Big\| \frac{(1-\kappa(e^{(\cdot)}-1))(e^{(\cdot)}-1)^{\frac{3}{2}} f_1(e^{(\cdot)}-1)}{e^{\frac{(\cdot)}{2}}} \ast \langle (\cdot) \rangle^{-2} \Big\|_{L^2} 
	\lesssim \|f_1\|_{\dot{H}^2(\R^6)}.
\end{equation*}

The third term in \eqref{eq:LeadingOrderTermAtInfinityTDot} is given by:
\begin{equation}
	\label{eq:I3}
	B_+^3(f_1)(\rho;\lambda):=\O(\langle \omega \rangle^{-2} ) (1-\kappa(\rho)) \rho^{-\frac{5}{2}} (1+\rho)^{\frac{3}{2}+\lambda} \int_{\rho}^{\infty} \frac{(1-\kappa(s))s^{\frac{5}{2}} f_1'(s)}{(s+1)^{\frac{3}{2}+\lambda}} ds.
\end{equation}
This is a form very similar to what we have dealt with before, and we omit the details.\\


We recall the leading order term in $B_-$ is given by:
\begin{equation*}
	B_-(f)(\rho;\lambda):=\O(\langle \omega \rangle^{-2}) (1-\kappa(\rho))\rho^{-\frac{5}{2}} (\rho-1)^{\frac{3}{2}+\lambda} \int_{\rho}^{\infty} \frac{(1-\kappa(s))s^{\frac{5}{2}}f(s)}{(s-1)^{\frac{3}{2}+\lambda}}ds,
\end{equation*}
which using \eqref{eq:IntegrationByPartsFinal} boils down to
\begin{equation}
\label{eq:IByPBMinus}
	\begin{split}
		&\O(\langle \omega \rangle^{-2})(1-\kappa(\rho)) \rho^{-\frac{5}{2}}(\rho-1)^{\frac{3}{2}+\lambda}\\
		&\quad \times \Big[\frac{(1-\kappa(\rho))\rho^{\frac{5}{2}}f_1(\rho)}{(\rho-1)^{\frac{1}{2}+\lambda}} +\frac{5}{2}\int_{\rho}^{\infty} \frac{(1-\kappa(s))s^{\frac{3}{2}}f_1(s)}{(s-1)^{\frac{3}{2}+\lambda}} ds + \int_{\rho}^{\infty} \frac{(1-\kappa(s))s^{\frac{5}{2}}f_1'(s)}{(s-1)^{\frac{3}{2}+\lambda}}\Big].
			\end{split}
\end{equation}
The first term has been treated beforehand, and we consider the second term, the estimation of the third term being similar. For $\lambda =\varepsilon +i\omega$, the resulting operator is given by
\begin{equation*}
	\begin{split}
		|\dot{T}_{B_-^2}(\tau)f(\rho)|&\lesssim \Big|\frac{1}{2\pi i} \lim_{\varepsilon \to 0}\lim_{N \to \infty} \int_{\varepsilon-iN}^{\varepsilon+iN} e^{\lambda \tau} \O(\langle \omega \rangle^{-2})(1-\kappa(\rho)) \rho^{-\frac{5}{2}} (\rho-1)^{\frac{3}{2}+\lambda} \int_{\rho}^{\infty} \frac{(1-\kappa(s))s^{\frac{3}{2}}f_1(s)}{(s-1)^{\frac{3}{2}+\lambda}} dsd\lambda\Big|\\
		&\lesssim \Big| (1-\kappa(\rho)) \rho^{-\frac{5}{2}}(\rho-1)^{\frac{3}{2}} \int_{\rho}^{\infty} \frac{(1-\kappa(s))s^{\frac{3}{2}}f_1(s)}{(s-1)^{\frac{3}{2}}} \langle \tau+\log(\rho-1)-\log(s-1)\rangle^{-2}ds\Big|.
	\end{split}
\end{equation*}

Using the same change of variables as before, namely, $s-1=e^y, \rho-1=e^{-x}$, we obtain 
\begin{equation*}
	\begin{split}
&\quad 	\|\dot{T}_{B_-^2}(\tau)f(\rho)\|_{L^6_{\rho}(\B_1^c)}^6\\
	 &\lesssim \int_1^{\infty} (1-\kappa(\rho))\rho^{-10}(\rho-1)^9\Big(\int_{\rho}^{\infty} \frac{(1-\kappa(s))s^{\frac{3}{2}}f_1(s)}{(s-1)^{\frac{3}{2}}} \langle \tau+\log(\rho-1)-\log(s-1)\rangle^{-2}ds\Big)^6 d\rho\\
	&\lesssim \int_{\R} \mb{1}_{(-\infty,1]}(x)\Big(\int_{\R}\frac{(1-\kappa(e^y+1))(e^y+1)^{\frac{3}{2}} f_1(e^y+1)\langle \tau-x-y\rangle^{-2}}{e^{\frac{y}{2}}}dy\Big)^6 dx\\
	&:=h_1\ast h_2,
	\end{split}
\end{equation*}
where
\begin{equation*}
	\begin{split}
		h_1(x):=\mb{1}_{(-\infty,1]}(x), \quad h_2(x):=\Big(\int_{\R}\frac{(1-\kappa(e^y+1))(e^y+1)^{\frac{3}{2}} f_1(e^y+1)\langle \tau-y\rangle^{-2}}{e^{\frac{y}{2}}}dy\Big)^6.
		\end{split}
\end{equation*}
This gives
\begin{equation*}
	\begin{split}
			\|\dot{T}_{B_-^2}(\tau)f(\rho)\|_{L^{\infty}_{\tau}L^6_{\rho}(\B_1^c)}^6 \lesssim \|h_1 \ast h_2\|_{L_{\tau}^{\infty}(\R_+)}^6 \lesssim \|h_1\|_{L_{\tau}^{\infty}(\R_+)}^6 \|h_2\|_{L_{\tau}^{1}(\R_+)}^6 \lesssim \|h_2\|_{L_{\tau}^{1}(\R_+)}^6 \lesssim \|\tilde{h}^6\|_{L_{\tau}^{1}(\R_+)} = \|\tilde{h}\|_{L_{\tau}^{6}(\R_+)}^6,
	\end{split}
\end{equation*}
where $\tilde{h}:=h^{\frac{1}{6}} =\tilde{h}_1 \ast \tilde{h}_2$ and
\begin{equation*}
	\tilde{h}_1(y) = \frac{(1-\kappa(e^y+1))(e^y+1)^{\frac{3}{2}} f_1(e^y+1)}{e^{\frac{y}{2}}}, \quad \tilde{h}_2(y):= \langle y\rangle^{-2}.
\end{equation*}
Hence, using Young's convolution inequality and Hardy's inequality, we get
\begin{equation*}
	\|\dot{T}_{B_-^2}(\tau)f(\rho)\|_{L^{\infty}_{\tau}L^6_{\rho}(\B_1^c)} \lesssim \|\tilde{h}_1\|_{L^2} \|\tilde{h}_2\|_{L^{\frac{3}{2}}} \lesssim \|\tilde{h}_1\|_{L^2} \lesssim \|f_1\|_{\dot{H}^2(\R^6)}.
\end{equation*}
The norm for the endpoint $(2,12)$ is controlled using the same change of variables as before:
\begin{equation*}
	\begin{split}
		&\quad \|\dot{T}_{B_-^2}(\tau)f(\rho)\|_{L^{12}_{\rho}(\B_1^c)}^{12}\\
		&\lesssim \int_1^{\infty} (1-\kappa(\rho))^{12} \rho^{-25}(\rho-1)^{18} \Big(\int_{\rho}^{\infty} \frac{(1-\kappa(s))s^{\frac{3}{2}}f_1(s)}{(s-1)^{\frac{3}{2}}} \langle \tau+\log(\rho-1)-\log(s-1)\rangle^{-2}ds\Big)^{12}d\rho\\
		&\lesssim (\tilde{h}\ast \tilde{p})(\tau),
	\end{split}
\end{equation*}
where
\begin{equation*}
	\begin{split}
		\tilde{h}(x)&:=(1-\kappa(e^{-x}+1))(e^{-x}+1)^{-25}e^{-19x}, \\
        \tilde{p}(x)&:=\Big(\int_{\R} \frac{(1-\kappa(e^y+1))(e^y+1)^{\frac{3}{2}} f_1(e^y+1)}{e^{\frac{y}{2}}}\langle x-y\rangle^{-2}dy\Big)^{12}.
	\end{split}
	\end{equation*}
Once again, we conclude
\begin{equation*}
	\begin{split}
		\|\dot{T}_{B_-^2}(\tau)f(\rho)\|_{L^2_{\tau}(\R_+)L^{12}_{\rho}(\B_1^c)} \lesssim \|\tilde{h}\ast \tilde{p}\|_{L^2_{\tau}(\R_+)} \lesssim \|\tilde{p}_1\|_{L^{2}(\R_+)}  \lesssim \|f_1\|_{\dot{H}^2(\R^6)},
	\end{split}
\end{equation*}
for 
\begin{equation*}
	\tilde{p}_1(y):=\frac{(1-\kappa(e^y+1)) (e^y+1)^{\frac{3}{2}}f_1(e^y+1)}{e^{\frac{y}{2}}}.
	\end{equation*}
    \end{proof}
    \end{lemma}

\textbf{$C$ terms}: The corresponding result for $C_{\star\circ}$ is as follows:
\begin{lemma}
    \label{lemma:TDotC}
    Let $q\in [2,\infty],~r\in[6,12]$ satisfy \eqref{eq:AdmissibleExponents}.  Then, the following estimate
 	\begin{equation*}
 		\|\dot{T}_{C_{\star\circ}}(\cdot)f\|_{L^q(\R_+)L^r(\B_1^c)} \lesssim \|f_1\|_{\dot{H}^2(\R^6)},
 	\end{equation*}
    holds for all $f_1 \in C_{c,rad}^{\infty}(\R^6)$.
    \begin{proof}
        As before, we treat the leading order term $C_{+-}$ as given in \eqref{eq:CPlusMinusLeading}, which in this case reads (when the derivative falls on $F_{\lambda} = f$):
                \begin{equation*}
            \O(\langle \omega \rangle^{-4}) \kappa(\rho)\rho^{-\frac{5}{2}} (\rho+1)^{\frac{3}{2}+\lambda} \int_{\rho}^{\infty} \frac{(1-\kappa(s))s^{\frac{5}{2}} \big((3-\lambda)f_1'(s)+sf_1''(s)\big)}{(s-1)^{\frac{1}{2}+\lambda}} ds.
        \end{equation*}
        We write
        \begin{equation*}
            (3-\lambda)f_1'(s)+sf_1''(s)=\Big(\frac{1}{2}-\lambda\Big) f_1'(s)+ \frac{5}{2}f_1'(s) +sf_1''(s),
        \end{equation*}
        and use integration by parts to obtain the following:
        \begin{equation}
            \begin{split}
                \label{eq:InytegrationByPartsForC}
                &\int_{\rho}^{\infty} \frac{(1-\kappa(s))s^{\frac{5}{2}} \big((3-\lambda)f_1'(s)+sf_1''(s)\big)}{(s-1)^{\frac{1}{2}+\lambda}} ds\\
                &\quad= -(1-\kappa(\rho)) \rho^{\frac{5}{2}} (\rho-1)^{\frac{1}{2}-\lambda} f_1'(\rho) + \int_{\rho}^{\infty} \frac{(1-\kappa(s))s^{\frac{3}{2}}f_1'(s)}{(s-1)^{\frac{1}{2}+\lambda}}ds +\int_{\rho}^{\infty} \frac{(1-\kappa(s))s^{\frac{5}{2}}f_1''(s)}{(s-1)^{\frac{1}{2}+\lambda}}ds.
            \end{split}
    \end{equation}
    The operator resulting from the third term in the above display can be bounded by:
    \begin{equation*}
    \begin{split}
        &\kappa(\rho)\rho^{-\frac{5}{2}} (\rho+1)^{\frac{3}{2}} \int_{\rho}^{\infty}\frac{(1-\kappa(s))s^{\frac{5}{2}}f_1''(s)}{(s-1)^{\frac{1}{2}}} \langle \tau +\log(\rho+1)-\log(s-1)\rangle^{-2}ds\\
        &\lesssim \kappa(\rho)\rho^{-\frac{5}{2}} (\rho+1)^{\frac{3}{2}} \int_{\rho}^{\infty}\frac{(1-\kappa(s))s^{\frac{5}{2}}f_1''(s)}{(s-1)^{\frac{1}{2}}} \langle \tau -\log(s-1)\rangle^{-2}ds,
        \end{split}
    \end{equation*}
    and the $L_{\rho}^6(\B_1^c)$ norm can be estimated by
    \begin{equation*}
    \begin{split}
        & \quad \int_1^{\infty} \frac{(1-\kappa(s))s^{\frac{5}{2}}f_1''(s)}{(s-1)^{\frac{1}{2}}} \langle \tau -\log(s-1)\rangle^{-2}ds\\
        &\lesssim \int_{\R} (1-\kappa(e^y+1))(e^y+1)^{\frac{5}{2}} f_1''(e^y+1)\langle \tau-y\rangle^{-2} e^{\frac{y}{2}}dy,
        \end{split}
    \end{equation*}
    which consequently gives that the $L^{\infty}_{\tau}(\R_+)L^6_{\rho}(\B_1^c)$ norm can be bounded by $\| h_1 \ast h_2\|_{L^{\infty}}$, where 
    \begin{equation*}
        h_1(y) :=  (1-\kappa(e^y+1))(e^y+1)^{\frac{5}{2}} f_1''(e^y+1) e^{\frac{y}{2}}, \quad h_2(y):=\langle y\rangle^{-2}.
    \end{equation*}
    Clearly, by undoing the change of variables and Hardy's inequality, we obtain that the above can be bounded by $\|f_1\|_{\dot{H}^2(\R^6)}$. For the $(2,12)$ endpoint, we have that the $L^{12}(\B_1^c)$ norm can be estimated by
    \begin{equation*}
        \begin{split}
            &\Big[\int_{\R} \kappa(\rho) \rho^{-25} (\rho+1)^{18} \Big( \int_{\rho}^{\infty} \frac{(1-\kappa(s))s^{\frac{5}{2}} f_1''(s)}{(s-1)^{\frac{1}{2}}} \langle \tau -\log(s-1)\rangle^{-2}ds\Big)^{12} d\rho \Big]^{\frac{1}{12}}\\
            &\lesssim \int_1^{\infty} \frac{(1-\kappa(s))s^{\frac{5}{2}} f_1''(s)}{(s-1)^{\frac{1}{2}}} \langle \tau-\log(s-1)\rangle^{-2}ds\\
            &\lesssim \int_{\R} (1-\kappa(e^y+1)) (e^y+1)^{\frac{5}{2}} e^{\frac{y}{2}} f_1''(e^y+1) \langle \tau-y\rangle^{-2}dy\\
            &=:(w_1 \ast w_2)(\tau),
        \end{split}
    \end{equation*}
    where
    \begin{equation*}
        w_1(y):= (1-\kappa(e^y+1)) (e^y+1)^{\frac{5}{2}} e^{\frac{y}{2}} f_1''(e^y+1), \quad w_2(y):=\langle y \rangle^{-2}.
    \end{equation*}
    So, the $L^2(\R_+)$ norm of the above can be bounded by
    \begin{equation*}
        \lesssim \|w_1\ast w_2\|_{L^2(\R_+)} \lesssim \|w_1\|_{L^2} \|w_2\|_{L^1} \lesssim \|f_1\|_{\dot{H}^2(\R^6)}.
    \end{equation*}
    The rest of the terms employ similar techniques, and we skip the details.
    \end{proof}
\end{lemma}

\subsection{Strichartz estimates for second derivatives}
In this section, we obtain better control of the semigroup by estimating the semigroup in $L_{\tau}^{\infty}(\R_+)\dot{H}^2(\R^6)$. 
For $\rho \in [1,\infty)$, we have
\begin{equation}
	\begin{split}
		\label{eq:SecondDerivativeOfResolvents}
		&(\mathcal{R}(F_{\lambda}) -\mathcal{R}_{\mathrm{f}}(F_{\lambda}))''(\rho;\lambda) \\
        &\quad = \big(\tilde{u}_1''(\rho;\lambda) -\tilde{u}_{\mathrm{f}_1}''(\rho;\lambda)\big) \int_{\rho}^{\infty} \frac{s^5\tilde{u}_2(s;\lambda)F_{\lambda}(s)}{(s^2-1)^{\frac{3}{2}+\lambda}}ds +\tilde{u}_{\mathrm{f}_1}''(\rho;\lambda) \int_{\rho}^{\infty} \frac{s^5\big(\tilde{u}_2(s;\lambda) - \tilde{u}_{\mathrm{f}_2}(s;\lambda)\big)F_{\lambda}(s)}{(s^2-1)^{\frac{3}{2}+\lambda}}ds\\
        &\quad \quad +\big(\tilde{u}_{\mathrm{f}_2}''(\rho;\lambda) - \tilde{u}_2''(\rho;\lambda)\big) \int_{\rho}^{\infty} \frac{s^5\tilde{u}_{\mathrm{f}_1}(s;\lambda)F_{\lambda}(s)}{(s^2-1)^{\frac{3}{2}+\lambda}}ds + \tilde{u}_2''(\rho;\lambda) \int_{\rho}^{\infty} \frac{s^5\big(\tilde{u}_{\mathrm{f}_1}(s;\lambda)- \tilde{u}_1(s;\lambda)\big)F_{\lambda}(s)}{(s^2-1)^{\frac{3}{2}+\lambda}}ds
	\end{split}
\end{equation}

since $W(u_1(\rho;\lambda),u_2(\rho;\lambda)) = W(u_{\mathrm{f}_1}(\rho;\lambda), u_{\mathrm{f}_2}(\rho;\lambda)) =: W(\rho;\lambda) = \rho^{-5}(\rho^2-1)^{\frac{1}{2}+\lambda}$.

 
 \subsubsection{Identifying the leading order term-an example} Writing out the complete expression for the above is not just tedious but also obscures relevant information.  Alternatively, we choose to deal with the derivative of the leading order terms for the first derivative, which we outline below. Spelling out the functions $\tilde{u}_i, i=1,2$ reveals that we can have more than one leading order term. However, it is sufficient to estimate one of them since the treatment of the others will be similar. Here, we elucidate how and why we refer to the terms in the following section as leading-order terms. Consider (one of) the leading order terms in $C_{+-}(F_{\lambda})(\rho;\lambda)$:
\begin{equation*}
	\begin{split}
	C_{+-}(F_{\lambda})(\rho;\lambda)& :=\frac{a_{2,3}(\lambda) - a_{\mathrm{f}_{2,3}}(\lambda)}{3+2\lambda} \kappa(\rho)\rho^{-\frac{5}{2}} (\rho+1)^{\frac{3}{2}+\lambda} \int_{\rho}^{\infty} \frac{(1-\kappa(s))s^{\frac{5}{2}}F_{\lambda}(s)}{(s-1)^{\frac{3}{2}+\lambda}}ds\\
	&\quad + \frac{a_{\mathrm{f}_{2,3}}(\lambda) - a_{2,3}(\lambda)}{3+2\lambda} (1-\kappa(\rho))\rho^{-\frac{5}{2}} (\rho+1)^{\frac{3}{2}+\lambda} \int_{\rho}^{\infty} \frac{\kappa(s)s^{\frac{5}{2}}F_{\lambda}(s)}{(s-1)^{\frac{3}{2}+\lambda}} ds.
	\end{split}
\end{equation*}
On taking the derivative of the above terms, we obtain that the boundary terms annihilate each other, and we have
\begin{equation}
	\begin{split}
		C'_{+-}(F_{\lambda})(\rho;\lambda)&:=-\frac{5}{2}\frac{a_{2,3}(\lambda) - a_{\mathrm{f}_{2,3}}(\lambda)}{3+2\lambda} \kappa(\rho)\rho^{-\frac{7}{2}} (\rho+1)^{\frac{3}{2}+\lambda} \int_{\rho}^{\infty} \frac{(1-\kappa(s))s^{\frac{5}{2}}F_{\lambda}(s)}{(s-1)^{\frac{3}{2}+\lambda}} ds\\
		&\quad -\frac{5}{2}\frac{a_{\mathrm{f}_{2,3}}(\lambda) - a_{2,3}(\lambda)}{3+2\lambda} (1-\kappa(\rho))\rho^{-\frac{7}{2}} (\rho+1)^{\frac{3}{2}+\lambda} \int_{\rho}^{\infty} \frac{\kappa(s)s^{\frac{5}{2}}F_{\lambda}(s)}{(s-1)^{\frac{3}{2}+\lambda}} ds\\
		&\quad +\frac{a_{2,3}(\lambda) - a_{\mathrm{f}_{2,3}}(\lambda)} {2}\kappa(\rho)\rho^{-\frac{5}{2}} (\rho+1)^{\frac{1}{2}+\lambda} \int_{\rho}^{\infty} \frac{(1-\kappa(s))s^{\frac{5}{2}}F_{\lambda}(s)}{(s-1)^{\frac{3}{2}+\lambda}} ds\\
		&\quad +\frac{a_{\mathrm{f}_{2,3}}(\lambda) - a_{2,3}(\lambda)}{2} (1-\kappa(\rho)) \rho^{-\frac{5}{2}} (\rho+1)^{\frac{1}{2}+\lambda} \int_{\rho}^{\infty} \frac{\kappa(s)s^{\frac{5}{2}}F_{\lambda}(s)}{(s-1)^{\frac{3}{2}+\lambda}} ds.
	\end{split}
\end{equation}
From the above, it is evident that the first two terms are well-behaved in $\omega$. Thus, on taking the derivative of the remaining two terms, we arrive at the following expression for $C''_{+-}(F_{\lambda})(\rho;\lambda)$:
\begin{equation}
	\begin{split}
		C''_{+-}(F_{\lambda})(\rho;\lambda)&:=-\frac{5}{4}\big(a_{2,3}(\lambda) - a_{\mathrm{f}_{2,3}}(\lambda)\big) \kappa(\rho)\rho^{-\frac{7}{2}}(\rho+1)^{\frac{1}{2}+\lambda} \int_{\rho}^{\infty} \frac{(1-\kappa(s))s^{\frac{5}{2}}F_{\lambda}(s)}{(s-1)^{\frac{3}{2}+\lambda}}ds\\
		&\quad -\frac{5}{4} \big(a_{\mathrm{f}_{2,3}}(\lambda) - a_{2,3}(\lambda)\big) (1-\kappa(\rho))\rho^{-\frac{7}{2}} (\rho+1)^{\frac{1}{2}+\lambda} \int_{\rho}^{\infty} \frac{\kappa(s)s^{\frac{5}{2}}F_{\lambda}(s)}{(s-1)^{\frac{3}{2}+\lambda}} ds\\
		&\quad +\frac{a_{2,3}(\lambda) - a_{\mathrm{f}_{2,3}}(\lambda)}{4}(1+2\lambda) \kappa(\rho)\rho^{-\frac{5}{2}} (\rho+1)^{-\frac{1}{2}+\lambda} \int_{\rho}^{\infty} \frac{(1-\kappa(s))s^{\frac{5}{2}}F_{\lambda}(s)}{(s-1)^{\frac{3}{2}+\lambda}} ds \\
		&\quad +\frac{a_{\mathrm{f}_{2,3}}(\lambda) - a_{2,3}(\lambda)}{4}(1+2\lambda) (1-\kappa(\rho))\rho^{-\frac{5}{2}} (\rho+1)^{-\frac{1}{2}+\lambda} \int_{\rho}^{\infty} \frac{\kappa(s)s^{\frac{5}{2}}F_{\lambda}(s)}{(s-1)^{\frac{3}{2}+\lambda}} ds.
	\end{split}
\end{equation}
Again, we shall drop the well-behaved first two terms and use an integration by parts on the last two terms to conclude that the leading order term for the second order derivative of $C_{+-}(F_{\lambda})$ is given by
\begin{equation}
	\begin{split}
		C''_{+-}(F_{\lambda})(\rho;\lambda)&:=\frac{a_{2,3}(\lambda) - a_{\mathrm{f}_{2,3}}(\lambda)}{2} \kappa(\rho) \rho^{-\frac{5}{2}} (\rho+1)^{-\frac{1}{2}+\lambda} \int_{\rho}^{\infty} \frac{\big((1-\kappa(s))s^{\frac{5}{2}}F_{\lambda}(s)\big)'}{(s-1)^{\frac{1}{2}+\lambda}}ds\\
		&\quad + \frac{a_{\mathrm{f}_{2,3}}(\lambda) - a_{2,3}(\lambda)}{2} (1-\kappa(\rho))\rho^{-\frac{5}{2}} (\rho+1)^{-\frac{1}{2}+\lambda} \int_{\rho}^{\infty} \frac{\big((1-\kappa(s))s^{\frac{5}{2}}F_{\lambda}(s)\big)'}{(s-1)^{\frac{1}{2}+\lambda}} ds,
	\end{split}
\end{equation}
noting that the boundary terms that arise are annulled. To make the exposition readable, we list the leading order terms for the first derivatives, use the same steps as described above to write the leading order terms for the second derivatives.
 \begin{equation}
\begin{split}
\label{eq:DerivativeA-}
A_-'(F_{\lambda})(\rho;\lambda)&:=\frac{1}{2(3+2\lambda)} \kappa(\rho)\rho^{-\frac{5}{2}} (\rho-1)^{\frac{1}{2}+\lambda} \int_{\rho}^{\infty} \frac{\kappa(s)s^{\frac{5}{2}}F_{\lambda}(s)}{(s-1)^{\frac{3}{2}+\lambda}}ds, 
\end{split}
\end{equation}

\begin{equation}
\begin{split}
\label{eq:DerivativeA+}
A_+'(F_{\lambda})(\rho;\lambda)&:=-\frac{1}{2(3+2\lambda)}\kappa(\rho)\rho^{-\frac{5}{2}} (\rho+1)^{\frac{1}{2}+\lambda} \int_{\rho}^{\infty} \frac{\kappa(s)s^{\frac{5}{2}}F_{\lambda}(s)}{(s+1)^{\frac{3}{2}+\lambda}}ds,
\end{split}
\end{equation}

\begin{equation}
\begin{split}
\label{eq:DerivativeB-}
B_-'(F_{\lambda})(\rho;\lambda)&:=\frac{\zeta_{\mathrm{f}}(\lambda) - \zeta(\lambda)}{2} (1-\kappa(\rho))\rho^{-\frac{5}{2}} (\rho-1)^{\frac{1}{2}+\lambda} \int_{\rho}^{\infty} \frac{(1-\kappa(s))s^{\frac{5}{2}}F_{\lambda}(s)}{(s-1)^{\frac{1}{2}+\lambda}}ds,
\end{split}
\end{equation}

\begin{equation}
\begin{split}
\label{eq:DerivativeB+}
B_+'(F_{\lambda})(\rho;\lambda)&:= -\frac{\zeta_{\mathrm{f}}(\lambda) - \zeta(\lambda)}{2} (1-\kappa(\rho))\rho^{-\frac{5}{2}} (\rho+1)^{\frac{1}{2}+\lambda} \int_{\rho}^{\infty} \frac{(1-\kappa(s))s^{\frac{5}{2}}F_{\lambda}(s)}{(s+1)^{\frac{1}{2}+\lambda}}ds
\end{split}
\end{equation}


\begin{equation}
\begin{split}
\label{eq:DerivativeC+-}
C_{+-}'(F_{\lambda})(\rho;\lambda)&:=\frac{a_{2,3}(\lambda) - a_{\mathrm{f}_{2,3}}(\lambda)}{2} \kappa(\rho)\rho^{-\frac{5}{2}} (\rho+1)^{\frac{1}{2}+\lambda} \int_{\rho}^{\infty} \frac{(1-\kappa(s))s^{\frac{5}{2}}F_{\lambda}(s)}{(s-1)^{\frac{3}{2}+\lambda}} ds\\
&\quad - \frac{a_{2,3}(\lambda) - a_{\mathrm{f}_{2,3}}(\lambda)}{2}(1-\kappa(\rho)) \rho^{-\frac{5}{2}} (\rho+1)^{\frac{1}{2}+\lambda} \int_{\rho}^{\infty} \frac{\kappa(s)s^{\frac{5}{2}}F_{\lambda}(s)}{(s-1)^{\frac{3}{2}+\lambda}} ds
\end{split}
\end{equation}

\begin{equation}
    \begin{split}
        \label{eq:DerivativeC-+}
        C_{-+}'(F_{\lambda})(\rho;\lambda)&:=\frac{a_{\mathrm{f}_{1,4}}(\lambda) - a_{1,4}(\lambda)}{2} \kappa(\rho)\rho^{-\frac{5}{2}} (\rho-1)^{\frac{3}{2}+\lambda} \int_{\rho}^{\infty} \frac{(1-\kappa(s))s^{\frac{5}{2}}F_{\lambda}(s)}{(s+1)^{\frac{3}{2}+\lambda}}ds\\
        &\quad + \frac{a_{1,4}(\lambda) -a_{\mathrm{f}_{1,4}}(\lambda)}{2}(1-\kappa(\rho))\rho^{-\frac{5}{2}} (\rho-1)^{\frac{1}{2}+\lambda} \int_{\rho}^{\infty}\frac{ \kappa(s)s^{\frac{5}{2}}F_{\lambda}(s)}{(s+1)^{\frac{3}{2}+\lambda}}ds.
    \end{split}
\end{equation}

\begin{equation}
\begin{split}
\label{eq:DerivativeC--}
C_{--}'(F_{\lambda})(\rho;\lambda)&:=-\frac{a_{1,3}(\lambda) - a_{\mathrm{f}_{1,3}}(\lambda)}{2} \kappa(\rho)\rho^{-\frac{5}{2}} (\rho-1)^{\frac{1}{2}+\lambda} \int_{\rho}^{\infty} \frac{(1-\kappa(s))s^{\frac{5}{2}} F_{\lambda}(s)}{(s-1)^{\frac{3}{2}+\lambda}}ds\\
&\quad + \frac{a_{\mathrm{f}_{2,4}}(\lambda) - a_{2,4}(\lambda)}{2} (1-\kappa(\rho)) \rho^{-\frac{5}{2}} (\rho-1)^{\frac{1}{2}+\lambda} \int_{\rho}^{\infty} \frac{\kappa(s)s^{\frac{5}{2}}F_{\lambda}(s)}{(s-1)^{\frac{3}{2}+\lambda}}ds,
\end{split}
\end{equation}

\begin{equation}
    \begin{split}
    \label{eq:DerivativeC++}
  C_{++}'(F_{\lambda})(\rho;\lambda)&:= \frac{a_{2,4}(\lambda) - a_{\mathrm{f}_{2,4}}(\lambda)}{2} \kappa(\rho)\rho^{-\frac{5}{2}} (\rho+1)^{\frac{1}{2}+\lambda} \int_{\rho}^{\infty} \frac{(1-\kappa(s))s^{\frac{5}{2}} F_{\lambda}(s)}{(s+1)^{\frac{3}{2}+\lambda}}ds\\
  &\quad + \frac{a_{1,3}(\lambda) - a_{\mathrm{f}_{1,3}}(\lambda)}{2} (1-\kappa(\rho)) \rho^{-\frac{5}{2}} (\rho+1)^{\frac{1}{2}+\lambda} \int_{\rho}^{\infty} \frac{\kappa(s)s^{\frac{5}{2}}F_{\lambda}(s)}{(s+1)^{\frac{3}{2}+\lambda}}ds.
\end{split}
\end{equation}

 In the following, we list the leading order terms (in $\omega$) for the second derivatives. We note that when the derivative hits the integral term, we obtain a boundary term, which in each case cancels with its counterpart arising from one of the other terms. 
 \begin{equation}
 \begin{split}
 A''_{-}(F_{\lambda})(\rho;\lambda)&:=\frac{1}{2(3+2\lambda)}\kappa(\rho)\rho^{-\frac{5}{2}} (\rho-1)^{-\frac{1}{2}+\lambda} \Big[\frac{5}{2}\int_{\rho}^{\infty} \frac{\kappa(s)s^{\frac{3}{2}}F_{\lambda}(s)}{(s-1)^{\frac{1}{2}+\lambda}}ds + \int_{\rho}^{\infty} \frac{\kappa(s)s^{\frac{5}{2}}F_{\lambda}'(s)}{(s-1)^{\frac{1}{2}+\lambda}}ds\Big],\\
 A''_{+}(F_{\lambda})(\rho;\lambda)&:=-\frac{1}{2(3+2\lambda)}\kappa(\rho)\rho^{-\frac{5}{2}} (\rho+1)^{-\frac{1}{2}+\lambda} \Big[\frac{5}{2}\int_{\rho}^{\infty} \frac{\kappa(s)s^{\frac{3}{2}}F_{\lambda}(s)}{(s+1)^{\frac{1}{2}+\lambda}}ds + \int_{\rho}^{\infty} \frac{\kappa(s)s^{\frac{5}{2}}F_{\lambda}'(s)}{(s+1)^{\frac{1}{2}+\lambda}}ds\Big],
 \end{split}
 \end{equation}

 \begin{equation}
     \begin{split}
     B''_{-}(F_{\lambda})(\rho;\lambda)&:=\frac{\zeta_{\mathrm{f}}(\lambda) - \zeta(\lambda)}{2}(1-\kappa(\rho))\rho^{-\frac{5}{2}}(\rho-1)^{-\frac{1}{2}+\lambda}\int_{\rho}^{\infty} \frac{\big((1-\kappa(s))s^{\frac{5}{2}}F_{\lambda}(s)\big)'}{(s-1)^{\frac{1}{2}+\lambda}}ds\\
 B''_{+}(F_{\lambda})(\rho;\lambda)&:=-\frac{\zeta_{\mathrm{f}}(\lambda) - \zeta(\lambda)}{2}(1-\kappa(\rho))\rho^{-\frac{5}{2}}(\rho+1)^{-\frac{1}{2}+\lambda}\int_{\rho}^{\infty} \frac{\big((1-\kappa(s))s^{\frac{5}{2}}F_{\lambda}(s)\big)'}{(s+1)^{\frac{1}{2}+\lambda}}ds,
 \end{split}
\end{equation}

\begin{equation}
\begin{split}
    C''_{+-}(F_{\lambda})(\rho;\lambda)&:=\frac{a_{2,3}(\lambda) - a_{\mathrm{f}_{2,3}}(\lambda)}{2} \kappa(\rho)\rho^{-\frac{5}{2}}(\rho+1)^{-\frac{1}{2}+\lambda} \int_{\rho}^{\infty} \frac{\big(1-\kappa(s))s^{\frac{5}{2}} F_{\lambda}(s)\big)'}{(s-1)^{\frac{1}{2}+\lambda}}ds\\
    &\quad + \frac{a_{\mathrm{f}_{2,3}}(\lambda) -a_{2,3}(\lambda)}{2} (1-\kappa(\rho))\rho^{-\frac{5}{2}} (\rho+1)^{-\frac{1}{2}+\lambda} \int_{\rho}^{\infty} \frac{\big(\kappa(s)s^{\frac{5}{2}} F_{\lambda}(s)\big)'}{(s-1)^{\frac{1}{2}+\lambda}}ds.
\end{split}
\end{equation}

\begin{equation}
\begin{split}
    C''_{-+}(F_{\lambda})(\rho;\lambda)&:=\frac{a_{\mathrm{f}_{1,4}}(\lambda) -a_{1,4}(\lambda)}{2} \kappa(\rho)\rho^{-\frac{5}{2}} (\rho-1)^{-\frac{1}{2}+\lambda} \int_{\rho}^{\infty} \frac{\big((1-\kappa(s))s^{\frac{5}{2}}F_{\lambda}(s)\big)'}{(s+1)^{\frac{1}{2}+\lambda}}ds\\
    &\quad + \frac{a_{1,4}(\lambda) - a_{\mathrm{f}_{1,4}}(\lambda)}{2}(1-\kappa(\rho))\rho^{-\frac{5}{2}} (\rho-1)^{-\frac{1}{2}+\lambda} \int_{\rho}^{\infty} \frac{\big(\kappa(s)s^{\frac{5}{2}}F_{\lambda}(s)\big)'}{(s+1)^{\frac{1}{2}+\lambda}}ds.
 \end{split}
\end{equation}

\begin{equation}
\begin{split}
 C''_{--}(F_{\lambda})(\rho;\lambda)&:=\frac{a_{\mathrm{f}_{1,3}}(\lambda) - a_{1,3}(\lambda)}{2} \kappa(\rho)\rho^{-\frac{5}{2}} (\rho-1)^{-\frac{1}{2}+\lambda} \int_{\rho}^{\infty} \frac{\big((1-\kappa(s))s^{\frac{5}{2}}F_{\lambda}(s)\big)'}{(s-1)^{\frac{1}{2}+\lambda}}ds\\
 &\quad-\frac{a_{2,4}(\lambda) - a_{\mathrm{f}_{2,4}}(\lambda)}{2}(1-\kappa(\rho)\rho^{-\frac{5}{2}} (\rho-1)^{-\frac{1}{2}+\lambda} \int_{\rho}^{\infty} \frac{\big(\kappa(s)s^{\frac{5}{2}}F_{\lambda}(s)\big)}{(s-1)^{\frac{1}{2}+\lambda}}ds,
 \end{split}
 \end{equation}

 \begin{equation}
\begin{split}
C''_{++}(F_{\lambda})(\rho;\lambda)&:=\frac{a_{2,4}(\lambda) - a_{\mathrm{f}_{2,4}}(\lambda)}{2}\kappa(\rho)\rho^{-\frac{5}{2}} (\rho+1)^{\frac{1}{2}+\lambda}\int_{\rho}^{\infty} \frac{\big((1-\kappa(s))s^{\frac{5}{2}}F_{\lambda}(s)\big)'}{(s+1)^{\frac{1}{2}+\lambda}}ds\\
&\quad +\frac{a_{1,3}(\lambda) - a_{\mathrm{f}_{1,3}}(\lambda)}{2} (1-\kappa(\rho))\rho^{-\frac{5}{2}} (\rho+1)^{-\frac{1}{2}+\lambda} \int_{\rho}^{\infty} \frac{\big(\kappa(s)s^{\frac{5}{2}}F_{\lambda}(s)\big)'}{(s+1)^{\frac{1}{2}+\lambda}}ds.
 \end{split}
\end{equation}

\subsubsection{The operator $T_{G''}$} We start with estimating the operator $T_{G''}$. Recall that
\begin{equation*}
    T_{G''}(\tau) f(\rho):=\frac{1}{2\pi i} \lim_{\varepsilon\to0} \lim_{N\to \infty} \int_{\varepsilon-iN}^{\varepsilon+iN} e^{\lambda \tau} G''(f)(\rho;\lambda)d\lambda.
\end{equation*}

\textbf{$A''$ terms}: We shall prove the following result in this section:
\begin{lemma}
    \label{lemma:TA''}
    The following estimate holds:
    \begin{equation}
        \|T_{A''_i}(\cdot)f_2\|_{L^{\infty}_{\tau}(\R_+)L^2(\B_1^c)}\lesssim \|f_2\|_{\dot{H}^1(\R^6)},
    \end{equation}
    for $i \in\{+,-\}$, and $f_2\in C_{c,rad}^{\infty}(\R^6)$.
    \begin{proof}
       Tracing back the terms and the decay which results from integrating by parts, we observe that the coefficient in $A''_-$ is explicitly given by
       \begin{equation*}
           \frac{1}{3+2\lambda} = \frac{3+2\varepsilon -2i\omega}{(3+2\varepsilon)^2 + 4\omega^2}= \O_o(\langle \omega \rangle^{-1}) + \O(\langle \omega \rangle^{-2}).
       \end{equation*}
       which is odd in $\omega$ in the leading order.
      
       We shall salvage the odd decay in $\omega$ via \cite[Lemma 4.2]{Donninger2010}. Observing that the first term in $A''_{-}$ is amenable to another integration by parts to make the decay in $\omega$ like $\O(\langle \omega \rangle^{-2})$, we treat the second term: the operator $T$ reads
       \begin{equation*}
       \begin{split}
           &\frac{1}{2\pi i} \lim_{\varepsilon\to 0}\lim_{N\to \infty} \int_{\varepsilon-iN}^{\varepsilon+iN} e^{\lambda\tau} \O_o(\langle \omega \rangle^{-1})\kappa(\rho)\rho^{-\frac{5}{2}} (\rho-1)^{-\frac{1}{2}+\lambda} \int_{\rho}^{\infty} \frac{\kappa(s)s^{\frac{5}{2}}f_2'(s)}{(s-1)^{\frac{1}{2}+\lambda}}ds\\
           &\lesssim \kappa(\rho)\rho^{-\frac{5}{2}} (\rho-1)^{-\frac{1}{2}} \int_{\rho}^{\infty} \frac{\kappa(s)s^{\frac{5}{2}}f_2'(s)}{(s-1)^{\frac{1}{2}}} \langle \tau +\log(\rho-1)-\log(s-1)\rangle^{-2}ds.
           \end{split}
 \end{equation*}
 Using the change of variables: $\rho-1=e^{-x}, s-1=e^y$, the squared $L^2(\R^6)$ norm of the above can be estimated by
 \begin{equation*}
     (\mb{1}_{[1,\infty)} \ast w)(\tau)
 \end{equation*}
 where 
 \begin{equation*}
 w(\tau-x):=\Big(\int_1^{\infty} \frac{\kappa(e^y+1)(e^y+1)^{\frac{5}{2}}f_2'(e^y+1)e^y}{e^{\frac{y}{2}}}\langle \tau-x-y\rangle^{-2}dy\Big)^2,
 \end{equation*}
 implying that the $L^{\infty}_{\tau}(\R_+)L^{2}_{\rho}(\B_1^c)$ can be estimated by
 \begin{equation*}
     \begin{split}
     \|\mb{1}_{[1,\infty)}\|_{L^{\infty}}^{\frac{1}{2}} \|w\|_{L^1}^{\frac{1}{2}} \lesssim \|w^{\frac{1}{2}}\|_{L^2} \lesssim \|\tilde{w} \ast \langle \cdot\rangle^{-2}\|_{L^1} \lesssim \|\tilde{w}\|_{L^2} \lesssim \|f_2\|_{\dot{H}^1(\R^6)},
     \end{split}
 \end{equation*}
 for
 \begin{equation*}
 \tilde{w}(y) := \frac{\kappa(e^y+1)(e^y+1)^{\frac{5}{2}}f_2'(e^y+1)e^y}{e^{\frac{y}{2}}}.
 \end{equation*}
 The $A''_+$ terms can be handled in a similar fashion and we conclude the proof.
 \end{proof}
\end{lemma}

\textbf{$B''$ terms}: The corresponding result for $B''$ terms is as follows.
\begin{lemma}
    \label{lemma:TB''}
    The following estimate holds:
    \begin{equation}
        \|T_{B''_i}(\cdot)f_2\|_{L^{\infty}_{\tau}(\R_+)L^2(\B_1^c)}\lesssim \|f_2\|_{\dot{H}^1(\R^6)},
    \end{equation}
    for  $i \in\{+,-\}$, and $f_2\in C_{c,rad}^{\infty}(\R^6)$.
    \begin{proof}
    The leading order terms in $B''_{\pm}$ and $A''_{\pm}$ are same up to the cutoff $\kappa$. Thus, the estimates for these follow by replacing $\kappa(\cdot)$ by $1-\kappa(\cdot)$.
    \end{proof}
\end{lemma}

\textbf{$C''$ terms}: We prove the following:
\begin{lemma}
    \label{lemma:TC''}
    The following estimate holds
    \begin{equation}
        \|T_{C''_{ij}}(\cdot)f_2\|_{L^{\infty}_{\tau}(\R_+)L^2(\B_1^c)}\lesssim \|f_2\|_{\dot{H}^1(\R^6)}.
    \end{equation}
   for $f_2\in C_{c,rad}^{\infty}(\R^6)$ and $i \in\{+,-\}$.
    \begin{proof}
    We continue with analysing the term $C''_{--}$: the first term is amenable to another integration by parts to improve the decay in $\omega$ to be of $\langle \omega \rangle^{-2}$, thus we deal with the second term for which the operator $\dot{T}$ is estimated by:
    \begin{equation*}
        (1-\kappa(\rho))\rho^{-\frac{5}{2}}(\rho-1)^{-\frac{1}{2}}\int_{\rho}^{\infty} \frac{\kappa(s)s^{\frac{5}{2}}f_2'(s)}{(s-1)^{\frac{1}{2}}} \langle \tau +\log(\rho-1)-\log(s-1)\rangle^{-2}ds.
    \end{equation*}
    Note that, to obtain the above, we have again employed the fact that $a_{2,4}(\lambda) - a_{\mathrm{f}_{2,4}}(\lambda) = \O_o(\langle \omega \rangle^{-1})$ from Lemma \ref{lemma:FundamentalSystem1Infty}. The rest of the proof now follows by adjusting the steps from Lemma \ref{lemma:TApm}.
    \end{proof}
\end{lemma}

\subsection{The operator $\dot{T}_{G''}$} For the sake of lucidity, we recall the definition of the operator $\dot{T}$:
\begin{equation*}
    \dot{T}_{G''}(\tau)f(\rho):=\frac{1}{2\pi i} \lim_{\varepsilon \to 0}\lim_{N \to \infty} \int_{\varepsilon-iN}^{\varepsilon+iN} e^{\lambda \tau} G''(f)(\rho;\lambda)d\lambda,
\end{equation*}
where $G''$ is one of the terms from \eqref{eq:SecondDerivativeOfResolvents}, and $f(\rho;\lambda):=(2-\lambda)f_1(\rho)+\rho f_1'(\rho)$. Furthermore, we also note the following for $\gamma =\gamma(\cdot) \in\{(1-\kappa(\cdot)), \kappa(\cdot)\}$ and $\star \in \{+,-\}$:
\begin{equation}
	\begin{split}
		\label{eq:IntegrationByPartsDerivative}
		&\int_{\rho}^{\infty} \frac{\gamma(s)s^{\frac{5}{2}} ((3-\lambda)f_1'(s) +sf_1''(s))}{(s\star1)^{\frac{1}{2}+\lambda}} ds\\
		&\quad= -\gamma(\rho)\rho^{\frac{5}{2}} f_1'(\rho)(\rho\star1)^{\frac{1}{2}-\lambda} +\frac{5}{2} \int_{\rho}^{\infty} \frac{\gamma(s)s^{\frac{3}{2}}f_1'(s)}{(s\star1)^{\frac{1}{2}+\lambda}} ds +\int_{\rho}^{\infty} \frac{\gamma(s)s^{\frac{5}{2}}f_1''(s)}{(s\star1)^{\frac{1}{2}+\lambda}}ds,
	\end{split}
\end{equation}
and
\begin{equation}
	\begin{split}
		\label{eq:IntegrationByPartsTwo}
		&\int_{\rho}^{\infty} \frac{\gamma(s)s^{\frac{5}{2}} ((2-\lambda)f_1(s) +sf_1'(s))}{(s\star1)^{\frac{1}{2}+\lambda}} ds\\
		&\quad= -\gamma(\rho)\rho^{\frac{3}{2}} f_1(\rho)(\rho\star1)^{\frac{1}{2}-\lambda} +\frac{3}{2} \int_{\rho}^{\infty} \frac{\gamma(s)s^{\frac{1}{2}}f_1(s)}{(s\star1)^{\frac{1}{2}+\lambda}} ds +\int_{\rho}^{\infty} \frac{\gamma(s)s^{\frac{3}{2}}f_1'(s)}{(s\star1)^{\frac{1}{2}+\lambda}}ds.
	\end{split}
\end{equation}

Recall that $f(\rho;\lambda):=(2-\lambda)f_1(\rho)+\rho f_1'(\rho)$.\\

\textbf{$A''$ terms}: We start with estimating the terms supported near $\rho=1$.
\begin{lemma}
    \label{lemma:TDotA''}
    The following estimate holds:
    \begin{equation}
        \|\dot{T}_{A''_{i}}(\cdot)f\|_{L^{\infty}_{\tau}(\R_+)L^2(\B_1^c)}\lesssim \|f_1\|_{\dot{H}^2(\R^6)}.
    \end{equation}
   for $f_1\in C_{c,rad}^{\infty}(\R^6)$, and  $i \in\{+,-\}$.
    \begin{proof}
We centre our attention to $A''_-$ and observe that the second term therein is not amenable to another straightforward integration by parts. However, as noted in Lemma \ref{lemma:TA''}, the coefficient has a decay of $\O_o(\langle \omega \rangle^{-1})$ in $\omega$ which will suffice for our purposes. Using \eqref{eq:IntegrationByPartsDerivative}, we can write for the second term in $A''_{-}$:
\begin{equation*}
    \begin{split}
    &\quad-\O_o(\langle \omega \rangle^{-1}) \kappa(\rho)\rho^{-\frac{5}{2}} (\rho-1)^{-\frac{1}{2}+\lambda}\int_{\rho}^{\infty} \frac{\kappa(s)s^{\frac{5}{2}}\big((3-\lambda)f_1'(s)+sf_1''(s)\big)}{(s-1)^{\frac{1}{2}+\lambda}}ds\\
    &=-\O_o(\langle \omega \rangle^{-1}) \kappa(\rho)\rho^{-\frac{5}{2}} (\rho-1)^{-\frac{1}{2}+\lambda}\Big[\kappa(\rho)\rho^{\frac{5}{2}}f_1(\rho)(\rho-1)^{\frac{1}{2}-\lambda} +\frac{5}{2}\int_{\rho}^{\infty} \frac{(\kappa(s)s^{\frac{3}{2}}f_1'(s)}{(s-1)^{\frac{1}{2}+\lambda}}ds \\
    &\quad \quad+\int_{\rho}^{\infty} \frac{(\kappa(s)s^{\frac{5}{2}}f_1''(s)}{(s-1)^{\frac{1}{2}+\lambda}}ds \Big].
    \end{split}
\end{equation*}
The first term above will not contribute since its negative counterpart from $A''_{+}$ will annul it. The operator that results from the second term above is estimated by
\begin{equation*}
    \kappa(\rho)\rho^{-\frac{5}{2}}(\rho-1)^{-\frac{1}{2}} \int_{\rho}^{\infty} \frac{\kappa(s)s^{\frac{3}{2}}f_1'(s)}{(s-1)^{\frac{1}{2}}}\langle \tau +\log(\rho-1)-\log(s-1)\rangle^{-2}ds
\end{equation*}
and it is standard, by now, to control the above in $L^{\infty}_{\tau}(\R_+)L^2_{\rho}(\B_1^c)$ by the $\dot{H}^2(\B_1^c)$ norm of $f_1$. The next term follows the same steps, and the simpler case of $A''_+$ is omitted.
    \end{proof}
    \end{lemma}

\textbf{$B''$ terms}: Next, we state the result for terms near $\rho=\infty$.
\begin{lemma}
    \label{lemma:TDotB''}
    Let $f_1\in \dot{H}^2(\R^6)$, and $f(\rho;\lambda):=(2-\lambda)f_1(\rho)+\rho f_1'(\rho)$, then for $i \in\{+,-\}$, the following estimate holds:
    \begin{equation}
        \|\dot{T}_{B''_{i}}(\cdot)f\|_{L^{\infty}_{\tau}(\R_+)L^2(\B_1^c)}\lesssim \|f_1\|_{\dot{H}^2(\R^6)}.
    \end{equation}
    \begin{proof}
    The terms are analogous to their $A''$ counterparts and have enough decay in $\rho$ to be controlled in the $L^{\infty}_{\tau}(\R_+)L^2_{\rho}(\B_1^c)$ norm.
    \end{proof}
    \end{lemma}

\textbf{$C''$ terms}: Next, we deal with the mixed terms $C_{ij}$.
\begin{lemma}
    \label{lemma:TDotC''}
    The following estimate holds
    \begin{equation}
        \|\dot{T}_{C''_{ij}}(\cdot)f\|_{L^{\infty}_{\tau}(\R_+)L^2(\B_1^c)}\lesssim \|f_1\|_{\dot{H}^2(\R^6)}.
    \end{equation}
    for  $f_1\in C_{c,rad}^{\infty}(\R^6)$, and $i,j \in\{+,-\}$.
    \begin{proof}
    We look at the first term in $C''_{--}$ and use \eqref{eq:IntegrationByPartsTwo} to obtain
    \begin{equation*}
        \begin{split}
        &\quad -\frac{5}{2}\O_o(\langle \omega \rangle^{-1}) (1-\kappa(\rho))\rho^{-\frac{5}{2}} (\rho-1)^{-\frac{1}{2}+\lambda} \int_{\rho}^{\infty} \frac{\kappa(s)s^{\frac{3}{2}}\big((2-\lambda)f_1(s)+sf_1'(s)\big)}{(s-1)^{\frac{1}{2}+\lambda}}ds\\
        &=-\frac{5}{2}\O_o(\langle \omega \rangle^{-1}) (1-\kappa(\rho))\rho^{-\frac{5}{2}} (\rho-1)^{-\frac{1}{2}+\lambda}\Big[-\kappa(\rho)\rho^{\frac{3}{2}}f_1(\rho)(\rho-1)^{\frac{1}{2}-\lambda} + \frac{3}{2}\int_{\rho}^{\infty} \frac{\kappa(s)s^{\frac{1}{2}}f_1(s)}{(s-1)^{\frac{1}{2}+\lambda}}ds\Big].
        \end{split}
    \end{equation*}
    The first term in the above display will not contribute, owing to its negative counterpart arising from the $C''_{++}$ term. For the second term, the operator $\dot{T}$ reads
    \begin{equation*}
        \begin{split}
            (1-\kappa(\rho))\rho^{-\frac{5}{2}} (\rho-1)^{-\frac{1}{2}} \int_{\rho}^{\infty} \frac{\kappa(s)s^{\frac{1}{2}}f_1(s)}{(s-1)^{\frac{1}{2}}} \langle \tau +\log(\rho-1)-\log(s-1)\rangle^{-2}ds,
        \end{split}
    \end{equation*}
    which is of a similar form that we have dealt with before. The second term demands a similar handling, and we conclude the proof.
    \end{proof}
    \end{lemma}

    \begin{proposition}
\label{prop:ExteriorStrichartz}
    The semigroup $\Sf$ satisfies the estimates
    \begin{align*}
        \|[\Sf(\cdot)\ff]_1\|_{L^p_\tau(\R_+) L^q(\B^c_1)}&\lesssim  \|\ff\|_{\mathcal{H}^2}
    \end{align*}
    for all $q\in [2,\infty],~r\in[6,12]$ that satisfy \eqref{eq:AdmissibleExponents}
    and all $\ff\in C_{c,rad}^{\infty}(\R^6)\times  C_{c,rad}^{\infty}(\R^6)$.
    Additionally, we have for all $\ff\in C_{c,rad}^{\infty}(\R^6)\times  C_{c,rad}^{\infty}(\R^6)$, the estimate:
    \begin{align*}
        \|[\Sf(\cdot)\ff]_1\|_{L^\infty_\tau(\R_+) \dot{H}^2(\B^c_1)} &\lesssim  \|\ff\|_{\mathcal{H}^2}.
    \end{align*}
    \begin{proof}
        The proof is a combination of Lemma \ref{lemma:TApm} and Lemmas 6.4-6.14.
    \end{proof}
\end{proposition}


\section{Energy estimate for the second component of the solution}
\label{section:SecondComponent}
In this section, our aim is to prove the following estimates for the second component $[\mb{S}(\tau)\mb{f}]_2$ of the resolvent:
\begin{equation*}
    \| [\mb{S}(\cdot)\mb{f}]_2\|_{L_{\tau}^{\infty}(\R_+)\dot{H}^1(\B_1^c)} \lesssim \|\mb{f}\|_{\mathcal{H}}.
\end{equation*}
Since the estimate inside the interior of the cone has been handled, it remains to deal with the second component on the exterior of the lightcone. We begin by noting that the derivative of the second component is given by
	\begin{equation}
		\begin{split}
        \label{eq:u2'}
			u_2'(\rho;\lambda) &= \big(\rho \tilde{u}_1''(\rho;\lambda) - (\lambda-2)\tilde{u}_1'(\rho;\lambda)\big) \int_{\rho}^{\infty} \frac{\tilde{u}_2(s;\lambda)s^5 f(s)}{(s^2-1)^{\frac{3}{2}+\lambda}} ds\\
            &\quad - \big(\rho \tilde{u}_2''(\rho;\lambda) - (\lambda-2)\tilde{u}_2'(\rho;\lambda)\big) \int_{\rho}^{\infty} \frac{\tilde{u}_1(s;\lambda)s^5 f(s)}{(s^2-1)^{\frac{3}{2}+\lambda}} ds.
		\end{split}
	\end{equation}
	We write out the leading order term in the expression $\rho \tilde{u}_1''(\rho;\lambda) - (\lambda-2)\tilde{u}_1'(\rho;\lambda)$:
	\begin{equation*}
		\begin{split}
			&\frac{1}{4\sqrt{3+2\lambda}}\kappa(\rho) (\rho+1)^{-\frac{1}{2}+\lambda} \rho^{-\frac{7}{2}} g_1(\rho;\lambda) \big(-2(3+2\lambda)(\lambda-2)\rho + 5(3+2\lambda)\big)\\
			&\quad +\frac{a_{1,3}(\lambda)}{4\sqrt{3+2\lambda}} (1-\kappa(\rho))(\rho+1)^{-\frac{1}{2}+\lambda}\rho^{-\frac{7}{2}} g_3(\rho;\lambda) \big(-2(3+2\lambda)(\lambda-2)\rho +5(3+2\lambda)\big)\\
			&\quad + \frac{a_{1,4}(\lambda)}{4\sqrt{3+2\lambda}} (1-\kappa(\rho))(\rho-1)^{-\frac{1}{2}+\lambda}\rho^{-\frac{7}{2}} g_3(\rho;\lambda) \big(2(3+2\lambda)(\lambda-2)\rho +5(3+2\lambda)\big),
		\end{split}
	\end{equation*}
	and similarly for $\rho \tilde{u}_2''(\rho;\lambda) - (\lambda-2)\tilde{u}_2'(\rho;\lambda)$:
	\begin{equation*}
		\begin{split}
			&\frac{1}{4\sqrt{3+2\lambda}}\kappa(\rho) (\rho-1)^{-\frac{1}{2}+\lambda} \rho^{-\frac{7}{2}} g_2(\rho;\lambda) \big(2(3+2\lambda)(\lambda-2)\rho + 5(3+2\lambda)\big)\\
			&\quad +\frac{a_{2,3}(\lambda)}{4\sqrt{3+2\lambda}} (1-\kappa(\rho))(\rho+1)^{-\frac{1}{2}+\lambda}\rho^{-\frac{7}{2}} g_3(\rho;\lambda) \big(-2(3+2\lambda)(\lambda-2)\rho +5(3+2\lambda)\big)\\
			&\quad + \frac{a_{2,4}(\lambda)}{4\sqrt{3+2\lambda}} (1-\kappa(\rho))(\rho-1)^{-\frac{1}{2}+\lambda}\rho^{-\frac{7}{2}} g_3(\rho;\lambda) \big(2(3+2\lambda)(\lambda-2)\rho +5(3+2\lambda)\big),
		\end{split}
	\end{equation*}
    We observe that when two derivatives hit $\tilde{u}_i, i=1,2$, the resulting behaviour is decay of order $-3+\lambda$. However, multiplication with $\rho$ makes the decay worse. The observation is that subtraction of the term $(\lambda-2)\tilde{u}_i'(\rho;\lambda)$ results in a cancellation in $\rho$ which leads us to the aforementioned behaviour. With this, we can write out the leading order terms in \eqref{eq:u2'}:
	\begin{equation}
		u_2'(\rho;\lambda)=\mathcal{A}_1 + \mathcal{A}_2 + \mathcal{B}_1 + \mathcal{B}_2 +\sum_{i=1}^8 \mathcal{C}_i,
	\end{equation}
	where 
	\begin{equation}
		\begin{split}
			\label{eq:u2'Around1}
		\mathcal{A}_1&=-\frac{\lambda-2}{2}\kappa(\rho)\rho^{-\frac{5}{2}}(\rho+1)^{-\frac{1}{2}+\lambda} g_1(\rho;\lambda) \int_{\rho}^{\infty} \frac{\kappa(s)s^{\frac{5}{2}}g_2(s;\lambda)f(s)}{(s+1)^{\frac{3}{2}+\lambda}} ds\\
		\mathcal{A}_2&=\frac{\lambda-2}{2}\kappa(\rho)\rho^{-\frac{5}{2}}(\rho-1)^{-\frac{1}{2}+\lambda} g_2(\rho;\lambda) \int_{\rho}^{\infty} \frac{\kappa(s)s^{\frac{5}{2}}g_1(s;\lambda)f(s)}{(s-1)^{\frac{3}{2}+\lambda}} ds
		\end{split}
	\end{equation}
	
	\begin{equation}
		\begin{split}
			\label{eq:u2'Cross}
			\sum_{i=1}^8\mathcal{C}_i&=-\frac{\lambda-2}{2} a_{1,3}(\lambda) (1-\kappa(\rho))\rho^{-\frac{5}{2}} (\rho+1)^{-\frac{1}{2}+\lambda}g_3(\rho;\lambda) \int_{\rho}^{\infty} \frac{\kappa(s)s^{\frac{5}{2}} g_2(s;\lambda)f(s)}{(s+1)^{\frac{3}{2}+\lambda}} ds\\
			& -\frac{\lambda-2}{2} a_{1,3}(\lambda) \kappa(\rho)\rho^{-\frac{5}{2}} (\rho-1)^{-\frac{1}{2}+\lambda}g_2(\rho;\lambda) \int_{\rho}^{\infty} \frac{(1-\kappa(s))s^{\frac{5}{2}} g_3(s;\lambda)f(s)}{(s-1)^{\frac{3}{2}+\lambda}} ds\\
			&+\frac{\lambda-2}{2} a_{1,4}(\lambda) (1-\kappa(\rho))\rho^{-\frac{5}{2}} (\rho-1)^{-\frac{1}{2}+\lambda}g_4(\rho;\lambda) \int_{\rho}^{\infty} \frac{\kappa(s)s^{\frac{5}{2}} g_2(s;\lambda)f(s)}{(s+1)^{\frac{3}{2}+\lambda}} ds\\
			&-\frac{\lambda-2}{2} a_{1,4}(\lambda) \kappa(\rho)\rho^{-\frac{5}{2}} (\rho-1)^{-\frac{1}{2}+\lambda}g_2(\rho;\lambda) \int_{\rho}^{\infty} \frac{(1-\kappa(s))s^{\frac{5}{2}} g_4(s;\lambda)f(s)}{(s+1)^{\frac{3}{2}+\lambda}} ds\\
			&-\frac{\lambda-2}{2} a_{2,3}(\lambda) \kappa(\rho)\rho^{-\frac{5}{2}} (\rho+1)^{-\frac{1}{2}+\lambda}g_1(\rho;\lambda) \int_{\rho}^{\infty} \frac{(1-\kappa(s))s^{\frac{5}{2}} g_3(s;\lambda)f(s)}{(s-1)^{\frac{3}{2}+\lambda}} ds\\
			&+\frac{\lambda-2}{2} a_{2,3}(\lambda) (1-\kappa(\rho))\rho^{-\frac{5}{2}} (\rho+1)^{-\frac{1}{2}+\lambda}g_3(\rho;\lambda) \int_{\rho}^{\infty} \frac{(1-\kappa(s))s^{\frac{5}{2}} g_1(s;\lambda)f(s)}{(s-1)^{\frac{3}{2}+\lambda}} ds\\
			&-\frac{\lambda-2}{2} a_{2,4}(\lambda) \kappa(\rho)\rho^{-\frac{5}{2}} (\rho+1)^{-\frac{1}{2}+\lambda}g_1(\rho;\lambda) \int_{\rho}^{\infty} \frac{(1-\kappa(s))s^{\frac{5}{2}} g_4(s;\lambda)f(s)}{(s+1)^{\frac{3}{2}+\lambda}} ds\\
			&-\frac{\lambda-2}{2} a_{2,4}(\lambda) (1-\kappa(\rho))\rho^{-\frac{5}{2}} (\rho-1)^{-\frac{1}{2}+\lambda}g_4(\rho;\lambda) \int_{\rho}^{\infty} \frac{(1-\kappa(s))s^{\frac{5}{2}} g_1(s;\lambda)f(s)}{(s-1)^{\frac{3}{2}+\lambda}} ds,
			\end{split}
	\end{equation}
	
	and
	\begin{equation}
		\begin{split}
			\label{eq:u1'AtInfinity}
		\mathcal{B}_1&	=\frac{\lambda-2}{2} \big(a_{1,4}(\lambda)a_{2,3}(\lambda) - a_{2,4}(\lambda)a_{1,3}(\lambda) \big) (1-\kappa(\rho))\rho^{-\frac{5}{2}} (\rho-1)^{-\frac{1}{2}+\lambda} g_4(\rho;\lambda)\\
        &\quad \times \int_{\rho}^{\infty}\frac{(1-\kappa(s))s^{\frac{5}{2}} g_3(s;\lambda)f(s)}{(s-1)^{\frac{3}{2}+\lambda}} ds\\
		\mathcal{B}_2&=-\frac{\lambda-2}{2} \big(a_{1,4}(\lambda)a_{2,3}(\lambda) - a_{2,4}(\lambda)a_{1,3}(\lambda) \big)(1-\kappa(\rho))\rho^{-\frac{5}{2}} (\rho+1)^{-\frac{1}{2}+\lambda} g_3(\rho;\lambda)\\
        &\quad \times \int_{\rho}^{\infty}\frac{(1-\kappa(s))s^{\frac{5}{2}} g_4(s;\lambda)f(s)}{(s+1)^{\frac{3}{2}+\lambda}} ds.
					\end{split}
	\end{equation}
    After considering the free counterparts and using an integration by parts, these terms can be dealt with in the same way as the first component of the resolvent and we have the following result:
    \begin{proposition}
        \label{prop:SecondComponent}
        The following estimate holds for $[\mb{S}(\tau)\mb{f}]_2$ of the semigroup:
        \begin{equation}
            \label{eq:SecondComponent}
            \| [\mb{S}(\cdot)\mb{f}]_2\|_{L_{\tau}^{\infty}(\R_+)\dot{H}^1(\B_1)} \lesssim \|\mb{f}\|_{\mathcal{H}},
        \end{equation}
        for $\mb{f}=(f_1,f_2) \in C_{c,rad}^{\infty}(\R^6)$.
    \begin{proof}
        We note that the second component is given by
        \begin{equation*}
            \begin{split}
			[\mb{S}(\tau)\mb{f}]_2(\rho) &= [\mb{S}_0(\tau)\mb{f}]_2(\rho) + [\tilde{\mb{S}}(\tau)\mb{f}](\rho)\\
			&=: [\mb{S}_0(\tau)\mb{f}]_2(\rho) + \frac{1}{2\pi i} \lim_{N \to \infty} \int_{\varepsilon -iN}^{\varepsilon+iN} e^{\lambda \tau} [(\lambda-1)\mathcal{R}(F_{\lambda})(\rho;\lambda) -\rho\partial_{\rho}\big(\mathcal{R}(F_{\lambda})(\rho;\lambda)\big)\\
            &\quad \quad -(\lambda-1)\mathcal{R}_{\mathrm{f}}(F_{\lambda})(\rho;\lambda) -\rho\partial_{\rho}\big(\mathcal{R}_{\mathrm{f}}(F_{\lambda})(\rho;\lambda)\big)d \lambda.
		\end{split}
        \end{equation*}
A full-fledged expression for the derivative of the above can be written using \eqref{eq:u2'}. Since the analysis is the same as that for the first component of the semigroup, we briefly illustrate the idea using one of the terms in \eqref{eq:u2'Around1}. Consider the difference of the term $\mathcal{A}_2$ and its free counterpart $\mathcal{A}_{\mathrm{f}_2}$:
	\begin{equation*}
		\begin{split}
			&\frac{\lambda-2}{2}\kappa(\rho) \rho^{-\frac{5}{2}} (\rho-1)^{-\frac{1}{2}+\lambda} \Big[\big(g_2(\rho;\lambda)-g_{\mathrm{f}_2}(\rho;\lambda) \big)\int_{\rho}^{\infty} \frac{\kappa(s)s^{\frac{5}{2}}g_1(s;\lambda)f(s)}{(s-1)^{\frac{3}{2}+\lambda}} ds\\
			&\quad  \quad + g_{\mathrm{f}_2}(\rho;\lambda) \int_{\rho}^{\infty} \frac{\kappa(s)s^{\frac{5}{2}} \big(g_1(s;\lambda)-g_{\mathrm{f}_1}(s;\lambda)\big)f(s)}{(s-1)^{\frac{3}{2}+\lambda}} ds\Big]=:\mathcal{I}_1 + \mathcal{I}_2
		\end{split}
	\end{equation*}
	We consider the first term above since the second one can be estimates similarly. Using the definition of $g_1$ and $g_{\mathrm{f}_1}$ and an integration by parts, we have
	\begin{equation*}
		\begin{split}
\mathcal{I}_1 &= \frac{\lambda-2}{(1+2\lambda)(3+2\lambda)}  \kappa(\rho)^2 F_{\lambda}(\rho)\\
&\quad +\frac{\lambda-2}{(1+2\lambda)(3+2\lambda)}  \kappa(\rho) \rho^{-\frac{5}{2}} (\rho-1)^{\frac{1}{2}+\lambda} \int_{\rho}^{\infty} \frac{\big(\kappa(s)s^{\frac{5}{2}} F_{\lambda}(s)\big)'}{(s-1)^{\frac{1}{2}+\lambda}}ds\\
&=\mathcal{I}_{11} + \mathcal{I}_{12}.
		\end{split}
	\end{equation*} 
	We observe that $\mathcal{I}_{11}$ will get annulled with its counterpart coming from the manipulation of $\mathcal{A}_1 - \mathcal{A}_{\mathrm{f}_1}$. We consider $\mathcal{I}_{12}$ when the derivative hits $s^{\frac{5}{2}}$. We note that the leading order coefficient is $\O_o(\langle \omega \rangle^{-1})$. Thus, the resulting operator $T$ is estimated by
	\begin{equation*}
		\begin{split}
			\kappa(\rho)\rho^{-\frac{5}{2}} (\rho-1)^{\frac{1}{2}} \int_{\rho}^{\infty} \frac{\kappa(s)s^{\frac{3}{2}}f_2(s)}{(s-1)^{\frac{1}{2}}}\langle \tau +\log(\rho-1)-\log(s-1)\rangle^{-2} ds,
						\end{split}
	\end{equation*}
	for which the $L^2(\B_1^c)$ is estimated by
	\begin{equation*}
		\begin{split}
			&\Big[ \int_1^{\infty} \kappa(\rho)^2 (\rho-1) \big( \int_{\rho}^{\infty} \frac{\kappa(s)s^{\frac{3}{2}}f_2(s)}{(s-1)^{\frac{1}{2}}} \langle \tau +\log(\rho-1)-\log(s-1)\rangle^{-2}ds\big)^2 d\rho \Big]^{\frac{1}{2}}\\
				&\quad \lesssim \int_{\rho}^{\infty} \frac{\kappa(s)s^{\frac{3}{2}}f_2(s)}{(s-1)^{\frac{1}{2}}} \langle \tau -\log(s-1)\rangle^{-2}ds.
		\end{split}
	\end{equation*}
	This is a form which can be estimated in the $L^{\infty}_{\tau}L^2_{\rho}(\B_1^c)$ norm along the same lines as before, and we omit the details. For the term when the derivative hits $F_{\lambda}$ in $\mathcal{I}_{12}$, the resulting operator is given by
	\begin{equation*}
		\begin{split}
			\kappa(\rho)\rho^{-\frac{5}{2}} (\rho-1)^{\frac{1}{2}} \int_{\rho}^{\infty} \frac{\kappa(s)s^{\frac{5}{2}}f_2'(s)}{(s-1)^{\frac{1}{2}}}\langle \tau +\log(\rho-1)-\log(s-1)\rangle^{-2} ds,
		\end{split}
	\end{equation*}
	whose $L^2(\B_1^c)$ norm is bounded by
	\begin{equation*}
\lesssim \int_{\rho}^{\infty} \frac{\kappa(s)s^{\frac{5}{2}}f_2'(s)}{(s-1)^{\frac{1}{2}}} \langle \tau -\log(s-1)\rangle^{-2}ds,
	\end{equation*}
	and the rest of the details follow analogously.
	
	Next, we look at the operator $\dot{T}$. It suffices to deal with the following terms:
	\begin{equation*}
		\begin{split}
			&\frac{3}{2}\frac{\lambda-2}{(1+2\lambda)(3+2\lambda)} \kappa(\rho)\rho^{-\frac{5}{2}} (\rho-1)^{\frac{1}{2}+\lambda} \int_{\rho}^{\infty} \frac{\kappa(s)s^{\frac{3}{2}}f(s)}{(s-1)^{\frac{1}{2}+\lambda}} ds\\
			&\quad + \frac{\lambda-2}{(1+2\lambda)(3+2\lambda)} \kappa(\rho)\rho^{-\frac{5}{2}} (\rho-1)^{\frac{1}{2}+\lambda} \int_{\rho}^{\infty} \frac{\kappa(s)s^{\frac{5}{2}}f'(s)}{(s-1)^{\frac{1}{2}+\lambda}} ds\\
			&=:\mathcal{P}_1 + \mathcal{P}_2
		\end{split}
	\end{equation*}
	where $f(s):=(2-\lambda)f_1(s)+sf_1'(s)$. We consider $\mathcal{P}_2$, the treatment of $\mathcal{P}_1$ being similar. Using \eqref{eq:IntegrationByPartsFinal}, we have
	\begin{equation*}
		\begin{split}
			\mathcal{P}_2&=\frac{\lambda-2}{(1+2\lambda)(3+2\lambda)} \kappa(\rho)\rho^{-\frac{5}{2}} (\rho-1)^{\frac{1}{2}+\lambda}\Big[-\kappa(\rho) \rho^{\frac{5}{2}} (\rho-1)^{\frac{1}{2}-\lambda} f_1'(\rho) \\
			&\quad \quad +\int_{\rho}^{\infty} \frac{\kappa(s)s^{\frac{3}{2}} f_1'(s)}{(s-1)^{\frac{1}{2}+\lambda}} ds +\int_{\rho}^{\infty} \frac{\kappa(s)s^{\frac{5}{2}}f_1''(s)}{(s-1)^{\frac{1}{2}+\lambda}} ds\Big]\\
			&=\mathcal{P}_{21} +  \mathcal{P}_{22} + \mathcal{P}_{23}.
		\end{split}
	\end{equation*}
	The operator that results from $\mathcal{P}_{13}$ is given by
	\begin{equation*}
		\begin{split}
			&\quad \kappa(\rho)\rho^{-\frac{5}{2}} (\rho-1)^{\frac{1}{2}} \int_{\rho}^{\infty} \frac{\kappa(s)s^{\frac{5}{2}} f_1''(s)}{(s-1)^{\frac{1}{2}}} \langle \tau +\log(\rho-1)-\log(s-1)\rangle^{-2}ds\\
			&\lesssim \langle \tau \rangle^{-2}\kappa(\rho)\rho^{-\frac{5}{2}} (\rho-1)^{\frac{1}{2}} \int_{\rho}^{\infty} \frac{\kappa(s)s^{\frac{5}{2}} f_1''(s)}{(s-1)^{\frac{1}{2}}} ds,
		\end{split}
	\end{equation*}
	The above can be controlled in $L_{\tau}^{\infty}(\R_+)L_{\rho}^2(\B_1^c)$ norm  by the $\dot{H}^2(\R^6)$ norm of $f_1$ in a straightforward manner. The other terms demand a similar treatment and the proof is complete.
    \end{proof}
    \end{proposition}

    To summarise, in Sections \ref{section:InteriorEstimates}, \ref{section:StrichartzEstimatesExterior} and \ref{section:SecondComponent}, we have proved the following:
    \begin{theorem}
	\label{thm:StrichartzFullSemigroup}
	Let $q \in [2,\infty], r\in [6,12]$ satisfy the wave admissibility condition, namely,
	\begin{equation*}
		\frac{1}{q} +\frac{6}{r} = 1.
	\end{equation*}
	Then, for $(\mathbf{S}(\tau))_{\tau\geqslant 0}$, the semigroup generated by the operator $\mb{L}=\mb{L}_0+\Lf'$, the following estimates hold:
	\begin{equation}
		\label{eq:StrichartzNoDerivatives}
		\begin{split}
			\|  [\mathbf{S}(\tau)\mathbf{f}]_1\|_{L_{\tau}^qL_{\rho}^r} &\lesssim \| \mathbf{f}\|_{\mathcal{H}^2}\\
			\Big \| \int_0^{\tau} [\mathbf{S}(\tau-\sigma)\mathbf{N}(\mathbf{u}(\sigma))]_1 d\sigma \Big\|_{L_{\tau}^qL_{\rho}^r} &\lesssim \|\mathbf{N}\|_{L_{\tau}^1\mathcal{H}^2}.
		\end{split}
	\end{equation}
	Furthermore, we have
	\begin{equation}
		\label{eq:StrichartzFirstDerivatives}
		\begin{split}
			\|  [\mathbf{S}(\tau)\mathbf{f}]_1\|_{L_{\tau}^2\dot{W}_{\rho}^{1,4}} &\lesssim \| \mathbf{f}\|_{\mathcal{H}^2}\\
			\Big \| \int_0^{\tau} [\mathbf{S}(\tau-\sigma)\mathbf{N}(\sigma)]_1 d\sigma \Big\|_{L_{\tau}^2\dot{W}_{\rho}^{1,4}} &\lesssim \|\mathbf{N}\|_{L_{\tau}^1\mathcal{H}^2} ,
		\end{split}
	\end{equation} 
	\begin{equation}
		\label{eq:StrichartzSecondDerivatives}
			\begin{split}
			\|  [\mathbf{S}(\tau)\mathbf{f}]_1\|_{L_{\tau}^\infty\dot{H}_{\rho}^{2}} &\lesssim \| \mathbf{f}\|_{\mathcal{H}^2},\\
			\Big \| \int_0^{\tau} [\mathbf{S}(\tau-\sigma)\mathbf{N}(\sigma)]_1 d\sigma \Big\|_{L_{\tau}^\infty\dot{H}_{\rho}^{2}} &\lesssim \|\mathbf{N}\|_{L_{\tau}^1\mathcal{H}^2}, \\
            \|  [\mathbf{S}(\tau)\mathbf{f}]_2\|_{L_{\tau}^\infty\dot{H}_{\rho}^{1}} &\lesssim \| \mathbf{f}\|_{\mathcal{H}^2}.
		\end{split}
	\end{equation}
    \begin{proof}
    The proof follows from Propositions \ref{prop:InteriorSTrichartz}, \ref{prop:ExteriorStrichartz} and \ref{prop:SecondComponent}.
    \end{proof}
\end{theorem}

\section{Nonlinearity control and proof of the main results}
\label{sec:NonlinearityControl}

\subsection{Nonlinearity control} In this section, we control the nonlinearity for the cubic wave and the wave maps equations and finally prove our two main theorems.
\subsubsection{Cubic wave nonlinearity}
For the cubic wave equation \eqref{eq:cNLW}, we have
\begin{equation}
	\mathbf{N}_{\mathrm{NLW}}(\bm{\phi}) = \begin{bmatrix} 0\\
		\phi_1^3 + 3u^*\phi_1^2
		\end{bmatrix},
\end{equation}
where $u^*$ satisfies the estimate
$$
|u^*(\rho)|\lesssim \langle\rho\rangle^{-1}.
$$

A simple application of H\"older's inequality and Hardy's inequality yield the following result:
\begin{lemma}
	\label{lemma:Nonlinearity}
	For $\bm{\phi}, \bm{\psi} \in C_c(\R_+;C_{c,rad}^{\infty}(\R^6))\times C_c(\R_+;C_{c,rad}^{\infty}(\R^6))$, the following estimates hold:
	\begin{equation*}
		\begin{split}
		\|\mathbf{N}_{\mathrm{NLW}}(\bm{\phi}) \|_{L_{\tau}^1(\R_+)\mathcal{H}^2} \lesssim \|\phi_1\|_{L_\tau^2(\R_+) \dot{W}^{1,4}(\R^6)}^2+\|\phi_1\|_{L_\tau^2(\R_+) \dot{W}^{1,4}(\R^6)} \|\phi_1\|_{L_\tau^4(\R_+)L^8(\R^6)}^2,
        \end{split}
        \end{equation*}
and
\begin{equation*}
\begin{split}
			&\|\mathbf{N}_{\mathrm{NLW}}(\bm{\phi}) - \mathbf{N}_{\mathrm{NLW}}(\bm{\psi}) \|_{L_{\tau}^1(\R_+)\mathcal{H}^2} \\
            &\quad \lesssim \Big(\| \phi_1-\psi_1\|_{L_{\tau}^4(\R_+)L_{\rho}^8(\R^6)} + \| \phi_1-\psi_1\|_{L_{\tau}^2(\R_+)L_{\rho}^{12}(\R^6)} + \| \phi_1-\psi_1\|_{L_{\tau}^2(\R_+)\dot{W}_{\rho}^{1,4}(\R^6)} \Big)\\
			&\quad \times \Big( \big( \| \phi_1\|_{L_{\tau}^4(\R_+)L_{\rho}^8(\R^6)} + \| \psi_1\|_{L_{\tau}^4(\R_+)L_{\rho}^8(\R^6)}\big) \big( \| \phi_1\|_{L_{\tau}^2(\R_+)\dot{W}_{\rho}^{1,4}(\R^6)} + \| \psi_1\|_{L_{\tau}^2(\R_+)\dot{W}_{\rho}^{1,4}(\R^6)}\big)\\
			&\quad \quad + \| \phi_1\|_{L_{\tau}^2(\R_+)L_{\rho}^{12}(\R^6)} + \| \psi_1\|_{L_{\tau}^2(\R_+)L_{\rho}^{12}(\R^6)} + \| \phi_1\|_{L_{\tau}^2(\R_+)\dot{W}_{\rho}^{1,4}(\R^6)} + \| \psi_1\|_{L_{\tau}^2(\R_+)\dot{W}_{\rho}^{1,4}(\R^6)}\Big).
		\end{split}
	\end{equation*}
	\begin{proof}
		Since the first component of the nonlinearity is zero, we control the second component of  $\mb{N}_{\mathrm{NLW}} (\bm{\phi})$, where $\bm{\phi} = (\phi_1,\phi_2)$ in $\dot{H}^1$ as follows:
		\begin{equation*}
			\|\phi_1^3 \|_{L_{\tau}^1(\R_+)\dot{H}^1(\R^6)} \lesssim \|\phi_1\|_{L_\tau^2(\R_+) \dot{W}^{1,4}(\R^6)} \|\phi_1\|_{L_\tau^4(\R_+)L^8(\R^6)}^2.
			\end{equation*}
			For the difference (dropping the subscripts for simplicity), we have
	\begin{equation*}
				\begin{split}
			\| \phi^3 - \psi^3\|_{L_{\tau}^1(\R_+)\dot{H}^1(\R^6)} &\lesssim \| \phi -\psi\|_{L_{\tau}^4(\R_+)L_{\rho}^8(\R^6)} \Big( \| \phi\|_{L_{\tau}^2 \dot{W}^{1,4}(\R^6)} \|\phi\|_{L_{\tau}^4(\R_+)L_{\rho}^8(\R^6)} + \|\psi\|_{L_{\tau}^4(\R_+)L_{\rho}^8(\R^6)} \| \psi\|_{L_{\tau}^2 \dot{W}^{1,4}(\R^6)}\\
			&\quad \quad + \|\psi\|_{L_{\tau}^4(\R_+)L_{\rho}^8(\R^6)} \| \phi\|_{L_{\tau}^2 \dot{W}^{1,4}(\R^6)}
			  + \|\phi\|_{L_{\tau}^4(\R_+)L_{\rho}^8(\R^6)} \| \psi\|_{L_{\tau}^2 \dot{W}^{1,4}(\R^6)} \Big)\\
			  & \quad + \| \phi-\psi\|_{L_{\tau}^2 \dot{W}^{1,4}(\R^6)} \Big(\|\phi\|_{L_{\tau}^4(\R_+)L_{\rho}^8(\R^6)}^2 +  \|\psi\|_{L_{\tau}^4(\R_+)L_{\rho}^8(\R^6)}^2\\
			  &\quad \quad  + \|\phi\|_{L_{\tau}^4(\R_+)L_{\rho}^8(\R^6)}\|\psi\|_{L_{\tau}^4(\R_+)L_{\rho}^8(\R^6)} \Big).
			\end{split}
		\end{equation*}
		To control the second summand, we use make use of H\"older's inequality as well as the following version of Hardy's inequality
        \begin{equation}\label{Eq:Hardy4}
            \||.|^{-1} f\|_{L^4(\mathbb R^d)}\lesssim \|f\|_{\dot{W}^{1,4}(\mathbb R^d)}
        \end{equation}
        to compute that
		\begin{equation*} 
			\|u_1^*\phi^2\|_{\dot{H}^1(\R^6)} \lesssim \||.|^{-2}\phi^2\|_{L^2(\R^6)}+ \||.|^{-1}\phi\|_{L^4(\R^6)}\|\phi\|_{\dot{W}^{1,4}(\R^6)}\lesssim \|\phi\|_{\dot{W}^{1,4}(\R^6)}^2
		\end{equation*}
	Lastly, by once more employing \eqref{Eq:Hardy4} one also concludes that
	\begin{equation*}
			\| u_1^* \phi^2 - u_1^*\psi^2 \|_{L_{\tau}^1(\R_+)\dot{H}^1(\R^6)}  \lesssim\|\phi_1-\psi_1\|_{L_\tau^2(\R_+) \dot{W}^{1,4}(\R^6)}\left(\|\phi_1\|_{L_\tau^2(\R_+) \dot{W}^{1,4}(\R^6)}+\|\psi_1\|_{L_\tau^2(\R_+) \dot{W}^{1,4}(\R^6)}\right).
	\end{equation*}	
	\end{proof}
\end{lemma}
With the aid of the above lemma, we define the space $\mathcal{X}^{\mathrm{NLW}}$ for the fixed point argument to be run on. The space $\mathcal{X}^{\mathrm{NLW}}$ is the completion of all functions $\bm{\phi} = (\phi_1,\phi_2) \in C_c(\R_+;C_{c,rad}^{\infty}(\R^6)) \times C_c(\R_+;C_{c,rad}^{\infty}(\R^6))$ with respect to the norm:
\begin{equation*}
    \begin{split}
	\|\bm{\phi}\|_{\mathcal{X}^{\mathrm{NLW}}} &= \| (\phi_1,\phi_2)\|_{\mathcal{X}^{\mathrm{NLW}}} = \| \phi_1\|_{L_{\tau}^2(\R_+)\dot{W}^{1,4}(\R^6)} + \|\phi_1\|_{L_{\tau}^\infty(\R_+)L^{6}(\R^6)} + \|\phi_1\|_{L_{\tau}^2(\R_+)L^{12}(\R^6)}\\
    &\quad + \|\phi_1\|_{L_{\tau}^\infty(\R_+)\dot{H}^2(\R^6)} + \|\phi_2\|_{L_{\tau}^\infty(\R_+)\dot{H}^1(\R^6)}.
    \end{split}
\end{equation*}


\subsubsection{Wave maps nonlinearity}
In this case, we recall that the nonlinearity is given by
\begin{equation}
        \mb{N}_{\mathrm{WM}}(\bm{\phi})=\begin{bmatrix}
        0\\
        -\frac{(d-1)}{2\rho^3} \big[\sin\big(2\rho (u_{\mathrm{WM}_1}^*(\rho) + \phi_1)\big) - \sin(2\rho u_{\mathrm{WM}_1}^*(\rho)) \big] +\frac{d-1}{\rho^2}\cos(2\rho u_{\mathrm{WM}_1}^*(\rho))\phi_1
    \end{bmatrix}.
\end{equation}
where $\bm{\phi}=(\phi_1,\phi_2)$.

\begin{lemma}
	For $\bm{\phi}, \bm{\psi} \in C_c(\R_+;C_{c,rad}^{\infty}(\R^6))\times C_c(\R_+;C_{c,rad}^{\infty}(\R^6))$, the following estimates hold:
	\begin{equation*}
		\begin{split}
			&\|\mb{N}_{\mathrm{WM}}(\bm{\phi})\|_{L_{\tau}^1(\R_+)\mathcal{H}^2} \lesssim \|\phi_1\|_{L_{\tau}^2(\R_+)L_{\rho}^{12}(\R^6)}^2 + \|\phi_1\|_{L_{\tau}^3(\R_+)L_{\rho}^{9}(\R^6)}^3 + \|\phi_1\|_{L_{\tau}^4(\R_+)L_{\rho}^{8}(\R^6)}^4 + \|\phi_1\|_{L_{\tau}^2(\R_+)\dot{W}_{\rho}^{1,4}(\R^6)}^2,\\
            \text{and}\\
			&\|\mb{N}_{\mathrm{WM}}(\bm{\phi}) - \mb{N}_{\mathrm{WM}}(\bm{\psi}) \|_{L_{\tau}^1(\R_+)\mathcal{H}^2} \\
            &\lesssim \| \phi_1-\psi_1\|_{L_{\tau}^2(\R_+)L_{\rho}^{12}(\R^6)} \Big(\|\phi_1\|_{L_{\tau}^2(\R_+)L_{\rho}^{12}(\R^6)}^2 + \|\psi_1\|_{L_{\tau}^2(\R_+)L_{\rho}^{12}(\R^6)} + \|\phi_1\|_{L_{\tau}^4(\R_+)L_{\rho}^{8}(\R^6)}^2 \\
			&\quad + \|\phi_1\|_{L_{\tau}^2(\R_+)\dot{W}_{\rho}^{1,4}(\R^6)} \big(1+\|\phi_1\|_{L_{\tau}^{\infty}(\R_+)L_{\rho}^{6}(\R^6)} + \|\psi_1\|_{L_{\tau}^{\infty}(\R_+)L_{\rho}^{6}(\R^6)})\\
			&\quad +\|\phi_1\|_{L_{\tau}^{6}(\R_+)L_{\rho}^{\frac{36}{5}}(\R^6)}^3 + \|\psi_1\|_{L_{\tau}^{6}(\R_+)L_{\rho}^{\frac{36}{5}}(\R^6)}^3\Big)\\
			&+\|\phi_1\psi_1\|_{L_{\tau}^2(\R_+)\dot{W}_{\rho}^{1,4}(\R^6)} \Big( \|\phi_1\|_{L_{\tau}^2(\R_+)L_{\rho}^{12}(\R^6)}^2 + \|\psi_1\|_{L_{\tau}^2(\R_+)L_{\rho}^{12}(\R^6)}^2\Big).
		\end{split}
	\end{equation*}
	\begin{proof}
    We define an auxiliary function $\eta: (0,\infty)\times \R \to \R$ by
    \begin{equation*}
        \eta(\rho;x):=-\frac{d-1}{2\rho^3} \big[\sin(2\rho u_{\mathrm{WM}_1}^*(\rho)+2\rho x) - \sin(2\rho u_{\mathrm{WM}_1}^*(\rho))\big] +\frac{(d-1)}{\rho^2} \cos(2 \rho u_{\mathrm{WM}_1}^*(\rho))x.
    \end{equation*}
    For the ease of notation, we denote $u_{\mathrm{WM}_1}^*$ by $u^*$. Then, we have, using the integral form of Taylor's theorem,
    \begin{equation*}
    \eta(\rho;x) = \frac{(d-1)\sin(2\rho u^*(\rho))}{\rho}x^2 + 2(d-1)\int_0^x (x-s)^2 \cos(2\rho u^*(\rho) +2\rho s)ds,
    \end{equation*}
    which implies
    \begin{equation*}
        |\eta(\rho;x)| \lesssim \langle \rho \rangle^{-1}|x|^2 + |x|^3,
    \end{equation*}
    for all $\rho \in (0,\infty)$. Note that the first term is smooth at $\rho=0$. The derivative of $\eta$ in $\rho$ is given by
    \begin{equation*}
    \begin{split}
        \partial_{\rho}\eta(\rho;x)& = \frac{d-1}{\rho^2} \big[-\sin(2\rho u^*(\rho) +2\rho \cos(2\rho u^*(\rho)) \big(u^*(\rho) +\rho \partial_{\rho}u^*(\rho)\big) \big]x^2 \\
        &\quad -4(d-1)\int_0^x (x-s)^2 \sin(2\rho u^*(\rho) +2\rho s)\big(u^*(\rho) +\rho \partial_{\rho}u^*(\rho) + s \big)ds,
        \end{split}
    \end{equation*}
    which is again smooth at $\rho=0$ as can be seen by its Taylor expansion. Furthermore,
    \begin{equation*}
    \begin{split}
    &|\partial_{\rho}\eta(\rho;x)| \lesssim \langle \rho \rangle^{-2}|x|^2+|x|^4,\\
    &|\partial_{x}\eta(\rho;x)| \lesssim \langle \rho \rangle^{-1}|x|+|x|^2.
    \end{split}
    \end{equation*}
    From this point on, we can emulate the steps from \cite[Lemma 7.1]{DonningerWallauch} to conclude the result. 

	\end{proof}
\end{lemma}

With this, we can define the auxiliary space for the fixed point argument to run:
\begin{equation}
    \begin{split}
        \| \bm{\phi}\|_{\mathcal{X}^{\mathrm{WM}}} &:= \|\phi_1\|_{L_{\tau}^2(\R_+)L_{\rho}^{12}(\R^6)} + \|\phi_1\|_{L_{\tau}^3(\R_+)L_{\rho}^{9}(\R^6)} + \|\phi_1\|_{L_{\tau}^4(\R_+)L_{\rho}^{8}(\R^6)} + \|\phi_1\|_{L_{\tau}^2(\R_+)\dot{W}_{\rho}^{1,4}(\R^6)}\\
        &\quad +\|\phi_1\|_{L_{\tau}^\infty(\R_+)L_{\rho}^{6}(\R^6)} + \|\phi_1\|_{L_{\tau}^6(\R_+)L_{\rho}^{\frac{36}{5}}(\R^6)} +\|\phi_1\|_{L_{\tau}^\infty(\R_+)\dot{H}_{\rho}^{2}(\R^6)} + \|\phi_2\|_{L_{\tau}^\infty(\R_+)\dot{H}_{\rho}^{1}(\R^6)}.
    \end{split}
\end{equation}


\subsection{Fixed point argument} With the help of the estimates proved in the previous section, we can run a fixed point argument in this section.
\subsubsection{Cubic wave equation}
We can write the equation in an equivalent Duhamel formulation as:
\begin{equation*}
	\mathbf{S}(\tau)\mathbf{u}_0 + \int_0^\tau \mathbf{S}(\tau-\sigma)\mathbf{N}(\mathbf{u}(\sigma))d\sigma
 \end{equation*}

and define the map $\mb{\Gamma}$ as
\begin{equation*}
	\mathbf{u} \mapsto \mb{\Gamma}_{\mathbf{u}_0}(\mathbf{u}) = \mathbf{S}(\tau)\mathbf{u}_0 + \int_0^\tau \mathbf{S}(\tau-\sigma)\mathbf{N}(\mathbf{u}(\sigma))d\sigma
\end{equation*}

In the first step we show that $\mb{\Gamma}$ is a self-mapping on the closed ball of radius $\delta$ in the space $\mathcal{X}^{\mathrm{NLW}}$
Using triangle inequality, Theorem \ref{thm:StrichartzFullSemigroup} and Lemma \ref{lemma:Nonlinearity} and the definition of the space $\mathcal{X}^{\mathrm{NLW}}$, we obtain
\begin{equation*}
	\begin{split}
	\| \mb{\Gamma}_{\mathbf{u}_0}(\mathbf{u}) \|_{\mathcal{X}^{\mathrm{NLW}}} &\lesssim  \|\mathbf{S}(\tau)\mathbf{u}_0\|_{\mathcal{X}^{\mathrm{NLW}}} + \Big\|\int_0^\tau \mathbf{S}(\tau-\sigma)N(\mathbf{u}(\sigma))d\sigma \Big\|_{\mathcal{X}^{\mathrm{NLW}}}\\
	&\lesssim \|\mathbf{S}(\tau)\mathbf{u}_0\|_{\mathcal{X}^{\mathrm{NLW}}} + \| \mathbf{N}(\mathbf{u})\|_{L_{\tau}^1(\R_+)\mathcal{H}}\\
	&\lesssim \| \mathbf{u}_0\|_{\mathcal{H}} + \|\mathbf{u}\|_{\mathcal{X}^{\mathrm{NLW}}}^3 + \|\mathbf{u}\|_{\mathcal{X}^{\mathrm{NLW}}}^2.
 	\end{split}
\end{equation*}

Next, we show that $\mb{\Gamma}_{\mathbf{u}_0}$ is (locally) Lipschitz continuous:
\begin{equation*}
	\begin{split}
	\|\mb{\Gamma}_{\mathbf{u}_0}(\bm{\phi}) - \mb{\Gamma}_{\mathbf{u}_0}(\bm{\psi}) \|_{\mathcal{X}^{\mathrm{NLW}}} &\lesssim \Big\| \int_0^{\tau} \mathbf{S}(\tau-\sigma)(\mathbf{N}(\mb{\phi}(\sigma)) - \mathbf{N}(\mb{\psi}(\sigma))) d\sigma \Big\|_\mathcal{X^{\mathrm{NLW}}}\\
	&\lesssim \| \mathbf{N} (\bm{\phi}) - \mathbf{N}(\bm{\psi}) \|_{L_{\tau}^1(\R_+)\mathcal{H}} \\
	&\lesssim \|\bm{\phi} - \bm{\psi}\|_{\mathcal{X}} \Big( \| \bm{\phi} \|_{\mathcal{X}^{\mathrm{NLW}}}^2 + \| \bm{\psi} \|_{\mathcal{X}^{\mathrm{NLW}}}^2 + \| \bm{\phi} \|_{\mathcal{X}^{\mathrm{NLW}}} \| \bm{\psi} \|_{\mathcal{X}^{\mathrm{NLW}}} + \| \bm{\phi} \|_{\mathcal{X}^{\mathrm{NLW}}} + \| \bm{\psi} \|_{\mathcal{X}^{\mathrm{NLW}}} \Big).
	\end{split}
\end{equation*}

Denote by $\mathcal{X}_\delta^{\mathrm{NLW}}$ the closed ball of radius $\delta$ around $0$ in $\mathcal{X}^{\mathrm{NLW}}$:
\begin{equation*}
	\mathcal{X}_\delta^{\mathrm{NLW}}:= \{ \mathbf{u} \in \mathcal{X}^{\mathrm{NLW}}: \|\mathbf{u}\|_{\mathcal{X}^i}\leqslant \delta\}.
\end{equation*}

Using the above estimates, by choosing $\delta$ small enough, we can conclude the following:
\begin{lemma}
\label{lemma:FixedPoint}
	There exist $C, \delta>0$, $\delta$ small enough such that for every initial data $\mathbf{u}_0$ with $\|\mathbf{u}_0\|_{\mathcal{H}^2} \leqslant \frac{\delta}{C}$, the map $\bm{\Gamma}_{\mathbf{u}_0}$ admits a unique fixed point in $\mathcal{X}_{\delta}^{\mathrm{NLW}}$, i.e. there exists $\mathbf{u} \in \mathcal{X}_{\delta}^{\mathrm{NLW}}$ such that $\bm{\Gamma}_{\mathbf{u}_0}(\mathbf{u}) = \mathbf{u}$.
	\end{lemma}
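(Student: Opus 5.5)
This is a standard contraction-mapping argument on the Banach space $\mathcal{X}^{\mathrm{NLW}}$, using the Strichartz estimates of Theorem \ref{thm:StrichartzFullSemigroup} together with Lemma \ref{lemma:Nonlinearity}.

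First I would record that all the Strichartz-type norms entering Lemma \ref{lemma:Nonlinearity} are controlled by $\|\cdot\|_{\mathcal{X}^{\mathrm{NLW}}}$. The norms $L^2_\tau L^{12}$ and $L^2_\tau\dot W^{1,4}$ appear directly in the definition. The norm $L^4_\tau L^8$ is not in the definition, but the pair $(4,8)$ satisfies $\frac14+\frac68=1$. It therefore follows by Hölder interpolation between $L^\infty_\tau L^6$ and $L^2_\tau L^{12}$:
\[
\|\phi\|_{L^4L^8}\leqslant \|\phi\|_{L^\infty L^6}^{1/2}\|\phi\|_{L^2L^{12}}^{1/2}.
\]
With this, Lemma \ref{lemma:Nonlinearity} gives
\begin{align*}
\|\mathbf N(\bm\phi)\|_{L^1_\tau\mathcal H^2}&\lesssim \|\bm\phi\|_{\mathcal X}^2+\|\bm\phi\|_{\mathcal X}^3,\\
\|\mathbf N(\bm\phi)-\mathbf N(\bm\psi)\|_{L^1_\tau\mathcal H^2}&\lesssim \|\bm\phi-\bm\psi\|_{\mathcal X}\bigl(\|\bm\phi\|_{\mathcal X}+\|\bm\psi\|_{\mathcal X}+\|\bm\phi\|_{\mathcal X}^2+\|\bm\psi\|_{\mathcal X}^2\bigr).
\end{align*}
Both estimates extend by density from $C_c$-functions to all of $\mathcal X^{\mathrm{NLW}}$.

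Next I would check that $\bm\Gamma_{\mathbf u_0}$ maps $\mathcal X^{\mathrm{NLW}}$ into itself with the linear bounds
\[
\|\mathbf S(\cdot)\mathbf u_0\|_{\mathcal X}\leqslant C_0\|\mathbf u_0\|_{\mathcal H^2},\qquad \Bigl\|\int_0^\tau\mathbf S(\tau-\sigma)\mathbf h(\sigma)\,d\sigma\Bigr\|_{\mathcal X}\leqslant C_0\|\mathbf h\|_{L^1_\tau\mathcal H^2}.
\]
Every first-component norm is covered by Theorem \ref{thm:StrichartzFullSemigroup}, which is stated for smooth data and extends to $\mathcal H^2$ by density. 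The second-component bound in $L^\infty_\tau\dot H^1$ for the Duhamel term is not written out in the theorem. It follows from Minkowski's inequality applied to the homogeneous estimate $\|[\mathbf S(\tau)\mathbf f]_2\|_{L^\infty\dot H^1}\lesssim\|\mathbf f\|_{\mathcal H^2}$, and the same device yields all the inhomogeneous bounds from the homogeneous ones. Measurability and membership in the completion follow by approximating $\mathbf h$ by $C_c$-functions.

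Then I would combine the estimates. Writing $C\geqslant C_0$ for a constant absorbing all the implicit constants, for $\mathbf u\in\mathcal X_\delta$ we get
\[
\|\bm\Gamma(\mathbf u)\|_{\mathcal X}\leqslant C\|\mathbf u_0\|_{\mathcal H^2}+C(\delta^2+\delta^3).
\]
For $\mathbf u,\mathbf v\in\mathcal X_\delta$ we get
\[
\|\bm\Gamma(\mathbf u)-\bm\Gamma(\mathbf v)\|_{\mathcal X}\leqslant C(2\delta+2\delta^2)\|\mathbf u-\mathbf v\|_{\mathcal X}.
\]
The statement's normalization $\|\mathbf u_0\|_{\mathcal H^2}\leqslant\delta/C$ is slightly too large, since the first term alone would use up the whole radius. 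I would replace it by $\|\mathbf u_0\|_{\mathcal H^2}\leqslant\delta/(2C)$, which is an equivalent normalization after enlarging $C$. With this choice, taking $\delta$ so small that $C(\delta+\delta^2)\leqslant\frac12$ makes $\bm\Gamma$ map $\mathcal X_\delta$ into itself and act as a contraction with constant $\leqslant\frac12$. The closed ball $\mathcal X_\delta$ is complete, so the Banach fixed point theorem gives existence of a fixed point and its uniqueness in $\mathcal X_\delta$.

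The only genuinely delicate point is the justification that the Strichartz bounds pass from $C_{c,rad}^\infty$ data to general $\mathcal H^2$ data and to $L^1_\tau\mathcal H^2$ forcing within the completed space. I would handle this by density together with the uniform bounds above.
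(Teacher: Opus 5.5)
Your proposal is correct and follows the paper's own argument: Theorem~\ref{thm:StrichartzFullSemigroup} and Lemma~\ref{lemma:Nonlinearity} give the self-mapping and Lipschitz bounds on $\mathcal{X}_\delta^{\mathrm{NLW}}$, and the Banach fixed point theorem concludes. You also make explicit two steps the paper leaves implicit: the interpolation $\|\phi\|_{L^4_\tau L^8}\leqslant\|\phi\|_{L^\infty_\tau L^6}^{1/2}\|\phi\|_{L^2_\tau L^{12}}^{1/2}$, which is needed because $L^4_\tau L^8$ is not part of the $\mathcal{X}^{\mathrm{NLW}}$-norm, and the Minkowski argument for the inhomogeneous $L^\infty_\tau\dot H^1$ bound on the second component.

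One caveat is inherited from the paper's definition of $\mathcal{X}^{\mathrm{NLW}}$. Every element of the completion of $C_c(\R_+;\cdot)$ under an $L^\infty_\tau$-norm satisfies $\|\phi_1(\tau)\|_{\dot H^2}\to0$ as $\tau\to\infty$, but $\mathbf{S}(\cdot)\mathbf{u}_0$ and the Duhamel term need not do so: already for $V=0$, $\|[\mathbf{S}_0(\tau)\mathbf{u}_0]_1\|_{\dot H^2}^2\to\frac12\|\mathbf{u}_0\|_{\mathcal{H}^2}^2$. So your step ``membership in the completion by density'' does not hold literally. One should instead take $\mathcal{X}^{\mathrm{NLW}}$ to be the space of all time-continuous functions with finite norm, after which your argument goes through unchanged.
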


\subsection{Proof of Theorems \ref{thm:MainTheoremNLW} and \ref{thm:MainTheoremWM}} We finally conclude the proofs of the core theorems.
    \begin{proof}[Proof of Theorem \ref{thm:MainTheoremNLW}]
        Let $\delta>0$ be small such that $\|\bm{\phi}(0,\cdot)\|_{\mathcal{H}^2}:=\|\mb{u} - \mb{u}^*(0,\cdot)\|_{\mathcal{H}^2} \leqslant \frac{\delta}{C}$. From Lemma \ref{lemma:FixedPoint}, we know that there exists a solution $\bm{\phi} \in \mathcal{X}^{\mathrm{NLW}}_{\delta}$ corresponding to this initial data. Now consider,
        \begin{equation*}
            \begin{split}
               \sup_{t\in[1,\infty)} \|u_1(t,\cdot) - u_1^*(t,\cdot)\|_{\dot{H}^2(\R^6)}&=\sup_{t\in[1,\infty)} \Big\| \frac{1}{t} v_1(\log(t), \frac{\cdot}{t}) - \frac{1}{t} u_1^*(\log(t),\frac{\cdot}{t})\Big\|_{\dot{H}^2(\R^6)}\\
               &=\sup_{t\in[1,\infty)} \|v_1(\log(t),\cdot) - u_1^*(\log(t),\cdot)\|_{\dot{H}^2(\R^6)}\\
               &=\|\phi_1\|_{L^\infty(\R_+)\dot{H}^2(\R^6)}\\
               &\leqslant \delta.
            \end{split}
        \end{equation*}
        Similarly, for the second component, we have
            \begin{equation*}
            \begin{split}
               \sup_{t\in[1,\infty)} \|u_2(t,\cdot) - u_2^*(t,\cdot)\|_{\dot{H}^1(\R^6)}&=\sup_{t\in[1,\infty)} \Big\| \frac{1}{t^2} v_2(\log(t), \frac{\cdot}{t}) - \frac{1}{t^2} u_2^*(\log(t),\frac{\cdot}{t})\Big\|_{\dot{H}^1(\R^6)}\\
               &=\sup_{t\in[1,\infty)} \|v_2(\log(t),\cdot) - u_2^*(\log(t),\cdot)\|_{\dot{H}^1(\R^6)}\\
               &=\|\phi_2\|_{L^\infty(\R_+)\dot{H}^1(\R^6)}\\
               &\leqslant \delta.
            \end{split}
        \end{equation*}
        
    \end{proof}

    \subsubsection{Wave maps equation} For the wave maps equation, we refer the reader to \cite[Section 7]{DonningerWallauch}. The arguments can be carried over to the present setting by omitting the projection $\mathbf{P}$ and replacing $\B_1$ by the full space $\R^6$.

    \begin{proof}[Proof of Theorem \ref{thm:MainTheoremWM}]
    The proof follows exactly in the same way as the proof of Theorem \ref{thm:MainTheoremNLW}.

    \end{proof}

\section{Appendix}

\subsection{Alternative proof of \ref{prop:ExplicitSolution}} We provide an alternative proof for the generation of the free semigroup $(\mb{S}_0(\tau))_{\tau\geqslant 0}$.
\label{appen:LPTheorem}
\begin{proposition}
	\label{prop:LPProposition}
	Let $d\geqslant 5$. The operator $\tilde{\Lf}_0: \mathcal{D}(\tilde{\Lf}_0) \subseteq \mathcal{H}^{\frac{d}{2}-1} \to \mathcal{H}^{\frac{d}{2}-1}$ is closable and its closure $(\mb{L}_0, \mathcal{D}(\mb{L}_0))$ generates a strongly continuous semigroup $(\mb{S}_0(\tau))_{\tau\geqslant 0}$ on $\mathcal{H}^{\frac{d}{2}-1}$ which satisfies
	\begin{equation*}
		\|\mb{S}_0(\tau)\mb{f}\|_{\mathcal{H}^{\frac{d}{2}-1}} \lesssim \|\mb{f}\|_{\mathcal{H}^{\frac{d}{2}-1}}.
	\end{equation*}
	for all $\mb{f} \in \mathcal{H}^{\frac{d}{2}-1}$ and $\tau \geqslant 0$.
\end{proposition}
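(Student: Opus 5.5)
The plan is to apply the Lumer--Phillips theorem on the Hilbert space $\mathcal{H}^{\frac{d}{2}-1}$ with the inner product $\langle\cdot,\cdot\rangle_{\mathcal{H}^{\frac d2-1}}$. Dissipativity is already available: Lemma \ref{lemma:Dissipativity} with $s=\frac d2-1$ gives $\Re\langle \tilde{\Lf}_0\mb{u},\mb{u}\rangle_{\mathcal{H}^{\frac d2-1}}\leqslant 0$ for all $\mb{u}\in\mathcal{D}(\tilde{\Lf}_0)$. The domain $\mathcal{D}(\tilde{\Lf}_0)$ consists of compactly supported radial $C^{\lceil\frac d2-1\rceil}\times C^{\lceil \frac d2-2\rceil}$ pairs. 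It is dense in the radial part of $\dot H^{\frac d2-1}\times\dot H^{\frac d2-2}$, since $C^\infty_{c}$ is dense in $\dot H^s(\R^d)$ for $s<\frac d2$ and radialization by averaging over $SO(d)$ preserves both support and norm. A densely defined dissipative operator is closable, and its closure $\mb{L}_0$ is again dissipative. The remaining task is therefore the range condition: for some $\lambda>0$, the range $\operatorname{rg}(\lambda-\tilde{\Lf}_0)$ must be dense in $\mathcal{H}^{\frac d2-1}$. Lumer--Phillips then yields a contraction semigroup, which gives the claimed bound with constant $1$.

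To verify the range condition, I would fix $\lambda>0$ large, say $\lambda=1$ or any $\lambda> 0$, and a dense class of right-hand sides, namely $\mb{f}\in C^\infty_{c,rad}\times C^\infty_{c,rad}$. The goal is to solve $(\lambda-\tilde{\Lf}_0)\mb{u}=\mb{f}$. The first component gives
\[
u_2=(\lambda-1)u_1-\Lambda u_1-f_1,
\]
and substituting this into the second component yields the radial ODE \eqref{eq:ODEWithV=0} for $u_1$ with right-hand side $F_\lambda$. One concrete option is to use the explicit free semigroup of Proposition \ref{prop:ExplicitSolution} and define
\[
\mb{u}:=\int_0^\infty e^{-\lambda\tau}\mb{S}_0(\tau)\mb{f}\,d\tau .
\]
Because of the uniform bound from Lemma \ref{lemma:DecayForPropagators}, this integral converges in $\mathcal{H}^{\frac d2-1}$. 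Since $\mb{S}_0(\tau)\mb{f}$ is a classical solution of $\partial_\tau\mb{v}=\tilde{\Lf}_0\mb{v}$, differentiating under the integral shows that $\mb{u}$ solves the resolvent equation pointwise.

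The catch is that $\mb{u}$ is smooth but not compactly supported: $\mb{S}_0(\tau)\mb{f}$ lives on a ball of radius comparable to $e^{-\tau}(e^\tau-1+R)$, so it stays bounded, and finite speed of propagation gives $\operatorname{supp}\mb{u}\subset \B_{R+1}$. Hence $\mb{u}$ is actually compactly supported and smooth, so $\mb{u}\in\mathcal{D}(\tilde{\Lf}_0)$. It follows that $\operatorname{rg}(\lambda-\tilde{\Lf}_0)$ contains the dense set $C^\infty_{c,rad}\times C^\infty_{c,rad}$.

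The main obstacle is making the support and regularity claims rigorous, particularly in the self-similar scaling. I would verify the support bound directly from the representation
\[
[\mb{S}_0(\tau)\mb{f}](\xi)=e^{\tau}\,[\cdots](e^\tau\xi),
\]
together with Huygens/finite propagation for the wave propagators, which are supported in $|x|\leqslant R+e^\tau-1$. This gives $|\xi|\leqslant (R+e^\tau-1)e^{-\tau}\leqslant R$ for $R\geqslant1$. Smoothness and the exchange of $\tilde{\Lf}_0$ with the integral follow because every derivative of $\mb{S}_0(\tau)\mb{f}$ grows at most polynomially in $e^\tau$, in light of Lemma \ref{lemma:DecayForPropagators} applied at higher $s$. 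This growth is beaten by $e^{-\lambda\tau}$ once $\lambda$ is chosen large enough. If one prefers to avoid this route, the fallback is to solve the ODE \eqref{eq:ODEWithV=0} by variation of constants using the free fundamental systems of Section \ref{section:ODEAnalysis}. In that approach, regularity at $\rho=1$ (the Frobenius index $\frac32+\lambda$) for large $\lambda$ is the delicate point.
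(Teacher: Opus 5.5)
Your proposal is correct. It shares the Lumer--Phillips framework and the dissipativity input (Lemma \ref{lemma:Dissipativity}) with the paper, but it verifies the range condition by a genuinely different argument.

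\textbf{The paper's route.} In Lemma \ref{lemma:Density} the paper treats only $d=6$ and $\lambda=1$. It writes down the explicit fundamental system $\{1,v_1\}$ of the homogeneous radial ODE and solves by variation of constants separately on $(0,1)$ and $(1,\infty)$. The constants are chosen for regularity at $\rho=0$ and for compact support, and $c_1=\beta$ enforces continuity at $\rho=1$. Repeated integrations by parts then show that $u_1'$ and $u_1''$ match across the light cone. This works because the nontrivial Frobenius index there is $\frac52>2$.

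\textbf{Your route.} You take the Laplace transform of the explicit free flow. Finite speed of propagation gives the uniform support bound $|\xi|\leqslant 1+(R-1)e^{-\tau}\leqslant R$. Regularity follows from crude sup bounds, e.g.\ $\partial_\xi^\alpha[\mb{S}_0(\tau)\mb{f}]_1=O(e^{(1+|\alpha|)\tau})$ and $\partial_\xi^\alpha[\mb{S}_0(\tau)\mb{f}]_2=O(e^{(2+|\alpha|)\tau})$, obtained via $L^1$ bounds on the Fourier side. Taking $\lambda$ large then suffices; $\lambda>3$ gives $C^2\times C^1$ in $d=6$.

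\textbf{What each route buys.} Your argument is dimension-independent, so it actually covers all $d\geqslant 5$ as stated, whereas the appendix only handles $d=6$. It also avoids the light-cone matching entirely. As a bonus, it shows that the resolvent of $\mb{L}_0$ equals the Laplace transform of the semigroup of Proposition \ref{prop:ExplicitSolution}: the two agree on a dense set and are both bounded. Hence the Lumer--Phillips semigroup coincides with the explicit one, a point the appendix leaves implicit. The price is that your argument is not independent of the explicit wave propagator. As an ``alternative proof'' of Proposition \ref{prop:ExplicitSolution} it therefore adds mainly the identification of the generator. The paper's ODE argument needs nothing about the wave flow and previews the matching at $\rho=1$ used in Section \ref{section:ODEAnalysis}.

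\textbf{Two small corrections.}
\begin{enumerate}
\item Drop the sentence calling $\mb{u}$ ``not compactly supported,'' since it contradicts what you then prove.
\item For fixed $\lambda$, $\mb{u}$ is in general not $C^\infty$: its regularity across $|\xi|=1$ is capped by the Frobenius exponent $\frac32+\lambda$ (in $d=6$). You should say $C^k$ for $\lambda$ large depending on $k$. That is all membership in $\mathcal{D}(\tilde{\Lf}_0)$ requires, and one fixed large $\lambda$ suffices for Lumer--Phillips.
\end{enumerate}
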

Having proved the closability of the operator $\tilde{\mb{L}}_0$, we shall accomplish the aforementioned result using the Lumer--Phillips theorem. In the first step, we prove that the operator $\tilde{\Lf}_0$ is dissipative on the Hilbert space $\mathcal{H}^{\frac{d}{2}-1}$.Next, we show that the operator $\lambda I-\mb{L}_0$ is surjective for some (hence all) large $\lambda \in \R$. Now we restrict ourselves to the special case $d=6$. 

\begin{lemma}
	\label{lemma:Density}
	Let $d=6$. Then,
	\begin{equation}
		\label{eq:onto}
		\rg(I-\mb{L}_0) = \mathcal{H}^2.
	\end{equation}
	\begin{proof}
		Let $\mb{f} \in C_{c,rad}^{\infty} \times C_{c,rad}^{\infty}$. We shall show that
		\begin{equation}
			(I-\tilde{\Lf}_0)\mb{u} = \mb{f}
		\end{equation}
		has a solution $\mb{u}$ such that $\mb{u}\in \mathcal{H}^2 \cap \mathcal{D}(\tilde{\Lf})$. Then, since $\tilde{\Lf}_0$ is closable with $\mb{L}_0$ being its closure, \eqref{eq:onto} follows. Using the definition of the operator $\tilde{\Lf}_0$, we have
		\begin{equation*}
			-\Lambda u_1 -u_2 = f_1, \quad -\Delta u_1 -\Lambda u_2 -u_2 = f_2.
		\end{equation*}
		Switching to the radial representatives of $u_i,f_i, i=1,2$, which we still denote by $u_i,f_i$, we obtain the following ordinary differential equation:
		\begin{equation}
			\label{eq:ODE}
			(1-\rho^2)u_1''(\rho) + \Big(\frac{5}{\rho}-2\rho\Big) u_1'(\rho) =r(\rho),
		\end{equation}
		where $r(\rho):=f_1(\rho)+\rho f_1'(\rho)-f_2(\rho)$. The homogeneous version of \eqref{eq:ODE} has the following Frobenius indices:
		\begin{itemize}
			\item $0$ and $-4$ at $\rho=0$,
			\item $0$ and $\frac{5}{2}$ at $\rho=1$,
			\item $0$ and $1$ at $\rho=\infty$ (for the variable $\rho^{-1})$.
		\end{itemize}
		More precisely, the general solution to 
		\begin{equation*}
			(1-\rho^2)u_1''(\rho) + \Big(\frac{5}{\rho}-2\rho\Big) u_1'(\rho) =0
		\end{equation*}
		is given by
		\begin{equation}
			\label{eq:GeneralSolutionHomogeneousEquation}
			a_1 \Big(\frac{\sqrt{1-\rho^2}(5\rho^2-2)}{8\rho^4} -\frac{3}{8}\arctanh{(\sqrt{1-\rho^2})}\Big) +a_2=:a_1v_1(\rho)+a_2 \cdot 1.
		\end{equation}
		for arbitrary constants $a_1,a_2$. The Wronskian is given by
		\begin{equation*}
			W(v_1,1)(\rho)=:W(\rho) = -\frac{(1-\rho^2)^{\frac{3}{2}}}{\rho^5}.
		\end{equation*}
		With this, we conclude that $\{1,v_1\}$ forms a fundamental system on $(0,1)$. By variation of constants, a solution to \eqref{eq:ODE} on $(0,1)$ is given by
		\begin{equation*}
			\begin{split}
				u_1(\rho) &= c_1 + c_2 v_1(\rho) + v_1(\rho)\int_0^{\rho}\frac{r(s)}{W(s)(1-s^2)}ds + \int_{\rho}^1 \frac{v_1(s)r(s)}{W(s)(1-s^2)}ds\\
				&=c_1 +c_2v_1(\rho)-v_1(\rho)\int_0^{\rho}\frac{r_1(s)}{(1-s^2)^{\frac{5}{2}}} -\int_1^{\rho} \frac{v_1(s)r_1(s)}{(1-s^2)^{\frac{5}{2}}}ds,
			\end{split}
		\end{equation*}
		where $r_1(s):=r(s)s^5$ and $c_1,c_2$ are arbitrary constants. We choose $c_2=0$. Then the solution reads
		\begin{equation}
			\label{eq:SolutionOn01}
			u_1(\rho) = c_1 -v_1(\rho)\int_0^{\rho}\frac{r_1(s)}{(1-s^2)^{\frac{5}{2}}}ds -\int_1^{\rho} \frac{v_1(s)r_1(s)}{(1-s^2)^{\frac{5}{2}}}ds.
		\end{equation}
		By its definition, the solution $v_1$ is of the form
		\begin{equation}
			\label{eq:v1}
			v_1(\rho) = (1-\rho)^{\frac{5}{2}}g(\rho),
		\end{equation}
		where $g(\rho)$ is analytic at $\rho=1$. Thus, the integrand in the second integral namely, $\frac{v_1(s)r_1(s)}{(1-s^2)^{\frac{5}{2}}}$ is well-defined. Furthermore, using the transformation $s=\rho(1-t)$, we have
		\begin{equation}
			\label{eq:Integral}
			\int_0^{\rho} \frac{r_1(s)}{(1-s^2)^{\frac{5}{2}}}ds = \rho \int_0^1 \frac{r_1(\rho(1-t))}{(1+\rho(1-t))^{\frac{5}{2}} (1-\rho(1-t))^{\frac{5}{2}}} dt \sim \frac{1}{(1-\rho)^{\frac{3}{2}}},
		\end{equation}
		and by \eqref{eq:v1}, we deduce that the second term in \eqref{eq:SolutionOn01} is $O(\rho-1)$. Thus, we have
		\begin{equation}
			\label{eq:uAt1FromLeft}
			u_1(1^-) = c_1.
		\end{equation}
		Similarly, on $(1,\infty)$, a solution to \eqref{eq:ODE} is given by
		\begin{equation*}
			\begin{split}
				u_1(\rho) &= c_3 + c_4v_1(\rho) +v_1(\rho)\int_{\rho_1}^{\rho} \frac{r(s)}{W(s)(1-s^2)} ds -\int_{\rho_0}^{\rho}\frac{v_1(s)r(s)}{W(s)(1-s^2)}ds\\
				&=c_3+c_4v_1(\rho) -v_1(\rho)\int_{\rho_1}^{\rho} \frac{r_1(s)}{(1-s^2)^{\frac{5}{2}}}ds +\int_{\rho_0}^{\rho}\frac{v_1(s)r_1(s)}{(1-s^2)^{\frac{5}{2}}}ds
			\end{split}
		\end{equation*} 
		for arbitrary constants $c_3$ and $c_4$ and $\rho_0,\rho_1>1$, say $\rho_0=\rho_1=\frac{3}{2}$. We choose
		\begin{equation*}
			c_3 = -\int_{\rho_0}^{\infty} \frac{v_1(s)r_1(s)}{(1-s^2)^{\frac{5}{2}}}ds, \quad c_4 = \int_{\rho_1}^{\infty}\frac{r_1(s)}{(1-s^2)^{\frac{5}{2}}}ds.
		\end{equation*}
		Clearly, $c_3,c_4<\infty$. This gives
		\begin{equation}
			\label{eq:SolutionOn1Infty}
			u_1(\rho) = -\int_{\rho}^{\infty}\frac{v_1(s)r_1(s)}{(1-s^2)^{\frac{5}{2}}} ds +v_1(\rho) \int_{\rho}^{\infty} \frac{r_1(s)}{(1-s^2)^{\frac{5}{2}}}ds.
		\end{equation}
		Note that by the above construction on $(1,\infty)$, since $r_1$ is compactly supported, the solution $u$ is also smooth and compactly supported. The first integral in \eqref{eq:SolutionOn1Infty} is well-defined and since $r_1$ is compactly supported, we observe that the integral converges for $\rho=1$:
		\begin{equation}
			\label{eq:FiniteIntegral}
			-\int_1^{\infty}\frac{v_1(s)r_1(s)}{(1-s^2)^{\frac{5}{2}}} ds=:\beta
		\end{equation}
		Arguing similarly to \eqref{eq:Integral}, we infer that the second term in \eqref{eq:SolutionOn1Infty} is well-defined and behaves like $O(\rho-1)$. Thus, from the right side as well, we have
		\begin{equation*}
			u(1^+) = \beta.
		\end{equation*}
		For our piecewise constructed solution $u_1$ to be continuous at $\rho=1$, we require
		\begin{equation}
			\label{eq:ConditionForContinuity}
			c_1 = \beta.
		\end{equation}
		Moreover, we require that $u_1 \in C^2_{c,r}$. To this end, we compute the derivative of $u_1$: on $(0,1)$, we have
		\begin{equation*}
			u_1'(\rho) = -v_1'(\rho)\int_0^{\rho} \frac{r_1(s)}{(1-s^2)^{\frac{5}{2}}}ds = -v_1'(\rho)\int_0^{\rho} \frac{r_2(s)}{(1-s)^{\frac{5}{2}}}ds,
		\end{equation*}
		where $r_2(s):=r_1(s)(1+s)^{-\frac{5}{2}}$. Using integration by parts, we obtain
		\begin{equation*}
			\int_0^{\rho} \frac{r_2(s)}{(1-s)^{\frac{5}{2}}}ds = \frac{2}{3}\frac{r_2(\rho)}{(1-\rho)^{\frac{3}{2}}}- \frac{2}{3} \int_0^{\rho} \frac{r_2'(s)}{(1-s)^{\frac{3}{2}}}ds.
		\end{equation*}
		From \eqref{eq:v1}, 
		\begin{equation*}
			v_1'(\rho) = (1-\rho)^{\frac{3}{2}} h(\rho),
		\end{equation*}
		for an analytic function $h$. This gives
		\begin{equation}
			\label{eq:DerivativeOn01}
			u_1'(\rho) = -\frac{2}{3}h(\rho)r_2(\rho) + \frac{2}{3}(1-\rho)^{\frac{3}{2}}h(\rho)\int_0^{\rho}\frac{r_2'(s)}{(1-s)^{\frac{3}{2}}}ds,
		\end{equation}
		thereby
		\begin{equation}
			\label{eq:DerivativeAtiFromLeft}
			u_1'(1) = -\frac{2}{3}h(1)r_2(1).
		\end{equation}
		On the right of $\rho=1$, we have
		\begin{equation*}
			u_1'(\rho) = v_1'(\rho)\int_{\rho}^{\infty}\frac{r_1(s)}{(1-s^2)^{\frac{5}{2}}}ds
		\end{equation*}
		which using integration by parts as before gives
		\begin{equation*}
			u_1'(\rho) = -\frac{2}{3}(1-\rho)^{\frac{3}{2}} h(\rho)\Big(\frac{r_2(\rho)}{(1-\rho)^{\frac{3}{2}}} + \int_{\rho}^{\infty} \frac{r_2'(s)}{(1-s)^{\frac{3}{2}}} ds\Big).
		\end{equation*}
		Thus,
		\begin{equation*}
			u_1'(1) = -\frac{2}{3}h(1)r_2(1).
		\end{equation*}
		Now we shall show that the second derivatives from the left and right are also equal. On $(0,1)$, using \eqref{eq:DerivativeOn01} and integration by parts, we have
		\begin{equation*}
			\begin{split}
				u_1''(\rho) &= -\frac{2}{3}h'(\rho)r_2(\rho) + \Big(\frac{2}{3}(1-\rho)^{\frac{3}{2}}h'(\rho) -(1-\rho)^{\frac{1}{2}}h(\rho) \Big) \int_0^{\rho} \frac{r_2'(s)}{(1-s)^{\frac{3}{2}}} ds\\
				&= -\frac{2}{3}h'(\rho)r_2(\rho) + \Big(\frac{2}{3}(1-\rho)^{\frac{3}{2}}h'(\rho) -(1-\rho)^{\frac{1}{2}}h(\rho) \Big) \Big(2\frac{r_2'(\rho)}{(1-\rho)^{\frac{1}{2}}} -2\int_0^{\rho} \frac{r_2''(s)}{(1-s)^{\frac{1}{2}}} ds \Big).
			\end{split}
		\end{equation*}
		From the above, we observe that the derivative can be extended to $\rho=1$ with its value being
		\begin{equation}
			\label{eq:SecondDerivativeAt1FromLeft}
			u_1''(1) = -\frac{2}{3}h'(1)r_2(1)-2h(1)r_2'(1),
		\end{equation}
		since all the other terms are $O(\rho-1)^{\frac{1}{2}}$. Similarly, on $(1,\infty)$, we obtain
		\begin{equation*}
			\begin{split}
				u_1''(\rho) = -\frac{2}{3}h'(\rho)r_2(\rho) -2\Big( (1-\rho)^{\frac{1}{2}} h(\rho) -\frac{2}{3}(1-\rho)^{\frac{3}{2}} h'(\rho)\Big) \Big( \frac{r_2'(\rho)}{(1-\rho)^{\frac{1}{2}}} + \int_{\rho}^{\infty} \frac{r_2''(s)}{(1-s)^{\frac{1}{2}}} ds \Big)
			\end{split}
		\end{equation*}
		which implies
		\begin{equation}
			\label{eq:SecondDerivativeFromRight}
			u_1''(1) = -\frac{2}{3}h'(1)r_2(1) -2h(1)r_2'(1).
		\end{equation}
		With this, we conclude that $u_1 \in C_{c,r}^2(\R^6 \backslash\{0\})$. From its construction, we observe that $u_1$ is also smooth at $\rho=0$ since the singular behaviour of $v_1$ at $\rho=0$ is annulled by $r_1(s) = s^5r(s)$. Furthermore, $u_1(\rho)= O(1)$ around $\rho=0$. Thus, $u_1 \in H^2(\B^6)$, indeed by elliptic regularity, we deduce $u_1\in C_{c,r}^{\infty}(\B^6)$. Combining this with the fact that $u_1$ constructed on $(1,\infty)$ has a compact support, we conclude that $\mb{u} = (u_1,u_2) \in \mathcal{D}(\tilde{\Lf}_0)$. The proof is complete.
	\end{proof}
\end{lemma}

\begin{proof}[Proof of Proposition \ref{prop:LPProposition}]
	Since the hypothesis of the Lumer--Phillips theorem are satisfied, namely Lemma \ref{lemma:Dissipativity} and \ref{lemma:Density}, the result follows.
\end{proof}


Transforming this expression to similarity coordinates yields the following result.

\subsection{On the admissibility of self-similar solutions to the wave maps equation}
\begin{lemma}
\label{lem: wmadmissible}
Let $u\in C^2((0,\infty)\times \R^6)$ be a self-similar solution to 
\begin{align}\label{eq:wavemaps again}
	\big(\partial_t^2 -\partial_r^2 -\frac{5}{r}\partial_r\big)u(t,r) + \frac{3\sin(2ru(t,r)) - 6ru(t,r)}{2r^3} =0.
\end{align}
Then, its profile $U$, defined by the relation $u(t,x)=\displaystyle\frac{1}{t}U(\frac{|x|}{t})$ satisfies
$$
\lim_{\rho\to \infty} \rho|U(\rho)|=c\in [0,\infty), \quad \text{ and }\quad  \lim_{\rho\to \infty} \rho^2|U'(\rho)|=c' \in [0,\infty).
$$
\end{lemma}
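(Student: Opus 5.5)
The plan is to reduce \eqref{eq:wavemaps again} to an ODE for the profile and then analyse it at $\rho=\infty$. Substituting $u(t,r)=t^{-1}U(r/t)$ (equivalently, taking the static equation in similarity variables with $\partial_\tau=0$) gives
\begin{equation*}
(1-\rho^2)U''+\Big(\frac{5}{\rho}-4\rho\Big)U'-2U-\frac{3}{2\rho^3}\big(\sin(2\rho U)-2\rho U\big)=0 .
\end{equation*}
The natural variable is the angle $\psi(\rho):=\rho U(\rho)$, since the nonlinearity only depends on it. The claim is then equivalent to two facts: $\psi(\rho)\to c$, and $\rho^2U'=\rho\psi'-\psi$ has a limit, i.e.\ $\rho\psi'(\rho)\to 0$.

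\textbf{Step 1: rewrite for $\psi$ on $(1,\infty)$ and pass to a logarithmic variable.} In terms of $\psi$ the equation becomes
\begin{equation*}
(1-\rho^2)\psi''+\Big(\frac{3}{\rho}-2\rho\Big)\psi'-\frac{3}{2\rho^2}\sin(2\psi)=0 .
\end{equation*}
I will check the constants by direct computation. For $\rho>1$, divide by $\rho^2-1$ and set $x=\log\rho$. Since $\rho\psi'=\dot\psi$ and $\rho^2\psi''=\ddot\psi-\dot\psi$, this yields
\begin{equation*}
\ddot\psi+\dot\psi=O(e^{-2x})\big(|\dot\psi|+|\sin 2\psi|\big).
\end{equation*}
This is the key structural point. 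At infinity the equation is a damped linear ODE with no potential, because the $\sin(2\psi)$ term carries an extra factor $\rho^{-2}$. (This matches the Frobenius exponents $0$ and $1$ found in the Appendix.)

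\textbf{Step 2: convergence of $\psi$.} Write the equation as $\frac{d}{dx}(e^{x}\dot\psi)=e^{x}E(x)$ with $|E|\lesssim e^{-2x}(|\dot\psi|+1)$; here $|\sin|\le 1$ is used, so no a priori bound on $\psi$ is needed. A Gronwall argument on $[x_0,\infty)$ then gives boundedness of $e^{x}\dot\psi$. Integrating, $\dot\psi(x)=e^{-x}(C+O(1))$, so $\dot\psi\in L^1$. Hence $\psi(x)\to c_0\in\R$, and consequently $\rho|U(\rho)|\to|c_0|=:c$.

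\textbf{Step 3: the derivative.} From Step 2, $\rho\psi'=\dot\psi=O(e^{-x})=O(\rho^{-1})\to 0$. Therefore $\rho^2U'=\rho\psi'-\psi\to -c_0$, which gives $\rho^2|U'|\to|c_0|=c'$.

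The main obstacle is making Step 1 rigorous near $\rho=1$ and up to it. The singular point $\rho=1$ is harmless here, because $U\in C^2$ is given on all of $[0,\infty)$ (from $u\in C^2$), so we simply start at some $\rho_0>1$. What requires care is that the error term is genuinely integrable without assuming a bound on $\psi$. This works because $\sin$ is bounded; that boundedness is what distinguishes the wave maps case from the cubic case, where Lemma \ref{Le:DecaySelfSim} was needed and a decay assumption had to be imposed.
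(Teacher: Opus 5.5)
Your argument is correct and is essentially the paper's: it also passes to $V=\rho U$ (your $\psi$), integrates the profile equation once at infinity using only $|\sin|\le 1$, gets $\psi'=O(\rho^{-2})$, and reads off both limits from $\rho^2U'=\rho\psi'-\psi$. The one difference is that the paper uses the exact integrating factor, $\partial_\rho\bigl[\rho^3(\rho^2-1)^{-1/2}\psi'\bigr]=-\tfrac{3\rho\sin(2\psi)}{2(\rho^2-1)^{3/2}}=O(\rho^{-2})$. The bracket there equals your $e^{x}\dot\psi=\rho^2\psi'$ up to a factor $1+O(\rho^{-2})$, so no $\dot\psi$-remainder appears and your Gronwall step becomes unnecessary.
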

\begin{proof}
Assume that $u$ is a smooth self-similar solution of the co-rotational wave maps equation. 
We set $v(t,r)=r u(t,r).$ A direct computation shows that $v$ then solves 
\begin{align*}
    	\big(\partial_t^2 -\partial_r^2 -\frac{3}{r}\partial_r\big)v(t,r) + \frac{3\sin(2v(t,r))}{2r^3} =0.
\end{align*}
We now insert the self-similar ansatz $v(t,r)=V(\frac{r}{t})$ to arrive at the equation
\begin{align*}
  \partial_\rho^2 V(\rho)+ (\rho^2-1)^{-1}(-\frac{3}{\rho}+2\rho) \partial_\rho V(\rho)=- \frac{\sin(V(\rho))}{(\rho^2-1)\rho^2}
\end{align*}
Note that we can find an integrating factor. Namely, we can rewrite the equation as
\begin{align*}
\partial_\rho    \left[\frac{\rho^3\partial_\rho V(\rho)}{\sqrt{\rho^2-1}} \right]= \frac{\sin(V(\rho))}{(1-\rho^2)\rho^2}.
\end{align*}
Hence, we see that
\begin{align*}
\partial_\rho    \left[\frac{\rho^3\partial_\rho V(\rho)}{\sqrt{\rho^2-1}} \right]= O(\rho^{-4}).
\end{align*}
Thus, it follows
\begin{align*}
\lim_{\rho \to \infty}   \frac{\rho^3\partial_\rho V(\rho)}{\sqrt{\rho^2-1}} 
\end{align*} 
exists and is finite.
This in turn implies that 
$\partial_\rho V(\rho)= O(\rho^{-2})$ and we conclude that  $\displaystyle\lim_{\rho\to \infty}V(\rho)$ does indeed exist and is finite. Consequently, as the profile $U$ satisfies $U(\rho)=\rho^{-1} V(\rho)$ the claim follows.
\end{proof}

	\bibliographystyle{plain} 
\bibliography{biblio.bib}
\end{document}